\renewcommand{\@secnumfont}{\bfseries}
\theoremstyle{plain} 
 \newtheorem{theorem}[equation]{Theorem}
 \newtheorem{proposition}[equation]{Proposition}
 \newtheorem{lemma}[equation]{Lemma}
 \newtheorem{sublemma}[equation]{Sublemma}
 \newtheorem{corollary}[equation]{Corollary}
 \theoremstyle{definition}
 \newtheorem{definition}[equation]{Definition}
 \newtheorem{construction}[equation]{Construction}
\theoremstyle{remark}
 \newtheorem{remark}[equation]{Remark}
 \newtheorem{example}[equation]{Example}
 \newtheorem{condition-list}[equation]{}
\def \N {\mathbb{N}}
\def \Z {\mathbb{Z}}
\def \Spf {\text{\rm Spf}}
\def \Spec {\text{\rm Spec}}
\def \BKF {\text{\rm BKF}}
\def \Hom {\text{\rm Hom}}
\def \HIG {\text{\rm HIG}}
\def \Ker {\text{\rm Ker}}
\def \id {\text{\rm id}} 
\def \fpqc {\text{\rm fpqc}}
\def \Zar {\text{\rm Zar}}
\def \Der {\text{\rm Der}}
\def \MIC {\text{\rm MIC}}
\def \Crystal{\text{\rm CR}}
\def \qnilp {\text{\rm q-nilp}}
\def \ZAR {\text{\rm ZAR}}
\def \Mod {\text{\rm\bf Mod}}
\def \Cone {\text{\rm Cone}}
\def \Cov {\text{\rm Cov}}
\def \cont{\text{\rm cont}}
\def \Set {\text{\rm\bf Set}}
\def \Map {\text{\rm Map}}
\def \Spa {\text{\rm Spa}}
\def \CMap {\text{\rm Map}}
\def \fSet {\text{\rm\bf fSet}}
\def \hCrystal {\widehat{\text{\rm CR}}}
\def \Ob {\text{\rm Ob}\,}
\def \disc {\text{\rm disc}}
\def \ev {\text{\rm ev}}
\def \proet {\text{\rm pro\'et}}
\def \et {\text{\'et}}
\def \fproj {\text{\rm fproj}}
\def \BKF {\text{\rm BKF}}
\def \AffZar {\text{\rm AffZar}}
\def \Aff {\text{\rm Aff}}
\def \Qis {\text{\rm Qis}}
\def \Mor {\text{\rm Mor\,}}
\def \Omegab {\Omega^{\bullet}}
\def \hy {\text{-}}
\def \bis {$^{\text{bis}}$\,}
\def \projl {\underset{\leftarrow}\ell}
\def \vf {v_{\text{\rm f}}}
\def \vS {v_{\mathbf{S}}}
\def \pq {[p]_q}
\def \varprojliml {\varprojlim\limits}
\def \mfi {\mathfrak{i}}
\def \ff  {\mathfrak{f}}
\def \fp {\mathfrak{p}}
\def \fg {\mathfrak{g}}
\def \fX {\mathfrak{X}}
\def \fY {\mathfrak{Y}}
\def \fU {\mathfrak{U}}
\def \fD {\mathfrak{D}}
\def \fB {\mathfrak{B}}
\def \fV {\mathfrak{V}}
\def \fh {\mathfrak{h}}
\def \off  {\overline{\mathfrak{f}}}
\def \mtg {\mathtt{g}}
\def \mti {\mathtt{i}}
\def \mtp {\mathtt{p}}
\def \CO {\mathcal {O}}
\def \CB {\mathcal{B}}
\def \CC {\mathcal{C}}
\def \CH {\mathcal{H}}
\def \CF {\mathcal{F}}
\def \CJ {\mathcal{J}}
\def \CI {\mathcal{I}}
\def \CK {\mathcal{K}}
\def \CA {\mathcal{A}}
\def \CS {\mathcal{S}}
\def \CT {\mathcal{T}}
\def \CM {\mathcal{M}}
\def \CN {\mathcal {N}}
\def \CG {\mathcal{G}}
\def \CP {\mathcal{P}}
\def \CL {\mathcal{L}}
\def \uCM {\underline{\mathcal{M}}}
\def \CKb {\mathcal{K}^{\bullet}}
\def \oCKb {\overline{\mathcal{K}}^{\bullet}}
\def \hCO {\widehat{\mathcal{O}}}
\def \hbZ {\widehat{\mathbb{Z}}}
\def \bM {\mathbb{M}}
\def \bA {\mathbb{A}}
\def \bU {\mathbb{U}}
\def \bV {\mathbb{V}}
\def \bL {\mathbb{L}}
\def \ubM {\underline{\mathbb{M}}}
\def \ubA {\underline{\mathbb{A}}}
\def \scrS {\mathscr{S}}
\def \hA {\widehat{A}}
\def \hR {\widehat{R}}
\def \hj {\widehat{j}}
\def \hpartial{\widehat{\partial}}
\def \halpha {\widehat{\alpha}}
\def \hbeta {\widehat{\beta}}
\def \hvarphi {\widehat{\varphi}}
\def \hgamma{\widehat{\gamma}}
\def \hotimes {\widehat{\otimes}}
\def \tf {\widetilde{f}}
\def \tD {\widetilde{D}}
\def \tI {\widetilde{I}}
\def \ts {\widetilde{s}}
\def \tv {\widetilde{v}}
\def \tg {\widetilde{g}}
\def \tp {\widetilde{p}}
\def \txi {\widetilde{\xi}}
\def \tLambda {\widetilde{\Lambda}}
\def \tchi {\widetilde{\chi}}
\def \tXi {\widetilde{\Xi}}
\def \tnu {\widetilde{\nu}}
\def \tCC {\widetilde{\CC}}
\def \tCA {\widetilde{\CA}}
\def \tCM {\widetilde{\CM}}
\def \oJ {\overline{J}}
\def \oR {\overline{R}}
\def \oA {\overline{A}}
\def \oD {\overline{D}}
\def \oY {\overline{Y}}
\def \oX {\overline{X}}
\def \of {\overline{f}}
\def \otau {\overline{\tau}}
\def \otheta{\overline{\theta}}
\def \ootau {\overline{\overline{\tau}}}
\def \oof {\overline{\overline{f}}}
\def \ofD {\overline{\mathfrak{D}}}
\def \uM {\underline{M}}
\def \uT {\underline{T}}
\def \uA {\underline{A}}
\def \uc {\underline{c}}
\def \ug {\underline{g}}
\def \ut  {\underline{t}}
\def \ui {\underline{i}}
\def \ur {\underline{r}}
\def \uvarphi{\underline{\varphi}}
\def \ualpha {\underline{\alpha}}
\def \ubeta {\underline{\beta}}
\def \uvarepsilon {\underline{\varepsilon}}
\def \utheta {\underline{\theta}}
\def \ugamma {\underline{\gamma}}
\def \uGamma {\underline{\Gamma}}
\def \uzero {\underline{0}}
\def \uone {\underline{1}}
\def \upartial {\underline{\partial}}
\def \unabla{\underline{\nabla}}
\def \bmI{\bm{I}}
\def \bmg {\bm{g}}
\def \trmf {\text{\rm f}}
\DeclareSymbolFontAlphabet{\mathbbm}{bbold}
\DeclareSymbolFontAlphabet{\mathbb}{AMSb}%
\newcommand{\prism} {\scalebox{0.8}{$\mathbbm \Delta$}}
\newcommand{\pone} {\scalebox{0.5}{$(1)$}}
\newcommand{\bcdot} {\scalebox{0.85}{\hspace{0.05em}$\boldsymbol{\cdot}$\hspace{0.05em}}}
\begin{document}
\title{Prismatic cohomology and $A_{\inf}$-cohomology with coefficients.}


\author{Takeshi Tsuji}
\address{Graduate School of Mathematical Sciences, The University of Tokyo, 
3-8-1 Komaba, Meguro-ku, Tokyo, 153-8914, Japan}
\email{t-tsuji@ms.u-tokyo.ac.jp}

\begin{abstract}
For a smooth $p$-adic formal scheme over the ring of integers of a perfectoid field of 
mixed characteristic $(0,p)$ containing all $p$-power roots of unity, 
we prove that the prismatic cohomology of a locally finite free prismatic crystal
is isomorphic to the $A_{\inf}$-cohomology of the corresponding relative Breuil-Kisin-Fargues
module, which is a certain type of locally finite free $\bA_{\inf}$-module,
on the pro\'etale site of the generic fiber. We use a global description
of the former in terms of $q$-Higgs modules via cohomological descent.
We also discuss its compatibility with inverse image functors, scalar extensions under 
Frobenius, and tensor products.
\end{abstract}

\maketitle

\tableofcontents

\section*{Introduction}
Inspired by generalized representations introduced by G.~Faltings in his
theory of $p$-adic Simpson correspondence \cite{FaltingsSimpson}, the notion of
relative Breuil-Kisin-Fargues modules, a certain analogue over $A_{\inf}$, 
was introduced and studied in \cite{MT} as a theory of coefficients for 
$A_{\inf}$-cohomology \cite{BMS}. In particular, we have a fully faithful
functor from the category of locally finite free prismatic crystals to that 
of relative Breuil-Kisin-Fargues modules, which induces an equivalence between objects
with Frobenius structure. In this paper,  we prove that the functor 
is compatible with the two integral cohomology theories:
prismatic cohomology and $A_{\inf}$-cohomology, i.e.,
the two coefficient systems give the same integral cohomology.

Let $C$ be a perfectoid field of mixed characteristic $(0,p)$ containing
all $p$-power roots of unity, let $\CO$ be the ring of 
integers of $C$, let $\CO^{\flat}$ be the tilt of $\CO$:
$\varprojlim_{\N, \text{Frob}}\CO/p\CO
=\varprojlim_{\N, x\mapsto x^p}\CO$, and let $A_{\inf}$ 
denote $W(\CO^{\flat})$, which is equipped with a lifting of Frobenius 
$\varphi$ and the map of Fontaine $\theta\colon A_{\inf}\to \CO$. 
We choose and fix a compatible system of primitive $p^n$th
roots of unity $\varepsilon=(\zeta_n)_{n\in \N}$, $\zeta_{n+1}^p=\zeta_n$
$(n\in \N)$ in $\CO$, and define  elements of $A_{\inf}$ by 
$\mu=[\varepsilon]-1$ and $\txi=\varphi(\mu)\mu^{-1}$. 
We regard $\CO$ as an $A_{\inf}$-algebra by the map
$\theta\circ\varphi^{-1}\colon A_{\inf}\to \CO$, which induces
an isomorphism $A_{\inf}/\txi A_{\inf}\xrightarrow{\cong}\CO$.
With $A_{\inf}$ being equipped with the $\delta$-structure
corresponding to $\varphi$, the pair $(A_{\inf}, (\txi))$  becomes
a bounded prism.

Let $\fX$ be a quasi-compact, separated, smooth, $p$-adic formal scheme 
over $\CO$, let $X$ be its adic generic fiber, and let 
$\nu_{\fX}\colon X_{\proet}^{\sim}\to \fX_{\Zar}^{\sim}$ be the 
projection morphism of topos. 
Then a relative Breuil-Kisin-Fargues module $\bM$ over $\fX$ is 
defined to be a locally finite free $\bA_{\inf,X}$-module 
on $X_{\proet}$ ``trivial modulo $<\mu$ Zariski locally on $\fX$". Following \cite{BMS},
its $A_{\inf}$-cohomology $R\Gamma_{A_{\inf}}(\fX,\bM)$ is defined to be the cohomology of 
the complex of $A_{\inf}$-modules $A\Omega_{\fX}(\bM)
:=L\eta_{\mu}(\widehat{R\nu_{\fX*}\bM})$ on $\fX_{\Zar}$, 
where the hat denotes the derived $p$-adic completion \cite[\S6]{MT}. 
For the prismatic side, we consider a locally finite free crystal
$\CF$ of $\CO_{\fX/A_{\inf}}$-modules on the prismatic 
site $(\fX/(A_{\inf},(\txi)))_{\prism}$ of $\fX$ over 
the bounded prism $(A_{\inf},(\txi))$ with its structure sheaf of rings
denoted by $\CO_{\fX/A_{\inf}}$.
Let $u_{\fX/A_{\inf}}$ denote the projection morphism of topos
$(\fX/(A_{\inf},(\txi)))_{\prism}^{\sim}\to \fX_{\Zar}^{\sim}$. Then the cohomology
of $\CF$ is given by the cohomology of the complex of $A_{\inf}$-modules
$Ru_{\fX/A_{\inf}*}\CF$ on $\fX_{\Zar}$. As mentioned in the first paragraph,
we have a fully faithful functor $\bM_{\BKF,\fX}$ from the category of locally finite
free crystals on $(\fX/(A_{\inf},(\txi)))_{\prism}$ to that of relative Breuil-Kisin-Fargues modules
on $\fX$. Our theorem is stated as a comparison isomorphism
between the two complexes of $A_{\inf}$-modules on $\fX_{\Zar}$ 
as follows.

\begin{theorem}
[Theorem \ref{thm:PrismCohAinfCohGlobComp}]\label{thm:IntroMainThm}
Let $\fX$ be a quasi-compact, separated, smooth, $p$-adic formal scheme over $\CO$.
Let $\CF$ be a locally finite free crystal of $\CO_{\fX/A_{\inf}}$-modules
on $(\fX/(A_{\inf},(\txi)))_{\prism}$ 
and let $\bM$ be the associated relative Breuil-Kisin-Fargues module $\bM_{\BKF,\fX}(\CF)$  on $\fX$.
Then we have the following canonical isomorphism 
in $D^+(\fX_{\Zar},A_{\inf})$ functorial in $\CF$.
\begin{equation}\label{eq:IntroMainCompIsom}
Ru_{\fX/A_{\inf}*}\CF\xrightarrow{\;\cong\;} A\Omega_{\fX}(\bM)
\end{equation}
\end{theorem}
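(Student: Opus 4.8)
The plan is to factor the comparison through a common ``$q$-Higgs complex'' built from $\bM$, reducing the global statement to a local computation by cohomological descent. Both $Ru_{\fX/A_{\inf}*}\CF$ and $A\Omega_{\fX}(\bM)=L\eta_{\mu}(\widehat{R\nu_{\fX*}\bM})$ commute with restriction to Zariski opens and are functorial in the coefficient object, so it is enough to produce the isomorphism after restriction to a basis of small affine opens $\Spf R\subseteq\fX$ admitting an \'etale framing $\CO\langle T_{1}^{\pm 1},\dots,T_{d}^{\pm 1}\rangle\to R$ and on which $\CF$ is finite free, together with compatibility with the restriction maps. This last compatibility is precisely what cohomological descent provides: one realizes both complexes as the value on $\fX$ of a morphism of complexes of sheaves already defined on the relevant covers (pro-\'etale, resp.\ prismatic) and their \v{C}ech nerves, and descends.

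On the $A_{\inf}$-side I would adapt the local computation of \cite{BMS} to coefficients. Over a small affine with framing, let $R_{\infty}$ be the perfectoid cover obtained by adjoining compatible $p$-power roots of the $T_{i}$, with Galois group $\Gamma\cong\Z_{p}(1)^{d}$; then $\widehat{R\nu_{\fX*}\bM}$ is computed by the continuous cohomology of $\Gamma$ acting on $\bM(R_{\infty})$, i.e.\ by the Koszul complex $K\bigl(\bM(R_{\infty});\gamma_{1}-1,\dots,\gamma_{d}-1\bigr)$ for topological generators $\gamma_{i}$. The hypothesis that $\bM$ is a relative Breuil-Kisin-Fargues module (``trivial modulo $<\mu$ Zariski locally'') makes the cochain modules $\mu$-torsion free and each $\gamma_{i}-1$ divisible by $\mu$ on the natural integral model $M\subseteq\bM(R_{\infty})$; the d\'ecalage identity for Koszul complexes then identifies $L\eta_{\mu}$ of this complex with the $q$-Higgs complex $K\bigl(M;\Theta_{1},\dots,\Theta_{d}\bigr)$, $\Theta_{i}=(\gamma_{i}-1)/\mu$. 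Since the $q$-Higgs complex is defined Zariski-locally on $\fX$ and patches, this exhibits $A\Omega_{\fX}(\bM)$ as the global $q$-Higgs complex of $\bM$.

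On the prismatic side I would invoke the global $q$-Higgs description of $Ru_{\fX/A_{\inf}*}\CF$ (established earlier in the paper by cohomological descent over the prismatic site). Over a small affine, the $q$-de Rham--type prism over $(A_{\inf},(\txi))$ attached to the framing --- whose distinguished element is $\txi=\varphi(\mu)\mu^{-1}$ --- sits in the prismatic site; $\CF$ evaluates on it to a finite free module with a $q$-connection coming from the stratification relative to the self-product, and its \v{C}ech nerve computes $Ru_{\fX/A_{\inf}*}\CF$ as the associated $q$-Higgs complex. Restriction along the map from this $q$-de Rham prism to $\bA_{\inf}(R_{\infty})$ is, by the construction of $\bM_{\BKF,\fX}$ in \cite{MT} --- the integral, $q$-deformed avatar of the $p$-adic Simpson correspondence --- compatible with passage from $\CF$ to $\bM=\bM_{\BKF,\fX}(\CF)$; in particular the underlying module and the operators $\Theta_{i}$ coincide with those of the previous paragraph. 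Chaining $Ru_{\fX/A_{\inf}*}\CF\cong(q\text{-Higgs complex of }\CF)=(q\text{-Higgs complex of }\bM)\cong A\Omega_{\fX}(\bM)$, and checking functoriality in $\CF$ and independence of the choices, yields \eqref{eq:IntroMainCompIsom}; note that the conventions fixing $\CO$ over $A_{\inf}$ via $\theta\circ\varphi^{-1}$ are what make the comparison carry no residual Frobenius twist.

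The main obstacle I anticipate is the descent/glueing step: upgrading the local quasi-isomorphisms to one canonical isomorphism in $D^{+}(\fX_{\Zar},A_{\inf})$. This forces one to work throughout with explicit functorial models --- \v{C}ech-Alexander or hypercover bicomplexes on each side --- and to verify the comparison at that level, while keeping track of the $p$-adic and $(p,\txi)$-adic completions, of the non-exactness of $L\eta_{\mu}$ (so that the d\'ecalage identity is applied only to genuinely $\mu$-torsion-free representatives), and of the precise match between the stratification-derived $q$-connection on the prismatic side and the $\Gamma$-action on the pro-\'etale side through $\bM_{\BKF,\fX}$. A secondary point is checking that the local data on small affines with framings are compatible with localization, which the cohomological descent argument needs in order to globalize over quasi-compact separated $\fX$.
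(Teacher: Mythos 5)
Your proposal follows essentially the same route as the paper: cohomological descent to reduce to small affines admitting a framed embedding, the description of prismatic cohomology via $q$-Higgs complexes on the $q$-prismatic envelope, the Koszul complex computation of $R\nu_{\fX*}\bM$ in terms of the $\Gamma_{\Lambda}$-action on $\bM(R_{\infty})$, and the comparison map induced by the morphism $p_{D,\ut}\colon A_{\inf}(R_{\infty})\to D$ in the prismatic site being compatible with $\bM_{\BKF,\fX}$. You also correctly identify the technical crux: one cannot merely glue local quasi-isomorphisms in $D^{+}(\fX_{\Zar},A_{\inf})$, so the whole argument must be carried out with explicit functorial models over a simplicial framed embedding of an affine Zariski hypercovering (this is exactly what forces the paper to develop the $D$-topos machinery of \S\ref{sec:DTopos}, since the direct image $\nu_{\infty,\bcdot*}$ is not cartesian), checking independence of the embedding by mapping to a product embedding, and applying $L\eta_{\mu}$ only to honest $\mu$-torsion-free representatives. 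One small imprecision worth flagging: on the $A_{\inf}$-side, $\gamma_{i}-1$ is divisible by $\mu$ only on the ``integral'' sub-$D$-module coming from the crystal, not on all of $\bM(R_{\infty})$; the point of the local isomorphism (Proposition~\ref{prop:qDolbeaultGalCoh}) is precisely that the complementary ``non-integral'' Koszul summands die after applying $\eta_{\mu}$, which you invoke implicitly via the BMS/MT computation but should make explicit in a full write-up.
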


When $\fX$ is framed small affine (i.e. affine with invertible coordinates given), 
then both sides of \eqref{eq:IntroMainCompIsom} have the same description in terms of the $q$-Higgs 
complex with coefficients in an integrable $q$-Higgs module on 
a smooth lifting of $\fX$ over $A_{\inf}$ equipped with liftings of the coordinates
 \cite[\S6]{MT}, \cite[\S13]{TsujiPrismQHiggs}.  More generally, for 
the prismatic side, we have a similar description in terms of a
$q$-Higgs complex on the bounded prismatic envelope of an
embedding of $\fX$ into a smooth affine formal scheme over 
$A_{\inf}$ equipped with coordinates, and it allows us to give a 
description of $Ru_{\fX/A_{\inf}*}\CF$ for a general $\fX$ in terms of $q$-Higgs
complexes via cohomological descent by taking a Zariski hypercovering of $\fX$ 
and its embedding into an affine simplicial smooth formal scheme with coordinates 
over $A_{\inf}$ \cite[\S15]{TsujiPrismQHiggs}. We construct a comparison morphism from $Ru_{\fX/A_{\inf}*}\CF$
to $A\Omega_{\fX}(\bM)$ via this description, and then prove that it is an isomorphism
by reducing it to the case where $\fX$ is a framed small affine formal scheme
mentioned above.

We also show the compatibility of \eqref{eq:IntroMainCompIsom} with inverse image functors 
(Proposition \ref{eq:BKFPrismLocCohCompSimpFunct}), 
scalar extensions under Frobenius 
(Proposition \ref{prop:PrismAinfCohIsomFrobCompGlobal}),
and tensor products
(Proposition \ref{prop:PrismAinfCohIsomProdCompGlobal})
by using the corresponding compatibility, verified in \cite[\S15]{TsujiPrismQHiggs},
for the description of $Ru_{\fX/A_{\inf}*}\CF$ 
in terms of $q$-Higgs complexes via cohomological descent. 
As observed in loc.~cit., the compatibility with
tensor products is a little involved because products of $q$-Higgs
complexes are not compatible with pullback by non smooth morphisms
such as diagonal immersions; to make it compatible,  we
take products after pulling back to the envelope
of the product of two copies of a given embedding.

In a subsequent paper in preparation with Abhinandan, we
study syntomic complexes with coefficients in $\CF$
and the complex of nearby cycles of the $p$-adic
\'etale local system associated to $\bM$ as an application
of the results of \cite{MT}, \cite{TsujiPrismQHiggs}, and this paper.

\begin{remark} 
(1) For the constant case $\CF=\CO_{\fX/A_{\inf}}$, the isomorphism
\eqref{eq:IntroMainCompIsom} is given in \cite[\S17]{BS} by B.~Bhatt and P.~Scholze via
$q$-de Rham cohomology.\par

(2) If $\CF$ is equipped with a Frobenius structure $\varphi_{\CF}$, which induces a Frobenius
structure $\varphi_{\bM}$ on $\bM$, we have the associated 
locally finite free lisse $\Z_p$-sheaf $\bL=(\bM\otimes_{\bA_{\inf,X}}W(\hCO_{X^{\flat}}))^{\varphi_{\bM}=1}$
on  $X_{\proet}$  and an isomorphism of $\bA_{\inf,X}[\frac{1}{\mu}]$-modules
$\bL\otimes_{\hbZ_p}\bA_{\inf,X}[\frac{1}{\mu}]\cong \bM[\frac{1}{\mu}]$
\cite[Proposition 6.15]{MT}.
Therefore \eqref{eq:IntroMainCompIsom} induces an isomorphism
\begin{equation}\label{eq:LocPrismEtaleComp}
(Ru_{\fX/A_{\inf}*}\CF)[\tfrac{1}{\mu}]\xrightarrow{\;\cong\;} 
R\nu_{\fX*}(\bL\otimes_{\hbZ_p}\widehat{\bA_{\inf,X}})[\tfrac{1}{\mu}],\\
\end{equation}
where $[\frac{1}{\mu}]$ means $-\otimes^L_{A_{\inf}}A_{\inf}[\frac{1}{\mu}]$
and the hat denotes the derived $p$-adic completion. 
We have an exact sequence $0\to \bL\to\mu^{-r}\bM
\xrightarrow{\varphi_{\bM}-1} \mu^{-r}\bM\to 0$ for
any integer $r$ satisfying $\txi^{r}\bM\subset\varphi_{\bM}(\bM)$
\cite[Proposition 6.15]{MT}. Therefore \eqref{eq:IntroMainCompIsom} yields a distinguished triangle
\begin{equation}\label{eq:LocEtalePrismDescrip}
R\nu_{\fX*}\bL\longrightarrow (Ru_{\fX/A_{\inf}*}\CF)[\tfrac{1}{\mu}]
\xrightarrow{\;\varphi-1\;} (Ru_{\fX/A_{\inf}*}\CF)[\tfrac{1}{\mu}]
\longrightarrow R\nu_{\fX*}\bL[1].
\end{equation}
When $\fX$ is proper
over $\CO$ and $C$ is algebraically closed, we obtain the following 
isomorphism by taking $R\Gamma(\fX_{\Zar},-)$ of \eqref{eq:IntroMainCompIsom} and
combining it with \cite[Corollary 6.3]{MT} which relies on the primitive comparison 
theorem of Scholze \cite[Theorem 5.1]{ScholzepHTRigid}.
\begin{equation}\label{eq:GlobPrismEtaleComp}
R\Gamma((\fX/(A_{\inf},(\txi)))_{\prism},\CF)[\tfrac{1}{\mu}]
\cong R\Gamma(X_{\proet},\bL)\otimes^L_{\Z_p}A_{\inf}[\tfrac{1}{\mu}]
\end{equation}

(3) I.~Gaisin and T.~Koshikawa proved relative analogues of the isomorphisms
\eqref{eq:IntroMainCompIsom}  and \eqref{eq:GlobPrismEtaleComp} for the constant coefficients
for a smooth morphism $f\colon \fX\to \fY$ of $p$-adic formal schemes 
flat locally of finite type over $\CO$ \cite[Theorems 6.28 and 6.36]{GaisinKoshikawa} when 
$C$ is algebraically closed; the latter relies on the primitive comparison theorem of Scholze \cite[Theorem 5.1,
Corollary 5.12]{ScholzepHTRigid}. It is a natural and interesting question to ask if one can
prove their analogues with coefficients by using the description of the
prismatic cohomology of $\fX\times_{\fY}\Spf(\hCO^+_Y(W))$ over 
$(\bA_{\inf,Y}(W),(\txi))$ in terms of $q$-Higgs modules via cohomological 
descent (\cite[Theorem 15.9]{TsujiPrismQHiggs}, \eqref{eq:PrismCohQHiggsCpxSimp2}) 
for each affinoid perfectoid $W$ in $Y_{\proet}$ 
lying over the generic fiber of an open affine of $\fY$,
and by generalizing the argument in \S\ref{sec:compAinfcohLocal} to this setting.

(4) 
For Laurent $F$-crystals, i.e., $F$-crystals over 
the ring obtained from the structure ring by inverting the structure 
ideal of the base prism and taking the $p$-adic completion, 
we have results analogous to \eqref{eq:LocEtalePrismDescrip} by Y.~Min and Y.~Wang
\cite[Theorem 4.1]{MinWangPrismPhiGamma},
and by H.~Guo and E.~Reinecke \cite[Theorem 6.1]{GuoReineckeCrysLocSys}
under much more general settings. For the latter, see also the preceding result 
\cite[Theorem 9.1]{BS} by Bhatt and Scholze in the case of constant coefficients.\par

For a complete discrete valuation ring $\CO_K$ of mixed characteristic $(0,p)$ with
perfect residue field and a proper smooth morphism of smooth $p$-adic formal schemes
$f\colon \fX\to \fY$ over $\CO_K$, Guo and Reinecke further derived from
the analogue for Laurent $F$-crystals a comparison theorem between
the relative prismatic cohomology of a prismatic $F$-crystal in perfect complexes
$(\CF,\varphi_{\CF})$ on $\fX$ and the relative pro-\'etale cohomology of the
\'etale realisation $T(\CF,\varphi_{\CF})$, a relative analogue
of \eqref{eq:GlobPrismEtaleComp} with more general coefficients \cite[Theorem 9.1]{GuoReineckeCrysLocSys}; 
``$p$-adic completion" is removed thanks to some global natures of the relative
cohomologies. (Prototypes of this argument can be found in 
\cite[Lemma 4.26]{BMS} and \cite[Proposition 6.15]{MT}.)\par

For logarithmic formal schemes, T.~Koshikawa and Z.~Yao proved
a generalization of the result of Bhatt and Scholze mentioned above
\cite[Theorems 6.1 and 7.30]{KoshikawaYaoPrism}, from which
they derived an analogue of \eqref{eq:GlobPrismEtaleComp} 
for constant coefficients \cite[Proposition 8.3, Remark 8.7]{KoshikawaYaoPrism}.

(5) In a recent preprint \cite{YTianPrismEtSt}, Y.~Tian showed an analogue
of the isomorphisms \eqref{eq:LocPrismEtaleComp} and 
\eqref{eq:GlobPrismEtaleComp} in the semi-stable reduction case for a complete prismatic
$F$-crystal locally finite free on the ``analytic locus"
\cite[Theorem 5.6]{YTianPrismEtSt}; in the course of
its proof, he proved an analogue of \eqref{eq:IntroMainCompIsom} in the semi-stable case
for an affine $\fX$ with ``semi-stable coordinates" by using a $q$-Higgs
module with log poles on a log smooth lifting of $\fX$ over $A_{\inf}$
\cite[Theorem 4.6 (2)]{YTianPrismEtSt}. 
\par
\end{remark}

This paper is organized as follows. After reviewing basic facts on $\delta$-structures, prisms, prismatic sites,
and prismatic crystals in \S\ref{sec:prisms}, we summarize results obtained
in \cite {TsujiPrismQHiggs} in the following three sections \S\ref{sec:TwDeriv}, 
\S\ref{sec:connection}, and \S\ref{sec:PrismCrysQHiggs}
for the convenience of the readers; we refer to these sections
instead of their originals in later sections. In \S\ref{sec:Gsheaves}, we state and prove some facts
on sheaves with action of a profinite group $G$, which is used in 
the construction of the comparison map. In particular, we give 
a description of the cohomology in terms
of a Koszul complex when $G$ is a finite free $\Z_p$-module and 
study its behavior under morphisms between $G$'s and tensor products.
Every result in \S\ref{sec:Gsheaves} should be more or less known. In \S\ref{sec:RelativeBKF}, 
we recall the construction
of the relative Breuil-Kisin-Fargues module associated to a locally finite free
prismatic crystal, summarize its local cohomological properties used in the proof of the
comparison isomorphism in \S\ref{sec:compAinfcohLocal}, and then 
review the definition of 
$A\Omega_{\fX}(\bM)$ together with its product structure and
its functoriality in $\fX$. In the short section \S\ref{sec:ProetGammaZarShv}, associated to an embedding 
of a framed small affine formal scheme $\fX$ over $\CO$ into a smooth affine formal scheme 
over $A_{\inf}$ with invertible $d$ coordinates, we give a 
construction of a morphism from the pro\'etale topos 
to the topos of $\Z_p^d$-sheaves on the Zariski site simply by evaluating
pro\'etale sheaves on the finite \'etale covers of the adic generic fiber of each affine open of $\fX$
obtained by adjoining the $p$-power roots of the images of the given $d$ coordinates.
If we apply this construction to our simplicial settings, we obtain 
a direct image functor between simplicial topos which is {\it not} cartesian, 
i.e., not compatible with the direct image functors among the 
components. Therefore we study the derived functor of such a functor 
between families of topos over a category in the last section \S\ref{sec:DTopos}.
With these preliminaries, we construct a comparison morphism 
via $q$-Higgs complexes and prove the main theorem in 
\S\ref{sec:AinfcohCompMap}, 
\S\ref{sec:compAinfcohLocal}, and \S\ref{sec:compAinfcohGlobal} as mentioned after Theorem \ref{thm:IntroMainThm}.

Throughout this paper, we fix universes $\bV$ and $\bU$ such that $\bV\in \bU$.
A site is a $\bU$-small site and its topos means the associated $\bU$-topos; by a
sheaf (resp.~presheaf), we mean a $\bU$-sheaf (resp.~$\bU$-presheaf). 
Following \cite{SGA4}, we write $C^{\sim}$ (resp.~$C^{\wedge}$) for 
the category of sheaves (resp.~presheaves) of sets on a site $C$.
When we consider a site whose object is a certain type of 
data consisting of sets and maps, we always consider the $\bU$-small site
consisting of objects with their data belonging to $\bV$.
We always work with derived categories in the classical sense, i.e.,
they are triangulated categories, and not stable $\infty$-categories.

\section{$\delta$-structures, prisms, prismatic sites, and prismatic crystals}
\label{sec:prisms}
We review prismatic sites and prismatic crystals starting by recalling 
$\delta$-structures, prisms and their fundamental properties.

Let $A$ be a ring. 
A {\it $\delta$-structure on} $A$ is a map $\delta\colon A\to A$
satisfying $\delta(0)=\delta(1)=0$, 
$\delta(x+y)=\delta(x)+\delta(y)-\sum_{\nu=1}^{p-1}p^{-1}\binom p\nu x^\nu y^{p-\nu}$,
and $\delta(xy)=\delta(x)y^p+x^p\delta(y)+p\delta(x)\delta(y)$.
By a {\it lifting of Frobenius of }$A$, we mean a ring 
endomorphism $\varphi\colon A\to A$ satisfying $\varphi(x)\equiv x^p\mod pA$.
For a $\delta$-structure $\delta\colon A\to A$ on $A$, 
the map $\varphi\colon A\to A$ defined by $\varphi(x)=x^p+p\delta(x)$
is a lifting of Frobenius of $A$. When $A$ is $p$-torsion free, this gives a 
bijection from the set of $\delta$-structures on $A$ to that 
of liftings of Frobenius of $A$. We call a pair of a ring and a $\delta$-structure
on it a {\it $\delta$-ring}. For $\delta$-rings $A$ and $A'$, a {\it homomorphism
of $\delta$-rings} (or a {\it $\delta$-homomorphism}) $A\to A'$ 
is a ring homomorphism $f\colon A\to A'$
satisfying $\delta\circ f=f\circ\delta$. For a $\delta$-ring $R$, 
we call a $\delta$-ring $A$ with a $\delta$-homomorphism
$R\to A$ a {\it $\delta$-$R$-algebra}.

Let $I$ be an ideal of a ring $A$ containing $p$ and put $\hA=\varprojlim_n A/I^n$. 
For a $\delta$-structure $\delta\colon A\to A$, we see 
$\delta(I^{n+1})\subset I^n$ $(n\in \N)$ by induction on $n$. 
This implies $\delta(a+I^{n+1})\subset \delta(a)+I^n$ for every $a\in A$ and $n\in \N$.
Therefore $\delta\colon A\to A$ is continuous with respect to the $I$-adic topology 
of $A$, and induces a map $\widehat{\delta}\colon \hA\to \hA$,
which  is a $\delta$-structure on $\hA$. We call $\widehat{\delta}$ the
{\it $I$-adic completion of $\delta$}.

A map $\delta\colon A\to A$ is a $\delta$-structure on a ring $A$ if and only 
if the map $A\to W_2(A)$ defined by $x\mapsto (x,\delta(x))$ is a 
ring homomorphism. This allows us to show the following. For a polynomial 
ring $A=R[T_1,\ldots, T_d]$ in $d$ variables over a $\delta$-ring $R$ and 
$d$ elements $a_1,\ldots, a_d$ of $A$, there exists a unique $\delta$-$R$-algebra
structure on $A$ satisfying $\delta(T_i)=a_i$ for every $i\in \N\cap [1,d]$.
For $\delta$-homomorphisms $A\to A_i$ $(i=1,2)$ of $\delta$-rings,
there exists a unique $\delta$-structure on $A_1\otimes_AA_2$
such that the homomorphisms $A_i\to A_1\otimes_AA_2$
$(i=1,2)$ are $\delta$-homomorphisms. This represents the cofiber
product of $A_1\leftarrow A\to A_2$ in the category of $\delta$-rings.

\begin{definition}[{\cite[1.9]{TsujiPrismQHiggs}}]\label{def:IadicProperties}
Let $A$ be a ring and let $I$ be an ideal of $A$.\par
(1) We say that a homomorphism of rings $f\colon A\to A'$ is {\it $I$-adically
\'etale} (resp.~{\it smooth}, resp.~{\it flat}) if  the reduction mod $I^n$ is 
\'etale (resp.~smooth, resp.~flat) for every integer $n\geq 1$.\par
(2) Let $f\colon A\to A'$ be an $I$-adically smooth homomorphism,
and let $f_n\colon A_n\to A'_n$ denote its reduction modulo $I^n$ for each integer $n\geq 1$.
We say that elements $t_1,\ldots, t_d$ of $A'$ are {\it $I$-adic coordinates of $f$}
if $d(t_i\bmod I^n)$ $(1\leq i\leq d)$ form a basis of the differential module 
$\Omega^1_{A'_n/A_n}$ of $f_n$ for every integer $n\geq 1$.
(The condition is equivalent to saying that the 
$A_n$-homomorphism from the polynomial algebra $A_n[T_1,\ldots, T_d]$ to $A'_n$
defined by $T_i\mapsto t_i$ $(1\leq i\leq d)$ is \'etale.)
\end{definition}

\begin{remark}[{\cite[1.10]{TsujiPrismQHiggs}}]\label{rmk:IadicFlatRegSeq}
Let $A$ be a ring, let $I$ be an ideal of $A$, and let $A'$ be an $A$-algebra.
If $I$ is generated by a regular sequence which is also $A'$-regular,
and $A'/IA'$ is flat over $A/I$, then we see that 
$A'$ is $I$-adically flat over $A$ by the local criteria of flatness.
\end{remark}

\begin{proposition}[{\cite[Lemma 2.18]{BS}, \cite[1.11, 1.12]{TsujiPrismQHiggs}}]
\label{prop:DeltaStruEtaleMap}
Let $A$ be a $\delta$-ring, and let $I$ be an ideal of $A$ containing $p$.\par
(1) Let $A'$ be an $I$-adically \'etale $A$-algebra (Definition \ref{def:IadicProperties} (1)) 
$I$-adically complete and separated.
Then $A'$ admits a unique $\delta$-$A$-algebra structure.\par
(2) Let $A'$ and $B$ be $\delta$-$A$-algebras, and assume that
$A'$ is $I$-adically \'etale over $A$ and that $B$ is $I$-adically separated.
Then any homomorphism $f\colon A'\to B$ of $A$-algebras is a 
$\delta$-homomorphism.
\end{proposition}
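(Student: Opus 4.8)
The plan is to reformulate a $\delta$-$A$-algebra structure as an infinitesimal lifting problem and solve it with the deformation theory of \'etale morphisms. As recalled above, for an $A$-algebra $B$ a map $\delta_B\colon B\to B$ is a $\delta$-$A$-algebra structure if and only if $b\mapsto(b,\delta_B(b))$ is a ring homomorphism $B\to W_2(B)$; since this homomorphism is automatically a section of the first projection $\pi_B\colon W_2(B)\to B$, a $\delta$-$A$-algebra structure on $B$ is the same as an $A$-algebra section of $\pi_B$, where $W_2(B)$ carries the $A$-algebra structure induced by $A\xrightarrow{(\mathrm{id},\delta_A)}W_2(A)\to W_2(B)$. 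The crucial features are that $\Ker\pi_B=\{(0,x)\mid x\in B\}$ has square zero, so that $\pi_B$ is a square-zero extension; and that $W_2$ commutes with inverse limits (its underlying-set functor $R\mapsto R\times R$ does, and the ring operations are given by polynomials), so that for an $I$-adically complete and separated $B$ one has $W_2(B)=\varprojlim_n W_2(B/I^nB)$, with each $W_2(B/I^nB)\to B/I^nB$ again a square-zero extension.

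For the existence in (1) I would apply this with $B=A'$. For each $n\geq 1$, since $\delta_A(I^{n+1}A)\subseteq I^nA$, the composite $A\xrightarrow{(\mathrm{id},\delta_A)}W_2(A)\to W_2(A'/I^nA')$ kills $I^{n+1}A$, making $W_2(A'/I^nA')$ an $A/I^{n+1}A$-algebra and $W_2(A'/I^nA')\to A'/I^nA'$ a square-zero extension of $A/I^{n+1}A$-algebras whose base map $A/I^{n+1}A\to A'/I^nA'$ is the canonical one. As $A/I^{n+1}A\to A'/I^{n+1}A'$ is \'etale, the infinitesimal lifting criterion yields a unique $A/I^{n+1}A$-algebra lift $A'/I^{n+1}A'\to W_2(A'/I^nA')$ of the reduction $A'/I^{n+1}A'\to A'/I^nA'$, equivalently a unique $A$-algebra map $\ell_n\colon A'\to W_2(A'/I^nA')$ lying over $A'\to A'/I^nA'$. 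By this uniqueness the $\ell_n$ are compatible, and passing to the inverse limit (using $A'=\varprojlim_n A'/I^nA'$ and $W_2(A')=\varprojlim_n W_2(A'/I^nA')$) produces an $A$-algebra section $\ell$ of $\pi_{A'}$. Writing $\ell(a')=(a',\delta_{A'}(a'))$ gives the desired $\delta$-$A$-algebra structure.

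For (2), given $f\colon A'\to B$ the two maps $a'\mapsto(f(a'),\delta_B(f(a')))$ and $a'\mapsto(f(a'),f(\delta_{A'}(a')))$ are $A$-algebra homomorphisms $A'\to W_2(B)$ lifting $f$ along $\pi_B$ (the first using that $B$ is a $\delta$-$A$-algebra, the second that $A'$ is). Composing with $W_2(B)\to W_2(B/I^nB)$ and invoking the \emph{uniqueness} in the \'etale lifting criterion for $A/I^{n+1}A\to A'/I^{n+1}A'$ against the square-zero extension $W_2(B/I^nB)\to B/I^nB$ shows that the two maps agree modulo $I^n$ for every $n$; since $B$ is $I$-adically separated, $W_2(B)\hookrightarrow\varprojlim_n W_2(B/I^nB)$, so the two maps coincide, which is exactly $\delta_B\circ f=f\circ\delta_{A'}$. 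The uniqueness in (1) is then the special case of (2) given by the identity of $A'$ regarded as a homomorphism between $A'$ equipped with two $\delta$-$A$-algebra structures, which is legitimate because $A'$, being $I$-adically complete, is $I$-adically separated. The only point that I expect to need genuine care is the $I$-adic bookkeeping above: because the $\delta$-structure of $A$ only shifts the $I$-adic filtration by one (it maps $I^{n+1}A$ into $I^nA$), one must work with $W_2$ of the reduction modulo $I^nA$ while using \'etaleness of the reduction modulo $I^{n+1}A$; combined with the fact that $\Ker\pi_B$ is square-zero on the nose --- so every such reduction remains a square-zero extension --- this is what keeps the \'etale infinitesimal-lifting criterion applicable, and everything else is the standard rigidity of \'etale algebras under square-zero thickenings.
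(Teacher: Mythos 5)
Your argument follows the standard route---reinterpret a $\delta$-$A$-algebra structure on $B$ as an $A$-algebra section of $\pi_B\colon W_2(B)\to B$, reduce modulo $I^n$ to make this an infinitesimal lifting problem against the \'etale map $A/I^{n+1}A\to A'/I^{n+1}A'$, and pass to the limit---and the two-step structure (existence via lifting, and uniqueness in (1) from (2) applied to the identity) is sound. This is essentially the argument the paper is citing.

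However, the step you rely on explicitly rests on a false claim: $\Ker\pi_B=\{(0,x)\mid x\in B\}$ is \emph{not} square-zero in general. The Witt multiplication gives $(0,x)\cdot(0,y)=(0,pxy)$, which is nonzero whenever $p\ne 0$ in $B$; for instance $(0,1)^2=(0,p)$ in $W_2(\Z)$. Your proof survives only because you never lift against $W_2(A')\to A'$ or $W_2(B)\to B$ themselves, but against $W_2(A'/I^nA')\to A'/I^nA'$ and $W_2(B/I^nB)\to B/I^nB$, and there $p$ is \emph{nilpotent} (since $p\in I$, we have $p^n=0$ in $A'/I^nA'$ and in $B/I^nB$), so the kernel is a nilpotent ideal: $(0,x)^{n+1}=(0,p^nx^{n+1})=0$. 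The infinitesimal lifting criterion for \'etale morphisms applies against arbitrary nilpotent surjections, not only square-zero ones, so the lifts $\ell_n$ exist and are unique and the two candidate lifts in part (2) agree modulo $I^n$, exactly as you need. But the two places where you assert square-zero-ness (the opening sentence and the closing remark that ``every such reduction remains a square-zero extension'') should be replaced by this nilpotence observation; as written they are false, and trusting them---for example to lift directly against $W_2(A')\to A'$, whose kernel is neither square-zero nor nilpotent---would lead you astray.
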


\begin{definition}[{\cite[Definition 3.2, Lemma 3.7 (1)]{BS}, \cite[4.1]{TsujiPrismQHiggs}}]
A {\it $\delta$-pair} $(A,I)$ is a pair of a $\delta$-ring $A$ and its ideal $I$.
We say that a $\delta$-pair $(A,I)$ is a {\it bounded prism} if
(i) $I$ is an invertible ideal, (ii) $A$ is $(pA+I)$-adically complete and separated,
(iii) $p\in I+\varphi(I)A$, and (iv) $A/I[p^{\infty}]=A/I[p^N]$ for some integer $N\geq 1$.
(Under the conditions (i) and (ii), the condition (iii) is known to be equivalent to 
$\delta(d)\in A^{\times}$ when  $I=dA$ \cite[Lemma 2.25]{BS}.)
A {\it homomorphism of $\delta$-pairs} (resp.~{\it bounded prisms}) $(A,I)\to (A',I')$
is a homomorphism of $\delta$-rings $f\colon A\to A'$ satisfying $f(I)\subset I'$.
\end{definition}

\begin{proposition}[{\cite[4.5]{TsujiPrismQHiggs}}]\label{prop:PrismModIComp}
For a bounded prism $(A,I)$,  $A/I^n$ is $p$-adically complete and separated
for every integer $n\geq 1$. 
\end{proposition}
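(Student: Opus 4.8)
The plan is to follow the strategy of \cite[Lemma 3.7]{BS}: show that each $A/I^n$ is derived $p$-complete and has bounded $p^\infty$-torsion, and then invoke the standard fact that a module which is both derived $p$-complete and has bounded $p^\infty$-torsion is classically $p$-adically complete and separated. (Condition (iii) in the definition of a bounded prism plays no role here.)

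First I would record the relevant structure of $I$. Since $I$ is an invertible ideal, it is a finitely generated projective $A$-module of rank $1$, hence a direct summand of a finite free $A$-module; the same holds for every $I^n$, and each successive quotient $I^n/I^{n+1}\cong I^n\otimes_A(A/I)$ is an invertible, hence finitely generated projective, $A/I$-module. On the other hand, since $A$ is classically complete and separated with respect to the finitely generated ideal $pA+I$, it is derived $(pA+I)$-complete, and therefore derived $p$-complete, as derived completeness with respect to a finitely generated ideal is equivalent to derived completeness with respect to each of its generators.

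Next I would run two parallel inductions on $n$. For derived $p$-completeness: as a direct summand of a finite free $A$-module, $I^n$ is derived $p$-complete, so the exact sequence $0\to I\to A\to A/I\to 0$ shows that $A/I$ is derived $p$-complete, using that derived $p$-complete modules form a weak Serre subcategory of $\Mod_A$; hence $(A/I)^k$, and therefore its direct summand $I^n/I^{n+1}$, is derived $p$-complete, and the exact sequences $0\to I^n/I^{n+1}\to A/I^{n+1}\to A/I^n\to 0$ together with closure under extensions give that $A/I^n$ is derived $p$-complete for all $n\geq 1$. For bounded torsion: the case $n=1$ is condition (iv); since $I^n/I^{n+1}$ is, being a direct summand, a submodule of some $(A/I)^k$, it has bounded $p^\infty$-torsion with the same bound as $A/I$; and an elementary chase in $0\to I^n/I^{n+1}\to A/I^{n+1}\to A/I^n\to 0$ bounds the $p^\infty$-torsion of $A/I^{n+1}$ by the sum of the bounds for the two outer terms. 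Applying the standard fact recalled above to $A/I^n$ then yields the proposition.

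The only real subtlety is the gap between derived and classical $p$-completeness: classical $p$-adic completeness of $A/I^n$ genuinely fails without a torsion hypothesis, so one cannot argue by naively $p$-completing the short exact sequences $0\to I^n/I^{n+1}\to A/I^{n+1}\to A/I^n\to 0$, and condition (iv) is precisely what bridges this gap. Once the two inductions above are in place, the rest is formal.
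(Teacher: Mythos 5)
Your proof is correct: each step checks out — the invertibility of $I^n/I^{n+1}\cong I^n\otimes_A(A/I)$ over $A/I$, the fact that derived $p$-complete modules form a weak Serre subcategory closed under direct summands (giving derived $p$-completeness of $I^n$, $A/I$, $I^n/I^{n+1}$ and then of $A/I^n$ by induction), the torsion-bound induction along $0\to I^n/I^{n+1}\to A/I^{n+1}\to A/I^n\to 0$ starting from condition (iv), and the criterion that a derived $p$-complete module with bounded $p^\infty$-torsion is classically $p$-adically complete and separated — and you are right that condition (iii) plays no role. The paper itself gives no proof here, deferring to \cite[4.5]{TsujiPrismQHiggs}, and your argument is essentially the standard one of Bhatt--Scholze (cf.\ \cite[Lemma 3.7]{BS}), so there is nothing to add.
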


\begin{proposition}[{\cite[Lemma 3.5]{BS}}]
For any homomorphism of bounded prisms $f\colon (A,I)\to (A',I')$, we have
$I'=f(I)A$.
\end{proposition}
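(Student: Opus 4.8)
The statement records a typographical point: the claim is $I'=f(I)A'$. Since $f$ is a homomorphism of $\delta$-pairs we have $f(I)\subseteq I'$ by definition, hence $f(I)A'\subseteq I'$, so only the reverse inclusion is at issue. The plan is to reduce to the case in which both ideals are principal with distinguished generators, and then to exploit that $f$ commutes with $\delta$.

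\textit{Reduction to the oriented case.} Both $I$ and $I'$ are invertible, and the equality $f(I)A'=I'$ of ideals of $A'$ may be checked after passing to a cover of $\Spf A$ by distinguished opens: choosing $s_1,\dots,s_n\in A$ whose images generate the unit ideal in $A/(pA+I)$ and such that $I$ becomes principal on each $D(\bar s_i)$, one replaces $(A,I)$ by the bounded prism $(A\{1/s_i\},IA\{1/s_i\})$, where $A\{1/s_i\}$ denotes the $(pA+I)$-adic completion of $A[1/s_i]$, and correspondingly $(A',I')$ by $(A'\{1/f(s_i)\},I'A'\{1/f(s_i)\})$; by Proposition~\ref{prop:DeltaStruEtaleMap}(1) these localized-and-completed algebras carry unique $\delta$-structures over $A$ (resp.\ $A'$), the transition maps are homomorphisms of bounded prisms, and on the target the $D(\overline{f(s_i)})$ still cover $\Spf A'$, which suffices since $pA'+I'$ lies in the Jacobson radical of the $(pA'+I')$-adically complete ring $A'$. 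Shrinking further, we are reduced to the situation $I=dA$ and $I'=d'A'$, and then, by the criterion recalled after the definition of bounded prism (under conditions (i) and (ii), condition (iii) is equivalent to $\delta$ of a generator being a unit), we have $\delta(d)\in A^{\times}$ and $\delta(d')\in (A')^{\times}$.

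\textit{The $\delta$-computation.} Now $f(d)\in I'=d'A'$, so write $f(d)=a d'$ with $a\in A'$; it suffices to show $a\in (A')^{\times}$. Applying $\delta$ and using that $f$ is a $\delta$-homomorphism,
\[
f(\delta(d))=\delta(f(d))=\delta(a d')=\delta(a)(d')^{p}+a^{p}\delta(d')+p\,\delta(a)\delta(d').
\]
Reducing modulo $d'A'$ kills the first term, giving
\[
f(\delta(d))\equiv\bigl(a^{p}+p\,\delta(a)\bigr)\delta(d')=\varphi(a)\,\delta(d')\pmod{d'A'}.
\]
Since $\delta(d)\in A^{\times}$ we get $f(\delta(d))\in (A')^{\times}$, and $\delta(d')\in (A')^{\times}$, so $\varphi(a)=a^{p}+p\,\delta(a)$ is a unit in $A'/d'A'$; in particular $a^{p}$, hence $a$, is a unit in $A'/(pA'+I')$. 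As $A'$ is $(pA'+I')$-adically complete and separated and $pA'+I'$ is topologically nilpotent, an element which is a unit modulo $pA'+I'$ is a unit in $A'$; thus $a\in (A')^{\times}$ and $f(I)A'=f(d)A'=a d'A'=d'A'=I'$.

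There is no deep obstacle here: the mathematical content is the one-line identity $\delta(f(d))=f(\delta(d))$ together with the fact that distinguished generators have unit $\delta$-value. The only part requiring care is the reduction to the oriented case, which uses the standard but slightly fiddly compatibility of $\delta$-structures with localization and $(pA+I)$-adic completion (Proposition~\ref{prop:DeltaStruEtaleMap}) and the fact that completeness forces $pA'+I'$ into the Jacobson radical of $A'$, so that checking the ideal equality over an open cover of $\Spf A'$ is legitimate.
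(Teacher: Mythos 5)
The paper itself offers no proof here: the proposition is stated as a citation of \cite[Lemma 3.5]{BS}, so there is no internal argument to compare against, and what you have done is reconstruct the Bhatt–Scholze argument. The heart of your proof — the $\delta$-computation giving $f(\delta(d))\equiv\varphi(a)\delta(d')\pmod{d'A'}$, and the deduction that $a$ is a unit first modulo $pA'+I'$ and then, by completeness, in $A'$ — is exactly the rigidity of distinguished elements (\cite[Lemma 2.24]{BS} combined with the criterion $\delta(d)\in A^\times$ from \cite[Lemma 2.25]{BS} that the paper records), and it is correct. So the approach is essentially the one in the source.

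Two steps are stated loosely and deserve to be spelled out. First, the phrase \emph{``shrinking further, we are reduced to the situation $I=dA'$ and $I'=d'A'$''} hides that the $s_i$ were chosen to trivialize $I$, not $I'$; you must take a second, independent affine formal cover of $\Spf A'\{1/f(s_i)\}$ trivializing $I'$, and note that the composite $A\{1/s_i\}\to A'\{1/f(s_i)\}\to A'\{1/f(s_i)t_{ij}\}$ is still a morphism of bounded prisms (the source stays fixed, which is fine). Second, the descent from the completed localizations back to $A'$ is not automatic: vanishing of the $(p,I')$-adic completion of a module need not force vanishing of the module. It does work here, but the reason is the Nakayama argument you should make explicit: $M:=I'/f(I)A'$ is finitely generated, $M/(p,I')M\cong \widehat{M}/(p,I')\widehat{M}$ for each completed localization, and the $\overline{f(s_i)t_{ij}}$ generate the unit ideal in $A'/(pA'+I')$, so $M/(p,I')M=0$, and since $pA'+I'\subset\mathrm{rad}(A')$ Nakayama gives $M=0$. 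Alternatively, Bhatt--Scholze avoid this entirely by invoking their Lemma~3.1 (an invertible prism ideal is Zariski-locally — with no completion — generated by a distinguished element) and running the rigidity computation in the uncompleted localization at each maximal ideal of $A'$; that route sidesteps the descent question at the cost of requiring Lemma~3.1. Either way the argument closes, but as written your proposal leaves both of these points implicit.
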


\begin{proposition}[{\cite[4.6]{TsujiPrismQHiggs}}]
\label{prop:bddPrismFlatAlg}
Let $(A,I)$ be a bounded prism, and let $A'$ be a $(pA+I)$-adically flat
$\delta$-$A$-algebra $(pA+I)$-adically complete and separated. Then
the pair $(A', IA')$ is a bounded prism.
\end{proposition}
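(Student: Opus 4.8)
The plan is to verify directly the four defining conditions (i)--(iv) of a bounded prism for the pair $(A', IA')$, using those for $(A,I)$ together with the flatness and completeness hypotheses on $A'$. Since $A'$ is a $\delta$-$A$-algebra it carries a lifting of Frobenius $\varphi$ compatible with the one on $A$, so all the $\delta$-theoretic data transports automatically. Two of the conditions are then essentially formal. For (ii), one has $pA'+IA'=(pA+I)A'$ and $((pA+I)A')^n=(pA+I)^nA'$, so the hypothesis that $A'$ is $(pA+I)$-adically complete and separated is precisely condition (ii) for $(A',IA')$. For (iii), I would write $p=\sum_j a_jx_j+\sum_k b_k\varphi(y_k)$ in $A$ with $a_j,b_k\in A$ and $x_j,y_k\in I$, and then apply the map $A\to A'$, using $\varphi\circ(A\to A')=(A\to A')\circ\varphi$, to get $p\in IA'+\varphi(IA')A'$.

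For condition (iv) I would first check that $A'/IA'$ is $p$-adically flat over $A/I$: for $n\geq 1$ and any $m\geq n$ one has $(pA+I)^m\subseteq I+p^mA\subseteq I+p^nA$, so $A/(I+p^nA)$ is a quotient of $A/(pA+I)^m$, and $A'/(I+p^nA)A'=(A'/(pA+I)^mA')\otimes_{A/(pA+I)^m}A/(I+p^nA)$ is flat over $A/(I+p^nA)$ by the flatness hypothesis; since $p^n(A'/IA')=(p^nA'+IA')/IA'$, this is exactly $p$-adic flatness of $A'/IA'$ over $A/I$. Moreover $A'/IA'$ is $p$-adically complete and separated, being a quotient of the $(pA+I)$-adically complete and separated ring $A'$ by the finitely generated ideal $IA'$, and $A/I$ is $p$-adically complete and separated by Proposition \ref{prop:PrismModIComp}. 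Fixing $N$ with $(A/I)[p^\infty]=(A/I)[p^N]$, I would then deduce $(A'/IA')[p^\infty]=(A'/IA')[p^N]$ as follows: if $p^mx=0$ in $A'/IA'$ with $m\geq N$, then for every large $M$ the flatness of $A'/IA'$ modulo $p^M$ together with the computation $((A/I)/p^M)[p^m]=(p^{M-m}(A/I)+(A/I)[p^N])/p^M(A/I)$ places the image of $x$ inside $(p^{M-m}(A'/IA')+(A/I)[p^N]\cdot(A'/IA'))/p^M(A'/IA')$, whence $p^Nx\in p^{M-m+N}(A'/IA')$; letting $M\to\infty$ and using $p$-adic separatedness gives $p^Nx=0$.

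The heart of the matter is condition (i), that $IA'$ is an invertible ideal of $A'$. Since $I$ is an invertible $A$-module with tensor inverse $I^{-1}$, one has $(I\otimes_AA')\otimes_{A'}(I^{-1}\otimes_AA')\cong(I\otimes_AI^{-1})\otimes_AA'\cong A'$, so $I\otimes_AA'$ is an invertible $A'$-module; what remains is to show the natural surjection $I\otimes_AA'\twoheadrightarrow IA'$ is an isomorphism, i.e. that $\Tor_1^A(A/I,A')=0$ (there is nothing in higher degrees since $A/I$ has Tor-dimension $\leq 1$ over $A$, as $I$ is invertible). The module $T:=\Tor_1^A(A/I,A')$ is annihilated by $I$, hence is an $A/I$-module on which $pA+I$ acts through $p$, and it is derived $(pA+I)$-complete, because $A'$ is classically $(pA+I)$-complete (hence derived $(pA+I)$-complete, the ideal being finitely generated) and $A/I$ is a perfect complex over $A$; thus $T$ is derived $p$-complete over $A/I$. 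By derived Nakayama it then suffices to show that $T\otimes^L_{A/I}(A/I)/p$ vanishes, and this is where the flatness of $A'$ modulo the powers of $pA+I$ must be combined with the fact that a bounded prism is $I$-torsion-free (a local generator $d$ of $I$ is a non-zero-divisor), for which I would invoke \cite{BS}. I expect this last vanishing --- reconstructing the behaviour of $T$ modulo $p$ from the $(pA+I)$-adic flatness and completeness of $A'$, the subtlety being that reduction modulo $p$ does not commute with the relevant derived tensor products unless one exploits $I$-torsion-freeness --- to be the main technical obstacle; once it is in hand, conditions (i)--(iv) together show that $(A',IA')$ is a bounded prism.
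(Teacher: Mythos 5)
Note first that the paper does not prove this proposition itself: it is quoted from \cite[4.6]{TsujiPrismQHiggs}, so your argument has to be judged on its own terms rather than against a proof in the text.

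Your verifications of (ii) and (iii) are correct, and your reduction of (i) to $\Tor_1^A(A/I,A')=0$ (using invertibility of $I$, derived $(p,I)$-completeness of the Tor module, and derived Nakayama) is a sensible framework. But the proof is not complete at its core: you explicitly leave the decisive vanishing $T\otimes^L_{A/I}(A/I)/p=0$ --- equivalently, that a local generator $d$ of $I$ stays a nonzerodivisor on $A'$ --- as ``the main technical obstacle'' without an argument. That step is precisely the substance of the proposition: it is where the hypothesis of $(pA+I)$-adic flatness (flatness of $A'/(pA+I)^nA'$ over $A/(pA+I)^n$, which is weaker than flatness of $A'$ or $(p,I)$-complete flatness in the derived sense) must be converted into a torsion statement, and it genuinely uses the boundedness of $(A,I)$ to control $\{a\in A:\ da\in(pA+I)^n\}$ uniformly in $n$. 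Nothing in your outline addresses this, so condition (i) remains unproved.

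There is a second gap in (iv). You invoke $p$-adic separatedness of $A'/IA'$, justified only by its ``being a quotient of the $(pA+I)$-adically complete and separated ring $A'$ by the finitely generated ideal $IA'$.'' That inference is not valid: a quotient of an adically separated ring by a finitely generated ideal need not be separated (equivalently, $IA'$ need not be closed in the $(pA+I)$-adic topology), and quotients of classically complete rings are in general only derived complete, while derived $p$-complete modules can fail to be $p$-adically separated. In the standard arguments (including the one behind Proposition \ref{prop:PrismModIComp}), the $p$-adic completeness and separatedness of the reduction mod $I$ is \emph{deduced from} bounded $p^{\infty}$-torsion together with regularity of $d$, i.e. from (i) and (iv) themselves; so as written your proof of (iv) is circular at this point. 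The rest of your (iv) argument --- the $p$-adic flatness of $A'/IA'$ over $A/I$ obtained from $(pA+I)^m\subseteq I+p^nA$, and the identification of $((A/I)/p^M)[p^m]$ --- is correct, but it only yields $p^Nx\in\bigcap_k p^k(A'/IA')$, and closing the argument requires exactly the separatedness (or the Tor-vanishing of (i)) that is missing.
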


\begin{definition}[{\cite[Proposition 3.13]{BS}, \cite[4.8]{TsujiPrismQHiggs}}]
\label{def:bddPrismEnv}
Let $(R,I)$ be a bounded prism, and let $(A,J)$ be a $\delta$-pair over 
$(R,I)$. We say that a homomorphism $f\colon (A,J)\to (D,ID)$ of $\delta$-pairs
over $(R,I)$ is a {\it bounded prismatic envelope of $(A,J)$ over $(R,I)$}
if $(D,ID)$ is a bounded prism, and any homomorphism 
$g\colon (A,J)\to (B,IB)$ of $\delta$-pairs over $(R,I)$ with $(B,IB)$
a bounded prism uniquely factors through a homomorphism of 
bounded prisms $h\colon (D,ID)\to (B,IB)$ over $(R,I)$; $g=h\circ f$.
\end{definition}

\begin{proposition}[{\cite[Proposition 3.13]{BS},\cite[4.12]{TsujiPrismQHiggs}}]
\label{prop:SuffCondExBdd}
Let $(R,\xi R)$ be a bounded prism,  and let $(A,J)$ be a $\delta$-pair over $(R,\xi R)$. 
Put $\tI=pR+\xi R$, $\oR=R/\tI$, $\oA'=A_{1+J}/\tI A_{1+J}$, and $\oJ'=J\oA'$.
Assume that $A_{1+J}$ is $\tI$-adically flat over $R$, $A/J$ is $p$-adically flat over $R/\xi R$,
and that there exists a regular sequence $T_1,\ldots, T_d\in \oJ'$ generating $\oJ'$
with quotients $\oA'/\sum_{i=1}^rT_i\oA'$ $(r\in \N\cap [0,d])$  flat over $\oR$.
Then $(A,J)$ has a bounded prismatic envelope $(A,J)\to (D,\xi D)$ over $(R,\xi R)$,
and $D$ is $\tI$-adically flat over $R$. 
\end{proposition}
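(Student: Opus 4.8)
The plan is to construct $D$ explicitly as the $\tI$-adic completion of a quotient of a free $\delta$-algebra, to deduce from the $\tI$-adic flatness of $D$ over $R$, via Proposition~\ref{prop:bddPrismFlatAlg}, that $(D,\xi D)$ is a bounded prism, and to verify the universal property of Definition~\ref{def:bddPrismEnv} directly; the flatness is where essentially all the work lies. First a reduction: for any bounded prism $(B,\xi B)$ the ring $B$ is $(pB+\xi B)$-adically complete, so $1+\xi B\subset B^{\times}$, and hence every homomorphism of $\delta$-pairs $(A,J)\to(B,\xi B)$ over $(R,\xi R)$ carries $1+J$ into $B^{\times}$ and factors uniquely through $(A_{1+J},JA_{1+J})$; so we may assume $A=A_{1+J}$. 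As $\oJ'$ is generated by $T_{1},\dots,T_{d}$, choose lifts $f_{1},\dots,f_{d}\in J$. Let $A\{x_{1},\dots,x_{d}\}$ denote the free $\delta$-$A$-algebra on $d$ indeterminates, let $\fa$ be the $\delta$-ideal generated by the elements $\xi x_{i}-f_{i}$ $(1\le i\le d)$, put $D_{0}=A\{x_{1},\dots,x_{d}\}/\fa$, and let $D$ be the $\tI$-adic completion of $D_{0}$, with the induced $\delta$-structure (a $\delta$-structure survives $\tI$-adic completion). Since $f_{1},\dots,f_{d}$ generate $J$ modulo $\tI$ one has $JD\subset\xi D+pD$, and the flatness hypotheses upgrade this to $JD\subset\xi D$; thus the canonical map is a homomorphism of $\delta$-pairs $(A,J)\to(D,\xi D)$ over $(R,\xi R)$.

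The crux is the $\tI$-adic flatness of $D$ over $R$. Granting it, $D$ is a $\tI$-adically flat $\delta$-$R$-algebra which is $\tI$-adically complete and separated, so $(D,\xi D)$ is a bounded prism by Proposition~\ref{prop:bddPrismFlatAlg}. To prove the flatness, note that $D/\tI^{n}D=D_{0}/\tI^{n}D_{0}$, so it is enough to show $D_{0}/\tI^{n}D_{0}$ is flat over $R/\tI^{n}$ for every $n$; a standard filtration argument along the graded pieces $\tI^{k}/\tI^{k+1}$ (which are $\oR$-modules) reduces this to showing that $D_{0}\otimes^{L}_{R}\oR$ is a flat $\oR$-module concentrated in degree $0$, equivalently that $\oD':=D_{0}/\tI D_{0}$ is flat over $\oR$ and $\Tor^{R}_{i}(D_{0},\oR)=0$ for $i>0$. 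This is an explicit computation inside the reduction modulo $\tI$ of the free $\delta$-algebra $A\{x_{1},\dots,x_{d}\}$: there $\xi\equiv 0$ and $f_{i}\equiv T_{i}$, while $\delta(\xi)\in R^{\times}$ reduces to a unit of $\oR$, so the reduction of $\fa$ contains each $T_{i}$, and after dividing by the $\delta$-ideal generated by $T_{1},\dots,T_{d}$ it is generated by explicit ``truncation'' relations on the iterated $\delta$'s of the $x_{i}$. Running an induction on $d$ and using that $T_{1},\dots,T_{d}$ is a regular sequence with all partial quotients $\oA'/\sum_{i\le r}T_{i}\oA'$ flat over $\oR$, one identifies $\oD'$ as a free module over $\oA'/\oJ'$, hence flat over $\oR$, and verifies the vanishing of the higher $\Tor$. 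This $\delta$-algebra bookkeeping, essentially the prismatic-envelope computation of Bhatt and Scholze \cite[Proposition~3.13]{BS}, is the step I expect to be the main obstacle; everything else is formal.

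It remains to check the universal property. Let $g\colon(A,J)\to(B,\xi B)$ be a homomorphism of $\delta$-pairs over $(R,\xi R)$ with $(B,\xi B)$ a bounded prism; by the reduction above $g$ factors through $A=A_{1+J}$. Since $\xi$ is a nonzerodivisor on $B$ (the ideal $\xi B$ being invertible), there are unique $b_{i}\in B$ with $g(f_{i})=\xi b_{i}$. The $\delta$-$A$-homomorphism $A\{x_{1},\dots,x_{d}\}\to B$ sending $x_{i}\mapsto b_{i}$ kills $\xi x_{i}-f_{i}$, hence annihilates the $\delta$-ideal $\fa$, so it factors as $D_{0}\to B$, which extends to $h\colon D\to B$ by $\tI$-adic completeness of $B$; clearly $h$ is a homomorphism of bounded prisms over $(R,\xi R)$. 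Conversely, any $\delta$-homomorphism $D\to B$ over $(R,\xi R)$ lifting $g$ must send $x_{i}$ to the unique solution of $\xi\cdot(-)=g(f_{i})$, namely $b_{i}$, and so agrees with $h$ on the image of $A$ and on the $x_{i}$, hence on $D_{0}$, hence on $D$. This gives the factorization and its uniqueness, and $D$ is $\tI$-adically flat over $R$ by the preceding paragraph, which completes the verification that $(A,J)\to(D,\xi D)$ is a bounded prismatic envelope over $(R,\xi R)$.
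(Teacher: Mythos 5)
The paper's own proof is a short reduction: it replaces $(A,J)$ by $(A',J')$, the $\tI$-adic completion of $(A_{1+J},JA_{1+J})$, checks that this changes neither the hypotheses nor the bounded prismatic envelope, and then invokes \cite[4.12]{TsujiPrismQHiggs}, which handles the case $\oA'=A/\tI A$ with $A$ itself $\tI$-adically flat over $R$. You instead run the Bhatt--Scholze-style construction of the envelope from scratch (freely adjoining $x_i$ subject to $\xi x_i=f_i$ in a $\delta$-ideal and then $\tI$-adically completing), which is a legitimate but substantially more ambitious route: in effect you are reproving the cited \cite[4.12]{TsujiPrismQHiggs} rather than reducing to it. The reduction to $A=A_{1+J}$ and the universal-property verification are fine.

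There are two gaps, one acknowledged and one not. The acknowledged one is the flatness of $D$: you defer the $\delta$-algebra bookkeeping that identifies $\oD'$ and verifies vanishing of higher $\Tor$ to \cite{BS}, calling it "the main obstacle." Since this is precisely the substance of \cite[Proposition 3.13]{BS} (and of \cite[4.12]{TsujiPrismQHiggs}), leaving it as a sketch means the proposal is not a self-contained argument; at that point the paper's route of reducing to the citation is simply more economical. The unacknowledged gap is the assertion $JD\subset\xi D$, which is needed just to have a morphism of $\delta$-pairs $(A,J)\to(D,\xi D)$. You establish $JD\subset\xi D+pD$ (because $f_1,\dots,f_d$ generate $J$ modulo $\tI A$ and $f_i=\xi x_i$ in $D$), and then write that "the flatness hypotheses upgrade this to $JD\subset\xi D$" with no further argument. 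That upgrade is not automatic: the hypothesis only controls the image of $J$ in $\oA'=A_{1+J}/\tI A_{1+J}$, so $J$ need not equal $(f_1,\dots,f_d,\xi)A$ on the nose, and eliminating the $pD$ term requires something like $J\cap pA=pJ$ together with $p$-adic separatedness of $D/\xi D$ — neither of which is available before one knows $(D,\xi D)$ is a bounded prism. This step must be carried out explicitly, and is best placed after the flatness step so that Proposition~\ref{prop:bddPrismFlatAlg} and Proposition~\ref{prop:PrismModIComp} can be invoked.
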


\begin{proof}
Put $A'=\varprojlim_n A_{1+J}/\tI^n A_{1+J}$ and 
$J'=\varprojlim_n (J\cdot (A_{1+J}/\tI^nA_{1+J}))$. Then we have
$A'/\tI^n A'\cong A_{1+J}/\tI^nA_{1+J}$ (\cite[4.9]{TsujiPrismQHiggs}),
$(A'/J')/p^n(A'/J')\cong A_{1+J}/(JA_{1+J}+p^nA_{1+J})\cong(A/J)/p^n(A/J)$, 
$A'$ admits a unique $\delta$-$A$-algebra structure,
and a bounded prismatic envelope of $(A,J)$ over $(R,\xi R)$ is
the same as that of $(A',J')$ over $(R,\xi R)$. As $A'/\tI A'=\oA'$
and $1+J\oA'\in (\oA')^{\times}$, we are reduced to the case
$\oA'=A/\tI A$ and $A$ is $\tI$-adically flat over $R$, 
i.e., \cite[4.12]{TsujiPrismQHiggs},  by replacing $(A,J)$ with $(A',J')$.
\end{proof}

\begin{proposition}[{\cite[4.14]{TsujiPrismQHiggs}}]
\label{prop:SuffCondExBdd2}
Let $(R,\xi R)$ be a bounded prism, put $\tI=pR+\xi R$, and let $f\colon (A,J)\to (A',J')$ be 
a homomorphism of $\delta$-pairs over $(R,\xi R)$. Assume that $f\colon A\to A'$ is 
$\tI$-adically smooth, has $\tI$-adic coordinates,
and induces an isomorphism $A/J\xrightarrow{\cong} A'/J'$. 
Then $(A',J')$ has a bounded prismatic envelope $D'$ over $(R,\xi R)$
if $(A,J)$ has a bounded prismatic envelope $D$ over $(R,\xi R)$.
Moreover $D'$ is $\tI$-adically flat over $D$.
\end{proposition}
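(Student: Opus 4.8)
The plan is to reduce the statement to Proposition~\ref{prop:SuffCondExBdd}, applied with the base bounded prism $(R,\xi R)$ replaced by $(D,\xi D)$.

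The first step is to transport the problem along $A\to D$. Since $f\colon A\to A'$ is a $\delta$-homomorphism, the cofibre product of $\delta$-rings $D\otimes_A A'$ exists; let $A''$ be its $(pD+\xi D)$-adic completion (note $pD+\xi D=(pR+\xi R)D=\tI D$) and put $J''=J'A''$, so that $(A'',J'')$ is a $\delta$-pair over $(D,\xi D)$. I claim $(A',J')$ and $(A'',J'')$ are classified by the same functor on bounded prisms, hence have the same bounded prismatic envelope when it exists. Indeed, any bounded prism $(B,\xi B)$ admitting a homomorphism of $\delta$-pairs from $(A',J')$ over $(R,\xi R)$ admits one from $(A,J)$ as well, because $f(J)\subset J'$; by the universal property of $D$ this factors through a homomorphism of bounded prisms $(D,\xi D)\to(B,\xi B)$. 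The maps $D\to B$ and $A'\to B$ coincide on $A$, so they induce $D\otimes_A A'\to B$ and then, by $(pB+\xi B)$-adic completeness of $B$, a $\delta$-homomorphism $A''\to B$ carrying $J''$ into $\xi B$. This assignment is inverse to restriction along $A'\to A''$, whence the claim.

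The second step is to check that $(A'',J'')$ over $(D,\xi D)$ meets the hypotheses of Proposition~\ref{prop:SuffCondExBdd}. Base change along $A\to D$ followed by completion makes $D\to A''$ a $(pD+\xi D)$-adically smooth homomorphism of relative dimension $d$ with $(pD+\xi D)$-adic coordinates the images of $t_1,\dots,t_d$; in particular $A''_{1+J''}$ is $(pD+\xi D)$-adically flat over $D$. Assuming, as is implicit in Proposition~\ref{prop:SuffCondExBdd}(ii) and automatic in the applications, that $\xi\in J$ (so $\xi\in J'\subset J''$), the image of $J$ generates $\xi D$ and the isomorphism $A/J\xrightarrow{\cong}A'/J'$ gives $A''/J''=\widehat{D\otimes_A(A/J)}=D/\xi D$, which is trivially $p$-adically flat over $D/\xi D$. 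Finally set $\oR''=D/(pD+\xi D)$ and let $\oA''$ be the reduction of $A''_{1+J''}$ modulo $pD+\xi D$: this is smooth of relative dimension $d$ over $\oR''$, and $\oA''/\oJ''=\oR''$, so $\Spec\oR''\hookrightarrow\Spec\oA''$ is a section of the structure morphism and therefore a regular immersion of codimension $d$. Consequently $\oJ''$ is, Zariski-locally around this section and in particular after the localization at $1+\oJ''$ built into Proposition~\ref{prop:SuffCondExBdd}, generated by a regular sequence $T_1,\dots,T_d$ forming part of a system of coordinates, so the partial quotients $\oA''/\sum_{i\le r}T_i\oA''$ are smooth, a fortiori flat, over $\oR''$. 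Proposition~\ref{prop:SuffCondExBdd} then provides a bounded prismatic envelope $D'$ of $(A'',J'')$ over $(D,\xi D)$ --- equal, by the first step, to one of $(A',J')$ over $(R,\xi R)$ --- which is $(pD+\xi D)$-adically, i.e. $\tI$-adically, flat over $D$.

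The step I expect to be the crux is the identification of $\oJ''$ as cutting out a section of a smooth morphism: this is exactly what turns the a priori oversized ideal $J'$ (in general strictly larger than $JA'$ together with the coordinate relations) into one admitting, after the canonical $1+\oJ''$-localization, a regular-sequence presentation of the correct length $d$ with flat partial quotients, which is the precise input of Proposition~\ref{prop:SuffCondExBdd}. If $\oR''$ carries nontrivial rank-$d$ bundles one cannot globalize the regular sequence directly, and one must instead apply Proposition~\ref{prop:SuffCondExBdd} Zariski-locally on $\Spec A''$ and glue the resulting envelopes, using that forming bounded prismatic envelopes is Zariski-local; this, together with continuity of $\delta$ under $(pD+\xi D)$-adic completion, the identity $JD=\xi D$, Artin--Rees-type control of completed quotients, and the case $\xi\notin J$ (handled via the fully general existence theorem for prismatic envelopes), are the routine points.
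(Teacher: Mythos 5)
Your overall strategy is sound, and note that this paper does not actually prove the proposition --- it is quoted from \cite[4.14]{TsujiPrismQHiggs} --- so I can only judge the argument on its own terms: base-change the problem along $A\to D$ to a $\delta$-pair $(A'',J'')$ over $(D,\xi D)$ and feed it to Proposition~\ref{prop:SuffCondExBdd}. The transport step is correct (and the worry about $\xi\in J$ is unnecessary: a $\delta$-pair over $(R,\xi R)$ receives a homomorphism of $\delta$-pairs from $(R,\xi R)$, so $\xi\in J$ automatically; combined with $JD\subset\xi D$ from the envelope map this gives $JD=\xi D$). The smoothness/flatness hypotheses also check out, although you should phrase the computation of $A''/J''$ mod $p^n$ rather than claim $A''/J''=D/\xi D$ on the nose: what Proposition~\ref{prop:SuffCondExBdd} needs is only that $A''/(J''+p^nA'')\cong (D/\xi D)/p^n$, which follows from $A''/\tI^{m}A''\cong(D\otimes_AA')/\tI^{m}$ and $A/J\cong A'/J'$.

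The genuine gap is at the step you yourself flag as the crux. From ``the section $\Spec\oR''\hookrightarrow\Spec\oA''$ is a regular immersion, so Zariski-locally $\oJ''$ is generated by a regular sequence'' you conclude ``in particular after the localization at $1+\oJ''$''; that inference is not valid, because localizing at the multiplicative set $1+\oJ''$ is not a Zariski localization, and local generation on an open cover does not pass to it. Your fallback --- applying Proposition~\ref{prop:SuffCondExBdd} Zariski-locally and gluing envelopes --- is both unjustified (Zariski-locality of bounded prismatic envelopes is nowhere established here) and unnecessary, since the hypothesis gives \emph{global} coordinates. The clean fix: let $c_i\in\oR''$ be the image of $t_i$ under $\oA''\to\oA''/\oJ''\cong\oR''$ and set $T_i=t_i-c_i\in\oJ''$. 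Since $dT_i=dt_i$ form a basis of $\Omega^1_{\oA''/\oR''}$, the map $\oR''[X_1,\dots,X_d]\to\oA''$, $X_i\mapsto T_i$, is \'etale; hence $T_1,\dots,T_d$ is a regular sequence with partial quotients flat over $\oR''$, and $\oA''/(T_1,\dots,T_d)$ is \'etale over $\oR''$ with a section, so it splits as $\oR''\times C$ with the image of $\oJ''$ generated by the complementary idempotent $e=(0,1)$. After inverting $1+\oJ''$ the ideal $\oJ''$ lies in the Jacobson radical, an idempotent in the radical is zero, so $e\mapsto 0$ and $\oJ''$ becomes generated by $T_1,\dots,T_d$ in the $(1+\oJ'')$-localization --- exactly the input of Proposition~\ref{prop:SuffCondExBdd}. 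With this inserted (and the vacuous ``$\xi\notin J$'' caveat deleted), your proof is complete.
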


\begin{definition}[{\cite[Definition 4.1]{BS}, \cite[11.1]{TsujiPrismQHiggs}}]
\label{def:PrismaticSiteCrystal}
Let $(R,I)$ be a bounded prism, and let $\fX$ be a $p$-adic formal scheme
over $\Spf(R/I)$, where $R/I$ is equipped with the $p$-adic topology, for which
$R/I$ is complete and separated (Proposition \ref{prop:PrismModIComp}).\par
(1) We define the {\it prismatic site} $(\fX/(R,I))_{\prism}$ (or $(\fX/R)_{\prism}$)
of $\fX$ over $(R,I)$ as follows: An object of the underlying category
is a pair of a bounded prism $(P,IP)$ over $(R,I)$ and a morphism 
$v\colon\Spf(P/IP)\to\fX$ over $\Spf(R/I)$. A morphism 
$u\colon ((P',IP'),v')\to ((P,IP),v)$ in the underlying category is
a homomorphism of bounded prisms $u\colon (P,IP)\to (P',IP')$ over 
$(R,I)$ compatible with $v$ and $v'$: $v'=v\circ \Spf(u\otimes R/I)$.
We abbreviate $((P,IP),v)$ to $P$ if there is no risk of confusion.
For morphisms $u_{\nu}\colon ((P_{\nu},IP_{\nu}),v_{\nu})\to ((P,IP),v)$ $(\nu=1,2)$ with one of $u_{\nu}$
is $(pR+I)$-adically flat, the fiber product is represented by 
the $(pR+I)$-adic completion of $P_1\otimes_PP_2$ thanks to 
Proposition \ref{prop:bddPrismFlatAlg} (and \cite[4.9]{TsujiPrismQHiggs}). 
For each object $((P,IP),v)$, we define $\Cov_{\fpqc}((P,IP),v)$ to be
the set of finite families of morphisms 
$(u_{\lambda}\colon ((P_{\lambda},IP_{\lambda}),v_{\lambda})\to ((P,IP),v))$
such that $u_{\lambda}$ is $(pR+I)$-adically flat (Definition \ref{def:IadicProperties} (1))
for every $\lambda$
and the union of the images of $\Spf(u_{\lambda})\colon 
\Spf(P_{\lambda})\to \Spf(P)$ is $\Spf(P)$. By the remark
above on fiber products, this defines a pretopology. We
equip $(\fX/(R,I))_{\prism}$ with the associated topology.\par
We define a sheaf of rings $\CO_{\fX/R}$ (resp.~$\CO_{\fX/R,n}$ for $n\in \N$) by 
setting $\CO_{\fX/R}((P,IP),v)=P$ (resp.~$\CO_{\fX/R,n}((P,IP),v)=P_n:=P/(pP+IP)^{n+1}$).
The lifting of Frobenius $\varphi_P$ of $P$ and its mod $(pR+I)^{n+1}$ reduction $\varphi_{P_n}$
define $\varphi\colon \CO_{\fX/R}\to \CO_{\fX/R}$ and
$\varphi_n\colon \CO_{\fX/R,n}\to \CO_{\fX/R,n}$.\par

(2) A {\it crystal of $\CO_{\fX/R}$-modules} (resp.~{\it $\CO_{\fX/R,n}$-modules}
for $n\in \N$) is a presheaf of $\CO_{\fX/R}$-modules (resp.~$\CO_{\fX/R,n}$-modules)
$\CF$ such that the $P'$-linear homomorphism $\CF((P,IP),v)\otimes_{P}P'\to
\CF((P',IP'),v')$ is an isomorphism for every morphism 
$((P',IP'),v')\to ((P,IP),v)$ in $(\fX/R)_{\prism}$. A crystal of $\CO_{\fX/R,n}$-modules is always a sheaf.
Let $\Crystal_{\prism}(\CO_{\fX/R,n})$ denote the category of crystals of $\CO_{\fX/R,n}$-modules.
We write $\Crystal^{\fproj}_{\prism}(\CO_{\fX/R})$ for the category of crystals of 
$\CO_{\fX/R}$-modules $\CF$ such that $\CF((P,IP),v)$ is a finite projective 
$P$-module for every object $((P,IP),v)$ of $(\fX/R)_{\prism}$. 
Every object of $\Crystal^{\fproj}_{\prism}(\CO_{\fX/R})$ is a sheaf on $(\fX/R)_{\prism}$. 
A {\it complete crystal of $\CO_{\fX/R}$-modules} is a presheaf of 
$\CO_{\fX/R}$-modules such that $\CF((P,IP),v)$ is $(pR+I)$-adically
complete and separated and the presheaf of $\CO_{\fX/R,n}$-modules
$((P,IP),v)\mapsto \CF((P,IP),v)/(pR+I)^{n+1}$ is a crystal for every $n\in \N$.
We write $\hCrystal_{\prism}(\CO_{\fX/R})$ for the category of complete
crystals of $\CO_{\fX/R}$-modules.
Every complete crystal of $\CO_{\fX/R}$-modules is a sheaf, 
and every object of $\Crystal^{\fproj}_{\prism}(\CO_{\fX/R})$ is a complete crystal
of $\CO_{\fX/R}$-modules.

(3)  Let $g\colon (R,I)\to (R',I')$ be a homomorphism of bounded prisms,
let $\fX'$ be a $p$-adic formal scheme over $\Spf(R'/I')$,
and let $f\colon \fX'\to \fX$ be a morphism over $\Spf(g)$.
Then the functor $f_{\prism}\colon(\fX'/R')_{\prism}\to (\fX/R)_{\prism}$
defined by $((P',I'P'),v')\mapsto ((P',IP'), f\circ v')$ is a cocontinuous
functor and therefore defines a morphism of topos 
$f_{\prism}\colon (\fX'/R')_{\prism}^{\sim}\to (\fX/R)^{\sim}_{\prism}$. 
We have $f_{\prism}^{-1}(\CF)((P',I'P'),v')=\CF((P',IP'),f\circ v')$.
Let $f_{\prism}^{-1}$ also denote the functor between the categories
of presheaves defined by the same formula. 
Note that the formula implies $f_{\prism}^{-1}(\CO_{\fX/R})=\CO_{\fX'/R'}$,
$f_{\prism}^{-1}(\CO_{\fX/R,n})=\CO_{\fX'/R',n}$ $(n\in \N)$,
and the image of a crystal of $\CO_{\fX/R}$-modules
(resp.~a crystal of $\CO_{\fX/R,n}$-modules $(n\in \N)$, resp.~a
complete crystal of $\CO_{\fX/R}$-modules) under $f_{\prism}^{-1}$
is a crystal of $\CO_{\fX'/R'}$-modules
(resp.~a crystal of $\CO_{\fX'/R',n}$-modules, resp.~a complete crystal
of $\CO_{\fX'/R'}$-modules). 
If $\CF\in \Ob \Crystal^{\fproj}_{\prism}(\CO_{\fX/R})$, we have
$f_{\prism}^{-1}(\CF)\in \Ob \Crystal^{\fproj}_{\prism}(\CO_{\fX'/R'})$. 
\end{definition}

\begin{remark}[{\cite[11.2, 11.4]{TsujiPrismQHiggs}}]
\label{rmk:PrismCrystalFrobTensor}
Let $(R,I)$ be a bounded prism, and let $\fX$ be a $p$-adic formal
scheme over $\Spf(R/I)$. \par
(1) The presheaf scalar extension
of a crystal (or a complete crystal) of $\CO_{\fX/R}$-modules under
$\CO_{\fX/R}\to \CO_{\fX/R,n}$ is a crystal of $\CO_{\fX/R,n}$-modules. 
A similar claim holds for the scalar extension of a crystal of 
$\CO_{\fX/R,n}$-modules under 
$\CO_{\fX/R,n}\to \CO_{\fX/R,m}$ for integers $n>m\geq 0$.\par
(2) For a crystal of $\CO_{\fX/R}$-modules $\CF$,
its presheaf scalar extension under 
$\varphi\colon \CO_{\fX/R}\to\CO_{\fX/R}$,
which is denoted by $\varphi^*\CF$, is 
a crystal of $\CO_{\fX/R}$-modules. 
For $\CF\in\Ob\Crystal^{\fproj}_{\prism}(\CO_{\fX/R})$, we have 
$\varphi^*\CF\in \Ob\Crystal^{\fproj}_{\prism}(\CO_{\fX/R})$.
A similar claim holds for a crystal of $\CO_{\fX/R,n}$-modules
$\CF$ and the scalar extension under
$\varphi_n\colon \CO_{\fX/R,n}\to \CO_{\fX/R,n}$.
For a complete crystal of $\CO_{\fX/R}$-modules $\CF$,
the inverse limit $\varprojlim_n\varphi_n^*(\CF\otimes_{\CO_{\fX/R}}\CO_{\fX/R,n})$,
which is denoted by $\hvarphi^*\CF$, is a complete crystal
of $\CO_{\fX/R}$-modules and satisfies
$(\hvarphi^*\CF)\otimes_{\CO_{\fX/R}}\CO_{\fX/R,n}\cong \varphi_n^*(\CF\otimes_{\CO_{\fX/R}}
\CO_{\fX/R,n})$. 
For $\CF\in \Ob\Crystal^{\fproj}_{\prism}(\CO_{\fX/R})$,
we have $\hvarphi^*\CF\cong \varphi^*\CF$ because 
$\varphi^*\CF\in \Ob\Crystal^{\fproj}_{\prism}(\CO_{\fX/R})$ is a 
complete crystal of $\CO_{\fX/R}$-modules and 
$\varphi^*\CF\otimes_{\CO_{\fX/R}}\CO_{\fX/R,n}
\cong\varphi_n^*(\CF\otimes_{\CO_{\fX/R}}\CO_{\fX/R,n})$
for $n\in \N$. The functors $\varphi^*$, $\varphi_n^*$, and 
$\hvarphi^*$ above are obviously compatible with 
$f_{\prism}^{-1}$ in Definition \ref{def:PrismaticSiteCrystal} (3).
\par
(3) For crystals of $\CO_{\fX/R}$-modules $\CF$ and $\CG$,
the presheaf tensor product of $\CF$ and $\CG$ as
$\CO_{\fX/R}$-modules is a crystal of $\CO_{\fX/R}$-modules.
If $\CF, \CG\in \Ob\Crystal_{\prism}^{\fproj}(\CO_{\fX/R})$, then
it belongs to $\Crystal_{\prism}^{\fproj}(\CO_{\fX/R})$.
A similar claim holds for crystals of $\CO_{\fX/R,n}$-modules.
For complete crystals of $\CO_{\fX/R}$-modules $\CF$ and $\CG$,
and their scalar extensions $\CF_n$ and $\CG_n$ under
$\CO_{\fX/R}\to\CO_{\fX/R,n}$ $(n\in \N)$, 
the inverse limit $\varprojlim_n(\CF_n\otimes_{\CO_{\fX/R,n}}\CG_n)$,
which is denoted by $\CF\hotimes_{\CO_{\fX/R}}\CG$, is 
a complete crystal of $\CO_{\fX/R}$-modules, and we have
$(\CF\hotimes_{\CO_{\fX/R}}\CG)\otimes_{\CO_{\fX/R}}\CO_{\fX/R,n}
\cong \CF_n\otimes_{\CO_{\fX/R,n}}\CG_n$.
If $\CF, \CG\in \Ob\Crystal^{\fproj}_{\prism}(\fX/R)$, 
we have $\CF\hotimes_{\CO_{\fX/R}}\CG\cong \CF\otimes_{\CO_{\fX/R}}\CG$
because $\CF\otimes_{\CO_{\fX/R}}\CG\in\Ob\Crystal^{\fproj}_{\prism}(\fX/R)$
is a complete crystal of $\CO_{\fX/R}$-modules and 
$(\CF\otimes_{\CO_{\fX/R}}\CG)\otimes_{\CO_{\fX/R}}\CO_{\fX/R,n}
\cong \CF_n\otimes_{\CO_{\fX/R,n}}\CG_n$ for $n\in \N$.
The tensor products and the completed one $\hotimes$ considered above are
compatible with $f_{\prism}^{-1}$ in Definition \ref{def:PrismaticSiteCrystal} (3).
\end{remark}

Let $(R,I)$ and $\fX$ be as in Definition \ref{def:PrismaticSiteCrystal}, let $\fX_{\ZAR}$
(resp.~$\fX_{\Zar}$) be the category of $p$-adic formal schemes
over $\fX$ (resp.~open formal subschemes of $\fX)$ equipped 
with the Zariski topology. Then the functor $(\fX/(R,I))_{\prism}\to \fX_{\ZAR}$
sending $((P,IP),v)$ to $(\Spf(P/IP),v)$ is cocontinuous and defines a morphism of topos 
$U_{\fX/(R,I)}\colon (\fX/(R,I))_{\prism}^{\sim}\to \fX_{\ZAR}^{\sim}$.
The inclusion  $\fX_{\Zar}\hookrightarrow \fX_{\ZAR}$ is a
continuous functor preserving finite inverse limits, and therefore defines
a morphism of topos $\varepsilon_{\fX,\Zar}\colon 
\fX_{\ZAR}^{\sim}\to \fX_{\Zar}^{\sim}$. We define a morphism
of topos \begin{equation}\label{eq:PrismToposZarProj}
u_{\fX/(R,I)}\colon (\fX/(R,I))_{\prism}^{\sim}\longrightarrow \fX_{\Zar}^{\sim}
\end{equation}
to be the composition $\varepsilon_{\fX,\Zar}\circ U_{\fX/(R,I)}$
\cite[\S12]{TsujiPrismQHiggs}.
Under the notation in Definition \ref{def:PrismaticSiteCrystal} (3), 
we have a canonical isomorphism
of morphisms of topos
\begin{equation}\label{eq:PrismToposZarProjFunct}
f_{\Zar}\circ u_{\fX'/(R',I')}\cong u_{\fX/(R,I)}\circ f_{\prism}.
\end{equation}

\section{Twisted derivations}\label{sec:TwDeriv}
We review the definition of twisted derivations and their fundamental properties.

Let $R$ be a ring, let $A$ be an $R$-algebra, let $\alpha$ be an element
of $A$, and let $\gamma$ be an $R$-algebra endomorphism of $A$
satisfying $\gamma(x)\equiv x \mod \alpha A$ for every $x\in A$.
If $\alpha$ is $A$-regular, then one can define an $R$-linear map 
$\partial\colon A\to A$ by setting $\partial(x)=\alpha^{-1}(\gamma(x)-x)$
$(x\in A)$, and it is straightforward to see that $\partial$ satisfies the 
following.
\begin{align}
&\partial(1)=0\label{eq:TwDerivCond1}\\
&\partial(xy)=\partial(x)y+x\partial(y)+\alpha\partial(x)\partial(y)
\qquad (x,y\in A)
\label{eq:TwDerivCond2}
\end{align}
If $R$ is a $\delta$-ring and $A$ is a $\delta$-$R$-algebra,
it is natural to ask if $\gamma$ is a $\delta$-homomorphism.
For $x\in A$, we have 
\begin{align*}
\gamma(\delta(x))&=\delta(x)+\alpha\partial(\delta(x)),\\
\delta(\gamma(x))&=\delta(x)
+(\alpha^p+p\delta(\alpha))\delta(\partial(x))
+\delta(\alpha)\partial(x)^p
-\sum_{\nu=1}^{p-1}p^{-1}\binom p{\nu} x^{p-\nu}\alpha^{\nu}\partial(x)^{\nu}.
\end{align*}

Based on these observations, we define an $\alpha$-derivation 
$A$ over $R$ and its compatibility with a $\delta$-structure
without assuming $\alpha$ is $A$-regular as follows.

\begin{definition}[{\cite[5.5, 6.1]{TsujiPrismQHiggs}}]
\label{def:TwDeriv}
Let $R$ be a ring, let $A$ be an $R$-algebra, and let $\alpha\in A$.\par
(1) We define an {\it $\alpha$-derivation of $A$ over $R$}
to be an $R$-linear map $\partial\colon A\to A$ satisfying 
\eqref{eq:TwDerivCond1} and \eqref{eq:TwDerivCond2}.
Let $\Der^{\alpha}_R(A)$ denote the set of $\alpha$-derivations
of $A$ over $R$.
\par
(2) Let $f\colon A\to A'$ be a ring homomorphism, put $\alpha'=f(\alpha)$,
and let $\partial\in \Der^{\alpha}_R(A)$ and $\partial'\in \Der^{\alpha'}_R(A')$.
We say that $\partial'$ is an {\it extension of} $\partial$ ({\it along} $f$)
(or $f$ {\it is compatible with $\partial$ and $\partial'$}) if the equality
$\partial'\circ f=f\circ \partial$ holds. \par
(3) Suppose that $R$ is a $\delta$-ring, $A$ is a $\delta$-$R$-algebra,
$\delta(\alpha)\in \alpha A$, and we are given an element $\beta\in A$ 
satisfying $\delta(\alpha)=\alpha\beta$. Let 
$\partial$ be an $\alpha$-derivation of $A$ over $R$.
For $x\in A$, we say that {\it $x$ is $\delta$-compatible with respect to 
$\partial$ and $\beta$} if it satisfies
\begin{equation}
\partial(\delta(x))=(\alpha^{p-1}+p\beta)\delta(\partial(x))+\beta\partial(x)^p
-\sum_{\nu=1}^{p-1}p^{-1}\binom p{\nu} x^{p-\nu}\alpha^{\nu-1}\partial(x)^{\nu}.
\end{equation}
We say that $\partial$ is {\it $\delta$-compatible with respect to $\beta$}
if every element of $A$ is $\delta$-compatible with respect to $\partial$ and $\beta$.
Let $\Der^{\alpha,\beta}_{R,\delta}(A)$ denote the set of 
$\alpha$-derivations of $A$ over $R$ $\delta$-compatible with respect
to $\beta$.
\end{definition}

It is straightforward to verify the following. 

\begin{lemma}[{\cite[5.7, 6.4]{TsujiPrismQHiggs}}]
\label{lem:TwDerivEndomDeltaComp}
Let $R$ be a ring, let $A$ be an $R$-algebra, and let $\alpha\in A$.
In (2) and (3), we assume that $R$ is a $\delta$-ring,  $A$ is a $\delta$-$R$-algebra,
$\delta(\alpha)\in \alpha A$, and we are given $\beta\in A$
satisfying $\delta(\alpha)=\alpha\beta$.\par
(1) For $\partial\in \Der^{\alpha}_R(A)$, $\gamma=\id_A+\alpha\partial$
is an endomorphism of the $R$-algebra $A$ and satisfies $\partial(xy)=\partial(x)y+\gamma(x)\partial(y)$
for $x,y\in A$. If $\alpha$ is $A$-regular, this gives
a bijection between $\Der^{\alpha}_R(A)$ and the set
of endomorphisms of the $R$-algebra $A$ congruent to $\id_A$ modulo $\alpha A$.\par
(2) For $\partial\in \Der^{\alpha}_R(A)$, the endomorphism $\gamma$ in (1)
is a $\delta$-homomorphism if $\partial$ is $\delta$-compatible with respect to $\beta$.
The converse is also true if $\alpha$ is $A$-regular.\par
(3) For $\partial\in \Der^{\alpha}_R(A)$, we have $\partial\circ\varphi=(\alpha^{p-1}+p\beta)
\varphi\circ\partial$ if $\partial$ is $\delta$-compatible with respect to $\beta$.
The converse is also true if $A$ is $p$-torsion free.
\end{lemma}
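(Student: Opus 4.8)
The plan is to verify the three parts by direct computation, in order, treating the ``forward'' implication first and then the converse under the stated extra hypothesis. For \emph{(1)}, I would check that $\gamma=\id_A+\alpha\partial$ is $R$-linear (immediate) and preserves $1$ (by \eqref{eq:TwDerivCond1}), and is multiplicative by expanding $\gamma(xy)=xy+\alpha\partial(xy)$ and $\gamma(x)\gamma(y)=xy+\alpha(\partial(x)y+x\partial(y))+\alpha^2\partial(x)\partial(y)$ and comparing via \eqref{eq:TwDerivCond2}; the same expansion yields $\partial(xy)=\partial(x)y+\gamma(x)\partial(y)$. For the bijection when $\alpha$ is $A$-regular, given an $R$-algebra endomorphism $\gamma$ with $\gamma\equiv\id_A\bmod\alpha A$, I define $\partial(x)$ as the unique element with $\gamma(x)-x=\alpha\partial(x)$, deduce $R$-linearity and \eqref{eq:TwDerivCond1}, \eqref{eq:TwDerivCond2} from the corresponding properties of $\gamma$ by cancelling $\alpha$, and observe that $\partial\mapsto\id_A+\alpha\partial$ and this construction are mutually inverse.

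For \emph{(2)}, since $\gamma$ is already an $R$-algebra endomorphism by (1), it is a $\delta$-homomorphism iff $\delta_A\circ\gamma=\gamma\circ\delta_A$. The two identities displayed just before Definition \ref{def:TwDeriv} hold for $\gamma=\id_A+\alpha\partial$ in this generality (they use only the $\delta$-ring axioms); substituting $\delta(\alpha)=\alpha\beta$ into the second one factors $\alpha$ out of all but the leading term, giving $\delta(\gamma(x))=\delta(x)+\alpha\big[(\alpha^{p-1}+p\beta)\delta(\partial(x))+\beta\partial(x)^p-\sum_{\nu=1}^{p-1}p^{-1}\binom{p}{\nu}x^{p-\nu}\alpha^{\nu-1}\partial(x)^{\nu}\big]$, while $\gamma(\delta(x))=\delta(x)+\alpha\partial(\delta(x))$. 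Hence $\delta_A\gamma(x)=\gamma\delta_A(x)$ is exactly $\alpha$ times the $\delta$-compatibility identity for $x$; this gives the forward implication, and the converse after cancelling $\alpha$ using $A$-regularity.

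For \emph{(3)}, the one genuine computation is a Leibniz formula for $\partial(x^p)$: iterating $\partial(xy)=\partial(x)y+\gamma(x)\partial(y)$ gives $\partial(x^n)=\big(\sum_{i=0}^{n-1}\gamma(x)^i x^{n-1-i}\big)\partial(x)$, and expanding $\gamma(x)^i=(x+\alpha\partial(x))^i$ by the binomial theorem, interchanging the order of summation, and applying the hockey-stick identity $\sum_{i=j}^{p-1}\binom{i}{j}=\binom{p}{j+1}$ produce $\partial(x^p)=\sum_{\nu=1}^{p}\binom{p}{\nu}x^{p-\nu}\alpha^{\nu-1}\partial(x)^{\nu}$, whose $\nu=p$ term is $\alpha^{p-1}\partial(x)^p$. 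Combining this with $\partial(\varphi(x))=\partial(x^p)+p\,\partial(\delta(x))$ and $(\alpha^{p-1}+p\beta)\varphi(\partial(x))=\alpha^{p-1}\partial(x)^p+p(\alpha^{p-1}+p\beta)\delta(\partial(x))+p\beta\partial(x)^p$, and using $p\cdot p^{-1}\binom{p}{\nu}=\binom{p}{\nu}$, one finds that $\partial\varphi(x)=(\alpha^{p-1}+p\beta)\varphi\partial(x)$ is precisely $p$ times the $\delta$-compatibility identity for $x$; the forward implication follows at once, and the converse after dividing by $p$ when $A$ is $p$-torsion free.

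I do not expect a real obstacle: everything here is elementary. The only delicate point is the binomial bookkeeping in (3) --- correctly deriving the $\partial(x^p)$ formula from the hockey-stick identity and matching the truncated-binomial sums on the two sides --- together with keeping straight that the converses in (1) and (2) need $\alpha$ to be $A$-regular while the converse in (3) needs $A$ to be $p$-torsion free.
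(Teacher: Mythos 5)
Your computation is correct, and it fills in exactly the ``straightforward verification'' the paper omits (the paper cites \cite{TsujiPrismQHiggs} for a proof). The one non-cosmetic observation you make — that the two displayed identities preceding Definition~\ref{def:TwDeriv}, which the paper derives under the assumption that $\alpha$ is $A$-regular, in fact hold for any $\alpha$-derivation $\partial$ and $\gamma=\id_A+\alpha\partial$ by the $\delta$-ring axioms alone — is needed for the forward implications in (2) and (3), and you handle it correctly.
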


We can interpret $\alpha$-derivations and their $\delta$-compatibility
in terms of ring homomorphisms and their $\delta$-compatibility as follows.

For a ring $A$ and $\alpha\in A$, we define an $A$-algebra $E^{\alpha}(A)$
to be the quotient $A[T]/(T(T-\alpha))$ of the polynomial algebra $A[T]$
over $A$ in one variable. We write $\pi_{A,0}$ (resp.~$\pi_{A,1}$)
for the $A$-algebra homomorphism $E^{\alpha}(A)\to A$
sending $T$ to $0$ (resp.~$\alpha$). 
The construction of $E^{\alpha}(A)$ with $\pi_{A,\nu}$ $(\nu=0,1)$ 
is functorial in $A$ and $\alpha$ in the obvious sense.
If $A$ is a $\delta$-ring, $\delta(\alpha)\in \alpha A$,
and we are given $\beta\in A$ satisfying $\delta(\alpha)=\alpha\beta$,
then there exists a unique $\delta$-$A$-algebra structure 
on $E^{\alpha}(A)$ satisfying $\delta(T)=\beta T$ \cite[6.6]{TsujiPrismQHiggs}; the uniqueness holds as
$E^{\alpha}(A)$ is generated by $T$ as an $A$-algebra, and the existence
is verified by showing that the extension $A[T]\to W_2(E^{\alpha}(A));T\mapsto
(T,\beta T)$ of the homomorphism $A\to W_2(A);a\mapsto (a,\delta(a))$
factors through $E^{\alpha}(A)$. This $\delta$-structure is explicitly 
given by the following formula for $x,y\in A$ \cite[6.7]{TsujiPrismQHiggs}.
\begin{equation}
\delta(x+yT)=\delta(x)+\left\{(\alpha^{p-1}+p\beta)\delta(y)+\beta y^p
-\sum_{\nu=1}^{p-1}p^{-1}\binom p\nu x^{p-\nu}\alpha^{\nu-1}y^{\nu}\right\}T
\end{equation}
We write $E^{\alpha,\beta}_{\delta}(A)$
for $E^{\alpha}(A)$ with the $\delta$-structure above.
We see that the homomorphisms $\pi_{A,0}$ and $\pi_{A,1}$ 
are $\delta$-homomorphisms. The construction 
of $E^{\alpha,\beta}_{\delta}(A)$ is functorial in a $\delta$-algebra
$A$, $\alpha$, and $\beta$ in the obvious sense.

\begin{proposition}[{\cite[5.11, 6.9]{TsujiPrismQHiggs}}]
\label{prop:TwDerivTwExtSec}
Let $A$ be an $R$-algebra and let $\alpha\in A$. 
In (2), we assume that $R$ is a $\delta$-ring, $A$ is a $\delta$-$R$-algebra,
$\delta(\alpha)\in \alpha A$, and we are given $\beta\in A$
satisfying $\delta(\alpha)=\alpha\beta$.\par
(1) For $\partial\in \Der^{\alpha}_R(A)$, the  map 
$s_{\partial}\colon A\to E^{\alpha}(A);x\mapsto x+\partial(x)T$ is a homomorphism of $R$-algebras.
This gives a bijection between $\Der^{\alpha}_R(A)$ and the set
of $R$-sections of $\pi_{A,0}\colon E^{\alpha}(A)\to A$.\par
(2) Let  $\partial$ and $s_{\partial}$ be as in (1), and let $E^{\alpha}(A)$
be equipped with the $\delta$-structure defined by $\beta$. For $x\in A$,
we have $\delta\circ s_{\partial}(x)=s_{\partial}\circ \delta(x)$
if and only if $x$ is $\delta$-compatible with respect to $\partial$
and $\beta$. In particular, $s_{\partial}$ is a $\delta$-homomorphism
if and only if $\partial$ is $\delta$-compatible with respect to $\beta$.
\end{proposition}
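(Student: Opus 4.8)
The statement to be proved is Proposition \ref{prop:TwDerivTwExtSec}, in two parts. Part (1) asserts that $\partial \mapsto s_{\partial}$ gives a bijection between $\Der^{\alpha}_R(A)$ and the set of $R$-algebra sections of $\pi_{A,0}\colon E^{\alpha}(A)\to A$. Part (2) characterizes when such a section is a $\delta$-homomorphism in terms of the $\delta$-compatibility of $\partial$. The plan is to treat (1) by a direct computation identifying $R$-sections with maps $A \to E^{\alpha}(A)$ of the form $x \mapsto x + \partial(x)T$, then to treat (2) by comparing the two sides of the $\delta$-identity term by term using the explicit formula for the $\delta$-structure on $E^{\alpha}(A)$.

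For part (1), first I would observe that any $R$-linear section $s$ of $\pi_{A,0}$, when composed with the other projection-like structure, is forced to have the shape $s(x) = x + \varphi(x)T$ for a unique $R$-linear map $\varphi\colon A \to A$: indeed $s(x) - x$ lies in the ideal generated by $T$, and since $\{1,T\}$ is an $A$-module basis of $E^{\alpha}(A) = A[T]/(T(T-\alpha))$, we can write $s(x) = x + \varphi(x)T$ uniquely, with $\varphi$ automatically $R$-linear because $s$ is. Then $s$ being an $R$-algebra homomorphism is equivalent to $s(1) = 1$, i.e.\ $\varphi(1) = 0$ (this is \eqref{eq:TwDerivCond1}), together with $s(xy) = s(x)s(y)$ for all $x,y$. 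Expanding the product $s(x)s(y) = (x + \varphi(x)T)(y + \varphi(y)T)$ and using $T^2 = \alpha T$, one gets $xy + \bigl(\varphi(x)y + x\varphi(y) + \alpha\varphi(x)\varphi(y)\bigr)T$; comparing with $s(xy) = xy + \varphi(xy)T$ and again using that $1, T$ are a basis, the homomorphism condition becomes exactly \eqref{eq:TwDerivCond2}. Hence $s$ is a homomorphism precisely when $\varphi \in \Der^{\alpha}_R(A)$, and the correspondence $\partial \leftrightarrow s_{\partial}$ is the asserted bijection; one checks routinely that $\pi_{A,0}\circ s_{\partial} = \id_A$, so these are genuine sections.

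For part (2), the key input is the explicit formula for the $\delta$-structure on $E^{\alpha}(A) = E^{\alpha,\beta}_{\delta}(A)$ given just before the proposition, namely $\delta(x + yT) = \delta(x) + \bigl\{(\alpha^{p-1} + p\beta)\delta(y) + \beta y^p - \sum_{\nu=1}^{p-1} p^{-1}\binom{p}{\nu} x^{p-\nu}\alpha^{\nu-1}y^{\nu}\bigr\}T$. Applying this with $y = \partial(x)$ computes $\delta(s_{\partial}(x)) = \delta\bigl(x + \partial(x)T\bigr)$, whose $T$-coefficient is $(\alpha^{p-1}+p\beta)\delta(\partial(x)) + \beta\partial(x)^p - \sum_{\nu=1}^{p-1}p^{-1}\binom{p}{\nu}x^{p-\nu}\alpha^{\nu-1}\partial(x)^{\nu}$, while the constant term is $\delta(x)$. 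On the other hand $s_{\partial}(\delta(x)) = \delta(x) + \partial(\delta(x))T$ has constant term $\delta(x)$ and $T$-coefficient $\partial(\delta(x))$. Since $1, T$ form a basis, the equation $\delta(s_{\partial}(x)) = s_{\partial}(\delta(x))$ holds if and only if the two $T$-coefficients agree, which is precisely the defining equation of "$x$ is $\delta$-compatible with respect to $\partial$ and $\beta$" from Definition \ref{def:TwDeriv}(3). Taking this over all $x \in A$ gives the final sentence: $s_{\partial}$ is a $\delta$-homomorphism iff $\partial$ is $\delta$-compatible with respect to $\beta$.

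I do not expect any serious obstacle here; the proposition is essentially a bookkeeping translation, and the only care needed is to make sure $E^{\alpha}(A)$ really is a free $A$-module on $\{1, T\}$ (so that comparing coefficients is legitimate) and that the $\delta$-structure formula has been correctly recorded — both of which are supplied in the excerpt. The mildly delicate point is purely notational: ensuring the exponents and binomial coefficients in the $\delta$-compatibility identity line up exactly with those produced by substituting $y = \partial(x)$ into the displayed $\delta$-formula, but this is immediate by inspection rather than a real difficulty.
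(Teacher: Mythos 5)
Your proof is correct and is the intended argument: this paper states the proposition without proof, citing \cite[5.11, 6.9]{TsujiPrismQHiggs}, and your verification — identifying $R$-sections of $\pi_{A,0}$ with maps $x\mapsto x+\partial(x)T$ via the $A$-basis $\{1,T\}$ of $E^{\alpha}(A)=A[T]/(T(T-\alpha))$, so that multiplicativity of $s$ is literally \eqref{eq:TwDerivCond2} and $s(1)=1$ is \eqref{eq:TwDerivCond1}, and then comparing $T$-coefficients of $\delta(s_{\partial}(x))$ and $s_{\partial}(\delta(x))$ using the displayed $\delta$-formula on $E^{\alpha,\beta}_{\delta}(A)$ to recover the $\delta$-compatibility identity of Definition \ref{def:TwDeriv} (3) — is exactly the straightforward bookkeeping the cited source carries out.
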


\begin{remark}[{\cite[5.6 (1), (2), 6.5 (1), 6.11]{TsujiPrismQHiggs}}]
\label{rmk:TwDerivScalExtComp} Let $R$ be a ring
and let $A$ be an $R$-algebra.\par
(1) Let $R'$ be an $R$-algebra,  and put $A'=A\otimes_RR'$
and $\alpha'=\alpha\otimes 1\in A'$.
For $\partial\in \Der_R^{\alpha}(A)$, the $R'$-linear extension 
$\partial'=\partial\otimes \id_{R'}\colon A'\to A'$ of $\partial$ is an $\alpha'$-derivation
of $A'$ over $R'$. Suppose that $R$ is a $\delta$-ring, $A$ and $R'$ are
$\delta$-$R$-algebras, $A'$ is equipped with the $\delta$-structure
induced by those of $R'$ and $A$, and $\delta(\alpha)\in \alpha A$.
Choose $\beta\in A$ satisfying $\delta(\alpha)=\alpha\beta$.
Then, by using Proposition \ref{prop:TwDerivTwExtSec} (2), we see that $\partial'$ is $\delta$-compatible
with respect to $\beta\otimes 1\in A'$ 
if $\partial$ is $\delta$-compatible with respect to $\beta$.\par
(2)  Let $I$ be an ideal of $R$, put $\hR=\varprojlim_nR/I^n$, 
$\hA=\varprojlim_n A/I^n A$, and 
let $\halpha$ be the image of $\alpha$ in $\hA$. 
Then, for $\partial\in\Der^{\alpha}_R(A)$, the inverse limit
$\hpartial=\varprojlim_n \partial\otimes \id_{A/I^nA}\colon \hA\to \hA$
is an $\halpha$-derivation of $\hA$ over $\hR$. 
Suppose that $I$ contains $p$, $R$ is a $\delta$-ring, $A$ is a $\delta$-$R$-algebra,
$\hR$ and $\hA$ are equipped with the $I$-adic completion of the $\delta$-structures of $R$ and $A$,
and $\delta(\alpha)\in \alpha A$. Choose $\beta\in A$ satisfying $\delta(\alpha)=\alpha\beta$,
and let $\hbeta$ denote its image in $\hA$.
Then $\hpartial$ is $\delta$-compatible with respect to $\hbeta$ if
$\partial$ is $\delta$-compatible with respect to $\beta$.
\end{remark}

\begin{example}\label{eq:qHiggsDerivPolRing}
($q$-Higgs derivations)
Let $\Z[q]$ be the polynomial ring over $\Z$ in one variable $q$,
and put $\mu=q-1$. We define $[n]_q\in \Z[q]$ for $n\in \N$
to be $\frac{q^n-1}{q-1}=\sum_{r=0}^{n-1}q^r$. 
Let $\Lambda$ be a finite set, and let $S$ be the polynomial
ring $\Z[q][t_i\;(i\in\Lambda)]$ over $\Z[q]$ in variables
$t_i$ $(i\in \Lambda)$ indexed by $\Lambda$. 
Then one can define endomorphisms
$\gamma_{S,i}$ $(i\in\Lambda)$ of the $\Z[q]$-algebra
$S$, commuting with each other, 
by $\gamma_{S,i}(t_i)=q^pt_i$ and $\gamma_{S,i}(t_j)=t_j$ $(j\neq i)$.
Since $\gamma_{S,i}(x)\equiv x \mod t_i\mu S$ and $t_i\mu$ is
$S$-regular, we can define a $t_i\mu$-derivation $\theta_{S,i}$ 
of $S$ over $\Z[q]$ by $\theta_{S,i}(x)=(t_i\mu)^{-1}(\gamma_{S,i}(x)-x)$
$(x\in S)$ (Lemma \ref{lem:TwDerivEndomDeltaComp} (1)). For $(n_j)_{j\in\Lambda}
\in \N^{\Lambda}$, 
we have $\theta_{S,i}(\prod_{j\in\Lambda}t_j^{n_j})
=[pn_i]_qt_i^{-1}\prod_{j\in\Lambda}t_j^{n_j}$ if $n_i>0$ and $0$ if $n_i=0$.
This implies $\theta_{S,i}(x)\equiv p \frac{\partial x}{\partial t_i}$
mod $\mu S$ for $x\in S$. 
The endomorphisms $\theta_{S,i}$ commute with each other since
$\gamma_{S,i}$ commute with each other and $\gamma_{S,i}(t_j)=t_j$ $(j\neq i)$.

We equip $\Z[q]$ and $S$ with the $\delta$-structures corresponding
to the liftings of Frobenius $\varphi_{\Z[q]}$ and $\varphi_S$ 
defined by $q\mapsto q^p$ and $t_i\mapsto t_i^p$. 
We have $\delta(\mu)=p^{-1}(((1+\mu)^p-1)-\mu^p)\in\mu\Z[q]$.
Put $\eta=\delta(\mu)\mu^{-1}=\sum_{\nu=1}^{p-1}p^{-1}\binom p\nu\mu^{\nu-1}$.
We have $\varphi_S\circ\gamma_{S,i}=\gamma_{S,i}\circ\varphi_S$, which implies
$\delta\circ\gamma_{S,i}=\gamma_{S,i}\circ\delta$ since $\gamma_{S,i}$
is a ring endomorphism and $S$ is $p$-torsion free. 
Therefore Lemma \ref{lem:TwDerivEndomDeltaComp} (2) 
shows that $\theta_{S,i}$ is $\delta$-compatible
with respect to $\delta(t_i\mu)(t_i\mu)^{-1}=t_i^p\delta(\mu)(t_i\mu)^{-1}=t_i^{p-1}\eta$.

Let $R$ be a $\delta$-$\Z[q]$-algebra, and let $A$ be
$R[t_i\; (i\in\Lambda)]\cong S\otimes_{\Z[q]}R$ equipped with
the $\delta$-structure induced by those on $S$ and $R$.
Then, by Remark \ref{rmk:TwDerivScalExtComp} (1), the $R$-linear extension 
$\theta_{A,i}=\theta_{S,i}\otimes \id_R\colon A\to A$
of $\theta_{S,i}$ is a $t_i\mu$-derivation of $A$ over $R$
$\delta$-compatible with respect to $t_i^{p-1}\eta$ for each $i\in \Lambda$. 
We have $\theta_{A,i}\circ\theta_{A,j}=\theta_{A,j}\circ\theta_{A,i}$
and $\theta_{A,i}\equiv p\frac{\partial\;\;}{\partial t_i}$ mod $\mu A$.
\end{example}

Next we review an interpretation of the commutativity of twisted derivations
in terms of ring homomorphisms. Let $R$ be a ring, 
let $A$ be an $R$-algebra, let $\Lambda$ be a finite set, and
let $\ualpha=(\alpha_i)_{i\in\Lambda}\in A^{\Lambda}$. Similarly to 
$E^{\alpha}(A)$, we define an $A$-algebra $E^{\ualpha}(A)$ to be
the quotient $A[T_i \,(i\in\Lambda)]/(T_i(T_i-\alpha_i),\, i\in\Lambda)$ of
the polynomial $A$-algebra $A[T_i\, (i\in\Lambda)]$ over $A$
in variables $T_i$ $(i\in\Lambda)$.
When $\Lambda=\{1,2\}$, we have the following canonical isomorphisms
for $(i,j)=(1,2), (2,1)$.
\begin{equation}\label{eq:MultTwExtAlg}
E^{\alpha_i}(E^{\alpha_j}(A))
\cong (A[T_j]/(T_j(T_j-\alpha_j)))[T_i]/(T_i(T_i-\alpha_i))\cong E^{\ualpha}(A)
\end{equation}
If $R$ is a $\delta$-ring, $A$ is a $\delta$-$R$-algebra,
$\delta(\alpha_i)\in \alpha_i A$ $(i\in \Lambda)$, and
we are given $\ubeta=(\beta_i)_{i\in\Lambda}\in A^{\Lambda}$ satisfying
$\delta(\alpha_i)=\alpha_i\beta_i$ $(i\in \Lambda)$,
then there exists a unique $\delta$-$A$-algebra structure 
on $E^{\ualpha}(A)$ satisfying $\delta(T_i)=\beta_iT_i$
for every $i\in \Lambda$. We write 
$E^{\ualpha,\ubeta}_{\delta}(A)$ for the $\delta$-$A$-algebra
$E^{\ualpha}(A)$ equipped with this $\delta$-structure. 
When $\Lambda=\{1,2\}$, the composition \eqref{eq:MultTwExtAlg}
is a $\delta$-homomorphism if $E^{\alpha_i}(E^{\alpha_j}(A))$
is equipped with the $\delta$-structure defined by that of 
$E^{\alpha_j,\beta_j}_{\delta}(A)$ and $\beta_i$. 

\begin{lemma}[{\cite[5.20]{TsujiPrismQHiggs}}]
\label{lem:TwDerivComm}
Let $R$ be a ring, let $A$ be an $R$-algebra, and let $\alpha_1,\alpha_2\in A$.
Let $\partial_i\in \Der^{\alpha_i}_R(A)$ $(i=1,2)$, assume
$\partial_1(\alpha_2)=\partial_2(\alpha_1)=0$, and let $s_i$ be 
the $R$-algebra homomorphism $A\to E^{\alpha_i}(A)$ corresponding to 
$\partial_i$ by Proposition \ref{prop:TwDerivTwExtSec} (1).
Then, for $x\in A$, we have 
$\partial_1\circ\partial_2(x)=\partial_2\circ\partial_1(x)$ if and only if 
the images of $x$ under the compositions below for $(i,j)=(1,2), (2,1)$
coincide. In particular, $\partial_1$ and $\partial_2$ are commutative, i.e.,
$\partial_1\circ\partial_2=\partial_2\circ\partial_1$ if and only if 
$E^{\alpha_1}(s_2)\circ s_1=E^{\alpha_2}(s_1)\circ s_2$.
\begin{equation}
A\xrightarrow{\;s_i\;}E^{\alpha_i}(A)
\xrightarrow{E^{\alpha_i}(s_j)}
E^{s_j(\alpha_i)}(E^{\alpha_j}(A))=E^{\alpha_i}(E^{\alpha_j}(A))\cong E^{(\alpha_1,\alpha_2)}(A).
\end{equation}
\end{lemma}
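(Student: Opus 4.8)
The plan is to reduce the statement about commutativity of the two twisted derivations to a purely formula-level assertion via the bijection of Proposition \ref{prop:TwDerivTwExtSec} (1), and then to check that assertion by a direct computation unwinding the definitions. First I would note that, by Proposition \ref{prop:TwDerivTwExtSec} (1), the data of $\partial_i\in\Der^{\alpha_i}_R(A)$ is equivalent to the data of the $R$-algebra homomorphism $s_i\colon A\to E^{\alpha_i}(A)$, $x\mapsto x+\partial_i(x)T_i$, which is an $R$-section of $\pi_{A,0}$. Applying the functor $E^{\alpha_j}(-)$ (for the appropriate index) and using the canonical isomorphism \eqref{eq:MultTwExtAlg}, one builds the two compositions $A\xrightarrow{s_i}E^{\alpha_i}(A)\xrightarrow{E^{\alpha_i}(s_j)}E^{\alpha_i}(E^{\alpha_j}(A))\cong E^{(\alpha_1,\alpha_2)}(A)$ for $(i,j)=(1,2)$ and $(2,1)$. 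The point is to compute the image of a general $x\in A$ under each composition explicitly in the basis $\{1,T_1,T_2,T_1T_2\}$ of $E^{(\alpha_1,\alpha_2)}(A)$ over $A$ (using the relations $T_i^2=\alpha_iT_i$).

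The computation goes as follows. Under $s_1$, $x\mapsto x+\partial_1(x)T_1$. To apply $E^{\alpha_1}(s_2)$, I first record that the hypothesis $\partial_i(\alpha_j)=0$ guarantees that $s_i(\alpha_j)=\alpha_j$ inside $E^{\alpha_i}(A)$, so the target algebra really is $E^{\alpha_i}(E^{\alpha_j}(A))$ with no twisting of the coefficient $\alpha_j$ — this is exactly why the identification with $E^{(\alpha_1,\alpha_2)}(A)$ is available and legitimate. Then $E^{\alpha_1}(s_2)$ sends each coefficient $a\in A$ of the expression $x+\partial_1(x)T_1$ to $a+\partial_2(a)T_2$, so the image of $x$ becomes
\begin{equation}
x+\partial_2(x)T_2+\bigl(\partial_1(x)+\partial_2(\partial_1(x))T_2\bigr)T_1
= x+\partial_2(x)T_2+\partial_1(x)T_1+\partial_2\partial_1(x)T_1T_2.
\end{equation}
Symmetrically, running through $(i,j)=(2,1)$ gives $x+\partial_1(x)T_1+\partial_2(x)T_2+\partial_1\partial_2(x)T_1T_2$. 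Comparing coefficients of the basis element $T_1T_2$ (the coefficients of $1$, $T_1$, $T_2$ already agree), the two images coincide if and only if $\partial_1\partial_2(x)=\partial_2\partial_1(x)$. Taking this over all $x\in A$ yields the ``in particular'' statement: $\partial_1$ and $\partial_2$ commute iff $E^{\alpha_1}(s_2)\circ s_1=E^{\alpha_2}(s_1)\circ s_2$.

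The genuinely delicate point — the only place where care is needed rather than routine bookkeeping — is the identification of the iterated algebra $E^{s_j(\alpha_i)}(E^{\alpha_j}(A))$ with $E^{\alpha_i}(E^{\alpha_j}(A))$ and hence with $E^{(\alpha_1,\alpha_2)}(A)$ via \eqref{eq:MultTwExtAlg}; this hinges precisely on the hypothesis $\partial_i(\alpha_j)=0$, which forces $s_j(\alpha_i)=\alpha_i$. Without it, the element defining the quotient relation in the outer $E$-construction would be twisted and the clean four-term expansion above would acquire correction terms. Once this is in place, everything else is the straightforward $A$-bilinear expansion in the finite free module $E^{(\alpha_1,\alpha_2)}(A)$ with its multiplication table $T_i^2=\alpha_iT_i$, $T_1T_2=T_2T_1$, and the verification that the three lower-order coefficients match is immediate from $R$-linearity of the $s_i$ and of the $\partial_i$. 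I would present the two expansions, invoke linear independence of $\{1,T_1,T_2,T_1T_2\}$ over $A$, and conclude.
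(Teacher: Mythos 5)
Your proof is correct: expanding $x$ under both compositions in the free $A$-basis $\{1,T_1,T_2,T_1T_2\}$ of $E^{(\alpha_1,\alpha_2)}(A)$, with the hypothesis $\partial_1(\alpha_2)=\partial_2(\alpha_1)=0$ used exactly where needed to get $s_j(\alpha_i)=\alpha_i$ and hence the identification \eqref{eq:MultTwExtAlg}, and comparing the $T_1T_2$-coefficients is precisely the intended argument. The paper itself only cites \cite[5.20]{TsujiPrismQHiggs} without reproducing the proof, and your direct computation is essentially that standard one, so there is nothing to add.
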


\begin{remark}\label{rmk:TwDerivComm}
 We follow the notation in Lemma \ref{lem:TwDerivComm}.\par
(1) Let $I$ be an ideal of $R$ and let $\scrS$ be a subset of $A$
 such that $A$ is $I$-adically separated and $R[\scrS]\subset A$
 is $I$-adically dense. Then Lemma \ref{lem:TwDerivComm} implies
 that $\partial_1$ and $\partial_2$ are commutative if and only 
 if $\partial_1\circ\partial_2(s)=\partial_2\circ\partial_1(s)$ for 
 every $s\in \scrS$.\par
(2) Suppose that $R$ is a $\delta$-ring,
$A$ is a $\delta$-$R$-algebra, $\delta(\alpha_i)\in \alpha_iA$ $(i=1,2)$,
and we are given $\beta_i\in A$ $(i=1,2)$ satisfying $\delta(\alpha_i)=\alpha_i\beta_i$
$(i=1,2)$. If $\partial_i\in \Der^{\alpha_i,\beta_i}_{R,\delta}(A)$ and 
$\partial_1(\beta_2)=\partial_2(\beta_1)=0$, then the two compositions
considered in Lemma \ref{lem:TwDerivComm} define $\delta$-homomorphisms to
the $\delta$-ring $E^{(\alpha_1,\alpha_2),(\beta_1,\beta_2)}_{\delta}(A)$.
\end{remark}

By using Proposition \ref{prop:TwDerivTwExtSec}, Lemma \ref{lem:TwDerivComm}, 
and Remark \ref{rmk:TwDerivComm}, we obtain
the following properties of twisted derivations.

\begin{proposition}\label{prop:TwDerivEtaleExt}
Let $R$ be a ring and let $A$ be an $R$-algebra.
Let $I$ be an ideal of $R$, and let $A'$ be an $I$-adically 
\'etale $A$-algebra (Definition \ref{def:IadicProperties} (1)) 
$I$-adically complete and separated.\par
(1) {\rm(\cite[5.14]{TsujiPrismQHiggs})}
For $\alpha\in IA$, any $\partial\in \Der^{\alpha}_{R}(A)$
has a unique extension $\partial'\in \Der^{\alpha}_R(A')$.\par
(2) {\rm(\cite[6.12]{TsujiPrismQHiggs})} 
Let $\alpha$, $\partial$, and $\partial'$ be as in (1). 
Suppose that $I$ contains $p$, $R$ is a $\delta$-ring, $A$ is a $\delta$-$R$-algebra, 
$\delta(\alpha)\in \alpha A$, and we are given $\beta\in A$
satisfying $\delta(\alpha)=\alpha\beta$. Then $\partial\in \Der^{\alpha,\beta}_{R,\delta}(A)$
implies $\partial'\in \Der^{\alpha,\beta}_{R,\delta}(A')$
for the unique $\delta$-$A$-algebra structure on $A'$
(Proposition \ref{prop:DeltaStruEtaleMap} (1)).\par
(3) {\rm(\cite[5.17]{TsujiPrismQHiggs})} Let $\alpha$, $\partial$, and $\partial'$ be as in (1). 
Then, for an $A'$-algebra $A''$ $I$-adically separated,
an $\alpha$-derivation $\partial''$ of $A''$ over $R$
is an extension of $\partial'$ if and only if it is an extension of $\partial$.\par
(4) {\rm(\cite[5.21]{TsujiPrismQHiggs})} Let $\alpha_i\in IA$ $(i=1,2)$, let $\partial_i\in \Der_R^{\alpha_i}(A)$
$(i=1,2)$, and let $\partial'_i\in \Der_R^{\alpha_i}(A')$ be 
the unique extension of $\partial_i$ (see (1)). Suppose
$\partial_1(\alpha_2)=\partial_2(\alpha_1)=0$. Then 
we have $\partial'_1\circ\partial_2'=\partial'_2\circ\partial'_1$
if $\partial_1\circ\partial_2=\partial_2\circ\partial_1$.
\end{proposition}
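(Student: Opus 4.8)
The plan is to convert all four assertions into statements about $R$-algebra homomorphisms, using the algebras $E^{\alpha}(A)$, $E^{(\alpha_{1},\alpha_{2})}(A)$ together with the dictionaries of Proposition \ref{prop:TwDerivTwExtSec} and Lemma \ref{lem:TwDerivComm}, and then to feed in the formal \'etaleness of $A\to A'$. The uniform technical ingredient I would record first is the following lifting principle. Call an ideal $N$ of an $R$-algebra $B$ \emph{$I$-adically topologically nilpotent} if $N$ is nilpotent in $B/I^{k}B$ for every $k\geq 1$; this holds whenever $N^{m}\subset (IB)N$ for some $m$. Suppose $B$ is $I$-adically separated and $\pi\colon B\to C$ is an $R$-algebra homomorphism whose kernel is $I$-adically topologically nilpotent. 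Then any two $R$-algebra homomorphisms $A'\to B$ that restrict to the same homomorphism on $A$ and have the same composite with $\pi$ coincide; if moreover $C=A'$, $\pi$ is surjective, and $B$ is $I$-adically complete, then every $R$-algebra homomorphism $g\colon A\to B$ with $\pi\circ g$ equal to the structure map $A\to A'$ is of the form $s\circ(A\to A')$ for a unique $R$-algebra section $s$ of $\pi$. Both statements reduce, through the nilpotent thickenings $B/I^{k}B\to C/I^{k}C$, to the infinitesimal lifting property of the \'etale morphisms $A/I^{k}A\to A'/I^{k}A$ and a passage to the inverse limit over $k$.

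For (1), let $\iota\colon A\to A'$ be the structure map and let $s_{\partial}\colon A\to E^{\alpha}(A)$ be the section of $\pi_{A,0}$ attached to $\partial$ (Proposition \ref{prop:TwDerivTwExtSec}(1)). Put $B=E^{\alpha}(A')\cong E^{\alpha}(A)\otimes_{A}A'$, which is finite free over $A'$, hence $I$-adically complete and separated, and $\pi=\pi_{A',0}$. Since $\alpha\in IA$, the kernel $T\,E^{\alpha}(A')$ of $\pi$ satisfies $(T\,E^{\alpha}(A'))^{2}=\alpha\,T\,E^{\alpha}(A')\subset(IB)\,T\,E^{\alpha}(A')$, so it is $I$-adically topologically nilpotent. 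Applying the lifting principle to $g:=E^{\alpha}(\iota)\circ s_{\partial}$ (note $\pi_{A',0}\circ g=\iota$) produces a unique $R$-algebra section $s'$ of $\pi_{A',0}$ with $s'\circ\iota=g$, which by Proposition \ref{prop:TwDerivTwExtSec}(1) equals $s_{\partial'}$ for a unique $\partial'\in\Der^{\alpha}_{R}(A')$; unwinding, $s'\circ\iota=E^{\alpha}(\iota)\circ s_{\partial}$ is precisely $\partial'\circ\iota=\iota\circ\partial$, i.e. $\partial'$ extends $\partial$, and uniqueness of $s'$ gives uniqueness of $\partial'$. For (2), Proposition \ref{prop:DeltaStruEtaleMap} makes $A'$ a $\delta$-$A$-algebra with $\iota$ a $\delta$-homomorphism, so $\delta(\alpha')=\alpha'\beta'$ for $\alpha'=\iota(\alpha)$, $\beta'=\iota(\beta)$, and uniqueness of the $\delta$-structure on $E^{\alpha}(A')$ with $\delta(T)=\beta'T$ identifies $E^{\alpha,\beta}_{\delta}(A)\otimes_{A}A'$ with $E^{\alpha',\beta'}_{\delta}(A')$ as $\delta$-rings. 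Hence $g=E^{\alpha}(\iota)\circ s_{\partial}\colon A\to E^{\alpha',\beta'}_{\delta}(A')$ is a $\delta$-homomorphism, being the composite of the $\delta$-homomorphism $s_{\partial}$ (Proposition \ref{prop:TwDerivTwExtSec}(2)) with the base-change map. Now regard $E^{\alpha',\beta'}_{\delta}(A')$ as a $\delta$-$A$-algebra \emph{via} $g$; with respect to this structure it is $I$-adically separated and $s'$ is a homomorphism of $A$-algebras (because $s'\circ\iota=g$), so Proposition \ref{prop:DeltaStruEtaleMap}(2) forces $s'$ to be a $\delta$-homomorphism. By Proposition \ref{prop:TwDerivTwExtSec}(2) this is exactly $\partial'\in\Der^{\alpha,\beta}_{R,\delta}(A')$.

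For (3), ``$\partial''$ extends $\partial'$'' $\Rightarrow$ ``$\partial''$ extends $\partial$'' is immediate from (1). Conversely, let $\kappa\colon A'\to A''$ be the structure map and $s''=s_{\partial''}$. The homomorphisms $E^{\alpha}(\kappa)\circ s'$ and $s''\circ\kappa$ from $A'$ to $E^{\alpha}(A'')$ have the same composite $\kappa$ with $\pi_{A'',0}$, and they agree on $A$: indeed $E^{\alpha}(\kappa)\circ s'\circ\iota=E^{\alpha}(\kappa)\circ E^{\alpha}(\iota)\circ s_{\partial}=E^{\alpha}(\kappa\iota)\circ s_{\partial}$, which equals $s''\circ(\kappa\iota)$ exactly because $\partial''$ extends $\partial$. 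Since the kernel $T\,E^{\alpha}(A'')$ of $\pi_{A'',0}$ is $I$-adically topologically nilpotent (as the image of $\alpha$ lies in $IA''$) and $E^{\alpha}(A'')$ is $I$-adically separated (finite free over the $I$-adically separated $A''$), the uniqueness part of the lifting principle gives $E^{\alpha}(\kappa)\circ s'=s''\circ\kappa$, i.e. $\partial''$ extends $\partial'$. For (4), $R$-linearity gives $\partial'_{1}(\alpha_{2})=\iota(\partial_{1}(\alpha_{2}))=0$ and $\partial'_{2}(\alpha_{1})=0$, so Lemma \ref{lem:TwDerivComm} applied over $A'$ reduces the claim to the equality of $E^{\alpha_{1}}(s'_{2})\circ s'_{1}$ and $E^{\alpha_{2}}(s'_{1})\circ s'_{2}$ as maps $A'\to E^{(\alpha_{1},\alpha_{2})}(A')$. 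Both have the same composite $\id_{A'}$ with the augmentation $E^{(\alpha_{1},\alpha_{2})}(A')\to A'$ killing $T_{1},T_{2}$, whose kernel $(T_{1},T_{2})$ is $I$-adically topologically nilpotent because $\alpha_{1},\alpha_{2}\in IA$; and restricting along $\iota$, both become the image under the canonical map $E^{(\alpha_{1},\alpha_{2})}(A)\to E^{(\alpha_{1},\alpha_{2})}(A')$ of $E^{\alpha_{1}}(s_{2})\circ s_{1}=E^{\alpha_{2}}(s_{1})\circ s_{2}$ --- this last equality holding by Lemma \ref{lem:TwDerivComm} over $A$ from $\partial_{1}\circ\partial_{2}=\partial_{2}\circ\partial_{1}$ --- where one uses that $E^{\alpha_{1}}(E^{\alpha_{2}}(\iota))$ and $E^{\alpha_{2}}(E^{\alpha_{1}}(\iota))$ both coincide with that canonical map under \eqref{eq:MultTwExtAlg}. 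Again the uniqueness part of the lifting principle concludes.

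\textbf{Main obstacle.} There is no serious difficulty: all the content is absorbed into the lifting principle. The only points that need care are, first, verifying in each of $E^{\alpha}(A')$, $E^{\alpha}(A'')$, and $E^{(\alpha_{1},\alpha_{2})}(A')$ that the relevant augmentation ideal is $I$-adically topologically nilpotent and that the ambient algebra is $I$-adically separated (and, for the existence step in (1), complete) --- this is precisely where the hypotheses $\alpha,\alpha_{i}\in IA$ and the completeness of $A'$ enter; and, second, in (2), being careful to apply Proposition \ref{prop:DeltaStruEtaleMap}(2) with the \emph{twisted} $A$-algebra structure on $E^{\alpha',\beta'}_{\delta}(A')$ coming from $g=E^{\alpha}(\iota)\circ s_{\partial}$ rather than the naive one (they differ by the $\partial$-part, and only the twisted one makes $s'$ an $A$-algebra homomorphism). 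Once these bookkeeping items are in place, the \'etale lifting arguments are routine.
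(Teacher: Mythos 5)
Your proof is correct and follows the strategy the paper itself indicates just before the proposition: convert $\alpha$-derivations into $R$-algebra sections of $E^{\alpha}(\cdot)$ via Proposition~\ref{prop:TwDerivTwExtSec}, translate commutativity via Lemma~\ref{lem:TwDerivComm}, and feed in the \'etale lifting property along the nilpotent thickenings $E^{\bullet}(\cdot)/I^{k}\to(\cdot)/I^{k}$. One small wording slip: in (4), the vanishing $\partial'_{1}(\alpha_{2})=\iota(\partial_{1}(\alpha_{2}))=0$ follows from the fact that $\partial'_{1}$ \emph{extends} $\partial_{1}$, not from $R$-linearity.
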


\begin{proposition}\label{prop:TwDerivPrismEnvExt}
Let $(R,I)$ be a bounded prism,  let $(A,J)$ be a $\delta$-pair
over $(R,I)$, and suppose that there exists a bounded prismatic
envelope $(A,J)\to (D,ID)$ of $(A,J)$ over $(R,I)$
(Definition \ref{def:bddPrismEnv}).\par
(1) {\rm (\cite[6.17]{TsujiPrismQHiggs})} For $\alpha,\beta\in A$ with $\delta(\alpha)=\alpha\beta$, 
any $\partial\in \Der_{R,\delta}^{\alpha,\beta}(A)$ satisfying
$\partial(J)\subset J$ has a unique extension 
$\partial'\in \Der_{R,\delta}^{\alpha,\beta}(D)$.\par
(2) {\rm(\cite[6.20]{TsujiPrismQHiggs})} Let $\alpha$, $\beta$, $\partial$, and $\partial'$ be the same
as in (1), let $(D,ID)\to (B,IB)$ be a homomorphism of bounded prisms
over $(R,I)$, and let $\partial''\in \Der^{\alpha,\beta}_{R,\delta}(B)$.
Then $\partial''$ is an extension of $\partial'$ if and only if
$\partial''$ is an extension of $\partial$.\par
(3) {\rm(\cite[6.23]{TsujiPrismQHiggs})} Let $\alpha_i,\beta_i\in A$ $(i=1,2)$ with
$\delta(\alpha_i)=\alpha_i\beta_i$,  let
$\partial_i\in\Der_{R,\delta}^{\alpha_i,\beta_i}(A)$ $(i=1,2)$
satisfying $\partial_i(J)\subset J$, and let 
$\partial_i'\in \Der_{R,\delta}^{\alpha_i,\beta_i}(D)$ be the
unique extension of $\partial_i$ for $i=1,2$ (see (1)).
Assume $\partial_i(\alpha_j)=\partial_i(\beta_j)=0$ for $(i,j)=(1,2), (2,1)$.
Then we have $\partial_1'\circ\partial_2'=\partial_2'\circ\partial_1'$
if $\partial_1\circ\partial_2=\partial_2\circ\partial_1$.
\end{proposition}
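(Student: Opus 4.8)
The plan is to reformulate all three assertions via the dictionary of \S\ref{sec:TwDeriv} between $\alpha$-derivations and $\delta$-sections of the augmentations $\pi_{-,0}$ (Proposition \ref{prop:TwDerivTwExtSec}, Lemma \ref{lem:TwDerivComm}), and then feed suitable auxiliary morphisms into the universal property of the bounded prismatic envelope $f\colon(A,J)\to(D,ID)$ (Definition \ref{def:bddPrismEnv}). The enabling observation is that for any bounded prism $B$ the ring $E^{\alpha}(B)=B[T]/(T(T-\alpha))$ is finite free over $B$, hence a $(pB+IB)$-adically flat, $(pB+IB)$-adically complete and separated $\delta$-$B$-algebra, so Proposition \ref{prop:bddPrismFlatAlg} makes $(E^{\alpha,\beta}_{\delta}(B),IE^{\alpha}(B))$ a bounded prism over $(R,I)$; the same applies to $(E^{(\alpha_1,\alpha_2),(\beta_1,\beta_2)}_{\delta}(B),IE^{(\alpha_1,\alpha_2)}(B))$. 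Moreover, $R$-linearity of a $\delta$-section forces it to carry $IB$ into $IE^{\alpha}(B)$, so a $\delta$-section of $\pi_{B,0}$ is automatically a morphism of bounded prisms $(B,IB)\to(E^{\alpha,\beta}_{\delta}(B),IE^{\alpha}(B))$ over $(R,I)$, and similarly in the two-variable case.

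For (1), I would take the $\delta$-section $s_{\partial}\colon A\to E^{\alpha,\beta}_{\delta}(A)$ attached to $\partial$ by Proposition \ref{prop:TwDerivTwExtSec} and compose with the functorial map $E^{\alpha,\beta}_{\delta}(A)\to E^{\alpha,\beta}_{\delta}(D)$ to obtain a $\delta$-homomorphism $\sigma\colon A\to E^{\alpha,\beta}_{\delta}(D)$ with $\sigma(x)=f(x)+f(\partial(x))T$. Because $\partial(J)\subset J$ and $f(J)\subset ID$, we get $\sigma(J)\subset IE^{\alpha}(D)$, so $\sigma$ is a morphism of $\delta$-pairs over $(R,I)$ into a bounded prism; the universal property yields a unique morphism of bounded prisms $h\colon(D,ID)\to(E^{\alpha,\beta}_{\delta}(D),IE^{\alpha}(D))$ over $(R,I)$ with $h\circ f=\sigma$. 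Composing with $\pi_{D,0}$ and using the uniqueness clause of the universal property (both $\pi_{D,0}\circ h$ and $\id_D$ are endomorphisms of $(D,ID)$ over $(R,I)$ restricting to $f$ on $A$) gives $\pi_{D,0}\circ h=\id_D$, so $h$ is a $\delta$-section and hence equals $s_{\partial'}$ for a unique $\partial'\in\Der^{\alpha,\beta}_{R,\delta}(D)$ by Proposition \ref{prop:TwDerivTwExtSec}; the identity $h\circ f=\sigma$ unwinds to $\partial'\circ f=f\circ\partial$, i.e.\ $\partial'$ extends $\partial$. Uniqueness is immediate: two extensions yield two $\delta$-sections $D\to E^{\alpha,\beta}_{\delta}(D)$ that agree on $A$, hence coincide by the universal property.

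Parts (2) and (3) are variations on the same theme. In (2), the ``only if'' direction is transitivity of the extension relation; for the converse I would compare, for $g\colon D\to B$, the two $\delta$-homomorphisms $s_{\partial''}\circ g$ and $E^{\alpha}(g)\circ s_{\partial'}$ from $D$ into the bounded prism $(E^{\alpha,\beta}_{\delta}(B),IE^{\alpha}(B))$: a short computation using that both $\partial''$ and $\partial'$ extend $\partial$ shows they restrict to the same morphism on $A$, so they agree by uniqueness of factorization through $(D,ID)$, and the coefficient of $T$ gives $\partial''\circ g=g\circ\partial'$. In (3), the hypotheses $\partial_i(\alpha_j)=\partial_i(\beta_j)=0$ propagate to $\partial'_i(\alpha_j)=\partial'_i(\beta_j)=0$, so by Remark \ref{rmk:TwDerivComm}(2) each of the two compositions of Lemma \ref{lem:TwDerivComm} for $(\partial_1,\partial_2)$, resp.\ $(\partial'_1,\partial'_2)$, is a $\delta$-homomorphism into $E^{(\alpha_1,\alpha_2),(\beta_1,\beta_2)}_{\delta}(A)$, resp.\ $E^{(\alpha_1,\alpha_2),(\beta_1,\beta_2)}_{\delta}(D)$; the hypothesis $\partial_1\partial_2=\partial_2\partial_1$ makes the two compositions over $A$ equal (Lemma \ref{lem:TwDerivComm}), and composing this common map with the functorial map to $E^{(\alpha_1,\alpha_2),(\beta_1,\beta_2)}_{\delta}(D)$ (which carries $J$ into the ideal since $\partial_i(J)\subset J$) and applying the universal property as in (1)--(2) forces the two compositions over $D$ to coincide, whence $\partial'_1\partial'_2=\partial'_2\partial'_1$ by Lemma \ref{lem:TwDerivComm}.

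I do not expect a conceptual obstacle; the proof is a systematic transport across Proposition \ref{prop:TwDerivTwExtSec} and Lemma \ref{lem:TwDerivComm}. The two points needing genuine care are: first, that the auxiliary rings $E^{\alpha}(D)$ and $E^{(\alpha_1,\alpha_2)}(D)$, with ideal generated by $I$, are bounded prisms over $(R,I)$ (this is where finite freeness over $D$ and Proposition \ref{prop:bddPrismFlatAlg} enter); and second, that the morphisms out of $(A,J)$ that we feed to the universal property actually carry $J$ into the distinguished ideal, which is precisely the role of the hypotheses $\partial(J)\subset J$ and $\partial_i(J)\subset J$. In (3) one must in addition keep track of which of the two compositions in Lemma \ref{lem:TwDerivComm} corresponds to which order of composition when transporting from $A$ to $D$.
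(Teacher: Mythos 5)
Your proposal is correct and follows exactly the route the paper indicates for Proposition \ref{prop:TwDerivPrismEnvExt} (which it quotes from \cite[6.17, 6.20, 6.23]{TsujiPrismQHiggs}): translate $\delta$-compatible $\alpha$-derivations into $\delta$-sections of $\pi_{-,0}$ via Proposition \ref{prop:TwDerivTwExtSec}, note that $E^{\alpha,\beta}_{\delta}(D)$ and $E^{\ualpha,\ubeta}_{\delta}(D)$ with the ideal generated by $I$ are bounded prisms by Proposition \ref{prop:bddPrismFlatAlg}, and conclude by the uniqueness clause of the universal property of the envelope, with Lemma \ref{lem:TwDerivComm} and Remark \ref{rmk:TwDerivComm} handling part (3). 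The verifications you single out (boundedness of the auxiliary prisms, $\sigma(J)\subset IE$ via $\partial(J)\subset J$, and $\pi_{D,0}\circ h=\id_D$ by uniqueness) are precisely the points that make the argument go through, so no gap remains.
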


\section{Connections over twisted derivations}\label{sec:connection}
In this section, we recall a connection over twisted derivations,
its de Rham complex, and their behavior under a scalar extension.

\begin{definition}[{\cite[8.1]{TsujiPrismQHiggs}}]
\label{def:TwConnectionOneVar}
Let $A$ be a ring, let $\alpha\in A$, and let $\partial$ be an $\alpha$-derivation of $A$
over $\Z$. Let $\gamma$ be the ring endomorphism 
$\id_A+\alpha\partial$ of $A$ (Lemma \ref{lem:TwDerivEndomDeltaComp} (1)). 
An {\it $(\alpha,\partial)$-connection}
on an $A$-module $M$ is an additive map
$\nabla_M\colon M\to M$ satisfying $\nabla_M(ax)=
\gamma(a)\nabla_M(x)+\partial(a)x$ for $a\in A$ and $x\in M$.
A {\it homomorphism $f\colon (M,\nabla_M)\to (M',\nabla_{M'})$ of 
$A$-modules with $(\alpha,\partial)$-connection} is an 
$A$-linear map $f\colon M\to M'$ satisfying $\nabla_{M'}\circ f=f\circ\nabla_M$.
\end{definition}

\begin{lemma}[{\cite[8.2]{TsujiPrismQHiggs}}]
\label{lem:TwConnSemilinEnd}
Let $A$, $\alpha$, $\partial$, and $\gamma$ be as 
in Definition \ref{def:TwConnectionOneVar}, and
let $M$ be an $A$-module.
If $\nabla_M$ is an $(\alpha,\partial)$-connection on $M$,
then the additive endomorphism $\gamma_M=\id_M+\alpha\nabla_M$
of $M$ is $\gamma$-semilinear over $A$, i.e., $\gamma_M(ax)=
\gamma(a)\gamma_M(x)$ $(a\in A,x\in M)$. If $\alpha$ is $M$-regular,
then this gives a bijection between the set of $(\alpha,\partial)$-connections
on $M$ and the set of $\gamma$-semilinear endomorphisms of $M$  over $A$
congruent to $\id_M$ modulo $\alpha M$. Note $\gamma(\alpha A)\subset \alpha A$.
\end{lemma}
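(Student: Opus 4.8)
The plan is to verify the two claims in Lemma \ref{lem:TwConnSemilinEnd} by direct computation, using the defining relation of an $(\alpha,\partial)$-connection together with the twisted Leibniz rule \eqref{eq:TwDerivCond2} for $\partial$ and the multiplicativity of $\gamma=\id_A+\alpha\partial$. First I would unwind $\gamma_M=\id_M+\alpha\nabla_M$: for $a\in A$ and $x\in M$,
\begin{equation*}
\gamma_M(ax)=ax+\alpha\nabla_M(ax)=ax+\alpha\bigl(\gamma(a)\nabla_M(x)+\partial(a)x\bigr)
=(a+\alpha\partial(a))x+\gamma(a)\alpha\nabla_M(x)=\gamma(a)x+\gamma(a)\alpha\nabla_M(x),
\end{equation*}
which is exactly $\gamma(a)\gamma_M(x)$. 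This proves $\gamma_M$ is $\gamma$-semilinear; note also that $\gamma_M\equiv\id_M\bmod\alpha M$ is immediate from the definition. For the last assertion $\gamma(\alpha A)\subset\alpha A$, it suffices to observe $\gamma(\alpha a)=\alpha a+\alpha\partial(\alpha a)\in\alpha A$.

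Next I would treat the bijectivity claim under the hypothesis that $\alpha$ is $M$-regular. Given a $\gamma$-semilinear endomorphism $\gamma_M$ of $M$ with $\gamma_M\equiv\id_M\bmod\alpha M$, for each $x\in M$ the element $\gamma_M(x)-x$ lies in $\alpha M$, so by $M$-regularity of $\alpha$ there is a unique $\nabla_M(x)\in M$ with $\gamma_M(x)-x=\alpha\nabla_M(x)$; this defines a map $\nabla_M\colon M\to M$, and $M$-regularity also gives uniqueness, so the two assignments $\nabla_M\mapsto\gamma_M$ and $\gamma_M\mapsto\nabla_M$ are mutually inverse once I check that $\nabla_M$ so defined is an $(\alpha,\partial)$-connection. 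Additivity of $\nabla_M$ follows by applying the regularity cancellation to the additivity of $\gamma_M$. For the connection rule: from $\gamma_M(ax)=\gamma(a)\gamma_M(x)$ I compute
\begin{equation*}
\alpha\nabla_M(ax)=\gamma_M(ax)-ax=\gamma(a)\gamma_M(x)-ax=\gamma(a)\bigl(x+\alpha\nabla_M(x)\bigr)-ax
=(\gamma(a)-a)x+\alpha\gamma(a)\nabla_M(x),
\end{equation*}
and since $\gamma(a)-a=\alpha\partial(a)$ this reads $\alpha\nabla_M(ax)=\alpha\bigl(\partial(a)x+\gamma(a)\nabla_M(x)\bigr)$; cancelling $\alpha$ by $M$-regularity yields the desired identity.

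The computations are entirely routine; the only point that requires a little care, and which I would flag as the main (minor) obstacle, is the bookkeeping around $M$-regularity of $\alpha$ — one must invoke it both to produce $\nabla_M$ well-definedly from $\gamma_M$ and to deduce the connection/additivity identities by cancelling the common factor $\alpha$, and one should confirm that without this hypothesis only the first (one-directional) statement survives. I would also remark that this lemma is the module-level analogue of Lemma \ref{lem:TwDerivEndomDeltaComp} (1) for the algebra $A$ itself, and the proof is formally the same with $\gamma$-semilinearity replacing the homomorphism property.
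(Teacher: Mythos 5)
Your proof is correct and is precisely the direct verification one would expect; the paper itself states this lemma without proof (citing \cite[8.2]{TsujiPrismQHiggs}), and your argument — unwinding $\gamma_M(ax)$ via the defining connection rule, and inverting the construction by cancelling $\alpha$ when $\alpha$ is $M$-regular — is the standard and essentially unique route.
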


\begin{remark}\label{rmk:TwConnectionRlin}
Let $A$, $\alpha$, $\partial$, and $\gamma$ be as 
in Definition \ref{def:TwConnectionOneVar}, suppose that $A$ is an algebra over a ring $R$,
and $\partial$ is an $\alpha$-derivation over $R$, i.e., $R$-linear. Then 
every $(\alpha,\partial)$-connection and its associated $\gamma$-semilinear
endomorphism are $R$-linear since $\partial(r\cdot 1_A)=0$ for every $r\in R$.
\end{remark}

Let $A$, $\alpha$, $\partial$, and $\gamma$ be as 
in Definition \ref{def:TwConnectionOneVar}.
We keep the notation in Definition \ref{def:TwConnectionOneVar}.
Let $(M,\nabla_M)$ and $(M',\nabla_{M'})$ be $A$-modules
with $(\alpha,\partial)$-connection.  Then
we see that the map $M\times M'\to M\otimes_AM'$
sending $(x,x')$ to 
\begin{align}\label{eq:TensorSingleConnection}
\nabla_{M}(x)\otimes \gamma_{M'}(x')+
x\otimes\nabla_{M'}(x')
=\nabla_{M}(x)\otimes x'+x\otimes\nabla_{M'}(x')+\alpha\nabla_{M}(x)\otimes\nabla_{M'}(x')
\end{align}
is $A$-bilinear and defines an $(\alpha,\partial)$-connection 
$\nabla_{M\otimes M'}\colon M\otimes_AM'\to M\otimes_AM'$ on $M\otimes_AM'$.
We define the {\it tensor product
of $(M,\nabla_M)$ and $(M',\nabla_{M'})$} (resp.~$\nabla_M$ {\it and} $\nabla_{M'}$) to be
$(M\otimes_AM',\nabla_{M\otimes M'})$ (resp.~$\nabla_{M\otimes M'}$). 
It is straightforward to verify that $\gamma_{M\otimes M'}=\id_{M\otimes M'}+
\alpha\nabla_{M\otimes M'}$ coincides with $\gamma_M\otimes\gamma_{M'}$.
This implies that, for another $A$-module with $(\alpha,\partial)$-connection
$(M'',\nabla_{M''})$, the isomorphism 
$(M\otimes_AM')\otimes_AM''\cong M\otimes_A(M'\otimes_AM'')$
is compatible with $(\alpha,\partial)$-connections.

Let $A$ be a ring and let $\Lambda$ be
a finite set. In the following, we assume that we are given $(\alpha_i)_{i\in\Lambda}
\in A^{\Lambda}$ and $\partial_i\in \Der^{\alpha_i}_{\Z}(A)$ for each
$i\in \Lambda$ satisfying the the following two conditions.
\begin{align}
&\partial_i(\alpha_j)=0\quad(i,j\in \Lambda, i\neq j)\label{eq:TwDerivConnCond1}\\
&\partial_i\circ\partial_j=\partial_j\circ\partial_i \quad(i,j\in\Lambda)\label{eq:TwDerivConnCond2}
\end{align}
For $i\in \Lambda$, we define the ring endomorphism $\gamma_i$ 
of $A$ to be $\id_A+\alpha_i\partial_i$.
We have $\gamma_i\circ\gamma_j=\gamma_j\circ\gamma_i$
and $\gamma_i\circ\partial_j=\partial_j\circ\gamma_i$ for $i\neq j$.
Put $\ualpha=(\alpha_i)_{i\in\Lambda}$,
$\upartial=(\partial_i)_{i\in\Lambda}$, and 
$\ugamma=(\gamma_i)_{i\in\Lambda}$.

\begin{example}
The $t_i\mu$-derivations $\theta_{A,i}$ $(i\in \Lambda)$ of $A$ over $R$
constructed in the last paragraph in Example \ref{eq:qHiggsDerivPolRing} 
satisfy the conditions \eqref{eq:TwDerivConnCond1}
and \eqref{eq:TwDerivConnCond2}. (We put 
$\alpha_i=t_i\mu$ and $\partial_i=\theta_{A,i}$ for $i\in\Lambda$.)
\end{example}

\begin{definition}[{\cite[8.8, 8.9, 8.11]{TsujiPrismQHiggs}}]
\label{def:TwConnectionDRcpx}
(1) An {\it $(\ualpha,\upartial)$-connection} on an $A$-module
is a family $\unabla_{M}=(\nabla_{M,i})_{i\in\Lambda}$ consisting
of $(\alpha_i,\partial_i)$-connections $\nabla_{M,i}$ on $M$
(Definition \ref{def:TwConnectionOneVar}). 
We write $\gamma_{M,i}$ for the $\gamma_i$-semilinear endomorphism
$\id_M+\alpha_i\nabla_{M,i}$ of $M$ associated to $\nabla_{M,i}$
(Lemma \ref{lem:TwConnSemilinEnd}),
and let $\ugamma_{M}$ denote the family $(\gamma_{M,i})_{i\in\Lambda}$.
A {\it homomorphism of $A$-modules with $(\ualpha,\upartial)$-connection}
$f\colon (M,\unabla_M)\to (M',\unabla_{M'})$ is an $A$-linear map
$f\colon M\to M'$ satisfying $\nabla_{M',i}\circ f=f\circ\nabla_{M,i}$ for
every $i\in \Lambda$. 
We say that $\unabla_M$ is {\it integrable} if 
$\nabla_{M,i}\circ\nabla_{M,j}=\nabla_{M,j}\circ\nabla_{M,i}$
for every $i,j\in\Lambda$. 
If $\unabla_M$ is integrable, the endomorphisms $\gamma_{M,i}$ $(i\in\Lambda)$
commute with each other
by \eqref{eq:TwDerivConnCond1}.
We write $\MIC(A,(\ualpha,\upartial))$ for the category of 
$A$-modules with integrable $(\ualpha,\upartial)$-connection.
\par
(2)  For an integrable  $(\ualpha,\upartial)$-connection
$\unabla_M=(\nabla_{M,i})_{i\in\Lambda}$ on an $A$-module $M$,
we define the {\it de Rham complex} $(\Omega^r(M,\unabla_M),\nabla^r_M)_{r\in \N}$ 
of $(M,\unabla_M)$ to be the Koszul complex associated to the commutative
operators $(\nabla_{M,i})_{i\in\Lambda}$ on $M$ as follows:
$\Omega^r(M,\unabla_M)=M\otimes_{\Z}\wedge^r(\oplus_{i\in\Lambda}\Z\omega_i)$
$(r\in \N)$ and $\nabla^r_M(x\otimes\omega_{\bmI})=
\sum_{i\in\Lambda}\nabla_{M,i}(x)\otimes(\omega_i\wedge\omega_{\bmI})$
$(r\in \N, x\in M, \bmI\in \Lambda^r)$. Here $\omega_{\bmI}$
denotes $\omega_{i_1}\wedge\cdots\wedge \omega_{i_r}$ for
$\bmI=(i_1,\ldots, i_r)\in\Lambda^r$.\par
(3) For two $A$-modules with $(\ualpha,\upartial)$-connection
$(M,\unabla_M)$ and $(M',\unabla_{M'})$,
we define the {\it tensor product
$(M\otimes_AM',\unabla_{M\otimes M'})$} (resp.~$\unabla_{M\otimes M'}$)
{\it of $(M,\unabla_M)$ and $(M',\unabla_{M'})$} (resp.~$\unabla_M$ {\it and} $\unabla_{M'}$)
by $\unabla_{M\otimes M'}=(\nabla_{M\otimes M',i})_{i\in\Lambda}$,
where $\nabla_{M\otimes M',i}$ is the tensor product of 
the $(\alpha_i,\partial_i)$-connections $\nabla_{M,i}$ and $\nabla_{M',i}$. 
If $\unabla_M$ and $\unabla_{M'}$ are integrable, one can verify,
by an explicit computation, that $\unabla_{M\otimes M'}$ is also integrable.
\end{definition}

For $A$-modules with integrable $(\ualpha,\upartial)$-connection 
$(M,\unabla_M)$ and $(M',\unabla_{M'})$, we can define a morphism of complexes
\begin{equation}\label{eq:DRCpxProdMap}
\Omegab(M,\unabla_M)\otimes_{\Z}\Omegab(M',\unabla_{M'})\longrightarrow
\Omegab(M\otimes_AM',\unabla_{M\otimes M'})
\end{equation}
by sending $(x\otimes\omega_{\bmI})\otimes(x'\otimes\omega_{\bmI'})$
to $x\otimes\gamma_{M',\bmI}(x')\otimes\omega_{\bmI}\wedge\omega_{\bmI'}$
\cite[8.13]{TsujiPrismQHiggs}.
Here $\gamma_{M',\bmI}$ denotes $\gamma_{M',i_1}\circ\dots \circ \gamma_{M',i_r}$
for $\bmI=(i_1,\ldots, i_r)\in \Lambda^r$. We write $z\wedge_{\ugamma_{M'}}z'$
for the image of $z\otimes z'$ under \eqref{eq:DRCpxProdMap}. Then, for another $A$-module with
integrable $(\ualpha,\upartial)$-connection $(M'',\unabla_{M''})$, 
we have $(z\wedge_{\ugamma_{M'}}z')\wedge_{\ugamma_{M''}}z''
=z\wedge_{\ugamma_{M'\otimes M''}}(z'\wedge_{\ugamma_{M''}}z'')$.

\begin{remark}\label{rmk:dRComplexRlin}
If $A$ is an algebra over a ring $R$, and $\partial_i$ is
an $\alpha_i$-derivation of $A$ over $R$ for every $i\in \Lambda$, 
then the differential maps 
of the de Rham complex of an $A$-module with $(\ualpha,\upartial)$-connection
are $R$-linear and the product morphism \eqref{eq:DRCpxProdMap} factors through
the tensor product over $R$.
\end{remark}

To study the behavior of $(\ualpha,\upartial)$-connections under scalar extensions, 
we give an interpretation of an integrable $(\ualpha,\upartial)$-connection in terms of 
the $A$-algebra $E^{\ualpha}(A)=A[T_i\,(i\in \Lambda)]/(T_i(T_i-\alpha_i),i\in\Lambda)$
as follows. (See the paragraph before Lemma \ref{lem:TwDerivComm} for 
$E^{\ualpha}(A)$.)  For $\ui\subset \Lambda$, we define $\partial_{\ui}$ 
(resp.~$T_{\ui}\in E^{\ualpha}(A)$)
to be the composition of $\partial_i$ $(i\in \ui)$ which does not depend
on the choice of  order of composition by \eqref{eq:TwDerivConnCond2} (resp.~$\prod_{i\in \ui}T_i$). 
Then, by using \eqref{eq:TwDerivConnCond1}, we see that
the additive map $s_{\upartial}\colon A\to E^{\ualpha}(A)$ defined by 
$s_{\upartial}(a)=\sum_{\ui\subset \Lambda}\partial_{\ui}(a)T_{\ui}$ is 
a ring homomorphism \cite[8.14, 8.15]{TsujiPrismQHiggs}. We define $E^{\ualpha,\upartial}(A)$ 
to be $E^{\ualpha}(A)$ regarded as an $A$-bialgebra by viewing 
the canonical $A$-algebra structure (resp.~$s_{\upartial}$) as 
a left (resp.~right) $A$-algebra structure.
The tensor product $E^{\ualpha,\upartial}(A)\otimes_A-$
(resp.~$-\otimes_AE^{\ualpha,\upartial}(A)$) is taken with respect to the
right (resp.~left) $A$-algebra structure.
We regard
$E^{\ualpha,\upartial}(A)\otimes_AE^{\ualpha,\upartial}(A)$ 
as an $A$-bialgebra by giving the left (resp.~right) $A$-algebra structure via
that of the left (resp.~right) $E^{\ualpha,\upartial}(A)$, and we define
$\overline{E^{\ualpha,\upartial}(A)\otimes_AE^{\ualpha,\upartial}(A)}$ to be
the quotient of the $A$-bialgebra $E^{\ualpha,\upartial}(A)\otimes_AE^{\ualpha,\upartial}(A)$
by the ideal generated by $T_i\otimes T_i$ $(i\in \Lambda)$. 
Then $\overline{E^{\ualpha,\upartial}(A)\otimes_AE^{\ualpha,\upartial}(A)}$ is 
a left free $A$-module with basis $T_{\ui}\otimes T_{\ui'}$
$(\ui,\ui'\subset\Lambda,\ui\cap\ui'=\emptyset)$, and 
there exists an $A$-bialgebra homomorphism 
$\delta^{\ualpha,\upartial}\colon E^{\ualpha,\upartial}(A)\to 
\overline{E^{\ualpha,\upartial}(A)\otimes_AE^{\ualpha,\upartial}(A)}$ sending
$T_i$ to $T_i\otimes 1+1\otimes T_i$ $(i\in\Lambda)$ \cite[8.23 (2)]{TsujiPrismQHiggs}.

Let $(M,\unabla_M)$ be an $A$-module with integrable $(\ualpha,\upartial)$-connection.
For $\ui\subset \Lambda$, we define $\nabla_{M,\ui}$ to be the composition of
$\nabla_{M,i}$ $(i\in\ui)$ which does not depend on the choice of order of composition.
We regard $M$ as a right $A$-module. Then by using $\nabla_{M,i}(xa)=\nabla_{M,i}(x)a+
x\partial_i(a)+\nabla_{M,i}(x)\partial_i(a)\alpha_i$ $(x\in M, a\in A)$ and \eqref{eq:TwDerivConnCond1}, 
we see that the additive map $s_{\unabla_M}\colon M\to M\otimes_AE^{\ualpha,\upartial}(A)$
defined by $s_{\unabla_M}(x)=\sum_{\ui\subset \Lambda}\nabla_{M,\ui}(x)\otimes T_{\ui}$
$(x\in M)$ is a right $A$-linear map \cite[8.24, 8.26]{TsujiPrismQHiggs}. 

\begin{proposition}[{\cite[8.29]{TsujiPrismQHiggs}}]
Under the notation above, the right $A$-linear map $s_{\unabla_M}$
satisfies the two properties \eqref{eq:TwStratCond1} and \eqref{eq:TwStratCond2} 
below for $s_M=s_{\unabla_M}$, where $\pi_0\colon E^{\ualpha,\upartial}(A)\to A$
denotes the $A$-bialgebra homomorphism defined by $T_i\mapsto 0$ $(i\in\Lambda)$.
Moreover this gives an equivalence of categories between $\MIC(A,(\ualpha,\upartial))$
and the category of pairs $(M,s_M)$ with $s_M$ a right $A$-linear map $s_M\colon M\to 
M\otimes_AE^{\ualpha,\upartial}(A)$ satisfying \eqref{eq:TwStratCond1} 
and \eqref{eq:TwStratCond2}.
\begin{align}
&\text{The composition }
M\xrightarrow{s_M} M\otimes_AE^{\ualpha,\upartial}(A)
\xrightarrow{\id_M\otimes\pi_0} M\text{ is the identity map.}\label{eq:TwStratCond1}\\
&\text{The following diagram is commutative.}\label{eq:TwStratCond2}\\
&\xymatrix@C=40pt{
M\ar[r]^(.4){s_M}\ar[d]_{s_M}&
M\otimes_AE^{\ualpha,\upartial}(A)\ar[r]^(.4){s_M\otimes\id}&
M\otimes_AE^{\ualpha,\upartial}(A)\otimes_AE^{\ualpha,\upartial}(A)\ar[d]\\
M\otimes_AE^{\ualpha,\upartial}(A)\ar[rr]^(.45){\id_M\otimes\delta^{\ualpha,\upartial}}&&
M\otimes_A\overline{E^{\ualpha,\upartial}(A)\otimes_AE^{\ualpha,\upartial}(A)}
}\notag
\end{align}
\end{proposition}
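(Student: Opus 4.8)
The statement is a twisted analogue of the classical equivalence relating modules with integrable connection to modules with stratification, so the plan is to exhibit $(M,\unabla_M)\mapsto(M,s_{\unabla_M})$ as one half of an equivalence whose quasi-inverse reads the individual connections off the components of $s_M$ relative to the free $A$-basis $\{T_{\ui}\}_{\ui\subset\Lambda}$ of $E^{\ualpha,\upartial}(A)$ (with respect to the canonical, left, $A$-algebra structure), and to match the two structures by a direct computation with the relations $T_i(T_i-\alpha_i)=0$. First I would check that $s_M=s_{\unabla_M}$ satisfies \eqref{eq:TwStratCond1} and \eqref{eq:TwStratCond2}, the right $A$-linearity being already recorded above; \eqref{eq:TwStratCond1} is immediate since $\nabla_{M,\emptyset}=\id_M$, $T_{\emptyset}=1$ and $\pi_0(T_{\ui})=0$ for $\ui\neq\emptyset$. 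For \eqref{eq:TwStratCond2} I would expand both composites in the free $A$-basis $\{T_{\uj}\otimes T_{\ui}\}_{\uj\cap\ui=\emptyset}$ of $\overline{E^{\ualpha,\upartial}(A)\otimes_AE^{\ualpha,\upartial}(A)}$. Since $\delta^{\ualpha,\upartial}$ is an algebra homomorphism with $\delta^{\ualpha,\upartial}(T_i)=T_i\otimes1+1\otimes T_i$ and $T_k\otimes T_k=0$ in the quotient, one gets $\delta^{\ualpha,\upartial}(T_{\ui})=\sum_{\ui=\uj\sqcup\ui'}T_{\uj}\otimes T_{\ui'}$; hence the $(T_{\uj}\otimes T_{\ui})$-component of $(\id_M\otimes\delta^{\ualpha,\upartial})\circ s_M$ is $\nabla_{M,\uj\cup\ui}$, while that of the projection of $(s_M\otimes\id)\circ s_M$ is $\nabla_{M,\uj}\circ\nabla_{M,\ui}$. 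Thus \eqref{eq:TwStratCond2} reduces to $\nabla_{M,\uj}\circ\nabla_{M,\ui}=\nabla_{M,\uj\cup\ui}$ for disjoint $\uj,\ui$, which holds by the definition of $\nabla_{M,\ui}$ as an iterated composition together with the integrability of $\unabla_M$.

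For the quasi-inverse, given $(M,s_M)$ with $s_M$ right $A$-linear and satisfying \eqref{eq:TwStratCond1}, \eqref{eq:TwStratCond2}, write $s_M(x)=\sum_{\ui\subset\Lambda}\nabla_{M,\ui}(x)\otimes T_{\ui}$ with uniquely determined additive $\nabla_{M,\ui}\colon M\to M$ and set $\nabla_{M,i}:=\nabla_{M,\{i\}}$. Expanding the right $A$-linearity $s_M(xa)=s_M(x)\cdot a$ by means of $s_{\upartial}(a)=\sum_{\uj}\partial_{\uj}(a)T_{\uj}$ and $T_{\ui}T_{\uj}=(\prod_{k\in\ui\cap\uj}\alpha_k)T_{\ui\cup\uj}$, and comparing the component of $T_{\{i\}}$ (to which only $(\ui,\uj)\in\{(\{i\},\emptyset),(\emptyset,\{i\}),(\{i\},\{i\})\}$ contribute), gives $\nabla_{M,i}(ax)=\gamma_i(a)\nabla_{M,i}(x)+\partial_i(a)x$ with $\gamma_i=\id_A+\alpha_i\partial_i$; that is, $\nabla_{M,i}$ is an $(\alpha_i,\partial_i)$-connection (Definition \ref{def:TwConnectionOneVar}). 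Condition \eqref{eq:TwStratCond1} gives $\nabla_{M,\emptyset}=\id_M$, and the computation of the previous paragraph shows \eqref{eq:TwStratCond2} is equivalent to $\nabla_{M,\uj}\circ\nabla_{M,\ui}=\nabla_{M,\uj\cup\ui}$ on disjoint subsets; taking singletons yields $\nabla_{M,i}\circ\nabla_{M,j}=\nabla_{M,j}\circ\nabla_{M,i}$, so $\unabla_M=(\nabla_{M,i})_{i\in\Lambda}$ is an integrable $(\ualpha,\upartial)$-connection, i.e.\ an object of $\MIC(A,(\ualpha,\upartial))$, and by induction on $|\ui|$ each $\nabla_{M,\ui}$ is the iterated composition of the $\nabla_{M,i}$, $i\in\ui$; hence $s_M=s_{\unabla_M}$, and the two constructions are mutually inverse on objects. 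Functoriality in both directions then follows by comparing components of the $T_{\ui}$: an $A$-linear $f\colon M\to M'$ satisfies $(f\otimes\id)\circ s_M=s_{M'}\circ f$ if and only if $\nabla_{M',i}\circ f=f\circ\nabla_{M,i}$ for every $i\in\Lambda$.

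The step I expect to be the main obstacle is the bookkeeping of the two $A$-algebra structures on $E^{\ualpha,\upartial}(A)$, hence of the left and right $A$-module structures on $M\otimes_AE^{\ualpha,\upartial}(A)$ and on $\overline{E^{\ualpha,\upartial}(A)\otimes_AE^{\ualpha,\upartial}(A)}$: one must be careful that unwinding the right $A$-linearity of $s_M$ produces exactly the $\alpha_i$-twisted Leibniz rule of Definition \ref{def:TwConnectionOneVar} rather than the untwisted one, and that the coassociativity diagram \eqref{eq:TwStratCond2}, read modulo the $T_k\otimes T_k$, genuinely forces the $\nabla_{M,i}$ to commute. Once the conventions are pinned down, each verification is a finite computation with the relations $T_i(T_i-\alpha_i)=0$ and the hypotheses \eqref{eq:TwDerivConnCond1}, \eqref{eq:TwDerivConnCond2}.
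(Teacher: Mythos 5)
Your argument is correct: the componentwise verification in the left $A$-basis $\{T_{\ui}\}_{\ui\subset\Lambda}$ (and $\{T_{\uj}\otimes T_{\ui}\}_{\uj\cap\ui=\emptyset}$), reducing \eqref{eq:TwStratCond1} to $\nabla_{M,\emptyset}=\id_M$, \eqref{eq:TwStratCond2} to $\nabla_{M,\uj}\circ\nabla_{M,\ui}=\nabla_{M,\uj\cup\ui}$ on disjoint subsets, and right $A$-linearity to the twisted Leibniz rule via $T_{\ui}T_{\uj}=(\prod_{k\in\ui\cap\uj}\alpha_k)T_{\ui\cup\uj}$, is exactly the standard dictionary this statement records. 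The present paper gives no proof itself (it cites \cite[8.29]{TsujiPrismQHiggs}), and the formulas it recalls around the statement set up precisely this bookkeeping, so your route matches the intended one; the only nitpick is that $\nabla_{M,\emptyset}=\id_M$ (i.e.\ \eqref{eq:TwStratCond1}) should be invoked before, not after, the Leibniz-rule computation, since the $(\emptyset,\{i\})$ term uses it.
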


\begin{remark}[{\cite[8.30 (2)]{TsujiPrismQHiggs}}]
\label{eq:TwStratTensor}
Let $(M_{\nu},\unabla_{M_\nu})$ $(\nu=1,2)$ be $A$-modules with $(\ualpha,\upartial)$-connection,
and let $s_{M_{\nu}}\colon M_{\nu}\to M_{\nu}\otimes_AE^{\ualpha,\upartial}(A)$ $(\nu=1,2)$ be the
right $A$-linear maps corresponding to $\unabla_{M_{\nu}}$. Then the
tensor product  of $\unabla_{M_1}$ and $\unabla_{M_2}$ corresponds to the composition 
$$M_1\otimes_AM_2\xrightarrow{s_{M_1}\otimes\id_{M_2}}
M_1\otimes_AE^{\ualpha,\upartial}(A)\otimes_AM_2
\xrightarrow{\id_{M_1}\otimes \ts_{M_2}}M_1\otimes_AM_2\otimes_AE^{\ualpha,\upartial}(A),$$
where $\ts_{M_2}$ denotes the $E^{\ualpha,\upartial}(A)$-linear extension
$E^{\ualpha,\upartial}(A)\otimes_AM_2\to M_2\otimes_AE^{\ualpha,\upartial}(A)$ of $s_{M_2}$.
\end{remark}

Now let us discuss scalar extensions of $(\ualpha,\upartial)$-connections.
Let $A'$ be a ring, let $\Lambda'$ be 
a finite set, and suppose that we are given 
$\ualpha'=(\alpha'_{i'})_{i'\in\Lambda'}\in (A')^{\Lambda'}$
and $\upartial'=(\partial'_{i'})_{i'\in\Lambda'}\in \prod_{i'\in\Lambda'}\Der^{\alpha'_{i'}}_{\Z}(A')$
satisfying the same conditions as \eqref{eq:TwDerivConnCond1} and \eqref{eq:TwDerivConnCond2}.
We define a family $\ugamma'=(\gamma'_{i'})_{i'\in\Lambda'}$ of 
endomorphisms of $A'$ by $\gamma'_{i'}=\id_{A'}+\alpha'_{i'}\partial'_{i'}$.
Suppose that we are given
a ring homomorphism $f\colon A\to A'$,
 a map $\psi\colon \Lambda\to \Lambda'$, and $\uc=(c_i)_{i\in\Lambda}\in (A')^{\Lambda}$
 satisfying  $f(\alpha_i)=c_i\alpha'_{\psi(i)}$ and $\partial'_{i'}(c_i)=0$
for all $i\in\Lambda$ and $i'\in \Lambda'\backslash\{\psi(i)\}$.
The triplet $(f,\psi,\uc)$ induces a homomorphism of left algebras
$E^{\psi,\uc}(f)\colon E^{\ualpha}(A)\to E^{\ualpha'}(A');$
$T_i\mapsto c_iT_{\psi(i)}$  $(i\in\Lambda)$ lying over $f$. 
We abbreviate $E^{\psi,\uc}(f)$ to $E^{\psi}(f)$ if $c_i=1$ for all 
$i\in\Lambda$.
\begin{lemma}[{\cite[9.5]{TsujiPrismQHiggs}}]
\label{lem:TwConnScExtCond}
The homomorphism $E^{\psi,\uc}(f)$ defines a homomorphism of bialgebras
$E^{\ualpha,\upartial}(A)\to E^{\ualpha',\upartial'}(A')$ lying over $f\colon A\to A'$
if and only if the following holds for every $i'\in \Lambda'$ and $a\in A$.
\begin{equation}\label{eq:TwDerivScExt}
\partial'_{i'}(f(a))=\sum_{\emptyset\neq \ui\subset \psi^{-1}(i')}
f(\partial_{\ui}(a))\prod_{i\in \ui}c_i\cdot(\alpha'_{i'})^{\sharp \ui-1}
\end{equation}
This equality implies
\begin{equation}\label{eq:TwEndScExt}
\gamma'_{i'}(f(a))=f((\prod_{i\in \psi^{-1}(i')}\gamma_i)(a)),
\end{equation}
and the converse is also true if $\alpha'_{i'}$ is $A'$-regular.
\end{lemma}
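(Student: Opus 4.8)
Here the final statement is the criterion of Lemma 9.5, and I would argue as follows.

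The plan is to strip off the parts of the bialgebra-homomorphism condition that hold automatically, reduce what is left to a coefficient comparison that produces \eqref{eq:TwDerivScExt}, and then extract \eqref{eq:TwEndScExt} by expanding $\gamma'_{i'}=\id_{A'}+\alpha'_{i'}\partial'_{i'}$. First I would note that $E^{\psi,\uc}(f)$ is, by construction, a homomorphism of left $A$-algebras lying over $f$, and that it is automatically compatible with the counits of $E^{\ualpha,\upartial}(A)$ and $E^{\ualpha',\upartial'}(A')$, since both counits kill the variables $T_i$, $T_{i'}$ and the two relevant composites are ring homomorphisms agreeing on $A$ and on the $T_i$. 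Hence being a homomorphism of $A$-bialgebras over $f$ amounts to the conjunction of (i) compatibility with the right $A$-algebra structures, i.e.\ $E^{\psi,\uc}(f)\circ s_{\upartial}=s_{\upartial'}\circ f$, and (ii) compatibility with the comultiplications $\delta^{\ualpha,\upartial}$, $\delta^{\ualpha',\upartial'}$ (which only makes sense once (i) is known, as (i) is exactly what makes $E^{\psi,\uc}(f)$ induce a map $\overline{E^{\ualpha,\upartial}(A)\otimes_AE^{\ualpha,\upartial}(A)}\to\overline{E^{\ualpha',\upartial'}(A')\otimes_{A'}E^{\ualpha',\upartial'}(A')}$). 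I claim (ii) then holds automatically: both composites send $T_i$ to an element whose image in the target is $c_i(T_{\psi(i)}\otimes 1)+c_i(1\otimes T_{\psi(i)})$, where for the second term one rewrites $1\otimes c_iT_{\psi(i)}=s_{\upartial'}(c_i)\otimes T_{\psi(i)}$ and uses the hypothesis $\partial'_{i'}(c_i)=0$ for $i'\neq\psi(i)$ together with the defining relation $T_{\psi(i)}\otimes T_{\psi(i)}=0$ of the quotient $\overline{(-)}$ — this is precisely the role of that quotient. So the whole statement reduces to proving (i) $\Leftrightarrow$ \eqref{eq:TwDerivScExt}.

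For (i) $\Rightarrow$ \eqref{eq:TwDerivScExt} I would compare coefficients in the free left $A'$-basis $\{T_{\ui'}\}_{\ui'\subseteq\Lambda'}$ of $E^{\ualpha'}(A')$. Using $T_j^2=\alpha'_jT_j$, the monomial $E^{\psi,\uc}(f)(T_{\ui})=\bigl(\prod_{i\in\ui}c_i\bigr)\prod_{i\in\ui}T_{\psi(i)}$ collapses, and it contributes to the coefficient of a singleton $T_{i'}$ exactly when $\emptyset\neq\ui\subseteq\psi^{-1}(i')$, with contribution $(\alpha'_{i'})^{\sharp\ui-1}\prod_{i\in\ui}c_i$; reading off the coefficient of $T_{i'}$ on both sides of $E^{\psi,\uc}(f)(s_{\upartial}(a))=s_{\upartial'}(f(a))$ gives \eqref{eq:TwDerivScExt}. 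The converse \eqref{eq:TwDerivScExt} $\Rightarrow$ (i) is the main point, and I expect it to be the real obstacle: one must show that the degree-one identity \eqref{eq:TwDerivScExt} forces agreement in every degree. I would prove by induction on $\sharp\ui'$ that the coefficient of $T_{\ui'}$ in $E^{\psi,\uc}(f)(s_{\upartial}(a))$, namely $\sum_{\ui\subseteq\Lambda,\ \psi(\ui)=\ui'}f(\partial_{\ui}(a))\prod_{i\in\ui}c_i\cdot\prod_{j'\in\ui'}(\alpha'_{j'})^{\sharp(\ui\cap\psi^{-1}(j'))-1}$, equals $\partial'_{\ui'}(f(a))$; the case $\sharp\ui'=1$ is \eqref{eq:TwDerivScExt}. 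For the inductive step, pick $j'\in\ui'$, write $\partial'_{\ui'}=\partial'_{j'}\circ\partial'_{\ui'\setminus\{j'\}}$, and apply $\partial'_{j'}$ to the inductive formula for $\partial'_{\ui'\setminus\{j'\}}(f(a))$: since $\partial'_{j'}(\alpha'_{k'})=0$ for $k'\neq j'$ by \eqref{eq:TwDerivConnCond1} and $\partial'_{j'}(c_i)=0$ when $\psi(i)\neq j'$, the twisted Leibniz rule \eqref{eq:TwDerivCond2} makes $\partial'_{j'}$ act only on the factor $f(\partial_{\ui}(a))$, where \eqref{eq:TwDerivScExt} applied to $\partial_{\ui}(a)$ — together with $\partial_{\uv}\circ\partial_{\ui}=\partial_{\uv\cup\ui}$ for disjoint index sets, from \eqref{eq:TwDerivConnCond2} — produces the extra factor $(\alpha'_{j'})^{\sharp\uv-1}$; re-indexing the resulting double sum by the disjoint union $\underline{w}=\ui\sqcup\uv$ over the fibers of $\psi$ yields exactly the formula for $\ui'$, with each $\alpha'_{j'}$ appearing to the exponent $\sharp(\underline{w}\cap\psi^{-1}(j'))-1$. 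The combinatorial re-indexing and the exponent bookkeeping are the only delicate points; everything else is formal.

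Finally, to deduce \eqref{eq:TwEndScExt}: from $\gamma'_{i'}=\id_{A'}+\alpha'_{i'}\partial'_{i'}$ and \eqref{eq:TwDerivScExt} one gets $\gamma'_{i'}(f(a))=f(a)+\sum_{\emptyset\neq\ui\subseteq\psi^{-1}(i')}f(\partial_{\ui}(a))\prod_{i\in\ui}c_i\,(\alpha'_{i'})^{\sharp\ui}$, while the commuting endomorphisms $\gamma_i$ $(i\in\psi^{-1}(i'))$, using $\partial_i(\alpha_j)=0$ for $i\neq j$, expand to $\bigl(\prod_{i\in\psi^{-1}(i')}\gamma_i\bigr)(a)=\sum_{\ui\subseteq\psi^{-1}(i')}\bigl(\prod_{i\in\ui}\alpha_i\bigr)\partial_{\ui}(a)$; applying $f$ and using $f(\alpha_i)=c_i\alpha'_{i'}$ gives the same expression, so \eqref{eq:TwEndScExt} holds. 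If $\alpha'_{i'}$ is $A'$-regular, the two forms of $\gamma'_{i'}(f(a))$ differ by $\alpha'_{i'}$ times the difference of the two sides of \eqref{eq:TwDerivScExt}, which can then be cancelled, giving the converse.
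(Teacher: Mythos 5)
Your proof is correct and takes what is surely the intended route: isolate compatibility with $s_{\upartial}$ as the only nontrivial constraint (counit compatibility is trivial, comultiplication compatibility is automatic once $s_{\upartial}$-compatibility holds because $1\otimes c_iT_{\psi(i)}=s_{\upartial'}(c_i)\otimes T_{\psi(i)}$ and the $T_{\psi(i)}\otimes T_{\psi(i)}$ term dies in the quotient), read off the $T_{i'}$-coefficient to get \eqref{eq:TwDerivScExt}, and induct on $\sharp\ui'$ for the converse using the twisted Leibniz rule, $\partial'_{j'}(\alpha'_{k'})=0$, $\partial'_{j'}(c_i)=0$, and commutativity $\partial_{\uw}\circ\partial_{\uv}=\partial_{\uw\cup\uv}$. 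The deduction of \eqref{eq:TwEndScExt} from \eqref{eq:TwDerivScExt} via expanding $\prod_{i\in\psi^{-1}(i')}\gamma_i=\sum_{\ui}(\prod_{i\in\ui}\alpha_i)\partial_{\ui}$ and applying $f$ is also correct (including the claim that the $\alpha_i\partial_i$ commute with each other, which uses $\partial_i(\alpha_j)=0$), as is the regularity argument for the converse. The only blemish is notational: in the inductive step the index symbols $\ui$ and $\uv$ drift in meaning between the outer sum and the new sum over $\psi^{-1}(j')$, but the intended re-indexing by $\underline{w}=\uv\sqcup\uw$ is clear and checks out, including the exponent bookkeeping.
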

We assume that $E^{\psi,\uc}(f)$ is a bialgebra homomorphism lying over $f$
in the following. Let $(M,\unabla_M)$ be an $A$-module with integrable
$(\ualpha,\upartial)$-connection, and let $s_M=s_{\unabla_M}$
be the right $A$-linear map $M\to M\otimes_AE^{\ualpha,\upartial}(A)$
corresponding to $\nabla_{\uM}$. Put $M'=M\otimes_AA'$. Then 
one can show that the right $A'$-linear extension 
$M'\to M'\otimes_{A'}E^{\ualpha',\upartial'}(A')$
of the composition of $s_M$ with $\id_M\otimes E^{\psi,\uc}(f)\colon 
M\otimes_AE^{\ualpha,\upartial}(A)\to 
M\otimes_AE^{\ualpha',\upartial'}(A')\cong
M'\otimes_{A'}E^{\ualpha',\upartial'}(A')$
satisfies the properties \eqref{eq:TwStratCond1} and \eqref{eq:TwStratCond2} 
\cite[9.9]{TsujiPrismQHiggs}, and therefore
defines an $(\ualpha',\upartial')$-connection $\unabla_{M'}$ on $M'$,
which we call the {\it  scalar extension of $\unabla_M$ under} 
$(f,\psi,\uc)$ (or simply $f$). By Remark \ref{eq:TwStratTensor}, 
we see that the scalar extension is compatible with tensor products \cite[9.12]{TsujiPrismQHiggs}.
This construction is obviously functorial
in $(M,\unabla_M)$ and defines a functor \cite[9.10]{TsujiPrismQHiggs}
\begin{equation}\label{eq:TwConnectionScExtFunct}
(f,\psi,\uc)^*\colon \MIC(A,(\ualpha,\upartial))\longrightarrow
\MIC(A',(\ualpha',\upartial')).
\end{equation}
We abbreviate $(f,\psi,\uc)^*$ to $(f,\psi)^*$ if $c_i=1$ for all $i\in \Lambda$.
This functor is compatible with compositions of $(f,\psi,\uc)$'s
\cite[9.23 (2)]{TsujiPrismQHiggs}. 

Let $(M,\unabla_{M})$ be an $A$-module with integrable $(\ualpha,\upartial)$-connection,
and let $(M',\unabla_{M'})$  be its  scalar extension under $(f,\psi,\uc)$. Then 
$\nabla_{M',i'}$ and $\gamma_{M',i'}$ $(i'\in\Lambda')$ 
are explicitly given by the following formulas for 
$x\in M$, similar to \eqref{eq:TwDerivScExt} and \eqref{eq:TwEndScExt} \cite[9.13]{TsujiPrismQHiggs}.
\begin{align}
\nabla_{M',i'}(x\otimes 1)&=\sum_{\emptyset\neq\ui\subset\psi^{-1}(i')}\nabla_{M,\ui}(x)
\otimes\prod_{i\in \ui}c_i\cdot(\alpha'_{i'})^{\sharp \ui-1}\label{eq:TwConnScExtFormula1}\\
\gamma_{M',i'}(x\otimes 1)&=(\prod_{i\in \psi^{-1}(i')}\gamma_{M,i})(x)\otimes 1
\label{eq:TwConnScExtFormula2}
\end{align}
Choose and fix a total order of the set $\Lambda$. Then one can 
define a morphism of complexes
\begin{equation}\label{eq:TwDrCpxScExt}
\Omegab_{f,\psi,\uc}(M,\unabla_{M})\colon \Omegab(M,\unabla_M)\longrightarrow \Omegab(M',\unabla_{M'})
\end{equation}
as follows \cite[9.19]{TsujiPrismQHiggs}. 
We abbreviate $\Omegab_{f,\psi,\uc}$ to $\Omegab_{f,\psi}$ if $c_i=1$ for all $i\in\Lambda$.
For $i\in \Lambda$, we define $\Lambda_{\psi,i}^{<}$ to be the subset of 
$\Lambda$ consisting of $j\in \Lambda$ satisfying $j<i$ and $\psi(j)=\psi(i)$,
and $\gamma_{M,\psi,i}^{<}$ to be the composition of $\gamma_{M,j}$ $(j\in \Lambda_{\psi,i}^{<})$.
For $\bmI=(i_n)_{1\leq n\leq r}\in \Lambda^r$, we define
$\gamma_{M,\psi,\bmI}^{<}$ (resp.~$c_{\bmI}\in A'$) to be the composition of $\gamma_{M,\psi,i_n}^{<}$
$(n\in \N\cap [1,r])$ (resp.~$\prod_{n=1}^rc_{i_n}$). 
Then the morphism \eqref{eq:TwDrCpxScExt} is defined by the
following formula
\begin{equation}
\Omega^r_{f,\psi,\uc}(M,\unabla_M)(x\otimes \omega_{\bmI})=(\gamma_{M,\psi,\bmI}^{<}(x)\otimes 
c_{\bmI})\otimes\omega_{\psi^r(\bmI)}\qquad (x\in M, r\in \N, \bmI\in \Lambda^r),
\end{equation}
where $\psi^r$ denotes the product $\Lambda^r\to(\Lambda')^r;(i_n)\mapsto (\psi(i_n))$ 
of $\psi$. The compatibility with the differential maps can be verified by an explicit computation
using the formulas \eqref{eq:TwConnScExtFormula1} and \eqref{eq:TwConnScExtFormula2}.
The morphism $\Omegab_{f,\psi,\uc}(M,\unabla_M)$ is obviously functorial in $(M,\unabla_M)$.
One can verify by an explicit computation 
that $\Omegab_{f,\psi,\uc}(M,\unabla_M)$ is also compatible with compositions of $(f,\psi,\uc)$'s with $\Lambda$'s totally ordered and $\psi$'s order preserving \cite[9.23 (2)]{TsujiPrismQHiggs}.
When $\psi$ is injective, we see that it is compatible with the product morphism 
\eqref{eq:DRCpxProdMap} \cite[9.21]{TsujiPrismQHiggs}.  When $\psi$ is not injective, we have some
weaker compatibility \cite[9.24]{TsujiPrismQHiggs}. 

Finally we discuss the scalar extension by a lifting of Frobenius under a certain 
setting, for which $A$ and $\theta_{A,i}$ in Example \ref{eq:qHiggsDerivPolRing} is a typical example.
Let $\Z[q]$, $\mu=q-1$,  $\eta=\delta(\mu)\mu^{-1}$, and $[n]_q$ $(n\in \N)$ be as in Example \ref{eq:qHiggsDerivPolRing}.
Let $A$ be a $\Z[q]$-algebra equipped with a lifting of Frobenius $\varphi_A$
compatible with that of $\Z[q]$, let $\Lambda$ be a finite totally ordered set,
let $\ut=(t_i)_{i\in\Lambda}$ be a family of elements of $A$ satisfying
$\varphi_A(t_i)=t_i^p$ for every $i\in \Lambda$, put $\alpha_i=t_i\mu$ 
$(i\in \Lambda)$,
and suppose that we are given an $\alpha_i$-derivation $\partial_i$ of $A$ over $\Z[q]$
for each $i\in \Lambda$ such that $\partial_i\circ\partial_j=\partial_j\circ\partial_i$
and $\partial_j(t_i)=0$ for $i,j\in \Lambda$, $i\neq j$. Then the ring $A$ with
$\ualpha:=(\alpha_i)_{i\in\Lambda}$ and $\upartial=(\partial_i)_{i\in\Lambda}$
satisfies the conditions \eqref{eq:TwDerivConnCond1} and \eqref{eq:TwDerivConnCond2}.
We further assume that the following equalities hold.
\begin{equation}\label{eq:FrobTwDerivCond}
 \partial_i\circ\varphi_A=t_i^{p-1}[p]_q\varphi_A\circ\partial_i\qquad (i\in\Lambda)
\end{equation}

\begin{remark}[{\cite[9.25 (2)]{TsujiPrismQHiggs}}]
\label{rmk:TwDerivFrobComp}
If $\varphi_A$ is induced by a $\delta$-structure on $A$ satisfying 
$\delta(t_i)=0$, and $\partial_i$ is $\delta$-compatible with respect to $t_i^{p-1}\eta$,
then \eqref{eq:FrobTwDerivCond} holds by 
Proposition \ref{lem:TwDerivEndomDeltaComp} (3) and $\alpha_i^{p-1}+pt_i^{p-1}\eta=
t_i^{p-1}(\mu^{p-1}+p\eta)=t_i^{p-1}\varphi(\mu)\mu^{-1}=t_i^{p-1}[p]_q$.
\end{remark}

We have $\varphi_A(\alpha_i)=t_i^p\varphi(\mu)=t_i^{p-1}[p]_q\alpha_i$.
Therefore, defining $\uc=(c_i)_{i\in\Lambda}\in A^{\Lambda}$
by $c_i=t_i^{p-1}[p]_q$, we see that the triplet $(\varphi_A,\id_{\Lambda},\uc)$
induces a homomorphism of bialgebras
$E^{\id_{\Lambda},\uc}(\varphi_A)\colon E^{\ualpha,\upartial}(A)\to E^{\ualpha,\upartial}(A)$
over $\varphi_A$ by using Lemma \ref{lem:TwConnScExtCond}. Therefore $(\varphi_A,\id_{\Lambda},\uc)$
defines a scalar extension functor \eqref{eq:TwConnectionScExtFunct}
\begin{equation}\label{eq:TwConnFrobPBFunct}
\varphi_A^*=(\varphi_A,\id_{\Lambda},\uc)^*\colon \MIC(A,(\ualpha,\upartial))\longrightarrow \MIC(A,(\ualpha,\upartial))
\end{equation}
preserving tensor products \cite[9.28]{TsujiPrismQHiggs}.
For an $A$-module with $(\ualpha,\partial)$-connection $(M,\unabla_M)$,
its scalar extension $(\varphi_A^*M,\unabla_{\varphi_A^*M})$ is given by 
the formula \eqref{eq:TwConnScExtFormula1} \cite[9.29]{TsujiPrismQHiggs}
\begin{equation}
\nabla_{\varphi_A^*M,i}(x\otimes 1)=\nabla_{M,i}(x)\otimes t_i^{p-1}[p]_q\quad (x\in M),
\end{equation}
and we have a morphism of complexes \eqref{eq:TwDrCpxScExt}
compatible with products  \eqref{eq:DRCpxProdMap} \cite[9.30]{TsujiPrismQHiggs}
\begin{equation}\label{eq:TwdRCpxFrobPB}
\varphi_{A,\Omega}^{\bullet}(M,\unabla_M)\colon \Omega^{\bullet}(M,\unabla_M)
\longrightarrow \Omega^{\bullet}(\varphi_A^*M,\unabla_{\varphi_A^*M})
\end{equation}
sending $x\otimes \omega_{\bmI}$ to $x\otimes [p]_q^rt_{\bmI}^{p-1}
\otimes\omega_{\bmI}$ for $x\in M$, $r\in \N$, and 
$\bmI=(i_n)_{1\leq n\le r}\in \Lambda^r$, where $t_{\bmI}=\prod_{n=1}^{r}t_{i_n}$.

Let $(A',\varphi_{A'}, \ut'=(t'_{i'})_{i'\in \Lambda'}, \upartial'=(\partial'_{i'})_{i'\in\Lambda'})$
be another set of data satisfying the same conditions as
$(A,\varphi_A,\ut, \upartial)$ above, and define $\ualpha'$ and $\uc'$ in the
same way as $\ualpha$ and $\uc$ by using $\ut'$. 
Suppose that we are given a homomorphism of $\Z[q]$-algebras
$f\colon A\to A'$ and an order preserving map $\psi\colon \Lambda\to\Lambda'$
such that $f\circ\varphi_A=\varphi_{A'}\circ f$,
$f(t_i)=t_{\psi(i)}'$, which implies $f(\alpha_i)=\alpha'_{\psi(i)}$,
and that the homomorphism $E^{\psi}(f)\colon E^{\ualpha,\upartial}(A)
\to E^{\ualpha',\upartial'}(A')$ induced by $f$ and $\psi$ is a bialgebra
homomorphism over $f$. (See Lemma \ref{lem:TwConnScExtCond}.)
Since $f\circ\varphi_A=\varphi_{A'}\circ f$ and 
$c'_{\psi(i)}=f(c_i)$, the composition of $(\varphi_A,\id_{\Lambda},\uc)$
and $(f,\psi,\uone_{\Lambda})$ coincides with 
that of $(f,\psi,\uone_{\Lambda})$ and $(\varphi_{A'},\id_{\Lambda'},\uc')$
\cite[9.31]{TsujiPrismQHiggs}.
Therefore the scalar extension by $(f,\psi)$ is compatible with
the scalar extensions by $(\varphi_A,\id_{\Lambda},\uc)$ and
by $(\varphi_{A'},\id_{\Lambda'},\uc')$ \cite[9.32]{TsujiPrismQHiggs}.
For an $A$-module with 
integrable $(\ualpha,\upartial)$-connection $(M,\unabla_M)$
and its scalar extension $(M',\unabla_{M'})$ under $(f,\psi)$,
the pullback morphism $\Omega^{\bullet}(M,\unabla_M)
\to \Omega^{\bullet}(M',\unabla_{M'})$ \eqref{eq:TwDrCpxScExt} is compatible
with the Frobenius pullbacks 
$\varphi^{\bullet}_{A,\Omega}(M,\unabla_M)$
and $\varphi^{\bullet}_{A',\Omega}(M',\unabla_{M'})$ \eqref{eq:TwdRCpxFrobPB}
\cite[9.33]{TsujiPrismQHiggs}.

\begin{example}\label{ex:qHiggsDerivFunct}
We follow the notation in Example \ref{eq:qHiggsDerivPolRing}.
Let $\Lambda'$ be another finite set, and define a $\delta$-$\Z[q]$-algebra 
$S'=\Z[q][t'_{i'}\,(i'\in\Lambda')]$ and a $t'_{i'}\mu$-derivation 
$\theta_{S',i'}$ of $S'$ over $\Z[q]$ for each $i'\in\Lambda'$
in the same way as $S$ and $\theta_{S,i}$ in Example \ref{eq:qHiggsDerivPolRing}.
Let $R'$ be a $\delta$-$\Z[q]$-algebra, and define
$A'\cong S'\otimes_{\Z[q]}R'$ and $\theta_{A',i'}$ $(i'\in \Lambda')$ in the
same way as $A$ and $\theta_{A,i}$ in Example \ref{eq:qHiggsDerivPolRing} by 
using $S'$, $\theta_{S',i'}$ and $R'$. 
Then, for any
$\delta$-homomorphism $g\colon R\to R'$ over $\Z[q]$
and a map $\psi\colon \Lambda\to \Lambda'$, 
the homomorphism $f\colon A\to A';t_i\mapsto t'_{\psi(i)} (i\in\Lambda)$ 
lying over $g$ is a $\delta$-homomorphism and the induced
homomorphism $E^{\psi}(f)\colon E^{(t_i\mu),(\theta_{A,i})}(A)
\to E^{(t'_{i'}\mu),(\theta_{A',i'})}(A')$ is a bialgebra homomorphism
over $f$. The former claim is obvious as $\delta(t_i)=0$ and $\delta(t'_{i'})=0$.
The latter one is reduced to the case $R=R'=\Z[q]$ and $g=\id$ by \eqref{eq:TwDerivScExt}.
Then $\alpha'_{i'}=t'_{i'}\mu$ is $A'$-regular, and the equality
\eqref{eq:TwEndScExt} holds by $\gamma_{i'}'(f(t_i))=q^pf(t_i)$ if $i'=\psi(i)$ and $=f(t_i)$ otherwise.

\end{example}

\section{Prismatic crystals and $q$-Higgs modules}\label{sec:PrismCrysQHiggs}
Let $\Z_p[[q-1]]$ be the ring of formal power series over $\Z_p$ in one variable $q-1$
equipped with the $\delta$-structure corresponding to the lifting
of Frobenius defined by $q\mapsto q^p$. We have $\delta(q)=0$.
We define $\mu=q-1$, $\eta=\delta(\mu)\mu^{-1}$,
and $[n]_q=\frac{q^n-1}{q-1}$ $(n\in \Z)$ as in Example \ref{eq:qHiggsDerivPolRing}.
The pair $(\Z_p[[\mu]],([p]_q))$ is a bounded prism. 

In this section, we review the description of prismatic crystals and
their cohomology given in \cite{TsujiPrismQHiggs} when a base bounded prism
lies over the bounded prism $(\Z_p[[\mu]],(\pq))$.

\begin{definition}[{\cite[10.1]{TsujiPrismQHiggs}}]\label{def:qprism}
(1) We call a bounded prism over the bounded prism
$(\Z_p[[\mu]],(\pq))$ a {\it $q$-prism}. A {\it morphism of $q$-prisms}
is a morphism of bounded prisms over $(\Z_p[[\mu]],(\pq))$. We call the
bounded prismatic envelope of a $\delta$-pair 
$(A,J)$ over $(\Z_p[[\mu]],(\pq))$ the {\it $q$-prismatic envelope}.
If $(A,J)$ is defined over a $q$-prism $(R,(\pq))$, it coincides
with the bounded prismatic envelope over $(R,(\pq))$.\par
(2) A {\it framed smooth $q$-prism} $(A/R, \ut)$ is a set of data
consisting of a $q$-prism $R$, a $(p,\pq)$-adically smooth $R$-algebra $A$
(Definition \ref{def:IadicProperties} (1)) 
$(p,\pq)$-adically complete and separated, and $(p,\pq)$-adic coordinates
$\ut=(t_i)_{i\in\Lambda}$ of $A$ over $R$ (Definition \ref{def:IadicProperties} (2)) 
indexed by  a finite totally ordered set $\Lambda$.
We equip $A$ with the unique $\delta$-$R$-algebra structure satisfying
$\delta(t_i)=0$ $(i\in\Lambda)$ (Proposition \ref{prop:DeltaStruEtaleMap} (1)). 
The pair $(A,(\pq))$ is a $q$-prism by 
Proposition \ref{prop:bddPrismFlatAlg}. When a $q$-prism $R$ is given, 
we also call $(A,\ut)$ a {\it framed smooth $\delta$-$R$-algebra}.
A {\it morphism of framed smooth $q$-prisms} 
$(A/R,(t_i)_{i\in\Lambda})\to (A'/R',(t'_{i'})_{i'\in\Lambda})$
is a triplet $(f/g,\psi)$ consisting of a morphism of $q$-prisms $g\colon R\to R'$,
a ring homomorphism $f\colon A\to A'$ lying over $g$, and a map 
$\psi\colon\Lambda\to\Lambda'$ preserving the orders such that
$f(t_i)=t'_{\psi(i)}$ $(i\in\Lambda)$, which implies that $f$ is a $\delta$-homomorphism. 
When $R=R'$ and $g=\id$, 
we also call it a {\it morphism of framed smooth $\delta$-$R$-algebras.}\par
(3) A {\it framed smooth $q$-pair} $((A,J)/R,\ut)$ is a framed
smooth $q$-prism $(A/R,\ut)$ equipped with an ideal $J$ containing $\pq$. 
When a $q$-prism $R$ is given, we also call $((A,J),\ut)$ a {\it framed smooth $\delta$-pair over $R$}.
We say that $((A,J)/R,\ut)$ (or $((A,J),\ut)$) is {\it admissible}
if $(A,J)$ has a $q$-prismatic envelope. 
A {\it morphism of framed smooth $q$-pairs} 
$((A,J)/R,\ut)\to ((A',J')/R',\ut')$ is a morphism $(f/g,\psi)$ 
of the  underlying framed smooth $q$-prisms satisfying
$f(J)\subset J'$. When $R=R'$ and $g=\id$, we also call it
a {\it morphism of framed smooth $\delta$-pairs over $R$.}
\end{definition}

\begin{remark}
(1) By Proposition \ref{prop:SuffCondExBdd}, we see that 
a framed smooth $q$-pair $((A,J)/R,\ut=(t_i)_{i\in\Lambda})$ is admissible
if $A/J$ is $p$-adically smooth over $R/\pq R$ and 
their exists a subset $\Lambda'$ of $\Lambda$ such that 
the images of $t_i$ $(i\in\Lambda')$ in $A/J$ form $p$-adic
coordinates of $A/J$ over $R/\pq R$ (\cite[4.13]{TsujiPrismQHiggs}).\par
(2) Let $R$ be a $q$-prism and let $(f,\psi)\colon ((A,J),\ut)\to ((A',J'),\ut')$
be a morphism of framed smooth $\delta$-pairs over $R$. Suppose that
$\psi$ is injective and $f$ induces an isomorphism $A/J\xrightarrow{\cong}A'/J'$.
Then, by Proposition \ref{prop:SuffCondExBdd2}, $((A',J'),\ut')$ is 
admissible if $((A,J),\ut)$ is admissible.
\end{remark}

We obtain the following by using Example \ref{eq:qHiggsDerivPolRing}, Proposition \ref{prop:TwDerivEtaleExt},
and Proposition \ref{prop:TwDerivPrismEnvExt}.
\begin{proposition}[{\cite[10.3]{TsujiPrismQHiggs}}]
\label{prop:qPrismEnvTheta}
Let $(A/R,\ut=(t_i)_{i\in \Lambda})$ be a framed smooth $q$-prism.
Then, for each $i\in \Lambda$, there exists a unique $t_i\mu$-derivation
$\theta_{A,i}$ of $A$ over $R$ $\delta$-compatible with $t_i^{p-1}\eta$ satisfying
$\theta_{A,i}(t_i)=\pq$ and $\theta_{A,i}(t_j)=0$ $(j\neq i)$. Moreover
we have $\theta_{A,i}\circ\theta_{A,j}=\theta_{A,j}\circ \theta_{A,i}$
$(i,j\in\Lambda)$ and $\theta_{A,i}(A)\subset \pq A$ $(i\in\Lambda)$.
Let $J$ be an ideal of $A$ containing $\pq$ making
$((A,J)/R,\ut)$ an admissible framed smooth $q$-pair,
and let $D$ be the $q$-prismatic envelope of $(A,J)$. Then
$\theta_{A,i}$ extends uniquely to a $t_i\mu$-derivation 
$\theta_{D,i}$ over $R$ $\delta$-compatible with $t_i^{p-1}\eta$
for each $i\in\Lambda$. Moreover we have 
$\theta_{D,i}\circ\theta_{D,j}=\theta_{D,j}\circ\theta_{D,i}$
$(i,j\in \Lambda)$.
\end{proposition}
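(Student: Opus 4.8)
The plan is to construct $\theta_{A,i}$ in two stages, first on the polynomial ring $A_0:=R[t_i\; (i\in\Lambda)]$, where it is explicit by Example~\ref{eq:qHiggsDerivPolRing}, and then transport it along the $(p,\pq)$-adically \'etale morphism $A_0\to A$, $t_i\mapsto t_i$, by Proposition~\ref{prop:TwDerivEtaleExt}, and along the $q$-prismatic envelope $A\to D$ by Proposition~\ref{prop:TwDerivPrismEnvExt}. Since a $q$-prism $R$ is in particular a $\delta$-$\Z[q]$-algebra (via $\Z[q]\to\Z_p[[\mu]]\to R$), Example~\ref{eq:qHiggsDerivPolRing}, applied with $A_0$ in the role of its ring $A$, produces for each $i\in\Lambda$ a $t_i\mu$-derivation $\theta_{A_0,i}$ (namely $\theta_{S,i}\otimes\id_R$) of $A_0$ over $R$, $\delta$-compatible with $t_i^{p-1}\eta$, with $\theta_{A_0,i}(t_i)=\pq$, $\theta_{A_0,i}(t_j)=0$ $(j\neq i)$, and $\theta_{A_0,i}\circ\theta_{A_0,j}=\theta_{A_0,j}\circ\theta_{A_0,i}$. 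From the monomial formula $\theta_{A_0,i}(\prod_jt_j^{n_j})=[pn_i]_q\,t_i^{-1}\prod_jt_j^{n_j}$ ($n_i>0$) and $[pn_i]_q=[n_i]_{q^p}\pq$ one gets $\theta_{A_0,i}(A_0)\subset\pq A_0$; more precisely $\theta_{A_0,i}=\pq\,\psi_i$, where $\psi_i$ is the $t_i\varphi(\mu)$-derivation of $A_0$ over $R$ given by $\psi_i(\prod_jt_j^{n_j})=[n_i]_{q^p}t_i^{-1}\prod_jt_j^{n_j}$ ($n_i>0$; $\psi_i=0$ on monomials with $n_i=0$), using $\varphi(\mu)=\pq\mu$. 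The twisted Leibniz rule~\eqref{eq:TwDerivCond2} together with $R$-linearity shows that a $t_i\mu$-derivation of $A_0$ over $R$ is determined by its values on the $t_j$, so $\theta_{A_0,i}$ is the unique one with the prescribed values.

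Next I would pass to $A$. The morphism $A_0\to A$ is $(p,\pq)$-adically \'etale by the definition of $(p,\pq)$-adic coordinates (Definition~\ref{def:IadicProperties}(2)), and $A$ is $(p,\pq)$-adically complete and separated; since $\pq\equiv p\bmod\mu$ and $\mu^{p-1}\equiv\pq\bmod p$, the $(p,\mu)$-adic and $(p,\pq)$-adic topologies on $R$, hence on $A_0$ and $A$, coincide, so $A_0\to A$ is also $(p,\mu)$-adically \'etale and $\alpha_i:=t_i\mu\in(p,\mu)A_0$. Then Proposition~\ref{prop:TwDerivEtaleExt}(1) gives the unique extension $\theta_{A,i}\in\Der^{t_i\mu}_R(A)$ of $\theta_{A_0,i}$; part~(2), with $\delta(t_i\mu)=t_i^{p-1}\eta\cdot t_i\mu$ (valid since $\delta(t_i)=0$), upgrades this to $\theta_{A,i}\in\Der^{t_i\mu,t_i^{p-1}\eta}_{R,\delta}(A)$; part~(4), using $\theta_{A_0,i}(t_j\mu)=\mu\theta_{A_0,i}(t_j)=0$ $(i\neq j)$, gives $\theta_{A,i}\circ\theta_{A,j}=\theta_{A,j}\circ\theta_{A,i}$; and $\theta_{A,i}(t_i)=\pq$, $\theta_{A,i}(t_j)=0$ are inherited since $t_i,t_j\in A_0$. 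Uniqueness of $\theta_{A,i}$ (among $t_i\mu$-derivations of $A$ over $R$ with the prescribed values on the $t_j$) comes from uniqueness on $A_0$ together with the uniqueness clause of Proposition~\ref{prop:TwDerivEtaleExt}(1). Finally, applying Proposition~\ref{prop:TwDerivEtaleExt}(1) to $\psi_i$ as well (note $t_i\varphi(\mu)=t_i\pq\mu\in(p,\mu)A_0$) yields $\psi'_i\in\Der^{t_i\varphi(\mu)}_R(A)$ extending $\psi_i$, and one checks $\pq\psi'_i$ is a $t_i\mu$-derivation of $A$ over $R$ extending $\pq\psi_i=\theta_{A_0,i}$; by the uniqueness just invoked, $\theta_{A,i}=\pq\psi'_i$, whence $\theta_{A,i}(A)\subset\pq A$.

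For the last stage, let $J\supset\pq$ be as in the statement. Then $\theta_{A,i}(J)\subset\theta_{A,i}(A)\subset\pq A\subset J$, so Proposition~\ref{prop:TwDerivPrismEnvExt}(1), with $\alpha=t_i\mu$ and $\beta=t_i^{p-1}\eta$, provides the unique extension $\theta_{D,i}\in\Der^{t_i\mu,t_i^{p-1}\eta}_{R,\delta}(D)$ of $\theta_{A,i}$ --- which is the asserted existence and uniqueness over $D$. Commutativity $\theta_{D,i}\circ\theta_{D,j}=\theta_{D,j}\circ\theta_{D,i}$ then follows from Proposition~\ref{prop:TwDerivPrismEnvExt}(3), whose hypotheses hold because $\theta_{A,i}\circ\theta_{A,j}=\theta_{A,j}\circ\theta_{A,i}$ and, for $i\neq j$, $\theta_{A,i}(t_j\mu)=\mu\theta_{A,i}(t_j)=0$ and $\theta_{A,i}(t_j^{p-1}\eta)=\eta\,\theta_{A,i}(t_j^{p-1})=0$ (here $\mu,\eta\in R$, and $\theta_{A,i}(t_j^{p-1})=0$ follows by the twisted Leibniz rule from $\theta_{A,i}(t_j)=0$). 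The one step that requires genuine care --- everything else being a routine verification that the numerical hypotheses of the two cited extension propositions are met --- is the divisibility $\theta_{A,i}(A)\subset\pq A$, which is exactly what forces $\theta_{A,i}(J)\subset J$ and so unlocks Proposition~\ref{prop:TwDerivPrismEnvExt}(1); I would prove it as above, via the factorisation $\theta_{A_0,i}=\pq\psi_i$ and the rigidity of extensions along $(p,\mu)$-adically \'etale morphisms.
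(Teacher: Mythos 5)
The proposal is correct and takes essentially the same approach that the paper indicates: the text preceding Proposition~\ref{prop:qPrismEnvTheta} says it is obtained from Example~\ref{eq:qHiggsDerivPolRing}, Proposition~\ref{prop:TwDerivEtaleExt}, and Proposition~\ref{prop:TwDerivPrismEnvExt}, and your argument fills these in precisely (including the non-obvious but necessary factorization $\theta_{A_0,i}=[p]_q\psi_i$, transported along the \'etale extension, to obtain $\theta_{A,i}(A)\subset[p]_qA$, which is exactly what is needed to invoke Proposition~\ref{prop:TwDerivPrismEnvExt}(1)).
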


\begin{definition}[{\cite[10.4, 10.5]{TsujiPrismQHiggs}}]
\label{def:qHiggsModCpx}
Let $((A,J)/R,\ut=(t_i)_{i\in\Lambda})$ be an admissible framed smooth
$q$-pair, and let $D$ be the $q$-prismatic envelope of $(A,J)$.
Then, with the notation in Proposition \ref{prop:qPrismEnvTheta}, 
$D$, $\ut\mu$, and $\utheta_D:=(\theta_{D,i})_{i\in\Lambda}$
satisfy the conditions \eqref{eq:TwDerivConnCond1} and \eqref{eq:TwDerivConnCond2}. We
call an integrable $(\ut\mu,\utheta_{D})$-connection $\utheta_M=(\theta_{M,i})_{i\in\Lambda}$ 
(Definition \ref{def:TwConnectionDRcpx} (1))
on a $D$-module $M$ a {\it $q$-Higgs field on $M$ over} $(\ut,\utheta_D)$,
and call a pair $(M,\utheta_M)$ a {\it $q$-Higgs module over} $(D,\ut,\utheta_D)$.
Note that $\theta_{M,i}$ is $R$-linear by Remark \ref{rmk:TwConnectionRlin}. 
We write $q\Omega^{\bullet}(M,\utheta_M)$ for the de Rham complex
$\Omega^{\bullet}(M,\utheta_M)$ (Definition \ref{def:TwConnectionDRcpx} (2))
and call it the {\it $q$-Higgs complex of}
$(M,\utheta_M)$. We write $q\HIG(D,\ut,\utheta_D)$ for the
category of $q$-Higgs modules over $(D,\ut,\utheta_D)$.
For $n\in \N$, we define $D_n$ to be the quotient $D/(p,\pq)^{n+1}D$,
and write $q\HIG(D_n,\ut,\utheta_D)$ for the full subcategory
of $q\HIG(D,\ut,\utheta_D)$ consisting of objects whose underlying
$D$-modules are annihilated by $(p,\pq)^{n+1}$.
\end{definition}

We follow the notation in Definition \ref{def:qHiggsModCpx}. Let $n$ be a
non-negative integer. Let $\varphi_{D_n}$ and $\theta_{D_n,i}$ $(i\in\Lambda)$
denote the reduction modulo $(p,\pq)^{n+1}$ of $\varphi_D$ and $\theta_{D,i}$,
respectively.
Then, by Remark \ref{rmk:TwDerivFrobComp}, 
we can apply the construction of \eqref{eq:TwConnFrobPBFunct} to
$D_n$, $\varphi_{D_n}$, $t_i$, and $\theta_{D_n,i}$ $(i\in\Lambda)$. 
We obtain a Frobenius pullback functor,
which preserves tensor products \cite[10.6]{TsujiPrismQHiggs},
\begin{equation}\label{eq:FrSmqHigFrobPB} 
\varphi_{D_n}^*\colon q\HIG(D_n,\ut,\utheta_D)
\longrightarrow q\HIG(D_n,\ut,\utheta_D).
\end{equation}

Next let us discuss scalar extensions of $q$-Higgs modules.

\begin{proposition}[{\cite[10.9]{TsujiPrismQHiggs}}]
\label{prop:FrSmqHiggDerivFunct}
Let $(f/g, \psi)\colon (A/R,\ut=(t_i)_{i\in\Lambda})\to 
(A'/R',\ut'=(t'_{i'})_{i'\in\Lambda'})$ be a morphism of framed smooth $q$-prisms.
The homomorphism $E^{\psi}(f)\colon E^{\ut\mu,\utheta_A}(A)\to E^{\ut'\mu,\utheta_{A'}}(A')$
is a bialgebra homomorphism over $f$. (See Lemma \ref{lem:TwConnScExtCond}.)
Let $J$ (resp.~$J'$) be an ideal of $A$ (resp.~$A'$) containing $\pq$
such that $(A,J)$ (resp.~$(A',J')$) has a $q$-prismatic envelope $D$ (resp.~$D'$). Assume 
$f(J)\subset J'$, and let $f_D$ denote the morphism of $q$-prisms $D\to D'$
induced by $f$. Then the homomorphism $E^{\psi}(f_D)\colon 
E^{\ut\mu,\utheta_D}(D)\to E^{\ut'\mu,\utheta_{D'}}(D')$ is a 
bialgebra homomorphism over $f_D$. 
\end{proposition}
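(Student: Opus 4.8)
\emph{The plan.} Both assertions amount to the commutativity of a square of ring homomorphisms. Since $f(t_i\mu)=t'_{\psi(i)}\mu$ (resp.\ $f_D(t_i\mu)=t'_{\psi(i)}\mu$), the map $E^{\psi}(f)$ (resp.\ $E^{\psi}(f_D)$) is well defined as a homomorphism of left algebras over $f$ (resp.\ $f_D$); and, the right algebra structure on $E^{\ualpha,\upartial}(\cdot)$ being $s_{\utheta_{(\cdot)}}$, being a bialgebra homomorphism over $f$ (resp.\ $f_D$) means exactly that $E^{\psi}(f)\circ s_{\utheta_A}=s_{\utheta_{A'}}\circ f$ (resp.\ $E^{\psi}(f_D)\circ s_{\utheta_D}=s_{\utheta_{D'}}\circ f_D$) as ring homomorphisms $A\to E^{\ut'\mu}(A')$ (resp.\ $D\to E^{\ut'\mu}(D')$). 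I would prove the first by reducing to the polynomial case and transferring along an \'etale base change, and the second by transferring from $A\to A'$ to $D\to D'$ through the universal property of the $q$-prismatic envelope.

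\emph{First assertion.} Let $B$ (resp.\ $B'$) be the $(p,\pq)$-adic completion of the polynomial ring $R[T_i\,(i\in\Lambda)]$ (resp.\ $R'[T'_{i'}\,(i'\in\Lambda')]$), so that $B\to A$ and $B'\to A'$ are $(p,\pq)$-adically \'etale and, by Proposition \ref{prop:qPrismEnvTheta}, $\theta_{A,i}$ (resp.\ $\theta_{A',i'}$) is the unique extension of the $q$-Higgs derivation $\theta_{B,i}$ (resp.\ $\theta_{B',i'}$) of Example \ref{eq:qHiggsDerivPolRing}; the data $(g,\psi)$ induce $B\to B'$ compatibly with $f$. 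On the uncompleted polynomial rings the identity $E^{\psi}(\cdot)\circ s_{\utheta}=s_{\utheta}\circ(\cdot)$ is Example \ref{ex:qHiggsDerivFunct} (applied to the $\delta$-homomorphism $g\colon R\to R'$), and since the $\theta$'s are $(p,\pq)$-adically continuous (Remark \ref{rmk:TwDerivScalExtComp}(2)) and $E^{\ut'\mu}(B')$ is separated it persists on $B$; composing with $B'\to A'$ it holds after restricting $A\to A'$ to $B$. On the other hand $\theta_{A,i}(A)\subset\pq A$ (Proposition \ref{prop:qPrismEnvTheta}) and $\theta_{A,i}$ is $R$-linear, so $s_{\utheta_A}$ and $s_{\utheta_{A'}}$ are congruent to the respective left structure maps modulo $\pq$; hence the two composites $E^{\psi}(f)\circ s_{\utheta_A}$ and $s_{\utheta_{A'}}\circ f$ from $A$ into $E:=E^{\ut'\mu}(A')$ induce the same map $A/(p,\pq)\to E/(p,\pq)$. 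Now $E$ is a finite free $A'$-module, hence $(p,\pq)$-adically complete and separated; the two composites agree on $B$ and agree modulo $(p,\pq)$, and $B\to A$ is $(p,\pq)$-adically \'etale, so by uniqueness of lifts along the nilpotent extensions $E/(p,\pq)^{n+1}\to E/(p,\pq)^{n}$ (induction on $n$) they coincide.

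\emph{Second assertion.} Equip $E:=E^{\ut'\mu,\utheta_{D'}}(D')$ with its $\delta$-structure as $E^{\ut'\mu,\ubeta'}_{\delta}(D')$, $\beta'_{i'}=(t'_{i'})^{p-1}\eta$; this is legitimate since $\delta(t'_{i'})=0$ in $D'$ gives $\delta(t'_{i'}\mu)=t'_{i'}\mu\cdot(t'_{i'})^{p-1}\eta$. As $E$ is a finite free $D'$-module it is $(p,\pq)$-adically flat, complete and separated over $D'$, so $(E,\pq E)$ is a bounded prism by Proposition \ref{prop:bddPrismFlatAlg}, lying over $(R,\pq R)$ via $R\to R'\to D'\to E$. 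I would show that the two ring homomorphisms $g_1:=E^{\psi}(f_D)\circ s_{\utheta_D}$ and $g_2:=s_{\utheta_{D'}}\circ f_D$ from $D$ to $E$ coincide. Both are $\delta$-homomorphisms: $s_{\utheta_D}$ and $s_{\utheta_{D'}}$ are $\delta$-homomorphisms into $E^{\ut\mu,\ubeta}_{\delta}(D)$ and $E^{\ut'\mu,\ubeta'}_{\delta}(D')$ (iterating Remark \ref{rmk:TwDerivComm}(2), the required cross-vanishings $\theta_{D,i}(\beta_j)=0$ for $i\neq j$ holding because $\theta_{D,i}$ is $R$-linear and annihilates every $t_j$), $f_D$ is a $\delta$-homomorphism, and $E^{\psi}(f_D)$ is a $\delta$-homomorphism since it carries $\delta(T_i)=\beta_iT_i$ to $\beta'_{\psi(i)}T'_{\psi(i)}=\delta(T'_{\psi(i)})$. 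They agree on the image of $A$ in $D$: this is the first assertion, transported along the functoriality of the construction $E^{\psi}(\cdot)$ and the fact that $\theta_{D,i}$ (resp.\ $\theta_{D',i'}$) extends $\theta_{A,i}$ (resp.\ $\theta_{A',i'}$). Finally $g_1$ and $g_2$ send $\pq D$ into $\pq E$ and restrict on $R$ to $R\to R'\to D'\to E$, and since $f_D(JD)\subset\pq D'$ their common restriction to $A$ carries $J$ into $\pq E$; thus $g_1$ and $g_2$ are morphisms of $\delta$-pairs $(A,J)\to(E,\pq E)$ over $(R,\pq R)$, and also morphisms of bounded prisms $(D,\pq D)\to(E,\pq E)$ over $(R,\pq R)$ inducing the same map on $A$. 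The universal property of the $q$-prismatic envelope $D$ (Definition \ref{def:bddPrismEnv}) then forces $g_1=g_2$, which is the claim.

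\emph{Main obstacle.} The polynomial base case is quoted from Example \ref{ex:qHiggsDerivFunct} and the \'etale transfer is routine; the delicate step is the second assertion, where one must recognise $E^{\ut'\mu,\utheta_{D'}}(D')$ as a bounded prism over $(R,\pq R)$ and keep careful track of the $\delta$-compatibilities --- that $s_{\utheta_D}$ and $E^{\psi}(f_D)$ are $\delta$-homomorphisms and that $g_1,g_2$ are morphisms of $\delta$-pairs over the base sending $J$ into $\pq E$ --- so that the universal property of the prismatic envelope may legitimately be invoked.
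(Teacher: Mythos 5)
Your proposal is correct and follows essentially the same route as the paper's (brief) argument: the claim for $(f/g,\psi)$ is reduced to the polynomial case of Example \ref{ex:qHiggsDerivFunct} and transferred by the unique lifting property along the $(p,\pq)$-adically \'etale map from the (completed) polynomial ring, and the claim for $f_D$ is deduced from it via the universal property of the $q$-prismatic envelope, using the $\delta$-structure on $E^{\ut'\mu}(D')$ given by $((t'_{i'})^{p-1}\eta)_{i'}$ (which makes $(E^{\ut'\mu}(D'),\pq)$ a bounded prism and the right structure maps $s_{\utheta_D}$, $s_{\utheta_{D'}}$ $\delta$-homomorphisms). Your write-up just fills in the details of that sketch, so no substantive difference.
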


One can deduce the claim for $(f/g,\psi)$ from Example \ref{ex:qHiggsDerivFunct}
by the unique lifting property for \'etale homomorphisms.
We derive the claim for $(f_D/g,\psi)$  from that of $(f/g,\psi)$
by using the universality of the $q$-prismatic envelope $D$
and the  $\delta$-structures of $E^{\ut\mu}(A)$ and 
$E^{\ut\mu}(D)$ (resp.~$E^{\ut'\mu}(A')$ and $E^{\ut'\mu}(D')$)
defined by $(t_i^{p-1}\eta)_{i\in\Lambda}$ 
(resp.~$((t'_{i'})^{p-1}\eta)_{i'\in\Lambda'}$)
as before Lemma \ref{lem:TwDerivComm}, with which the right structures
over $A$ and $D$ (resp.~$A'$ and $D'$) are $\delta$-homomorphisms.

By Proposition \ref{prop:FrSmqHiggDerivFunct}, a morphism of admissible
framed smooth $q$-pairs $(f/g,\psi)$
induces a scalar extension
functor \eqref{eq:TwConnectionScExtFunct}
\begin{equation}\label{eq:qHiggsCpxScExtFunct}
(f_D/g,\psi)^*\colon q\HIG(D,\ut,\utheta_{D})\longrightarrow q\HIG(D',\ut',\utheta_{D'})
\end{equation}
compatible with tensor products, and compositions of $(f/g,\psi)$'s \cite[10.11]{TsujiPrismQHiggs}. 

\begin{construction}\label{constr:qHiggsModCpxShv}
In our description of prismatic cohomology of a prismatic crystal, 
we use a $q$-Higgs complex defined as a complex of sheaves. 
We summarize its construction and its basic properties.\par
(1) (\cite[13.1]{TsujiPrismQHiggs}) Let $((A,J)/R,\ut=(t_i)_{i\in\Lambda})$ be an admissible framed smooth 
$q$-pair, and let $D$ be the $q$-prismatic envelope of $(A,J)$. 
Let $\fD$ and $\fD_n$ denote $\Spf(D)$ and $\Spec(D_n)$, respectively.
For each affine open $\Spf(\tD)\subset \fD$, $\theta_{D,i}$ $(i\in\Lambda)$
extend uniquely to $t_i\mu$-derivations $\theta_{\tD,i}$ of $\tD$ over $R$
$\delta$-compatible
with respect to $t_i^{p-1}\eta$ commuting with each other by Proposition \ref{prop:TwDerivEtaleExt}
(1), (2), and (4).
Varying $\tD$ and using Proposition \ref{prop:TwDerivEtaleExt} (3), 
we obtain a family of endomorphisms $\utheta_{\fD}=(\theta_{\fD,i})_{i\in\Lambda}$
of $\CO_{\fD}$. Let $n\in \N$, and let $(M,\utheta_M)$ be an object of $q\HIG(D_n,\ut,\utheta_D)$.
By taking the scalar extension of $(M,\utheta_M)$ under $D\to \tD$ and $\id_{\Lambda}$
\eqref{eq:TwConnectionScExtFunct} for each affine open $\Spf(\tD)\subset \fD$, 
we obtain a family of endomorphisms $\utheta_{\CM}=(\theta_{\CM,i})_{i\in\Lambda}$
of the quasi-coherent $\CO_{\fD_n}$-module $\CM$ on $\fD_{\Zar}=(\fD_n)_{\Zar}$ associated
to the $D_n$-module $M$. We call $(\CM,\utheta_{\CM})$ the {\it $q$-Higgs module over
$(\fD, \ut,\utheta_{\fD})$} {\it associated to} $(M,\utheta_M)$.
Similarly to Definition \ref{def:TwConnectionDRcpx} (2) and Definition \ref{def:qHiggsModCpx}, 
we define a complex $q\Omega^{\bullet}(\CM,\utheta_{\CM})$ to be the
Koszul complex associated to $\utheta_{\CM}$, which we call the {\it $q$-Higgs complex
of} $(\CM,\utheta_{\CM})$. The differential maps of $q\Omega^{\bullet}(\CM,\utheta_{\CM})$ are
$R_n$-linear by Remark \ref{rmk:dRComplexRlin}, where $R_n$ denotes $R/(p,\pq)^{n+1}R$ similarly to $D_n$.
\par
(2) (\cite[13.5 (2)]{TsujiPrismQHiggs}) Let $n\in \N$,  let $(M_{\nu},\utheta_{M_{\nu}})$ $(\nu=1,2)$ be objects of 
$q\HIG(D_n,\ut,\utheta_D)$, and let $(\CM_{\nu},\utheta_{\CM_{\nu}})$ $(\nu=1,2)$
and $(\CM_1\otimes\CM_2,\utheta_{\CM_1\otimes \CM_2})$ be the $q$-Higgs
modules over $(\fD,\ut,\utheta_{\fD})$ associated to $(M_{\nu},\utheta_{M_{\nu}})$
and their tensor product. The $\CO_{\fD_n}$-module 
$\CM_1\otimes\CM_2$ is the tensor product of the $\CO_{\fD_n}$-modules
$\CM_1$ and $\CM_2$.  Since the scalar extension functor \eqref{eq:TwConnectionScExtFunct} 
preserves tensor products, 
we obtain a morphism of complexes
\begin{equation}\label{eq:qHiggsCpxShvProd}
q\Omegab(\CM_1,\utheta_{\CM_1})\otimes_{R_n}q\Omegab(\CM_2,\utheta_{\CM_2})\longrightarrow
q\Omegab(\CM_1\otimes\CM_2,\utheta_{\CM_1\otimes\CM_2})
\end{equation}
by applying \eqref{eq:DRCpxProdMap} to the sections of the three $q$-Higgs complexes on each affine
open of $\fD$.\par
(3) (\cite[13.5 (1)]{TsujiPrismQHiggs}) Let $n\in \N$, let $(M,\utheta_{M})$ be an object of $q\HIG(D_n,\ut,\utheta)$,
and let $(\CM,\utheta_{\CM})$ be its associated $q$-Higgs module over $(\fD,\ut,\utheta_{\fD})$.
Let $\varphi_{\fD_n}$ denote the endomorphism of $\fD_n$ defined by $\varphi_{D_n}$,
and let $(\varphi_{\fD_n}^*\CM,\utheta_{\varphi_{\fD_n}^*\CM})$ denote the $q$-Higgs module
over $(\fD,\ut,\utheta_{\fD})$ associated to the Frobenius pullback 
$\varphi_{D_n}^*(M,\utheta_M)$ \eqref{eq:FrSmqHigFrobPB}. By the compatibility of the Frobenius
pullback functor \eqref{eq:TwConnFrobPBFunct} with scalar extensions
mentioned before Example \ref{ex:qHiggsDerivFunct}, we obtain a morphism of complexes
\begin{equation}\label{eq:qHiggCpxShvFrobPB}
\varphi^{\bullet}_{\fD_n,\Omega}(\CM,\utheta_{\CM})\colon 
q\Omega^{\bullet}(\CM,\utheta_{\CM})\longrightarrow
q\Omega^{\bullet}(\varphi_{\fD_n}^*\CM,\utheta_{\varphi^*_{\fD_n}\CM})
\end{equation}
functorial in $(M,\utheta_M)$, by applying \eqref{eq:TwdRCpxFrobPB} to the sections of the two $q$-Higgs complexes on 
each affine open of $\fD$. Since the Frobenius pullback morphism of de Rham complexes
\eqref{eq:TwdRCpxFrobPB} is compatible with the product \eqref{eq:DRCpxProdMap}, we see that the morphism
\eqref{eq:qHiggCpxShvFrobPB} is compatible with the product \eqref{eq:qHiggsCpxShvProd}.\par

(4) (\cite[14.13]{TsujiPrismQHiggs}) 
Let $(f/g,\psi)\colon ((A,J)/R,\ut=(t_i)_{i\in\Lambda})\to ((A',J')/R',\ut'=(t'_{i'})_{i'\in\Lambda'})$
be a morphism of admissible framed smooth $q$-pairs, let $D'$ be the $q$-prismatic envelope
of $(A',J')$,  and let $f_D$ be the morphism of $q$-prisms $D\to D'$ induced by $f$.
We define $D'_n$, $\fD'$, $\fD'_n$, $\utheta_{\fD'}=(\theta_{\fD',i'})_{i'\in\Lambda'}$
in the same way as $D_n$, $\fD$, etc.~by using $D'$ and $\utheta_{D'}$.
Let $\ff_D$ denote the morphism $\Spf(f_D)\colon \fD'\to \fD$.
Let $n\in \N$, let $(M,\utheta_M)$ be an object of $q\HIG(D_n,\ut,\utheta_D)$,
let $(M',\utheta_{M'})$ be its scalar extension under $(f_D/g,\psi)$ \eqref{eq:qHiggsCpxScExtFunct},
and let $(\CM,\utheta_{\CM})$ (resp.~$(\CM',\utheta_{\CM'})$) be
the $q$-Higgs module over $(\fD,\ut,\utheta_{\fD})$ (resp.~$(\fD',\ut',\utheta_{\fD'})$)
associated to $(M,\utheta_M)$ (resp.~$(M',\utheta_{M'})$). 
For each affine open $\Spf(\tD)\subset \fD$ and its pullback $\Spf(\tD')\subset \fD'$
under $\ff_D$, writing $f_{\tD}$ for the morphism $\tD\to \tD'$ induced by $f_D$,
we see that $E^{\psi}(f_{\tD})\colon E^{\ut\mu,\utheta_{\tD}}(\tD)
\to E^{\ut'\mu,\utheta_{\tD'}}(\tD')$ is a bialgebra homomorphism over $f_{\tD}$
as $E^{\psi}(f_D)$ is a bialgebra  homomorphism over $f_D$ (Proposition \ref{prop:FrSmqHiggDerivFunct}).
Therefore we obtain a morphism of complexes
\begin{equation}\label{eq:qHiggsCpxShvFunct}
q\Omega^{\bullet}_{f_D,\psi}(\CM,\utheta_{\CM})\colon
q\Omegab(\CM,\utheta_{\CM})\longrightarrow \ff_{D,\Zar*}(q\Omegab(\CM',\utheta_{\CM'}))
\end{equation}
by applying \eqref{eq:TwDrCpxScExt} for $f_{\tD}$ and $\psi$ to the sections of the
two $q$-Higgs complexes on $\Spf(\tD)$ and $\Spf(\tD')$ 
for each affine open $\Spf(\tD)$ of $\fD$. 
The morphism \eqref{eq:qHiggsCpxShvFunct} is compatible with compositions of $(f/g,\psi)$'s 
by the same property of the morphism \eqref{eq:TwDrCpxScExt}. Since 
\eqref{eq:TwDrCpxScExt} is compatible
with the product \eqref{eq:DRCpxProdMap} when $\psi$ is injective, the morphism 
\eqref{eq:qHiggsCpxShvFunct} is compatible with the product \eqref{eq:qHiggsCpxShvProd} 
when $\psi$ is injective \cite[14.16 (2)]{TsujiPrismQHiggs}. 
We have some weaker compatibility for a general
$\psi$ (see \cite[14.17]{TsujiPrismQHiggs}). By the compatibility
of the Frobenius pullbacks of de Rham complexes \eqref{eq:TwdRCpxFrobPB}
with scalar extensions mentioned before Example \ref{ex:qHiggsDerivFunct}, 
the morphism \eqref{eq:qHiggsCpxShvFunct}
is compatible with the Frobenius pullback morphisms
\eqref{eq:qHiggCpxShvFrobPB} for $(\CM,\utheta_{\CM})$ and $(\CM',\utheta_{\CM'})$
\cite[14.16 (1)]{TsujiPrismQHiggs}.
\end{construction}

\begin{definition}\label{def:qNilpQHiggs}
Let $((A,J)/R,\ut=(t_i)_{i\in\Lambda})$ be an admissible framed smooth $q$-pair
and let $D$ be the $q$-prismatic envelope of $(A,J)$. \par
(1) (\cite[11.19]{TsujiPrismQHiggs}) For $n\in\N$,
we say that an object $(M,\utheta_{M})$ of $q\HIG(D_n,\ut,\utheta_{D})$
(Definition \ref{def:qHiggsModCpx}) is {\it quasi-nilpotent} if for any $x\in M$ and $i\in \Lambda$,
there exists an integer $N\geq 1$ such that $\theta_{M,i}^N(x)=0$.
We write $q\HIG_{\qnilp}(D_n,\ut,\utheta_D)$ for the full subcategory
of $q\HIG(D_n,\ut,\utheta_D)$ consisting of quasi-nilpotent objects.\par
(2) Assume that $A/J$ is $p$-adically complete and separated, and
put $\fX=\Spf(A/J)$. We regard $D$ as an object of $(\fX/R)_{\prism}$
by the morphism $v_D\colon \Spf(D/\pq D)\to \fX$ induced 
by the homomorphism of $\delta$-pairs $(A,J)\to (D,\pq D)$. 
Let $i\in \Lambda$. 
Since $\theta_{A,i}(A)\subset \pq A\subset J$ (Proposition \ref{prop:qPrismEnvTheta}),
the automorphism $\gamma_{D,i}=\id_D+t_i\mu\theta_{D,i}$ of the bounded
prism $(D,\pq D)$ over $(R,\pq R)$ defines an automorphism of the object $(D,v_D)$ of $(\fX/R)_{\prism}$,
which we denote by $\gamma_{(D,v_D),i}$.
\end{definition}

\begin{theorem}[{\cite[11.10, 11.20]{TsujiPrismQHiggs}}]
\label{thm:PrismCrysqHiggsEquiv}
We have a canonical equivalence of categories 
\begin{equation} \label{eq:PrismCrysqHiggsEquiv}
\Crystal_{\prism}(\CO_{\fX/R,n})\simeq q\HIG_{\qnilp}(D_n,\ut,\utheta_D)
\end{equation}
satisfying the properties below, 
for each admissible framed smooth $q$-pair 
$((A,J)/R,\ut)$ with $A/J$ $p$-adically complete and separated, 
$\fX=\Spf(A/J)$, and the $q$-prismatic envelope $D$ 
of $(A,J)$, which is equipped with $\utheta_{D}$ and $v_D$.\par
(1) {\rm(\cite[11.33]{TsujiPrismQHiggs})} Let $\CF$ be an object of $\Crystal_{\prism}(\CO_{\fX/R,n})$,
and let $(M,\utheta_{M})$ be the object of $q\HIG(D_n,\ut,\utheta_D)$
corresponding to $\CF$ by \eqref{eq:PrismCrysqHiggsEquiv}.
Then we have $M=\CF(D,v_D)$ and the $\gamma_{D,i}$-semilinear
automorphism $\gamma_{M,i}=\id_M+t_i\mu\theta_{M,i}$ 
(Lemma \ref{lem:TwConnSemilinEnd}) of $M$
coincides with the $\gamma_{D,i}$-semilinear 
automorphism of $\CF(D,v_D)$ induced by the automorphism
$\gamma_{(D,v_D),i}$ of $(D,v_D)$.\par
(2) The equivalence \eqref{eq:PrismCrysqHiggsEquiv} is
compatible with the inclusions
$\Crystal_{\prism}(\CO_{\fX/R,n})\subset \Crystal_{\prism}(\CO_{\fX/R,n+1})$
and $q\HIG_{\qnilp}(D_n,\ut,\utheta_D)\subset 
q\HIG_{\qnilp}(D_{n+1},\ut,\utheta_D)$.\par
(3) {\rm (\cite[11.34 (1)]{TsujiPrismQHiggs})} The equivalence \eqref{eq:PrismCrysqHiggsEquiv} is compatible
with the Frobenius pullbacks $\varphi_n^*$ (Remark \ref{rmk:PrismCrystalFrobTensor} (2))
and $\varphi^*_{D_n}$ \eqref{eq:FrSmqHigFrobPB}.\par
(4) {\rm (\cite[11.34 (2)]{TsujiPrismQHiggs})}
The equivalence \eqref{eq:PrismCrysqHiggsEquiv} is compatible 
with tensor products (Remark \ref{rmk:PrismCrystalFrobTensor} (3)
and Definition \ref{def:TwConnectionDRcpx} (3)).\par
(5) {\rm (\cite[11.14, 11.37]{TsujiPrismQHiggs})} Let $(f/g,\psi)\colon ((A,J)/R,\ut)\to ((A',J')/R',\ut')$ be a
morphism of admissible framed smooth $q$-pairs such that 
$A/J$ and $A'/J'$ are $p$-adically complete and separated.
Let $D$ and $D'$ be the $q$-prismatic envelopes of $(A,J)$ and $(A',J')$,
let $f_D\colon D\to D'$ be the morphism of $q$-prisms induced by $f$,
put $\fX=\Spf(A/J)$ and $\fX'=\Spf(A'/J')$, and 
let $\off_{\prism}\colon (\fX'/R')^{\sim}_{\prism}\to (\fX/R)_{\prism}^{\sim}$ 
be the morphism of topos induced by $f$ and $g$.
Then the following diagram is commutative
up to a canonical isomorphism.
\begin{equation}
\xymatrix{
\Crystal_{\prism}(\CO_{\fX/R,n})\ar@{-}[r]^(.43){\sim}_(.43){\eqref{eq:PrismCrysqHiggsEquiv}}
\ar[d]_{\overline{\ff}_{\prism}^{-1}}^{\text{Definition }\ref{def:PrismaticSiteCrystal} \text{(3)}}
& q\HIG_{\qnilp}(D_n,\ut,\utheta_D) \ar@{^{(}->}[r]&
q\HIG(D,\ut,\utheta_D)
\ar[d]_{(f_D/g,\psi)^*}^{\eqref{eq:qHiggsCpxScExtFunct}}\\
\Crystal_{\prism}(\CO_{\fX'/R',n})\ar@{-}[r]^(.43){\sim}_(.43){\eqref{eq:PrismCrysqHiggsEquiv}}& 
q\HIG_{\qnilp}(D'_n,\ut',\utheta_{D'})\ar@{^{(}->}[r]&
q\HIG(D',\ut',\utheta_{D'})
}
\end{equation}
\end{theorem}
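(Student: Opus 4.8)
The plan is to establish \eqref{eq:PrismCrysqHiggsEquiv} as an instance of flat descent along a single well-chosen object of the prismatic site, combined with the stratification description of integrable $q$-connections. Fix an admissible framed smooth $q$-pair $((A,J)/R,\ut)$ with $\fX=\Spf(A/J)$ and $q$-prismatic envelope $D$, equipped with $\utheta_D$ (Proposition~\ref{prop:qPrismEnvTheta}) and the object $v_D\colon\Spf(D/\pq D)\to\fX$ of $(\fX/R)_{\prism}$. First I would produce the functor from left to right. By Proposition~\ref{prop:bddPrismFlatAlg}, the $A$-bialgebra $E^{\ut\mu,\utheta_D}(D)=D[T_i\;(i\in\Lambda)]/(T_i(T_i-t_i\mu))$ of Definition~\ref{def:TwConnectionDRcpx} and its ``diagonal'' self-products $\overline{E^{\ut\mu,\utheta_D}(D)\otimes_D\cdots\otimes_DE^{\ut\mu,\utheta_D}(D)}$ are bounded prisms, finite free over $D$, hence objects of $(\fX/R)_{\prism}$ receiving the structure maps between them and $(D,v_D)$; a crystal $\CF$ therefore produces, on $M:=\CF(D,v_D)$, a right $D_n$-linear map $s_M\colon M\to M\otimes_{D_n}E^{\ut\mu,\utheta_{D_n}}(D_n)$ satisfying \eqref{eq:TwStratCond1} and \eqref{eq:TwStratCond2} (the latter by evaluating on the twofold product and the codegeneracies), and hence, by the equivalence recalled just before Remark~\ref{eq:TwStratTensor}, an integrable $(\ut\mu,\utheta_D)$-connection $\utheta_M$ on $M$, i.e.\ an object $(M,\utheta_M)$ of $q\HIG(D_n,\ut,\utheta_D)$. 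As in Theorem~\ref{thm:PrismCrysqHiggsEquiv}(1) — which is forced by the crystal property applied to the automorphism $\gamma_{(D,v_D),i}$ — the associated $\gamma_{D,i}$-semilinear automorphism $\id_M+t_i\mu\theta_{M,i}$ of Lemma~\ref{lem:TwConnSemilinEnd} is the one induced by $\gamma_{(D,v_D),i}$; since that automorphism has finite order on each section at every truncation, one reads off that $(M,\utheta_M)$ is quasi-nilpotent (Definition~\ref{def:qNilpQHiggs}), so the functor lands in $q\HIG_{\qnilp}(D_n,\ut,\utheta_D)$.

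The substance is that this functor is an equivalence. For this I would show that $(D,v_D)$ together with its finite self-products in $(\fX/R)_{\prism}$ is a covering family of the final object: using the universal property of the $q$-prismatic envelope $D$, a map $(P',IP',v')\to(D,v_D)$ over a suitable $(p,\pq)$-adically flat cover $P\to P'$ of a given object $(P,IP,v)$ amounts to lifting the structure morphism along the $\delta$-algebra $A$, which is $\delta$-$R$-étale over $R[\ut]$, and this is possible after passing to such a cover. Thus $\Crystal_{\prism}(\CO_{\fX/R,n})$ is equivalent to the category of $D_n$-modules with descent data relative to the Čech nerve $D^{(\bullet)}$ of $(D,v_D)$, whose terms are the $q$-prismatic envelopes of the self-products of the ``diagonal'' $\delta$-pair $(A\otimes_RA,\ker(A\otimes_RA\to A/J))$ over $(R,\pq R)$. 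The remaining point is that, on a $D_n$-module $M$, giving such a descent datum is the same as giving a stratification over $E^{\ut\mu,\utheta_{D_n}}(D_n)$ \emph{whose corresponding connection is quasi-nilpotent}: one direction is restriction along the natural maps $D^{(1)}\to E^{\ut\mu,\utheta_D}(D)$ classifying the two lifts of coordinates built from the left structure and $s_{\utheta_D}$, and the converse is the reconstruction of the full descent datum from the operators $\theta_{M,i}$, valid precisely because pointwise nilpotence of the $\theta_{M,i}$ turns the relevant infinite expansions into finite sums — this is the $q$-analogue, with the role of divided powers played by the $q$-prismatic envelope, of the classical identification ``crystal $=$ module with topologically quasi-nilpotent integrable connection''.

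The compatibilities (2)--(5) follow by unwinding. (2) holds because the construction is uniform in $n$ and both inclusions are restriction of scalars along $\CO_{\fX/R,n+1}\to\CO_{\fX/R,n}$, resp.\ $D_{n+1}\to D_n$. For (3), the Frobenius $\varphi$ of the site composed with $v_D$ realizes $\varphi_n^*\CF$ as $\CF(D,v_D)$ re-based along $\varphi_{D_n}$ with the coordinate twist $\delta(t_i)=0$, which is exactly the definition of $\varphi_{D_n}^*$ via \eqref{eq:TwConnFrobPBFunct} (cf.\ Remark~\ref{rmk:PrismCrystalFrobTensor}(2)). For (4), the presheaf tensor product of crystals evaluates on $(D,v_D)$ to $M_1\otimes_{D_n}M_2$, and its descent datum is the composite displayed in Remark~\ref{eq:TwStratTensor}, which corresponds to the tensor product $q$-Higgs field of Definition~\ref{def:TwConnectionDRcpx}(3). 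For (5), given $(f/g,\psi)$, the morphism $f_D\colon D\to D'$ defines a morphism $(D',v_{D'})\to(D,v_D)$ over $g$, so $\overline{\ff}_{\prism}^{-1}\CF$ evaluated at $(D',v_{D'})$ is $\CF(D,v_D)\otimes_DD'$ by the crystal property; that the transported stratification is the scalar extension $(f_D/g,\psi)^*$ follows since $E^{\psi}(f_D)\colon E^{\ut\mu,\utheta_D}(D)\to E^{\ut'\mu,\utheta_{D'}}(D')$ is a bialgebra homomorphism over $f_D$ (Proposition~\ref{prop:FrSmqHiggDerivFunct}), giving the asserted commutative square with \eqref{eq:qHiggsCpxScExtFunct}.

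The main obstacle is the middle step: identifying the genuine Čech nerve $D^{(\bullet)}$ — which is built from $q$-prismatic envelopes and is of infinite rank over $D$ in general — precisely enough to match descent data along it with $E^{\ut\mu,\utheta_D}(D)$-stratifications, and, crucially, proving that the image is \emph{exactly} the quasi-nilpotent $q$-Higgs modules rather than merely producing a connection. Checking that $E^{\ut\mu,\utheta_D}(D)$ and its diagonal self-products are bounded prisms, tracking the $\gamma$-twists coming from the multiplicative normalization of the coordinate differences, and verifying that the cocycle condition matches \eqref{eq:TwStratCond2} are the technical heart; the functoriality statements (3)--(5) are then routine bookkeeping.
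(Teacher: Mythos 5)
First, note that this paper does not prove Theorem \ref{thm:PrismCrysqHiggsEquiv} at all: it is imported wholesale from \cite[11.10, 11.20, 11.33, 11.34, 11.37]{TsujiPrismQHiggs}, so your proposal can only be measured against the expected argument of that reference. Your architecture is indeed the standard one and almost certainly the one used there: show that $(D,v_D)$ covers the final object of $(\fX/R)_{\prism}^{\sim}$, identify crystals with descent data along the \v{C}ech nerve $D^{(\bullet)}$ of $q$-prismatic envelopes of the self-products of $(A,J)$, and compare such descent data with stratifications over $E^{\ut\mu,\utheta_D}(D)$, i.e.\ with integrable $(\ut\mu,\utheta_D)$-connections, the quasi-nilpotent ones being exactly the image. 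Your observations that $E^{\ut\mu,\utheta_D}(D)$ and $\overline{E\otimes_DE}$ are bounded prisms (via Proposition \ref{prop:bddPrismFlatAlg}) and that both structure maps are morphisms in the site, and your treatment of (2)--(5), are fine in outline.

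The genuine gap is at the decisive step, and it is twofold. First, your claimed shortcut for quasi-nilpotence --- that the automorphism of $\CF(D,v_D)$ induced by $\gamma_{(D,v_D),i}$ ``has finite order on each section at every truncation'' --- is unjustified and, as far as I can see, false: $\gamma_{M,i}=\id_M+t_i\mu\theta_{M,i}$ has no reason to be of finite order on $M$ killed by $(p,\pq)^{n+1}$, and even a finite-order statement would not yield nilpotence of $\theta_{M,i}$ without circularity. A stratification over the rank-$2^{\sharp\Lambda}$ algebra $E^{\ut\mu,\utheta_D}(D)$ encodes only an integrable connection with no nilpotence constraint; quasi-nilpotence (Definition \ref{def:qNilpQHiggs}) can only come from the crystal's values on the genuine diagonal envelope $D^{(1)}$, where the Taylor/cocycle expansion must converge. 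Second, precisely this comparison --- reconstructing the full descent datum along $D^{(\bullet)}$ from a quasi-nilpotent $q$-Higgs module by an explicit convergent expansion, verifying the cocycle condition against \eqref{eq:TwStratCond2}, and showing conversely that every crystal yields a quasi-nilpotent field --- is what \cite[11.10, 11.20]{TsujiPrismQHiggs} actually prove, and you explicitly defer it as ``the main obstacle'' and ``the technical heart.'' As written, the proposal therefore reduces the equivalence \eqref{eq:PrismCrysqHiggsEquiv} to its own key content rather than proving it; to close the gap you would need a concrete description of $D^{(1)}$ (the $q$-analogues of the divided powers of $1\otimes t_i-t_i\otimes1$, normalized by $t_i\mu$) sufficient to write the Taylor formula, prove its convergence exactly under quasi-nilpotence, and check full faithfulness and essential surjectivity from it.
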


\begin{definition}\label{def:PrismCrysqHigsCpxShv}
Let $((A,J)/R,\ut=(t_i)_{i\in\Lambda})$ be an admissible framed smooth $q$-pair with
$A/J$ $p$-adically complete and separated, let $D$ be the $q$-prismatic envelope of $(A,J)$, 
and put $\fD=\Spf(D)$ and $\fX=\Spf(A/J)$. 
For $n\in \N$ and $\CF\in \Ob\Crystal_{\prism}(\CO_{\fX/R,n})$, we write
$(M_D(\CF),\utheta_{M_D(\CF)})$ for the $q$-Higgs module over $(D,\ut,\utheta_{D})$ 
corresponding to $\CF$ by the equivalence \eqref{eq:PrismCrysqHiggsEquiv}, and define
$(\CF_{\fD},\utheta_{\CF_{\fD}})$ to  be the $q$-Higgs module over $(\fD,\ut,\utheta_{\fD})$
associated to $(M_D(\CF),\utheta_{M_D(\CF)})$ (Construction \ref{constr:qHiggsModCpxShv} (1)).
For $\CF\in \Ob \hCrystal_{\prism}(\CO_{\fX/R})$, we define 
$(\CF_{\fD},\utheta_{\CF_{\fD}})$ to be the inverse limit
of $(\CF_{n\fD},\utheta_{\CF_{n\fD}})$
$(n\in \N)$, where $\CF_n=\CF\otimes_{\CO_{\fX/R}}\CO_{\fX/R,n}$ 
(Remark \ref{rmk:PrismCrystalFrobTensor} (1)),
and define the {\it $q$-Higgs complex} $q\Omegab(\CF_{\fD},\utheta_{\CF_{\fD}})$
to be the Koszul complex of $\utheta_{\CF_{\fD}}$, which is isomorphic 
to the inverse limit of $q\Omegab(\CF_{n\fD},\utheta_{\CF_{n\fD}})$ $(n\in \N)$.
We often abbreviate $q\Omegab(\CF_{\fD},\utheta_{\CF_{\fD}})$
to $q\Omegab(\CF_{\fD})$ to simplify the notation.
\end{definition}

Let $((A,J)/R,\ut=(t_i)_{i\in\Lambda})$ be an admissible framed smooth $q$-pair with
$A/J$ $p$-adically complete and separated, let $D$ be the $q$-prismatic envelope of $(A,J)$, 
and put $\fD=\Spf(D)$ and $\fX=\Spf(A/J)$. We can apply Construction \ref{constr:qHiggsModCpxShv}
(2), (3), and (4) to the $q$-Higgs modules over $(D,\ut,\utheta_D)$ associated to crystals
on $(\fX/R)_{\prism}$ as follows.

For $n\in \N$ and $\CF_1,\CF_2\in \Ob\Crystal_{\prism}(\CO_{\fX/R,n})$
(resp.~$\Ob \hCrystal_{\prism}(\CO_{\fX/R})$), we obtain a morphism 
\begin{equation}\label{eq:CrysqHiggsCpxProd}
q\Omegab(\CF_{1\fD})\otimes_Rq\Omega^{\bullet}(\CF_{2\fD})
\to q\Omegab((\CF_1\otimes_{\CO_{\fX/R,n}}\CF_2)_{\fD})
\;\;(\text{resp.~ } q\Omegab((\CF_1\hotimes_{\CO_{\fX/R}}\CF_2)_{\fD}))
\end{equation}
by applying \eqref{eq:qHiggsCpxShvProd} to $(M_D(\CF_{\nu}),\utheta_{M_D(\CF_{\nu})})$ $(\nu=1,2)$
(resp.~$(M_D(\CF_{\nu,n}),\utheta_{M_D(\CF_{\nu,n})}$) $(\nu=1,2)$, 
$\CF_{\nu,n}=\CF_{\nu}\otimes_{\CO_{\fX/R}}\CO_{\fX/R,n}$ $(n\in \N)$ and taking
the inverse limit over $n$.) Note that the equivalence \eqref{eq:PrismCrysqHiggsEquiv} 
preserves tensor products
(Theorem \ref{thm:PrismCrysqHiggsEquiv} (4)).
See Remark \ref{rmk:PrismCrystalFrobTensor} (3) for $\CF_1\hotimes_{\CO_{\fX/R}}\CF_2$.

For $n\in \N$ and $\CF\in \Ob\Crystal_{\prism}(\CO_{\fX/R,n})$
(resp.~$\CF\in \Ob\hCrystal_{\prism}(\CO_{\fX/R})$), we obtain a morphism 
\begin{equation}\label{eq:CrysqHiggCpxFrobPB}
q\Omega^{\bullet}(\CF_{\fD})\longrightarrow q\Omegab((\varphi_n^*\CF)_{\fD})
\;\;\text{(resp.~ }q\Omegab((\hvarphi^*\CF)_{\fD})
\end{equation}
functorial in $\CF$ and compatible with the product \eqref{eq:CrysqHiggsCpxProd}
by applying \eqref{eq:qHiggCpxShvFrobPB} to $(M_D(\CF),\utheta_{M_D(\CF)})$ 
(resp.~$(M_D(\CF_n),\utheta_{M_D(\CF_n)})$,
$\CF_n=\CF\otimes_{\CO_{\fX/R}}\CO_{\fX/R,n}$ $(n\in \N)$ and taking the inverse limit
over $n$).
Note that the equivalence \eqref{eq:PrismCrysqHiggsEquiv} is compatible with the Frobenius pullbacks
(Theorem \ref{thm:PrismCrysqHiggsEquiv} (3)). See Remark 
\ref{rmk:PrismCrystalFrobTensor} (2) for $\hvarphi^*\CF$. 

Under the same notation as Construction \ref{constr:qHiggsModCpxShv} (4), 
assume that $A/J$ and $A'/J'$ are $p$-adically complete
and separated, put $\fX=\Spf(A/J)$ and $\fX'=\Spf(A'/J')$, and let 
$\off_{\prism}\colon (\fX'/R')_{\prism}^{\sim}\to (\fX/R)_{\prism}^{\sim}$ 
denote the morphism of topos induced by $f$ and $g$.
For $n\in \N$ and $\CF\in \Ob\Crystal_{\prism}(\CO_{\fX/R,n})$
(resp.~$\CF\in \Ob\hCrystal_{\prism}(\CO_{\fX/R})$), we obtain a morphism 
\begin{equation}\label{eq:CrysqHiggsCpxFunct}
q\Omegab(\CF_{\fD})\longrightarrow \ff_{D,\Zar*}(q\Omegab((\off^{-1}_{\prism}\CF)_{\fD'}))
\end{equation}
by applying \eqref{eq:qHiggsCpxShvFunct} to $(M_D(\CF),\utheta_{M_D(\CF)})$
(resp.~$(M_D(\CF_n),\utheta_{M_D(\CF_n)})$, $\CF_n=\CF\otimes_{\CO_{\fX/R}}\CO_{\fX/R,n}$
$(n\in \N)$ and taking the inverse limit over $n$). 
Note that the equivalence \eqref{eq:PrismCrysqHiggsEquiv}
is compatible with the
pullbacks $\off_{\prism}^{-1}$ and $(f_D/g,\psi)^*$ (Theorem \ref{thm:PrismCrysqHiggsEquiv} (5)).
The morphism \eqref{eq:CrysqHiggsCpxFunct} 
is functorial in $\CF$, compatible with the Frobenius pullback \eqref{eq:CrysqHiggCpxFrobPB},
and compatible with compositions of $(f/g,\psi)$'s. When $\psi $ is injective,
it is also compatible with the product \eqref{eq:CrysqHiggsCpxProd}. 
For a general $\psi$, we have  some
weaker compatibility (\cite[14.17]{TsujiPrismQHiggs}).

\begin{theorem}[{\cite[13.9, 13.31]{TsujiPrismQHiggs}}]
\label{thm:PrismCohQHiggsCpx}
Let $((A,J)/R,\ut)$ be an admissible framed smooth $q$-pair with
$A/J$ $p$-adically complete and separated, put $\fX=\Spf(A/J)$,
and let $D$ be the $q$-prismatic envelope of $(A,J)$, which is equipped with
$\utheta_D$ and $v_D$. 
For $n\in \N$ and $\CF\in \Ob \Crystal_{\prism}(\CO_{\fX/R,n})$
(resp.~$\CF\in \Ob\hCrystal_{\prism}(\CO_{\fX/R})$),
we have a canonical isomorphism
in $D^+(\fX_{\Zar},R_n)$ (resp.~$D^+(\fX_{\Zar},R)$)
\begin{equation}\label{eq:PrismCohQHiggsCpx}
Ru_{\fX/R*}\CF\cong v_{D,\Zar*}(q\Omegab(\CF_{\fD},\utheta_{\CF_{\fD}}))
\end{equation}
functorial in $\CF$ and satisfying the properties below.
Here $u_{\fX/R}$ is the morphism of topos \eqref{eq:PrismToposZarProj}.\par
(1) In the first case, the isomorphism \eqref{eq:PrismCohQHiggsCpx} is compatible
with respect to $n$ such that $\CF\in \Ob \Crystal_{\prism}(\CO_{\fX/R,n})$.
The isomorphisms \eqref{eq:PrismCohQHiggsCpx} for $\CF\in \Ob\hCrystal_{\prism}(\CO_{\fX/R})$
and $\CF_n=\CF\otimes_{\CO_{\fX/R}}\CO_{\fX/R,n}$ $(n\in \N)$ are compatible with the
projection morphisms for both sides induced by $\CF\to \CF_n$.
\par
(2) {\rm(\cite[13.28 (1), 13.33 (1)]{TsujiPrismQHiggs})} 
The isomorphisms \eqref{eq:PrismCohQHiggsCpx} for $\CF$ and 
$\varphi_n^*\CF$ (resp.~$\hvarphi^*\CF$) 
are compatible with 
$Ru_{\fX/R*}\CF\to \varphi_*Ru_{\fX/R*}(\varphi_n^*\CF)$ (resp.~$\varphi_*Ru_{\fX/R*}(\hvarphi^*\CF)$)
induced by 
$\CF\to \varphi_n^*\CF$ (resp.~$\hvarphi^*\CF$); $x\mapsto x\otimes 1$ and
\eqref{eq:CrysqHiggCpxFrobPB}. Here $\varphi_*$ denotes the restriction of scalars
under the lifting of Frobenius of $R_n$ (resp.~$R$).\par
(3) {\rm (\cite[13.28 (2), 13.33 (2)]{TsujiPrismQHiggs})}
For $n\in \N$ and $\CF_{\nu}\in \Ob\Crystal_{\prism}(\CO_{\fX/R,n})$ 
(resp.~$\CF_{\nu}\in \Ob\hCrystal_{\prism}(\CO_{\fX/R})$) $(\nu=1,2)$,
the isomorphisms \eqref{eq:PrismCohQHiggsCpx} for $\CF_{\nu}$
and their tensor product
are compatible with the product $Ru_{\fX/R*}\CF_1\otimes^LRu_{\fX/R*}\CF_2
\to Ru_{\fX/R*}(\CF_1\otimes_{\CO_{\fX/R,n}}\CF_2)$
(resp.~$Ru_{\fX/R*}(\CF_1\hotimes_{\CO_{\fX/R}}\CF_2$))
and the product \eqref{eq:CrysqHiggsCpxProd}.
Here $\otimes^L$ denotes $\otimes^L_{R_n}$ (resp.~$\otimes^L_R$).
\par
(4) {\rm(\cite[14.32, 14.33]{TsujiPrismQHiggs})} 
Under the notation in Theorem \ref{thm:PrismCrysqHiggsEquiv} (5), 
the isomorphisms
\eqref{eq:PrismCohQHiggsCpx} 
for $(\CF,  ((A,J)/R,\ut))$ and 
$(\overline{\ff}_{\prism}^{-1}\CF, ((A',J')/R',\ut'))$ are compatible with 
$Ru_{\fX/R*}\CF\to Ru_{\fX/R*}R\overline{\ff}_{\prism*}\overline{\ff}_{\prism}^{-1}\CF
\cong R\overline{\ff}_{\Zar*}Ru_{\fX'/R'*}\overline{\ff}_{\prism}^{-1}\CF$ 
and \eqref{eq:CrysqHiggsCpxFunct}, where
$\overline{\ff}_{\Zar}$ denotes the morphism of topos
$\fX_{\Zar}^{\prime\sim}\to \fX_{\Zar}^{\sim}$ induced by $f$.
\end{theorem}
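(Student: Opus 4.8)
The plan is to construct the isomorphism first at each finite level $n$, for $\CF\in\Ob\Crystal_{\prism}(\CO_{\fX/R,n})$, and to deduce the statement for a complete crystal by passing to the (derived) inverse limit over $n$ applied to $\CF_n=\CF\otimes_{\CO_{\fX/R}}\CO_{\fX/R,n}$; this also yields the compatibility (1). Fix $n$, and let $(M,\utheta_M)=(M_D(\CF),\utheta_{M_D(\CF)})$ be the quasi-nilpotent $q$-Higgs module over $(D_n,\ut,\utheta_D)$ corresponding to $\CF$ under Theorem~\ref{thm:PrismCrysqHiggsEquiv}. The first step is a \emph{$q$-prismatic \v{C}ech--Alexander description}. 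Working Zariski locally on the affine $\fX$ (equivalently, since $\fX$ is quasi-compact and separated, after applying $R\Gamma(\fX_{\Zar},-)$), one forms the cosimplicial $q$-prism $D^{(\bullet)}$ whose $D^{(m)}$ is the $q$-prismatic envelope of the $(m{+}1)$-fold self-coproduct of the framed embedding $\fX\hookrightarrow\Spf(A)$ over $R$ (these envelopes exist by Proposition~\ref{prop:SuffCondExBdd}, as one may take a subset of the tensor coordinates as $p$-adic coordinates of $A/J$). One checks that $D^{(\bullet)}$ is ``covering enough'': every object $(Q,IQ,v_Q)$ of $(\fX/R)_{\prism}$ admits, after a $(p,\pq)$-adically faithfully flat base change on $Q$, a morphism from some $D^{(m)}$ — one lifts $v_Q$ along $\Spf(Q)\twoheadrightarrow\Spf(Q/IQ)$ using the $(p,\pq)$-adic smoothness of $A$ over $R$, and chooses the images of the coordinates $t_i$ $\delta$-compatibly fpqc locally. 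Hence $Ru_{\fX/R*}\CF$ is represented by the complex associated to the cosimplicial $\CO_{\fX}$-module $\CF(D^{(\bullet)})$, which by the crystal property equals $M\otimes_DD^{(\bullet)}$, functorially in $\CF$.

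The second and main step is a \emph{$q$-Poincaré lemma} identifying $M\otimes_DD^{(\bullet)}$ with the $q$-Higgs complex. Using the coordinates together with Propositions~\ref{prop:SuffCondExBdd}, \ref{prop:SuffCondExBdd2}, and~\ref{prop:TwDerivPrismEnvExt}, one shows that the first self-coproduct $D^{(1)}$ is canonically the $(p,\pq)$-adic completion of the bialgebra $E^{\ut\mu,\utheta_D}(D)=D[T_i\,(i\in\Lambda)]/(T_i(T_i-t_i\mu),\,i\in\Lambda)$ of Section~\ref{sec:connection}, equipped with $\delta(T_i)=t_i^{p-1}\eta\,T_i$, with the left and right $D$-algebra structures of Section~\ref{sec:connection} as its two coface maps, $\pi_0$ as codegeneracy, and $\delta^{\ut\mu,\utheta_D}$ (landing in $\overline{E^{\ut\mu,\utheta_D}(D)\otimes_DE^{\ut\mu,\utheta_D}(D)}$) governing the comultiplication. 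Under the equivalence between integrable $(\ut\mu,\utheta_D)$-connections and stratifications $s_M\colon M\to M\otimes_DE^{\ut\mu,\utheta_D}(D)$ recalled in Section~\ref{sec:connection}, $M\otimes_DD^{(\bullet)}$ becomes the cobar complex of $(M,s_M)$. Since $E^{\ut\mu,\utheta_D}(D)$ is free over $D$ on the monomials $T_{\ui}$ $(\ui\subseteq\Lambda)$ with each $T_i$ primitive and $T_i\otimes T_i=0$ in $\overline{E^{\ut\mu,\utheta_D}(D)\otimes_DE^{\ut\mu,\utheta_D}(D)}$, the normalized cobar complex factors over $i\in\Lambda$ into a tensor product of two-term complexes $[M\xrightarrow{\;\theta_{M,i}\;}M]$; it is therefore the Koszul complex of $(\theta_{M,i})_{i\in\Lambda}$, which is exactly $q\Omega^{\bullet}(M,\utheta_M)$, and the quasi-isomorphism is given by an explicit, coordinate-dependent homotopy. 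Sheafifying over $\fD_{\Zar}$ and applying $v_{D,\Zar*}$ produces the isomorphism~\eqref{eq:PrismCohQHiggsCpx}.

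All constructions in Steps~1--2 are natural in $\CF$ and built from the equivalence of Theorem~\ref{thm:PrismCrysqHiggsEquiv} and the bialgebras $E^{\ut\mu,\utheta_D}$, so the compatibilities (2)--(4) are read off from the corresponding properties recorded in Sections~\ref{sec:connection}--\ref{sec:PrismCrysQHiggs}. For (2), combine Theorem~\ref{thm:PrismCrysqHiggsEquiv}~(3) with the Frobenius pullback~\eqref{eq:TwConnFrobPBFunct} on $E^{\ut\mu,\utheta_D}$, which matches the Frobenius on the \v{C}ech--Alexander complex. For (3), combine Theorem~\ref{thm:PrismCrysqHiggsEquiv}~(4) with the shuffle (Alexander--Whitney) products on cosimplicial modules and the product morphism~\eqref{eq:DRCpxProdMap} on $q$-Higgs complexes. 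For (4), combine Theorem~\ref{thm:PrismCrysqHiggsEquiv}~(5), the functoriality of the framed embedding and its self-coproducts under $(f/g,\psi)$, and the bialgebra homomorphism $E^{\psi}(f_D)$ of Proposition~\ref{prop:FrSmqHiggDerivFunct}, keeping track of the order-preserving $\psi$ in the homotopy; when $\psi$ is not injective the product morphisms on the two sides agree only up to higher homotopy, so one retains only the weaker compatibility analogous to \cite[14.17]{TsujiPrismQHiggs}.

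The crux is the $q$-Poincaré lemma, and within it two sub-points. First, the identification $D^{(1)}\cong\widehat{E^{\ut\mu,\utheta_D}(D)}$ rests on a careful analysis of the $q$-prismatic envelope of the self-coproduct via the coordinates, using the flatness and boundedness outputs of Propositions~\ref{prop:SuffCondExBdd} and~\ref{prop:SuffCondExBdd2}: one must verify that the $\delta$-pair presenting the self-coproduct satisfies their regular-sequence hypotheses, so that the prismatic envelope produces \emph{exactly} the relations $T_i(T_i-t_i\mu)=0$ and no further ``$q$-divided powers''. Second, the homotopy equivalence between the normalized cobar complex and $q\Omega^{\bullet}(M,\utheta_M)$ is a $q$-deformation of the classical Poincaré lemma and must be built explicitly over the $(p,\pq)$-adic base, where $\mu$ and $\pq$ are non-units, so the usual contracting homotopies that divide by integers have to be replaced by integral manipulations — this is precisely where the quasi-nilpotence of $(M,\utheta_M)$ and the combinatorics of $E^{\ut\mu,\utheta_D}$ are used. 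Verifying that $D^{(\bullet)}$ is covering enough — the fpqc-local $\delta$-compatible lifting of coordinates — is a further but more routine point of Step~1.
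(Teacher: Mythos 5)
Note first that the paper you are working from does not prove Theorem \ref{thm:PrismCohQHiggsCpx} at all: it is quoted from \cite[13.9, 13.31]{TsujiPrismQHiggs}, so your argument has to stand on its own. Your Step 1 (computing $Ru_{\fX/R*}\CF$ by the \v{C}ech--Alexander complex of the cosimplicial $q$-prismatic envelopes $D^{(\bullet)}$ of the self-coproducts of the framed embedding) is the standard and reasonable starting point. The gap is in the crux of Step 2: the identification of $D^{(1)}$ with the $(p,\pq)$-adic completion of $E^{\ut\mu,\utheta_D}(D)=D[T_i\,(i\in\Lambda)]/(T_i(T_i-t_i\mu))$ is false. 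By the universal property of the bounded prismatic envelope (Definition \ref{def:bddPrismEnv}), $D^{(1)}$ is obtained from $(A\hotimes_RA,J^{(1)})$ by freely $\delta$-adjoining the elements $\xi_i/\pq$, $\xi_i=t_i\otimes 1-1\otimes t_i$, and completing; by Propositions \ref{prop:SuffCondExBdd} and \ref{prop:SuffCondExBdd2} it is $(p,\pq)$-adically flat over $D$ and is, topologically, of infinite rank over $D$ --- the $q$-analogue of the PD-polynomial algebra of crystalline theory --- whereas $E^{\ut\mu,\utheta_D}(D)$ is finite free of rank $2^{\sharp\Lambda}$. There is a canonical morphism $D^{(1)}\to E^{\ut\mu,\utheta_D}(D)$ (using $\theta_{D,i}(t_i)=\pq$ from Proposition \ref{prop:qPrismEnvTheta}, the two structure maps send $\xi_i$ to $-\pq T_i$), but it is not an isomorphism: mapping $D^{(1)}$ to a $q$-prismatic envelope in which the image of $\xi_i/\pq$ is a free $\delta$-variable shows that no quadratic relation such as $T_i(T_i-t_i\mu)=0$ can hold in $D^{(1)}$. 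Your appeal to Propositions \ref{prop:SuffCondExBdd} and \ref{prop:SuffCondExBdd2} to rule out ``further $q$-divided powers'' is exactly backwards: what those results give is flatness of the envelope, i.e. precisely that the freely adjoined $\delta$-iterates of $\xi_i/\pq$ survive. The bialgebra $E^{\ualpha,\upartial}(A)$ of \S\ref{sec:connection} serves only to encode integrable $(\ualpha,\upartial)$-connections by finite stratification-like data; it does not represent self-coproducts in the prismatic site.

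As a consequence, your ``$q$-Poincar\'e lemma'' computes the cohomology of the wrong cosimplicial ring (your cobar complex over $E$ is essentially a formal repackaging of the $q$-Higgs complex itself), and the genuine content of the theorem --- that the \v{C}ech--Alexander complex of the true envelopes $D^{(\bullet)}$ is quasi-isomorphic to $v_{D,\Zar*}(q\Omegab(\CF_{\fD},\utheta_{\CF_{\fD}}))$, for instance via the comparison map induced by $D^{(\bullet)}\to E^{(\bullet)}$ together with an honest Poincar\'e-type lemma for the infinitely many $q$-PD-type variables, which is where integrality over $(p,\pq)$ and the quasi-nilpotence of the $q$-Higgs field really enter --- is exactly the step your proposal skips. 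The verifications of (2)--(4) inherit this gap, since they are all read off from the unproved identification; your concluding remark about retaining only a weaker product compatibility when $\psi$ is not injective is consistent with the paper's own caveat for \eqref{eq:CrysqHiggsCpxFunct}, but it does not affect the main issue.
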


\begin{definition}
An {\it admissible framed embedding system over $\Z_p[[\mu]]$}
is a set of data
\begin{equation}\label{eq:AdFrEmbSys}
(\fX\xleftarrow{\pi_{\bcdot}} \fX_{\bcdot}=\Spf(\oA_{\bcdot})
\overset{\mfi_{\bcdot}}{\hookrightarrow}
\fY_{\bcdot}=\Spf(A_{\bcdot})/R, \ut_{\bcdot})\end{equation}
consisting of a $q$-prism $R$, a quasi-compact and separated
$p$-adic formal scheme $\fX$ over $\Spf(R/\pq R)$,
a Zariski hypercovering $\fX_{\bcdot}=([r]\mapsto \fX_{[r]}=\Spf(\oA_{[r]}))_{r\in\N}$
of $\fX$ by affine formal schemes, a cosimplicial admissible framed smooth $\delta$-pair 
$A_{\bcdot}=([r]\mapsto ((A_{[r]},J_{[r]}), \ut_{[r]}))_{r\in \N}$ over $R$ 
(Definition \ref{def:qprism} (3)), 
and a closed immersion
$\mfi_{\bcdot}\colon \fX_{\bcdot}\hookrightarrow \fY_{\bcdot}=\Spf(A_{\bcdot})$ of simplicial
formal schemes over $R$ defined by an isomorphism of cosimplicial
$R$-algebras $\oA_{\bcdot}\cong A_{\bcdot}/J_{\bcdot}$.
By taking the $q$-prismatic envelope $D_{[r]}$ of 
$(A_{[r]},J_{[r]})$ for each $r\in \N$, we obtain a closed 
immersion of simplicial formal schemes
$\ofD_{\bcdot}=\Spf(D_{\bcdot}/\pq D_{\bcdot})
\hookrightarrow \fD_{\bcdot}=\Spf(D_{\bcdot})$ 
lying over $\mfi_{\bcdot}\colon\fX_{\bcdot}\hookrightarrow \fY_{\bcdot}$,
which we call the {\it $q$-prismatic envelope of the embedding system}
\eqref{eq:AdFrEmbSys}.
We write $v_{D_{\bcdot}}$ for the morphism of 
simplicial formal schemes $\ofD_{\bcdot}\to \fX_{\bcdot}$.
 
A {\it morphism of admissible framed embedding systems over $\Z_p[[\mu]]$}
\begin{equation}\label{eq:MOrphiAdFrEmbSys}
(\fX'\leftarrow \fX'_{\bcdot}\hookrightarrow\fY'_{\bcdot}/R',\ut'_{\bcdot})\longrightarrow
(\fX\leftarrow \fX_{\bcdot}\hookrightarrow\fY_{\bcdot}/R,\ut_{\bcdot})\end{equation}
is a pair $((f_{\bcdot}/g,\psi_{\bcdot}),\off)$ consisting of 
a morphism $(f_{\bcdot}/g,\psi_{\bcdot})\colon ((A_{\bcdot},J_{\bcdot})/R,\ut_{\bcdot})\to
((A'_{\bcdot},J'_{\bcdot})/R',\ut'_{\bcdot})$ of cosimplicial framed smooth $q$-pairs
and a morphism of formal schemes $\off\colon \fX'\to \fX$ over $\Spf(g)$
such that the morphism $\off_{\bcdot}\colon \fX'_{\bcdot}\to \fX_{\bcdot}$ induced by $f_{\bcdot}$
is compatible with $\off$. The morphism $((f_{\bcdot}/g,\psi_{\bcdot}),\off)$ induces a
morphism between the $q$-prismatic envelopes 
$(\off_{D_{\bcdot}},\ff_{D_{\bcdot}})\colon (\ofD'_{\bcdot}\hookrightarrow \fD'
_{\bcdot})\to (\ofD_{\bcdot}\hookrightarrow \fD_{\bcdot})$.
\end{definition}

\begin{remark}[{\cite[15.1]{TsujiPrismQHiggs}}]
Any $q$-prism $R$ and any quasi-compact and separated $p$-adic smooth formal scheme $\fX$ over $\Spf(R/\pq R)$ have an admissible framed
embedding system as \eqref{eq:AdFrEmbSys}. 
\end{remark}

Let $(\fX\xleftarrow{\pi_{\bcdot}} \fX_{\bcdot}=\Spf(\oA_{\bcdot})
\overset{\mfi_{\bcdot}}{\hookrightarrow}
\fY_{\bcdot}=\Spf(A_{\bcdot})/R,\ut_{\bcdot})$ be an admissible framed embedding system over $\Z_p[[\mu]]$.
We define $((\fX_{\bcdot}/R)_{\prism}^{\sim},\CO_{\fX_{\bcdot}/R,n})$ 
to be the ringed topos associated to the simplicial ringed topos 
$([r]\mapsto ((\fX_{[r]}/R)_{\prism}^{\sim},\CO_{\fX_{[r]}/R,n}))_{r\in \N}$, and let 
$\theta_{\prism}$ denote the morphism of ringed topos
$((\fX_{\bcdot}/R)_{\prism}^{\sim},\CO_{\fX_{\bcdot}/R,n})\to
((\fX/R)_{\prism}^{\sim},\CO_{\fX/R,n})$ induced by $\pi_{\bcdot}$. 
We define the ringed topos
$(\fX_{\bcdot,\Zar}^{\sim},R_n)$ and the morphism of ringed topos
$\theta\colon (\fX_{\bcdot,\Zar}^{\sim},R_n)\to (\fX_{\Zar}^{\sim},R_n)$
similarly. Let $u_{\fX_{\bcdot}/R}\colon ((\fX_{\bcdot}/R)_{\prism}^{\sim},\CO_{\fX_{\bcdot}/R,n})
\to (\fX_{\bcdot,\Zar}^{\sim},R_n)$ denote the morphism of ringed topos associated to
the morphism of simplicial ringed topos
$([r]\mapsto u_{\fX_{[r]}/R}\colon 
((\fX_{[r]}/R)_{\prism}^{\sim},\CO_{\fX_{[r]}/R,n})\to (\fX_{[r],\Zar}^{\sim},R_n))$
for $n\in \N$. We also consider variants without the subscript $n$. 
For the $q$-prismatic envelope $\ofD_{\bcdot}\hookrightarrow \fD_{\bcdot}$
of the embedding system, the morphism of simplicial formal schemes
$v_{D_{\bcdot}}\colon \ofD_{\bcdot}\to \fX_{\bcdot}$ defines a morphism of topos
$v_{D_{\bcdot},\Zar}\colon \ofD_{\bcdot,\Zar}^{\sim}\to \fX_{\bcdot,\Zar}^{\sim}$.

\begin{theorem}[{\cite[15.5, 15.11]{TsujiPrismQHiggs}}]\label{thm:PrismCohQHiggsCpxGlb}
Under the notation above, let $n\in \N$ and $\CF\in \Ob \Crystal_{\prism}(\CO_{\fX/R,n})$
(resp.~$\CF\in\Ob \hCrystal_{\prism}(\CO_{\fX/R})$),
and let $\CF_{\bcdot}$ be the $\CO_{\fX_{\bcdot}/R,n}$-module (resp.~$\CO_{\fX_{\bcdot}/R}$-module) 
$\theta_{\prism}^{-1}(\CF)$, which consists of $\pi_{[r]\prism}^{-1}(\CF)
\in \Ob \Crystal_{\prism}(\CO_{\fX_{[r]}/R,n})$ (resp.~$\in\Ob \hCrystal_{\prism}(\CO_{\fX_{[r]}/R})$)
$(r\in \N)$.
Let $q\Omegab(\CF_{\bcdot \fD_{\bcdot}},\utheta_{\CF_{\bcdot \fD_{\bcdot}}})$ be the
complex on $\ofD_{\bcdot,\Zar}^{\sim}$ 
obtained by applying Definition \ref{def:PrismCrysqHigsCpxShv}
and \eqref{eq:CrysqHiggsCpxFunct} to $\CF_{[r]}$ $(r\in \N)$ and 
the structure morphisms among 
$((A_{[r]},J_{[r]}),\ut_{[r]})$ $(r\in \N)$. 
Then we have a canonical isomorphism 
in $D^+(\fX_{\bcdot,\Zar}^{\sim},R_n)$ (resp.~$D^+(\fX_{\bcdot,\Zar}^{\sim},R)$)
\begin{equation}\label{eq:PrismCohQHiggsCpxSimp}
Ru_{\fX_{\bcdot}/R*}\CF_{\bcdot}\cong 
v_{D_{\bcdot},\Zar*}(q\Omegab(\CF_{\bcdot \fD_{\bcdot}},\utheta_{\CF_{\bcdot \fD_{\bcdot}}}))\end{equation}
functorial in $\CF$ and satisfying the following properties.\par
(1) In the first case, the isomorphism \eqref{eq:PrismCohQHiggsCpxSimp} is compatible
with respect to $n$ such that $\CF\in \Ob \Crystal_{\prism}(\CO_{\fX/R,n})$.
The isomorphisms \eqref{eq:PrismCohQHiggsCpxSimp} for $\CF\in \Ob\hCrystal_{\prism}(\CO_{\fX/R})$
and $\CF_n=\CF\otimes_{\CO_{\fX/R}}\CO_{\fX/R,n}$ $(n\in \N)$ are compatible with the
projection morphisms for both sides induced by $\CF\to \CF_n$.\par
(2) {\rm(\cite[15.13]{TsujiPrismQHiggs})} 
The isomorphisms \eqref{eq:PrismCohQHiggsCpxSimp} for $\CF$ and 
$\varphi_n^*\CF$ (resp.~$\hvarphi^*\CF$) 
are compatible with 
$Ru_{\fX_{\bcdot}/R*}\CF_{\bcdot}\to \varphi_*Ru_{\fX_{\bcdot}/R*}(\varphi_n^*\CF)_{\bcdot}$ 
(resp.~$\varphi_*Ru_{\fX_{\bcdot}/R*}(\hvarphi^*\CF)_{\bcdot}$) induced by 
$\CF\to \varphi_n^*\CF$ (resp.~$\hvarphi^*\CF$); $x\mapsto x\otimes 1$ and
the morphism on $\ofD_{\bcdot,\Zar}^{\sim}$
\begin{equation}q\Omegab(\CF_{\bcdot\fD_{\bcdot}})\longrightarrow 
\varphi_*q\Omegab((\varphi_n^*\CF)_{\bcdot\fD_{\bcdot}})\;\;
{\text(resp.~ }\varphi_*q\Omegab((\hvarphi^*\CF)_{\bcdot\fD_{\bcdot}}))\end{equation}
obtained by applying \eqref{eq:CrysqHiggCpxFrobPB}
to $\CF_{[r]}$ $(r\in \N)$. Here $\varphi_*$ denotes the restriction of scalars
under the lifting of Frobenius of $R_n$ (resp.~$R$).\par
(3) {\rm(\cite[15.24, 15.28]{TsujiPrismQHiggs})}
Suppose that we are given a morphism $((f_{\bcdot}/g,\psi_{\bcdot}),\off)$ as
\eqref{eq:MOrphiAdFrEmbSys}, and let $\off_{\bcdot}$ and $\ff_{D_{\bcdot}}$ 
be the morphisms introduced after \eqref{eq:MOrphiAdFrEmbSys}.
Let $\off_{\prism}$ (resp.~$\off_{\bcdot\prism}$) be
 the morphism of topos $(\fX'/R')^{\sim}_{\prism}\to (\fX/R)^{\sim}_{\prism}$
(resp.~$(\fX'_{\bcdot}/R')^{\sim}_{\prism}\to (\fX_{\bcdot}/R)^{\sim}_{\prism}$)
induced by $\off$ (resp.~$\off_{\bcdot}$) and $g$.
Put $\CF':=\off_{\prism}^{-1}\CF$.
Then 
the isomorphisms \eqref{eq:PrismCohQHiggsCpxSimp} for 
$\CF$ and $\CF'$ are compatible with 
$Ru_{\fX_{\bcdot}/R*}\CF_{\bcdot}\to Ru_{\fX_{\bcdot}/R*}R\off_{\bcdot\prism*}\CF'_{\bcdot}
\cong R\off_{\bcdot,\Zar*}Ru_{\fX'_{\bcdot}/R'*}\CF'_{\bcdot}$ 
and the morphism on $\ofD_{\bcdot,\Zar}^{\sim}$
\begin{equation}\label{eq:CrysQHiggsCpxPB}
q\Omegab(\CF_{\bcdot\fD_{\bcdot}})\longrightarrow 
\ff_{D_{\bcdot},\Zar*}(q\Omegab(\CF'_{\bcdot\fD'_{\bcdot}}))\end{equation}
obtained by applying \eqref{eq:CrysqHiggsCpxFunct} to $\CF_{[r]}$ and $(f_{[r]}/g,\psi_{[r]})$
$(r\in \N)$.\par
(4) {\rm(\cite[15.17]{TsujiPrismQHiggs})}
Let $(\fX\xleftarrow{\pi_{\bcdot}}\fX_{\bcdot}\overset{\mfi^{\pone}_{\bcdot}}
\hookrightarrow \fY^{\pone}_{\bcdot}/R,\ut_{\bcdot}^{\pone})$ be
the admissible framed embedding system over $\Z_p[[\mu]]$
obtained by taking the fiber product of two copies of $(\fY_{\bcdot},\ut_{\bcdot})$
over $R$ (the $q$-prismatic envelope of $\mfi^{\pone}_{\bcdot}$ exists by 
Proposition \ref{prop:SuffCondExBdd2}), and let $\fD^{\pone}_{\bcdot}=\Spf(D^{\pone}_{\bcdot})$ denote
its $q$-prismatic envelope. Let $((p_{\nu\bcdot}/\id_R,\psi_{\nu\bcdot}),\id_{\fX})$ 
$(\nu=1,2)$ be the projection to $(\fX\xleftarrow{\pi_{\bcdot}}\fX_{\bcdot}
\overset{\mfi_{\bcdot}}\hookrightarrow \fY_{\bcdot}/R,\ut_{\bcdot})$
defined by the $\nu$th projection $(\fY^{\pone}_{\bcdot},\ut^{\pone}_{\bcdot})
\to (\fY_{\bcdot},\ut_{\bcdot})$. (For each $r\in \N$, we equip 
the index set $\Lambda_{[r]}^{\pone}$ of the coordinates
$\ut_{[r]}^{\pone}$ with the unique total order such that $\psi_{\nu}\colon
\Lambda_{[r]}\to \Lambda_{[r]}^{\pone}$
$(\nu=1,2)$ are maps of ordered sets and $\psi_1(i)\leq \psi_2(j)$
for every $i,j\in \Lambda_{[r]}$.) 
Let $n\in \N$ and 
$\CF_{\nu}\in \Ob \Crystal_{\prism}(\CO_{\fX/R,n})$
(resp.~$\CF_{\nu}\in \Ob \hCrystal_{\prism}(\CO_{\fX/R})$)
$(\nu=1,2)$, and let $\CF$ be $\CF_1\otimes_{\CO_{\fX/R,n}}\CF_2$
(resp.~$\CF_1\hotimes_{\CO_{\fX/R}}\CF_2$). Then the morphism
\eqref{eq:CrysQHiggsCpxPB} $q\Omegab(\CF_{\nu\bcdot\fD_{\bcdot}})\to
\fp_{\nu D_{\bcdot},\Zar*}(q\Omegab(\CF_{\nu\bcdot\fD^{\pone}_{\bcdot}}))$
for $\CF_{\nu}$ and $((p_{\nu\bcdot}/\id_R,\psi_{\nu\bcdot}),\id_{\fX})$,
and the product \eqref{eq:CrysqHiggsCpxProd} for $\CF_{1[r]}$, $\CF_{2[r]}$, and 
$(\mfi^{\pone}_{[r]}\colon \fX_{[r]}\hookrightarrow \fY^{\pone}_{[r]}/R,\ut_{[r]}^{\pone})$ $(r\in\N)$
induce a morphism of complexes
\begin{equation}
v_{D_{\bcdot},\Zar*}(q\Omegab(\CF_{1\bcdot\fD_{\bcdot}}))\otimes_R
v_{D_{\bcdot},\Zar*}(q\Omegab(\CF_{2\bcdot\fD_{\bcdot}}))
\longrightarrow
v_{D^{\pone}_{\bcdot},\Zar*}(q\Omegab(\CF_{\bcdot\fD^{\pone}_{\bcdot}})),
\end{equation}
and it describes the product
$Ru_{\fX_{\bcdot}/R*}\CF_{1\bcdot}
\otimes^LRu_{\fX_{\bcdot}/R*}\CF_{2\bcdot}
\to Ru_{\fX_{\bcdot}/R*}\CF_{\bcdot}$
via the isomorphisms \eqref{eq:PrismCohQHiggsCpxSimp} for $(\CF_{\nu},\fX\leftarrow\fX_{\bcdot}
\hookrightarrow \fY_{\bcdot}/R,\ut_{\bcdot})$ $(\nu=1,2)$
and for $(\CF, \fX\leftarrow \fX_{\bcdot}\hookrightarrow\fY^{\pone}_{\bcdot}/R,\ut^{\pone}_{\bcdot})$.
Here $\otimes^L$ denotes $\otimes^L_{R_n}$ (resp.~$\otimes^L_R$).
\end{theorem}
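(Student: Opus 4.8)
The plan is to deduce \eqref{eq:PrismCohQHiggsCpxSimp} from the local comparison Theorem \ref{thm:PrismCohQHiggsCpx} by cohomological descent along the Zariski hypercovering $\fX_{\bcdot}\to \fX$. The first point is that the morphism of ringed topos $u_{\fX_{\bcdot}/R}$ is ``cartesian'': by the functoriality isomorphism \eqref{eq:PrismToposZarProjFunct}, the projections $u_{\fX_{[r]}/R}$ commute with pullback along the structure morphisms of $\Delta$, so for any $\CG_{\bcdot}$ the $[r]$-component of $Ru_{\fX_{\bcdot}/R*}\CG_{\bcdot}$ is canonically $Ru_{\fX_{[r]}/R*}\CG_{[r]}$, and likewise $v_{D_{\bcdot},\Zar*}$ is computed componentwise. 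Applied to $\CF_{\bcdot}=\theta_{\prism}^{-1}\CF$, whose $[r]$-component is $\pi_{[r]\prism}^{-1}\CF$, this reduces the construction of \eqref{eq:PrismCohQHiggsCpxSimp} to the local isomorphisms \eqref{eq:PrismCohQHiggsCpx} for each $((A_{[r]},J_{[r]})/R,\ut_{[r]})$ together with their compatibility with the (co)simplicial structure.

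That compatibility is precisely Theorem \ref{thm:PrismCohQHiggsCpx} (4) applied to each structure morphism of $\Delta$, viewed as a morphism of admissible framed smooth $q$-pairs: it identifies \eqref{eq:PrismCohQHiggsCpx} for source and target with the pullback morphism \eqref{eq:CrysqHiggsCpxFunct}. Since $q\Omegab(\CF_{\bcdot\fD_{\bcdot}},\utheta_{\CF_{\bcdot\fD_{\bcdot}}})$ on $\ofD_{\bcdot,\Zar}^{\sim}$ is by definition glued from the $q\Omegab(\CF_{[r]\fD_{[r]}})$ using exactly these maps \eqref{eq:CrysqHiggsCpxFunct}, the componentwise isomorphisms assemble into an isomorphism of simplicial objects, hence the required isomorphism in $D^+(\fX_{\bcdot,\Zar}^{\sim},R_n)$ (resp.\ $D^+(\fX_{\bcdot,\Zar}^{\sim},R)$). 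Properties (1) and (2) are then the componentwise statements, i.e.\ Theorem \ref{thm:PrismCohQHiggsCpx} (1), (2) on each $\fX_{[r]}$, the relevant diagrams commuting because they commute in each degree; and (3) follows from Theorem \ref{thm:PrismCohQHiggsCpx} (4) applied degreewise to the morphism of embedding systems $((f_{\bcdot}/g,\psi_{\bcdot}),\off)$, using the cartesian property of $Ru_{\fX_{\bcdot}/R*}$ and $R\off_{\bcdot,\Zar*}$ and the fact that the base-change isomorphism $Ru_{\fX_{\bcdot}/R*}R\off_{\bcdot\prism*}\cong R\off_{\bcdot,\Zar*}Ru_{\fX'_{\bcdot}/R'*}$ comes from \eqref{eq:PrismToposZarProjFunct} in each degree.

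The delicate point, where I expect the work to lie, is the multiplicativity (4). The obstruction is that $\CG\mapsto q\Omegab(\CG_{\fD})$ is not compatible with pullback along the diagonal $\fX_{\bcdot}\hookrightarrow\fY_{\bcdot}\times_R\fY_{\bcdot}$, which is not smooth, so one cannot form the product of $q$-Higgs complexes on the envelope $\fD_{\bcdot}$ of the original embedding. Following the strategy announced in the introduction, I would pass to the auxiliary embedding system $(\fX\leftarrow\fX_{\bcdot}\hookrightarrow\fY^{\pone}_{\bcdot}=\fY_{\bcdot}\times_R\fY_{\bcdot}/R,\ut^{\pone}_{\bcdot})$, whose $q$-prismatic envelope $\fD^{\pone}_{\bcdot}$ exists by Proposition \ref{prop:SuffCondExBdd2}. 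The two projections $p_{\nu}\colon\fY^{\pone}_{\bcdot}\to\fY_{\bcdot}$ are smooth with coordinates, hence $\psi_{\nu\bcdot}$ is injective and \eqref{eq:CrysqHiggsCpxFunct} for $p_{\nu}$ is compatible with the product \eqref{eq:qHiggsCpxShvProd}; the product \eqref{eq:CrysqHiggsCpxProd} itself is formed on $\fD^{\pone}_{[r]}$, where $\CF_{[r]}=\CF_{1[r]}\otimes\CF_{2[r]}$ naturally lives. Combining these and applying $v_{D_{\bcdot},\Zar*}$ gives the asserted product morphism of complexes, and that it computes the cup product on $Ru_{\fX_{\bcdot}/R*}$ follows from Theorem \ref{thm:PrismCohQHiggsCpx} (3) for the embeddings $\mfi^{\pone}_{[r]}$, together with the independence (up to canonical isomorphism) of \eqref{eq:PrismCohQHiggsCpx} of the chosen embedding, which makes the descriptions of $Ru_{\fX_{\bcdot}/R*}\CF_{\nu\bcdot}$ via $\fD_{\bcdot}$ and via $\fD^{\pone}_{\bcdot}$ agree. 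The main obstacle is keeping track of the order-of-composition and $\ugamma$-twists in the product map \eqref{eq:DRCpxProdMap} under the non-injective maps $\psi$ arising from the diagonal, i.e.\ checking that the weaker compatibility of \cite[14.17]{TsujiPrismQHiggs} suffices to make all the squares commute; this is the computational heart of the argument.
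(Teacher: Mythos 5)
Your outline follows the same route as the actual source of this statement (which is only quoted in this paper from \cite[\S 15]{TsujiPrismQHiggs}, just as the local Theorem \ref{thm:PrismCohQHiggsCpx} is): compute $Ru_{\fX_{\bcdot}/R*}$ and $v_{D_{\bcdot},\Zar*}$ componentwise, invoke the local comparison on each $\fX_{[r]}$ together with its functoriality for the cosimplicial transition morphisms, and handle (4) by passing to the product embedding $\fY^{\pone}_{\bcdot}$, where the projections have injective $\psi_{\nu}$ so that \eqref{eq:CrysqHiggsCpxFunct} is compatible with \eqref{eq:CrysqHiggsCpxProd}. That part of the plan, including the use of Proposition \ref{prop:SuffCondExBdd2} for the existence of $\fD^{\pone}_{\bcdot}$ and the comparison of the $\fD_{\bcdot}$- and $\fD^{\pone}_{\bcdot}$-descriptions via Theorem \ref{thm:PrismCohQHiggsCpx} (4), is the right strategy.

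The genuine gap is the sentence ``the componentwise isomorphisms assemble into an isomorphism of simplicial objects.'' Theorem \ref{thm:PrismCohQHiggsCpx} (4) only asserts that, for each structure morphism $[r]\to[s]$ of the cosimplicial framed smooth $q$-pair, a square commutes in the derived category $D^{+}(\fX_{[r],\Zar},R_n)$. A family of isomorphisms $Ru_{\fX_{[r]}/R*}\CF_{[r]}\cong v_{D_{[r]},\Zar*}(q\Omegab(\CF_{[r]\fD_{[r]}}))$ compatible with the transition maps only up to such derived-category identifications does not produce a morphism, let alone an isomorphism, in $D^{+}(\fX_{\bcdot,\Zar}^{\sim},R_n)$: a morphism over the simplicial topos is strictly more data than its components together with commutativity of squares in each $D^{+}(\fX_{[r],\Zar})$, since no homotopy-coherence is recorded, and the fiberwise computation \eqref{eq:DtoposFBF} only lets you read off components of a \emph{given} global map, not glue one from componentwise data. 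Closing this requires constructing the comparison at the level of actual complexes of sheaves, strictly functorially in $[r]$ (in \cite[\S 15]{TsujiPrismQHiggs} this is done with \v{C}ech--Alexander-type resolutions attached to the cosimplicial envelopes $D_{[r]}$), so that the defining zig-zag of quasi-isomorphisms already lives on $\fX_{\bcdot,\Zar}^{\sim}$ and is only afterwards identified componentwise with \eqref{eq:PrismCohQHiggsCpx}. Treating the local isomorphism as a black box with the stated derived-category functoriality cannot supply this, and the same strictness issue undermines your verification of (1)--(3) ``because the diagrams commute in each degree.''
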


\begin{remark}[{\cite[15.6,15.9]{TsujiPrismQHiggs}}]
By taking $R\theta_*$ of \eqref{eq:PrismCohQHiggsCpxSimp}, we obtain an isomorphism in $D^+(\fX_{\Zar},R_n)$
(resp.~$D^+(\fX_{\Zar},R)$)
\begin{equation}\label{eq:PrismCohQHiggsCpxSimp2}
Ru_{\fX/R*}\CF\cong (([r]\mapsto \pi_{[r],\Zar*}v_{D_{[r]},\Zar*}(q\Omegab(\CF_{[r]\fD_{[r]}},
\utheta_{\CF_{[r]\fD_{[r]}}})))_{r\in\N})_s,
\end{equation}
where $(\;)_s$ denotes the simple complex associated to a cosimplicial complex.
\end{remark}

\section{Sheaves with action of a profinite group}\label{sec:Gsheaves}
In this section, we summarize basic facts on sheaves with
action of a profinite group on a site equipped with the
topology induced  by a pretopology
whose coverings consist of finite families of morphisms.
Recall that we have fixed two universes $\bV\in \bU$ and
chosen some set-theoretical conventions on sites, presheaves,
sheaves, and topos at the end of Introduction. We assume that 
profinite groups $G$, $G'$ and $G''$
appearing in this section always belong to $\bU$.

\begin{definition} \label{def:Gset}
Let $G$ be a profinite group and let $A$ be a ring belonging to $\bU$.\par
(1) By a {\it $G$-set} (resp.~{\it $G$-module}, resp.~{\it $A$-$G$-module}),
we mean a set (resp.~module, resp.~$A$-module) belonging to $\bU$
equipped with
a left action of $G$ continuous with respect to the discrete topology on 
the set (resp.~module, resp.~$A$-module). A {\it morphism of 
$G$-sets} (resp.~{\it $G$-modules}, resp.~{\it $A$-$G$-modules}) is a
$G$-equivariant map (resp.~homomorphism of modules, 
resp.~$A$-linear map). We write $G\Set$ (resp.~$G\Mod$, resp.~$A$-$G\Mod$)
for the category of $G$-sets (resp.~$G$-modules, resp.~$A$-$G$-modules).\par
\par
(2) By a {\it finite $G$-set}, we mean a $G$-set whose underlying set is finite
and belongs to $\bV$.
We define $G\fSet$ to be the category of  finite $G$-sets 
equipped with the topology induced by the pretopology 
defined by all finite surjective families of morphisms. 
For $S\in \Ob G\fSet$, we write $\Cov_{G\fSet}(S)$ for
the set consisting of all finite surjective families of morphisms
in $G\fSet$ with target $S$. Note that $G\fSet$ is $\bU$-small.
\end{definition}

\begin{remark}\label{rmk:LimitsGSets}
 Let $G$ and $A$ be as in Definition \ref{def:Gset}.\par
(1) Every $\bU$-small direct limit in $G\Set$ is representable; it is 
represented by the direct limit of the underlying sets equipped
with the action of $G$ by functoriality, which is continuous. 
Every $\bU$-small inverse limit in $G\Set$ is also representable; 
it is represented by the subset of the
inverse limit of the underlying sets with the action of $G$
by functoriality consisting of elements whose
stabilizers of the $G$-action are open. For a finite inverse limit,
the stabilizer of every element is open. Therefore a finite inverse limit
in $G\Set$ is compatible with that of the underlying sets.
This implies that a $G$-module is interpreted as a module object
of $G\Set$. Similarly we may regard $A$ equipped with the trivial $G$-action
as a ring object of $G\Set$, and then an $A$-$G$-module
is interpreted as an $A$-module object of $G\Set$.\par
(2) Every finite inverse limit in $G\fSet$ is representable; 
the finite inverse limit in $G\Set$ is represented by an object
of $G\fSet$ because 
only finite number of finite $G$-sets are involved in the construction.
\end{remark}

Let $G$ be a profinite group, and let 
$\CN(G)$ denote the set of all open normal subgroups of $G$.
For each $H\in \CN(G)$, we regard $G/H$ as an
object of $G\fSet$ by the left action of $G$, which factors through
$G/H$ and hence is continuous. Then the right action of $G/H$
on $G/H$ defines a right action of
$G/H$ on $G/H$ regarded as an object of $G\fSet$. Hence
for a sheaf of sets $\CF$ on $G\fSet$, the right action above
defines a left action of $G/H$ on $\CF(G/H)$. By taking the
direct limit over $H\in \CN(G)$ with respect to the map 
$\CF(G/H)\to \CF(G/H')$
induced by the projection map $G/H'\to G/H$ for $H$, $H'\in \CN(G)$
with $H'\subset H$, we obtain an object
$\varinjlim_{H\in \CN(G)}\CF(G/H)$ of $G\Set$, which will be 
denoted by $\CF(G)$. This construction is obviously functorial in $\CF$,
and therefore defines a functor 
\begin{equation}\label{eq:GfSetShfToGSet}
\rho_{G}^*\colon G\fSet^{\sim}\to G\Set,\;\;
\rho_{G}^*\CF=\CF(G)=\varinjlim_{H\in \CN(G)}\CF(G/H).\end{equation}

Conversely, given a $G$-set $T$, we obtain a presheaf $\CF$
on $G\fSet$ simply by restricting the presheaf on $G\Set$
represented by $T$, i.e., by $\CF(S)=\Hom_{G\Set}(S,T)$ $(S\in G\fSet)$.
We see that
$\CF$ is a sheaf as follows. Let $(S_{\lambda}\to S)_{\lambda\in \Lambda}$
be a finite surjective family of morphisms in $G\fSet$. 
It is straightforward to show that the sequence 
$$\Map(S,T)\to \prod_{\lambda\in\Lambda}\Map(S_{\lambda},T)
\rightrightarrows \prod_{(\lambda,\lambda')\in\Lambda^2}
\Map(S_{\lambda}\times_SS_{\lambda'},T)$$
is exact, where the right two maps are defined by the two projections
$S_{\lambda}\times_SS_{\lambda'}\to S_{\lambda}$, $S_{\lambda'}$. 
This remains exact after it is restricted to $\Hom_{G\Set}$ the sets
of $G$-equivariant maps since
a map $S\to T$ is $G$-equivariant if and only if its composition with the
surjective $G$-equivariant map $\sqcup_{\lambda\in\Lambda}S_{\lambda}\to S$ is 
$G$-equivariant. This construction is functorial in $T$, and we obtain
a functor in the opposite direction 
\begin{equation}\label{eq:GSetToGfSetShf}
\rho_{G*}\colon G\Set\to G\fSet^{\sim},\;\;
\rho_{G*}T=\Hom_{G\Set}(-,T).\end{equation}

We have the following well-known fact, which implies that 
the category $G\Set$ is a topos.

\begin{proposition} \label{prop:GsetGfsetSheaves}
The functor $\rho_{G*}$ is canonically regarded as a right adjoint of 
$\rho_G^*$, and the functors $\rho_{G*}$ and $\rho_G^*$
are equivalences of categories which are quasi-inverses of each other
by the adjunction.
\end{proposition}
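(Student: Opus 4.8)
The plan is to verify the adjunction $\rho_G^*\dashv\rho_{G*}$ directly and then check that the unit and counit are isomorphisms. First I would construct the adjunction bijection. Given a sheaf $\CF$ on $G\fSet$ and a $G$-set $T$, I want a natural bijection
\[
\Hom_{G\Set}(\rho_G^*\CF, T)\;\cong\;\Hom_{G\fSet^{\sim}}(\CF,\rho_{G*}T).
\]
For the right-hand side, a morphism $\CF\to\rho_{G*}T$ amounts to a compatible family of maps $\CF(S)\to\Hom_{G\Set}(S,T)$ for $S\in G\fSet$, i.e.\ by adjoint transposition a compatible family of $G$-equivariant maps $S\to\Map(\CF(S),T)$, or equivalently (evaluating) a map that for each $s\in S$ and each $H\in\CN(G)$ with $H$ fixing $s$ sends $\CF(S)\to T$; restricting attention to $S=G/H$ and the base point, this is exactly the data of a map $\CF(G/H)\to T$ for each $H$, compatible under the transition maps and $G$-equivariant, hence a $G$-equivariant map $\rho_G^*\CF=\varinjlim_H\CF(G/H)\to T$. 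The reverse direction is the analogous unwinding, using that every $S\in G\fSet$ is a finite disjoint union of orbits $G/H_i$ and that $\CF$, being a sheaf, satisfies $\CF(S)\cong\prod_i\CF(G/H_i)$. I would spell this out as the construction of the unit $\eta_{\CF}\colon\CF\to\rho_{G*}\rho_G^*\CF$ and counit $\varepsilon_T\colon\rho_G^*\rho_{G*}T\to T$ and check the triangle identities, which is a routine diagram chase.

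Next I would show $\varepsilon_T$ is an isomorphism. We have $\rho_G^*\rho_{G*}T=\varinjlim_{H\in\CN(G)}\Hom_{G\Set}(G/H,T)$, and $\Hom_{G\Set}(G/H,T)\cong T^H$, the $H$-fixed points, the isomorphism sending a $G$-map $f$ to $f(\bar 1)$. Since the action of $G$ on $T$ is continuous, $T=\varinjlim_{H\in\CN(G)}T^H$, and one checks the transition maps and the residual $G$-action match those built into $\rho_G^*\rho_{G*}$; hence $\varepsilon_T$ is an isomorphism of $G$-sets.

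The main obstacle — and the step deserving the most care — is showing the unit $\eta_{\CF}\colon\CF\to\rho_{G*}\rho_G^*\CF$ is an isomorphism, i.e.\ that an arbitrary sheaf $\CF$ on $G\fSet$ is recovered from the $G$-set $\CF(G)$. Evaluating $\eta_{\CF}$ on $S\in G\fSet$, one must show $\CF(S)\xrightarrow{\sim}\Hom_{G\Set}(S,\rho_G^*\CF)$. By the sheaf property applied to the covering of $S$ by its $G$-orbits (a finite surjective family in $G\fSet$, with fiber products again finite disjoint unions of orbits), both sides decompose as products over the orbits, so it suffices to treat $S=G/H$ for $H\in\CN(G)$. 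There $\Hom_{G\Set}(G/H,\rho_G^*\CF)\cong(\rho_G^*\CF)^H=\bigl(\varinjlim_{H'}\CF(G/H')\bigr)^H$, and since filtered colimits of finite $G$-sets commute with taking $H$-invariants (every element has open stabilizer, $H$ is open, so each element of the colimit already lies in some $\CF(G/H')^{?}$ and $H$-invariance is detected at finite level), this is $\varinjlim_{H'\subseteq H}\CF(G/H')^{H/H'}$; one then identifies $\CF(G/H')^{H/H'}$ with $\CF(G/H)$ using that the $H/H'$-action on $\CF(G/H')$ comes from the right $H/H'$-action on the object $G/H'$, that $(G/H'\to G/H)$ is a covering of $G/H$ with the $H/H'$-action encoding its fiber products, and that $\CF$ is a sheaf — this last identification is precisely a descent statement along the Galois covering $G/H'\to G/H$, and is the technical heart of the argument. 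Finally I would remark that the adjunction being an adjoint equivalence implies (or simply cite that it is well known) that $G\Set$, being equivalent to the category of sheaves $G\fSet^{\sim}$, is a topos.
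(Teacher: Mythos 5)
Your proposal is correct and follows essentially the same route as the paper: the explicit adjunction bijection, the counit isomorphism via $\Hom_{G\Set}(G/H,T)\cong T^H$ and continuity of the action, and the unit isomorphism by reducing to orbits $G/H$ and invoking descent along the Galois coverings $G/H'\to G/H$ (whose fiber product decomposes as $\sqcup_{h\in H/H'}G/H'$), which is exactly the paper's Lemma \ref{lem:GfSetSheafDescent}. The only cosmetic difference is that you phrase the reduction via commuting $H$-invariants with the filtered colimit at finite level, while the paper states the lemma as the bijectivity of $\CF(G/H)\to\CF(G)^{H}$ and then concludes since the $G/H$ generate the site.
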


\begin{proof}
Let $\CF$ be a sheaf of sets on $G\fSet$, and let $T$ be a $G$-set.
Given a morphism $\varphi\colon \CF\to \rho_{G*}T=\Hom_{G\Set}(-,T)$ in $G\fSet^{\sim}$,
we obtain a morphism $\psi\colon \rho_G^*\CF\to T$ in $G\Set$ by taking the 
direct limit over $H\in \CN(G)$ of the composition of $\varphi(G/H)$ with the $G/H$-equivariant
bijection $\ev_{1}\colon \Hom_{G\Set}(G/H,T)\to T^H; f\mapsto f(1)$.
Conversely, for a morphism
$\psi\colon \rho_G^*\CF\to T$ in $G\Set$,  one can construct a morphism 
$\varphi\colon \CF\to \rho_{G*}T=\Hom_{G\Set}(-,T)$ by sending 
$x\in \CF(S)$ to $\varphi(x)\colon s\mapsto \psi(\CF(\alpha_s)(x))\in \rho_{G*}T(S)$ 
for each $S\in \Ob (G\fSet)$, where $\alpha_s$ denotes a morphism 
$G/H\to S; \gamma H\mapsto \gamma s$ in $G\fSet$ for an $H\in \CN(G)$ stabilizing $s$;
we see that $\varphi(x)$ is $G$-equivariant as we may take $\alpha_{gs}$ to
be $\alpha_s(-\cdot g)$ for $g\in G$. It is straightforward to verify that the above constructions
give bijections between $\Hom_{G\fSet^{\sim}}(\CF,\rho_{G*}T)$
and $\Hom_{G\Set}(\rho_{G}^*\CF,T)$ which are inverses of each other.
They are obviously functorial in $\CF$ and $T$. It remains to show that 
the adjunction morphisms $\eta_{T}\colon \rho_G^*\rho_{G*}T\to T$ and
$\varepsilon_{\CF}\colon\CF\to\rho_{G*}\rho_G^*\CF$ are isomorphisms.
The former is given by the direct limit of 
the bijective maps $\ev_1\colon \Hom_{G\Set}(G/H,T)\xrightarrow{\cong}T^H$
over $H\in \CN(G)$. For $H\in \CN(G)$, the map $\varepsilon_{\CF}(G/H)$
is given by the inclusion map $\CF(G/H)\to\CF(G)^H=\Hom_{G\Set}(G/H,\CF(G))$,
which is bijective by Lemma \ref{lem:GfSetSheafDescent} below. Hence $\varepsilon_{\CF}$
is bijective as $G/H$ $(H\in \CN(G))$ form generators of the site $G\fSet$.
\end{proof}

\begin{remark}\label{rmk:GsetGfSetSheavesAdj}
By the proof of Proposition \ref{prop:GsetGfsetSheaves},
the adjunction morphism $\eta_T\colon \rho_G^*\rho_{G*}T\xrightarrow{\cong} T$
for $T\in \Ob G\Set$ is the direct limit of the bijections
$\Hom_{G\Set}(G/H,T)\xrightarrow{\cong}T^H;f\mapsto f(1)$ for $H\in \CN(G)$,
and the adjunction morphism $\varepsilon_{\CF}\colon 
\CF\xrightarrow{\cong}\rho_{G*}\rho_G^*\CF$ for $\CF\in \Ob G\fSet^{\sim}$
is explicitly given as follows. For $S\in \Ob G\fSet$ and
$x\in \CF(S)$, $\varepsilon_{\CF}(S)(x)
\in \rho_{G*}\rho_G^*\CF(S)=\Hom_{G\Set}(S,\varinjlim_{H\in\CN(G)}
\CF(G/H))$ is the map sending $s\in S$ to 
$\CF(\alpha_s)(x)$, where $\alpha_s$ is the morphism 
$G/H\to S;\gamma H\mapsto \gamma s$ in $G\fSet$ for an $H\in \CN(G)$
stabilizing $s$.
\end{remark}

\begin{lemma}\label{lem:GfSetSheafDescent}
For a sheaf of sets $\CF$ on $G\fSet$ and $H\in \CN(G)$,
the morphism $\CF(G/H)\to \CF(G)^{H}$ is bijective.
\end{lemma}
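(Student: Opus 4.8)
## Proof Proposal for Lemma \ref{lem:GfSetSheafDescent}

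The plan is to exhibit the cover $G/1 \to G/H$ — wait, $G/1$ need not be a finite $G$-set. Instead I would work with a cofinal system of open normal subgroups $K \subseteq H$ and use the sheaf condition for the covering family given by a single map $G/K \to G/H$.

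\medskip

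\emph{Setup.} Fix $H \in \CN(G)$. For $K \in \CN(G)$ with $K \subseteq H$, the projection $p_K \colon G/K \to G/H$ is a surjective morphism in $G\fSet$, so $(p_K)$ is a one-element covering family. I would first identify the fiber product $G/K \times_{G/H} G/K$ inside $G\fSet$ (it is representable by Remark \ref{rmk:LimitsGSets} (2)). Concretely, $G/K \times_{G/H} G/K \cong \coprod_{h \in H/K} G/K$, where the component indexed by $hK$ maps to $G/K$ by identity in the first coordinate and by right translation $gK \mapsto ghK$ in the second (equivalently, $(gK, gh^{-1}K)$). Hence the sheaf condition for $\CF$ applied to this covering gives an equalizer
\begin{equation}\label{eq:GfSetSheafDescentEq}
\CF(G/H) \longrightarrow \CF(G/K) \overset{\to}{\to} \prod_{h \in H/K} \CF(G/K),
\end{equation}
where the two maps are $x \mapsto (x)_{h}$ and $x \mapsto (\rho_h(x))_h$, with $\rho_h$ the action of $hK$ on $\CF(G/K)$ coming from right translation on $G/K$. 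Therefore $\CF(G/H)$ is identified with the $H/K$-invariants $\CF(G/K)^{H/K}$ inside $\CF(G/K)$, compatibly with varying $K$.

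\medskip

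\emph{Passage to the limit.} Taking the direct limit over $K \in \CN(G)$ with $K \subseteq H$ (these are cofinal in $\CN(G)$), and using that filtered colimits commute with finite limits in $\Set$, the identifications $\CF(G/H) = \CF(G/K)^{H/K}$ pass to an identification of $\CF(G/H)$ with the $H$-invariants of $\varinjlim_K \CF(G/K) = \CF(G) = \rho_G^*\CF$. The action of $H$ on $\CF(G)$ here is the restriction of the $G$-action defined before \eqref{eq:GfSetShfToGSet}, since that $G$-action on each $\CF(G/K)$ is precisely the one induced by right translation on $G/K$, which factors through $G/K$ and hence — restricted to $H$ — is the $\rho_h$ above. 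One checks that the resulting map $\CF(G/H) \to \CF(G)^H$ agrees with the natural map $\CF(p_K)\colon \CF(G/H)\to\CF(G/K)$ followed by the inclusion into the colimit, i.e.\ with the map in the statement. Hence it is bijective.

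\medskip

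\emph{Main obstacle.} The only genuinely non-formal point is the explicit computation of the fiber product $G/K \times_{G/H} G/K$ in $G\fSet$ and the verification that, under this description, the two structure projections induce exactly the identity map and the translation maps $\rho_h$ on sections — everything else is the sheaf axiom plus the commutation of filtered colimits with finite limits. I would also double-check that the $G$-action on $\CF(G)$ from \eqref{eq:GfSetShfToGSet}, when restricted to $H$ and expressed through the transition maps $\CF(G/K) \to \CF(G/K')$, is compatible with the $H/K$-actions appearing in \eqref{eq:GfSetSheafDescentEq}; this is a routine functoriality check using that right translation by $h \in H$ on $G/K$ is compatible with the projections $G/K' \to G/K$ for $K' \subseteq K \subseteq H$.
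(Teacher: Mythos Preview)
Your proof is correct and follows essentially the same route as the paper: reduce to showing $\CF(G/H)\xrightarrow{\cong}\CF(G/K)^{H/K}$ for each $K\in\CN(G)$ with $K\subset H$ by applying the sheaf condition to the single-map covering $G/K\to G/H$ after identifying $G/K\times_{G/H}G/K\cong\coprod_{h\in H/K}G/K$, and then pass to the colimit. The paper leaves the colimit step implicit (``it suffices to prove''), whereas you spell it out; your parenthetical ``equivalently, $(gK,gh^{-1}K)$'' is a slip in the indexing but does not affect the argument.
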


\begin{proof} It suffices to prove that, for $H$, $H'\in\CN(G)$ with $H'\subset H$,
the map $\CF(G/H)\to \CF(G/H')^{H/H'}$ induced by the projection $G/H'\to G/H$
is bijective. We note that this projection is a covering in $G\fSet$. 
A $G$-equivariant map
$G/H'\to G/H'\times_{G/H}G/H';g\mapsto (g,gh)$
for each $h\in H/H'$ induces a bijection 
$\sqcup_{h\in H/H'}G/H'\to G/H'\times_{G/H}G/H'$, whose
composition with the first (resp.~second) projection $p_0$ (resp.~
$p_1)$ to $G/H'$ is given by the identity map 
(resp.~the right multiplication by $h$) of $G/H'$ for each
$h\in H/H'$. This implies that the difference kernel of the
maps $\CF(p_i)\colon \CF(G/H')\to \CF(G/H'\times_{G/H}G/H')$ 
$(i=0,1)$ is the $H/H'$-invariant part, and completes the proof
since $\CF$ is a sheaf.
\end{proof}

Let $v\colon G'\to G$ be a continuous homomorphism of profinite groups.
Then we can define a functor 
\begin{equation}\label{eq:RestrGpActionFunctor}
\vf^*\colon G\fSet\longrightarrow G'\fSet\end{equation} 
by 
sending $S$ to $S$ equipped with the action of $G'$ via $v$. 
The functor $\vf^*$ obviously preserves finite surjective
families of morphisms and it also preserves finite inverse
limits by Remark \ref{rmk:LimitsGSets}. Hence $\vf^*$ defines a morphism of 
sites (\cite[IV 4.9]{SGA4}) and induces a morphism of topos
$\tv=(\tv^*,\tv_*)\colon G'\fSet^{\sim}\to G\fSet^{\sim}$. 
By composing $\tv_*$ with the equivalences in 
Proposition \ref{prop:GsetGfsetSheaves} for $G$ and $G'$, 
we obtain a functor 
\begin{equation}\label{eq:GSetFunct}
v_*\colon G'\Set\to G\Set; T\mapsto \varinjlim_{H\in \CN(G)}
\Map_{G'}(G/H,T)=\Map_{G',\cont}(G,T),\end{equation}
where $\Map_{G',\cont}$ denotes the set of continuous 
$G'$-equivariant maps. We define a functor
$v^*\colon G\Set\to G'\Set$ by sending $T$ to $T$
equipped with the action of $G'$ via $v$.
By Remark \ref{rmk:LimitsGSets} (1), the functor $v^*$ is left exact, i.e., 
preserves finite inverse limits.

\begin{proposition}\label{prop:GSetFunctoriality}
The functor $v^*$ is canonically regarded as a left adjoint
of $v_*$. Therefore the pair $\vS:=(v^*,v_*)$ defines a morphism
of topos $G'\Set\to G\Set$. The unit and counit
$\id_{G\Set}\to v_*v^*$ and $v^*v_*\to \id_{G'\Set}$ are
given by $T\to \Map_{G',\cont}(G,v^*T);x \mapsto (g\mapsto gx)$
and $v^*\Map_{G',\cont}(G,T')\to T'; \varphi\mapsto \varphi(1)$
for $T\in \Ob (G\Set)$ and $T'\in \Ob (G'\Set)$. 
\end{proposition}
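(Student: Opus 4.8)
The plan is to establish the adjunction $v^*\dashv v_*$ by exhibiting the unit and counit explicitly and checking the triangle identities; this also yields directly the formulas in the statement. The one thing to be careful about is that three actions are in play and must be kept consistent: the given left $G$-action on a $G$-set $T$; the left $G$-action on $\Map_{G',\cont}(G,T')$ induced by right translation on $G$, which is well defined and continuous precisely because every $H\in\CN(G)$ is normal so that right translation by $G$ descends to $G/H$; and the left $G'$-action $g'\cdot g=v(g')g$ on $G$ with respect to which the maps in $\Map_{G',\cont}(G,-)$ are required to be equivariant.

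First I would set $\eta_T\colon T\to v_*v^*T=\Map_{G',\cont}(G,v^*T)$, $\eta_T(x)=(g\mapsto gx)$, and $\varepsilon_{T'}\colon v^*v_*T'\to T'$, $\varepsilon_{T'}(\varphi)=\varphi(1)$. One checks that $g\mapsto gx$ is continuous (its fibres are cosets of the open stabiliser of $x$ in $G$) and $G'$-equivariant for the action via $v$, and that $\eta_T$ is itself $G$-equivariant; dually, $\varepsilon_{T'}$ is $G'$-equivariant, using that each $\varphi\in\Map_{G',\cont}(G,T')$ satisfies $\varphi(v(g'))=g'\varphi(1)$. Naturality of $\eta$ and $\varepsilon$ is immediate from the formulas. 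The two triangle identities then reduce to the computations $(g\mapsto gx)\mapsto x$ upon evaluating at $g=1$, and $\varphi\mapsto(g\mapsto(g\cdot\varphi)(1))=(g\mapsto\varphi(g))=\varphi$; hence $v^*$ is left adjoint to $v_*$ with the stated unit and counit. Since $v^*$ is left exact by Remark \ref{rmk:LimitsGSets} (1), the pair $\vS=(v^*,v_*)$ is a morphism of topos.

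An alternative, more in the spirit of the preceding discussion, is to note that $v_*$ is by construction $\rho_G^*\circ\tv_*\circ\rho_{G'*}$, so since $\rho_{G*},\rho_G^*$ and $\rho_{G'*},\rho_{G'}^*$ are quasi-inverse equivalences and $\tv^*\dashv\tv_*$, the left adjoint of $v_*$ is $\rho_{G'}^*\circ\tv^*\circ\rho_{G*}$; one then identifies this composite with the restriction functor $v^*$. On the site $G\fSet$ this is the standard fact that the inverse image of a morphism of sites sends representable sheaves to representable sheaves: for $S\in G\fSet$ the presheaf $\Hom_{G\fSet}(-,S)$ equals $\rho_{G*}S$, hence is a sheaf, and $\tv^*$ carries it to $\Hom_{G'\fSet}(-,\vf^*S)=\rho_{G'*}(\vf^*S)$, where $\vf^*S$ is just $S$ with the action restricted along $v$; applying $\rho_{G'}^*$ gives $\vf^*S=v^*S$, naturally in $S$. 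Both $v^*$ and the composite preserve all small colimits, and every $G$-set is a filtered colimit of finite $G$-sets, so the natural isomorphism on $G\fSet$ extends to one between $v^*$ and the left adjoint of $v_*$; the unit and counit are then read off from Remark \ref{rmk:GsetGfSetSheavesAdj}. In either approach I do not anticipate any real obstacle beyond the bookkeeping of the several group actions and left/right conventions — the statement is the familiar restriction--coinduction adjunction transported to the continuous, profinite setting.
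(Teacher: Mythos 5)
Your main argument is essentially the paper's proof in a different dress: the paper exhibits the hom-set bijection $\Hom_{G'\Set}(v^*T,T')\cong\Hom_{G\Set}(T,\Map_{G',\cont}(G,T'))$ via $\psi(x)(g)=\varphi(gx)$, $\varphi(x)=\psi(x)(1)$, which is precisely the data of your unit $x\mapsto(g\mapsto gx)$ and counit $\varphi\mapsto\varphi(1)$ together with the triangle identities, and your bookkeeping of the three actions and the continuity and equivariance checks is correct. The alternative you sketch --- identifying $v^*$ with $\rho_{G'}^*\circ\tv^*\circ\rho_{G*}$ by first showing they agree on representables $\rho_{G*}S$ for $S\in G\fSet$ and then extending by colimit preservation --- is a valid, more conceptual route not taken by the paper; it spares the hands-on verification but requires invoking the behaviour of the inverse image of a morphism of sites on representable sheaves and the fact that every $G$-set is a filtered colimit of finite $G$-subsets.
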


\begin{proof}
Let $T$ be a $G$-set, and let $T'$ be a $G'$-set.
Then we have a bijection 
$\Hom_{G'\Set}(v^*T,T')\cong \Hom_{G\Set}(T,
\Map_{G',\cont}(G,T'))$ given by $\varphi\leftrightarrow\psi$,
$(\psi(x))(g)=\varphi(gx)$  and 
$\varphi(x)=(\psi(x))(1)$ $(x\in T, g\in G)$. It is straightforward to verify
that the maps in both directions are well-defined
and give the inverses of each other.
\end{proof}

For another continuous homomorphism $v'\colon G''\to G'$ of profinite groups,
we have $\vf^{\prime*}\circ \vf^*=(v\circ v')_{\text{f}}^*$,
which implies that we have canonical isomorphisms
$\tv\circ \tv^{\prime}\cong \widetilde{v\circ v'}$
and $\vS\circ \vS^{\prime}\cong (v\circ v')_{\mathbf{S}}$.

\begin{lemma}\label{lem:GsetPFComp}
For $T''\in G''\Set$, the canonical isomorphism 
$$\Map_{G',\cont}(G,\Map_{G'',\cont}(G',T''))=
v_*\circ v'_*(T'')\cong (v\circ v')_*(T'')=\Map_{G'',\cont}(G,T'')$$
is given by $\varphi\leftrightarrow \psi$, where 
$\psi(g)=(\varphi(g))(1)$ $(g\in G)$ and $(\varphi(g))(g')
=\psi(v(g')g)$ $(g\in G, g'\in G')$.
\end{lemma}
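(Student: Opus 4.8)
The plan is to identify the canonical isomorphism $v_*\circ v'_*\cong (v\circ v')_*$ — i.e.\ the direct–image component of $\vS\circ\vS^{\prime}\cong (v\circ v')_{\mathbf{S}}$ — through the adjunctions of Proposition \ref{prop:GSetFunctoriality}, and then to read off its effect on elements. Since clearly $(v\circ v')^*=v'^*\circ v^*$ as functors $G\Set\to G''\Set$, both $v_*\circ v'_*$ and $(v\circ v')_*$ are right adjoint to this common functor, so the canonical isomorphism $\Theta\colon v_*\circ v'_*\xrightarrow{\cong}(v\circ v')_*$ is the unique natural isomorphism compatible with the two adjunctions; equivalently, by Yoneda, $\Theta_{T''}$ is the morphism $v_*v'_*T''\to (v\circ v')_*T''$ whose associated post-composition bijection on $\Hom_{G\Set}(T,-)$ equals the composite of the adjunction bijections $\Hom_{G}(T,v_*v'_*T'')\cong\Hom_{G'}(v^*T,v'_*T'')\cong\Hom_{G''}(v'^*v^*T,T'')=\Hom_{G''}((v\circ v')^*T,T'')\cong\Hom_{G}(T,(v\circ v')_*T'')$.

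First I would extract the formula by chasing the identity morphism (take $T=v_*v'_*T''$ and $\Psi=\id_T$): applying the two rules $(\psi(x))(g)=\varphi(gx)$ and $\varphi(x)=(\psi(x))(1)$ of Proposition \ref{prop:GSetFunctoriality} at each of the three stages, and using that the $G$-action on $v_*v'_*T''=\Map_{G',\cont}(G,\Map_{G'',\cont}(G',T''))$ is $(h\cdot\varphi)(g_0)=\varphi(g_0 h)$, one obtains $(\Theta_{T''}(\varphi))(g)=(\varphi(g))(1)$, which is the first assignment in the lemma. For the inverse, if $\psi=\Theta_{T''}(\varphi)$ then the $G'$-equivariance of $\varphi$ as an element of $\Map_{G',\cont}(G,v'_*T'')$, together with the $G'$-action $(g'\cdot F)(g'_0)=F(g'_0 g')$ on $v'_*T''=\Map_{G'',\cont}(G',T'')$, forces $(\varphi(g))(g')=(\varphi(v(g')g))(1)=\psi(v(g')g)$, which is the second assignment. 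Conversely, defining $\varphi$ from $\psi$ by $(\varphi(g))(g')=\psi(v(g')g)$, one checks directly that each $\varphi(g)$ is a continuous $G''$-equivariant map $G'\to T''$, that $g\mapsto\varphi(g)$ is continuous and $G'$-equivariant, and that the two assignments $\varphi\leftrightarrow\psi$ are mutually inverse; all of these verifications use only the equivariance properties just recorded.

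It then remains to confirm that the bijection so obtained really is $\Theta_{T''}$. Since $(v\circ v')_*$ is a right adjoint, it suffices to check that $\Theta$ is compatible with the adjunction units, i.e.\ that $\Theta_{(v\circ v')^*T}\circ\eta^{\mathrm{comp}}_T=\eta_T$ for all $T\in\Ob G\Set$, where $\eta^{\mathrm{comp}}$ is the unit of the composite adjunction $(v\circ v')^*\dashv v_*v'_*$ and $\eta$ that of $(v\circ v')^*\dashv(v\circ v')_*$; unwinding the explicit units $x\mapsto(g\mapsto gx)$ of Proposition \ref{prop:GSetFunctoriality} gives $\eta^{\mathrm{comp}}_T(x)=\bigl(g\mapsto(g'\mapsto (v(g')g)x)\bigr)$, whence $\Theta_{(v\circ v')^*T}(\eta^{\mathrm{comp}}_T(x))=\bigl(g\mapsto (v(1)g)x\bigr)=\bigl(g\mapsto gx\bigr)=\eta_T(x)$, as required, while naturality of $\Theta$ in $T''$ is immediate from its formula. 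The only genuinely fiddly point — and the thing to be most careful about — is the bookkeeping of the several group actions in play (the $G$-action by right translation on the iterated mapping sets, and the actions restricted along $v$, $v'$, and $v\circ v'$); I expect no conceptual obstacle beyond this.
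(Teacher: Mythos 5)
Your element-level computations are accurate, but the proof has a gap at its very first step: you take for granted that the lemma's ``canonical isomorphism'' is \emph{the unique natural isomorphism $v_*\circ v'_*\cong(v\circ v')_*$ compatible with the adjunctions} of Proposition \ref{prop:GSetFunctoriality}. In this paper that is not how the canonical isomorphism is defined. It is the specific map obtained from the equality of site functors $\vf^{\prime*}\circ\vf^*=(v\circ v')_{\text{\rm f}}^*$ by conjugating with the equivalences of Proposition \ref{prop:GsetGfsetSheaves}; concretely (and this is how the paper's own proof begins) it is $\rho_G^*\tv_*\circ\varepsilon'\circ\tv'_*\rho_{G''*}$, where $\varepsilon'$ is the unit isomorphism $\id\cong\rho_{G'*}\rho_{G'}^*$. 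The adjunctions in Proposition \ref{prop:GSetFunctoriality} are constructed independently by explicit formulas, not by transporting the sheaf-level adjunctions along $\rho_G$, $\rho_{G'}$, so the assertion that this sheaf-theoretically defined map is adjunction-compatible (equivalently, that its mate on inverse images is the identity of $v^{\prime*}\circ v^*=(v\circ v')^*$) is exactly the substantive content that needs an argument; you state it by fiat. Verifying it requires unwinding the explicit description of $\varepsilon'$ in Remark \ref{rmk:GsetGfSetSheavesAdj}, which is essentially the computation the paper performs directly: it evaluates $\varepsilon'(\tv'_*\rho_{G''*}T'')$ on the objects $\vf^*(G/H)$ and passes to $\varinjlim_{H\in\CN(G)}$ to read off the formula.

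What you do prove is correct and would stand as an alternative route once that identification is supplied: the chase of $\id_{v_*v'_*T''}$ through the three adjunction bijections (with the $G$-action $(h\cdot\varphi)(g)=\varphi(gh)$ on $\Map_{G',\cont}(G,-)$ and the $G'$-equivariance of $\varphi$) yields exactly $\psi(g)=(\varphi(g))(1)$ and $(\varphi(g))(g')=\psi(v(g')g)$, and your unit-compatibility check pins this down as the unique adjunction-compatible isomorphism. So the repair is localized: either prove that $\rho_G^*\tv_*\circ\varepsilon'\circ\tv'_*\rho_{G''*}$ intertwines the composite adjunction with the adjunction of Proposition \ref{prop:GSetFunctoriality} for $v\circ v'$, or compute that map directly as the paper does; as written, your argument establishes the formula for a map that is identified with the lemma's canonical isomorphism only by assertion.
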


\begin{proof}
Let $\varepsilon'$ denote the unit isomorphism 
$\id_{G'\fSet^{\sim}}\xrightarrow{\cong} \rho_{G'*}\rho^*_{G'}$
(Proposition \ref{prop:GsetGfsetSheaves}). Then, by definition, the isomorphism 
$(v\circ v')_*\xrightarrow{\cong} v_*v'_*$ is
$\rho_G^*\tv_*\circ\varepsilon'\circ
\widetilde{v}'_*\rho_{G''*}\colon\rho_G^*\widetilde{v\circ v'}_*\rho_{G''*}
=\rho_G^*\tv_*\widetilde{v}'_*\rho_{G''*}
\xrightarrow{\cong}
\rho_G^*\widetilde{v}_*\rho_{G'*}\rho_{G'}^*\widetilde{v}'_*
\rho_{G''*}$.
By Remark \ref{rmk:GsetGfSetSheavesAdj}, the morphism 
$(\varepsilon'(\tv'_*\rho_{G''*}T''))(S')\colon
\Map_{G''\Set}(\vf^{\prime*}S',T'')=\tv'_*\rho_{G''*}T''(S')
\xrightarrow{\cong}
\rho_{G'*}\rho_{G'}^*\tv'_*\rho_{G''*}T''(S')
=\Map_{G'\Set}(S',(\tv'_*\rho_{G''*}T'')(G'))
=\Map_{G'\Set}(S',\Map_{G'',\cont}(v^{\prime*}G',T''))$
for $S'\in G'\fSet$
is given by $f\mapsto (s'\mapsto (g'\mapsto f(g's')))$.
Since $\rho_G^*\tv_*\CF'=(\tv_*\CF')(G)
=\varinjlim_{H\in \CN(G)}\CF'(\vf^*(G/H))$
for $\CF'\in G'\fSet^{\sim}$,
we obtain the description of the map
$(v\circ v')_*(T'')\xrightarrow{\cong}
v_*\circ v'_*(T'')$ in the claim by setting $S'=\vf^*(G/H)$
and taking the direct limit over $H\in \CN(G)$.
\end{proof}

\begin{definition}\label{def:Gsheaf}
Let $G$ be a profinite group, let $C$ be a site
whose topology is defined by a pretopology $\Cov_C(X)$ $(X\in \Ob C)$
consisting of finite families of morphisms, and let $\CA$ be a sheaf
of rings on $C$. \par
(1) We say that a left action of $G$ on a presheaf $\CP$ of sets 
(resp.~modules, resp.~$\CA$-modules) on $C$ is {\it continuous} if the action of $G$
on $\CP(X)$ with the discrete topology is continuous for every $X\in \Ob C$.\par
(2)
By a {\it $G$-sheaf} (resp.~{\it $G$-presheaf}) 
{\it of sets} (resp.~{\it modules}, resp.~{\it $\CA$-modules})
{\it $\CT$ on $C$}, we mean a sheaf (resp.~presheaf) of sets
(resp.~modules, resp.~$\CA$-modules) 
on $C$ equipped with a continuous left action of $G$.
A {\it morphism of $G$-sheaves} (resp.~{\it $G$-presheaves}) 
{\it of sets} (resp.~{\it modules}, 
resp.~{\it $\CA$-modules}) {\it on $C$} is a
morphism of sheaves (resp.~presheaves) of sets (resp.~modules, resp.~$\CA$-modules)
which is $G$-equivariant.
We write $G$-$C^{\sim}$ (resp.~$G\Mod(C)$,
resp.~$G\Mod(C,\CA)$) for the category of $G$-sheaves of sets
(resp.~modules, resp.~$\CA$-modules) on $C$.\par
(3) We define a site $C_G$ to be the product category
$C\times G\fSet$ with the topology generated by 
two sets of coverings $\Cov_{C_G}^h(X,S)$
and $\Cov_{C_G}^v(X,S)$ of each $(X,S)\in \Ob C_G$ defined as follows.
\begin{align*}
\Cov_{C_G}^h(X,S)&=\{((f_{\lambda},\id_S)\colon (X_{\lambda},S)
\to (X,S))_{\lambda\in \Lambda}\;\vert\; (f_{\lambda}\colon X_{\lambda}\to X)_{\lambda\in\Lambda}
\in \Cov_C(X)\}\\
\Cov_{C_G}^{v}(X,S)&=\{((\id_X,\alpha_{\lambda})\colon
(X,S_{\lambda})\to (X,S))_{\lambda\in\Lambda}\;\vert\; 
(\alpha_{\lambda}\colon S_{\lambda}\to S)_{\lambda\in\Lambda}
\in \Cov_{G\fSet}(S)\}
\end{align*}
We call $C_G$ the {\it site of finite $G$-sets above $C$}. 
\end{definition}

\begin{remark}\label{rmk:FinInvLimGSheaf}
Let $G$, $C$ and $\CA$ be as in Definition \ref{def:Gsheaf}. \par
(1) By Remark \ref{rmk:LimitsGSets} (1), we see that a finite inverse limit
in $G\hy C^{\sim}$ is given by the finite inverse limit of the underlying
sheaves of sets equipped with the action of $G$ defined by 
functoriality. This implies that a $G$-sheaf of modules 
(resp.~$\CA$-modules) on $C$ is interpreted as a module object
(resp.~an $\CA$-module object) of $G$-$C^{\sim}$, where
$\CA$ is equipped with the trivial $G$-action.\par
(2) For a $G$-presheaf of sets on $C$, the induced action of 
$G$ on the associated sheaf of sets is continuous.
By the construction of the sheaf associated to a presheaf
in \cite[II.3]{SGA4}, this is a consequence of 
the claim in Remark \ref{rmk:LimitsGSets} (1) on direct limits of $G$-sets 
and the following fact: For a $G$-presheaf $\CP$ of sets on $C$
and the sieve $R$ generated by a covering $(X_{\lambda}\to X)_{\lambda\in \Lambda}
\in \Cov_C(X)$ of an object $X$ of $C$, 
the action of $G$ on 
$\CP(R)\cong \Ker(\prod_{\lambda\in\Lambda}\CP(X_{\lambda})
\rightrightarrows \prod_{(\lambda,\lambda')\in \Lambda^2}
\CP(X_{\lambda}\times_XX_{\lambda'}))$
is continuous since the index set $\Lambda$ is finite.
\end{remark}

\begin{proposition}\label{prop:CGfSetSheaf}
Let $\CF$ be a presheaf of sets on $C_G$. Then $\CF$
is a sheaf of sets on $C_G$ if and only if $\CF$ satisfies 
the following two conditions. \par
(a) The sequence below is exact for every $(X,S)\in \Ob C_G$ and 
every $(X_{\lambda}\to X)_{\lambda\in\Lambda}\in \Cov_{C}(X)$,
i.e., $\CF(-,S)$ is a sheaf of sets on $C$ for every $S\in \Ob G\fSet$.
$$\CF(X,S)\to \prod_{\lambda\in\Lambda}\CF(X_{\lambda},S)
\rightrightarrows \prod_{(\lambda,\lambda')\in\Lambda^2}
\CF(X_{\lambda}\times_XX_{\lambda'},S)$$

(b) The sequence below is exact for every $(X,S)\in \Ob C_G$
and $(S_{\lambda}\to S)_{\lambda\in \Lambda}
\in \Cov_{G\fSet}(S)$, i.e., $\CF(X,-)$ is a sheaf of sets on $G\fSet$
for every $X\in \Ob C$.
$$\CF(X,S)\to \prod_{\lambda\in\Lambda}\CF(X,S_{\lambda})
\rightrightarrows \prod_{(\lambda,\lambda')\in\Lambda^2}
\CF(X,S_{\lambda}\times_SS_{\lambda'})$$
\end{proposition}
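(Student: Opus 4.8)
The plan is to prove necessity by a direct application of the sheaf axiom and sufficiency by reducing the sheaf property on $C_G$ — through the pretopology generated by $\Cov_{C_G}^h\cup\Cov_{C_G}^v$ (Definition \ref{def:Gsheaf}(3)) — to the two hypotheses (a), (b) via an induction on iterated covers. Throughout, the key structural fact is that $C_G=C\times G\fSet$ is a product category, so finite inverse limits in $C_G$ are computed componentwise; for the $G\fSet$-component this uses Remark \ref{rmk:LimitsGSets}. In particular $(X_1,S_1)\times_{(X,S)}(X_2,S_2)=(X_1\times_XX_2,\,S_1\times_SS_2)$. For necessity: if $\CF$ is a sheaf on $C_G$, then applying its sheaf axiom to the family $((f_\lambda,\id_S)\colon(X_\lambda,S)\to(X,S))_\lambda\in\Cov_{C_G}^h(X,S)$ attached to an arbitrary $(f_\lambda)_\lambda\in\Cov_C(X)$ yields exactly the exactness in (a), since the relevant fibre products are $(X_\lambda\times_XX_{\lambda'},S)$; symmetrically the families in $\Cov_{C_G}^v(X,S)$ yield (b).

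For sufficiency, recall that a presheaf on a site is a sheaf for the topology generated by a collection of covering families precisely when it satisfies the sheaf axiom for every member of the pretopology that collection generates, i.e.\ the closure under isomorphisms, base change and composition \cite[II]{SGA4}. I would first describe this pretopology explicitly: its members — call them \emph{admissible} covers of $(X,S)$ — are those obtained from the trivial cover $\{\id_{(X,S)}\}$ by finitely many steps, each step replacing the cover over one of its pieces $(X_0,S_0)$ by the composite of that piece either with a cover $(X_{0,i}\to X_0)_i\in\Cov_C(X_0)$ or with a cover $(S_{0,j}\to S_0)_j\in\Cov_{G\fSet}(S_0)$. That the admissible covers are stable under base change follows because base change in $C_G$ is componentwise and preserves the two pretopologies; stability under composition is clear from the recursive definition; and $\Cov_{C_G}^h\cup\Cov_{C_G}^v$ together with the isomorphism families are admissible by construction. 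Hence the generated pretopology consists exactly of the admissible covers.

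It then suffices to prove, by induction on the number of steps, that $\CF$ satisfies the sheaf axiom for every admissible cover, assuming (a) and (b). Write an admissible cover $\mathcal A$ as obtained from an admissible cover $\mathcal A'$ by one more step over a piece $(X_0,S_0)$, say by a $C$-cover $(X_{0,i}\to X_0)_i$ (the $G\fSet$-case being symmetric), and let a descent datum for $\mathcal A$ be given. The sections indexed by the new pieces $(X_{0,i},S_0)$ are compatible on the diagonal overlaps $(X_{0,i}\times_{X_0}X_{0,j},S_0)$ — this follows from the given compatibility on $(X_{0,i}\times_XX_{0,j},S_0\times_SS_0)$ by restriction along the diagonal — so by (a) applied with the fixed $G$-set $S_0$ they glue to a section over $(X_0,S_0)$; adjoining it to the sections over the remaining pieces of $\mathcal A'$ produces a descent datum for $\mathcal A'$, the outstanding compatibilities being verified after pulling back along $(X_{0,i}\to X_0)_i$ and invoking the separatedness contained in (a). By the inductive hypothesis this datum glues to a section over $(X,S)$, and a final use of (a) and (b) shows it restricts to the original datum for $\mathcal A$; the base case $\mathcal A=\{\id_{(X,S)}\}$ is trivial, and the separatedness of $\CF$ for admissible covers used along the way is obtained by the same, easier, induction. (As a sanity check, already for a single ``rectangle'' cover $((f_\lambda,\alpha_\mu)\colon(X_\lambda,S_\mu)\to(X,S))_{\lambda,\mu}$ the statement reduces, via (a) and (b), to a Fubini-type rearrangement of iterated equalizers.)

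I expect the main obstacle to be purely organisational: pinning down the description of the generated pretopology as the admissible covers, and, inside the induction, tracking which overlaps of $\mathcal A$ govern which overlaps of $\mathcal A'$ — the recurring device being that an overlap in $C_G$ is a product overlap, so that a suitable diagonal sub-overlap already carries the needed compatibility. No individual step is hard, and the whole argument uses only (a), (b), and the componentwise computation of finite inverse limits in $C_G$.
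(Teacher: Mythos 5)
Your proof is correct, but you take a different and noticeably longer route than the paper. The paper's entire proof is: observe that the two classes $\Cov_{C_G}^h$ and $\Cov_{C_G}^v$ are each stable under base change in $C_G$ (for $\Cov^h$, the base change of $(X_\lambda,S)\to(X,S)$ along $(Y,T)\to(X,S)$ is $(X_\lambda\times_XY,\,T)\to(Y,T)$, and symmetrically for $\Cov^v$), and then apply \cite[II Corollaire 2.3]{SGA4}, which says precisely that if a generating collection of covering families is stable under base change, a presheaf is a sheaf for the generated topology if and only if it satisfies the sheaf axiom for each member of the collection. That result removes the need to close up under composition at all. You instead re-derive this fact from scratch: you explicitly describe the pretopology generated by $\Cov^h\cup\Cov^v$ (your ``admissible covers'') and run an induction on the number of refinement steps, gluing first along one $C$-cover or one $G\fSet$-cover at a time and propagating compatibilities through diagonal sub-overlaps. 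The trade-off is clear: the paper's citation makes the proposition a two-line corollary, at the cost of invoking a somewhat nontrivial general lemma from SGA4; your argument is self-contained and uses only the sheaf axiom and the componentwise computation of fibre products in $C_G$, but as you yourself note it carries a real organizational burden (identifying the generated pretopology, separately handling separatedness, tracking which overlaps of the refined cover dominate which overlaps of the coarser one). Both are valid; the paper's is the economical choice when SGA4 is already among the references, and yours is a reasonable fallback if one wants to avoid that citation. One small presentational point: in your necessity step and base-change claim it is worth writing out explicitly that $(X_\lambda,S)\times_{(X,S)}(X_{\lambda'},S)=(X_\lambda\times_XX_{\lambda'},\,S)$ and $(X,S_\lambda)\times_{(X,S)}(X,S_{\lambda'})=(X,\,S_\lambda\times_SS_{\lambda'})$, since that identification is exactly what makes conditions (a) and (b) the sheaf axioms for $\Cov^h$ and $\Cov^v$.
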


\begin{proof}
Since $\Cov^h_{C_G}(X,S)$ and $\Cov^v_{C_G}(X,S)$ are stable under
base changes, the claim follows from \cite[II Corollaire 2.3]{SGA4}.
\end{proof}

For a sheaf of sets $\CF$ on $C_G$, 
the presheaf of sets $\CF(X,-)$ on 
$G\fSet$ is a sheaf by Proposition \ref{prop:CGfSetSheaf}.
Hence, by associating $X$ to the $G$-set
$\rho_{G}^*(\CF(X,-))=\varinjlim_{H\in \CN(G)}\CF(X,G/H)$
\eqref{eq:GfSetShfToGSet},
we obtain a $G$-presheaf of sets on $C$, which is denoted by 
$\rho_{G,C}^*\CF$ in the following. We
see that $\rho_{G,C}^*\CF$ is a $G$-sheaf by using the assumption that
$C$ is generated by a pretopology consisting of finite families of morphisms.
Thus we obtain a functor 
\begin{equation}\label{eq:CGfSetShfToGShf}
\rho_{G,C}^*\colon C_G^{\sim}\longrightarrow G\hy C^{\sim}
\end{equation}
Conversely we have the following construction in the
opposite direction.
\begin{lemma}\label{lem:GShfToCGfSetShf}
Let $\CT$ be a $G$-sheaf of sets on $C$.
Then the presheaf  of sets $\CF$ on $C_G$ defined by 
$\CF(X,S)=(\rho_{G*}(\CT(X))(S)=\Hom_{G\Set}(S,\CT(X))$ 
\eqref{eq:GSetToGfSetShf} is a sheaf of sets on $C_G$.
\end{lemma}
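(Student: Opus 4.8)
The plan is to verify directly the two conditions (a) and (b) of Proposition~\ref{prop:CGfSetSheaf}, whose conjunction is equivalent to $\CF$ being a sheaf on $C_G$.

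Condition (b) asks that $\CF(X,-)$ be a sheaf of sets on $G\fSet$ for every $X\in\Ob C$. This is immediate from the definition: by construction $\CF(X,-)=\Hom_{G\Set}(-,\CT(X))=\rho_{G*}(\CT(X))$ in the notation of \eqref{eq:GSetToGfSetShf}, and the verification carried out right after \eqref{eq:GSetToGfSetShf} shows exactly that $\rho_{G*}T$ is a sheaf on $G\fSet$ for every $G$-set $T$. So nothing further is needed for (b).

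The substance lies in condition (a): for each fixed $S\in\Ob G\fSet$, the presheaf $X\mapsto\Hom_{G\Set}(S,\CT(X))$ on $C$ must be a sheaf. The key point is that $\CT$ is not merely a sheaf of sets but a sheaf with values in $G\Set$ — the $G$-action is by automorphisms of the presheaf, so the restriction maps of $\CT$ are $G$-equivariant — and that, by Remark~\ref{rmk:LimitsGSets} (1), finite inverse limits in $G\Set$ are computed on underlying sets with the functorial $G$-action. Since the covering families $(X_\lambda\to X)_{\lambda\in\Lambda}\in\Cov_C(X)$ are finite by the standing hypothesis on $C$ in Definition~\ref{def:Gsheaf}, the canonical isomorphism expressing the sheaf axiom for $\CT$ is $G$-equivariant, so it exhibits $\CT(X)$ as the finite inverse limit, computed in $G\Set$, of
\[
\CT(X)\ \xrightarrow{\;\cong\;}\ \Ker\Bigl(\prod_{\lambda\in\Lambda}\CT(X_\lambda)\rightrightarrows\prod_{(\lambda,\lambda')\in\Lambda^2}\CT(X_\lambda\times_XX_{\lambda'})\Bigr).
\]
I would then apply the functor $\Hom_{G\Set}(S,-)\colon G\Set\to\Set$, which is left exact (indeed it preserves all limits), together with the identity $\Hom_{G\Set}(S,\prod_\lambda T_\lambda)\cong\prod_\lambda\Hom_{G\Set}(S,T_\lambda)$; this yields the exactness of
\[
\CF(X,S)\to\prod_{\lambda\in\Lambda}\CF(X_\lambda,S)\rightrightarrows\prod_{(\lambda,\lambda')\in\Lambda^2}\CF(X_\lambda\times_XX_{\lambda'},S),
\]
which is precisely condition (a). Together with (b), Proposition~\ref{prop:CGfSetSheaf} then gives that $\CF$ is a sheaf on $C_G$.

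I do not expect a genuine obstacle. The only two points that require attention are that the covering families of $C$ are finite — so that the sheaf condition for $\CT$ is a \emph{finite} limit to which the left-exact functor $\Hom_{G\Set}(S,-)$ applies termwise — and the mild bookkeeping remark that the sheaf identity for $\CT$ is to be read in $G\Set$ rather than merely in $\Set$, which is legitimate because by Remark~\ref{rmk:LimitsGSets} (1) the forgetful functor $G\Set\to\Set$ is compatible with finite inverse limits. Everything else is a routine unwinding of universal properties.
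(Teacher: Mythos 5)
Your proof is correct and takes essentially the same route as the paper: check condition (b) by the earlier discussion of $\rho_{G*}$, then check condition (a) by applying the limit-preserving functor $\Hom_{G\Set}(S,-)$ to the equalizer diagram coming from the sheaf condition for $\CT$, and conclude via Proposition~\ref{prop:CGfSetSheaf}. The only difference is that you make explicit the two points the paper leaves tacit — that the equalizer must be read in $G\Set$, and that this is legitimate by Remark~\ref{rmk:LimitsGSets}~(1) because the covering families are finite — which is a welcome clarification rather than a different argument.
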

\begin{proof}
For each $X\in \Ob C$, $\CF(X,-)=\rho_{G*}(\CT(X))$ is a sheaf of sets
on $G\fSet$.
For a covering $(X_{\lambda}\to X)_{\lambda\in \Lambda}\in \Cov_C(X)$ of 
$X\in \Ob C$, the sequence
$\CT(X)\to \prod_{\lambda\in\Lambda}\CT(X_{\lambda})\rightrightarrows
\prod_{(\lambda,\lambda')\in \Lambda^2}\CT(X_{\lambda}\times_XX_{\lambda'})$
is exact since $\CT$ is a $G$-sheaf, whence the
sequence
$\Hom_{G\Set}(S,\CT(X))\to
\prod_{\lambda\in\Lambda}\Hom_{G\Set}(S,\CT(X_{\lambda}))
\rightrightarrows
\prod_{(\lambda,\lambda')\in\Lambda^2}\Hom_{G\Set}(S,\CT(X_{\lambda}\times_XX_{\lambda'}))$
is exact. This completes the proof by Proposition \ref{prop:CGfSetSheaf}. 
\end{proof}
The construction in Lemma \ref{lem:GShfToCGfSetShf} is functorial
in $\CT$, and defines a functor 
\begin{equation}\label{eq:GShfToCGfSetShf}
\rho_{G,C*}\colon G\hy C^{\sim}\longrightarrow C_G^{\sim};\; (\rho_{G,C*}\CT)(X,-)=\rho_{G*}(\CT(X)).
\end{equation}

\begin{proposition}\label{prop:GsheafTopos}
The functor $\rho_{G,C*}$ is canonically regarded as a right adjoint
of $\rho_{G,C}^*$, and the functors $\rho_{G,C*}$ and $\rho_{G,C}^*$
are equivalences of categories which are quasi-inverses of each other
by the adjunction.
\end{proposition}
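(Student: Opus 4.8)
The plan is to reduce the statement to the ``absolute'' case already established in Proposition~\ref{prop:GsetGfsetSheaves} by arguing fibrewise over the objects of $C$. First I would unwind what a morphism in each category amounts to: since $C_G=C\times G\fSet$, a morphism $\CF\to\CG$ in $C_G^{\sim}$ is the same datum as a family of morphisms $\CF(X,-)\to\CG(X,-)$ of sheaves of sets on $G\fSet$ (these are sheaves by Proposition~\ref{prop:CGfSetSheaf}), indexed by $X\in\Ob C$ and compatible with the transition maps attached to morphisms of $C$; and a morphism $\CS\to\CT$ in $G\hy C^{\sim}$ is likewise a family of morphisms $\CS(X)\to\CT(X)$ of $G$-sets, indexed by $X$ and compatible with transition maps. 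I would then apply, for each $X$, the natural bijection of Proposition~\ref{prop:GsetGfsetSheaves} between $\Hom_{G\fSet^{\sim}}(\CF(X,-),\rho_{G*}(\CT(X)))$ and $\Hom_{G\Set}(\rho_G^*(\CF(X,-)),\CT(X))=\Hom_{G\Set}((\rho_{G,C}^*\CF)(X),\CT(X))$, using that this bijection is functorial in both arguments so that it intertwines the transition maps; this yields a bijection $\Hom_{C_G^{\sim}}(\CF,\rho_{G,C*}\CT)\cong\Hom_{G\hy C^{\sim}}(\rho_{G,C}^*\CF,\CT)$ natural in $\CF$ and $\CT$, i.e.\ the claimed adjunction. (That the two constructions are well defined as functors between the stated categories is already recorded in Lemma~\ref{lem:GShfToCGfSetShf} and the discussion preceding \eqref{eq:CGfSetShfToGShf}.)

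To promote this to an adjoint equivalence it then suffices to check that the unit and counit are isomorphisms, and for this I would again look fibrewise. The counit $\rho_{G,C}^*\rho_{G,C*}\CT\to\CT$ evaluated on $X$ is exactly the counit of Proposition~\ref{prop:GsetGfsetSheaves} for the $G$-set $\CT(X)$ --- the direct limit over $H\in\CN(G)$ of the bijections $\Hom_{G\Set}(G/H,\CT(X))\xrightarrow{\cong}\CT(X)^H$ (Remark~\ref{rmk:GsetGfSetSheavesAdj}) --- hence an isomorphism of $G$-sheaves. The unit $\CF\to\rho_{G,C*}\rho_{G,C}^*\CF$ evaluated on $(X,S)\in\Ob C_G$ is the map $\CF(X,S)\to\Hom_{G\Set}(S,\varinjlim_{H\in\CN(G)}\CF(X,G/H))$, which is the $S$-component of the unit $\varepsilon_{\CF(X,-)}$ of Proposition~\ref{prop:GsetGfsetSheaves} applied to the sheaf $\CF(X,-)$ on $G\fSet$; that unit is an isomorphism (here Lemma~\ref{lem:GfSetSheafDescent} enters, exactly as in the proof of Proposition~\ref{prop:GsetGfsetSheaves}), so the unit of our adjunction is an isomorphism as well.

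I do not expect a genuine obstacle: every step is an ``$X$-parametrised'' copy of something proved in the absolute case, and the only points needing a word of justification are (i) that the fibrewise constructions really land in $C_G^{\sim}$, respectively $G\hy C^{\sim}$, which is where the hypothesis that $C$ is generated by a pretopology of finite families of morphisms is used, and (ii) the compatibility of the bijections of Proposition~\ref{prop:GsetGfsetSheaves} with transition maps, which is immediate from their naturality. If one prefers to avoid the bookkeeping altogether, the same argument can be packaged in one line: Proposition~\ref{prop:CGfSetSheaf} identifies $C_G^{\sim}$ with the category of $G\fSet^{\sim}$-valued sheaves on $C$, Remark~\ref{rmk:LimitsGSets}(1) identifies $G\hy C^{\sim}$ with the category of $G\Set$-valued sheaves on $C$, and the limit-preserving equivalence $G\fSet^{\sim}\simeq G\Set$ of Proposition~\ref{prop:GsetGfsetSheaves} induces an equivalence between these two categories of $C$-valued sheaves which is manifestly the pair $(\rho_{G,C}^*,\rho_{G,C*})$.
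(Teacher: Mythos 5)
Your proposal is correct and takes essentially the same approach as the paper: both proofs reduce fibrewise over $X\in\Ob C$ to Proposition~\ref{prop:GsetGfsetSheaves}. The only difference is cosmetic — you establish the adjunction via the Hom-set bijection and then observe unit and counit are isomorphisms, whereas the paper constructs the unit and counit isomorphisms directly and then verifies the triangle identities, but both routes rely on the identical fibrewise facts.
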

\begin{proof}
For a sheaf of sets $\CF$ on $C_G$, the adjunction
isomorphism $\CF(X,-)\xrightarrow{\cong} \rho_{G*}\rho_G^*(\CF(X,-))
=(\rho_{G,C*}\rho_{G,C}^*\CF)(X,-)$ (Proposition \ref{prop:GsetGfsetSheaves}) for each $X\in \Ob C$
defines an isomorphism $\varepsilon_{\CF}\colon\CF\xrightarrow{\cong}
\rho_{G,C*}\rho_{G,C}^*\CF$, which is functorial in $\CF$. 
Similarly, for a $G$-sheaf of sets $\CT$ on $C$,
the adjunction isomorphism 
$(\rho_{G,C}^*\rho_{G,C*}\CT)(X)
=\rho_G^*\rho_{G*}(\CT(X))\xrightarrow{\cong}\CT(X)$
(Proposition \ref{prop:GsetGfsetSheaves}) for each  $X\in \Ob C$ gives an isomorphism 
$\eta_{\CT}\colon \rho_{G,C}^*\rho_{G,C*}\CT\xrightarrow{\cong}\CT$,
which is functorial in $\CT$. We see that the compositions
$\rho_{G,C}^*\CF\xrightarrow{\rho_{G,C}^*\varepsilon_{\CF}}
\rho_{G,C}^*\rho_{G,C*}\rho_{G,C}^*\CF
\xrightarrow{\eta_{\rho_{G,C}^*\CF}}\rho_{G,C}^*\CF$
and $\rho_{G,C*}\CT\xrightarrow{\varepsilon_{\rho_{G,C*}\CT}}
\rho_{G,C*}\rho_{G,C}^*\rho_{G,C*}\CT
\xrightarrow{\rho_{G,C*}\eta_{\CT}}\rho_{G,C*}\CT$
are the identity morphisms by evaluating them on 
each $X\in \Ob C$. This completes the proof.
\end{proof}

By Proposition \ref{prop:GsheafTopos}, 
we see that $G$-$C^{\sim}$ is a topos. 
By Remark \ref{rmk:FinInvLimGSheaf} (1), we see that 
$G\Mod(C)$ and $G\Mod(C,\CA)$ are 
abelian categories with enough injectives
(\cite[II Proposition 6.7, Remarque 6.9]{SGA4}).

\begin{proposition}\label{prop:GSheafToposEx}
A sequence  $\CL\to \CM\to \CN$ in 
$G\Mod(C)$ (resp.~$G\Mod(C,\CA)$) is exact 
if and only it is exact as a sheaf of modules
(resp.~$\CA$-modules) on $C$.
\end{proposition}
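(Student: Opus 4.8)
The plan is to reduce the statement to a claim about the category of $G$-sheaves of sets. Recall from Proposition \ref{prop:GsheafTopos} that $\rho_{G,C}^*\colon C_G^{\sim}\to G\hy C^{\sim}$ is an equivalence of topos, and by Remark \ref{rmk:FinInvLimGSheaf} (1) it identifies module objects (resp.\ $\CA$-module objects) of $G\hy C^{\sim}$ with module objects (resp.\ $\CA$-module objects) of $C_G^{\sim}$, i.e.\ $G\Mod(C)\simeq \Mod(C_G,\ubZ)$ and $G\Mod(C,\CA)\simeq \Mod(C_G,\pi^*\CA)$ where $\pi\colon C_G\to C$ is the projection $(X,S)\mapsto X$. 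Since exactness of a sequence of abelian sheaves on any site (in particular on $C_G$) is tested stalkwise, or equivalently is detected by the underlying presheaf-level kernels and cokernels being computed and then sheafified, the problem is to compare ``exact in $G\Mod(C)$'' with ``exact as a sheaf of modules on $C$''. The key point is that both categories $G\Mod(C)$ and $\Mod(C)$ are abelian, and there is an exact forgetful functor between them; the content of the proposition is that this forgetful functor is \emph{conservative and exact}, and moreover that exactness in the source can be tested after forgetting.

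First I would make precise the forgetful functor $\omega\colon G\Mod(C)\to \Mod(C)$ (resp.\ $G\Mod(C,\CA)\to\Mod(C,\CA)$) sending a $G$-sheaf of modules to its underlying sheaf of modules. The ``only if'' direction is the assertion that $\omega$ is exact. For this I would note that kernels and cokernels in $G\Mod(C)$ are computed by taking the kernel/cokernel of the underlying morphism of sheaves of modules and equipping it with the action of $G$ by functoriality; this is exactly the content of Remark \ref{rmk:FinInvLimGSheaf} (1) for kernels, and for cokernels one uses Proposition \ref{prop:GSheafToposEx}'s analogue at the level of constructions: a cokernel in $G\Mod(C)$ is the sheafification of the presheaf cokernel of the underlying map, with its (continuous, by Remark \ref{rmk:FinInvLimGSheaf} (2)) $G$-action. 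Hence the underlying sheaf of $\ker$ (resp.\ $\coker$) in $G\Mod(C)$ is $\ker$ (resp.\ $\coker$) in $\Mod(C)$, so for a sequence $\CL\to\CM\to\CN$ in $G\Mod(C)$ the object ``image of $\CM\to\CN$ computed in $G\Mod(C)$'' has underlying sheaf the image computed in $\Mod(C)$, and likewise for ``kernel of $\CM\to\CN$''. Therefore the canonical monomorphism $\mathrm{im}\hookrightarrow\ker$ in $G\Mod(C)$ is an isomorphism if and only if it is an isomorphism of underlying sheaves of modules, which gives both directions simultaneously.

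More concretely and self-containedly, I would argue as follows. Since $G\Mod(C)$ is abelian, a sequence $\CL\xrightarrow{f}\CM\xrightarrow{g}\CN$ is exact if and only if $g\circ f=0$ and the induced map $\mathrm{coim}(f)\to\ker(g)$ — equivalently $\mathrm{im}(f)\to\ker(g)$ — is an isomorphism in $G\Mod(C)$. A morphism in $G\Mod(C)$ is an isomorphism if and only if its underlying morphism of sheaves of modules is an isomorphism: indeed an inverse, if it exists at the level of sheaves, is automatically $G$-equivariant because it is the inverse of a $G$-equivariant map. Combining this with the fact just established that forming $\mathrm{im}(f)$ and $\ker(g)$ commutes with $\omega$ (up to canonical identification of underlying sheaves, compatibly with the comparison map), exactness of $\CL\to\CM\to\CN$ in $G\Mod(C)$ is equivalent to: $g\circ f=0$ as a map of sheaves of modules and $\mathrm{im}(f)\to\ker(g)$ is an isomorphism of sheaves of modules, i.e.\ exactness of the underlying sequence in $\Mod(C)$. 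The identical argument works verbatim in the $\CA$-linear setting, since everything above is compatible with the $\CA$-module structures and the forgetful functor $G\Mod(C,\CA)\to\Mod(C,\CA)$ is again exact and conservative.

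I do not expect a genuine obstacle here; the only thing requiring a little care is the bookkeeping verifying that the equivalence $\rho_{G,C*}$ (or, if one prefers to work directly, the explicit description of kernels and cokernels) really does identify $\mathrm{im}$ and $\ker$ in $G\Mod(C)$ with their counterparts in $\Mod(C)$ \emph{together with} the canonical comparison morphism between them — i.e.\ that the identification is natural, not just an abstract isomorphism of objects. This is where one must invoke Remark \ref{rmk:FinInvLimGSheaf} (2) to know that the $G$-action on the presheaf cokernel, hence on its sheafification, is continuous, so that the cokernel computed naively in sheaves of modules, equipped with the functorial $G$-action, really is the cokernel in $G\Mod(C)$. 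Once that is in hand the proof is a formal diagram chase in the two abelian categories, and I would write it in just a few lines.
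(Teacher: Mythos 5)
Your proposal is correct and follows essentially the same route as the paper: reduce to showing that the forgetful functor to $\Mod(C)$ (resp.\ $\Mod(C,\CA)$) preserves kernels and cokernels, handle kernels by noting the sheaf-theoretic kernel is a $G$-stable subsheaf with continuous induced action, and handle cokernels by sheafifying the presheaf cokernel and invoking Remark \ref{rmk:FinInvLimGSheaf} (2) for continuity of the resulting $G$-action. One small wording slip: in the middle paragraph you appeal to ``Proposition \ref{prop:GSheafToposEx}'s analogue'' while proving that very proposition, but the sentence that follows makes clear you mean the explicit description of cokernels (presheaf cokernel, then sheafify, then equip with the functorial action), so no actual circularity is present.
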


\begin{proof}
It suffices to show that kernels and cokernels are preserved  under the functor taking the underlying 
sheaves of modules (resp.~$\CA$-modules). Let $f\colon \CM\to \CN$ be
a morphism in $G\Mod(C)$ (resp.~$G\Mod(C,\CA)$). The kernel $\CL$ of 
the morphism of sheaves of modules (resp.~$\CA$-modules) 
underlying $f$ is stable under the action of $G$ on $\CM$, and $\CL$ equipped with
the induced $G$-action, which is continuous, gives the kernel of 
the morphism $f$. Let $\mathcal C_{\text{pre}}$ be the  cokernel of $f$ regarded
as a morphism of presheaves of modules (resp.~$\CA$-modules).
Then the action of $G$ on $\mathcal C_{\text{pre}}$ induced by that on $\CN$ is continuous,
and it induces a continuous $G$-action on the sheaf of sets $\mathcal C$
associated to $\mathcal C_{\text{pres}}$ by Remark \ref{rmk:FinInvLimGSheaf} (2).
We see that the $G$-equivariant morphism $\CN\to \mathcal C$ gives the cokernel of $f$.
This completes the proof.
\end{proof}

Let $G'$ be a profinite group, let $C'$ be a site whose topology is defined by 
a pretopology  $\Cov_{C'}(X')$ $(X'\in \Ob C')$
consisting of finite families of morphisms,
and suppose that we are given a
continuous homomorphism $v\colon G'\to G$ and a functor
$u\colon C\to C'$ defining a morphism of sites.
Let $u$ also denote the morphism of topos
$(u^*, u_*)\colon C^{\prime\sim}\to C^{\sim}$ induced by $u$.

By Proposition \ref{prop:CGfSetSheaf}, the product functor 
$u\times \vf^*\colon C_G\to C'_{G'}$ is continuous, and 
therefore defines an adjoint pair of functors
$(\tv_u^*,\tv_{u*})\colon C_{G'}^{\prime\sim}\to C_G^{\sim}$. 
By composing $\tv_{u*}$ with the equivalences in 
Proposition \ref{prop:GsheafTopos} for $(C,G)$ and $(C',G')$, we obtain a functor 
\begin{equation}\label{eq:GSheafFunct}
v_{u*}\colon G'\hy C^{\prime\sim}\to G\hy C^{\sim};
v_{u*}\CT'=\CMap_{G',\cont}(G,u_*\CT')
\end{equation}
similarly to \eqref{eq:GSetFunct}. 
Here, for a $G'$-sheaf of sets $\CT$ on $C$, we write $\CMap_{G',\cont}(G,\CT)$
for the $G$-sheaf of sets $X\mapsto \Map_{G',\cont}(G,\CT(X))=v_*(\CT(X))$
on $C$. 
\begin{proposition}\label{prop:GSheafPB2}
(1) For a $G$-sheaf of sets $\CT$ on $C$, the action of $G$ on 
$u^*\CT$ is continuous.\par
(2) Let $v_u^*$ be the functor $G\hy C^{\sim}\to G'\hy C^{\prime\sim}$
defined by sending $\CT$ to $u^*\CT$ equipped with the action
of $G'$ via $v$, which is continuous by (1). 
Then the functor $v_u^*$ is canonically regarded as a left adjoint
of $v_{u*}$ \eqref{eq:GSheafFunct}. The unit and counit
$\id_{G\hy C^{\sim}}\to v_{u*}v_u^*$ and 
$v_u^*v_{u*}\to \id_{G'\hy C^{\prime \sim}}$ are given by 
$\CT\to \CMap_{G',\cont}(G,u_*u^*\CT); x\mapsto 
(g\mapsto \eta_{\CT}(gx))$ and
the morphism $u^*\CMap_{G',\cont}(G,u_*\CT')\to \CT'$
corresponding to $\CMap_{G',\cont}(G,u_*\CT')\to u_*\CT';
\varphi\mapsto \varphi(1)$ by the adjunction of $(u^*,u_*)$
for $\CT\in \Ob (G\hy C^{\sim})$ and $\CT'\in 
\Ob (G'\hy C^{\prime\sim})$,
where $\eta_{\CT}$ denotes the adjunction morphism
$\CT\to u_*u^*\CT$. 
\end{proposition}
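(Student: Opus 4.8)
The plan is to treat part (1) as a sheafification argument and then obtain the adjunction in part (2) by stacking the adjunction of $(u^*,u_*)$ on top of the section-wise bijection that underlies Proposition~\ref{prop:GSetFunctoriality}.

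First I would prove (1). By definition $u^*\CT$ is the sheaf associated to the presheaf on $C'$ whose value at $X'$ is the colimit $\varinjlim\CT(X)$ over the category of pairs $(X,\,X'\to u(X))$. Since $\CT$ is a $G$-presheaf on $C$, this diagram lands in $G\Set$, so by Remark~\ref{rmk:LimitsGSets}~(1) the colimit carries a continuous $G$-action; hence the presheaf above is a $G$-presheaf of sets on $C'$, and by Remark~\ref{rmk:FinInvLimGSheaf}~(2) --- this is where the hypothesis that the topology of $C'$ is generated by a pretopology with finite coverings is used --- the induced $G$-action on $u^*\CT$ is continuous. Pulling back along the continuous homomorphism $v$ then shows $v_u^*\CT\in\Ob(G'\hy C^{\prime\sim})$, so the functor $v_u^*$ is well-defined.

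For (2), fix $\CT\in\Ob(G\hy C^{\sim})$ and $\CT'\in\Ob(G'\hy C^{\prime\sim})$. The adjunction of $(u^*,u_*)$ gives a natural bijection between morphisms of sheaves $\varphi\colon u^*\CT\to\CT'$ and $\psi\colon\CT\to u_*\CT'$; chasing through this bijection the $G'$-action on $\CT'$ and the $G'$-action on $u^*\CT$ coming via $v$ from the $G$-action on $\CT$, one finds that $\varphi$ is a morphism $v_u^*\CT\to\CT'$ in $G'\hy C^{\prime\sim}$ if and only if $u_*(g')\circ\psi=\psi\circ v(g')$ for all $g'\in G'$. To such a $\psi$ I would associate, mimicking the proof of Proposition~\ref{prop:GSetFunctoriality}, the morphism $\Phi\colon\CT\to\CMap_{G',\cont}(G,u_*\CT')=v_{u*}\CT'$ given section-wise by $\Phi(X)(x)(g)=\psi(X)(g\cdot x)$: the displayed relation makes $g\mapsto\psi(X)(g\cdot x)$ a $G'$-equivariant map, continuity of the $G$-action on $\CT(X)$ makes it continuous, and naturality in $X$ together with $G$-equivariance of $\Phi$ are formal. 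In the reverse direction one sets $\psi(X)(x)=\Phi(X)(x)(1)$; a short verification shows the two assignments are mutually inverse and functorial in $\CT$ and $\CT'$, which gives $v_u^*\dashv v_{u*}$. Finally, evaluating this bijection at $\varphi=\id_{v_u^*\CT}$ (whose $(u^*,u_*)$-adjoint is the unit $\eta_\CT\colon\CT\to u_*u^*\CT$) and at $\Phi=\id_{v_{u*}\CT'}$ (whose associated $\psi$ is evaluation at $1\in G$) yields the asserted descriptions of the unit and counit.

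The computations in (2) are routine bookkeeping of three interacting adjunctions and group actions; the one substantive point is the continuity claim in (1), for which the finiteness of the coverings in the pretopology of $C'$ is essential. Alternatively, (2) can be derived by transporting the adjoint pair $(\tv_u^*,\tv_{u*})$ attached to $u\times\vf^*\colon C_G\to C'_{G'}$ through the equivalences of Proposition~\ref{prop:GsheafTopos}, after identifying $\rho_{G',C'}^*\circ\tv_u^*\circ\rho_{G,C*}$ with $v_u^*$; I would nevertheless present the direct argument above, since it delivers the explicit unit and counit with no extra work.
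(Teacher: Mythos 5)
Your argument matches the paper's proof: part (1) is the same observation that the presheaf inverse image lands in $G$-presheaves (Remark \ref{rmk:LimitsGSets}~(1)) and that sheafification preserves continuity of the action (Remark \ref{rmk:FinInvLimGSheaf}~(2)), and part (2) is the same chain of bijections $\Hom_{G'}(u^*\CT,\CT')\cong\Hom_{G'}(\CT,u_*\CT')\cong\Hom_G(\CT,\CMap_{G',\cont}(G,u_*\CT'))$ with the same explicit formulas $(\psi(x))(g)=\varphi(gx)$, $\varphi(x)=(\psi(x))(1)$. One small slip: the colimit computing the presheaf inverse image at $X'$ is over pairs $(X,\,u(X)\to X')$, not $(X,\,X'\to u(X))$; this reversal of variance does not affect the conclusion since either way the colimit is taken in $G\Set$.
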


\begin{proof}
(1) This follows from Remark \ref{rmk:LimitsGSets} (1) and  Remark \ref{rmk:FinInvLimGSheaf} (2) since 
$u^*\CT$ is the sheaf associated to the presheaf inverse image of $\CT$.\par
(2) The adjunction is given by the following bijections for $\CT\in \Ob (G\hy C^{\sim})$ and 
$\CT'\in \Ob (G'\hy C^{\prime\sim})$.
$$\Hom_{G'}(u^*\CT,\CT')
\cong \Hom_{G'}(\CT,u_*\CT')\cong \Hom_{G}(\CT, \CMap_{G',\cont}(G,u_*\CT')),$$
where the second map is given by $\varphi\leftrightarrow \psi$, $(\psi(x))(g)=\varphi(gx)$,
$\varphi(x)=(\psi(x))(1)$.
\end{proof}

Since $u^*$ preserves finite inverse limits, the functor $v_u^*$ 
defined in Proposition \ref{prop:GSheafPB2} (2) also preserves finite inverse limits 
by Remark \ref{rmk:FinInvLimGSheaf} (1). Hence the pairs
$(v_u^*,v_{u*})$ and $(\tv_{u}^*,\tv_{u*})$ define 
morphisms of topos 
\begin{equation}\label{eq:GSheafToposMorph}
v_u\colon G'\hy C^{\prime\sim}\to G\hy C^{\sim},\qquad
\tv_u\colon C^{\prime\sim}_{G'}\to C^{\sim}_{G},\end{equation}
respectively. 
When $C=C'$, $\Cov_{C}(X)=\Cov_{C'}(X)$ $(X\in \Ob C)$
and $u=\id_C$, then we write $v_C$ and $\tv_C$ for $v_u$
and $\tv_u$, respectively.

\begin{lemma}\label{lem:GfSetShfToGShfFunct}
The isomorphism of functors
$$\sigma\colon \rho_{G,C}^*\tv_{u*}
\xrightarrow{\cong}\rho_{G,C}^*\tv_{u*}\rho_{G',C'*}\rho_{G',C'}^*
= v_{u*}\rho_{G',C'}^*\colon C_{G'}^{\prime\sim}
\to G\hy C^{\sim}$$
is explicitly described as follows. For $\CF'\in \Ob(C^{\prime\sim}_{G'})$,
$X\in \Ob C$, and $N\in \CN(G)$, the image of $x\in \CF'(u(X),\vf^*(G/N))$ under
$$\sigma(\CF')(X)\colon 
\varinjlim_{H\in \CN(G)}\CF'(u(X),\vf^*(G/H))
\to\Map_{G',\cont}(G,\varinjlim_{H'\in \CN(G')}\CF'(u(X),G'/H'))$$
is the map sending $g\in G$ to $\CF'(\id_{u(X)}, \alpha_g)(x)$,
where $\alpha_g$ denotes the morphism
$G'/N'\to \vf^*(G/N)$ in $G'\fSet$ sending $1$ to $g$
for an $N'\in \CN(G')$ stabilizing $gN\in \vf^*(G/N)$.
\end{lemma}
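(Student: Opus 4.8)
The plan is to make $\sigma$ explicit by unwinding it step by step. Recall that, by construction, $v_{u*}=\rho_{G,C}^*\circ\tv_{u*}\circ\rho_{G',C'*}$ \eqref{eq:GSheafFunct}, and that $\sigma$ is the natural transformation obtained by applying $\rho_{G,C}^*\tv_{u*}$ to the unit isomorphism $\varepsilon'\colon\id_{C^{\prime\sim}_{G'}}\xrightarrow{\cong}\rho_{G',C'*}\rho_{G',C'}^*$ of Proposition \ref{prop:GsheafTopos}. First I would evaluate $\sigma(\CF')$ at an object $X$ of $C$. Since $(\rho_{G,C}^*\CG)(X)=\varinjlim_{H\in\CN(G)}\CG(X,G/H)$ and $(\tv_{u*}\CG')(X,S)=\CG'(u(X),\vf^*(S))$ for $\CG\in\Ob C_G^{\sim}$ and $\CG'\in\Ob C^{\prime\sim}_{G'}$, the map $\sigma(\CF')(X)$ is the colimit over $H\in\CN(G)$ of the maps $\varepsilon'_{\CF'}(u(X),\vf^*(G/H))$; it is well defined on the colimit because $\varepsilon'$ is a morphism of sheaves, so it suffices to track the image of an element $x\in\CF'(u(X),\vf^*(G/N))$.

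Next I would insert the explicit description of $\varepsilon'$. By the proof of Proposition \ref{prop:GsheafTopos}, for each object $X'$ of $C'$ the map $\varepsilon'_{\CF'}(X',-)$ coincides with the unit isomorphism $\CF'(X',-)\xrightarrow{\cong}\rho_{G'*}\rho_{G'}^*(\CF'(X',-))$ of Proposition \ref{prop:GsetGfsetSheaves} attached to the sheaf $\CF'(X',-)$ on $G'\fSet$ (it is a sheaf by Proposition \ref{prop:CGfSetSheaf}). Remark \ref{rmk:GsetGfSetSheavesAdj} then gives, for $x\in\CF'(u(X),\vf^*(G/N))$, that $\varepsilon'_{\CF'}(u(X),\vf^*(G/N))(x)$ is the $G'$-equivariant map $\vf^*(G/N)\to\varinjlim_{H'\in\CN(G')}\CF'(u(X),G'/H')$ sending $s'$ to $\CF'(\id_{u(X)},\alpha_{s'})(x)$, where $\alpha_{s'}\colon G'/N'\to\vf^*(G/N)$ is the morphism in $G'\fSet$ with $\alpha_{s'}(1)=s'$ for some $N'\in\CN(G')$ stabilizing $s'$. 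As every element of $\vf^*(G/N)$ is of the form $gN$ with $g\in G$, and for $s'=gN$ the morphism $\alpha_{s'}$ is precisely the $\alpha_g$ of the statement, this map sends $gN$ to $\CF'(\id_{u(X)},\alpha_g)(x)$.

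Finally I would pass this through the identification $\varinjlim_{H\in\CN(G)}\Hom_{G'\Set}(\vf^*(G/H),\CT'')\cong\Map_{G',\cont}(G,\CT'')$ underlying formula \eqref{eq:GSetFunct}, with $\CT''=\varinjlim_{H'\in\CN(G')}\CF'(u(X),G'/H')$: a $G'$-equivariant map $\phi\colon\vf^*(G/H)\to\CT''$ corresponds to the continuous $G'$-equivariant map $G\to\CT''$, $g\mapsto\phi(gH)$, obtained by precomposing with the $G'$-equivariant quotient $G\to G/H$. Applying this to the $\phi$ just computed yields the map $g\mapsto\phi(gN)=\CF'(\id_{u(X)},\alpha_g)(x)$, which is exactly the formula claimed for $\sigma(\CF')(X)(x)$. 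Every step here is the same formal unwinding already performed in the group-only situation in the proof of Lemma \ref{lem:GsetPFComp}, so I do not expect a genuine obstacle; the one thing requiring care is the bookkeeping of the three nested adjunctions --- that of $(u^*,u_*)$, that of $\rho_{G,C}^*\dashv\rho_{G,C*}$ together with its $(C',G')$-analogue, and the continuity that makes $\CF'(X',-)$ a sheaf on $G'\fSet$ so that Remark \ref{rmk:GsetGfSetSheavesAdj} applies --- together with matching the colimit index $N$ with the choice of $N'$ in the condition ``$N'$ stabilizes $gN$''.
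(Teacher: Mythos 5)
Your argument is correct and is precisely the detailed unwinding of the paper's one-line proof, which cites Remark \ref{rmk:GsetGfSetSheavesAdj} and the proof of Proposition \ref{prop:GsheafTopos}. You identify $\sigma$ as $\rho_{G,C}^*\tv_{u*}$ applied to the unit isomorphism, evaluate fiberwise via $\tv_{u*}\CG'(X,S)=\CG'(u(X),\vf^*(S))$, plug in the explicit formula from Remark \ref{rmk:GsetGfSetSheavesAdj}, and pass through the colimit identification underlying \eqref{eq:GSetFunct} — exactly the steps the paper compresses into its citation.
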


\begin{proof}
This follows from Remark \ref{rmk:GsetGfSetSheavesAdj} and the proof of 
Proposition \ref{prop:GsheafTopos}.
\end{proof}

Suppose that we are given another pair of $v'\colon G''\to G'$ and
$u'\colon C'\to C''$ satisfying the same conditions as $v$ and $u$.
Then we have 
$(u'\times \vf^{\prime*})\circ (u\times \vf^*)=
(u'\circ u)\times (v\circ v')_{\text{f}}^*$, which implies that we have
canonical isomorphisms 
\begin{equation}\label{eq:GSheavesFunctCocyc}
\tv_u\circ \tv'_{u'}\cong
\widetilde{(v\circ v')}_{u'\circ u},\qquad
v_u\circ v'_{u'}\cong (v\circ v')_{u'\circ u}.
\end{equation}

\begin{lemma}\label{lem:GSheafPFComp}
For a $G''$-sheaf $\CT''$ of sets on $C''$ and $X\in \Ob C$, the canonical isomorphism 
\begin{multline*}
\Map_{G',\cont}(G,\Map_{G'',\cont}(G',\CT''(u'(u(X))))=
v_{u*}\circ v'_{u'*}(\CT'')(X)\\
\cong (v\circ v')_{u'\circ u*}(\CT'')(X)=\Map_{G'',\cont}(G,
\CT''(u'\circ u(X)))
\end{multline*}
 is given by the same formula as Lemma \ref{lem:GsetPFComp}.
\end{lemma}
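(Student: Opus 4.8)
The plan is to evaluate the stated isomorphism on $X$-sections and to identify the result with the isomorphism of Lemma \ref{lem:GsetPFComp} applied to the $G''$-set $\CT''(u'(u(X)))$. First I would record how the functors involved act on $X$-sections. With the conventions of the text, the direct image $u_*$ attached to the continuous functor $u\colon C\to C'$ is computed on sections by $(u_*\CG')(X)=\CG'(u(X))$ for $X\in\Ob C$; hence the definition \eqref{eq:GSheafFunct} of $v_{u*}$ gives $(v_{u*}\CT')(X)=\Map_{G',\cont}(G,\CT'(u(X)))=v_*(\CT'(u(X)))$ for every $G'$-sheaf of sets $\CT'$ on $C'$, and likewise for $v'_{u'*}$ and for $(v\circ v')_{u'\circ u*}$. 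So, evaluated at $X$, the source and target of the isomorphism of Lemma \ref{lem:GSheafPFComp} become $v_*(v'_*(\CT''(u'(u(X)))))$ and $(v\circ v')_*(\CT''(u'(u(X))))$, i.e.\ the source and target of the isomorphism of Lemma \ref{lem:GsetPFComp} for $T''=\CT''(u'(u(X)))$.

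Next I would unwind the cocycle isomorphism \eqref{eq:GSheavesFunctCocyc}. It is built, in exact parallel with the construction of $v_*\circ v'_*\cong(v\circ v')_*$ recalled in the proof of Lemma \ref{lem:GsetPFComp}, from the strict identity $(u'\times\vf^{\prime*})\circ(u\times\vf^*)=(u'\circ u)\times(v\circ v')^*_{\trmf}$ of functors on the sites (so that the direct image functors $\tv_{u*}$ for the sites $C_G$, $C'_{G'}$, $C''_{G''}$ compose strictly), together with the equivalences of Proposition \ref{prop:GsheafTopos} and the unit isomorphism $\id\xrightarrow{\cong}\rho_{G',C'*}\rho_{G',C'}^*$, the passage between $C_G^{\sim}$ and $G\hy C^{\sim}$ being supplied by the isomorphism $\sigma\colon\rho_{G,C}^*\tv_{u*}\xrightarrow{\cong}v_{u*}\rho_{G',C'}^*$ of Lemma \ref{lem:GfSetShfToGShfFunct}. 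The key point is that each ingredient is local over $C$: its $X$-component involves only the sheaf $S'\mapsto\CF'(u(X),S')$ on $G'\fSet$. This is immediate from the definitions of $\rho_{G,C}^*$ and $\tv_{u*}$ and from the explicit formula for $\sigma$ in Lemma \ref{lem:GfSetShfToGShfFunct}, and the unit isomorphism of Proposition \ref{prop:GsheafTopos} is by construction the one of Proposition \ref{prop:GsetGfsetSheaves} applied at each object of $C$. Applying all this to sheaves of the form $\rho_{G'',C''*}\CT''$, the $X$-component of $\sigma$ is the isomorphism $\rho_G^*\tv_*\xrightarrow{\cong}v_*\rho_{G'}^*$ without $C$ (Remark \ref{rmk:GsetGfSetSheavesAdj} and the proof of Proposition \ref{prop:GsetGfsetSheaves}) for the relevant $G'$-set, and similarly for the unit isomorphism.

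Combining the two paragraphs, the $X$-component of the chain of natural transformations defining $v_u\circ v'_{u'}\cong(v\circ v')_{u'\circ u}$ is termwise equal to the chain defining $v_*\circ v'_*\cong(v\circ v')_*$ applied to $\CT''(u'(u(X)))$, and Lemma \ref{lem:GsetPFComp} then yields the asserted formula on $X$-sections. The one point that needs genuine care — and which I expect to be the main obstacle — is verifying this locality over $C$ of the whole construction of \eqref{eq:GSheavesFunctCocyc}, i.e.\ that each constituent natural transformation is compatible with restriction to the fiber over $u(X)$; this is a routine chase of the adjunctions $\rho_{G,C}^*\dashv\rho_{G,C*}$, but it has to be carried out carefully, and it is entirely formal once Lemma \ref{lem:GfSetShfToGShfFunct} is in hand.
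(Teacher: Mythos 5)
Your proposal follows essentially the same route as the paper's proof: in both cases the crux is to identify the isomorphism $(v\circ v')_{u'\circ u*}\cong v_{u*}\circ v'_{u'*}$ with $\sigma$ of Lemma \ref{lem:GfSetShfToGShfFunct} applied to the sheaf $\CF'=\tv'_{u'*}\rho_{G'',C''*}\CT''$ on $C'_{G'}$, and then read off the explicit formula. The paper does this directly: it writes the cocycle isomorphism as $\sigma\circ\tv'_{u'*}\rho_{G'',C''*}$, observes $\CF'(u(X),S')=\Map_{G''}(\vf^{\prime*}S',\CT''(u'\circ u(X)))$, and plugs this into the explicit description in Lemma \ref{lem:GfSetShfToGShfFunct} — which gives the claimed formula with no further steps. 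Your writeup reaches the same place but wraps the evaluation in an abstract ``locality over $C$'' argument (that each natural transformation in the chain commutes with restriction to the fiber over $u(X)$), and you acknowledge this as a step still needing verification. In fact it needs no further verification beyond what you already cite: the formula of Lemma \ref{lem:GfSetShfToGShfFunct} for $\sigma(\CF')(X)$ depends only on the sheaf $S'\mapsto\CF'(u(X),S')$ on $G'\fSet$, and the unit isomorphism of Proposition \ref{prop:GsheafTopos} is by construction the $X$-componentwise application of Proposition \ref{prop:GsetGfsetSheaves}. One small imprecision: you say ``applying all this to sheaves of the form $\rho_{G'',C''*}\CT''$,'' but the sheaf that $\sigma$ is evaluated at is $\tv'_{u'*}\rho_{G'',C''*}\CT''$, not $\rho_{G'',C''*}\CT''$ itself; the identification $\CF'(u(X),S')=\Map_{G''}(\vf^{\prime*}S',\CT''(u'\circ u(X)))$ is what turns the abstract formula of Lemma \ref{lem:GfSetShfToGShfFunct} into exactly the formula of Lemma \ref{lem:GsetPFComp}.
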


\begin{proof}
The isomorphism $(v\circ v')_{u'\circ u*}
\xrightarrow{\cong}v_{u*}\circ v'_{u'*}$ is given by 
$\sigma\circ \tv'_{u'*}\rho_{G'',C''*}\colon
\rho_{G,C}^*\tv_{u*}\tv'_{u'*}\rho_{G'',C''*}
\xrightarrow{\cong}\rho_{G,C}^*\tv_{u*}\rho_{G',C'*}
\rho_{G',C'}^*\tv'_{u'*}\rho_{G'',C''*}$
for the isomorphism $\sigma$ in Lemma \ref{lem:GfSetShfToGShfFunct}.
Therefore we obtain the claim by applying Lemma \ref{lem:GfSetShfToGShfFunct}
to $\CF'=\tv'_{u'*}\rho_{G'',C''*}\CT''$, for which we have
$\CF'(u(X),S')=(\rho_{G'',C''*}\CT'')(u'\circ u(X),\vf^{\prime*}S')
=\Map_{G''}(\vf^{\prime*}S',\CT''(u'\circ u(X)))$ $(S'\in \Ob G'\fSet)$.
\end{proof}

Let $G$ be a profinite group, and let 
$C$  be a site whose topology is generated by a pretopology
$\Cov_C(X)$ $(X\in \Ob C)$ consisting of finite families of morphisms.
Then the homomorphisms $\iota_G\colon \{1\}\to G$
and $\pi_G\colon G\to \{1\}$ induce morphisms of topos
$\iota_{G,C}\colon C^{\sim}\to G\hy C^{\sim}$
and $\pi_{G,C}\colon G\hy C^{\sim}\to C^{\sim}$
such that $\pi_{G,C}\circ \iota_{G,C}\cong\id_{C^{\sim}}$. 
By \eqref{eq:GSheafFunct} and 
Proposition \ref{prop:GSheafPB2} (2), the direct images and the inverse images
under these morphisms are explicitly given as follows, where
$\CT\in \Ob (C^{\sim})$ and $\CT'\in \Ob (G\hy C^{\sim})$: 
$\iota_{G,C*}(\CT)=\Map_{\cont}(G,\CT)$, 
$\iota_{G,C}^*(\CT')=$the sheaf of sets underlying $\CT'$, 
$\pi_{G,C*}(\CT')=\CT^{\prime G}$, and $\pi_{G,C}^*(\CT)=$
the sheaf $\CT$ with the trivial action of $G$. 
Let $\CA$ be a sheaf of rings on $C$, and we write
$\CA$ for its inverse image by $\pi_{G,C}$. 

\begin{proposition}\label{pro:TrivIndFunctEx}
The functor $\iota_{G,C*}\colon \Mod(C,\CA)\to G\Mod(C,\CA)$
is exact.
\end{proposition}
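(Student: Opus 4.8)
The plan is to deduce the exactness of $\iota_{G,C*}$ from Proposition \ref{prop:GSheafToposEx}, which tells us that a sequence in $G\Mod(C,\CA)$ is exact exactly when the underlying sequence of sheaves of $\CA$-modules on $C$ is exact. Recall (as recorded just before the statement, and as the case $v=\iota_G\colon\{1\}\to G$, $u=\id_C$ of \eqref{eq:GSheafFunct}) that for $\CT\in\Ob\Mod(C,\CA)$ the object $\iota_{G,C*}(\CT)$ of $G\Mod(C,\CA)$ has underlying sheaf of $\CA$-modules $\Map_{\cont}(G,\CT)\colon X\mapsto\Map_{\cont}(G,\CT(X))$, the $\CA(X)$-module operations being defined pointwise in the $G$-variable. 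Hence, for a short exact sequence $0\to\CL\to\CM\to\CN\to 0$ in $\Mod(C,\CA)$, Proposition \ref{prop:GSheafToposEx} reduces exactness of $0\to\iota_{G,C*}\CL\to\iota_{G,C*}\CM\to\iota_{G,C*}\CN\to 0$ in $G\Mod(C,\CA)$ to exactness of $0\to\Map_{\cont}(G,\CL)\to\Map_{\cont}(G,\CM)\to\Map_{\cont}(G,\CN)\to 0$ in $\Mod(C,\CA)$, and this is what I would prove.

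For the latter I would use that $G$ is profinite. For $H\in\CN(G)$ let $\Map(G/H,\CT)$ denote the sheaf $X\mapsto\Map(G/H,\CT(X))$; since $G/H$ is a finite set, $\Map(G/H,\CT)$ is canonically the finite product $\prod_{gH\in G/H}\CT$, so $\CT\mapsto\Map(G/H,\CT)$ is an exact functor on $\Mod(C,\CA)$. Precomposition with the projections $G/H'\to G/H$ for $H'\subseteq H$ makes $\bigl(\Map(G/H,\CT)\bigr)_{H\in\CN(G)}$ a filtered system, the poset $\CN(G)$ being directed under reverse inclusion because $H_1\cap H_2\in\CN(G)$. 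The colimit of this system in presheaves of $\CA$-modules has sections $X\mapsto\varinjlim_{H\in\CN(G)}\Map(G/H,\CT(X))=\Map_{\cont}(G,\CT(X))$; since this presheaf is already a sheaf --- which is precisely the fact, used in constructing $\iota_{G,C*}$, that $\Map_{\cont}(G,-)$ sends sheaves of sets to sheaves --- it is also the colimit in $\Mod(C,\CA)$, so $\Map_{\cont}(G,\CT)=\varinjlim_{H\in\CN(G)}\Map(G/H,\CT)$ in $\Mod(C,\CA)$, naturally in $\CT$.

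It then remains to use that filtered colimits are exact in the Grothendieck abelian category $\Mod(C,\CA)$ (equivalently: a filtered colimit of sheaves of modules is the sheafification of the presheaf filtered colimit, which is exact since filtered colimits of modules are exact, and sheafification is exact). Applying $\varinjlim_{H\in\CN(G)}$ to the short exact sequences $0\to\Map(G/H,\CL)\to\Map(G/H,\CM)\to\Map(G/H,\CN)\to 0$ then gives exactness of $0\to\Map_{\cont}(G,\CL)\to\Map_{\cont}(G,\CM)\to\Map_{\cont}(G,\CN)\to 0$, which by the first paragraph finishes the proof. Since $\iota_{G,C*}$ is right adjoint to $\iota_{G,C}^*$ it is automatically left exact, so strictly only preservation of epimorphisms --- a filtered colimit of epimorphisms being an epimorphism --- is needed, but the full sequence costs nothing more. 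I do not anticipate a genuine obstacle; the only point deserving a little care is that the pointwise colimit $X\mapsto\Map_{\cont}(G,\CT(X))$ is a sheaf, and that is already built into the construction of $\iota_{G,C*}$ recalled above.
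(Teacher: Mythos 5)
Your proof is correct, but it follows a genuinely different route from the paper's. The paper argues by direct local lifting: since $\iota_{G,C*}$ is a right adjoint it is automatically left exact, so one only has to show it preserves epimorphisms; given $\varphi\colon\CM\to\CM'$ epi, $X\in\Ob C$, and $f\in\Map_{\cont}(G,\CM'(X))$, the paper factors $f$ through a finite quotient $G/H$ and uses the finiteness of $G/H$ to find a single covering of $X$ on which \emph{all} of the finitely many values of $f_H$ lift simultaneously. You instead decompose $\Map_{\cont}(G,-)$ as the filtered colimit of the finite-product (hence exact) functors $\Map(G/H,-)$, observe that the presheaf colimit is already a sheaf (so agrees with the colimit in $\Mod(C,\CA)$ --- this is where the finite-covering hypothesis enters), and conclude by exactness of filtered colimits (AB5) in $\Mod(C,\CA)$. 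Both arguments hinge on the same two facts --- continuous maps from $G$ factor through finite quotients, and finiteness of $G/H$ together with finiteness of coverings --- but the paper unpacks them elementwise while you bundle them into exactness of a filtered colimit. The paper's route is more elementary and self-contained; yours is cleaner and handles the whole short exact sequence at once, at the small cost of invoking AB5 for sheaf categories, which you correctly sketch.
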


\begin{proof}
Let $\varphi\colon \CM\to \CM'$ be an epimorphism of $\CA$-modules on $C$.
Let $X\in \Ob C$ and $f\in \Map_{\cont}(G,\CM'(X))$. Then
$f$ factors through a map $f_H\colon G/H\to \CM'(X)$ for some $H\in \CN(G)$. 
Since $G/H$ is finite, there exists a covering 
$(\alpha_{\lambda}\colon X_{\lambda}\to X)_{\lambda\in \Lambda}
\in \Cov_C(X)$ such that, for each $\lambda\in\Lambda$, 
the image of the composition $G/H\xrightarrow{f_H}\CM'(X)
\xrightarrow{\CM'(\alpha_{\lambda})}\CM'(X_{\lambda})$
lies in the image of $\varphi(X_{\lambda})\colon\CM(X_{\lambda})\to \CM'(X_{\lambda})$, 
whence there exists a
map $\tf_H\colon G/H\to \CM(X_{\lambda})$ satisfying 
$\varphi(X_{\lambda})\circ \tf_H=\CM'(\alpha_{\lambda})\circ f_H$.
\end{proof}

\begin{corollary}\label{cor:IndGModGacyc}
For a sheaf of $\CA$-modules $\CM$ on $C$, the morphism
$$\CM\cong \pi_{G,C*}\iota_{G,C*}\CM\to
R\pi_{G,C*}(\iota_{G,C*}\CM)$$ 
is an isomorphism in $D^+(C,\CA)$. 
\end{corollary}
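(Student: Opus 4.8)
The plan is to deduce this from two formal properties of $\iota_{G,C*}$: it is exact (Proposition \ref{pro:TrivIndFunctEx}), and it carries injective $\CA$-modules on $C$ to injective objects of $G\Mod(C,\CA)$. For the second property I would observe that $\iota_{G,C*}$ has as left adjoint the inverse-image functor $\iota_{G,C}^*$, which sends a $G$-sheaf of $\CA$-modules to its underlying sheaf of $\CA$-modules; by Proposition \ref{prop:GSheafToposEx} a sequence in $G\Mod(C,\CA)$ is exact exactly when its image under $\iota_{G,C}^*$ is exact, so $\iota_{G,C}^*$ is exact. Since a right adjoint of an exact functor preserves injectives, $\iota_{G,C*}$ preserves injectives.

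Granting this, I would pick an injective resolution $\CM\to I^{\bullet}$ in $\Mod(C,\CA)$. Exactness of $\iota_{G,C*}$ makes $\iota_{G,C*}\CM\to\iota_{G,C*}I^{\bullet}$ a resolution in $G\Mod(C,\CA)$, and by the previous paragraph each $\iota_{G,C*}I^{j}$ is injective, hence $\pi_{G,C*}$-acyclic. Therefore $R\pi_{G,C*}(\iota_{G,C*}\CM)$ is computed by the complex $\pi_{G,C*}(\iota_{G,C*}I^{\bullet})$. The canonical isomorphism $\pi_{G,C*}\circ\iota_{G,C*}\cong\id$, coming from $\pi_{G,C}\circ\iota_{G,C}\cong\id_{C^{\sim}}$, identifies this complex with $I^{\bullet}$, which is a resolution of $\CM$. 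Chasing the identifications, the resulting isomorphism $\CM\xrightarrow{\cong}R\pi_{G,C*}(\iota_{G,C*}\CM)$ in $D^+(C,\CA)$ is the canonical morphism of the statement, since the latter is by construction induced on $\pi_{G,C*}$ of the resolution by the identity of $\pi_{G,C*}(\iota_{G,C*}I^{\bullet})=I^{\bullet}$.

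The argument is essentially formal, so I do not expect a serious obstacle; the only point needing care is the exactness of $\iota_{G,C}^*$ and the ensuing preservation of injectives by $\iota_{G,C*}$ — which is precisely where Proposition \ref{prop:GSheafToposEx} is used — together with the bookkeeping that the comparison map furnished by the resolution agrees with the unit morphism $\CM\cong\pi_{G,C*}\iota_{G,C*}\CM\to R\pi_{G,C*}(\iota_{G,C*}\CM)$. (One could instead argue directly that $\iota_{G,C*}\CM=\CMap_{\cont}(G,\CM)$ is $\pi_{G,C*}$-acyclic by dimension shifting, using that $\CMap_{\cont}(G,-)$ is a coinduced object; but passing through injectives on $C$ is shorter and avoids computing with continuous cochains of $G$.)
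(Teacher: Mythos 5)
Your proof is correct and takes essentially the same route as the paper: the paper proves Proposition~\ref{pro:TrivIndFunctEx} (exactness of $\iota_{G,C*}$) and leaves the corollary implicit, the intended argument being precisely the formal one you give — $\iota_{G,C}^*$ is exact by Proposition~\ref{prop:GSheafToposEx}, so $\iota_{G,C*}$ preserves injectives, and together with its exactness and $\pi_{G,C*}\circ\iota_{G,C*}\cong\id$ this yields the isomorphism.
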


Thanks to this corollary, we see that the right derived functor
of $\pi_{G,C*}\colon G\Mod(C,\CA)\to \Mod(C,\CA)$ can 
be computed by the complex obtained by taking the inhomogeneous
cochain complex of the section on each $X\in \Ob C$ as follows.

For a sheaf of $\CA$-modules $\CM$ on $C$ and $n\in \N$, 
we define the presheaf $\Map_{\cont}(G^n,\CM)$ of $\CA$-modules on $C$
by $X\mapsto \Map_{\cont}(G^n,\CM(X))$ $(X\in \Ob C)$.
\begin{lemma}\label{lem:GMapSheaf}
The presheaf  $\Map_{\cont}(G^n,\CM)$ is a sheaf.\par
\end{lemma}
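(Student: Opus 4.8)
The claim is that for a sheaf of $\CA$-modules $\CM$ on $C$ and $n \in \N$, the presheaf $X \mapsto \Map_{\cont}(G^n, \CM(X))$ is a sheaf on $C$. The plan is to reduce this to the sheaf property of $\CM$ itself, using that $G^n$ is a profinite set and that continuous maps from a profinite set to a discrete set factor through a finite quotient. First I would fix a covering $(X_\lambda \to X)_{\lambda \in \Lambda} \in \Cov_C(X)$; since the topology of $C$ is generated by a pretopology whose coverings are finite families, $\Lambda$ is finite. I must check exactness of
\[
\Map_{\cont}(G^n,\CM(X)) \to \prod_{\lambda}\Map_{\cont}(G^n,\CM(X_\lambda)) \rightrightarrows \prod_{\lambda,\lambda'}\Map_{\cont}(G^n,\CM(X_\lambda \times_X X_{\lambda'})).
\]

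For injectivity of the first map, suppose $f \colon G^n \to \CM(X)$ is continuous and its restriction to each $X_\lambda$ is zero; since $\CM$ is a sheaf and the $f(\gamma)$ for $\gamma \in G^n$ all map to $0$ in each $\CM(X_\lambda)$, each $f(\gamma) = 0$, so $f = 0$. For the equalizer property, suppose we are given continuous $f_\lambda \colon G^n \to \CM(X_\lambda)$ agreeing on overlaps. The key step is that each $f_\lambda$ is continuous with $G^n$ profinite and $\CM(X_\lambda)$ discrete, hence factors through a finite quotient $G^n/(H_\lambda)$ for some open subgroup; taking a common refinement (possible since $\Lambda$ is finite, so only finitely many $f_\lambda$ are involved), all $f_\lambda$ factor through a single finite quotient $Q := G^n/H$. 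Then for each of the finitely many $\gamma \in Q$, the elements $(f_\lambda(\gamma))_\lambda$ agree on overlaps, so by the sheaf property of $\CM$ there is a unique $f(\gamma) \in \CM(X)$ restricting to $f_\lambda(\gamma)$ for all $\lambda$; this defines a map $f \colon Q \to \CM(X)$, which composed with $G^n \to Q$ gives a continuous map $G^n \to \CM(X)$, and it is the required (unique, by injectivity) preimage.

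The main obstacle — really the only subtlety — is ensuring that finitely many continuous maps into a discrete module can be simultaneously factored through one finite quotient of $G^n$; this is where the finiteness of the covering families in the pretopology of $C$ is essential (an infinite family could require infinitely many open subgroups with trivial intersection, and no single finite quotient would work). Everything else is a routine unwinding of the sheaf axiom applied termwise over the finite set $Q$. One should also note that the same argument, applied with $G^0 = \{1\}$ or directly, recovers the fact that $\Map_{\cont}(G,\CM)$ is a sheaf, consistent with the earlier identification $\iota_{G,C*}(\CM) = \Map_{\cont}(G,\CM)$.
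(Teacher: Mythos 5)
Your proof is correct and is essentially the paper's argument: the paper's (very terse) proof rests on exactly the same observation, namely that because the covering family $\Lambda$ is finite, a map $G^n\to\CM(X)$ whose compositions with the restrictions $\CM(X)\to\CM(X_\lambda)$ are all continuous is itself continuous (equivalently, the finitely many continuous $f_\lambda$ factor through a common finite quotient of $G^n$, so the glued map does too). Your write-up just spells out the routine gluing over the finite quotient that the paper leaves implicit.
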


\begin{proof}
This follows from the following observation. For $X\in \Ob C$
and $(X_{\lambda}\to X)_{\lambda\in \Lambda}\in\Cov_C(X)$, 
the map $G^n\to \CM(X)$ is continuous  if its composition
with $\CM(X)\to \CM(X_{\lambda})$ is continuous for every $\lambda\in\Lambda$
because the set $\Lambda$ is finite.
\end{proof}

Let $\CM$ be a $G$-sheaf of $\CA$-modules on $C$ (Definition \ref{def:Gsheaf} (2)). For $n\in \N$,
we define a $G$-sheaf of $\CA$-modules $K^n(G,\CM)$ on $C$ to be 
the sheaf of $\CA$-modules $\Map_{\cont}(G^{n+1},\CM)$ on 
$C$ equipped with the action of $G$ defined by 
$(g\cdot f)(g_0,\ldots, g_n)=g\cdot f(g^{-1}g_0,\ldots, g^{-1}g_n)$
for $X\in \Ob C$, $f\in\Map_{\cont}(G^{n+1},\CM(X))$, and $g, g_0,\ldots, g_n\in G$;
the action is continuous since every $f$ factors through $(G/H)^{n+1}$
and $\CM(X)^H$ for some $H\in \CN(G)$. We define homomorphisms of $G$-sheaves
of $\CA$-modules $d^n\colon K^n(G,\CM)\to K^{n+1}(G,\CM)$
$(n\in \N)$ and $\varepsilon\colon \CM\to K^0(G,\CM)$ by 
$(d^nf)(g_0,\ldots, g_{n+1})=\sum_{i=0}^{n+1}(-1)^if(g_0,\ldots,\check{g}_i,\ldots, g_{n+1})$
and $\varepsilon(x)(g_0)=x$ for $X\in \Ob C$, 
$f\in K^n(G,\CM)(X)$, $x\in \CM(X)$, and $g_0,\ldots, g_{n+1}\in G$. 
It is straightforward to see $d^{n+1}\circ d^n=0$ $(n\in \N)$ and
$d^0\circ \varepsilon=0$. 

\begin{lemma}\label{lem:GModResol}
The complex $\CM\xrightarrow{\varepsilon} K^{\bullet}(G,\CM)$
is homotopy equivalent to zero as a complex of sheaves of $\CA$-modules
on $C$.
\end{lemma}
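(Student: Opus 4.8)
The plan is to exhibit an explicit contracting homotopy, exactly as for the (homogeneous) bar resolution in ordinary group cohomology. View $\CM$ as placed in degree $-1$ and $K^n(G,\CM)$ in degree $n$, so the complex reads $\CM\xrightarrow{\varepsilon}K^0(G,\CM)\xrightarrow{d^0}K^1(G,\CM)\xrightarrow{d^1}\cdots$, and a homotopy from $\id$ to $0$ consists of morphisms $h^0\colon K^0(G,\CM)\to\CM$ and $h^n\colon K^n(G,\CM)\to K^{n-1}(G,\CM)$ for $n\geq 1$ satisfying $h^0\circ\varepsilon=\id_{\CM}$ and $d^{n-1}\circ h^n+h^{n+1}\circ d^n=\id_{K^n(G,\CM)}$ for $n\geq 0$, where $d^{-1}:=\varepsilon$. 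First I would define these on sections: for $X\in\Ob C$, let $h^0$ send $f\in\Map_{\cont}(G,\CM(X))=K^0(G,\CM)(X)$ to $f(1)\in\CM(X)$, and for $n\geq 1$ let $h^n$ send $f\in\Map_{\cont}(G^{n+1},\CM(X))=K^n(G,\CM)(X)$ to the map $(g_0,\ldots,g_{n-1})\mapsto f(1,g_0,\ldots,g_{n-1})$.

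Next I would check that these are well-defined morphisms of sheaves of $\CA$-modules. Precomposition with the continuous map $G^n\to G^{n+1}$, $(g_0,\ldots,g_{n-1})\mapsto (1,g_0,\ldots,g_{n-1})$, preserves continuity, so $h^nf$ again lies in $\Map_{\cont}(G^n,\CM(X))$; the assignment is visibly $\CA(X)$-linear and compatible with the restriction maps of $\CM$, hence with those of the presheaves $\Map_{\cont}(G^\bullet,\CM)$. Since source and target are sheaves by Lemma \ref{lem:GMapSheaf}, $h^0$ and the $h^n$ define morphisms of sheaves of $\CA$-modules on $C$. (Note that $h^n$ is \emph{not} $G$-equivariant, so this is only a homotopy of complexes of sheaves of $\CA$-modules on $C$, not of $G$-sheaves; this is exactly the form of the assertion and all that is needed.)

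Finally I would verify the two cochain identities by the classical computation on sections over each $X$. The identity $h^0\circ\varepsilon=\id_{\CM}$ is immediate from $\varepsilon(x)(g_0)=x$. For the second identity, evaluating $(h^{n+1}d^nf)(g_0,\ldots,g_n)=(d^nf)(1,g_0,\ldots,g_n)=\sum_{i=0}^{n+1}(-1)^i(d^nf)(\text{delete the }i\text{-th entry of }(1,g_0,\ldots,g_n))$ and separating the $i=0$ term gives $f(g_0,\ldots,g_n)-\sum_{j=0}^{n}(-1)^jf(1,g_0,\ldots,\check{g}_j,\ldots,g_n)$, whereas $(d^{n-1}h^nf)(g_0,\ldots,g_n)=\sum_{j=0}^{n}(-1)^jf(1,g_0,\ldots,\check{g}_j,\ldots,g_n)$; adding the two yields $f(g_0,\ldots,g_n)$, as required. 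There is no real obstacle here: the only substantive points are that each $h^n$ is again a continuous cochain and a sheaf morphism (reduced to Lemma \ref{lem:GMapSheaf}), and the two displayed identities are the standard bar-resolution computations; together they show $(h^n)_{n\geq 0}$ is a contracting homotopy, so $\CM\xrightarrow{\varepsilon}K^{\bullet}(G,\CM)$ is homotopy equivalent to zero as a complex of sheaves of $\CA$-modules on $C$.
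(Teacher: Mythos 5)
Your proof is correct and is essentially the paper's own argument: the paper defines the homotopy by precomposition with the continuous maps $G^n\to G^{n+1}$, $(g_0,\ldots,g_{n-1})\mapsto(1,g_0,\ldots,g_{n-1})$, which is exactly your $h^n$ (with $h^0=$ evaluation at $1$), and the identities you verify are the standard ones the paper leaves implicit. Your additional remarks on continuity, sheafhood via Lemma \ref{lem:GMapSheaf}, and the non-$G$-equivariance of the homotopy are accurate and consistent with the statement.
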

 
 \begin{proof} Put $K^{-1}(G,\CM)=\CM=\Map_{\cont}(G^0,\CM)$, where $G^0$ denotes the
 trivial group. Then the $\CA$-linear morphisms $K^n(G,\CM)\to K^{n-1}(G,\CM)$
 $(n\in \N)$  induced by the continuous maps $G^n\to G^{n+1};(g_0,\ldots, g_{n-1})
 \mapsto (1,g_0,\ldots, g_{n-1})$ give the desired homotopy.
\end{proof}

\begin{lemma}\label{lem:GModResolnd}
The morphism
$\rho\colon \iota_{G,C}^*K^n(G,\CM)\to \Map_{\cont}(G^n,\CM)$
in $\Mod(C,\CA)$ defined by $\rho(f)(g_1,\ldots, g_n)=f(1,g_1,\ldots, g_n)$
for $X\in \Ob  C$, $f\in K^n(G,\CM)(X)$, and $g_1,\ldots, g_n\in G$
induces an isomorphism 
$\tau\colon K^n(G,\CM)\xrightarrow{\cong}\iota_{G,C*}\Map_{\cont}(G^n,\CM)$
in $G\Mod(C,\CA)$.
\end{lemma}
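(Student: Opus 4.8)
The plan is to make the transpose $\tau$ completely explicit on sections, write down a two-sided inverse, and check the (elementary) continuity, $\CA$-linearity and equivariance; the argument is bookkeeping with the twisted $G$-action and carries no conceptual difficulty. Recall that $\iota_{G,C}^*$ is the forgetful functor and, by the $G'=\{1\}$, $u=\id_C$ case of \eqref{eq:GSheafFunct} and Proposition \ref{prop:GSheafPB2} (2), $\iota_{G,C*}\CN=\Map_{\cont}(G,\CN)$ with $G$ acting by right translation, $(g\cdot F)(x)=F(xg)$, while the unit $\CN\to\iota_{G,C*}\iota_{G,C}^*\CN$ sends $x$ to $g\mapsto g\cdot x$. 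Hence, for $f\in K^n(G,\CM)(X)=\Map_{\cont}(G^{n+1},\CM(X))$, the transpose $\tau$ of $\rho$ under the adjunction $(\iota_{G,C}^*,\iota_{G,C*})$ is $\tau(f)(g)=\rho(g\cdot f)$, i.e.
\[
\tau(f)(g)(g_1,\dots,g_n)=g\cdot f\bigl(g^{-1},\,g^{-1}g_1,\dots,g^{-1}g_n\bigr).
\]
As an adjoint transpose, $\tau$ is automatically a morphism in $G\Mod(C,\CA)$, and it is evidently $\CA$-linear, so the content of the lemma is that $\tau$ is bijective on sections over every $X\in\Ob C$.

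First I would record that $\tau(f)$ is indeed a section of $\iota_{G,C*}\Map_{\cont}(G^n,\CM)$ over $X$, i.e.\ continuous: if $f$ factors through $(G/H)^{n+1}$ and $\CM(X)^H$ for some $H\in\CN(G)$, which we may take normal, then normality of $H$ together with the fact that $H$ fixes $\CM(X)^H$ shows $\tau(f)(g)(g_1,\dots,g_n)$ is unchanged when $g$ or any $g_i$ is multiplied on the right by an element of $H$, so $\tau(f)$ factors through $(G/H)^{n+1}$ and $\CM(X)^H$. Then I would exhibit the candidate inverse $\psi\colon\iota_{G,C*}\Map_{\cont}(G^n,\CM)\to K^n(G,\CM)$, given on sections over $X$ by
\[
\psi(F)(g_0,g_1,\dots,g_n)=g_0\cdot F(g_0^{-1})\bigl(g_0^{-1}g_1,\dots,g_0^{-1}g_n\bigr),
\]
check by the same continuity computation that $\psi(F)$ is continuous and that $\psi$ is $\CA$-linear, and verify $\psi\circ\tau=\id$ and $\tau\circ\psi=\id$ on sections by direct substitution. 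Since $\tau$ is a morphism of $G$-sheaves of $\CA$-modules and is bijective on all sections, it is an isomorphism in $G\Mod(C,\CA)$, its inverse being $\psi$ (automatically $\CA$-linear and $G$-equivariant; cf.\ Proposition \ref{prop:GSheafToposEx}).

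The only delicate point — the main obstacle, such as it is — is keeping the conventions consistent: the right-translation $G$-action on $\iota_{G,C*}$, the diagonal twisted action defining $K^n(G,\CM)$, the shearing substitution built into $\psi$, and the interplay of $g$ versus $g^{-1}$ in each of them; and, in the continuity verification, using normality of $H$ to absorb the conjugates produced by the shearing. Everything else is formal, and in fact the statement can also be phrased as the familiar ``untwisting'' identity $K^n(G,\CM)\cong\iota_{G,C*}\iota_{G,C}^*\bigl(\Map_{\cont}(G^n,\CM)\bigr)$ for the diagonal bar-complex, which is what makes $K^{\bullet}(G,\CM)$ a resolution by $\pi_{G,C*}$-acyclics in the next lemmas.
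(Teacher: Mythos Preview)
Your proposal is correct and follows essentially the same approach as the paper: you derive the explicit formula $\tau(f)(g)(g_1,\dots,g_n)=g\cdot f(g^{-1},g^{-1}g_1,\dots,g^{-1}g_n)$ from the adjunction and exhibit the same inverse $\psi(F)(g_0,\dots,g_n)=g_0\cdot F(g_0^{-1})(g_0^{-1}g_1,\dots,g_0^{-1}g_n)$. Your additional remarks on continuity and the automatic $G$-equivariance of $\tau$ as an adjoint transpose are more explicit than the paper but add nothing essentially new.
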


\begin{proof} Let $X$ be an object of $C$. By the proof of Proposition \ref{prop:GSheafPB2} (2),
the $\CA(X)$-linear map 
$\tau(X)\colon \Map_{\cont}(G^{n+1},\CM(X))\to \Map_{\cont}(G,\Map_{\cont}(G^n,\CM(X)))$ is given by 
$$\{(\tau(X)(f))(g)\}(g_1,\ldots, g_n)
=(g\cdot f)(1,g_1,\ldots, g_n)=g\cdot f(g^{-1},g^{-1}g_1,\ldots,
g^{-1}g_n)$$
for $f\in \Map_{\cont}(G^{n+1},\CM(X))$ and $g,g_1,\ldots,g_n\in G$. 
We see that the inverse of $\tau(X)$ is given by sending 
$F$ to $f$ defined by $f(g_0,g_1,\ldots,g_n)=g_0\cdot F(g_0^{-1})(g_0^{-1}g_1,\ldots,g_0^{-1}g_n)$.
\end{proof}

We define the inhomogeneous cochain complex $C^{\bullet}(G,\CM)$
of $\CM$ by associating to each $X$ the inhomogeneous cochain complex
$C^{\bullet}(G,\CM(X))$  of the $\CA(X)$-$G$-module $\CM(X)$. 
By Lemma \ref{lem:GMapSheaf}, this is a complex of sheaves of $\CA$-modules.
We see that the restriction under the continuous map
$G^n\to G^{n+1}$ sending $(g_1,\ldots, g_n)$ to $(1,g_1,g_1g_2,\ldots, 
g_1g_2\cdots g_n)$ induces an isomorphism of complexes
$\pi_{G,C*}K^{\bullet}(G,\CM)\xrightarrow{\cong}C^{\bullet}(G,\CM)$.
By Lemma \ref{lem:GModResol}, Lemma \ref{lem:GModResolnd}, and 
Corollary \ref{cor:IndGModGacyc}, we obtain  the following isomorphisms in 
$D^+(C,\CA)$. 
\begin{equation}
R\pi_{G,C*}\CM\xrightarrow{\;\cong\;} R\pi_{G,C*}K^{\bullet}(G,\CM)
\xleftarrow{\;\cong\;} \pi_{G,C*}K^{\bullet}(G,\CM)\xrightarrow{\;\cong\;} C^{\bullet}(G,\CM)
\end{equation}

When $G=\Map(\Lambda,\Z_p)$ for some finite set $\Lambda$ and
$\gamma_i$ $(i\in \Lambda)$ denotes its element sending $j\in\Lambda$ to $1$ if $j=i$ and 
to $0$ otherwise, we show that the right derived functor of $\pi_{G,C*}$ can be computed by the 
Koszul complex with respect to $\gamma_i-1$ $(i\in \Lambda)$ similarly to 
the usual group cohomology, and study the functoriality of the description with respect to $C$ and $\Lambda$. 

To start with, we introduce a Koszul complex on a ringed site and discuss its
functoriality with respect to $\Lambda$ as above.

Let $(C,\CA)$ be a ringed site, and let $\Lambda$ be a finite set.
Let  $\Gamma_{\Lambda}^{\disc}$ be the group $\Map(\Lambda,\Z)$,
and let $\gamma_i$ $(i\in \Lambda)$ be the map $\Lambda\to \Z$
sending $j$ to $1$ if $j=i$ and to $0$ otherwise. 
Let $\CM$ be an $\CA$-module endowed with an $\CA$-linear
action of $\Gamma_{\Lambda}^{\disc}$. 
We define $K^{\bullet}_{\Lambda}(\CM)$ to be the Koszul complex of $\CM$ with
respect to the actions of $\gamma_i-1$ $(i\in \Lambda)$ on $\CM$
commuting with each other. We have $K^r_{\Lambda}(\CM)=\CM\otimes_{\Z}
\wedge^r\Z^{(\Lambda)}$ and $d^r(m\otimes e_{\bmI})=
\sum_{i\in \Lambda}(\gamma_i-1)(m)\otimes e_i\wedge
e_{\bmI}$ for $r\in \N$, $X\in \Ob C$, $m\in \CM(X)$, and $\bmI\in \Lambda^r$, 
where $e_i$ $(i\in \Lambda)$ denotes the standard basis 
of $\Z^{(\Lambda)}=\oplus_{i\in \Lambda}\Z$, and
$e_{\bmI}=e_{i_1}\wedge\cdots\wedge e_{i_r}$ for $\bmI=(i_1,\ldots, i_r)\in \Lambda^r$.
For a $\Gamma^{\disc}_{\Lambda}$-equivariant morphism  
$f\colon \CM\to \CM'$ of $\CA$-modules on $C$ with $\CA$-linear $\Gamma^{\disc}_{\Lambda}$-action,
we write $K^{\bullet}_{\Lambda}(f)$ for the morphism of complexes 
$K^{\bullet}_{\Lambda}(\CM)\to K^{\bullet}_{\Lambda}(\CM')$ of 
$\CA$-modules on $C$ induced by $f$ and the identity maps of 
$\wedge^r\Z^{(\Lambda)}$ $(r\in \N)$. 
Let $\CM_{\nu}$ $(\nu=1,2)$ be $\CA$-modules with $\CA$-linear
$\Gamma^{\disc}_{\Lambda}$-action, and let $\CM_1\otimes_{\CA}\CM_2$
be equipped with the diagonal $\CA$-linear action of $\Gamma^{\disc}_{\Lambda}$:
$\gamma(m_1\otimes m_2)=\gamma(m_1)\otimes\gamma(m_2)$
($X\in \Ob C$, $m_{\nu}\in \CM_{\nu}(X)$ $(\nu=1,2)$, $\gamma\in \Gamma^{\disc}_{\Lambda}$).
Then one can define a morphism of complexes of $\CA$-modules
\begin{equation}\label{eq:KoszulCpxProd}
K_{\Lambda}^{\bullet}(\CM_1)\otimes_{\CA}K_{\Lambda}^{\bullet}(\CM_2)
\longrightarrow K_{\Lambda}^{\bullet}(\CM_1\otimes_{\CA}\CM_2)
\end{equation}
by sending $(m_1\otimes e_{\bmI_1})\otimes(m_2\otimes e_{\bmI_2})$
to $(m_1\otimes\gamma_{\bmI_1}(m_2))\otimes e_{\bmI_1}\wedge e_{\bmI_2}$
for $X\in \Ob C$, $r_{\nu}\in \N$, $m_{\nu}\in \CM_{\nu}(X)$, and $\bmI_{\nu}\in \Lambda^{r_{\nu}}$
$(\nu=1,2)$. Here $\gamma_{\bmI}$ denotes
$\prod_{n=1}^r\gamma_{i_n}\in\Gamma^{\disc}_{\Lambda}$ for
$r\in \N$ and $\bmI=(i_1,\ldots, i_r)\in \Lambda^r$.
We write $z_1\wedge_{\Gamma^{\disc}_{\Lambda}}z_2$
for the image of $z_1\otimes z_2$ under \eqref{eq:KoszulCpxProd}. Then, for another $\CA$-module
$\CM_3$ with $\CA$-linear $\Gamma_{\Lambda}^{\disc}$-action,
we have $(z_1\wedge_{\Gamma^{\disc}_{\Lambda}}z_2)\wedge_{\Gamma^{\disc}_{\Lambda}}z_3
=z_1\wedge_{\Gamma^{\disc}_{\Lambda}}(z_2\wedge_{\Gamma^{\disc}_{\Lambda}}z_3)$.
The product morphism \eqref{eq:KoszulCpxProd} is obviously functorial in $\CM_1$ and $\CM_2$.

Let $\psi\colon \Lambda\to \Lambda'$ be a map of finite sets, and 
let $\Gamma^{\disc}_{\psi}$ denote the  homomorphism 
$\Gamma^{\disc}_{\Lambda'}\to \Gamma^{\disc}_{\Lambda}$ defined by 
the composition with $\psi$. We have $\Gamma^{\disc}_{\psi}(\gamma_{i'})
=\prod_{i\in \psi^{-1}(i')}\gamma_i$ for $i'\in \Lambda'$.
We assume that we are given a total 
order on $\Lambda$,  and let $\Lambda_{\psi, i}^{<}$ for $i\in \Lambda$ denote the 
subset of $\Lambda$ consisting of $j\in \Lambda$ 
satisfying  $\psi(j)=\psi(i)$ and $j<i$. 
We  define
$\gamma_{\psi,i}^{<}$ for $i\in \Lambda$  to be the product 
of $\gamma_j$ $(j\in \Lambda_{\psi,i}^{<})$, 
and define $\gamma_{\psi,\bmI}^{<}$ $(r\in \N, \bmI=(i_1,\ldots, i_r)\in \Lambda^r)$
to be $\prod_{\nu=1}^r\gamma_{\psi,i_{\nu}}^{<}$. 
For an $\CA$-module $\CM$ with $\CA$-linear action of $\Gamma^{\disc}_{\Lambda}$,
we define $\CA$-linear homomorphisms 
$K^r_{\psi}(\CM)\colon K^r_{\Lambda}(\CM)\to K^r_{\Lambda'}(\CM)$ $(r\in \Z)$
by $K^r_{\psi}(\CM)(m\otimes e_{\bmI})=\gamma_{\psi,\bmI}^{<}(m)\otimes e_{\psi^r(\bmI)}$ 
for $r\in \N$, $X\in \Ob C$, $m\in \CM(X)$, and $\bmI\in \Lambda^r$.
Here $\psi^r$ denotes the product $\Lambda^r\to \Lambda^{\prime r}$ of $\psi$,
and we define the action of $\Gamma^{\disc}_{\Lambda'}$ 
on $\CM$ in the codomain via $\Gamma_{\psi}^{\disc}$. 
We often abbreviate $\psi^r$ $(r\in \N)$ to $\psi$ in the following.
The homomorphism $K_{\psi}^r(\CM)$ is obviously functorial in $\CM$.
If $\psi$ is injective, the element $\gamma_{\psi,\bmI}^{<}\in \Gamma^{\disc}_{\Lambda}$ is 
the unit for every $r\in \N$ and $\bmI\in \Lambda^r$, which implies, in 
particular, that $K_{\psi}^r(\CM)$ does not depend on the choice of 
a total order of $\Lambda$.

\begin{lemma}\label{lem:KoszulFunct}
The homomorphisms $K^r_{\psi}(\CM)$ $(r\in \N)$ define a morphism of complexes\linebreak
$K^{\bullet}_{\psi}(\CM)\colon K^{\bullet}_{\Lambda}(\CM)\to 
K^{\bullet}_{\Lambda'}(\CM)$.
\end{lemma}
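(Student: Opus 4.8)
The plan is to verify directly that $K^\bullet_\psi(\CM)$ commutes with the differentials, i.e., that
\[
d^r_{\Lambda'}\circ K^r_\psi(\CM)=K^{r+1}_\psi(\CM)\circ d^r_\Lambda
\]
as $\CA$-linear maps $K^r_\Lambda(\CM)\to K^{r+1}_{\Lambda'}(\CM)$, by evaluating both sides on a generator $m\otimes e_{\bmI}$ with $X\in\Ob C$, $m\in\CM(X)$, and $\bmI\in\Lambda^r$. Since everything in sight is defined sectionwise over objects of $C$, this reduces to a purely algebraic identity inside the $\Gamma^{\disc}_{\Lambda'}$-module $\CM(X)$, and it suffices to treat $\CM$ as an $\CA(X)$-module with commuting operators $\gamma_i$ $(i\in\Lambda)$; the sheaf structure plays no role.

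First I would expand the left-hand side: $K^r_\psi(\CM)(m\otimes e_{\bmI})=\gamma^{<}_{\psi,\bmI}(m)\otimes e_{\psi(\bmI)}$, and then apply $d^r_{\Lambda'}$, which produces $\sum_{i'\in\Lambda'}(\gamma_{i'}-1)\gamma^{<}_{\psi,\bmI}(m)\otimes e_{i'}\wedge e_{\psi(\bmI)}$, where $\gamma_{i'}$ acts via $\Gamma^{\disc}_\psi$, so $\gamma_{i'}=\prod_{j\in\psi^{-1}(i')}\gamma_j$. On the right-hand side, $d^r_\Lambda(m\otimes e_{\bmI})=\sum_{j\in\Lambda}(\gamma_j-1)(m)\otimes e_j\wedge e_{\bmI}$, and then $K^{r+1}_\psi(\CM)$ sends each term $ (\gamma_j-1)(m)\otimes e_j\wedge e_{\bmI}$ to $\gamma^{<}_{\psi,(j,\bmI)}\big((\gamma_j-1)(m)\big)\otimes e_{\psi(j)}\wedge e_{\psi(\bmI)}$. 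The key bookkeeping is the relation $\gamma^{<}_{\psi,(j,\bmI)}=\gamma^{<}_{\psi,j}\cdot\gamma^{<}_{\psi,\bmI}$ together with the fact that $\gamma^{<}_{\psi,j}$ commutes with $\gamma_j$ and with $\gamma^{<}_{\psi,\bmI}$ (all the $\gamma$'s commute), so the $j$-term becomes $\gamma^{<}_{\psi,\bmI}\gamma^{<}_{\psi,j}(\gamma_j-1)(m)\otimes e_{\psi(j)}\wedge e_{\psi(\bmI)}$, and $\gamma^{<}_{\psi,j}\gamma_j$ is exactly the product of $\gamma_k$ over $k\in\Lambda^{<}_{\psi,j}\cup\{j\}$, i.e., over $\{k\le j:\psi(k)=\psi(j)\}$.

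Then I would group the sum over $j\in\Lambda$ according to the value $i'=\psi(j)\in\Lambda'$. For a fixed $i'$, letting $j$ range over $\psi^{-1}(i')=\{j_1<\cdots<j_m\}$, the corresponding terms contribute $e_{i'}\wedge e_{\psi(\bmI)}$ tensored with $\gamma^{<}_{\psi,\bmI}\big(\sum_{\ell=1}^m (\gamma_{j_1}\cdots\gamma_{j_\ell}-\gamma_{j_1}\cdots\gamma_{j_{\ell-1}})(m)\big)$, which telescopes to $\gamma^{<}_{\psi,\bmI}\big((\gamma_{j_1}\cdots\gamma_{j_m}-1)(m)\big)=\gamma^{<}_{\psi,\bmI}\big((\gamma_{i'}-1)(m)\big)$, matching the corresponding term on the left-hand side after commuting $\gamma_{i'}-1$ past $\gamma^{<}_{\psi,\bmI}$. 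I also need the sign bookkeeping for the wedge factors to match: the differential of the Koszul complex prepends $e_j$ (resp.\ $e_{i'}$), and since $\psi^r$ is applied in the same positional way on both sides, $e_{\psi(j)}\wedge e_{\psi(\bmI)}=e_{i'}\wedge e_{\psi(\bmI)}$ carries the same sign on both sides; there is no reordering, so signs cause no trouble. Carrying out this telescoping identity, together with checking that all $\gamma$'s involved genuinely commute (which follows from $\Gamma^{\disc}_\Lambda$ being abelian and the action being $\Gamma^{\disc}_\Lambda$-linear), establishes the equality of the two maps.

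The only mildly delicate point — and the one I expect to be the main obstacle — is the careful combinatorial identification $\gamma^{<}_{\psi,(j,\bmI)}\gamma_j=\prod\{\gamma_k:k\le j,\ \psi(k)=\psi(j)\}$ and the regrouping-and-telescoping step, because one must track precisely which $\gamma$'s act and in what order on each summand; since the group is abelian this is bookkeeping rather than a real difficulty, but it is the heart of why the particular twisting cocycle $\gamma^{<}_{\psi,\bmI}$ is the right choice. The remark that $K^r_\psi(\CM)$ is independent of the total order when $\psi$ is injective, already noted before the lemma, is consistent with this: when $\psi$ is injective every set $\{k\le j:\psi(k)=\psi(j)\}$ is the singleton $\{j\}$, so the telescoping sum has a single term and $\gamma^{<}_{\psi,\bmI}$ is the identity, recovering the trivial functoriality $K^\bullet_{\Lambda}(\CM)\to K^\bullet_{\Lambda'}(\CM)$, $m\otimes e_{\bmI}\mapsto m\otimes e_{\psi(\bmI)}$.
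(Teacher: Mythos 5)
Your proposal is correct and follows essentially the same route as the paper: evaluate both compositions on a generator $m\otimes e_{\bmI}$, group the terms of $K^{r+1}_{\psi}(\CM)\circ d^r$ over the fibers $\psi^{-1}(i')$, and use the telescoping identity $\sum_{\nu=1}^{s}(\gamma_{i_1}\cdots\gamma_{i_{\nu-1}})(\gamma_{i_{\nu}}-1)=\gamma_{i_1}\cdots\gamma_{i_s}-1$ in $\Z[\Gamma^{\disc}_{\Lambda}]$, which is exactly the paper's computation. The bookkeeping step $\gamma^{<}_{\psi,(j,\bmI)}=\gamma^{<}_{\psi,j}\gamma^{<}_{\psi,\bmI}$ that you single out is likewise the identity the paper uses implicitly when writing the first displayed sum, so there is no gap.
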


\begin{proof}
For $r\in \N$, $\bmI\in \Lambda^r$, $X\in \Ob C$, and $m\in \CM(X)$, we have
\begin{align*}
K^{r+1}_{\psi}(\CM)\circ d^r(m\otimes e_{\bmI})&=\sum_{i\in\Lambda}
\gamma_{\psi,i}^{<}\gamma_{\psi,\bmI}^{<}(\gamma_i-1)(m)\otimes e_{\psi(i)}\wedge e_{\psi^r(\bmI)},
\\
d^r\circ K^r_{\psi}(\CM)(m\otimes e_{\bmI})
&=\sum_{i'\in \Lambda'}(\Gamma^{\disc}_{\psi}(\gamma_{i'})-1)
\gamma_{\psi,\bmI}^{<}(m)\otimes e_{i'}\wedge e_{\psi^r(\bmI)}.
\end{align*}
Hence it suffices to prove that the sum
$\sum_{i\in \psi^{-1}(i')}\gamma_{\psi,i}^{<}(\gamma_i-1)$ coincides with $\Gamma^{\disc}_{\psi}(\gamma_{i'})-1
=\prod_{i\in \psi^{-1}(i')}\gamma_i-1$
in $\Z[\Gamma^{\disc}_{\Lambda}]$ for $i'\in \Lambda'$.
By setting $\psi^{-1}(i')=\{i_1<\cdots<i_s\}$, this is simply verified as 
$\sum_{\nu=1}^{s}
(\gamma_{i_1}\cdots\gamma_{i_{\nu-1}})(\gamma_{i_{\nu}}-1)
=\gamma_{i_1}\cdots\gamma_{i_s}-1$.
\end{proof}

\begin{lemma}\label{lem:KoszFunctComp}
Let $\psi'\colon \Lambda'\to \Lambda''$ be another map of finite sets,
and assume that we are given a total order on $\Lambda'$ such that
$\psi\colon \Lambda\to \Lambda'$ preserves orders. 
Then we have $K_{\psi'}^{\bullet}(\CM)\circ K_{\psi}^{\bullet}(\CM)
=K^{\bullet}_{\psi'\circ\psi}(\CM)$.
\end{lemma}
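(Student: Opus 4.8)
The plan is to check the equality $K^{\bullet}_{\psi'}(\CM)\circ K^{\bullet}_{\psi}(\CM)=K^{\bullet}_{\psi'\circ\psi}(\CM)$ degreewise, on the generators $m\otimes e_{\bmI}$ with $r\in\N$, $X\in\Ob C$, $m\in\CM(X)$, and $\bmI\in\Lambda^r$. First I would unwind the definitions: $K^r_{\psi}(\CM)(m\otimes e_{\bmI})=\gamma^{<}_{\psi,\bmI}(m)\otimes e_{\psi^r(\bmI)}$, and then $K^r_{\psi'}(\CM)$ sends this to $\gamma^{<}_{\psi',\psi^r(\bmI)}(\gamma^{<}_{\psi,\bmI}(m))\otimes e_{\psi'^{r}(\psi^r(\bmI))}$. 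The exterior part is immediate, since $\psi'^r\circ\psi^r=(\psi'\circ\psi)^r$, so $e_{\psi'^r(\psi^r(\bmI))}=e_{(\psi'\circ\psi)^r(\bmI)}$, which matches the target of $K^r_{\psi'\circ\psi}(\CM)$. At this point one should also note that the statement is well posed: the $\Gamma^{\disc}_{\Lambda''}$-action on $\CM$ in the common codomain is via $\Gamma^{\disc}_{\psi}\circ\Gamma^{\disc}_{\psi'}$, which equals $\Gamma^{\disc}_{\psi'\circ\psi}$ because composing with $\psi$ and then with $\psi'$ is composing with $\psi'\circ\psi$.

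The real content is then an identity of group elements acting on $m$, namely $\Gamma^{\disc}_{\psi}(\gamma^{<}_{\psi',\psi^r(\bmI)})\cdot\gamma^{<}_{\psi,\bmI}=\gamma^{<}_{\psi'\circ\psi,\bmI}$ in the abelian group $\Gamma^{\disc}_{\Lambda}$, where $\gamma^{<}_{\psi',\psi^r(\bmI)}$ acts through $\Gamma^{\disc}_{\psi}$ because that is the $\Gamma^{\disc}_{\Lambda'}$-action on the codomain of $K^{\bullet}_{\psi}(\CM)$. Since each $\gamma^{<}_{?,\bmI}$ is by definition the product of the corresponding $\gamma^{<}_{?,i_\nu}$ over the entries of $\bmI$, and $\Gamma^{\disc}_{\Lambda}$ is commutative, I would reduce at once to the single-index identity: for every $i\in\Lambda$,
\[
\Gamma^{\disc}_{\psi}\bigl(\gamma^{<}_{\psi',\psi(i)}\bigr)\cdot\gamma^{<}_{\psi,i}=\gamma^{<}_{\psi'\circ\psi,i}.
\]

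To prove this I would expand both sides as products of the $\gamma_j$ ($j\in\Lambda$), using the formula $\Gamma^{\disc}_{\psi}(\gamma_k)=\prod_{j\in\psi^{-1}(k)}\gamma_j$ recorded before Lemma \ref{lem:KoszulFunct}: the left-hand side is the product of $\gamma_j$ over $j$ with $\psi(j)<\psi(i)$ and $\psi'(\psi(j))=\psi'(\psi(i))$, times the product over $j$ with $\psi(j)=\psi(i)$ and $j<i$, while the right-hand side is the product over $j$ with $j<i$ and $\psi'(\psi(j))=\psi'(\psi(i))$. Thus the claim becomes the set identity
\[
\{\,j<i,\ \psi'(\psi(j))=\psi'(\psi(i))\,\}=\{\,\psi(j)<\psi(i),\ \psi'(\psi(j))=\psi'(\psi(i))\,\}\ \sqcup\ \{\,\psi(j)=\psi(i),\ j<i\,\}.
\]
This is exactly where the hypothesis that $\psi$ preserves orders is used, and it is the only genuine point: if $j<i$ then $\psi(j)\le\psi(i)$, so such a $j$ lies in precisely one of the two right-hand sets according to whether $\psi(j)<\psi(i)$ or $\psi(j)=\psi(i)$; conversely $\psi(j)<\psi(i)$ forces $j<i$ (otherwise $j\ge i$ would give $\psi(j)\ge\psi(i)$), so both right-hand sets sit inside the left-hand one, while disjointness is clear. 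I expect no obstacle beyond this bookkeeping; reassembling the products over $\nu$ and re-tensoring with the already matching exterior part concludes the termwise identification. Finally, since $K^{\bullet}_{\psi'}(\CM)$ and $K^{\bullet}_{\psi}(\CM)$ are morphisms of complexes by Lemma \ref{lem:KoszulFunct}, their composite is one as well, so this degreewise computation proves the lemma.
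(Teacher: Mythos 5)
Your proof is correct and follows essentially the same route as the paper's: reduce termwise to the single-index identity $\Gamma_{\psi}^{\disc}(\gamma^{<}_{\psi',\psi(i)})\gamma_{\psi,i}^{<}=\gamma^{<}_{\psi'\circ\psi,i}$ in $\Gamma^{\disc}_{\Lambda}$, then verify it by expanding each $\gamma^{<}$ as a product of the $\gamma_j$ and using order-preservation of $\psi$ to split the index set $\{j<i,\ \psi'\psi(j)=\psi'\psi(i)\}$ according to whether $\psi(j)<\psi(i)$ or $\psi(j)=\psi(i)$. The only cosmetic difference is that you spell out the underlying disjoint-union-of-sets identity that the paper leaves implicit.
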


\begin{proof}
For $r\in \N$, $\bmI\in \Lambda^r$, $X\in \Ob C$, and $m\in \CM(X)$, we have
\begin{align*}
K_{\psi'}^{\bullet}(\CM)\circ K_{\psi}^{\bullet}(\CM)
(m\otimes e_{\bmI})&=\Gamma^{\disc}_{\psi}(\gamma_{\psi',\psi(\bmI)}^{<})
\gamma_{\psi,\bmI}^{<}(m)\otimes e_{\psi'(\psi(\bmI))},\\
K^{\bullet}_{\psi'\circ\psi}(\CM)(m\otimes e_{\bmI})
&=\gamma_{\psi'\circ\psi,\bmI}^{<}(m)\otimes e_{\psi'\circ\psi(\bmI)}.
\end{align*}
Hence we are reduced to showing
$\Gamma_{\psi}^{\disc}(\gamma^{<}_{\psi',\psi(i)})
\gamma_{\psi,i}^{<}=\gamma^{<}_{\psi'\circ\psi,i}$ for $i\in\Lambda$.
By using $\Gamma_{\psi}^{\disc}(\gamma_{i'})=
\prod_{j\in\psi^{-1}(i')}\gamma_j$ for $i'\in \Lambda'$,
we see that $\Gamma_{\psi}^{\disc}(\gamma^{<}_{\psi',\psi(i)})$
(resp.~$\gamma_{\psi,i}^{<}$) is the product of 
$\gamma_j$ over $j\in (\psi'\circ\psi)^{-1}(\psi'\circ\psi(i))$ satisfying
$j<i$ and $\psi(j)<\psi(i)$ (resp.~$\psi(j)=\psi(i)$). 
This implies the desired equality.
\end{proof}

Let $u\colon (C',\CA')\to (C,\CA)$ be a morphism of ringed sites, 
let $\CM$ be an $\CA$-module on $C$
with $\CA$-linear action of $\Gamma^{\disc}_{\Lambda}$, let
$\CM'$ be an $\CA'$-module on $C'$
with $\CA'$-linear action of $\Gamma^{\disc}_{\Lambda'}$,
and let $f\colon \CM\to u_*\CM'$ be an $\CA$-linear
homomorphism compatible with the actions of $\Gamma^{\disc}_{\Lambda}$
and  $\Gamma^{\disc}_{\Lambda'}$ via $\Gamma^{\disc}_{\psi}$. 
We define $K^{\bullet}_{\psi}(f)$ to be the composition 
\begin{equation}\label{eq:KoszulFunctMorph}
K_{\Lambda}^{\bullet}(\CM)
\xrightarrow{K^{\bullet}_{\psi}(\CM)}K_{\Lambda'}^{\bullet}(\CM)
\xrightarrow{K^{\bullet}_{\Lambda'}(f)}
K_{\Lambda'}^{\bullet}(u_*\CM')=u_*K_{\Lambda'}^{\bullet}(\CM').
\end{equation}
Let $g\colon u^*\CM\to \CM'$ be the morphism corresponding to $f$
by adjunction. Then the morphism $K_{\psi}^{\bullet}(g)\colon 
u^*K_{\Lambda}^{\bullet}(\CM)=K_{\Lambda}^{\bullet}(u^*\CM)
\to K_{\Lambda'}^{\bullet}(\CM')$ corresponds to 
$K_{\psi}^{\bullet}(f)$ by adjunction.

\begin{remark}\label{rmk:KoszulCpxProdFunct}
(1) 
Let $u\colon (\CC',\CA')\to (\CC,\CA)$ be a morphism of ringed sites,
let $\psi\colon \Lambda\to \Lambda'$ be an injective map of finite sets,
and suppose that $\Lambda$ is equipped with a total order.
 Let $\CM_{\nu}$ (resp.~$\CM'_{\nu}$) $(\nu=1,2)$ be $\CA$-modules
(resp.~$\CA'$-modules) with $\CA$-linear $\Gamma_{\Lambda}^{\disc}$-action
(resp.~$\CA'$-linear $\Gamma_{\Lambda'}^{\disc}$-action), and let $f_{\nu}$
$(\nu=1,2)$ be $\CA$-linear maps $\CM_{\nu}\to u_*\CM'_{\nu}$
compatible with the actions of $\Gamma_{\Lambda}^{\disc}$
and $\Gamma_{\Lambda'}^{\disc}$ via $\Gamma_{\psi}^{\disc}$.
Let $f$ be the $\CA$-linear map $\CM_1\otimes_{\CA}\CM_2
\xrightarrow{f_1\otimes f_2}
(u_*\CM_1')\otimes_{\CA}(u_*\CM_2')\to u_*(\CM_1'\otimes_{\CA'}\CM_2')$,
which is compatible with the diagonal actions of $\Gamma_{\Lambda}^{\disc}$
and $\Gamma_{\Lambda'}^{\disc}$ via $\Gamma_{\psi}^{\disc}$. 
Then the following diagram is commutative.
\begin{equation}\label{eq:KoszulProductFunct}
\xymatrix{
K_{\Lambda}^{\bullet}(\CM_1)\otimes_{\CA}K_{\Lambda}^{\bullet}(\CM_2)
\ar[rr]^{\eqref{eq:KoszulCpxProd}}
\ar[d]_{K^{\bullet}_{\psi}(f_1)\otimes K^{\bullet}_{\psi}(f_2)}&&
K_{\Lambda}^{\bullet}(\CM_1\otimes_{\CA}\CM_2)
\ar[d]^{K^{\bullet}_{\psi}(f)}\\
(u_*K_{\Lambda'}^{\bullet}(\CM'_1))\otimes_{\CA}
(u_*K_{\Lambda'}^{\bullet}(\CM'_2))\ar[r]&
u_*(K_{\Lambda'}^{\bullet}(\CM'_1)\otimes_{\CA'}
K_{\Lambda'}^{\bullet}(\CM'_2))
\ar[r]^(.55){\eqref{eq:KoszulCpxProd}}&
u_*K_{\Lambda'}^{\bullet}(\CM_1'\otimes_{\CA'}\CM_2')
}
\end{equation}
The proof of the claim is reduced to the two cases $u=\id_{(C,\CA)}$
and $(\psi,f_{\nu})=(\id_{\Lambda'},\id_{u_*\CM_{\nu}'})$.
The first case immediately follows from the definition of 
$K_{\psi}^{\bullet}(\CM_{\nu})$ $(\nu=1,2)$ and the
functoriality of \eqref{eq:KoszulCpxProd}. The second case follows from the
$\Gamma_{\Lambda'}^{\disc}$-equivariance of the
$\CA$-linear map $(u_*\CM_1')\otimes_{\CA}(u_*\CM_2')
\to u_*(\CM_1'\otimes_{\CA'}\CM_2')$ with respect to the
$\Gamma_{\Lambda'}^{\disc}$-actions via the second factors.

(2) Suppose that we are given the following commutative diagrams of ringed
sites and finite ordered sets such that the maps $\chi\colon \Lambda_1\sqcup \Lambda_2\to \Lambda$
and $\tchi\colon \tLambda_1\sqcup \tLambda_2\to \tLambda$ induced by 
$\chi_{\nu}$ and $\tchi_{\nu}$ are injective.
\begin{equation}
\xymatrix@R=15pt{
(\CC,\CA)&
\ar[l]_{p_{\nu}} (\CC',\CA')\\
(\tCC,\tCA)\ar[u]_{u}&
\ar[l]_{\tp_{\nu}} (\tCC',\tCA')\ar[u]_{u'}
}\qquad\qquad
\xymatrix@R=15pt{
\Lambda_{\nu}\ar[d]_{\psi_{\nu}}
\ar[r]^{\chi_{\nu}}&
\Lambda\ar[d]^{\psi}\\
\tLambda_{\nu}\ar[r]^{\tchi_{\nu}}&
\tLambda
}\qquad (\nu\in \{1,2\})
\end{equation}
Let $\CM_{\nu}$ (resp.~$\CM_{\nu}'$) $(\nu=1,2)$ be 
$\CA$-modules with $\Gamma_{\Lambda_{\nu}}^{\disc}$-action
(resp.~$\CA'$-modules with $\Gamma_{\Lambda}^{\disc}$-action),
and let $\tCM_{\nu}$ (resp.~$\tCM_{\nu}'$) $(\nu=1,2)$ be 
$\tCA$-modules with $\Gamma_{\tLambda_{\nu}}^{\disc}$-action
(resp.~$\tCA'$-modules with $\Gamma_{\tLambda}^{\disc}$-action).
For $\nu\in \{1,2\}$, let $g_{\nu}\colon p_{\nu}^*\CM_{\nu}\to \CM_{\nu}'$
(resp.~$\tg_{\nu}\colon \tp_{\nu}^*\tCM_{\nu}\to \tCM'_{\nu}$) be 
a $\Gamma^{\disc}_{\chi_{\nu}}$-equivariant $\CA'$-linear
(resp.~$\Gamma^{\disc}_{\tchi_{\nu}}$-equivariant $\tCA'$-linear)
map, and let $h_{\nu}\colon u^*\CM_{\nu}\to \tCM_{\nu}$
(resp.~$h'_{\nu}\colon u^{\prime*}\CM'_{\nu}\to \tCM'_{\nu}$)
be a  $\Gamma_{\psi_{\nu}}^{\disc}$-equivariant $\tCA$-linear
(resp.~$\Gamma_{\psi}^{\disc}$-equivariant $\tCA'$-linear)
map such that $h'_{\nu}\circ u^{\prime*}g_{\nu}=\tg_{\nu}\circ \tp_{\nu}^*h_{\nu}$. 
Then the following diagram is commutative
\begin{equation}
\xymatrix@R=15pt{
u^{\prime*}(p_1^*(K_{\Lambda_1}^{\bullet}(\CM_1))\otimes_{\CA'}
p_2^*(K_{\Lambda_2}^{\bullet}(\CM_2)))\ar[r]
\ar[d]_{\tp_1^*K_{\psi_1}^{\bullet}(h_1)\otimes \tp_2^*K_{\psi_2}^{\bullet}(h_2)}
&
u^{\prime*}K_{\Lambda}^{\bullet}(\CM_1'\otimes_{\CA'}\CM_2')
\ar[d]^{K_{\psi}^{\bullet}(h_1'\otimes h_2')}
\\
\tp_1^*K_{\tLambda_1}^{\bullet}(\tCM_1)\otimes_{\tCA'}
\tp_2^*K_{\tLambda_2}^{\bullet}(\tCM_2)
\ar[r]&
K^{\bullet}_{\tLambda}(\tCM_1'\otimes_{\tCA'}\tCM_2'),
}
\end{equation}
where the upper horizontal morphism is defined by the composition of  the tensor 
product of $K_{\chi_{\nu}}^{\bullet}(g_{\nu})\colon 
p_{\nu}^*K^{\bullet}_{\Lambda_{\nu}}(\CM_{\nu})\to K^{\bullet}_{\Lambda}(\CM'_{\nu})$
$(\nu=1,2)$ with \eqref{eq:KoszulCpxProd} for $\CM'_{\nu}$ $(\nu=1,2)$, and 
the lower one is defined similarly using $\tg_{\nu}$. 
By replacing $\CM_{\nu}$, $\tCM_{\nu}$, $\CM'_{\nu}$, $g_{\nu}$, and $h_{\nu}$
by their pullbacks on $(\tCC',\tCA')$, the proof is reduced to the case where
all of the morphisms  of ringed topos above are the identity morphisms.
We regard $\Lambda_1\sqcup \Lambda_2$ (resp.~$\tLambda_1\sqcup\tLambda_2$)
as a subset of $\Lambda$ (resp.~$\tLambda)$ via the injective map 
$\chi$ (resp.~$\tchi$). For $\nu\in\{1,2\}$ and $i\in \Lambda\backslash \Lambda_{\nu}$
the action of $\gamma_i\in \Gamma_{\Lambda}^{\disc}$ on $\CM_{\nu}$
via $\Gamma_{\chi_{\nu}}^{\disc}$ is trivial. Hence,  for
$m_{\nu}\in \CM_{\nu}$, $r_{\nu}\in \N$, and $\bmI_{\nu}\in \Lambda_{\nu}^{r_{\nu}}$
$(\nu=1,2)$, the image of $m_1\otimes e_{\bmI_1}\otimes m_2\otimes e_{\bmI_2}$
under the upper horizontal morphism is 
$x=g_1(m_1)\otimes g_2(m_2)\otimes e_{\bmI_1}\wedge e_{\bmI_2}$.
The same claim holds for the lower horizontal morphism. Since
$(\Lambda_{\nu})_{\psi_{\nu},i}^{<}=\Lambda_{\psi,i}^{<}\cap \Lambda_{\nu}$
for $\nu\in \{1,2\}$ and $i\in \Lambda_{\nu}$, the above remark
on the action of $\Gamma_{\Lambda}^{\disc}$ on $\CM_{\nu}$ implies
that the image of $x$ above under the right vertical morphism 
$K_{\psi}^{\bullet}(h_1'\otimes h_2')$
is 
$h_1'g_1(\gamma_{\psi_1,\bmI_1}^{<}(m_1))\otimes
h_2'g_2(\gamma_{\psi_2,\bmI_2}^{<}(m_2))
\otimes e_{\psi_1(\bmI_1)}\wedge e_{\psi_2(\bmI_2)}$.
This completes the proof because the image of 
$m_{\nu}\otimes e_{\bmI_{\nu}}$ under
$K^{\bullet}_{\psi_{\nu}}(h_{\nu})$ is 
$h_{\nu}(\gamma_{\psi_{\nu},\bmI_{\nu}}^{<}(m_{\nu}))\otimes
e_{\psi_{\nu}(\bmI_{\nu})}$.
\end{remark}

\begin{lemma}\label{lem:GammaKoszCompos}
 Under the notation before Remark \ref{rmk:KoszulCpxProdFunct}, 
 assume that we are given 
a morphism $u'\colon (C'',\CA'')\to (C',\CA')$ of ringed sites,
a map of finite sets $\psi'\colon \Lambda'\to \Lambda''$,
and a total order on $\Lambda'$ such that $\psi\colon \Lambda\to \Lambda'$
preserves orders. Let $\CM''$ be an $\CA''$-module on $C''$ with
$\CA''$-linear action of $\Gamma_{\Lambda''}^{\disc}$,
and let $f'\colon \CM'\to u'_*\CM''$ be an $\CA'$-linear homomorphism
compatible with the actions of $\Gamma_{\Lambda'}^{\disc}$
and $\Gamma_{\Lambda''}^{\disc}$ via $\Gamma_{\psi'}^{\disc}$. 
Then we have $u_*K^{\bullet}_{\psi'}(f')\circ K_{\psi}^{\bullet}(f)=
K^{\bullet}_{\psi'\circ\psi}(u_*f'\circ f)$.
\end{lemma}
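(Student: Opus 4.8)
The plan is to reduce the assertion to an identity of morphisms of complexes of $\CA$-modules on $C$ and verify it degreewise by a direct computation with the explicit formulas for $K^{\bullet}_{\psi}$, exactly as in the proofs of Lemma \ref{lem:KoszulFunct} and Lemma \ref{lem:KoszFunctComp}. First I would unwind the definition \eqref{eq:KoszulFunctMorph}: by definition $K^{\bullet}_{\psi}(f)$ is the composition $K^{\bullet}_{\Lambda}(\CM)\xrightarrow{K^{\bullet}_{\psi}(\CM)}K^{\bullet}_{\Lambda'}(\CM)\xrightarrow{K^{\bullet}_{\Lambda'}(f)}u_*K^{\bullet}_{\Lambda'}(\CM')$, and similarly $K^{\bullet}_{\psi'}(f')=K^{\bullet}_{\Lambda''}(f')\circ K^{\bullet}_{\psi'}(\CM')$ after applying $u'_*$. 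Applying $u_*$ to the latter and composing, the left-hand side $u_*K^{\bullet}_{\psi'}(f')\circ K^{\bullet}_{\psi}(f)$ becomes
\[
u_*u'_*K^{\bullet}_{\Lambda''}(f')\circ u_*K^{\bullet}_{\psi'}(\CM')\circ K^{\bullet}_{\Lambda'}(f)\circ K^{\bullet}_{\psi}(\CM).
\]
The key observation is that $K^{\bullet}_{\psi'}(\bcdot)$ is natural in the module argument with respect to $\CA'$-linear maps compatible with the $\Gamma^{\disc}$-actions via $\Gamma^{\disc}_{\psi'}$; concretely $K^{\bullet}_{\psi'}(u'_*\CM'')\circ K^{\bullet}_{\Lambda'}(f)=u'_*K^{\bullet}_{\Lambda''}(f')\circ K^{\bullet}_{\psi'}(\CM')$ as maps $K^{\bullet}_{\Lambda'}(\CM')\to u'_*K^{\bullet}_{\Lambda''}(\CM'')$ on $C'$. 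This follows because $K^r_{\psi'}(\bcdot)(m\otimes e_{\bmI})=\gamma^{<}_{\psi',\bmI}(m)\otimes e_{\psi'^r(\bmI)}$ depends on $m$ only through the $\CA'$-module structure and the $\Gamma^{\disc}_{\Lambda'}$-action, both of which $f$ respects (the $\Gamma^{\disc}_{\Lambda'}$-action on $\CM'$ enters $\gamma^{<}_{\psi',\bmI}$ directly, and $f$ intertwines it with the $\Gamma^{\disc}_{\Lambda''}$-action on $\CM''$ via $\Gamma^{\disc}_{\psi'}$). Using this naturality to commute $K^{\bullet}_{\psi'}(\CM')$ past $K^{\bullet}_{\Lambda'}(f)$, and the functoriality of $K^{\bullet}_{\Lambda''}(\bcdot)$ in its morphism argument to combine $u'_*K^{\bullet}_{\Lambda''}(f')$ with $K^{\bullet}_{\Lambda''}(u_*f'\circ\cdots)$, the left-hand side collapses to $u_*u'_*K^{\bullet}_{\Lambda''}(u_*f'\circ f)$ precomposed with $K^{\bullet}_{\psi'}(\CM')\circ K^{\bullet}_{\psi}(\CM)$ (note $u_*K^{\bullet}_{\psi'}(\CM')\circ K^{\bullet}_{\psi}(\CM)$ makes sense since all these live over $C$ after $u_*$; I will keep track of which site each morphism lives on).

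The remaining ingredient is the composition law for the purely combinatorial morphisms: $u_*K^{\bullet}_{\psi'}(\CM')\circ K^{\bullet}_{\psi}(\CM)=K^{\bullet}_{\psi'\circ\psi}(\CM)$ after the appropriate $\CA$-module and $\Gamma^{\disc}$-action identifications. This is precisely the content of Lemma \ref{lem:KoszFunctComp} (applied with the total orders chosen so that $\psi$ preserves orders, which is part of the hypothesis here), once one notes that $K^{\bullet}_{\psi}(\CM)$ intertwines the $\Gamma^{\disc}_{\Lambda'}$-action-via-$\Gamma^{\disc}_{\psi}$ with the $\Gamma^{\disc}_{\Lambda''}$-action-via-$\Gamma^{\disc}_{\psi'\circ\psi}$. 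Combining the two reductions gives $u_*K^{\bullet}_{\psi'}(f')\circ K^{\bullet}_{\psi}(f)=K^{\bullet}_{\Lambda''}(u_*f'\circ f)\circ K^{\bullet}_{\psi'\circ\psi}(\CM)=K^{\bullet}_{\psi'\circ\psi}(u_*f'\circ f)$, as desired.

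I expect the only real bookkeeping obstacle to be keeping straight on which site ($C$, $C'$, or $C''$) each morphism of complexes is defined and applying the correct direct image functor at each stage — in particular making sure that the naturality of $K^{\bullet}_{\psi'}(\bcdot)$ is invoked on $C'$ before pushing forward by $u_*$, and that the hypotheses of Lemma \ref{lem:KoszFunctComp} (order-preservation of $\psi$) are met. A clean way to organize this is to first prove the two auxiliary statements as displayed identities on the appropriate sites — the naturality square for $K^{\bullet}_{\psi'}(\bcdot)$ with respect to $\Gamma^{\disc}_{\psi'}$-equivariant maps, and the composition identity $K^{\bullet}_{\psi'}\circ K^{\bullet}_{\psi}=K^{\bullet}_{\psi'\circ\psi}$ (which is Lemma \ref{lem:KoszFunctComp}) — and then chain them together with the functoriality of $K^{\bullet}_{\Lambda''}(\bcdot)$. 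No new computation beyond what is already in Lemmas \ref{lem:KoszulFunct} and \ref{lem:KoszFunctComp} should be needed; the proof is essentially formal once the auxiliary naturality is recorded.
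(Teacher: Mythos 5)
Your overall plan — unwind the definitions, commute a $K^{\bullet}_{\psi'}$ past a $K^{\bullet}_{\Lambda'}(\bcdot)$ by naturality in the module argument, then invoke Lemma \ref{lem:KoszFunctComp} for the purely combinatorial composition — is exactly the paper's one-line argument. One correction, though, since you flagged the site bookkeeping as the risk: the naturality square you display is mis-sited. The one you actually need lives on $C$ and is applied to $f\colon\CM\to u_*\CM'$, namely
\begin{equation*}
K^{\bullet}_{\psi'}(u_*\CM')\circ K^{\bullet}_{\Lambda'}(f)=K^{\bullet}_{\Lambda''}(f)\circ K^{\bullet}_{\psi'}(\CM),
\end{equation*}
where $\CM$ on the right is equipped with the $\Gamma^{\disc}_{\Lambda'}$-action via $\Gamma^{\disc}_{\psi}$ and one has identified $u_*K^{\bullet}_{\psi'}(\CM')$ with $K^{\bullet}_{\psi'}(u_*\CM')$; the square you wrote (on $C'$, involving $f'$ and $\CM'$ — and with an $f$ that should presumably be $f'$) is just a restatement of the definition of $K^{\bullet}_{\psi'}(f')$ and would not let you commute $K^{\bullet}_{\psi'}(\bcdot)$ past $K^{\bullet}_{\Lambda'}(f)$. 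With this corrected the chain of equalities runs exactly as you describe, and the claim reduces to Lemma \ref{lem:KoszFunctComp} together with the functoriality of $K^{\bullet}_{\Lambda''}(\bcdot)$.
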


\begin{proof}
By the functoriality of $K_{\psi}^{\bullet}(\CM)$ and 
$K_{\psi'}^{\bullet}(\CM')$ with respect to $\CM$ and $\CM'$, 
the claim is immediately reduced to 
Lemma \ref{lem:KoszFunctComp}.
\end{proof}

We are ready to study the right derived functor of $\pi_{G,C*}$
when $G=\Map(\Lambda,\Z_p)$ in terms of the Koszul complex.
Let $(C,\CA)$ be a ringed site whose topology is defined by a pretopology 
$\Cov_C(X)$ $(X\in\Ob C)$ consisting of finite families of morphisms,
let $\CA$ be a sheaf of rings on $C$, and let
$\Lambda$ be a finite set.
Let $\Gamma_{\Lambda}$ denote
the abelian profinite group $\Map(\Lambda,\Z_p)$. For $i\in\Lambda$,
let $\gamma_i$ be the map $\Lambda\to \Z_p$ sending $j$ to $1$
if $j=i$ and to $0$ otherwise. To simplify the notation, we write
$\iota_{\Lambda,C}$ and $\pi_{\Lambda,C}$
for the morphisms of topos $\iota_{\Gamma_{\Lambda},C}$
and $\pi_{\Gamma_{\Lambda},C}$, respectively.

Let $\CM$ be a $\Gamma_{\Lambda}$-sheaf of $\CA$-modules on $C$.
We define an action $[-]$ of $\Gamma_{\Lambda}^{\disc}$ on the $\Gamma_{\Lambda}$-sheaf of 
$\CA$-modules
$\iota_{\Lambda,C*}\iota_{\Lambda,C}^*\CM
=\CMap_{\cont}(\Gamma_{\Lambda},\CM)$
by  
\begin{equation}\label{eq:SecGammaAction}
([\gamma]f)(g)=\gamma\cdot f(\gamma^{-1}g)
\end{equation}
for $X\in \Ob C$, $f\in \Map_{\cont}(\Gamma_{\Lambda},\CM(X))$,
$g\in \Gamma_{\Lambda}$, and $\gamma\in \Gamma^{\disc}_{\Lambda}$.
By Remark \ref{rmk:FinInvLimGSheaf} (1) and Proposition \ref{prop:GsheafTopos}, 
we can apply the general construction of Koszul complex 
on a ringd site to $\iota_{\Lambda,C*}\iota_{\Lambda,C}^*\CM$ with the above
$\Gamma^{\disc}_{\Lambda}$-action and obtain the complex
$K^{\bullet}_{\Lambda}(\iota_{\Lambda,C*}\iota_{\Lambda,C}^*\CM)$ of 
$\Gamma_{\Lambda}$-sheaves of $\CA$-modules on  $C$.

\begin{proposition}\label{prop:GammaRepKResol}
If $p^N\CM=0$ for some positive integer $N$, then 
the complex $K^{\bullet}_{\Lambda}(\iota_{\Lambda,C*}\iota_{\Lambda,C}^*\CM)$ 
in $\Gamma_{\Lambda}\Mod(C,\CA)$
gives a resolution of $\CM$ via the adjunction morphism 
$\CM\to \iota_{\Lambda,C*}\iota_{\Lambda,C}^*\CM$.
\end{proposition}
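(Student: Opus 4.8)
The plan is to reduce to the case $\Lambda = \{*\}$, i.e.\ $\Gamma_\Lambda = \Z_p$, by a K\"unneth-type argument, and then to prove the rank-one case by an explicit telescoping/limit computation. First I would observe that for a finite set $\Lambda$ we have a product decomposition $\Gamma_\Lambda = \prod_{i\in\Lambda}\Z_p$, and correspondingly $\Gamma_\Lambda^{\disc} = \bigoplus_{i\in\Lambda}\Z\gamma_i$, so that the Koszul complex $K^{\bullet}_\Lambda(-)$ is the (completed) tensor product over $i\in\Lambda$ of the two-term complexes $[\,-\xrightarrow{\gamma_i-1}-\,]$. The key point is that $\iota_{\Lambda,C*}\iota_{\Lambda,C}^*\CM = \CMap_{\cont}(\Gamma_\Lambda,\CM)$ decomposes, compatibly with the actions $[\gamma_i]$, as an iterated ``induction'': writing $\Gamma_\Lambda = \Z_p \times \Gamma_{\Lambda'}$ with $\Lambda' = \Lambda\setminus\{i\}$, one has $\CMap_{\cont}(\Gamma_\Lambda,\CM)\cong\CMap_{\cont}(\Z_p,\CMap_{\cont}(\Gamma_{\Lambda'},\CM))$ by Lemma~\ref{lem:GsetPFComp}/Lemma~\ref{lem:GSheafPFComp}, with the outer $\Z_p$-action $[\gamma_i]$ and the residual $\Gamma_{\Lambda'}^{\disc}$-action on the inner mapping sheaf. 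So the whole question reduces, by induction on $\sharp\Lambda$ and the fact that a tensor product of resolutions of $p^N$-torsion sheaves (which are flat as complexes in each variable, the terms being $p^N$-torsion free is not needed — rather one uses that $K^\bullet$ in each variable is a bounded complex of sheaves) computes a resolution of the tensor product, to the single statement: for a $\Z_p$-sheaf of $\CA$-modules $\CM$ on $C$ with $p^N\CM = 0$, the two-term complex
\begin{equation*}
\CMap_{\cont}(\Z_p,\CM)\xrightarrow{\;[\gamma]-1\;}\CMap_{\cont}(\Z_p,\CM)
\end{equation*}
together with the adjunction map $\CM\to\CMap_{\cont}(\Z_p,\CM)$ (the inclusion of constant maps) is exact.

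For that rank-one statement I would argue sheaf-theoretically, reducing to exactness of sequences of sections — but one must be careful: $\CMap_{\cont}(\Z_p,-)$ is not a direct limit of the $\CMap(\Z/p^n,-)$ on the nose as sheaves unless $\CM$ is suitably bounded; here $p^N\CM = 0$ guarantees that every continuous map $\Z_p\to\CM(X)$ factors through some $\Z/p^n$, and by Lemma~\ref{lem:GMapSheaf}-type reasoning (finiteness of coverings) $\CMap_{\cont}(\Z_p,\CM) = \varinjlim_n \CMap(\Z/p^n,\CM)$ already at the level of sheaves. On $\CMap(\Z/p^n,\CM)$ the operator $[\gamma]-1$ is, after the standard change of basis $f\mapsto(f(j))_{j\in\Z/p^n}$, identified with the operator on $\CM^{\oplus\Z/p^n}$ that is a ``twisted'' difference operator; concretely it sends $(x_j)_j\mapsto (\gamma\cdot x_{j-1} - x_j)_j$. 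I would then exhibit an explicit contracting homotopy / compute kernel and cokernel directly: the kernel is $\{(x_j) : x_j = \gamma^j x_0,\ \gamma^{p^n}x_0 = x_0\}$, which is exactly $\CM^{\Z/p^n}$ mapped in by constants, and compatibly under $n\mapsto n+1$ these stabilize to $\CM^{\Z_p} = \pi_{\Z_p,C*}\CM$ in the limit — wait, but we want a resolution of $\CM$ itself, not of $\CM^{\Z_p}$, so the homology must be $\CM$ in degree $0$. The resolution here is: we are resolving $\CM$ by the $\Gamma_\Lambda$-acyclic (by Corollary~\ref{cor:IndGModGacyc}) sheaf $\iota_{\Lambda,C*}\iota_{\Lambda,C}^*\CM$; so I must show $H^0$ of the Koszul complex is $\CM$ (via the adjunction unit) and $H^{>0} = 0$. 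The adjunction unit $\CM\to\CMap_{\cont}(\Z_p,\CM)$ lands in the $[\gamma]$-invariants precisely because $\gamma\cdot(\text{const}_x)(\gamma^{-1}g) = \gamma\cdot x$... no: the unit from Proposition~\ref{prop:GSheafPB2}(2) sends $x\mapsto(g\mapsto g\cdot x)$, and $([\gamma]f)(g)=\gamma f(\gamma^{-1}g)=\gamma\gamma^{-1}g\cdot x = g\cdot x = f(g)$, so indeed it lands in $\ker([\gamma]-1)$. So I need: (i) the unit induces an isomorphism $\CM\xrightarrow{\cong}\ker([\gamma]-1)$, and (ii) $[\gamma]-1$ is surjective as a sheaf map (equivalently, locally surjective on sections). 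For (ii) I would use that on $\CMap(\Z/p^n,\CM)$ the cokernel of the corresponding difference operator is computed by the ``norm'' $N = \sum_{j=0}^{p^n-1}\gamma^j$, i.e.\ cokernel $\cong \CM/N\CM$ locally, but these cokernels are killed in the colimit over $n$ because for $m>n$ the transition map multiplies by a further norm factor and $p^{\text{large}}$ kills $\CM$; this is the standard fact that $\varinjlim_n \CM/N_n\CM = 0$ when $p^N\CM = 0$, since $N_{n+1} = N_n\cdot(1 + \gamma^{p^n} + \cdots)$ and the extra factor is $\equiv p^{p^n-1}(\cdots)$-ish... more cleanly: $N_n \equiv p^n \pmod{(\gamma-1)}$ is not quite it either — the cleanest is to note $\CMap_{\cont}(\Z_p,\CM)$ is an induced/coinduced module for the profinite group $\Z_p$ acting with a $p^N$-torsion coefficient, hence $\Z_p$-acyclic, and the Koszul complex computes its continuous cohomology (this is the continuous analogue of the usual bar-vs-Koszul comparison, which I would either cite as well known for $\Z_p$ or derive from the already-established resolution $\CM\to K^{\bullet}(G,\CM)$ of Lemma~\ref{lem:GModResol} specialized to $G=\Z_p$ together with a comparison of the bar complex and the Koszul complex for $\Z_p$).

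In fact the cleanest route, which I would actually take to avoid re-deriving continuous $\Z_p$-cohomology by hand, is: apply Lemma~\ref{lem:GModResol} to get that $\CM\to K^{\bullet}(\Gamma_\Lambda,\CM)$ is a (homotopy-trivial, hence exact) resolution in $\Gamma_\Lambda\Mod(C,\CA)$; observe via Lemma~\ref{lem:GModResolnd} that $K^n(\Gamma_\Lambda,\CM)\cong\iota_{\Lambda,C*}\CMap_{\cont}(\Gamma_\Lambda^n,\CM)$ is $\iota_{\Lambda,C*}$ of something, hence (by Proposition~\ref{pro:TrivIndFunctEx}, Corollary~\ref{cor:IndGModGacyc}) $\pi_{\Lambda,C*}$-acyclic; then construct an explicit morphism of complexes of $\Gamma_\Lambda$-sheaves $K^{\bullet}_\Lambda(\iota_{\Lambda,C*}\iota_{\Lambda,C}^*\CM)\to K^{\bullet}(\Gamma_\Lambda,\CM)$ (the standard comparison of Koszul and bar complexes, using the generators $\gamma_i$ and a choice of section $\Z_p\to$ free resolution — here the $p^N$-torsion hypothesis is what makes continuous cochains on $\Z_p$ behave like cochains on $\Z/p^n$ for $n\gg0$, so the finite-group comparison applies) and check it is a quasi-isomorphism by reducing to the well-known case of $\Z/p^n$-modules (Koszul complex of $[\gamma-1]$ computes $\Z/p^n$-cohomology) and passing to the filtered colimit over $n$, which is exact. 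The main obstacle, and the step I expect to require genuine care rather than routine bookkeeping, is precisely this comparison between the Koszul complex $K^{\bullet}_\Lambda$ and the homogeneous/bar complex $K^{\bullet}(\Gamma_\Lambda,-)$ in the \emph{continuous} setting on a site: one must check that the comparison maps are well-defined on continuous-mapping sheaves (factoring through $(\Gamma_\Lambda/H)^{\bullet}$ uniformly, which is fine as each section factors through some $H$) and that acyclicity survives the colimit; the $p^N$-torsion assumption is used exactly here, to guarantee that the colimit over open subgroups stabilizes in a way that the finite-group computation $H^{>0}(\Z/p^n, \text{induced}) = 0$ propagates. Everything else — the product/K\"unneth reduction over $\Lambda$, the identification of the $\Gamma_\Lambda^{\disc}$-action after restriction along $\iota_{\Lambda,C}$, and exactness of $\pi_{\Lambda,C*}$ on induced modules — is already packaged in the lemmas above (Lemmas~\ref{lem:GsetPFComp}, \ref{lem:GSheafPFComp}, \ref{lem:GModResol}, \ref{lem:GModResolnd}, Proposition~\ref{pro:TrivIndFunctEx}, Corollary~\ref{cor:IndGModGacyc}) and should go through mechanically.
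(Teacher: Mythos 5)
Your structural skeleton is the same as the paper's: reduce to sections over each $X\in \Ob C$ (a one-point site), then induct on $\sharp\Lambda$ using the $\Gamma^{\disc}$-equivariant identification $\Map_{\cont}(\Gamma_{\Lambda},M)\cong \Map_{\cont}(\Gamma_{\Lambda_1},\Map_{\cont}(\Gamma_{\Lambda_0},M))$, so that everything rests on the rank-one statement that $M\to \Map_{\cont}(\Z_p,M)\xrightarrow{[\gamma]-1}\Map_{\cont}(\Z_p,M)\to 0$ is exact when $p^NM=0$. The problem is that this rank-one step --- the only place the torsion hypothesis does any work --- is never actually proved in your proposal. Your norm/cokernel computation is abandoned midstream (you note yourself that ``$N_n\equiv p^n \pmod{(\gamma-1)}$ is not quite it''), and your fallback, citing as ``well known'' that induced modules are acyclic and that the Koszul complex computes continuous $\Z_p$-cohomology of discrete torsion modules, is in this context circular: that assertion \emph{is} the rank-one case you are trying to establish, and Corollary \ref{cor:IndGModGacyc} only gives acyclicity for $\pi_{G,C*}$, not exactness of the two-term $[\gamma]-1$ complex.

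Moreover, the concrete justification you offer for the bar-vs-Koszul comparison route is false as stated: the Koszul complex of $\gamma-1$ does not compute $\Z/p^n$-group cohomology (that cohomology is $2$-periodic, computed with the norm interleaved), and the finite-level operator $[\gamma]$ on $\Map(\Z/p^n,\CM)$ does not even factor through $\Z/p^n$, since $[\gamma]^{p^n}f=\gamma^{p^n}\circ f\neq f$ in general; so ``reduce to the well-known $\Z/p^n$ case and pass to the colimit'' would not go through as described. What is actually needed (and what the paper does) is elementary but explicit: (i) if $[\gamma]f=f$ then $f(\gamma^r)=\gamma^rf(1)$ for all $r\geq 0$, whence $f=f_{f(1)}$ by continuity, so the unit hits the whole kernel (you only checked the easy inclusion); and (ii) for surjectivity, given $f$ factoring through $\Gamma/\Gamma^{p^m}$ with $\Gamma^{p^m}$ acting trivially on its finitely many values, one writes down the preimage $h(\gamma^{r})=-\gamma^{r}\sum_{s=1}^{r}\gamma^{-s}f(\gamma^{s})$ for $r>0$ and checks, using $p^NM=0$, that this expression is periodic of period $p^{N+m}$, hence extends to a continuous map on $\Gamma$ with $([\gamma]-1)h=f$. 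Without carrying out (ii), or some equally explicit substitute, the proof is incomplete precisely at its essential point.
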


\begin{proof}
By considering sections over each $X\in \Ob C$, we are reduced
to the case where $C$ is a one point category and $\CA=\Z/p^n\Z$ for
a positive integer $n$. (See Proposition \ref{prop:GSheafPB2} (2) for the description
of the adjunction morphism.) Let $M$ be a $\Gamma_{\Lambda}$-module
over $\Z/p^n\Z$, and let $K_{\Lambda}^{\bullet}(\Map_{\cont}(\Gamma_{\Lambda},M))$
be the Koszul complex of $\Map_{\cont}(\Gamma_{\Lambda},M)$ with respect
to the $[-]$-action of $\Gamma_{\Lambda}^{\disc}$.
The adjunction morphism $\eta_{\Lambda,M}\colon M\to \Map_{\cont}(\Gamma_{\Lambda},M)$
is given by $x\mapsto f_x$, $f_x(\gamma)=\gamma x$. 
We prove the claim by induction on $\sharp \Lambda$. 
\par
Assume $\sharp \Lambda=1$ and let  $i_0$ be the unique element of $\Lambda$.
For $x\in M$, we have $([\gamma_{i_0}]f_x)(\gamma)=
\gamma_{i_0}f_x(\gamma_{i_0}^{-1}\gamma)=\gamma_{i_0}(\gamma_{i_0}^{-1}\gamma)x
=\gamma x=f_x(\gamma)$ for $\gamma\in \Gamma_{\Lambda}$.
Conversely, if $f\in \Map_{\cont}(\Gamma_{\Lambda},M)$ satisfies
$[\gamma_{i_0}]f=f$, then we have 
$\gamma_{i_0}^r(f(1))=([\gamma_{i_0}^r]f)(\gamma_{i_0}^r)=f(\gamma_{i_0}^r)$ $(r\in \N)$,
which implies $f=f_x$, $x=f(1)$ by the continuity of $f$. 
For $f\in \Map_{\cont}(\Gamma_{\Lambda},M)$, suppose that the map 
$f$ factors through $\Gamma_{\Lambda}/\Gamma_{\Lambda}^{p^m}$
and the action of $\Gamma_{\Lambda}^{p^m}$ on 
the finite set $f(\Gamma_{\Lambda})$ is trivial.
Then we see that
the map $h_+\colon \{\gamma_{i_0}^{r}\,\vert\, r\in \Z, r>0\}\to M$
defined by $h_+(\gamma_{i_0}^{r})=-\gamma_{i_0}^r\sum_{s=1}^r\gamma_{i_0}^{-s}f(\gamma_{i_0}^s)$
factors through a map $\overline{h}\colon \Gamma_{\Lambda}/\Gamma_{\Lambda}^{p^{n+m}}\to M$
and its composition $h$ with the projection map from $\Gamma_{\Lambda}$ satisfies
$([\gamma_{i_0}]-1)h=f$. This completes the proof in the case $\sharp \Lambda=1$.
\par
Assume $\sharp \Lambda \geq 2$, and the claim holds for $\Lambda_1\subset\Lambda$
with $\sharp \Lambda_1=\sharp\Lambda-1$. Put $\Lambda_0=\Lambda\backslash \Lambda_1=\{i_0\}$.
We identify $\Gamma_{\Lambda}$ with $\Gamma_{\Lambda_0}\times\Gamma_{\Lambda_1}$
by the isomorphism $\gamma\mapsto (\gamma\vert_{\Lambda_0},\gamma\vert_{\Lambda_1})$.
Put $N=\Map_{\cont}(\Gamma_{\Lambda_0},M)$.  Then the action of $\Gamma_{\Lambda_1}$
on $M$ induces a continuous action of $\Gamma_{\Lambda_1}$ on $N$, and we have an
isomorphism $\Phi\colon \Map_{\cont}(\Gamma_{\Lambda},M)
\xrightarrow{\cong}\Map_{\cont}(\Gamma_{\Lambda_1},N)$ given by 
$f\mapsto F$, $f(\gamma_0,\gamma_1)=(F(\gamma_1))(\gamma_0)$,
$(\gamma_0,\gamma_1)\in\Gamma_{\Lambda}=\Gamma_{\Lambda_0}\times\Gamma_{\Lambda_1}$.
The isomorphism $\Phi$ is equivariant with respect to the $[-]$-action 
of $\Gamma_{\Lambda_1}^{\disc}$ (resp.~$\Gamma_{\Lambda_0}^{\disc}$) on the domain,
and the $[-]$-action of $\Gamma_{\Lambda_1}^{\disc}$ (resp.~the action of $\Gamma_{\Lambda_0}^{\disc}$
induced by its $[-]$-action on $N$) on the codomain. Hence we have morphisms of complexes
\begin{multline*}
K^{\bullet}_{\Lambda}(\Map_{\cont}(\Gamma_{\Lambda},M))
\xrightarrow{\cong}\text{fiber}\left(K^{\bullet}_{\Lambda_1}(\Map_{\cont}(\Gamma_{\Lambda_1},N))
\xrightarrow{[\gamma_{i_0}]-1}K^{\bullet}_{\Lambda_1}(\Map_{\cont}(\Gamma_{\Lambda_1},N))\right)\\
\longleftarrow\text{fiber}(N\xrightarrow{[\gamma_{i_0}]-1} N)
\xleftarrow{\eta_{\Lambda_0,M}} M,
\end{multline*}
where the second arrow is the morphism induced by $\eta_{\Lambda_1,N}$,
which is a quasi-isomorphism by assumption, and the third arrow is a
quasi-isomorphism by the case $\sharp \Lambda=1$ proven above. 
We obtain the claim by observing $\Phi\circ\eta_{\Lambda,M}=\eta_{\Lambda_1,N}\circ \eta_{\Lambda_0,M}$.
\end{proof}

Since $\iota_{\Lambda,C}^*\CM$ is the sheaf of 
$\CA$-modules underlying $\CM$, we have an action of $\Gamma^{\disc}_{\Lambda}$ on 
$\iota_{\Lambda,C}^*\CM$ via $\Gamma_{\Lambda}$. The $[-]$-action of 
$\Gamma_{\Lambda}^{\disc}$ on the $\Gamma_{\Lambda}$-sheaf of $\CA$-modules
$\iota_{\Lambda,C*}\iota_{\Lambda,C}^*\CM$
induces  an action of $\Gamma^{\disc}_{\Lambda}$ on 
its direct image under $\pi_{\Lambda,C}$. 

\begin{lemma}\label{lem:KoszResolGalInv}
The isomorphism 
$\iota_{\Lambda,C}^*
\CM\cong \pi_{\Lambda,C*}\iota_{\Lambda,C*}
\iota_{\Lambda,C}^*\CM$ 
given by $\id_{C^{\sim}}\cong \pi_{\Lambda,C}\circ \iota_{\Lambda,C}$
is $\Gamma_{\Lambda}^{\disc}$-equivariant with respect to the 
$\Gamma_{\Lambda}^{\disc}$-actions
above.
\end{lemma}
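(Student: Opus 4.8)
We must show that the canonical isomorphism
$$\iota_{\Lambda,C}^*\CM\xrightarrow{\;\cong\;}\pi_{\Lambda,C*}\iota_{\Lambda,C*}\iota_{\Lambda,C}^*\CM$$
coming from $\id_{C^{\sim}}\cong\pi_{\Lambda,C}\circ\iota_{\Lambda,C}$ (equivalently, from the unit of the adjunction $(\iota^*,\iota_*)$ followed by $\pi_*$, using $\pi\circ\iota\cong\id$) intertwines the two given $\Gamma_{\Lambda}^{\disc}$-actions: on the left, the tautological action of $\Gamma_{\Lambda}^{\disc}$ on the underlying sheaf $\iota_{\Lambda,C}^*\CM$ coming from the continuous $\Gamma_{\Lambda}$-action via $\Gamma_{\Lambda}^{\disc}\to\Gamma_{\Lambda}$; on the right, the action induced by applying $\pi_{\Lambda,C*}$ to the $[-]$-action \eqref{eq:SecGammaAction} on $\CMap_{\cont}(\Gamma_{\Lambda},\CM)=\iota_{\Lambda,C*}\iota_{\Lambda,C}^*\CM$.

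\textbf{Plan.} The plan is to reduce the claim to a section-wise computation, which is legitimate because everything in sight is defined object-by-object on $C$: by Proposition \ref{prop:GSheafPB2} (2) (applied to $v=\iota_{\Gamma_{\Lambda}}\colon\{1\}\to\Gamma_{\Lambda}$, $u=\id_C$), the sheaf $\iota_{\Lambda,C*}\iota_{\Lambda,C}^*\CM$ sends $X$ to $\Map_{\cont}(\Gamma_{\Lambda},\CM(X))$, and the functor $\pi_{\Lambda,C*}$ is the $\Gamma_{\Lambda}$-invariants functor, computed section-wise. So I would first recall, from the description of the adjunction unit in Proposition \ref{prop:GSheafPB2} (2), that the isomorphism $\CM(X)\xrightarrow{\cong}\Map_{\cont}(\Gamma_{\Lambda},\CM(X))^{\Gamma_{\Lambda}}$ sends $x\in\CM(X)$ to the orbit map $f_x\colon g\mapsto g\cdot x$ (this is exactly $\eta_{\Lambda,M}$ appearing in the proof of Proposition \ref{prop:GammaRepKResol}). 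Then the content of the lemma is the identity, for every $\gamma\in\Gamma_{\Lambda}^{\disc}$ and $x\in\CM(X)$,
$$[\gamma]f_x \;=\; f_{\gamma\cdot x},$$
where on the left $[\gamma]$ is the action \eqref{eq:SecGammaAction} and on the right $\gamma\cdot x$ is the tautological action of $\Gamma_{\Lambda}^{\disc}$ (via $\Gamma_{\Lambda}^{\disc}\to\Gamma_{\Lambda}$) on $\CM(X)$.

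\textbf{The computation.} This is a one-line check: for $g\in\Gamma_{\Lambda}$,
$$([\gamma]f_x)(g)\;=\;\gamma\cdot f_x(\gamma^{-1}g)\;=\;\gamma\cdot(\gamma^{-1}g\cdot x)\;=\;g\cdot x\;=\;f_{\gamma\cdot x}(g),$$
using only the definition \eqref{eq:SecGammaAction}, the definition of $f_x$, and associativity of the $\Gamma_{\Lambda}$-action on $\CM(X)$ (which is legitimate since $\Gamma_{\Lambda}^{\disc}$ acts through $\Gamma_{\Lambda}$). Finally I would note that this identity is natural in $X\in\Ob C$ — both sides are built from the $\Gamma_{\Lambda}$-module structure on $\CM(X)$, which is functorial — so it glues to a $\Gamma_{\Lambda}^{\disc}$-equivariant isomorphism of sheaves of $\CA$-modules on $C$, as desired.

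\textbf{Expected obstacle.} There is essentially no obstacle here; the only thing requiring care is bookkeeping of the two adjunctions $(\iota^*,\iota_*)$ and $(\pi^*,\pi_*)$ and the identification $\pi\circ\iota\cong\id$, so that the map called ``the isomorphism given by $\id_{C^{\sim}}\cong\pi_{\Lambda,C}\circ\iota_{\Lambda,C}$'' really is the section-wise map $x\mapsto f_x$ into the invariants. Once the explicit formulas of Proposition \ref{prop:GSheafPB2} (2) are in hand (the unit $\CT\to\CMap_{\cont}(G,\CT)$ is $x\mapsto(g\mapsto gx)$, and $\pi_{G,C*}$ is $\Gamma$-invariants), the verification is the displayed one-line calculation above.
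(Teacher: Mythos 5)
Your plan — reduce to a section‐wise computation with the explicit formulas from Proposition~\ref{prop:GSheafPB2} and the definition \eqref{eq:SecGammaAction} — is the right one, and it matches the paper's approach. But you have misidentified the isomorphism in the statement, and as a consequence your ``one-line check'' is in fact false as written.

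The map $\CM(X)\to\Map_{\cont}(\Gamma_{\Lambda},\CM(X))$, $x\mapsto f_x$ with $f_x(g)=g\cdot x$, is the \emph{unit} of the adjunction $(\iota_{\Lambda,C}^*,\iota_{\Lambda,C*})$, a morphism $\CM\to\iota_{\Lambda,C*}\iota_{\Lambda,C}^*\CM$ in $\Gamma_{\Lambda}\hy C^{\sim}$. That is not what the lemma is about. The lemma's isomorphism is $\iota_{\Lambda,C}^*\CM\cong\pi_{\Lambda,C*}\iota_{\Lambda,C*}\iota_{\Lambda,C}^*\CM$ coming from $\pi_{\Lambda,C}\circ\iota_{\Lambda,C}\cong\id$, which in terms of direct images is the canonical identification $\id_*\cong\pi_{\Lambda,C*}\iota_{\Lambda,C*}$ of Lemma~\ref{lem:GSheafPFComp} (or Lemma~\ref{lem:GsetPFComp}), applied to the sheaf $\iota_{\Lambda,C}^*\CM$. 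Unwinding that lemma with $v=\pi_{\Gamma_{\Lambda}}$ and $v'=\iota_{\Gamma_{\Lambda}}$ shows that the isomorphism sends $x$ to the \emph{constant} map $c_x$, $c_x(g)=x$, not to the orbit map $f_x$. This has to be so: the $\Gamma_{\Lambda}$-sheaf structure on $\iota_{\Lambda,C*}\iota_{\Lambda,C}^*\CM$ acts by $(h\cdot f)(g)=f(gh)$, so the $\Gamma_{\Lambda}$-invariants $\pi_{\Lambda,C*}\iota_{\Lambda,C*}\iota_{\Lambda,C}^*\CM$ consist precisely of the constant maps, and $f_x$ (unless $x$ is fixed) does not land there at all.

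Your computation then also fails at the last step: you get $([\gamma]f_x)(g)=\gamma\cdot f_x(\gamma^{-1}g)=g\cdot x$, i.e.\ $[\gamma]f_x=f_x$, whereas $f_{\gamma\cdot x}(g)=g\cdot(\gamma\cdot x)=(g\gamma)\cdot x\neq g\cdot x$ in general. So even granting your (incorrect) description of the map, the claimed equality $[\gamma]f_x=f_{\gamma\cdot x}$ does not hold. With the correct description $x\mapsto c_x$, the equivariance is immediate and this is exactly how the paper argues:
\begin{equation*}
([\gamma]c_x)(g)=\gamma\cdot c_x(\gamma^{-1}g)=\gamma\cdot x=c_{\gamma\cdot x}(g).
\end{equation*}
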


\begin{proof} By Lemma \ref{lem:GSheafPFComp}, 
the isomorphism in the claim is given by 
$\CM\xrightarrow{\cong}\CMap_{\cont}(\Gamma_{\Lambda},\CM)^{\Gamma_{\Lambda}}; x\mapsto c_x$,
where $c_x(g)=x$ for $X\in \Ob C$, $x\in \CM(X)$, and $g\in \Gamma_{\Lambda}$. 
The claim holds by $([\gamma]c_x)(g)
=\gamma\cdot c_x(\gamma^{-1}g)=\gamma x=c_{\gamma x}(g)$.
\end{proof}

By applying the general construction of Koszul complex on a ringed site
to the $\CA$-module $\iota_{\Lambda,C}^*\CM$ with the 
$\Gamma^{\disc}_{\Lambda}$-action,
we obtain a complex $K^{\bullet}_{\Lambda}(\iota_{\Lambda,C}^*\CM)$.
If $p^N\CM=0$ for some positive integer $N$, then,
by Proposition \ref{prop:GammaRepKResol}, 
Corollary \ref{cor:IndGModGacyc}, and 
Lemma \ref{lem:KoszResolGalInv},  we obtain the following isomorphisms
in $D^+(C,\CA)$.
\begin{equation}\label{eq:GSheafCohKCpx}
R\pi_{\Lambda,C*}\CM
\cong R\pi_{\Lambda,C*}K^{\bullet}_{\Lambda}
(\iota_{\Lambda,C*}\iota_{\Lambda,C}^*\CM)
\cong \pi_{\Lambda,C*}K^{\bullet}_\Lambda
(\iota_{\Lambda,C*}\iota_{\Lambda,C}^*\CM)
\cong K^{\bullet}_{\Lambda}
(\iota_{\Lambda,C}^*\CM)
\end{equation}

\begin{remark}\label{rmk:GShfCohKoszProd}
We abbreviate $\iota_{\Lambda,C}$, $\pi_{\Lambda,C}$, 
$\Gamma_{\Lambda}$, $\Gamma_{\Lambda}^{\disc}$, and $\otimes_{\CA}$ to 
$\iota$, $\pi$, $\Gamma$, $\Gamma^{\disc}$, and $\otimes$,
respectively, in this remark. Let $\CM_{\nu}$ $(\nu=1,2)$ be $\Gamma$-sheaves
of $\CA$-modules on $C$. We see that the $\CA$-linear map
\begin{equation}\label{eq:TensorGammadiscAction}
\iota_*\iota^*(\CM_1)\otimes\iota_*\iota^*(\CM_2)\to \iota_*\iota^*(\CM_1\otimes\CM_2)
\end{equation}
obtained by taking the right adjoint of the tensor product 
$\iota^*\iota_*(\iota^*\CM_1)\otimes\iota^*\iota_*(\iota^*\CM_2)
\to \iota^*\CM_1\otimes\iota^*\CM_2$ of counit maps, is
$\Gamma^{\disc}$-equivariant for the $[-]$-action
by the following explicit description of \eqref{eq:TensorGammadiscAction}:
By Proposition \ref{prop:GSheafPB2} (2), 
the counit map $\iota^*\iota_*(\iota^*\CM_{\nu})
=\Map_{\cont}(\Gamma,\iota^*\CM_{\nu})\to \iota^*\CM_{\nu}$
sends $\varphi$ to $\varphi(1)$ for $\nu=1,2$, and therefore 
the right adjoint in question
$\Map_{\cont}(\Gamma,\iota^*\CM_1)
\otimes
\Map_{\cont}(\Gamma,\iota^*\CM_2)
\to \Map_{\cont}(\Gamma,\iota^*(\CM_1\otimes\CM_2))$
sends $\varphi_1\otimes\varphi_2$ to
$\psi$ defined by $\psi(\gamma)=
(\gamma\cdot \varphi_1)(1)\otimes(\gamma\cdot\varphi_2)(1)
=\varphi_1(\gamma)\otimes\varphi_2(\gamma)$
$(\gamma\in \Gamma)$. By applying \eqref{eq:KoszulProductFunct} to 
the $\Gamma^{\disc}$-equivariant maps $\iota^*\CM_{\nu}\xrightarrow{\cong}
\pi_*(\iota_*\iota^*\CM_{\nu})$ (Lemma \ref{lem:KoszResolGalInv}) and using 
the $\Gamma^{\disc}$-equivariance of \eqref{eq:TensorGammadiscAction}
above, we obtain the following commutative diagram, where $K$ denotes $K_{\Lambda}^{\bullet}$.
\begin{equation}\label{eq:KoszulGammaInvProdComp}
\xymatrix@R=15pt{
K(\iota^*\CM_1)\otimes
K(\iota^*\CM_2)\ar[r]^{\eqref{eq:KoszulCpxProd}}\ar[d]&
K((\iota^*\CM_1)\otimes(\iota^*\CM_2))\ar[r]^{\cong}\ar[d]&
K(\iota^*(\CM_1\otimes\CM_2))\ar[d]\\
\pi_*(K(\iota_*\iota^*\CM_1)\otimes K(\iota_*\iota^*\CM_2))
\ar[r]^(.53){\eqref{eq:KoszulCpxProd}}&
\pi_*(K(\iota_*\iota^*\CM_1\otimes\iota_*\iota^*\CM_2))
\ar[r]^{\eqref{eq:TensorGammadiscAction}}&
\pi_*(K(\iota_*\iota^*(\CM_1\otimes\CM_2)))
}
\end{equation}
Since the unit maps $\CN\to \iota_*\iota^*\CN$ for 
$\CN=\CM_1,\CM_2$, and $\CM_1\otimes\CM_2$ 
are compatible with \eqref{eq:TensorGammadiscAction} above,
we obtain the following compatibility of \eqref{eq:GSheafCohKCpx} with products.
\begin{equation}\label{eq:GSheafCohKCpxProdComp}
\xymatrix@R=15pt@C=60pt{
R\pi_*\CM_1\otimes^LR\pi_*\CM_2
\ar[r]^(.45){\cong}_(.45){\eqref{eq:GSheafCohKCpx}}\ar[d]^{-\; \cup\;- }&
K_{\Lambda}^{\bullet}(\iota^*\CM_1)\otimes^LK_{\Lambda}^{\bullet}(\iota^*\CM_2)\ar[d]^{\eqref{eq:KoszulCpxProd}}\\
R\pi_*(\CM_1\otimes\CM_2)
\ar[r]^{\cong}_{\eqref{eq:GSheafCohKCpx}}&
K_{\Lambda}^{\bullet}(\iota^*(\CM_1\otimes\CM_2))
}
\end{equation}
\end{remark}

We discuss the functoriality of the isomorphisms \eqref{eq:GSheafCohKCpx}
with respect to $(C,\CA)$ and $\Lambda$.
Let $\Lambda'$ be a finite set, let $C'$ be a site whose topology is defined by 
a pretopology $\Cov_{C'}(X')$ $(X'\in \Ob C')$ consisting of finite families
of morphisms, and suppose that we are given a map $\psi\colon \Lambda\to 
\Lambda'$ and a functor $u\colon C\to C'$ defining a morphism of sites and therefore
inducing a morphism of topos $(u^*,u_*)\colon C^{\prime\sim}\to C^{\sim}$, which will be also denoted by $u$.
Let $\Gamma_{\psi}$ denote the continuous homomorphism $\Gamma_{\Lambda'}
\to\Gamma_{\Lambda}; \gamma\mapsto \gamma\circ\psi$ induced by $\psi$.
We write $u_{\psi}=(u_{\psi}^*,u_{\psi*})$ for the morphism of 
topos $\Gamma_{\Lambda'}\hy C^{\prime\sim}\to \Gamma_{\Lambda}\hy C^{\sim}$
induced by the pair $(u,\Gamma_{\psi})$
\eqref{eq:GSheafToposMorph}. 
Let $\CA'$ be a sheaf of rings  on $C'$, let $\CA'$ also denote its pullback
by $\pi_{\Lambda',C'}$, and suppose that we are given a morphism 
$u^*(\CA)\to \CA'$ of sheaves of rings on $C'$.
Under these settings, we have the following commutative diagram
of ringed topos.
\begin{equation}\label{eq:KoszFunctToposDiag}
\xymatrix@C=50pt{
(C^{\prime\sim},\CA')\ar[r]^(.45){\iota_{\Lambda',C'}}\ar[d]^{u}
&(\Gamma_{\Lambda'}\hy C^{\prime\sim},\CA')\ar[r]^{\pi_{\Lambda',C'}}\ar[d]^{u_{\psi}}
&(C^{\prime\sim},\CA')\ar[d]^{u}\\
(C^{\sim},\CA)\ar[r]^(.45){\iota_{\Lambda,C}}
&(\Gamma_{\Lambda}\hy C^{\sim},\CA)\ar[r]^{\pi_{\Lambda,C}}
&(C^{\sim},\CA)
}
\end{equation}
We choose and fix a total order on the set $\Lambda$. 
To simplify the notation, we abbreviate $\iota_{\Lambda,C}$, $\pi_{\Lambda,C}$,
$\Gamma_{\Lambda}$,  $\iota_{\Lambda',C'}$, $\pi_{\Lambda',C'}$, 
and $\Gamma_{\Lambda'}$ to 
$\iota$, $\pi$, $\Gamma$, $\iota'$, $\pi'$, and $\Gamma'$, respectively,
in the following.

\begin{lemma}\label{lem:ForgetGammaBC}
For a $\Gamma'$-sheaf of $\CA'$-modules $\CM'$ on $C'$, 
The base change morphism 
$\iota^*u_{\psi*}\CM'\to u_*\iota^{\prime*}\CM'$
with respect to the left square of \eqref{eq:KoszFunctToposDiag}
is given by $\CMap_{\Gamma',\cont}(\Gamma,u_*\CM')\to u_*\CM'; \varphi\mapsto \varphi(1)$.\par
\end{lemma}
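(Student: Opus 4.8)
The plan is to trace through the explicit descriptions of the two adjunctions involved, namely the adjunction $(u_{\psi}^*, u_{\psi*})$ defining $u_{\psi}$ and the adjunction $(u^*, u_*)$ defining $u$, and to compute the base-change morphism directly from these. Recall that the base-change morphism $\iota^* u_{\psi*} \CM' \to u_* \iota'^* \CM'$ associated to the commutative square
\[
\iota_{\Lambda,C} \circ u = u_{\psi} \circ \iota_{\Lambda',C'}
\]
(the left square of \eqref{eq:KoszFunctToposDiag}) is by definition the composite of the unit $\iota^* u_{\psi*} \CM' \to \iota^* u_{\psi*} \iota'_* \iota'^* \CM'$ with the identification $\iota^* u_{\psi*} \iota'_* \cong \iota^* \iota_* u_* \cong u_*$ coming from the commutativity of the square and the counit $\iota^* \iota_* \to \id$. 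So the first step is to unwind each of these pieces.

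First I would recall from Proposition \ref{prop:GSheafPB2} (2) that, for the morphism of topos $u_{\psi} = v_u$ associated to the continuous homomorphism $\Gamma_{\psi}\colon \Gamma' \to \Gamma$ and the functor $u$, the direct image is $u_{\psi*}\CM' = \CMap_{\Gamma',\cont}(\Gamma, u_*\CM')$ as a $\Gamma$-sheaf on $C$, via \eqref{eq:GSheafFunct}; and that the functor $\iota^* = \iota_{\Lambda,C}^*$ simply forgets the $\Gamma$-action, so that $\iota^* u_{\psi*}\CM'$ is the sheaf $X \mapsto \Map_{\Gamma',\cont}(\Gamma, (u_*\CM')(X)) = \Map_{\Gamma',\cont}(\Gamma, \CM'(u(X)))$ of $\CA$-modules on $C$. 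Likewise $u_*\iota'^*\CM'$ is the sheaf $X \mapsto \CM'(u(X))$ with its $\CA$-module structure via $u^*\CA \to \CA'$. The candidate map $\varphi \mapsto \varphi(1)$ is then visibly $\CA$-linear and a morphism of sheaves; the content of the lemma is that this particular map is the base-change morphism.

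To identify it, I would evaluate the base-change morphism on sections over $X \in \Ob C$ and use the counit description from Proposition \ref{prop:GSheafPB2} (2), namely that the counit $\iota'^* \iota'_* \to \id$ and the counit $u^* u_* \to \id$ are both given by "evaluate at $1$" resp. the usual adjunction counit. Concretely: the unit $\CM' \to \iota'_* \iota'^* \CM' = \CMap_{\cont}(\Gamma', \iota'^*\CM')$ sends $x$ to $g' \mapsto g'x$; applying $u_{\psi*}$ and then $\iota^*$, followed by the canonical isomorphism $\iota^* u_{\psi*}\iota'_* \cong u_* \iota_{\Lambda',?}\ldots$, the chain of identifications collapses — by Lemma \ref{lem:GSheafPFComp} (compatibility of direct images under composition of the $v_u$'s) and Remark \ref{rmk:GsetGfSetSheavesAdj} — to the map sending $\varphi \in \Map_{\Gamma',\cont}(\Gamma,\CM'(u(X)))$ to its value $\varphi(1) \in \CM'(u(X))$. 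This is the routine but slightly fiddly bookkeeping step; the main obstacle is purely keeping the several adjunctions and their unit/counit formulas straight and checking the cocycle compatibility of the identification $\iota^* u_{\psi*} \iota'_* \cong u_* \iota''_*$ invoked along the way, which is precisely what Lemma \ref{lem:GSheafPFComp} and Lemma \ref{lem:GfSetShfToGShfFunct} are there to supply. Once the sections over each $X$ are matched, the naturality in $\CM'$ is automatic, completing the proof.
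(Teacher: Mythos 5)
Your proposal is correct and follows essentially the same route as the paper: decompose the base change morphism as (unit of $\iota'$) $\circ$ (canonical isomorphism $\iota^*u_{\psi*}\iota'_*\cong\iota^*\iota_*u_*$ from the commutative square) $\circ$ (counit of $\iota$), then evaluate on sections using the explicit unit/counit formulas of Proposition \ref{prop:GSheafPB2} (2) and the composition formula of Lemma \ref{lem:GSheafPFComp} to see it collapses to $\varphi\mapsto\varphi(1)$. The only (harmless) blemishes are writing ``$\cong$'' for the counit map $\iota^*\iota_*u_*\to u_*$, which is not an isomorphism, and citing Lemma \ref{lem:GfSetShfToGShfFunct}/Remark \ref{rmk:GsetGfSetSheavesAdj} where Lemma \ref{lem:GSheafPFComp} already suffices.
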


\begin{proof} 
The base change morphism is the composition  
$\iota^*u_{\psi*}\CM'\to \iota^*u_{\psi*}\iota^{\prime}_*\iota^{\prime*}\CM'
\cong \iota^*\iota_*u_*\iota^{\prime*}\CM'\to u_*\iota^{\prime*}\CM'$,
where the first (resp.~third) morphism is induced by the unit (resp.~counit) of $\iota'$
(resp.~$\iota$). By Lemma \ref{lem:GSheafPFComp} and Proposition \ref{prop:GSheafPB2} (2),
this is explicitly given by the composition 
$$\CMap_{\Gamma',\cont}(\Gamma,u_*\CM')
\to \CMap_{\Gamma',\cont}(\Gamma,u_*\CMap_{\cont}(\Gamma',\CM'))
\cong \CMap_{\cont}(\Gamma,u_*\CM')\to u_*\CM'$$
sending a section $\varphi$ on an object of $C$ as
$\varphi\mapsto \{g\mapsto (g'\mapsto g'\varphi(g))\}
\mapsto (g\mapsto \varphi(g))\mapsto \varphi(1).$
\end{proof}

\begin{proposition}\label{prop:GSheafKoszulFunct}
Let $\CM$ be a $\Gamma$-sheaf of $\CA$-modules on $C$,
let $\CM'$ be a $\Gamma'$-sheaf of $\CA'$-modules on $C'$,
and let $f\colon \CM\to u_{\psi*}\CM'$ be a morphism of 
$\Gamma$-sheaves of $\CA$-modules on $C$. \par
(1) The composition $\of\colon \iota^*\CM
\xrightarrow{\iota^*f}\iota^*u_{\psi*}\CM'
\to u_*\iota^{\prime*}\CM'$ in $\Mod(C,\CA)$ 
is compatible with the actions of $\Gamma_{\Lambda}^{\disc}$
and $\Gamma_{\Lambda'}^{\disc}$ via
$\Gamma^{\disc}_{\psi}\colon \Gamma^{\disc}_{\Lambda'}
\to \Gamma^{\disc}_{\Lambda}$, where the second
morphism in the composition $\of$ is the base change
with respect to the left square of \eqref{eq:KoszFunctToposDiag}.\par
(2) The composition $\oof\colon \iota_*\iota^*\CM
\xrightarrow{\iota_*(\of)}\iota_*u_*\iota^{\prime*}\CM'\cong
u_{\psi*}\iota'_*\iota^{\prime*}\CM'$ in $\Gamma_{\Lambda}\Mod(C,\CA)$ 
is compatible with the $[-]$-actions 
\eqref{eq:SecGammaAction} of $\Gamma^{\disc}_{\Lambda}$ and 
$\Gamma^{\disc}_{\Lambda'}$ via $\Gamma_{\psi}^{\disc}$.
\par
(3) The following diagrams are commutative.
\begin{equation}\label{eq:IotaUnitFunct}
\xymatrix{
\iota_*\iota^*\CM\ar[r]^(.45){\oof}& u_{\psi*}\iota_*'\iota^{\prime*}\CM'\\
\CM\ar[u]^{\eta_{\iota}(\CM)}\ar[r]^(.45)f&
u_{\psi*}\CM'\ar[u]_{u_{\psi*}\eta_{\iota'}(\CM')}
}
\end{equation}
\begin{equation}\label{eq:PiIotaIdFunct}
\xymatrix{
\pi_*\iota_*\iota^*\CM\ar[r]^(.45){\pi_*\oof}&
\pi_*u_{\psi*}\iota'_*\iota^{\prime*}\CM'
\ar[r]^{\cong}&
u_*\pi_*'\iota_*'\iota^{\prime*}\CM'\\
\iota^*\CM\ar[u]^{\cong}_{\alpha_{C}(\CM)}\ar[rr]^{\of}&&
u_*\iota^{\prime*}\CM'
\ar[u]^{\cong}_{u_*\alpha_{C'}(\CM')}
}
\end{equation}
\end{proposition}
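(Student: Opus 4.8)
\textbf{Proof plan for Proposition \ref{prop:GSheafKoszulFunct}.}
The plan is to verify all three parts by unwinding the explicit descriptions of the various adjunction units, counits, and base-change morphisms recorded in the preceding lemmas, so that everything reduces to checking equalities of concrete maps of continuous functions on profinite groups. For part (1), I would first make the composition $\of$ explicit: by Lemma \ref{lem:ForgetGammaBC} the second morphism $\iota^*u_{\psi*}\CM'\to u_*\iota'^*\CM'$ is $\varphi\mapsto\varphi(1)$ on $\CMap_{\Gamma',\cont}(\Gamma,u_*\CM')$, while $\iota^*f$ is just the underlying morphism of $\CA$-modules of $f$; recall also (Proposition \ref{prop:GSheafPB2} (2) and the paragraph before it) that $f$ corresponds to a map $\CM\to\CMap_{\Gamma',\cont}(\Gamma,u_*\CM')$, $x\mapsto(g\mapsto \overline{f}(gx))$ for a suitable $\Gamma'$-equivariant $\overline f$. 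Composing, $\of(x)$ is ``$\overline f$ evaluated at $x$'', i.e. essentially the adjoint $u^*\CM\to\CM'$ of $f$ under $(u^*,u_*)$ followed by the forgetful functor. Since $f$ is $\Gamma_{\Lambda}$-equivariant, $\overline f$ intertwines the $\Gamma$-action on $\CM$ (through $\Gamma_{\psi}$, as it lands in $\CM'$ with its $\Gamma'$-action) — which is precisely $\Gamma^{\disc}_{\psi}$-compatibility of $\of$ once one restricts the profinite actions to the discrete subgroups $\Gamma^{\disc}_{\Lambda}\subset\Gamma_{\Lambda}$, $\Gamma^{\disc}_{\Lambda'}\subset\Gamma_{\Lambda'}$ (note $\Gamma_{\psi}(\gamma_{i'})=\prod_{i\in\psi^{-1}(i')}\gamma_i$ matches $\Gamma^{\disc}_{\psi}$).

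For part (2), I would apply $\iota_*$ to $\of$ and identify $\iota_*u_*\iota'^*\CM'\cong u_{\psi*}\iota'_*\iota'^*\CM'$ via the canonical isomorphism of Lemma \ref{lem:GSheafPFComp} (commutativity of the left square of \eqref{eq:KoszFunctToposDiag} on direct images), whose formula is the one in Lemma \ref{lem:GsetPFComp}. The $[-]$-action \eqref{eq:SecGammaAction} on $\iota_*\iota^*\CM=\CMap_{\cont}(\Gamma,\CM)$ is $([\gamma]f)(g)=\gamma\cdot f(\gamma^{-1}g)$; I would simply compute $\oof([\gamma]f)$ and $[\Gamma^{\disc}_{\psi}(\gamma)]\oof(f)$ using the explicit formula for $\of$ from part (1) and the formula from Lemma \ref{lem:GSheafPFComp}, and observe they agree, using the $\Gamma^{\disc}_{\psi}$-compatibility of $\of$ just established. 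This is a routine but slightly bookkeeping-heavy substitution; no conceptual difficulty.

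For part (3), both squares are standard naturality diagrams for adjunctions. The square \eqref{eq:IotaUnitFunct} commutes because $\oof$ is (up to the identification $\iota_*u_*\cong u_{\psi*}\iota'_*$) the image of $\of$ under $\iota_*$, and $\of$ is obtained from $f$ by applying $\iota^*$ then the base-change map; naturality of the unit $\eta_\iota$ of the adjunction $(\iota^*,\iota_*)$ together with the compatibility of the base-change morphism with units (for the left square of \eqref{eq:KoszFunctToposDiag}) gives the claim, and I would verify it by evaluating on sections over an object of $C$ using Proposition \ref{prop:GSheafPB2} (2) and Lemma \ref{lem:GSheafPFComp}. The square \eqref{eq:PiIotaIdFunct} follows by applying $\pi_*$ and using the canonical isomorphism $\id\cong\pi\circ\iota$ on both rows (i.e. Lemma \ref{lem:KoszResolGalInv}/Lemma \ref{lem:GSheafPFComp} again) and the commutativity of the right square of \eqref{eq:KoszFunctToposDiag}; here $\alpha_C(\CM)\colon\iota^*\CM\xrightarrow{\cong}\pi_*\iota_*\iota^*\CM$ is the isomorphism of Lemma \ref{lem:KoszResolGalInv}, whose explicit form is $x\mapsto c_x$ (constant function), so again one evaluates on sections and checks equality of concrete maps.

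The main obstacle will be purely organizational: keeping straight the several layers of adjunctions $(u^*,u_*)$, $(\iota^*,\iota_*)$, $(\pi^*,\pi_*)$, the two base-change isomorphisms attached to the squares of \eqref{eq:KoszFunctToposDiag}, and the identification of profinite $[-]$-actions with discrete $\Gamma^{\disc}$-actions, so that the formulas in Lemmas \ref{lem:GsetPFComp}, \ref{lem:GSheafPFComp} and \ref{lem:ForgetGammaBC} are applied to the correct arrows. Once the explicit section-level description of $\of$ is pinned down in part (1), parts (2) and (3) are mechanical verifications on sections over objects of $C$; I would carry out part (1) carefully and then indicate that (2) and (3) follow ``by a direct computation on sections'' citing the relevant explicit formulas, rather than writing out every substitution.
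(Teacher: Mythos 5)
Your plan is correct and follows essentially the same route as the paper's proof: describe $\of$ on sections as $x\mapsto f(x)(1)$ via Lemma~\ref{lem:ForgetGammaBC}, then check $\Gamma^{\disc}_{\psi}$-equivariance and the two diagrams by direct computation using Proposition~\ref{prop:GSheafPB2}~(2), Lemma~\ref{lem:GsetPFComp}, and Lemma~\ref{lem:GSheafPFComp}. One simplification worth adopting: the paper first observes $\of=\overline{\id_{u_{\psi*}\CM'}}\circ\iota^*f$ and $\oof=\overline{\overline{\id_{u_{\psi*}\CM'}}}\circ\iota_*\iota^*f$, which reduces all three parts at once to $f=\id_{u_{\psi*}\CM'}$ (so no general $f$ needs to be carried through the computation), and for part (3) it then replaces the section-level checks you propose by pure adjunction identities --- the triangle identity for $(\iota^*,\iota_*)$ gives \eqref{eq:IotaUnitFunct}, and the cocycle relation \eqref{eq:GSheavesFunctCocyc} for $\pi\circ\iota$ gives \eqref{eq:PiIotaIdFunct}.
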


\begin{proof}
By the equalities $\of=\overline{\id_{u_{\psi*}\CM'}}\circ\iota^*f$
and $\oof=\overline{\overline{\id_{u_{\psi*}\CM'}}}\circ \iota_*\iota^*f$, 
every claim is immediately reduced to the case $f=\id_{u_{\psi*}\CM'}$.\par
(1) The claim follows from Lemma \ref{lem:ForgetGammaBC}; we have 
$(\Gamma^{\disc}_{\psi}(\gamma')\varphi)(1)=\varphi(\Gamma^{\disc}_{\psi}(\gamma'))
=\gamma'(\varphi(1))$ for $\varphi\in \Map_{\Gamma',\cont}(\Gamma,u_*\CM')$ and
$\gamma'\in \Gamma^{\disc}_{\Lambda'}$.\par
(2) By Lemmas  \ref{lem:ForgetGammaBC} and \ref{lem:GsetPFComp}, the morphism
$\overline{\overline{\id_{u_*\CM'}}}$
is given by the composition 
$$\CMap_{\cont}(\Gamma,\CMap_{\Gamma'}(\Gamma,u_*\CM'))
\to \CMap_{\cont}(\Gamma,u_*\CM')\cong \CMap_{\Gamma',\cont}(\Gamma,\CMap_{\cont}
(\Gamma',u_*\CM'))$$
sending a section $\varphi$ on an object of $C$ as
$\varphi\mapsto (g\mapsto \{\varphi(g)\}(1))\mapsto 
(g\mapsto (g'\mapsto \{\varphi(\Gamma_{\psi}(g')g)\}(1))).$
The claim is verified by the following computation for $\gamma'\in \Gamma_{\Lambda'}^{\disc}$
and $\gamma=\Gamma_{\psi}^{\disc}(\gamma')$:
$$\{([\gamma]\varphi)(\Gamma_{\psi}(g')g)\}(1)=
\{\gamma\cdot \varphi(\gamma^{-1}\Gamma_{\psi}(g')g)\}(1)
=\{\varphi(\gamma^{-1}\Gamma_{\psi}(g')g)\}(\gamma)
=\gamma'\{\varphi(\Gamma_{\psi}(\gamma^{\prime-1}g')g)\}(1).$$

(3) The morphism $\oof$ for $f=\id_{u_{\psi_*}\CM'}$ is given by the composition 
$\iota_*\iota^*u_{\psi*}
\to\iota_*\iota^*u_{\psi*}\iota_*^{\prime}\iota^{\prime*}
\cong\iota_*\iota^*\iota_*u_*\iota^{\prime*}\to 
\iota_*u_*\iota^{\prime*}\cong u_{\psi*}\iota^{\prime}_*\iota^{\prime*}$
defined by the unit of $\iota'$ and the counit of $\iota$, and
the commutativity of \eqref{eq:IotaUnitFunct} is reduced to 
the fact that the composition $\iota_*u_*\iota^{\prime*}
\to \iota_*\iota^*\iota_*u_*\iota^{\prime*}
\to \iota_*u_*\iota^{\prime*}$ given by the unit and counit
of $\iota$ is the identity morphism. The commutativity of
\eqref{eq:PiIotaIdFunct} follows from the construction
of $\oof$ from $\of$ and  the fact that
we have isomorphisms
$u_*\cong \pi_*\iota_*u_*\cong \pi_*u_{\psi*}\iota'_*
\cong u_*\pi'_*\iota'_*\cong u_*$
whose composition  is the identity morphism
by \eqref{eq:GSheavesFunctCocyc}.
\end{proof}

\begin{remark} \label{rmk:KoszulResolProd}
For $\nu\in \{1,2,3\}$, let $\CM_{\nu}$ be a $\Gamma_{\Lambda}$-sheaf of $\CA$-modules on $C$,
let $\CM'_{\nu}$ be a $\Gamma_{\Lambda'}$-sheaf of $\CA'$-modules on $C'$,
and let $f_{\nu}\colon \CM_{\nu}\to u_{\psi*}\CM'_{\nu}$ be a morphism of 
$\Gamma_{\Lambda}$-sheaves of $\CA$-modules on $C$. We define 
$\of_{\nu}$ and $\oof_{\nu}$ by applying the constructions of Proposition 
\ref{prop:GSheafKoszulFunct} (1) and  (2) to $f_{\nu}$. Suppose that we are
given a $\Gamma_{\Lambda}$-equivariant $\CA$-linear morphism 
$g\colon \CM_1\otimes_{\CA}\CM_2\to \CM_3$ and a $\Gamma_{\Lambda'}$-equivariant 
$\CA'$-linear morphism 
$g'\colon \CM'_1\otimes_{\CA'}\CM'_2\to \CM'_3$ making the following diagram commutative.
\begin{equation}\label{eq:GammaSheafProdFunct}
\xymatrix@R=10pt@C=40pt{
\CM_1\otimes_{\CA}\CM_2\ar[r]^(.4){f_1\otimes f_2}\ar[d]_{g}
&(u_{\psi*}\CM'_1)\otimes_{\CA}(u_{\psi*}\CM'_2)\ar[r]&
u_{\psi*}(\CM'_1\otimes_{\CA'}\CM'_2)\ar[d]^{u_{\psi*}g'}\\
\CM_3\ar[rr]^{f_3}&& u_{\psi*}\CM_3'
}
\end{equation}
Then the following two diagrams are commutative.
\begin{equation}\label{eq:GammaSheafProdFunct2}
\xymatrix@R=10pt@C=40pt{
\iota^*\CM_1\otimes_{\CA}\iota^*\CM_2\ar[r]^(.42){\of_1\otimes \of_2}\ar[d]_{\iota^*g}
&(u_*\iota^{\prime*}\CM'_1)\otimes_{\CA}(u_*\iota^{\prime*}\CM'_2)\ar[r]&
u_*\iota^{\prime*}(\CM'_1\otimes_{\CA'}\CM'_2)\ar[d]^{u*\iota^{\prime*}g'}\\
\iota^*\CM_3\ar[rr]^{\of_3}&& u_*\iota^{\prime*}\CM_3'
}
\end{equation}
\begin{equation}\label{eq:GammaSheafProdFunct3}
\xymatrix@R=10pt{
\iota_*\iota^*\CM_1\otimes_{\CA}\iota_*\iota^*\CM_2\ar[r]^(.41){\oof_1\otimes \oof_2}\ar[d]
&(u_{\psi*}\iota'_*\iota^{\prime*}\CM'_1)\otimes_{\CA}(u_{\psi*}\iota'_*\iota^{\prime*}\CM'_2)\ar[r]&
u_{\psi*}\iota'_*\iota^{\prime*}(\CM'_1\otimes_{\CA'}\CM'_2)\ar[d]^{u_{\psi*}\iota'_*\iota^{\prime*}g'}\\
\iota_*\iota^*(\CM_1\otimes_{\CA}\CM_2)\ar[r]^(.6){\iota_*\iota^*g}&
\iota_*\iota^*\CM_3\ar[r]^{\oof_3}& u_{\psi*}\iota'_*\iota^{\prime*}\CM_3'
}
\end{equation}
The proof is reduced to the case $g$ and $g'$ are the identity morphisms, and
$f_3$ is the composition of the upper horizontal morphisms of 
\eqref{eq:GammaSheafProdFunct}. Then the claim for \eqref{eq:GammaSheafProdFunct2}
follows from the fact that the base change morphism $\iota^*u_{\psi*}\to u_*\iota^{\prime*}$
is compatible with the lax monoidal structures. We obtain the claim 
for \eqref{eq:GammaSheafProdFunct3} from that for \eqref{eq:GammaSheafProdFunct2}
by applying $\iota_*$ and noting that the isomorphism $\iota_*u_*\cong u_{\psi*}\iota'_*$
is compatible with the lax monoidal structures.
\end{remark}

We keep the notation and assumption in Proposition \ref{prop:GSheafKoszulFunct}.
Since $\alpha_C(\CM)$ (resp.~$\alpha_{C'}(\CM')$)
is $\Gamma_{\Lambda}^{\disc}$ (resp.~$\Gamma_{\Lambda'}^{\disc}$)-equivariant
(Lemma \ref{lem:KoszResolGalInv}),
the claims (1), (2), and the commutative diagram \eqref{eq:PiIotaIdFunct} in Proposition 
\ref{prop:GSheafKoszulFunct} allow us to apply Lemma \ref{lem:GammaKoszCompos}
to the composition of $\oof$ and $\alpha_{C}(\CM)$
and to that of $\alpha_{C'}(\CM')$ and $\of$, obtaining the 
following commutative diagram of complexes in $\Mod(C,\CA)$.
\begin{equation}\label{eq:GsheafKoszulCohFunct}
\xymatrix@C=50pt{
\pi_*K^{\bullet}_{\Lambda}(\iota_*\iota^*\CM)
\ar[r]^(.47){\pi_*K^{\bullet}_{\psi}(\oof)}&
\pi_*u_{\psi*}K^{\bullet}_{\Lambda'}(\iota'_*\iota^{\prime*}\CM')
\ar[r]^{\cong}&
u_*\pi'_*K^{\bullet}_{\Lambda'}(\iota'_*\iota^{\prime*}\CM')\\
K^{\bullet}_{\Lambda}(\iota^*\CM)
\ar[u]_{\cong}^{K^{\bullet}_{\Lambda}(\alpha_{C}(\CM))}
\ar[rr]^{K^{\bullet}_{\psi}(\of)}&&
u_*K^{\bullet}_{\Lambda'}(\iota^{\prime*}\CM')
\ar[u]^{\cong}_{u_*K^{\bullet}_{\Lambda'}(\alpha_{C'}(\CM'))}
}
\end{equation}
Combining \eqref{eq:GsheafKoszulCohFunct} with
the commutative diagram \eqref{eq:IotaUnitFunct}, we obtain a 
commutative diagram in $D^+(C,\CA)$
\begin{equation}\label{eq:GammaSheafCohFunct}
\xymatrix{
R\pi_*\CM\ar[r]^(.45){R\pi_*f}&
R\pi_*u_{\psi*}\CM'\ar[r]&
R\pi_*Ru_{\psi*}\CM'
\ar[r]^{\cong}&
Ru_*R\pi^{\prime}_*\CM'\\
K^{\bullet}_{\Lambda}(\iota^*\CM)
\ar[u]_{\cong}^{\eqref{eq:GSheafCohKCpx}}
\ar[rr]^{K^{\bullet}_{\psi}(\of)}&&
u_*K^{\bullet}_{\Lambda'}(\iota^{\prime*}\CM')
\ar[r]&
Ru_*K^{\bullet}_{\Lambda'}(\iota^{\prime*}\CM').
\ar[u]^{\cong}_{\eqref{eq:GSheafCohKCpx}}&
}
\end{equation}

We see that the commutative diagrams
\eqref{eq:IotaUnitFunct}, \eqref{eq:PiIotaIdFunct}, \eqref{eq:GsheafKoszulCohFunct}, 
and \eqref{eq:GammaSheafCohFunct} are
compatible with composition of $f$'s as follows.
Suppose that we are given another pair 
$(\psi',u')$ of a map of finite sets $\Lambda'\to \Lambda''$
and a morphism of ringed sites $(C'',\CA'')\to(C',\CA')$ 
satisfying the same conditions as the pair $(\psi,u)$ 
considered above, and a total order on 
$\Lambda'$ such that the map $\psi\colon 
\Lambda\to \Lambda'$ preserves orders.
We have a commutative diagram \eqref{eq:KoszFunctToposDiag} for
the pair $(\psi',u')$ and also for the pair $(\psi'',u''):=(\psi'\circ\psi, u\circ u')$.
We abbreviate $\iota_{\Lambda'',C''}$ and $\pi_{\Lambda'',C''}$
to $\iota''$ and $\pi''$, respectively. 

\begin{proposition}\label{prop:KoszulFunctCocycCond}
Let $\CM$, $\CM'$, and $\CM''$ be objects
of $\Gamma_{\Lambda}\Mod(C,\CA)$, 
$\Gamma_{\Lambda'}\Mod(C',\CA')$,
and $\Gamma_{\Lambda''}\Mod(C'',\CA'')$,
respectively, and suppose that 
we are given morphisms $f\colon 
\CM\to u_{\psi*}\CM'$ in $\Gamma_{\Lambda}\Mod(C,\CA)$
and $f'\colon \CM'\to u'_{\psi'*}\CM''$ in 
$\Gamma_{\Lambda'}\Mod(C',\CA')$.
We define $f''$ to be the composition 
$\CM\xrightarrow{f}u_{\psi*}\CM'
\xrightarrow{u_{\psi*}f'}u_{\psi*}u'_{\psi'*}\CM''
\cong u''_{\psi''*}\CM''$, and define
$(\of,\oof)$, $(\of',\oof')$, and $(\of'', \oof'')$
by applying the construction of Proposition \ref{prop:GSheafKoszulFunct} (1) and (2) to 
$(u,\psi,f)$, $(u',\psi',f')$, and $(u'',\psi'',f'')$, respectively.\par
(1) Via the canonical isomorphism $u_*\circ u'_*\cong u''_*$
and $u_{\psi*}\circ u'_{\psi'*}\cong u''_{\psi''*}$, we have 
$\of''=u_*\of'\circ \of$ and $\oof''=u_{\psi*}\oof'\circ \oof$.\par
(2) The composition of the diagrams 
\eqref{eq:GsheafKoszulCohFunct}
(resp.~\eqref{eq:GammaSheafCohFunct}) for $(u,\psi,f)$ and $(u',\psi',f')$
coincides with that for $(u'',\psi'',f'')$ under the equalities
in (1).
\end{proposition}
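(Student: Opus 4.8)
The plan is to reduce everything to a bookkeeping exercise about adjunctions, since $\of$, $\oof$, $f''$, and the various diagrams are all built by formally composing unit and counit morphisms of $\iota$, $\iota'$, $\iota''$, $\pi$, $\pi'$, $\pi''$, together with base change morphisms for the squares \eqref{eq:KoszFunctToposDiag}. First I would establish part (1). By the first sentence of the proof of Proposition \ref{prop:GSheafKoszulFunct}, the morphism $\of$ only depends on $f$ through the identity $\of=\overline{\id_{u_{\psi*}\CM'}}\circ\iota^*f$, where $\overline{\id_{u_{\psi*}\CM'}}$ is the base change morphism $\iota^*u_{\psi*}\to u_*\iota^{\prime*}$ for the left square of \eqref{eq:KoszFunctToposDiag}; likewise $\of''=\overline{\id_{u''_{\psi''*}\CM''}}\circ\iota''^*f''$ and similarly for $\of'$. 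So the equality $\of''=u_*\of'\circ\of$ is reduced to the statement that the base change morphism for the composite square is the composite of the base change morphisms for the two squares, which is the standard horizontal-composition (``pasting'') compatibility of base change morphisms — here one uses the canonical isomorphisms $\tv_u\circ\tv'_{u'}\cong\widetilde{(v\circ v')}_{u'\circ u}$ and $v_u\circ v'_{u'}\cong(v\circ v')_{u'\circ u}$ from \eqref{eq:GSheavesFunctCocyc}, and the compatibility $u_*\circ u'_*\cong u''_*$ built from the commutative square. The same argument with $\iota_*\iota^*$ in place of $\iota^*$, together with the isomorphisms $\iota_*u_*\cong u_{\psi*}\iota'_*$ used in the proof of Proposition \ref{prop:GSheafKoszulFunct} (3), gives $\oof''=u_{\psi*}\oof'\circ\oof$.

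Next I would deduce part (2) for the diagram \eqref{eq:GsheafKoszulCohFunct}. That diagram is obtained by applying Lemma \ref{lem:GammaKoszCompos} to the pair consisting of $\oof$ (composed with $\alpha_C(\CM)$) and $\alpha_{C'}(\CM')$ composed with $\of$; concretely its horizontal arrows are $\pi_*K^{\bullet}_{\psi}(\oof)$ on top and $K^{\bullet}_{\psi}(\of)$ on the bottom, conjugated by the isomorphisms $K^{\bullet}_{\Lambda}(\alpha_C(\CM))$ and $K^{\bullet}_{\Lambda'}(\alpha_{C'}(\CM'))$ and the identification $\pi_*u_{\psi*}\cong u_*\pi'_*$. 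So composing the diagram for $(u,\psi,f)$ with that for $(u',\psi',f')$ amounts to composing $K^{\bullet}_{\psi}(\of)$ with $u_*K^{\bullet}_{\psi'}(\of')$ on the bottom (and the analogous composite on top), and part (1) together with Lemma \ref{lem:GammaKoszCompos} (applied with the total orders on $\Lambda$ and $\Lambda'$ as in its statement, using that $\psi$ preserves orders) identifies this with $K^{\bullet}_{\psi''}(\of'')$. One must also check that the ``glue'' — the middle isomorphisms $\pi_*u_{\psi*}\cong u_*\pi'_*$, $\iota_*u_*\cong u_{\psi*}\iota'_*$, and so on — compose correctly; this is exactly the coherence expressed by \eqref{eq:GSheavesFunctCocyc} and the hypothesis that \eqref{eq:KoszFunctToposDiag} is commutative for each of the three pairs and that the composite square is the pasting of the two. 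Finally, \eqref{eq:GammaSheafCohFunct} is \eqref{eq:GsheafKoszulCohFunct} glued on the left to the commutative square \eqref{eq:IotaUnitFunct} via $R\pi_*f$, so its compatibility with composition follows from that of \eqref{eq:GsheafKoszulCohFunct} together with the compatibility of \eqref{eq:IotaUnitFunct} with composition — which in turn is immediate from $f''=u_{\psi*}f'\circ f$ and the functoriality of the unit morphisms $\eta_{\iota}$. The commutativity-with-composition of \eqref{eq:IotaUnitFunct} and \eqref{eq:PiIotaIdFunct} themselves is proved the same way: reduce to $f=\id$ as in Proposition \ref{prop:GSheafKoszulFunct}, and then invoke the triangle identities for $(\iota^*,\iota_*)$ and the cocycle isomorphisms \eqref{eq:GSheavesFunctCocyc}.

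The main obstacle I expect is not any single calculation but the coherence bookkeeping: one has three squares \eqref{eq:KoszFunctToposDiag}, and to conclude one must know that the canonical 2-isomorphism exhibiting the outer square as the pasting of the two inner squares is compatible, on the one hand, with the isomorphisms $u_*u'_*\cong u''_*$, $u_{\psi*}u'_{\psi'*}\cong u''_{\psi''*}$ appearing in part (1), and, on the other hand, with the identifications $\pi_*u_{\psi*}\cong u_*\pi'_*$ used to state diagram \eqref{eq:GsheafKoszulCohFunct}. All of these isomorphisms ultimately come from \eqref{eq:GSheavesFunctCocyc} and the evident functoriality of $\iota$ and $\pi$ in the group and the site, so they do cohere; the work is in writing this out carefully rather than in discovering anything. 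A clean way to organize it, which I would adopt, is to first record a lemma (parallel to Lemma \ref{lem:GammaKoszCompos}) that the base change morphisms $\overline{(\;\cdot\;)}$ and $\overline{\overline{(\;\cdot\;)}}$ are functorial for horizontal composition of the squares \eqref{eq:KoszFunctToposDiag}, and then let parts (1) and (2) fall out of that lemma plus Lemma \ref{lem:GammaKoszCompos} and the already-proven diagrams of Proposition \ref{prop:GSheafKoszulFunct}.
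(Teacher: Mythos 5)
Your proposal is correct and matches the paper's own argument: part (1) reduces to the case $f=\id$, $f'=\id$ via the identity $\of=\overline{\id_{u_{\psi*}\CM'}}\circ\iota^*f$ and then appeals to the compatibility of base change morphisms with pasting of the squares \eqref{eq:KoszFunctToposDiag}, and part (2) is deduced from (1) via the composition compatibility of \eqref{eq:IotaUnitFunct} and \eqref{eq:PiIotaIdFunct} together with Lemma \ref{lem:GammaKoszCompos}. The only minor imprecision is your remark that the composition compatibility of \eqref{eq:IotaUnitFunct} is ``immediate from $f''=u_{\psi*}f'\circ f$ and the functoriality of the unit morphisms'': the top horizontal arrow of that square is $\oof$, so one also needs $\oof''=u_{\psi*}\oof'\circ\oof$ from part (1) — which you do establish, but should cite explicitly at that point.
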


\begin{proof}
(1) The second equality immediately follows from 
the first one, which is reduced to the case $f=\id$
and $f'=\id$, where the claim is nothing but
the compatibility of the base change morphisms 
with the composition of the left squares of 
\eqref{eq:KoszFunctToposDiag} for $(u,\psi)$ and $(u',\psi')$.\par
(2) The equalities in (1) imply that the composition
of the diagrams \eqref{eq:IotaUnitFunct} (resp.~\eqref{eq:PiIotaIdFunct}) for
$(u,\psi,f)$ and $(u',\psi',f')$ coincides with that 
for $(u'',\psi'',f'')$. Hence the claim follows
from Lemma \ref{lem:GammaKoszCompos} as we have assumed that
$\psi\colon \Lambda\to \Lambda'$ preserves
orders. 
\end{proof}

\section{Relative Breuil-Kisin-Fargues modules}
\label{sec:RelativeBKF}
Let $C$ be a perfectoid field of mixed characteristic $(0,p)$ containing
all $p$-power roots of unity, let $\CO$ be the ring of 
integers of $C$, let $\CO^{\flat}$ be the tilt of $\CO$:
$\varprojlim_{\N, \text{Frob}}\CO/p\CO
=\varprojlim_{\N, x\mapsto x^p}\CO$,
which is a perfect ring, let $A_{\inf}$ be $W(\CO^{\flat})$,
which is equipped with a lifting of Frobenius $\varphi$
of the absolute Frobenius of $W(\CO^{\flat})/pW(\CO^{\flat})=\CO^{\flat}$,
and let $\theta\colon A_{\inf}\to \CO$ be the
Fontaine's period map. Since $A_{\inf}$ is $p$-torsion 
free as $\CO^{\flat}$ is perfect, 
the lifting of Frobenius $\varphi$ of $A_{\inf}$
defines a $\delta$-structure on $A_{\inf}$. 
We fix a compatible system 
$\varepsilon=(\zeta_n)_{n\in \N}\in \CO^{\flat}$ of 
primitive $p^n$th roots of unity in $\CO$, and
define $\mu\in A_{\inf}$ to be $[\varepsilon]-1$.
Let $\Z_p[[q-1]]$, $\mu$, $\eta$, and $[n]_q$ $(n\in \Z)$
be as in the first paragraph of \S\ref{sec:PrismCrysQHiggs}.
We regard $A_{\inf}$ as a $\delta$-$\Z_p[[q-1]]$-algebra
by the homomorphism sending $q$ to $[\varepsilon]$. 
The composition $\theta\circ\varphi^{-1}$ induces an 
isomorphism $A_{\inf}/[p]_qA_{\inf}\xrightarrow{\cong}\CO$.
We regard every $p$-adic formal scheme over $\CO$
also as a formal scheme over $A_{\inf}$ via
$\theta\circ\varphi^{-1}$ in the following.
Since $A_{\inf}$ is $(p,\pq)$-adically complete and 
separated, the pair  $(A_{\inf},\pq A_{\inf})$ is a $q$-prism
(Definition \ref{def:qprism} (1)). 

Let $\fX$ be a separated, smooth, $p$-adic formal scheme over $\CO$,
and let $X$ be the adic generic fiber of $\fX$.
\begin{definition}[{\cite[Definition 5.1]{MT}}]
\label{def:FprojCrystalBKFCat}
We define $\BKF(\fX)$ to be the category of {\it relative Breuil-Kisin-Fargues modules
without Frobenius on $\fX$}, which is the full subcategory of the category
of locally finite free $\bA_{\inf,X}$-modules on $X_{\proet}$ consisting
of objects ``trivial modulo $<\mu$".
\end{definition}

In this section, we  recall the construction of the fully faithful functor
(\cite[Theorem 5.15]{MT})
\begin{equation}\label{eq:PrismCrysToBKF}
\bM_{\BKF,\fX}\colon \Crystal^{\fproj}_{\prism}(\fX/A_{\inf})\longrightarrow \BKF(\fX),
\end{equation}
and discuss its compatibility with the inverse image functors
(Proposition \ref{prop:BKFFunctComp}). 
See Definition \ref{def:PrismaticSiteCrystal} (2) for the definition of $\Crystal^{\fproj}_{\prism}(\fX/A_{\inf})$.

Let $\CB_{\fX}$ be the the full subcategory of $X_{\proet}$ consisting
of affinoid perfectoids $V\in \Ob X_{\proet}$ such that the image of $V\to X$
is contained in the adic generic fiber $U=\Spa(A[\frac{1}{p}],A)$ 
of some affine open $\fU=\Spf(A)\subset \fX$. We equip
$\CB_{\fX}$ with the topology induced by that of $X_{\proet}$. 
Since every object of $X_{\proet}$ admits a covering by an object
of $\CB_{\fX}$ (\cite[Corollary 4.7]{ScholzepHTRigid}), the
inclusion functor $\iota_X\colon \CB_{\fX}\to X_{\proet}$ is
continuous and cocontinuous, the restriction functor 
$\iota_X^*\colon X_{\proet}^{\sim}\to\CB_{\fX}^{\sim};
\iota_X^*\CG=\CG\circ \iota_X$ is an equivalence of categories
(\cite[III Th\'eor\`eme 4.1 and its proof]{SGA4},
and its quasi-inverse is given by 
$\iota_{X*}\colon \CB_{\fX}^{\sim}
\to X_{\proet}^{\sim}; (\iota_{X*}\CH)(W)=\varprojlim_{V\in (\CB_{\fX})_{/W}}
\CH(V)$, where $(\CB_{\fX})_{/W}$ is the full subcategory of $(X_{\proet})_{/W}$
consisting of $V\to W$, $V\in \Ob \CB_{\fX}$.

For $V\in \Ob \CB_{\fX}$, put $A_V^+=\Gamma(V,\hCO_{V}^+)$. Then
we have $\bA_{\inf,X}(V)=A_{\inf}(A_V^+)$
(\cite[Theorem 6.5 (i)]{ScholzepHTRigid}), and the morphism 
$v_{\fX,V}\colon 
\Spf(A_{\inf}(A_V^+)/[p]_qA_{\inf}(A_V^+))
=\Spf(A_V^+)\to \fU=\Spf(A)\to \fX$ is independent of the choice of 
$\fU$ whose adic generic fiber contains the image of $V$ in $X$ since 
$\fX$ is assumed to be separated. This construction gives
a functor 
\begin{equation}\alpha_{\inf,\fX}\colon 
\CB_{\fX}\to (\fX/A_{\inf})_{\prism}; V\mapsto
(\bA_{\inf,X}(V),v_{\fX,V}).
\end{equation}

\begin{lemma}
For $\CF\in \Ob (\Crystal^{\fproj}_{\prism}(\fX/A_{\inf}))$, the 
presheaf $\CF_{\BKF}=\CF\circ\alpha_{\inf,\fX}$ is a sheaf 
of $\iota_X^*\bA_{\inf,X}$-modules on $\CB_{\fX}$.
\end{lemma}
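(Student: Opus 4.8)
The plan is to verify the sheaf axioms for $\CF_{\BKF} = \CF \circ \alpha_{\inf,\fX}$ by exploiting the crystal property of $\CF$ together with the flat descent built into the definition of the prismatic site. First I would recall that a presheaf of $\iota_X^*\bA_{\inf,X}$-modules on $\CB_{\fX}$ is a sheaf precisely when it satisfies descent along coverings in $\CB_{\fX}$; since $\CB_{\fX}$ carries the topology induced from $X_{\proet}$, it suffices to check descent for those coverings of an affinoid perfectoid $V \in \Ob\CB_{\fX}$ that again lie in $\CB_{\fX}$, and by a standard refinement argument one reduces to a single covering $V' \to V$ by an affinoid perfectoid, whose \v{C}ech nerve $V'^{\bullet/V}$ consists of affinoid perfectoids over the same affine open $\fU$. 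The key input is that under tilting and the functor $A_{\inf}(-)$, the covering $A_V^+ \to A_{V'}^+$ of $p$-complete perfectoid rings induces a $(p,[p]_q)$-adically (in fact $(p)$-completely) faithfully flat map $\bA_{\inf,X}(V) = A_{\inf}(A_V^+) \to A_{\inf}(A_{V'}^+) = \bA_{\inf,X}(V')$, together with the identification of $A_{\inf}(A_{V'^{\times_V n}}^+)$ with the $n$-fold completed tensor power; this is the content of \cite[Theorem 6.5]{ScholzepHTRigid} and the almost-purity formalism, and it identifies $\alpha_{\inf,\fX}$ applied to the nerve with a \v{C}ech nerve of a flat covering in $(\fX/A_{\inf})_{\prism}$.

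Next I would use the crystal property: for $\CF \in \Ob\Crystal^{\fproj}_{\prism}(\fX/A_{\inf})$, the value $\CF(P,v)$ is finite projective over $P$ and the transition maps are base-change isomorphisms $\CF(P,v)\otimes_P P' \xrightarrow{\cong} \CF(P',v')$. Hence $\CF_{\BKF}(V'^{\times_V n}) \cong \CF(\bA_{\inf,X}(V)) \otimes_{\bA_{\inf,X}(V)} \bA_{\inf,X}(V'^{\times_V n})$, and the \v{C}ech complex of $\CF_{\BKF}$ along $V' \to V$ becomes the finite projective module $\CF(\bA_{\inf,X}(V))$ tensored with the \v{C}ech complex of $\bA_{\inf,X}(V) \to \bA_{\inf,X}(V')$. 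Faithfully flat descent for modules — applied after checking the ring-level \v{C}ech complex is a resolution — then gives exactness of $\CF_{\BKF}(V) \to \CF_{\BKF}(V') \rightrightarrows \CF_{\BKF}(V' \times_V V')$, which is exactly the sheaf condition. The $\iota_X^*\bA_{\inf,X}$-module structure is automatic since $\bA_{\inf,X}(V) = \CO_{\fX/A_{\inf}}(\alpha_{\inf,\fX}(V))$ by construction, and $\CF$ is a sheaf of $\CO_{\fX/A_{\inf}}$-modules, so the module structures are compatible under the restriction maps.

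The main obstacle I anticipate is the passage from topological flatness of $A_V^+ \to A_{V'}^+$ to genuine (non-completed) faithfully flat descent for the finitely generated modules in sight, i.e., justifying that the completed tensor products $\bA_{\inf,X}(V)\widehat{\otimes}\cdots$ agree with ordinary ones on the modules $\CF_{\BKF}(-)$ and that the \v{C}ech complex at ring level is exact. This should be handled either by reducing mod $(p,[p]_q)^n$ — where everything becomes genuinely flat and the Čech complex of a faithfully flat cover of ordinary rings is exact — and then passing to the limit using that $\CF(\bA_{\inf,X}(V))$ is finite projective (hence finitely presented, so completed tensor equals ordinary tensor for it), or by invoking directly the perfectoid/prismatic descent results already available; either way the crucial finiteness is the finite projectivity coming from $\CF \in \Crystal^{\fproj}_{\prism}$. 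A secondary, more bookkeeping point is checking that $\alpha_{\inf,\fX}$ really does send a covering $(V'\to V)$ in $\CB_{\fX}$ to a $(p,[p]_q)$-adically flat covering in $(\fX/A_{\inf})_{\prism}$ in the precise sense of Definition \ref{def:PrismaticSiteCrystal} (1), so that the formula for fibre products there applies and identifies the Čech nerve correctly; this uses separatedness of $\fX$ to make $v_{\fX,V}$ well-defined and compatible along the nerve.
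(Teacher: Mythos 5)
Your identification of the \v{C}ech complex of $\CF_{\BKF}$, via the crystal property, with $\CF_{\BKF}(V)\otimes_{\bA_{\inf,X}(V)}(\text{\v{C}ech complex of }\bA_{\inf,X})$ is exactly the right ingredient, but the way you then establish exactness has a genuine gap: you rest everything on honest $(p,\pq)$-adic (or $p$-complete) \emph{faithful flatness} of $\bA_{\inf,X}(V)=A_{\inf}(A_V^+)\to A_{\inf}(A_{V'}^+)$ for a pro\'etale covering $V'\to V$. This is not what \cite[Theorem 6.5]{ScholzepHTRigid} says (that theorem computes sections of the period sheaves on affinoid perfectoids), and such flatness is not available: pro\'etale coverings are built from rational localizations and finite \'etale maps of the generic fibre, and at the level of the $+$-rings these are only \emph{almost} flat (almost purity), not flat; the same problem persists after reduction mod $(p,\pq)^n$, so your proposed fix in the last paragraph does not repair it. Moreover, even granting almost flatness, descent would only give almost exactness of the \v{C}ech complex, whereas the sheaf condition for $\CF_{\BKF}$ is an equality of honest equalizers, so the almost formalism cannot close the argument by itself. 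A secondary inaccuracy: the $+$-ring of $V'\times_V V'$ is not simply the completed tensor product of the $+$-rings (again only almost so), so the identification of the ring-level \v{C}ech nerve you invoke is also not literally correct.

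The point you are missing is that no flatness of the covering maps is needed at all: the degree-$\le 1$ exactness of the ring-level \v{C}ech complex is already known on the nose, because $\iota_X^*\bA_{\inf,X}$ \emph{is} a sheaf on $\CB_{\fX}$ (restriction of the sheaf $\bA_{\inf,X}=W(\hCO^+_{X^\flat})$ on $X_{\proet}$, with sections on affinoid perfectoids given by Scholze's theorem). The paper's proof localizes at $V$: on the slice site $(\CB_{\fX})_{/V}$ the crystal property gives
$\hj_V^*\CF_{\BKF}\cong \CF_{\BKF}(V)\otimes_{\bA_{\inf,X}(V)}j_V^*\iota_X^*\bA_{\inf,X}$,
and since $\CF_{\BKF}(V)$ is finite projective, hence a retract of a finite free $\bA_{\inf,X}(V)$-module, tensoring with it preserves the (finite-limit) sheaf condition; so $\hj_V^*\CF_{\BKF}$ is a sheaf on $(\CB_{\fX})_{/V}$. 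Since sheafification commutes with restriction along $j_V$, evaluating at $\id_V$ gives $(a\CF_{\BKF})(V)=\CF_{\BKF}(V)$ for every $V$, i.e.\ $\CF_{\BKF}$ is a sheaf. If you replace your flat-descent step by this use of the sheaf property of $\bA_{\inf,X}$ together with finite projectivity, your argument becomes essentially the paper's.
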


\begin{proof}
Let $V\in \Ob \CB_{\fX}$, let $(\CB_{\fX})_{/V}$ be the category of 
objects of $\CB_{\fX}$ lying above $V$ equipped with the topology
induced by that of $\CB_{\fX}$ via the functor 
$j_V\colon (\CB_{\fX})_{/V}\to \CB_{\fX}; (W\to V)\mapsto W$.
The functor $j_V$ is continuous and cocontinuous
(\cite[III Proposition 5.2 2)]{SGA4}). Let $\hj_V^*$
denote the functor $\CB_{\fX}^{\wedge}
\to (\CB_{\fX})_{/V}^{\wedge};\CP\mapsto \CP\circ j_V$,
and let $j_V^*$ denote its restriction
$\CB_{\fX}^{\sim}\to (\CB_{\fX})_{/V}^{\sim}$. Then
we have 
\begin{multline*}(\hj_V^*\CF_{\BKF})(W\to V)
=\CF(\bA_{\inf,X}(W),v_{\fX,W})
\xleftarrow{\cong} \CF(\bA_{\inf,X}(V),v_{\fX,V})\otimes_{\bA_{\inf,X}(V)}\bA_{\inf,X}(W)\\
=\CF_{\BKF}(V)\otimes_{\bA_{\inf,X}(V)}
(j_{V}^*\iota_X^*\bA_{\inf,X})(W).\end{multline*}
Since $\CF_{\BKF}(V)$ is a finite projective $\bA_{\inf,X}(V)$-module,
this implies that $\hj_V^*\CF_{\BKF}$ is a sheaf of 
$j_V^*\iota_X^*\bA_{\inf,X}$-modules on $(\CB_{\fX})_{/V}$. Since 
$j_V^*$ and $\hj_V^*$ commute with the sheafifying functors $a(-)$
(\cite[III Proposition 2.3 2)]{SGA4}), we obtain
$$(a\CF_{\BKF})(V)
=(j_V^*a\CF_{\BKF})(\id_V)\xleftarrow{\cong}
(a\hj_V^*\CF_{\BKF})(\id_V)
\xleftarrow{\cong} (\hj_V^*\CF_{\BKF})(\id_V)=\CF_{\BKF}(V).$$
\end{proof}

\begin{definition}\label{BKFFunctor}
We define the functor $\bM_{\BKF,\fX}$ \eqref{eq:PrismCrysToBKF} 
by $\bM_{\BKF,\fX}(\CF)=\iota_{X*}(\CF\circ \alpha_{\inf,\fX})$
(\cite[Theorem 5.15]{MT}). 
\end{definition}

\begin{lemma}\label{lem:BKFModLocalStr}
Let  $\CF\in \Ob(\Crystal^{\fproj}_{\prism}(\fX/A_{\inf}))$, and put
$\bM=\bM_{\BKF,\fX}(\CF)$. Then, for any $V\in \Ob\CB_{\fX}$, the 
morphism $\bM(V)\otimes_{\bA_{\inf,X}(V)}
\bA_{\inf,X}\vert_V\xrightarrow{\cong}\bM\vert_{V}$
is an isomorphism on $(X_{\proet})_{/V}^{\sim}$
\end{lemma}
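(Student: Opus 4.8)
The statement to prove is Lemma \ref{lem:BKFModLocalStr}: for $V\in \Ob\CB_{\fX}$ the natural morphism $\bM(V)\otimes_{\bA_{\inf,X}(V)}\bA_{\inf,X}\vert_V\to \bM\vert_V$ on $(X_{\proet})_{/V}^{\sim}$ is an isomorphism. The plan is to reduce everything to the category $\CB_{\fX}$ via the equivalence $\iota_X^*\colon X_{\proet}^{\sim}\xrightarrow{\cong}\CB_{\fX}^{\sim}$ and its quasi-inverse $\iota_{X*}$, since by Definition \ref{BKFFunctor} we have $\bM=\iota_{X*}(\CF_{\BKF})$ where $\CF_{\BKF}=\CF\circ\alpha_{\inf,\fX}$ is the sheaf of $\iota_X^*\bA_{\inf,X}$-modules established in the preceding lemma. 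Because $\iota_X^*$ restricts cleanly to the localized topos — the composite $(X_{\proet})_{/V}$ with $\iota_X$ sits over $(\CB_{\fX})_{/V}$, and $\iota_X^*$ commutes with localization since $\iota_X$ is continuous and cocontinuous — it suffices to prove the analogous statement on $(\CB_{\fX})_{/V}$: the morphism $\CF_{\BKF}(V)\otimes_{\bA_{\inf,X}(V)}(j_V^*\iota_X^*\bA_{\inf,X})\to j_V^*\CF_{\BKF}$ is an isomorphism of sheaves on $(\CB_{\fX})_{/V}$.

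First I would unwind what $j_V^*\CF_{\BKF}$ is: for $(W\to V)\in \Ob(\CB_{\fX})_{/V}$ we have $(j_V^*\CF_{\BKF})(W\to V)=\CF_{\BKF}(W)=\CF(\bA_{\inf,X}(W),v_{\fX,W})$, and the morphism $(\bA_{\inf,X}(V),v_{\fX,V})\to (\bA_{\inf,X}(W),v_{\fX,W})$ in $(\fX/A_{\inf})_{\prism}$ induced by $W\to V$ lets us invoke the crystal property of $\CF$: the base-change map $\CF(\bA_{\inf,X}(V),v_{\fX,V})\otimes_{\bA_{\inf,X}(V)}\bA_{\inf,X}(W)\xrightarrow{\cong}\CF(\bA_{\inf,X}(W),v_{\fX,W})$ is an isomorphism (Definition \ref{def:PrismaticSiteCrystal} (2)). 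This is exactly the display already written in the proof of the preceding lemma, namely the identification of the presheaf inverse image $\hj_V^*\CF_{\BKF}$ with $\CF_{\BKF}(V)\otimes_{\bA_{\inf,X}(V)}(j_V^*\iota_X^*\bA_{\inf,X})$ as a presheaf on $(\CB_{\fX})_{/V}$. Since $\CF_{\BKF}(V)$ is a finite projective $\bA_{\inf,X}(V)$-module — it is a direct summand of a finite free module, and $V\mapsto\bA_{\inf,X}(V)$ is a sheaf of rings — the presheaf tensor product $\CF_{\BKF}(V)\otimes_{\bA_{\inf,X}(V)}(j_V^*\iota_X^*\bA_{\inf,X})$ is already a sheaf (tensoring a sheaf of modules by a finite projective module over the global sections is computed componentwise and preserves the sheaf condition), so $\hj_V^*\CF_{\BKF}$ is a sheaf and therefore equals $j_V^*\CF_{\BKF}$. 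Thus the desired isomorphism on $(\CB_{\fX})_{/V}$ holds, and transporting back through the equivalence $\iota_X^*$ and the identification $j_V^*\iota_X^*\bA_{\inf,X}$ with $\iota_X^*(\bA_{\inf,X}\vert_V)$ gives the claim.

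The one point that needs slightly more care — and which I would flag as the main obstacle — is the bookkeeping of the various localization functors and their compatibility: one must check that $\iota_X^*$ carries $\bM\vert_V=\iota_{X*}(\CF_{\BKF})\vert_V$ to $j_V^*\CF_{\BKF}$ and $\bA_{\inf,X}\vert_V$ to $j_V^*\iota_X^*\bA_{\inf,X}$ compatibly, so that the isomorphism proved on $(\CB_{\fX})_{/V}$ is genuinely the $\iota_X^*$-image of the asserted morphism on $(X_{\proet})_{/V}$. This follows from the cited facts in \cite[III]{SGA4} that $j_V^*$ and $\hj_V^*$ commute with sheafification and that for continuous cocontinuous $\iota_X$ the restriction $\iota_X^*$ commutes with localization, together with the fact that $\iota_{X*}$ is the inverse of $\iota_X^*$; once these are assembled, the statement is immediate. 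The algebraic heart of the argument — the crystal property combined with finite projectivity making the presheaf tensor product already a sheaf — is short and is essentially already carried out in the proof of the previous lemma, so I would present this lemma's proof as a localized reprise of that computation.
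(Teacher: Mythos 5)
Your proof is correct and rests on exactly the same algebraic fact as the paper's one-line proof: the crystal property of $\CF$ gives $\CF_{\BKF}(V)\otimes_{\bA_{\inf,X}(V)}\bA_{\inf,X}(V')\xrightarrow{\cong}\CF_{\BKF}(V')$ for every $V'\in\Ob\CB_{\fX}$ over $V$, and finite projectivity of $\CF_{\BKF}(V)$ makes the presheaf tensor product a sheaf. You unpack the localization bookkeeping ($\iota_X^*$, $j_V^*$, $\hj_V^*$) that the paper leaves implicit, but the approach is the same.
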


\begin{proof}
This follows from the isomorphism
$\CF_{\BKF}(V)\otimes_{\bA_{\inf,X}(V)}
\bA_{\inf,X}(V')\xrightarrow{\cong}\CF_{\BKF}(V')$
for every object $V'$ of $\CB_{\fX}$ lying over $V$,
where $\CF_{\BKF}=\CF\circ \alpha_{\inf,\fX}$. 
\end{proof}

\begin{remark}\label{rmk:PrismTOBKFFrobTensor}
(1) For every $V\in \Ob \CB_{\fX}$, we have $A_V^+=A_V^{\circ}$
(\cite[Example 1.6 (iii)]{MT}), which implies that the 
Frobenius of $\bA_{\inf,X}(V)=A_{\inf}(A_V^+)$ is an automorphism. 
Therefore, the Frobenius of $\bA_{\inf, X}$ is an automorphism, and
for $\CF\in \Ob(\Crystal^{\fproj}_{\prism}(\fX/A_{\inf}))$ and the object
$\varphi^*\CF=\CF\otimes_{\CO_{\fX/A_{\inf}},\varphi}\CO_{\fX/A_{\inf}}$
of $\Crystal^{\fproj}_{\prism}(\fX/A_{\inf})$ (Remark \ref{rmk:PrismCrystalFrobTensor} (2)),
the homomorphism $\CF\to \varphi^*\CF;x\mapsto x\otimes 1$
induces an isomorphism
\begin{equation}\label{eq:BKFFunctMapFrobPB}
\bM_{\BKF,\fX}(\CF)\xrightarrow{\;\cong\;} \bM_{\BKF,\fX}(\varphi^*\CF)
\end{equation}
semilinear over $\varphi\colon \bA_{\inf,X}\xrightarrow{\cong}\bA_{\inf,X}$.
Taking the scalar extension under the Frobenius of $\bA_{\inf,X}$, we obtain 
an $\bA_{\inf,X}$-linear isomorphism
\begin{equation}
\varphi^*(\bM_{\BKF,\fX}(\CF))=\bM_{\BKF,\fX}(\CF)\otimes_{\bA_{\inf,X},\varphi}
\bA_{\inf,X}\xrightarrow{\;\cong\:}\bM_{\BKF,\fX}(\varphi^*\CF).
\end{equation}
\par
(2) For $\CF,\CG\in \Ob(\Crystal^{\fproj}_{\prism}(\fX/A_{\inf}))$ and
$\CF\otimes_{\CO_{\fX/A_{\inf}}}\CG\in \Ob(\Crystal^{\fproj}_{\prism}(\fX/A_{\inf}))$
(Remark \ref{rmk:PrismCrystalFrobTensor} (3)), the $\CO_{\fX/A_{\inf}}$-bilinear
map $\CF\times\CG\to \CF\otimes_{\CO_{\fX/A_{\inf}}}\CG$ induces
an $\bA_{\inf,X}$-bilinear map 
$\bM_{\BKF,\fX}(\CF)\times\bM_{\BKF,\fX}(\CG)
\to \bM_{\BKF,\fX}(\CF\otimes_{\CO_{\fX/A_{\inf}}}\CG)$,
which yields an isomorphism
\begin{equation}\label{eq:BKFFunctorTensor}
\bM_{\BKF,\fX}(\CF)\otimes_{\bA_{\inf,X}}\bM_{\BKF,\fX}(\CG)\xrightarrow{\;\cong\;}
\bM_{\BKF,\fX}(\CF\otimes_{\CO_{\fX/A_{\inf}}}\CG)
\end{equation}
because $\CF(\alpha_{\inf,\fX}(V))\otimes_{\bA_{\inf,X}(V)}
\CG(\alpha_{\inf,\fX}(V))=(\CF\otimes_{\CO_{\fX/A_{\inf}}}\CG)(\alpha_{\inf,\fX}(V))$
for $V\in \Ob\CB_{\fX}$ 
(Remark \ref{rmk:PrismCrystalFrobTensor} (3)).
\end{remark}

We summarize  some basic facts on $\bM$ used in the proof of 
Theorem \ref{thm:PrismCohAinfCohLocComp}.
They are easily derived from some properties of the sheaves $\hCO_{X}^+$ and
$\bA_{\inf,X}$ on $X_{\proet}$ proven in \cite{ScholzepHTRigid}.

\begin{definition}\label{def:AlmostIsom}
For an $A_{\inf}$-module or a sheaf of $A_{\inf}$-modules on a topos $M$,
we say that $M$ is {\it almost zero} and write $M\approx 0$ if it is annihilated
by $[\varphi^{-r}(\varepsilon-1)]$ for every $r\in \N$. 
The subcategory of almost zero modules or sheaves is
 stable under kernels, cokernels, and extensions. 
A morphism $f\colon M\to N$ of
$A_{\inf}$-modules or  sheaves of $A_{\inf}$-modules on a topos is said to be
an {\it almost isomorphism} if the kernel and the cokernel of $f$ are almost zero.
Almost isomorphisms are stable under compositions. 
\end{definition}

Let $w_X\colon X_{\proet}^{\sim}\to X_{\et}^{\sim}$ denote
the morphism of topos induced by the morphism of sites
defined by the inclusion functor $X_{\et}\to X_{\proet}$.
Following \cite[\S5]{MT}, we say that a sheaf of abelian groups
$\CF$ on $X_{\proet}$ is {\it discrete} if it is the pullback of 
a sheaf of abelian groups on $X_{\et}$ under $w_X$. 
For a sheaf of abelian groups $\CG$ on $X_{\et}$, 
the adjunction morphism $\CG\to Rw_{X*}w_X^*\CG$
is an isomorphism \cite[Corollary 3.17 (i)]{ScholzepHTRigid}.
This implies that $\CF$ is discrete if and only if the adjunction
morphism $w_X^*w_{X*}\CF\to \CF$ is an isomorphism, and
that discrete sheaves on $X_{\proet}$ are stable under extensions. 

\begin{proposition}\label{prop:BKFProperties}
Let $\CF\in \Ob(\Crystal^{\fproj}_{\prism}(\fX/A_{\inf}))$,
and put $\bM=\bM_{\BKF,\fX}(\CF)$ and $\bM_m=\bM/(p,\pq)^{m+1}\bM$
for $m\in \N$. Let $V=$`` $\varprojlim_i$" $V_i$ be an object of $\CB_{\fX}$.\par
(1) The morphism $\bM\to \varprojlim_m\bM_m$ is an isomorphism.\par
(2) The homomorphism 
$\bM(V)/(p,\pq)^{m+1}\bM(V)\to \bM_m(V)$ is an almost isomorphism for $m\in \N$.\par
(3) The sheaf $\bM_m$ is discrete for $m\in \N$.\par
(4) The homomorphism $\varinjlim_i H^r(V_i,\bM_m)\to H^r(V,\bM_m)$
is an isomorphism for $r\in \N$, and the latter cohomology is almost zero if $r>0$.
\end{proposition}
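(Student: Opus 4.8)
The plan is to deduce all four assertions from the explicit local structure of $\bM=\bM_{\BKF,\fX}(\CF)$ given in Lemma \ref{lem:BKFModLocalStr} together with the known properties of $\hCO_X^+$ and $\bA_{\inf,X}$ on $X_{\proet}$ established in \cite{ScholzepHTRigid} and recalled in \cite{MT}. Fix $V=\text{``}\varprojlim_i\text{''}V_i\in\Ob\CB_{\fX}$ with image in the adic generic fiber $U$ of an affine open $\fU=\Spf(A)\subset\fX$; write $P=\bA_{\inf,X}(V)=A_{\inf}(A_V^+)$ and $M_0=\bM(V)$, a finite projective $P$-module, so that by Lemma \ref{lem:BKFModLocalStr} we have $\bM\vert_V\cong M_0\otimes_P(\bA_{\inf,X}\vert_V)$ on $(X_{\proet})_{/V}^\sim$. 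First I would treat (1): since $M_0$ is finite projective over $P$, it is a direct summand of a finite free $P$-module, so the statement $\bM\xrightarrow{\cong}\varprojlim_m\bM_m$ reduces to the case $\bM=\bA_{\inf,X}$ restricted to $V$, which in turn follows from the corresponding derived/classical $(p,\pq)$-adic completeness of $\bA_{\inf,X}$ on affinoid perfectoids --- here one uses that $[p]_q$ and $p$ generate an ideal of $A_{\inf}$ whose powers are cofinal with $(p,[p]_q)^{m}$, and that $\bA_{\inf,X}(V)=A_{\inf}(A_V^+)$ is $(p,[\varpi])$-adically (equivalently $(p,[p]_q)$-adically) complete with $\bA_{\inf,X}(V)/(p,[p]_q)^{m+1}\to\bA_{\inf,X,m}(V)$ an almost isomorphism; the transition to sheaves is via $\CB_{\fX}$ being a basis and $\bM\vert_V$ having the displayed form.

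For (2), again by finite projectivity of $M_0$ over $P$ I would reduce to $\bM=\bA_{\inf,X}$, where the claim is that $P/(p,[p]_q)^{m+1}P\to(\bA_{\inf,X}/(p,[p]_q)^{m+1}\bA_{\inf,X})(V)$ is an almost isomorphism. Tensoring the almost isomorphism $\hCO^+_X(V)/p^k\to(\hCO^+_X/p^k)(V)$ (a consequence of \cite[Lemma 4.10, Lemma 5.6]{ScholzepHTRigid}-type statements: cohomology of $\hCO^+_X/p^k$ on affinoid perfectoids is almost zero in positive degrees and almost equals $A_V^+/p^k$ in degree zero) along the Witt-vector/period presentation of $\bA_{\inf,X}$, and using that $A_{\inf}/(p,[p]_q)^{m+1}$ is built out of finitely many copies of $\CO$ via successive extensions, gives the claim; I would be a little careful that the ``almost'' setup with respect to $[\varphi^{-r}(\varepsilon-1)]$ as in Definition \ref{def:AlmostIsom} is exactly the one appearing in \cite{ScholzepHTRigid}. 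Part (3): $\bM_m$ is, locally on $V\in\CB_{\fX}$, a finite $\bA_{\inf,X,m}\vert_V$-module built from $M_0/(p,[p]_q)^{m+1}$; since $\hCO^+_X/p^k$ is discrete (it is $w_X^*$ of the corresponding étale sheaf, cf.\ \cite[Lemma 5.6, Corollary 6.6]{ScholzepHTRigid}) and discrete sheaves are stable under extensions and under tensoring with the pullback of an étale sheaf, one concludes $\bM_m$ is discrete; more precisely I would show $\bA_{\inf,X,m}$ is discrete and then that $\bM_m$, being étale-locally on $U$ a finite free $\bA_{\inf,X,m}$-module (possible after passing to an étale cover trivializing the finite projective module $M_0$, which only affects the value, not discreteness), is discrete.

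Finally (4): because $\bM_m$ is discrete by (3), say $\bM_m=w_X^*\CG_m$ for an étale sheaf $\CG_m$ on $X$, one has $R\Gamma(V,\bM_m)=R\Gamma(V,w_X^*\CG_m)$; the key input is that for an affinoid perfectoid $V$ over $U$ and an abelian sheaf $\CG$ on $X_{\et}$, the proétale cohomology $H^r(V,w_X^*\CG)$ is almost zero for $r>0$ and computed in degree $0$ as a filtered colimit over the $V_i$ --- this is precisely \cite[Corollary 6.6, Lemma 4.12]{ScholzepHTRigid} (vanishing of higher cohomology of $\hCO^+_X/p^k$, hence of any finite iterated extension thereof, on affinoid perfectoids, plus the continuity statement $\varinjlim_i H^r(V_i,-)\xrightarrow{\cong}H^r(V,-)$ for the proétale site). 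So I would: write $\bM_m$ as an iterated extension of sheaves of the form (finite projective module over $\hCO^+_X/p^k$, étale-locally free), reduce by the long exact sequence and the stability of ``almost zero'' under extensions to the case $\bM_m$ étale-locally $\cong(\hCO^+_X/p^k)^{\oplus N}$, and invoke the cited almost vanishing and colimit-continuity. The main obstacle I anticipate is bookkeeping the ``almost'' qualifiers and the passage between the three flavors of object (finite projective $\bA_{\inf,X}(V)$-module, the sheaf $\bM$, its mod $(p,[p]_q)^{m+1}$ reductions) cleanly --- in particular making sure that trivializing the finite projective module $M_0$ over an étale cover of $U$ does not disturb the almost-isomorphism and discreteness claims, which are local on $X_{\et}$, and that the filtered-colimit statement in (4) interacts correctly with taking such covers. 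Everything else is a direct appeal to \cite{ScholzepHTRigid} and the definitions.
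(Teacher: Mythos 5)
Your reductions in (1) and (2) — reduce to $\bM=\bA_{\inf,X}$ via Lemma \ref{lem:BKFModLocalStr} and finite projectivity, then use completeness and almost isomorphisms for the period sheaves — are on the right track and essentially match the paper, although the paper makes the reduction to $m=0$ via the explicit exact sequence of sheaves \eqref{eq:BKFGr} built from the gradation $\gr^{m}_{\tI}\bM\cong\bM_0^{\oplus(m+1)}$, which you gesture at but do not isolate cleanly.

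Part (3) has a genuine gap, and it propagates to (4). Two things go wrong. First, you assert that $\hCO_X^+/p^k$ is discrete; it is not — only $\CO_X^+/p^k$ is discrete, and the map $\CO_X^+/p^k\to\hCO_X^+/p^k$ is merely an \emph{almost} isomorphism (\cite[Lemma 4.10]{ScholzepHTRigid}). Consequently $\bA_{\inf,X,m}=\bA_{\inf,X}/(p,\pq)^{m+1}$ is also not discrete, so the strategy ``show $\bA_{\inf,X,m}$ is discrete, then show $\bM_m$ is étale-locally free over it'' cannot get started. Second, and more fundamentally, you never invoke the ``trivial modulo $<\mu$'' condition built into Definition \ref{def:FprojCrystalBKFCat} (i.e.\ \cite[Definition 5.1]{MT}). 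That condition is exactly what is needed: it says that $\bM/(p,\varphi^{-1}(\pq))\bM$ is Zariski-locally on $\fX$ a finite \emph{free} module over $\bA_{\inf,X}/(p,\varphi^{-1}(\pq))\bA_{\inf,X}\cong\CO_X^+/p\CO_X^+$, which \emph{is} discrete; one then bootstraps to $\bM_0$ via the $\varphi^{-1}(\pq)$-filtration of $\bM/p\bM$ and to all $\bM_m$ via \eqref{eq:BKFGr}, using stability of discrete sheaves under extensions. Without the ``trivial modulo $<\mu$'' input, the statement is simply false for an arbitrary locally finite free $\bA_{\inf,X}$-module. Your proposed substitute — ``trivializing the finite projective module $M_0$ over an étale cover of $U$'' — does not help: the trivialization lives on a Zariski cover of $\Spec\bA_{\inf,X}(V)$, which bears no relation to an étale cover of $X$, and even if it did, freeness over the non-discrete sheaf $\bA_{\inf,X,m}$ would not yield discreteness. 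The same issue then undermines your reduction in (4) to ``étale-locally $(\hCO_X^+/p^k)^{\oplus N}$''; the paper instead reduces via \eqref{eq:BKFGr} and triviality mod $\varphi^{-1}(\pq)$ to the almost vanishing of $H^r(V,\CO_X^+/p\CO_X^+)$ for $r>0$.
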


\begin{proof}
Put $\tI=pA_{\inf}+\pq A_{\inf}$ to simplify the notation. 
Let $W$ be an object of $\CB_{\fX}$. Since $\bM(W)$ is a finite
projective $\bA_{\inf,X}(W)$-module and the sequence
$p$, $\pq$ is $\bA_{\inf,X}(W)$-regular, the homomorphisms
$\bM(W)/\tI\bM(W)\to \tI^{m+1}\bM(W)/\tI^{m+2}\bM(W)$
induced by the multiplication by $p^i\pq^{m+1-i}$
on $\bM(W)$ for $i\in \N\cap [0,m+1]$ induce an exact sequence
\begin{equation}\label{eq:BKFSectionGr}
0\longrightarrow
\bigoplus_{i=0}^{m+1}\bM(W)/\tI\bM(W)\longrightarrow
\bM(W)/\tI^{m+2}\bM(W)
\longrightarrow
\bM(W)/\tI^{m+1}\bM(W)
\longrightarrow 0.
\end{equation}
Varying $W$, taking the associated sheaf on $\CB_{\fX}$,
and applying $\iota_{X*}$, we obtain an exact sequence
\begin{equation}\label{eq:BKFGr}
0\longrightarrow \bigoplus_{i=0}^{m+1}\bM_0
\longrightarrow \bM_{m+1}\longrightarrow \bM_m\longrightarrow 0.
\end{equation}
Note that the functors $\iota_X^*\colon X_{\proet}^{\sim}\to \CB_{\fX}^{\sim}$
and $\widehat{\iota_X}^*\colon X_{\proet}^{\wedge}\to \CB_{\fX}^{\wedge};
\CP\mapsto \CP\circ \iota_X$ commute with the functors $a$
sending presheaves to their associated sheaves: $a\widehat{\iota_X}^*\cong \iota_X^*a$
by \cite[III Proposition 2.3 2)]{SGA4}.

(1) By Lemma \ref{lem:BKFModLocalStr}, it suffices to prove
$\varprojlim_m\bA_{\inf,X}/\tI^{m+1}\bA_{\inf,X}\cong\bA_{\inf,X}$.
This follows from 
$\bA_{\inf,X}=\varprojlim_rW_r(\hCO_{X^{\flat}}^{+})$,
$W_r(\hCO_{X^{\flat}}^{+})\cong\bA_{\inf,X}/p^r\bA_{\inf,X}$,
and $\bA_{\inf,X}/p^r\bA_{\inf,X}\cong\varprojlim_m\bA_{\inf,X}/(\pq^m,p^r)$.
Since $\bA_{\inf,X}$ is $p$-torsion free, the last isomorphism is
reduced to the case $r=1$. In this case, it is derived from 
\cite[Lemma 5.11 (i)]{ScholzepHTRigid}
which implies that $\frac{\varepsilon-1}{\varphi^{-1}(\varepsilon-1)}$ is $\hCO_{X^{\flat}}^{+}$-regular
and the kernel of the projection to the
$n$th component $\hCO_{X^{\flat}}^{+}\to \CO_X^+/p\CO_X^+$
is generated by $\left(\frac{\varepsilon-1}{\varphi^{-1}(\varepsilon-1)}\right)^{p^{n-1}}$. \par
(2) By Lemma \ref{lem:BKFModLocalStr}, we have 
$\bM_m(V)\cong \bM(V)\otimes_{\bA_{\inf,X}(V)}(\bA_{\inf,X}/\tI^{m+1}\bA_{\inf,X})(V)$.
Hence it suffices to prove the claim for $\bM=\bA_{\inf, X}$. 
By \eqref{eq:BKFSectionGr} for $W=V$ and 
\eqref{eq:BKFGr}, the claim is reduced to the case $m=0$.
By \cite[Theorem 6.5 (i), Lemma 4.10 (i), (iii)]{ScholzepHTRigid},
we have isomorphisms $\bA_{\inf,X}(V)/\pq\bA_{\inf,X}(V)\xrightarrow{\cong}\hCO_X^+(V)$
and $\bA_{\inf,X}/\pq\bA_{\inf,X}\xrightarrow{\cong}\hCO_X^+$,
and an almost isomorphism $\hCO_X^+(V)/p\hCO_X^+(V)\xrightarrow{\approx}\CO^+_X/p\CO^+_X(V)
\cong \hCO_X^+/p\hCO_X^+(V)$.\par
(3) By \eqref{eq:BKFGr}, the claim is reduced to the case $m=0$ by induction.
Since $\bM/p\bM$ is a locally free $\bA_{\inf,X}/p\bA_{\inf,X}=\CO_{X^{\flat}}^{+}$-module,
$\varphi^{-1}(\pq)=\mu/\varphi^{-1}(\mu)$ is $\hCO_{X^{\flat}}^+$-regular, 
and $\bM_0\cong(\bM/p\bM)/\varphi^{-1}(\pq)^p(\bM/p\bM)$, 
it is further reduced to the claim that $\bM/(p,\varphi^{-1}(\pq))\bM$ is discrete,
which is a consequence of $\bA_{\inf,X}/(p,\varphi^{-1}(\pq))\bA_{\inf,X}\cong 
\CO_X^+/p\CO_X^+$
since $\bM$ is trivial modulo $\varphi^{-1}(\pq)$ (\cite[Definition 5.1]{MT}).\par
(4) The first claim follows from (3) and \cite[Lemma 3.16]{ScholzepHTRigid}.
Similarly to the proof of (3), the exact sequence
\eqref{eq:BKFGr} and  the triviality of $\bM$ modulo $\varphi^{-1}(\pq)$
reduces the claim to $H^r(V,\CO_X^+/p\CO_X^+)\approx 0$ $(r>0)$
\cite[the proof of Lemma 4.10 (v)]{ScholzepHTRigid}.
\end{proof}

Let us discuss the compatibility of $\bM_{\BKF,\fX}$ 
with inverse image functors. Let $\fX'$
be another separated, smooth, $p$-adic formal scheme over 
$\CO$, and let $g\colon \fX'\to \fX$ be a morphism
over $\CO$. We define $X'$, $\iota_{X'}\colon \CB_{\fX'}
\hookrightarrow X'_{\proet}$ and 
$v_{\fX',V'}\colon \Spf(A_{V'}^+)\to \fX'$
for $V'\in \Ob \CB_{\fX'}$ associated to $\fX'$
in the same way as $X$, $\iota_X$ and $v_{\fX,V}$ 
defined above associated to $\fX$. 
Let $\bmg\colon X'\to X$ be the morphism of 
adic spaces associated to $g$. We define 
$\CB_g$ to be the category of morphisms 
$u\colon V'\to V$ $(V\in \Ob \CB_{\fX}, V'\in \Ob \CB_{\fX'})$
compatible with $\bmg$ such that there exists
a pair of affine opens $\fU'\subset \fX'$ and $\fU\subset \fX$
such that $g(\fU')\subset \fU$ and the adic
generic fibers of $\fU$ and $\fU'$ contain
the images of $V$ and $V'$ in $X$ and $X'$, respectively.
Let $\CF\in \Ob(\Crystal_{\prism}^{\fproj}(\fX/A_{\inf}))$
and put $\CF'=g_{\prism}^{-1}\CF$ 
(Definition \ref{def:PrismaticSiteCrystal} (3)), 
which belongs
to $\Crystal_{\prism}^{\fproj}(\fX'/A_{\inf})$.
Put $\CF_{\BKF}=\CF\circ \alpha_{\inf,\fX}$,
$\CF'_{\BKF}=\CF'\circ\alpha_{\inf,\fX'}$, 
$\bM=\bM_{\BKF,\fX}(\CF)=\iota_{X*}\CF_{\BKF}$,
and $\bM'=\bM_{\BKF,\fX'}(\CF')=\iota_{X'*}\CF'_{\BKF}$.
For $u\colon V'\to V\in \Ob\CB_g$, the morphism 
$\Spf(A_{V'}^+)\to \Spf(A_V^+)$ 
is compatible with $g\colon \fX'\to \fX$
via $v_{\fX,V}$ and $v_{\fX',V'}$, thereby 
defining a morphism 
$(\bA_{\inf,X'}(V'),g\circ v_{\fX',V'})
\to (\bA_{\inf,X}(V), v_{\fX,V})$ in
$(\fX/A_{\inf})_{\prism}$, which is obviously
functorial in $u$. Evaluating $\CF$ on the morphism,
we obtain a morphism $\CF_{\BKF}(u)\colon 
\CF_{\BKF}(V)\to \CF'_{\BKF}(V')$ functorial in $u$.

\begin{proposition}\label{prop:BKFFunctComp}
(1) There exists a unique $\bA_{\inf,X}$-linear morphism $\varepsilon_{g,\CF}\colon 
\bM\to \bmg_{\proet*}\bM'$ such that for any $W\in \Ob X_{\proet}$
and a left commutative diagram below with 
$u\colon V'\to V\in \Ob \CB_g$
and the left (resp.~right) vertical morphism belonging
to $X_{\proet}$ (resp.~$X'_{\proet})$, 
the right diagram below is commutative.
\begin{equation*}
\xymatrix{
W&\ar[l] W\times_XX'\\
V\ar[u]&\ar[l]_u V'\ar[u]
}\qquad
\xymatrix@C=50pt{
\bM(W)\ar[r]^(.42){\varepsilon_{g,\CF}(W)} \ar[d]&\bM(W\times_XX')\ar[d]\\
\bM(V)\ar[r]^{\CF_{\BKF}(u)} &\bM(V')
}
\end{equation*}

(2) The left adjoint $\eta_{g,\CF}\colon 
\bmg_{\proet}^*\bM:=
\bA_{\inf,X'}\otimes_{\bmg^{-1}_{\proet}(\bA_{\inf,X})}
\bmg_{\proet}^{-1}(\bM)\to \bM'$ 
of the $\bA_{\inf,X}$-linear morphism $\varepsilon_{g,\CF}$
is an isomorphism. This is functorial in $\CF$ and gives an 
isomorphism 
\begin{equation}\eta_g\colon \bmg_{\proet}^*\circ\bM_{\BKF,\fX}
\xrightarrow{\;\cong\;} \bM_{\BKF,\fX'}\circ g_{\prism}^*.
\end{equation}
\par

(3) Let $g'\colon \fX''\to \fX'$ be an $\CO$-morphism 
from a separated, smooth, $p$-adic formal scheme $\fX''$ 
over $\CO$, and let $\bmg'\colon X''\to X'$
be its adic generic fiber. Then the following diagram is commutative
\begin{equation}
\xymatrix@C=50pt{
\bmg_{\proet}^{\prime*}\bmg_{\proet}^*\bM_{\BKF,\fX}
\ar[r]^{\cong}_{\bmg^{\prime*}_{\proet}(\eta_g)}&
\bmg^{\prime*}_{\proet}\bM_{\BKF,\fX'}g_{\prism}^*
\ar[r]^{\cong}_{\eta_{g'}\circ g_{\prism}^*}&
\bM_{\BKF,\fX''}g^{\prime*}_{\prism}g^*_{\prism}\\
(\bmg\circ\bmg')_{\proet}^*\bM_{\BKF,\fX}
\ar@{-}[u]^{\cong}
\ar[rr]^{\cong}_{\eta_{g\circ g'}}&&
\bM_{\BKF,\fX''}(g\circ g')^*_{\prism}
\ar@{-}[u]_{\cong}
}
\end{equation}
\end{proposition}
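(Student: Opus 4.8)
The plan is to construct $\varepsilon_{g,\CF}$ in part (1) from the equivalence $\iota_X^*\colon X_{\proet}^\sim\xrightarrow{\sim}\CB_{\fX}^\sim$ together with the descriptions $\bM=\iota_{X*}\CF_{\BKF}$ and $\bM'=\iota_{X'*}\CF'_{\BKF}$. First I would note that, by Definition \ref{def:PrismaticSiteCrystal} (3), one has $\CF'_{\BKF}(V')=\CF(\bA_{\inf,X'}(V'),g\circ v_{\fX',V'})$ for $V'\in\Ob\CB_{\fX'}$, so that any morphism $V'\to V$ over $\bmg$ with $V'\in\Ob\CB_{\fX'}$ and $V\in\Ob\CB_{\fX}$ gives a homomorphism $A_V^+\to A_{V'}^+$ and hence a morphism $(\bA_{\inf,X'}(V'),g\circ v_{\fX',V'})\to(\bA_{\inf,X}(V),v_{\fX,V})$ in $(\fX/A_{\inf})_{\prism}$; here the two resulting maps $\Spf(A_{V'}^+)\to\fX$ agree because $\fX$ is separated and they induce the same map of adic generic fibers. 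Applying $\CF$ gives $\CF_{\BKF}(V)\to\CF'_{\BKF}(V')$, which is $\CF_{\BKF}(u)$ when $u\colon V'\to V$ lies in $\CB_g$. For $V\in\Ob\CB_{\fX}$ one has $(\iota_X^*\bmg_{\proet*}\bM')(V)=\bM'(V\times_XX')=\varprojlim_{V'\to V\times_XX'}\CF'_{\BKF}(V')$, and the homomorphisms above, for all $V'\to V\times_XX'\to V$, assemble by functoriality of $\CF$ into a homomorphism $\CF_{\BKF}(V)\to(\iota_X^*\bmg_{\proet*}\bM')(V)$ natural in $V$; transporting through $\iota_X^*$ yields $\varepsilon_{g,\CF}$, which is $\bA_{\inf,X}$-linear since all the maps in play are semilinear over the structure ring maps. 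The characterizing diagram holds by construction, and uniqueness follows because a morphism out of $\bM$ is determined by its restriction to $\CB_{\fX}$, on which each component is pinned down by the diagram, using that the $V'$ fitting into objects of $\CB_g$ are cofinal among those mapping to $V\times_XX'$ (\cite[Corollary 4.7]{ScholzepHTRigid}).

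For part (2), being an isomorphism is local on $X'_{\proet}$, so I would check it on sections over objects $V'\in\Ob\CB_{\fX'}$. After refining $V'$ I may choose $V\in\Ob\CB_{\fX}$ and a morphism $u\colon V'\to V$ over $\bmg$. Restricting the isomorphism $\bM|_V\cong\bM(V)\otimes_{\bA_{\inf,X}(V)}\bA_{\inf,X}|_V$ of Lemma \ref{lem:BKFModLocalStr} along $u$ gives $\bmg_{\proet}^*\bM|_{V'}\cong\bM(V)\otimes_{\bA_{\inf,X}(V)}\bA_{\inf,X'}|_{V'}$, and the crystal property of $\CF$ applied to the morphism $(\bA_{\inf,X'}(V'),g\circ v_{\fX',V'})\to(\bA_{\inf,X}(V),v_{\fX,V})$ identifies $\bM(V)\otimes_{\bA_{\inf,X}(V)}\bA_{\inf,X'}(V')=\CF_{\BKF}(V)\otimes_{\bA_{\inf,X}(V)}\bA_{\inf,X'}(V')\xrightarrow{\cong}\CF'_{\BKF}(V')=\bM'(V')$. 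Combined with Lemma \ref{lem:BKFModLocalStr} for $\fX'$ this exhibits $\bmg_{\proet}^*\bM|_{V'}\cong\bM'|_{V'}$, and one checks this isomorphism equals $\eta_{g,\CF}|_{V'}$; since objects of $\CB_{\fX'}$ cover $X'_{\proet}$, $\eta_{g,\CF}$ is an isomorphism. Naturality in $\CF$ is immediate from the construction, giving $\eta_g$.

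For part (3), both composites in the diagram are isomorphisms $(\bmg\circ\bmg')_{\proet}^*\bM_{\BKF,\fX}\xrightarrow{\cong}\bM_{\BKF,\fX''}\circ(g\circ g')_{\prism}^{-1}$; by adjunction and the uniqueness clause of part (1) it suffices to check that the corresponding morphisms $\bM_{\BKF,\fX}\to(\bmg\circ\bmg')_{\proet*}\bM_{\BKF,\fX''}$ coincide, and this follows from the characterizing diagram of part (1): for a composable pair $V''\to V'\to V$ over $\bmg'$ and $\bmg$ (with $V''\in\Ob\CB_{\fX''}$, $V'\in\Ob\CB_{\fX'}$, $V\in\Ob\CB_{\fX}$) the relevant component is $\CF$ applied to the composite of the two induced morphisms in $(\fX/A_{\inf})_{\prism}$, which is the same whether one composes stepwise or in one step. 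I expect the main obstacle to be the careful setup in part (1): verifying the compatibility of the $\Spf$-level maps over $\fX$ via separatedness, controlling the indexing category $\CB_g$ and the fiber products $V\times_XX'$ in $X'_{\proet}$ well enough for the assembly and cofinality arguments, and confirming $\bA_{\inf,X}$-linearity; once this is in place, parts (2) and (3) are essentially formal.
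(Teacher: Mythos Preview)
Your approach is correct and follows essentially the same strategy as the paper. The main difference is in the packaging of part (1): the paper constructs $\varepsilon_{g,\CF}(W)$ for arbitrary $W\in\Ob X_{\proet}$ by explicitly choosing coverings $(V_\alpha\to W)$ in $\CB_{\fX}$, then coverings $(V'_{\alpha\beta}\to V_\alpha\times_XX')$ in $\CB_{\fX'}$ with composites $V'_{\alpha\beta}\to V_\alpha$ in $\CB_g$, and glues via the sheaf condition (difference kernels). Your approach avoids this by working directly on $\CB_{\fX}$ via the equivalence $\iota_X^*$ and the inverse-limit formula $(\iota_{X'*}\CF'_{\BKF})(W')=\varprojlim_{V'}\CF'_{\BKF}(V')$, which is cleaner. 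Two small points to tighten: in your uniqueness argument, ``cofinal'' is not quite the right word---the relevant $V'$ need not be cofinal in $(\CB_{\fX'})_{/V\times_XX'}$, but they do cover $V\times_XX'$, and the sheaf property of $\bM'$ is what you actually use; in part (3), a given $u''\colon V''\to V$ in $\CB_{g\circ g'}$ need not factor through some $V'\in\CB_{\fX'}$ with both legs in $\CB_{g'}$ and $\CB_g$, so one must first refine $V''$ by a covering (the paper does this explicitly via compatible triples of coverings of $W$, $W'$, $W''$). Parts (2) and (3) are otherwise argued the same way in both.
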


\begin{proof}
(1) Any $W\in \Ob X_{\proet}$ admits a 
covering $(V_{\alpha}\to W)_{\alpha\in A}$
by objects $V_{\alpha}$ of $\CB_{\fX}$ and,
for each $\alpha\in A$, $V_{\alpha}'=V_{\alpha}\times_XX'$
is covered by objects of $\CB_{\fX'}$ as 
$(V'_{\alpha\beta}\to V_{\alpha}')_{\beta\in A_{\alpha}}$
in such a way that the composition $u_{\alpha\beta}\colon V'_{\alpha\beta}
\to V'_{\alpha}\to V_{\alpha}$ belongs to $\CB_g$.
This implies the uniqueness. 
Put $W'=W\times_XX'$, 
$A'=\{(\alpha,\beta)\,\vert\,
\alpha\in A, \beta\in A_{\alpha}\}$, 
let $\varphi\colon A'\to A$ be the morphism 
defined by $(\alpha,\beta)\mapsto \alpha$,
and put $V_{\alpha_1\alpha_2}=V_{\alpha_1}\times_WV_{\alpha_2}$
$(\alpha_1,\alpha_2\in A)$
and $V'_{\alpha'_1\alpha'_2}=
V'_{\alpha'_1}\times_{W'}V'_{\alpha'_2}$
$(\alpha'_1,\alpha_2'\in A'$). 
Note that $(V'_{\alpha'}\to V'_{\varphi(\alpha')}\to W')_{\alpha'\in A'}$
is a covering of $W'$. 
We can further take a covering $V_{\alpha_1\alpha_2;\gamma}\to 
V_{\alpha_1\alpha_2}$ $(\gamma\in A_{\alpha_1\alpha_2})$
by objects of $\CB_{\fX}$ for each $(\alpha_1,\alpha_2)
\in A^2$, and then a covering
$V'_{\alpha'_1\alpha_2';\gamma\gamma'}
\to V'_{\alpha'_1\alpha'_2;\gamma}:=V_{\alpha_1\alpha_2;\gamma}\times_{V_{\alpha_1\alpha_2}}
V'_{\alpha_1'\alpha_2'}
$
$(\gamma'\in A_{\alpha_1'\alpha_2';\gamma})$
by objects of $\CB_{\fX'}$ for $\alpha_i'\in A'$,
$\alpha_i=\varphi(\alpha_i')$ $(i=1,2)$, and $\gamma\in A_{\alpha_1\alpha_2}$
such that the composition 
$u_{\alpha'_1\alpha'_2;\gamma\gamma'}\colon V'_{\alpha'_1\alpha'_2;\gamma\gamma'}
\to V'_{\alpha'_1\alpha'_2;\gamma}\to V_{\alpha_1\alpha_2;\gamma}$
belongs to $\CB_g$. 
The morphisms $\CF_{\BKF}(u_{\alpha'})$
and $\CF_{\BKF}(u_{\alpha'_1\alpha_2';\gamma\gamma'})$
induce an  $\bA_{\inf,X}(W)$-linear morphism 
$\varepsilon(W)\colon \bM(W)\to \bM'(W')$ since the domain
(resp.~the codomain) is the difference kernel of
$\prod\CF_{\BKF}(V_{\alpha})
\rightrightarrows \prod\CF_{\BKF}(V_{\alpha_1\alpha_2;\gamma})$
(resp.~$\prod \CF'_{\BKF}(V'_{\alpha'})
\rightrightarrows \prod \CF'_{\BKF}(V'_{\alpha'_1\alpha'_2;\gamma\gamma'})$.
We see that $\varepsilon(W)$ satisfies the desired property
for any morphism from an object $u\colon V'\to V$ of $\CB_g$
to $W'\to W$ over $\bmg\colon X'\to X$ by adding $V$ and a covering of $V\times_XX'$ containing
$V'$ to the coverings
$(V_{\alpha}\to W)_{\alpha\in A}$ and $(V'_{\alpha'}\to W')_{\alpha'\in A'}$,
respectively.  This compatibility with $\CF_{\BKF}(u)$ allows us to show that 
$\varepsilon(W)$ is functorial in $W$ and therefore defines
an $\bA_{\inf,X}$-linear morphism $\bM\to \bmg_{\proet*}\bM'$.\par
(2) It suffices to prove that the restriction of 
$\eta_{g,\CF}\colon\bmg_{\proet}^*\bM\to \bM'$ to $(X'_{\proet})_{/V'}$ 
is an isomorphism for any $u\colon V'\to V\in \Ob \CB_g$.
Put $V''=V\times_XX'$, let $u'\colon V'\to V''$
be the morphism in $X'_{\proet}$ induced by $u$,
let $j_{u'}\colon (X'_{\proet})_{/V'}^{\sim}
\to (X'_{\proet})_{/V''}^{\sim}$ be the morphism 
of topos induced by $u'$, and let $\bmg_{\proet,V}$ be the
morphism of topos
$(X'_{\proet})_{/V''}^{\sim}\to (X_{\proet})_{/V}^{\sim}$
induced by the morphism of sites
$(X_{\proet})_{/V}\to (X'_{\proet})_{/V''}$
defined by taking the pullback by $\bmg\colon X'\to X$. 
Then the restriction of $\eta_{g,\CF}$ to 
$(X_{\proet})_{/V'}$ is  the adjoint
of $\bM\vert_V\xrightarrow{\varepsilon_{g,\CF}\vert_{V}}
 \bmg_{\proet, V*}(\bM'\vert_{V''})
\to \bmg_{\proet,V*}j_{u'*}j_{u'}^*(\bM'\vert_{V''})
=\bmg_{\proet,V*}j_{u'*}(\bM'\vert_{V'})$,
whose section over $V$ is simply given by $\CF_{\BKF}(u)$. 
Hence the composition of $(\eta_{g,\CF}\vert_{V'})(V')\colon
(\bmg_{\proet}^*\bM)(V')
\to \bM'(V')$ with $\CF_{\BKF}(V)
=\bM(V)\to \bmg_{\proet*}\bmg_{\proet}^*\bM(V)=
\bmg_{\proet}^*\bM(V'')
\xrightarrow{\bmg_{\proet}^*\bM(u')}\bmg_{\proet}^*\bM(V')$ coincides with 
$\CF_{\BKF}(u)\colon \CF_{\BKF}(V)
\to \CF'_{\BKF}(V')$. 
By Lemma \ref{lem:BKFModLocalStr}, we have
$(\bmg_{\proet}^*\bM)\vert_{V'}
\cong j_{u'}^*\bmg_{\proet,V}^*(\bM\vert_V)
\cong\CF_{\BKF}(V)\otimes_{\bA_{\inf,X}(V)}
\bA_{\inf,X'}\vert_{V'}$
and 
$\bM'\vert_{V'}
\cong\CF'_{\BKF}(V')\otimes_{\bA_{\inf,X'}(V')}
\bA_{\inf,X'}\vert_{V'}$. 
Now the above observation
on $(\eta_{g,\CF}\vert_{V'})(V')$ implies that 
$\eta_{g,\CF}\vert_{V'}$ is given by the isomorphism 
$\CF_{\BKF}(V)\otimes_{\bA_{\inf,X}(V)}
\bA_{\inf,X'}\vert_{V'}
\xrightarrow{\cong} \CF'_{\BKF}(V')
\otimes_{\bA_{\inf,X'}(V')}\bA_{\inf,X'}\vert_{V'}$
induced by $\CF_{\BKF}(u)$.\par
(3) Let $\CF\in \Ob(\Crystal^{\fproj}_{\prism}(\fX/A_{\inf}))$, and
put $\CF'=g_{\prism}^{-1}\CF$, $\CF''=(g^{\prime}_{\prism})^{-1}\CF'
=(g\circ g')_{\prism}^{-1}\CF$, $\bM=\bM_{\BKF,\fX}(\CF)$,
$\bM'=\bM_{\BKF,\fX'}(\CF')$, and $\bM''=\bM_{\BKF,\fX''}(\CF'')$.
Then it suffices to prove that the 
composition 
$\bmg_{\proet*}(\varepsilon_{g',\CF'})\circ
\varepsilon_{g,\CF}\colon 
\bM\to \bmg_{\proet*}\bM'
\to \bmg_{\proet*}\bmg_{\proet*}^{\prime}\bM''$
coincides with $\varepsilon_{g\circ g',\CF}
\colon \bM\to (\bmg\circ\bmg')_{\proet*}\bM''$
via the canonical isomorphism between the
codomains. For any $W\in \Ob X_{\proet}$,
$W'=W\times_XX'\in \Ob X'_{\proet}$, 
and $W''=W'\times_{X'}X''\in \Ob X''_{\proet}$,
there exist coverings
$(V_{\alpha}\to W)_{\alpha\in A}$,
$(V'_{\alpha'}\to W')_{\alpha'\in A'}$,
and $(V''_{\alpha''}\to W'')_{\alpha''\in A''}$
by objects of $\CB_{\fX}$, $\CB_{\fX'}$, and $\CB_{\fX''}$,
respectively, maps $\varphi\colon A'\to A$
and $\varphi'\colon A''\to A'$, and 
morphisms $u_{\alpha'}\colon V'_{\alpha'}\to V_{\varphi(\alpha')}
\in \Ob \CB_g$ $(\alpha'\in A')$ 
(resp.~$u'_{\alpha''}\colon V''_{\alpha''}\to 
V'_{\varphi'(\alpha'')}\in \Ob \CB_{g'}$
$(\alpha''\in A'')$) compatible with $W'\to W$
(resp.~$W''\to W'$) and satisfying $u_{\varphi(\alpha'')}\circ u'_{\alpha''}
\in \Ob \CB_{g\circ g'}$ for every $\alpha''\in A''$.  
Hence, by the characterization
of the morphism $\varepsilon_{g,\CF}$ in (1),
the claim is reduced to $\CF'_{\BKF}(u'_{\alpha''})
\circ \CF_{\BKF}(u_{\varphi(\alpha'')})=
\CF_{\BKF}(u_{\varphi(\alpha'')}\circ
u'_{\alpha''})$, which immediately follows
from the definition.
\end{proof}

\begin{remark}
(1) For $\CF\in \Ob(\Crystal^{\fproj}_{\prism}(\fX/A_{\inf}))$, the following
diagram is commutative.
\begin{equation}\label{eq:BKFFunctFrobFunct}
\xymatrix{
\bM_{\BKF,\fX}(\CF)\ar[rr]^{\varepsilon_{g,\CF}}
\ar[d]^{\eqref{eq:BKFFunctMapFrobPB}}
&&
\bmg_{\proet*}\bM_{\BKF,\fX'}(g_{\prism}^{-1}(\CF))
\ar[d]^{\eqref{eq:BKFFunctMapFrobPB}}\\
\bM_{\BKF,\fX}(\varphi^*\CF)
\ar[r]^(.4){\varepsilon_{g,\varphi^*\CF}}&
\bmg_{\proet*}\bM_{\BKF,\fX'}(g_{\prism}^{-1}(\varphi^*\CF))
\ar@{=}[r]&
\bmg_{\proet*}\bM_{\BKF,\fX'}(\varphi^*(g_{\prism}^{-1}(\CF)))
}
\end{equation}
Indeed, for any $W\in \Ob(X_{\proet})$, taking
$u_{\alpha\beta}\colon V'_{\alpha\beta}\to V_{\alpha}$
as in the proof of Proposition \ref{prop:BKFFunctComp} (1), the commutativity
for the sections on $W$ is reduced to the compatibility of 
$\CF_{\BKF}(u_{\alpha\beta})$ and $(\varphi^*\CF)_{\BKF}(u_{\alpha\beta})$
with the map $\CF(P)\to \varphi^*\CF(P);x\mapsto x\otimes 1$
for $P=\alpha_{\inf,\fX}(V_{\alpha})$
and $g_{\prism}\circ\alpha_{\inf,\fX'}(V'_{\alpha\beta})$.\par
(2) For $\CF, \CG\in \Ob(\Crystal_{\prism}^{\fproj}(\fX/A_{\inf}))$,
the following diagram is commutative, where the upper horizontal
map is induced by $\varepsilon_{g,\CF}$ and $\varepsilon_{g,\CG}$.
\begin{equation}\label{eq:BKFFunctorTensorFunct}
\xymatrix{
\bM_{\BKF,\fX}(\CF)\otimes_{\bA_{\inf,X}}
\bM_{\BKF,\fX}(\CG)\ar[r]\ar[d]_{\cong}^{\eqref{eq:BKFFunctorTensor}}&
\bmg_{\proet*}(\bM_{\BKF,\fX'}(g_{\prism}^{-1}(\CF))\otimes_{\bA_{\inf,X'}}
\bM_{\BKF,\fX'}(g_{\prism}^{-1}(\CG)))\ar[d]_{\cong}^{\eqref{eq:BKFFunctorTensor}}\\
\bM_{\BKF,\fX}(\CF\otimes_{\CO_{\fX/A_{\inf}}}\CG)
\ar[r]^(.4){\varepsilon_{g,\CF\otimes\CG}}&
\bmg_{\proet*}\bM_{\BKF,\fX'}(g_{\prism}^{-1}(\CF)\otimes_{\CO_{\fX'/A_{\inf}}}g_{\prism}^{-1}(\CG))
}
\end{equation}
It suffices to prove the claim after replacing the tensor products $\otimes$
on the upper line by the products $\times$. Then, similarly to (1) above,
the commutativity for the sections on $W\in \Ob(X_{\proet})$ is reduced
to the compatibility of $\CH_{\BKF}(u_{\alpha\beta})$ for
$\CH=\CF$, $\CG$ and $\CF\otimes_{\CO_{\fX/A_{\inf}}}\CG$ with the
product $\CF(P)\times \CG(P)\to (\CF\otimes_{\CO_{\fX/A_{\inf}}}\CG)(P)$
for $P=\alpha_{\inf,\fX}(V_{\alpha})$ and 
$g_{\prism}\circ\alpha_{\inf,\fX'}(V'_{\alpha\beta})$.
\end{remark}

Put $\bA_{\inf,X,m}=\bA_{\inf,X}/(p,\pq)^{m+1}$ for $m\in \N$,
and let $\ubA_{\inf,X}$ denote the inverse system of sheaves
of rings $(\bA_{\inf,X,m})_{m\in \N}$ on $X_{\proet}$.

\begin{definition}\label{eq:AinfCoh}
For $\CF\in \Ob \Crystal_{\prism}^{\fproj}(\fX/A_{\inf})$
and $\bM=\bM_{\BKF,\fX}(\CF)$, we define 
$A\Omega_{\fX}(\bM)$ to be $L\eta_{\mu}R\nu_{\fX*}
R\varprojlim_{\N}\ubM\in D(\fX_{\Zar},A_{\inf})$
(\cite[\S6]{MT}), where  $\nu_{\fX}$ denotes the
morphism of topos $X_{\proet}^{\sim}
\to \fX_{\Zar}^{\sim}$ and $\ubM$ is the $\ubA_{\inf,X}$-module
$\bM\otimes_{\bA_{\inf,X}}\ubA_{\inf,X}
=(\bM/(p,\pq)^{m+1}\bM)_{m\in \N}$.
(Note that we have $R\varprojlim_m(\bM/(p^n,\pq^m)\bM)=\bM/p^n\bM$
for $n>0$  by \cite[Proposition 5.4 (i), Example 5.2 (ii), Proposition 5.13]{MT}.
This implies $R\varprojlim_{\N}\ubM=R\varprojlim_n\bM/p^n\bM$.)
\end{definition}

\begin{remark}\label{rmk:FrobProdAOmega}
(1) For $\CF\in \Ob \Crystal_{\prism}^{\fproj}(\fX/A_{\inf})$, the
semilinear map \eqref{eq:BKFFunctMapFrobPB} induces an isomorphism in $D(\fX_{\Zar},A_{\inf})$
\begin{equation}\label{eq:AinfCohBKFFrobPB}
A\Omega_{\fX}(\bM_{\BKF,\fX}(\CF))\overset{\cong}\longrightarrow
\varphi_*L\eta_{[p]_q}A\Omega_{\fX}(\bM_{\BKF,\fX}(\varphi^*\CF)),
\end{equation}
where $\varphi_*$ denotes the restriction of scalars under
$\varphi\colon A_{\inf}\to A_{\inf}$, as follows.
Let $\ubM$ and $\ubM_{\varphi}$ be the $\ubA_{\inf,X}$-modules
$\bM_{\BKF,\fX}(\CF)\otimes_{\bA_{\inf,X}}\ubA_{\inf,X}$ and 
$\bM_{\BKF,\fX}(\varphi^*\CF)\otimes_{\bA_{\inf,X}}\ubA_{\inf,X}$,
respectively.
Then the isomorphism is obtained by composing
\begin{multline*}
L\eta_{\mu}R\nu_{\fX*}R\varprojlim_{\N} \ubM
\xrightarrow{\cong}
L\eta_{\mu}R\nu_{\fX*}R\varprojlim_{\N} \uvarphi_*\ubM_{\varphi}
\cong L\eta_{\mu}\varphi_*R\nu_{\fX*}R\varprojlim_{\N} \ubM_{\varphi}\\
\cong \varphi_*L\eta_{\varphi(\mu)}R\nu_{\fX*}R\varprojlim_{\N} \ubM_{\varphi}
\cong \varphi_*L\eta_{\pq}L\eta_{\mu}R\nu_{\fX*}R\varprojlim_{\N} \ubM_{\varphi}.
\end{multline*}
Here $\uvarphi_*$ denotes the restriction of 
scalars under $\uvarphi=(\varphi\mod (p,\pq)^{m+1})_{m\in \N}\colon
\ubA_{\inf,X}\to \ubA_{\inf,X}$, the first isomorphism is
induced by \eqref{eq:BKFFunctMapFrobPB},
and the last isomorphism is
obtained by \cite[Lemma 6.11]{BMS} and $\varphi(\mu)=\pq\mu$.

(2) For $\CF_1, \CF_2\in \Ob\Crystal_{\prism}^{\fproj}(\fX/A_{\inf})$
and $\CF_3=\CF_1\otimes_{\CO_{\fX/A_{\inf}}}\CF_2
\in \Ob\Crystal_{\prism}^{\fproj}(\fX/A_{\inf})$ 
(Remark \ref{rmk:PrismCrystalFrobTensor} (3)), we have
a morphism in $D(\fX_{\Zar},A_{\inf})$
\begin{equation}\label{eq:BKFAinfCohProd}
A\Omega_{\fX}(\bM_{\BKF,\fX}(\CF_1))\otimes^L_{A_{\inf}}
A\Omega_{\fX}(\bM_{\BKF,\fX}(\CF_2))\longrightarrow
A\Omega_{\fX}(\bM_{\BKF,\fX}(\CF_3))
\end{equation}
defined by the composition below, where 
$\ubM_{\nu}=\bM_{\BKF,\fX}(\CF_{\nu})\otimes_{\bA_{\inf,X}}\ubA_{\inf,X}$
for $\nu\in\{1,2,3\}$.
\begin{multline*}
(L\eta_{\mu}R\nu_{\fX*}R\varprojlim_{\N}\ubM_1)\otimes_{A_{\inf}}^L
(L\eta_{\mu}R\nu_{\fX*}R\varprojlim_{\N}\ubM_2)
\longrightarrow L\eta_{\mu}((R\nu_{\fX*}R\varprojlim_{\N}\ubM_1)\otimes_{A_{\inf}}^L
(R\nu_{\fX*}R\varprojlim_{\N}\ubM_2))\\
\longrightarrow L\eta_{\mu}R\nu_{\fX*}R\varprojlim_{\N}(\ubM_1\otimes^L_{\ubA_{\inf,X}}\ubM_2)
\longrightarrow L\eta_{\mu}R\nu_{\fX*}R\varprojlim_{\N}\ubM_3.
\end{multline*}
The first map is defined by \cite[Proposition 6.7]{BMS}, the second one is induced by the cup product
\cite[0B6C]{Stacks},
and the third one is given by \eqref{eq:BKFFunctorTensor}.
\end{remark}

\begin{lemma}\label{lem:DecalageMorphRingedTopos}
Let $f\colon (E',\CA')\to (E,\CA)$ be a flat morphism of ringed topos,
let $\CI$ be an invertible ideal of $\CA$, and put $\CI'=f^*\CI\subset \CA'$.\par
(1) We have a natural isomorphism of functors \cite[Lemma 6.14]{BMS}
\begin{equation}
\alpha_{f,\CI}\colon Lf^*L\eta_{\CI}\xrightarrow{\cong} L\eta_{\CI'}Lf^*\colon D(E,\CA)\to D(E',\CA').
\end{equation}
By taking the adjoint of the composition of 
$\alpha_{f,\CI}\circ Rf_*$ with $L\eta_{\CI'}\circ\text{(counit)}\colon
L\eta_{\CI'}Lf^*Rf_*\to L\eta_{\CI'}$, we obtain a  morphism of functors
\begin{equation}\label{eq:DeclageDirectImage}
\beta_{f,\CI}\colon L\eta_{\CI}Rf_*\to Rf_*L\eta_{\CI'}\colon D(E',\CA')\to D(E,\CA).
\end{equation}

(2) Let $g\colon (E'',\CA'')\to (E',\CA')$ be another flat morphism of ringed topos,
and put $\CI''=g^*\CI'=(f\circ g)^*\CI\subset \CA''$. Then the following diagrams
are commutative.
\begin{equation}
\xymatrix@R=10pt@C=50pt{
Lg^*Lf^*L\eta_{\CI}\ar@{-}[d]^{\cong}
\ar[r]^{\cong}_{Lg^*\circ \alpha_{f,\CI}}
&
Lg^*L\eta_{\CI'}Lf^*
\ar[r]_{\alpha_{g,\CI'}\circ Lf^*}^{\cong}
&
L\eta_{\CI''}Lg^*Lf^*\ar@{-}[d]_{\cong}
\\
L(f\circ g)^*L\eta_{\CI}
\ar[rr]^{\cong}_{\alpha_{f\circ g,\CI}}
&&
L\eta_{\CI''}L(f\circ g)^*
}
\end{equation}
\begin{equation}\label{eq:DecalageDirectImComp}
\xymatrix@R=10pt@C=50pt{
L\eta_{\CI}Rf_*Rg_*\ar[r]^{\beta_{f,\CI}\circ Rg_*}\ar@{-}[d]_{\cong}&
Rf_*L\eta_{\CI'}Rg_*\ar[r]^{Rf_*\circ \beta_{g,\CI'}}&
Rf_*Rg_*L\eta_{\CI''}\ar@{-}[d]^{\cong}\\
L\eta_{\CI}R(f\circ g)_*\ar[rr]^{\beta_{f\circ g,\CI}}&&
R(f\circ g)_*L\eta_{\CI''}}
\end{equation}

(3) Let $\CJ$ be another invertible ideal of $\CA$, and put $\CJ'=f^*\CJ\subset \CA'$. Then the
isomorphisms $L\eta_{\CI\CJ}\cong L\eta_{\CI}L\eta_{\CJ}$
and $L\eta_{\CI'\CJ'}\cong L\eta_{\CI'}L\eta_{\CJ'}$ \cite[Lemma 6.11]{BMS}
make the following diagrams commutative.
\begin{equation}
\xymatrix@R=10pt@C=50pt{
Lf^*L\eta_{\CI}L\eta_{\CJ}\ar[r]^{\cong}_{\alpha_{f,\CI}\circ L\eta_{\CJ}}\ar@{-}[d]_{\cong}&
L\eta_{\CI'}Lf^*L\eta_{\CJ}\ar[r]^{\cong}_{L\eta_{\CI'}\circ\alpha_{f,\CJ}}&
L\eta_{\CI'}L\eta_{\CJ'}Lf^*\ar@{-}[d]^{\cong}\\
Lf^*L\eta_{\CI\CJ}\ar[rr]^{\cong}_{\alpha_{f,\CI\CJ}}&&
L\eta_{\CI'\CJ'}Lf^*}
\end{equation}
\begin{equation}\label{eq:DecalageCompDerictIm}
\xymatrix@R=10pt@C=50pt{
L\eta_{\CI}L\eta_{\CJ}Rf_*\ar@{-}[d]_{\cong}
\ar[r]^{L\eta_{\CI}\circ \beta_{f,\CJ}}&
L\eta_{\CI}Rf_*L\eta_{\CJ'}
\ar[r]^{\beta_{f,\CI}\circ L\eta_{\CJ'}}&
Rf_*L\eta_{\CI'}L\eta_{\CJ'}\ar@{-}[d]^{\cong}\\
L\eta_{\CI\CJ}Rf_{*}\ar[rr]^{\beta_{f,\CI\CJ}}&&
Rf_*L\eta_{\CI'\CJ'}
}
\end{equation}
\end{lemma}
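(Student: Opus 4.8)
The plan is to prove Lemma \ref{lem:DecalageMorphRingedTopos} by systematically reducing each assertion to the functoriality of the basic isomorphism $\alpha_{f,\CI}\colon Lf^*L\eta_{\CI}\xrightarrow{\cong} L\eta_{\CI'}Lf^*$ established in \cite[Lemma 6.14]{BMS}, together with formal properties of adjunctions. The recurring principle is that every diagram involving $\beta_{f,\CI}$ unwinds, by definition of $\beta$ as an adjoint, into a diagram involving only $\alpha_{f,\CI}$ and the unit/counit of the adjunction $(Lf^*, Rf_*)$; the latter can then be checked directly.

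First I would treat part (1): here $\alpha_{f,\CI}$ is quoted from \cite{BMS}, so the only content is the construction of $\beta_{f,\CI}$, which is already spelled out in the statement. I would just record the precise definition, namely that $\beta_{f,\CI}$ is the composite
\begin{equation*}
L\eta_{\CI}Rf_*\xrightarrow{\;\text{unit}\;}Rf_*Lf^*L\eta_{\CI}Rf_*
\xrightarrow{\;Rf_*\alpha_{f,\CI}Rf_*\;}Rf_*L\eta_{\CI'}Lf^*Rf_*
\xrightarrow{\;Rf_*L\eta_{\CI'}(\text{counit})\;}Rf_*L\eta_{\CI'},
\end{equation*}
and note that well-definedness needs only that $Rf_*$, $L\eta_{\CI'}$ preserve the relevant derived categories, which is automatic. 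Then for part (3), the compatibility of $\alpha$ with the decomposition $L\eta_{\CI\CJ}\cong L\eta_{\CI}L\eta_{\CJ}$ is proved by tracing both composites on a complex $K^\bullet$ of $\CI$-torsion-free (equivalently, appropriately flat) sheaves representing the object and using the explicit formula for $\eta$ on such complexes, exactly as in \cite[Lemma 6.11, 6.14]{BMS}; once the $\alpha$-square commutes, the $\beta$-square \eqref{eq:DecalageCompDerictIm} follows purely formally by substituting the above definition of $\beta$ and cancelling unit-counit composites (the triangle identities). The same mechanism handles the first (the $\alpha$-) squares of parts (2) and (3): they express that $\alpha$ is compatible with composition of flat morphisms and with composition of $L\eta$'s, and both reduce to a diagram chase with the explicit truncation formula for $L\eta$, or can be cited from the cocycle-type statements implicit in \cite{BMS}.

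The remaining and genuinely new diagram is \eqref{eq:DecalageDirectImComp}, the compatibility of $\beta$ with composition $(E'',\CA'')\xrightarrow{g}(E',\CA')\xrightarrow{f}(E,\CA)$. Here I would argue as follows: expand $\beta_{f,\CI}\circ Rg_*$ and $Rf_*\circ\beta_{g,\CI'}$ using the definition of $\beta$, so that the outer path becomes a long string of units, counits for the two adjunctions, and instances of $\alpha_{f,\CI}$ and $\alpha_{g,\CI'}$; then use the already-established upper square of part (2) (compatibility of $\alpha$ with composition) to replace $\alpha_{g,\CI'}\circ Lf^*\circ Lf^*\alpha_{f,\CI}$-type composites by $\alpha_{f\circ g,\CI}$; finally invoke the standard fact that the composite adjunction $(L(f\circ g)^*, R(f\circ g)_*)$ has unit and counit obtained by pasting those of $(Lf^*,Rf_*)$ and $(Lg^*,Rg_*)$, so all intermediate unit-counit pairs cancel by the triangle identities and one is left with the definition of $\beta_{f\circ g,\CI}$. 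This is the step I expect to be the main obstacle, not because of any conceptual difficulty but because the diagram chase is large; I would organize it by drawing the pasted hexagon of $\alpha$'s and adjunction 2-cells and citing the interchange law rather than writing every arrow.

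Throughout, the only external inputs are \cite[Lemmas 6.11 and 6.14]{BMS} for the existence and flat base-change behavior of $L\eta$, and the standard $2$-categorical facts about adjunctions; no properties of prisms or of the specific sheaves $\bA_{\inf,X}$ enter. I would therefore present the proof as: (i) fix notation and recall the definition of $\beta$; (ii) prove the two $\alpha$-squares of (2) and (3) by the truncation-formula chase; (iii) derive \eqref{eq:DecalageCompDerictIm} formally; (iv) derive \eqref{eq:DecalageDirectImComp} by the pasting argument above. Each of (iii) and (iv) is "formal" in the precise sense that it uses only the triangle identities and the already-proven $\alpha$-compatibilities, so I would keep those parts short and emphasize that the computations are the same type as in loc.~cit.
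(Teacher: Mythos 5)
The paper states this lemma without a written proof (the text immediately moves on to the relative Breuil--Kisin--Fargues setup), so there is no official argument to compare against; your proposal supplies exactly the routine argument the author evidently had in mind. Your reduction is sound: the $\alpha$-squares in (2) and (3) are checked on an $\CI$-torsion-free representative using the explicit truncation description of $\eta$, after which the $\beta$-squares are formal consequences.

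One remark to make the ``formal'' step airtight: the cleanest way to see \eqref{eq:DecalageDirectImComp} and \eqref{eq:DecalageCompDerictIm} is not to unwind both sides into long unit/counit strings and cancel by triangle identities (which works but is painful to organize), but to pass to adjoints. For \eqref{eq:DecalageCompDerictIm}, compute the $f$-adjoint of the composite $\bigl(\beta_{f,\CI}L\eta_{\CJ'}\bigr)\circ\bigl(L\eta_{\CI}\beta_{f,\CJ}\bigr)$ using $\operatorname{adj}_f(g\circ h)=\operatorname{adj}_f(g)\circ Lf^*h$ and naturality of $\alpha_{f,\CI}$ applied to $\beta_{f,\CJ}$; one lands directly on $L\eta_{\CI'}L\eta_{\CJ'}(\text{counit})\circ L\eta_{\CI'}\alpha_{f,\CJ}\circ\alpha_{f,\CI}L\eta_{\CJ}$, which is the adjoint of $\beta_{f,\CI\CJ}$ once the first $\alpha$-square of (3) is invoked. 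For \eqref{eq:DecalageDirectImComp}, use $\operatorname{adj}_{fg}=\operatorname{adj}_g\circ\operatorname{adj}_f$ together with $\operatorname{adj}_f(Rf_*n)=n\circ(\text{counit}_f)$; after inserting the first $\alpha$-square of (2), the two sides differ only by the naturality square of $\alpha_{g,\CI'}$ applied to $\text{counit}_f\colon Lf^*Rf_*Rg_*K\to Rg_*K$, which commutes. This is the same formal content as your pasting argument, but it makes the single nontrivial cancellation visible as one instance of naturality rather than a diffuse collection of interchange moves.
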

We follow the notation and assumption before Proposition 
\ref{prop:BKFFunctComp}. 
Put $\ubM=\bM\otimes_{\bA_{\inf,X}}\ubA_{\inf,X}$
and $\ubM'=\bM'\otimes_{\bA_{\inf,X'}}\ubA_{\inf,X'}$.
Then the morphism $\varepsilon_{g,\CF}\colon \bM\to\bmg_{\proet*}\bM'$
in Proposition \ref{prop:BKFFunctComp} (1) induces  morphisms
\begin{equation}\label{eq:BKFZarProjFunct}
R\nu_{\fX*}R\varprojlim_{\N}\ubM
\to R\nu_{\fX*}R\varprojlim_{\N}R\bmg^{\N^{\circ}}_{\proet*}
\ubM'
\cong R\nu_{\fX*}R\bmg_{\proet*}R\varprojlim_{\N}\ubM'
\cong Rg_{\Zar*}R\nu_{\fX'*}R\varprojlim_{\N}\ubM'
\end{equation}
in $D(\fX_{\Zar},A_{\inf})$.
By taking $L\eta_{\mu}$ and composing it with $L\eta_{\mu}Rg_{\Zar*}
\to Rg_{\Zar*}L\eta_{\mu}$ \eqref{eq:DeclageDirectImage}, 
we obtain a morphism 
\begin{equation}\label{eq:AOmegaFunct}
A\Omega_{\fX}(\bM)\longrightarrow Rg_{\Zar*}(A\Omega_{\fX'}(\bM')).
\end{equation}
By Proposition \ref{prop:BKFFunctComp} (3) and 
\eqref{eq:DecalageDirectImComp}, 
the morphism 
\eqref{eq:AOmegaFunct}
satisfies the obvious cocycle condition with
respect to composition of $g$'s.

\begin{remark}
(1) We see that the morphism \eqref{eq:AOmegaFunct} is compatible with 
\eqref{eq:AinfCohBKFFrobPB}
as follows. Let $\CF\in \Ob \Crystal^{\fproj}_{\prism}(\fX/A_{\inf})$,
let $\CF'$ be $g_{\prism}^{-1}(\CF)$, and put $\CF_{\varphi}=\varphi^*\CF$
and $\CF'_{\varphi}=\varphi^*\CF'$ (Remark \ref{rmk:PrismCrystalFrobTensor} (2)). 
We have $g_{\prism}^{-1}(\CF_{\varphi})=\CF'_{\varphi}$. We define $\bM$ and $\bM_{\varphi}$
(resp.~$\bM'$ and $\bM'_{\varphi})$ to be the images of $\CF$ and $\CF_{\varphi}$
(resp.~$\CF'$ and $\CF'_{\varphi}$) under $\bM_{\BKF,\fX}$
(resp.~$\bM_{\BKF,\fX'}$). Let $\ubM$ and $\ubM_{\varphi}$
(resp.~$\ubM'$ and $\ubM'_{\varphi}$) be their scalar extensions under
$\bA_{\inf,X}\to\ubA_{\inf,X}$ (resp.~$\bA_{\inf,X'}\to \ubA_{\inf,X'}$).
Then the commutative diagram \eqref{eq:BKFFunctFrobFunct} induces a commutative 
diagram in $D((X_{\proet}^{\sim})^{\N^{\circ}},\ubA_{\inf,X})$
\begin{equation}
\xymatrix@R=10pt{
\ubM\ar[rr]\ar[d]&&
R\bmg_{\proet*}^{\N^{\circ}}\ubM'\ar[d]\\
\varphi_*\ubM_{\varphi}\ar[r]&
\varphi_*R\bmg_{\proet*}^{\N^{\circ}}\ubM'_{\varphi}\ar@{-}[r]^{\cong}&
R\bmg_{\proet*}^{\N^{\circ}}\varphi_*\ubM'_{\varphi},
}
\end{equation}
where $\varphi_*$ denotes the restriction of scalars under the Frobenius.
By taking $R\nu_{\fX*}R\varprojlim_{\N}$ and composing it with 
$R\nu_{\fX*}R\varprojlim_{\N}R\bmg_{\proet*}^{\N^{\circ}}
\cong Rg_{\Zar*}R\nu_{\fX'*}R\varprojlim_{\N}$
for the right two terms, we obtain a commutative diagram 
in $D(\fX_{\Zar},A_{\inf})$
\begin{equation}
\xymatrix@R=10pt@C=50pt{
R\nu_{\fX*}R\varprojlim_{\N}\ubM\ar[r]^(.45){\eqref{eq:BKFZarProjFunct}}\ar[d]_{\cong}&
Rg_{\Zar*}R\nu_{\fX'*}R\varprojlim_{\N}\ubM'\ar[d]^{\cong}\\
\varphi_*R\nu_{\fX*}R\varprojlim_{\N}\ubM_{\varphi}\ar[r]^(.45){\eqref{eq:BKFZarProjFunct}}&
\varphi_*Rg_{\Zar*}R\nu_{\fX'*}R\varprojlim_{\N}\ubM'_{\varphi}.
}
\end{equation}
By taking $L\eta_{\mu}$, using 
$L\eta_{\mu}\varphi_*\xrightarrow{\cong}\varphi_*L\eta_{\varphi(\mu)}$,
$L\eta_{\lambda}Rg_{\Zar*}\to Rg_{\Zar*}L\eta_{\lambda}$ for $\lambda=\mu,\varphi(\mu)$
\eqref{eq:DeclageDirectImage}, $L\eta_{\varphi(\mu)}\cong L\eta_{\pq}L\eta_{\mu}$,
and applying \eqref{eq:DecalageDirectImComp} (resp.~\eqref{eq:DecalageCompDerictIm}) to 
$Rg_{\Zar*}\circ \varphi_*\cong \varphi_*\circ Rg_{\Zar*}$
(resp.~$Rg_{\Zar*}$ and $L\eta_{\varphi(\mu)}\cong L\eta_{\pq}L\eta_{\mu}$), we obtain
the desired commutative diagram.
\begin{equation}
\xymatrix@R=10pt@C=40pt{
A\Omega_{\fX}(\bM)\ar[rr]^(.45){\eqref{eq:AOmegaFunct}}
\ar[d]_{\eqref{eq:AinfCohBKFFrobPB}}^{\cong}&&
Rg_{\Zar*}A\Omega_{\fX}(\bM')\ar[d]_{\eqref{eq:AinfCohBKFFrobPB}}^{\cong}\\
\varphi_*L\eta_{\pq}A\Omega_{\fX}(\bM_{\varphi})
\ar[r]^(.45){\eqref{eq:AOmegaFunct}}
&
\varphi_*L\eta_{\pq}Rg_{\Zar*}A\Omega(\bM'_{\varphi})
\ar[r]^{\eqref{eq:DeclageDirectImage}}
&
Rg_{\Zar*}\varphi_*L\eta_{\pq}A\Omega_{\fX}(\bM'_{\varphi})
}
\end{equation}

(2)  We see that the morphism \eqref{eq:AOmegaFunct} is compatible with the 
product \eqref{eq:BKFAinfCohProd} as follows. Let $\CF_{\nu}$ ($\nu\in\{1,2\}$) be objects of
$\Crystal_{\prism}^{\fproj}(\fX/A_{\inf})$, put $\CF_3=\CF_1\otimes_{\CO_{\fX/A_{\inf}}}\CF_2$,
and let $\CF'_{\nu}$ denote $g_{\prism}^{-1}(\CF_{\nu})$ for $\nu\in \{1,2,3\}$.
We have $\CF_3'=g_{\prism}^{-1}\CF_3$ (Remark \ref{rmk:PrismCrystalFrobTensor} (3)). 
For $\nu\in \{1,2,3\}$, we define 
$\bM_{\nu}$ (resp.~$\bM_{\nu}'$) to be $\bM_{\BKF,\fX}(\CF_{\nu})$ (resp.~$\bM_{\BKF,\fX'}(\CF_{\nu}')$),
and put $\ubM_{\nu}=\bM_{\nu}\otimes_{\bA_{\inf,X}}\ubA_{\inf,X}$
(resp.~$\ubM'_{\nu}=\bM'_{\nu}\otimes_{\bA_{\inf,X'}}\ubA_{\inf,X'}$).
Then we see that the obvious analogue of \eqref{eq:BKFFunctorTensorFunct} 
for $\ubM_{\nu}$ and $\ubM_{\nu}'$
$(\nu\in\{1,2,3\})$ holds by considering the reduction modulo
$(p,\pq)^{m+1}$ $(m\in \N)$ of the left adjoint of \eqref{eq:BKFFunctorTensorFunct}
with respect to 
$\bmg_{\proet}$.
Therefore
by the compatibility of cup products with composition of morphisms of ringed topos
\cite[0FPN]{Stacks} and that of $\bmg_{\proet*}\ubM_{\nu}'\to R\bmg_{\proet*}\ubM_{\nu}'$
with the products (which is verified by taking the left adjoint  with respect to $\bmg_{\proet}$
and going back to the definition of the cup product of $R\bmg_{\proet*}$ (\cite[0B6C]{Stacks}), 
we obtain a commutative diagram
\begin{equation}\label{eq:BKFZarProjProdFunct}
\xymatrix@R=10pt{
R\nu_{\fX*}R\varprojlim_{\N}\ubM_1
\otimes_{A_{\inf}}^L
R\nu_{\fX*}R\varprojlim_{\N}\ubM_2
\ar[r]^(.43){\eqref{eq:BKFZarProjFunct}}
\ar[dd]
&
Rg_{\Zar*}R\nu_{\fX'*}R\varprojlim_{\N}\ubM_1'
\otimes^L_{A_{\inf}}
Rg_{\Zar*}R\nu_{\fX'*}R\varprojlim_{\N}\ubM_2'\ar[d]\\
& Rg_{\Zar*}(R\nu_{\fX'*}R\varprojlim_{\N}\ubM_1'
\otimes^L_{A_{\inf}}R\nu_{\fX'*}R\varprojlim_{\N}\ubM_2')\ar[d]\\
R\nu_{\fX*}R\varprojlim_{\N}\ubM_3
\ar[r]^{\eqref{eq:BKFZarProjFunct}} 
&Rg_{\Zar*}R\nu_{\fX'*}R\varprojlim_{\N}\ubM_3'.
}
\end{equation}
We see that the morphism $L\eta_{\mu}Rg_{\Zar*}\to Rg_{\Zar*}L\eta_{\mu}$
\eqref{eq:DeclageDirectImage} is compatible with the lax symmetric monoidal
structures by taking the left adjoint with respect to $g_{\Zar}$, exchanging 
$Lg_{\Zar}^*$ and $L\eta_{\mu}$ by the
isomorphism $Lg_{\Zar}^*L\eta_{\mu}\cong L\eta_{\mu}Lg_{\Zar}^*$
(\cite[Lemma 6.14]{BMS}), which is compatible with the lax symmetric monoidal structures
(\cite[Proposition 6.7]{BMS}), and going back to the definition of 
the cup product of $Rg_{\Zar*}$.
Therefore, by taking $L\eta_{\mu}$ of \eqref{eq:BKFZarProjProdFunct},
and composing it with $L\eta_{\mu}Rg_{\Zar*}\to Rg_{\Zar*}L\eta_{\mu}$ considered above
for the right three terms, we obtain the following commutative diagram as desired.
\begin{equation}
\xymatrix@R=10pt{
A\Omega_{\fX}(\bM_1)\otimes^L_{A_{\inf}}
A\Omega_{\fX}(\bM_2)\ar[r]^(.41){\eqref{eq:AOmegaFunct}}
\ar[dd]_{\eqref{eq:BKFAinfCohProd}}
&
Rg_{\Zar*}A\Omega_{\fX'}(\bM'_1)\otimes^L_{A_{\inf}}
Rg_{\Zar*}A\Omega_{\fX'}(\bM'_2)\ar[d]\\
&
Rg_{\Zar*}(A\Omega_{\fX'}(\bM'_1)\otimes^L_{A_{\inf}}
A\Omega_{\fX'}(\bM'_2))
\ar[d]^{\eqref{eq:BKFAinfCohProd}}
\\
A\Omega_{\fX}(\bM_3)\ar[r]^{\eqref{eq:AOmegaFunct}}&
Rg_{\Zar*}A\Omega_{\fX'}(\bM'_3).
}
\end{equation}
\end{remark}

\section{Pro\'etale sites and framed embeddings}\label{sec:ProetGammaZarShv}
We retain the settings introduced in the first paragraph
of \S\ref{sec:RelativeBKF}. 
\begin{definition}\label{def:AdmFramedSmEmbed}
(1) A {\it small framed embedding over $A_{\inf}$}
$(\mfi\colon \fX\hookrightarrow \fY, \ut=(t_i)_{i\in\Lambda})$
is a set of data consisting of  a $p$-adic smooth affine formal
scheme $\fX=\Spf(A)$ over $\CO$, a framed smooth 
$\delta$-$A_{\inf}$-algebra $(B,\ut=(t_i)_{i\in\Lambda})$ 
(Definition \ref{def:qprism} (2))
equipped with a $(p,\pq)$-adic topology,
and a closed immersion 
$\mfi\colon \fX\hookrightarrow \fY=\Spf(B)$ over $A_{\inf}$,
which is equivalent to a surjective homomorphism 
$\mfi^*\colon B\to A$ of $A_{\inf}$-algebras, 
satisfying the following two conditions.
Let $t_{A,i}$ $(i\in \Lambda)$ denote the image of $t_i\in B$
in $A$ under $\mfi^*$.
\begin{equation}\label{cond:frameAinf}
\text{$t_i\in B^{\times}$ for every $i\in \Lambda$.}
\end{equation}
\begin{equation}\label{cond:frameAinf2}
\text{\parbox[t]{0.8\linewidth}{There exists $\Lambda_A\subset \Lambda$
such that $t_{A,i}$ $(i\in \Lambda_A)$ form $p$-adic coordinates of $A$ 
over $\CO$ (Definition \ref{def:IadicProperties} (2)). }}
\end{equation}
When $\fX=\Spf(A)$ is given, we call 
$(\mfi\colon \fX\hookrightarrow \fY, \ut=(t_i)_{i\in\Lambda})$
a {\it small framed embedding of $\fX$ over $A_{\inf}$}.\par
(2)  A {\it morphism $(g,h,\psi)$ of small framed embeddings over} $A_{\inf}$
from $(\mfi'\colon \fX'\to \fY'=\Spf(B'),\ut'=(t'_{i'})_{i'\in\Lambda'})$
to $(\mfi\colon \fX\to \fY=\Spf(B), \ut=(t_{i})_{i\in\Lambda})$ is a triplet consisting of 
morphism $g\colon \fX'\to \fX$ over $\CO$, a morphism 
$h\colon \fY'\to \fY$ over $A_{\inf}$, and a map of ordered sets
$\psi\colon \Lambda\to \Lambda'$ such that $h\circ \mfi'=\mfi\circ g$
and $(h^*,\psi)$ is a morphism of framed smooth $\delta$-$A_{\inf}$-algebras
(Definition \ref{def:qprism} (2)) from 
$(B,\ut)$ to $(B',\ut')$, i.e., $h^*(t_i)=t'_{\psi(i)}$ for every $i\in \Lambda$. 
\end{definition}

Let $\fX=\Spf(A)$ be a $p$-adic smooth affine formal scheme over $\CO$
admitting invertible $p$-adic coordinates. Then there exists 
a small framed embedding $\mathtt i=(\mfi\colon \fX\to \fY=\Spf(B), \ut=(t_i)_{i\in\Lambda})$ of $\fX$ over $A_{\inf}$, which we choose in the following. Put $\Gamma_{\Lambda}=\Map(\Lambda,\Z_p)$, and let
$X$ denote the adic generic fiber $\Spa(A[\frac{1}{p}], A)$ of $\fX$. 
In this section, we will construct and study a morphism of topos 
from the pro\'etale topos $X_{\proet}^{\sim}$ to the topos $\Gamma_{\Lambda}\hy\fX^{\sim}_{\Zar}$
of $\Gamma_{\Lambda}$-sheaves of sets on 
$\fX_{\Zar}$ (Definition \ref{def:Gsheaf} (2), Proposition \ref{prop:GsheafTopos})
\begin{equation}\label{eq:ProetGammaShvProj}
\nu_{\fX,\ut}\colon X_{\proet}^{\sim}\to \Gamma_{\Lambda}\hy\fX_{\Zar}^{\sim}\end{equation}
by evaluating sheaves on $X_{\proet}$
on an inverse system of finite \'etale adic spaces over the adic generic fiber 
of each open formal subscheme of $\fX$ obtained by 
adjoining $p$-power roots of $t_i$ $(i\in \Lambda)$ via the embedding $\mfi$. 
See \eqref{eq:ProetGammaShvProjDescrip}.

Put $t_{A,i}=\mfi^*(t_i)$ for $i\in \Lambda$. Regarding $A$ as an algebra
over $\CO[T_i^{\pm 1} (i\in \Lambda)]$
by the $\CO$-homomorphism defined by $T_i\mapsto 
t_{A,i}$, we define an inductive system of $A$-algebras
$(A_n)_{n\in \N}$ by the integral closures of $A$ in 
the finite \'etale $A[\frac{1}{p}]$-algebras
\begin{equation}
A[\tfrac{1}{p}]\otimes_{\CO[T_i^{\pm 1} (i\in \Lambda)]}
\CO[T_i^{\pm 1/p^n} (i\in \Lambda)]\quad
(n\in \N).\end{equation} For $n\in \N$, the homomorphism
$A_n\to A_{n+1}$ is injective, and we regard
$A_n$ as an $A$-subalgebra of $A_{n+1}$
in the following. 

The inductive system of $A$-algebras
$(A_n)_{n\in \N}$ is equipped with the action of 
$\Gamma_{\Lambda}$ defined by 
$\gamma(T_i^{1/p^n})=\zeta_n^{\gamma(i)}T_i^{1/p^n}$
$(i\in \Lambda)$ for $\gamma\in \Gamma_{\Lambda}$, 
where $\zeta_n$ is the 
primitive $p^n$th root of unity in $\CO$ fixed at the beginning of 
\S\ref{sec:RelativeBKF}. 
For a finite $\Gamma_{\Lambda}$-set $S$
(Definition \ref{def:Gset} (2)), we define an $A$-algebra
$A_S$ to be 
$\varinjlim_{n\in \N}\Map_{\Gamma_{\Lambda}}(S,A_n)$.
This construction is contravariant in $S$; a morphism of
finite $\Gamma_{\Lambda}$-sets $\alpha\colon S\to S'$
induces a homomorphism of $A$-algebras
$A_{S'}\to A_S$ by the composition with $\alpha$.
It is obvious that if $S$ is the disjoint union of
finite number of finite $\Gamma_{\Lambda}$-sets 
$S_{\alpha}$, then $A_S$ is the product of $A_{S_{\alpha}}$. 
For $n$, $n'\in \N$ with $n'\geq n$, 
we have $A_n=(A_{n'})^{p^n\Gamma_{\Lambda}}$
as it holds after inverting $p$. This implies
that we have $A_S=\Map_{\Gamma_{\Lambda}}(S,A_n)$
if the action of $p^n\Gamma_{\Lambda}$ on $S$ is trivial.
In particular, we have 
$A_{\Gamma_{\Lambda}/p^n\Gamma_{\Lambda}}
=\Map_{\Gamma_{\Lambda}}
(\Gamma_{\Lambda}/p^n\Gamma_{\Lambda},A_n)
\cong A_n; f\mapsto f(1)$. We see that 
$A_S$ is integrally closed in $A_S[\frac{1}{p}]$
noting that the underlying set of $S$ is finite.

\begin{lemma}\label{lem:FiniteEtalAdicSpacesTriv}
Let $S$ be a finite $\Gamma_{\Lambda}$-set and let $n\in \N$
such that the action of $p^n\Gamma_{\Lambda}$ on $S$ is trivial.
Then we have a canonical isomorphism of $A_n[\frac{1}{p}]$-algebras
$A_S[\frac{1}{p}]\otimes_{A[\frac{1}{p}]}A_n[\frac{1}{p}]
\cong\Map(S,A_n[\frac{1}{p}])$ functorial in $S$ with trivial
$p^n\Gamma_{\Lambda}$-action and compatible with $n$.
\end{lemma}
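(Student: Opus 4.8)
\textbf{Proof plan for Lemma \ref{lem:FiniteEtalAdicSpacesTriv}.}
The plan is to reduce the assertion to the case of a transitive $\Gamma_{\Lambda}$-set and then to an explicit computation with the algebras $A_m$. First I would observe that both sides of the desired isomorphism are additive in $S$ with respect to finite disjoint unions (for the left side, because $A_{S\sqcup S'}=A_S\times A_{S'}$ as noted before the lemma, and the tensor product is additive; for the right side, because $\Map(S\sqcup S',-)=\Map(S,-)\times\Map(S',-)$), so it suffices to treat a single $\Gamma_{\Lambda}$-orbit $S=\Gamma_{\Lambda}/H$ with $p^n\Gamma_{\Lambda}\subset H$. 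For such an $S$, since the action of $p^n\Gamma_{\Lambda}$ on $S$ is trivial, we have $A_S=\Map_{\Gamma_{\Lambda}}(S,A_n)$ by the discussion preceding the lemma.

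Next I would exhibit the canonical map. After inverting $p$, the $A[\frac{1}{p}]$-algebra $A_n[\frac{1}{p}]$ is finite \'etale and carries the $\Gamma_{\Lambda}/p^n\Gamma_{\Lambda}$-action, and in fact $A_n[\frac{1}{p}]$ is a $\Gamma_{\Lambda}/p^n\Gamma_{\Lambda}$-Galois extension of $A[\frac{1}{p}]=(A_n[\frac{1}{p}])^{\Gamma_{\Lambda}/p^n\Gamma_{\Lambda}}$; this is because it is obtained from the Galois cover $\CO[T_i^{\pm 1/p^n}]/\CO[T_i^{\pm 1}]$ with Galois group $\Gamma_{\Lambda}/p^n\Gamma_{\Lambda}$ by the finite \'etale base change $A[\frac{1}{p}]\otimes_{\CO[T_i^{\pm 1}]}(-)$, together with the fact that the integral closure is compatible with this \'etale base change after inverting $p$. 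The general Galois-descent identity
\begin{equation*}
R'\otimes_{(R')^{G}}R'\;\xrightarrow{\;\cong\;}\;\Map(G,R'),\qquad a\otimes b\mapsto (g\mapsto a\cdot g(b)),
\end{equation*}
for a $G$-Galois extension $R'/(R')^G$ with $G$ finite, then gives, for $S=\Gamma_{\Lambda}/p^n\Gamma_{\Lambda}$,
\begin{equation*}
A_{S}[\tfrac{1}{p}]\otimes_{A[\frac{1}{p}]}A_n[\tfrac{1}{p}]
= A_n[\tfrac{1}{p}]\otimes_{A[\frac{1}{p}]}A_n[\tfrac{1}{p}]
\cong \Map(\Gamma_{\Lambda}/p^n\Gamma_{\Lambda},A_n[\tfrac{1}{p}])=\Map(S,A_n[\tfrac{1}{p}]).
\end{equation*}
For a general orbit $S=\Gamma_{\Lambda}/H$ with $p^n\Gamma_{\Lambda}\subset H$, I would obtain the isomorphism either by taking $H/p^n\Gamma_{\Lambda}$-invariants of the previous display (using that $A_S[\frac{1}{p}]=(A_n[\frac{1}{p}])^{H/p^n\Gamma_{\Lambda}}$, the fixed subring being exactly the integral-closure-after-inverting-$p$ of $A[\frac{1}{p}]$ in the corresponding intermediate \'etale algebra), or directly from the Galois-descent formula applied to the étale extension $A_S[\frac{1}{p}]/A[\frac{1}{p}]$. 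The functoriality in $S$ with trivial $p^n\Gamma_{\Lambda}$-action, and the compatibility with increasing $n$ (i.e. with the transition maps $A_n\hookrightarrow A_{n+1}$), are then verified by tracing through the explicit formula $a\otimes b\mapsto (g\mapsto a\cdot g(b))$; this is routine and I would only indicate it.

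The main obstacle I anticipate is the second point above: checking carefully that $A_n[\frac{1}{p}]$ really is $\Gamma_{\Lambda}/p^n\Gamma_{\Lambda}$-Galois over $A[\frac{1}{p}]$, i.e. that forming the integral closure is compatible with the finite \'etale base change $A[\frac{1}{p}]\otimes_{\CO[T_i^{\pm1}]}(-)$ and that no ramification is introduced, so that $\Spa(A_n[\frac{1}{p}],A_n)\to\Spa(A[\frac{1}{p}],A)$ is a $\Gamma_{\Lambda}/p^n\Gamma_{\Lambda}$-torsor in the finite \'etale topology. Once this structural fact is in place, everything else is formal Galois descent of modules plus bookkeeping with the directed systems. (The statement at the level of adic spaces referred to in the lemma's title then follows by taking $\Spa$ of these rings.)
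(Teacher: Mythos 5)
Your proposal is correct, and it rests on the same key fact as the paper's proof, namely the Kummer--Galois (torsor) isomorphism $A_n[\tfrac{1}{p}]\otimes_{A[\frac{1}{p}]}A_n[\tfrac{1}{p}]\cong\Map(\Gamma_{\Lambda}/p^n\Gamma_{\Lambda},A_n[\tfrac{1}{p}])$ together with flatness of $A[\tfrac1p]\to A_n[\tfrac1p]$; but the organization differs. The paper never decomposes $S$ into orbits: it writes $A_S[\tfrac1p]\otimes_{A[\frac1p]}A_n[\tfrac1p]\cong\Map_{\Gamma_{\Lambda}}(S,(A_n\otimes_AA_n)[\tfrac1p])$ using flatness (a finite limit commutes with flat base change), then inserts the $\Gamma_{\Lambda}$-equivariant isomorphism $(A_n\otimes_AA_n)[\tfrac1p]\cong\Map(\Gamma_{\Lambda}/p^n\Gamma_{\Lambda},A_n[\tfrac1p])$, $x\otimes y\mapsto(\gamma\mapsto\gamma(x)y)$, and finishes with the evaluation bijection $f\mapsto(s\mapsto f(s)(1))$; this treats all $S$ at once and makes functoriality and compatibility in $n$ immediate from the formulas. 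Your route instead reduces to a single orbit $\Gamma_{\Lambda}/H$ and, for non-free orbits, takes $H/p^n\Gamma_{\Lambda}$-invariants of the Galois isomorphism; this works, but note two small points of bookkeeping: (i) commuting invariants with $-\otimes_{A[\frac1p]}A_n[\tfrac1p]$ again uses flatness (invariants are an equalizer), and (ii) you must use the convention in which the Galois group acts on the tensor factor carrying $A_S$, i.e.\ $a\otimes b\mapsto(g\mapsto g(a)\,b)$, so that the $H$-action becomes translation in the $G$-variable and the invariants visibly identify with $\Map(\Gamma_{\Lambda}/H,A_n[\tfrac1p])$; with your stated formula $a\otimes b\mapsto(g\mapsto a\,g(b))$ and $H$ acting on the first factor the identification is not transparent. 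Finally, your justification of the ``main obstacle'' is sound and in fact slightly more detailed than the paper's: since every element of a finite \'etale $A[\tfrac1p]$-algebra becomes integral over $A$ after multiplication by a power of $p$, one has $A_n[\tfrac1p]=A[\tfrac1p]\otimes_{\CO[T_i^{\pm1}]}\CO[T_i^{\pm1/p^n}]$, so the torsor property follows by flat base change from the Kummer cover, whose Galois property is immediate because $\CO$ contains all $p$-power roots of unity and $p$ is inverted.
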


\begin{proof}
By the flatness of $A[\frac{1}{p}]\to A_n[\frac{1}{p}]$,
we have $A_{S}[\frac{1}{p}]\otimes_{A[\frac{1}{p}]}
A_n[\frac{1}{p}]\cong
\Map_{\Gamma_{\Lambda}}(S, (A_n\otimes_AA_n)[\frac{1}{p}])$,
where $\Gamma_{\Lambda}$ acts on $A_n\otimes_AA_n$
via the left factor. For the right-hand side, we have a 
$\Gamma_{\Lambda}$-equivariant isomorphism
$(A_n\otimes_AA_n)[\frac{1}{p}]
\xrightarrow{\cong}\Map(\Gamma_{\Lambda}/p^n\Gamma_{\Lambda},
A_n[\frac{1}{p}])$ sending $x\otimes y$ to the map $f$ defined by 
$f(\gamma)=\gamma(x)y$, where $\Gamma_{\Lambda}$
acts on the codomain via the right action on 
$\Gamma_{\Lambda}/p^n\Gamma_{\Lambda}$,
and a bijection 
$\Map_{\Gamma_{\Lambda}}(S,\Map(\Gamma_{\Lambda}/p^n\Gamma_{\Lambda},
A_n[\frac{1}{p}]))\xrightarrow{\cong}
\Map(S,A_n[\frac{1}{p}])$ defined by 
$f\mapsto (s\mapsto f(s)(1))$. 
\end{proof}

\begin{lemma}\label{lem:KummerCov}
(1) For $S\in \Ob \Gamma_{\Lambda}\fSet$, $A_S[\frac{1}{p}]$
is a finite \'etale $A[\frac{1}{p}]$-algebra.\par
(2) For a covering $(S_{\alpha}\to S)_{\alpha\in I}$ in $\Gamma_{\Lambda}\fSet$
(Definition \ref{def:Gset} (2)), the morphism of $A[\frac{1}{p}]$-schemes
$\sqcup_{\alpha\in I}
\Spec (A_{S_{\alpha}}[\frac{1}{p}])\to \Spec (A_S[\frac{1}{p}])$ is surjective.\par
(3) For morphisms $S_i\to S_0$ $(i=1,2)$ in $\Gamma_{\Lambda}\fSet$
and $S_3=S_1\times_{S_0}S_2$, 
the $A[\frac{1}{p}]$-algebra homomorphism 
$(A_{S_1}\otimes_{A_{S_0}}A_{S_2})[\frac{1}{p}]
\to A_{S_3}[\frac{1}{p}]$ is an isomorphism.
\end{lemma}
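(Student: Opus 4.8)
The plan is to verify Lemma \ref{lem:KummerCov} by reducing all three claims to the basic structure of the pro-Kummer covers $A_n$, exploiting that everything becomes transparent after the finite \'etale base change $A[\tfrac{1}{p}]\to A_n[\tfrac{1}{p}]$ via Lemma \ref{lem:FiniteEtalAdicSpacesTriv}. For part (1), given $S\in \Ob \Gamma_{\Lambda}\fSet$, I would choose $n\in \N$ large enough that $p^n\Gamma_{\Lambda}$ acts trivially on $S$; then by Lemma \ref{lem:FiniteEtalAdicSpacesTriv} the $A_n[\tfrac{1}{p}]$-algebra $A_S[\tfrac{1}{p}]\otimes_{A[\tfrac{1}{p}]}A_n[\tfrac{1}{p}]\cong \Map(S,A_n[\tfrac{1}{p}])$ is finite \'etale (a finite product of copies of $A_n[\tfrac{1}{p}]$, $S$ being finite). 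Since $A[\tfrac{1}{p}]\to A_n[\tfrac{1}{p}]$ is itself finite \'etale and in particular faithfully flat, finite \'etaleness of $A_S[\tfrac{1}{p}]$ over $A[\tfrac{1}{p}]$ descends. (Finiteness descends by faithfully flat descent of finitely generated projective modules; \'etaleness is fpqc-local on the base.)

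For part (2), I would again pass to a level $n$ at which $p^n\Gamma_{\Lambda}$ acts trivially on $S$ and on all $S_{\alpha}$ (the covering family is finite, so a single $n$ works). Surjectivity of $\sqcup_{\alpha}\Spec(A_{S_{\alpha}}[\tfrac{1}{p}])\to \Spec(A_S[\tfrac{1}{p}])$ can be checked after the faithfully flat base change $\Spec A_n[\tfrac{1}{p}]\to \Spec A[\tfrac{1}{p}]$, where by Lemma \ref{lem:FiniteEtalAdicSpacesTriv} it becomes the map $\sqcup_{\alpha}\Spec \Map(S_{\alpha},A_n[\tfrac{1}{p}])\to \Spec\Map(S,A_n[\tfrac{1}{p}])$, i.e. $\sqcup_{\alpha}(S_{\alpha}\times \Spec A_n[\tfrac{1}{p}])\to S\times \Spec A_n[\tfrac{1}{p}]$ up to the identification of a finite discrete set with $\Spec$ of the corresponding product ring; this is surjective precisely because $\sqcup_{\alpha}S_{\alpha}\to S$ is surjective, which is the definition of a covering in $\Gamma_{\Lambda}\fSet$. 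One must be slightly careful that this identification is natural in $S$, which is exactly the functoriality asserted in Lemma \ref{lem:FiniteEtalAdicSpacesTriv}.

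For part (3), the same strategy applies: pick $n$ so that $p^n\Gamma_{\Lambda}$ acts trivially on $S_0,S_1,S_2$, hence also on $S_3=S_1\times_{S_0}S_2$ (a finite inverse limit in $\Gamma_{\Lambda}\fSet$, computed on underlying sets by Remark \ref{rmk:LimitsGSets}). Base-changing the $A[\tfrac{1}{p}]$-algebra map $(A_{S_1}\otimes_{A_{S_0}}A_{S_2})[\tfrac{1}{p}]\to A_{S_3}[\tfrac{1}{p}]$ along the faithfully flat $A[\tfrac{1}{p}]\to A_n[\tfrac{1}{p}]$ and using Lemma \ref{lem:FiniteEtalAdicSpacesTriv} turns it into the canonical map $\Map(S_1,A_n[\tfrac{1}{p}])\otimes_{\Map(S_0,A_n[\tfrac{1}{p}])}\Map(S_2,A_n[\tfrac{1}{p}])\to \Map(S_3,A_n[\tfrac{1}{p}])$, which is an isomorphism because for finite sets $\Map(S_i,R)$ is the $R$-algebra of functions on a finite scheme $S_i$ over $\Spec R$ and fibre products of finite discrete schemes correspond to tensor products of function algebras; again the functoriality in the $S_i$ from Lemma \ref{lem:FiniteEtalAdicSpacesTriv} is what makes these base-changed maps agree with the ones obtained by applying the functor $A_{(-)}$. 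Then one descends the isomorphism along the faithfully flat base change.

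The main obstacle I anticipate is bookkeeping rather than conceptual: ensuring that the isomorphisms of Lemma \ref{lem:FiniteEtalAdicSpacesTriv} are applied compatibly across the several finite $\Gamma_{\Lambda}$-sets simultaneously in play (so that, e.g., the base-changed map in (3) really is the ``canonical'' map on the $\Map$-algebras and not merely isomorphic to it), and handling the choice of a uniform level $n$ in each part. A secondary point worth spelling out carefully is the descent argument itself — that finite \'etaleness, surjectivity of scheme morphisms, and being an isomorphism of modules all descend along the faithfully flat morphism $\Spec A_n[\tfrac{1}{p}]\to \Spec A[\tfrac{1}{p}]$ — but these are standard fpqc-descent facts and need only be invoked, not reproved.
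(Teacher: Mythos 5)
Your proposal is correct and is essentially the paper's own proof: base-change along the faithfully flat $A[\tfrac1p]\to A_n[\tfrac1p]$ for $n$ large enough to trivialize all relevant $\Gamma_{\Lambda}$-actions, invoke Lemma \ref{lem:FiniteEtalAdicSpacesTriv} to reduce to $\Map(S,A_n[\tfrac1p])$, and observe the claims are then clear for these split algebras, descending back by fpqc descent. You merely spell out the descent and functoriality bookkeeping that the paper leaves implicit.
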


\begin{proof}
By taking the scalar extension under the faithfully flat homomorphism
$A[\frac{1}{p}]\to A_n[\frac{1}{p}]$ for $n\in \N$ such that
$p^n\Gamma_{\Lambda}$-acts trivially on the relevant
finite $\Gamma_{\Lambda}$-sets and using Lemma \ref{lem:FiniteEtalAdicSpacesTriv}, 
we are reduced to showing
the corresponding claims for $\Map(S,A_n[\frac{1}{p}])$,
which are all obvious. Note that we have 
$\Spec(\Map(S,A_n[\frac{1}{p}]))
=\sqcup_{S}\Spec(A_n[\frac{1}{p}])$.
\end{proof}

By Lemma \ref{lem:KummerCov} (1), we have a covariant functor from the 
category of  finite $\Gamma_{\Lambda}$-sets
$\Gamma_{\Lambda}\fSet$ to that of 
finite \'etale adic spaces over $X$ sending 
$S$ to $X_S=\Spa(A_S[\frac{1}{p}], A_S)$. 
Since the reduction of $A$ modulo the maximal ideal of $\CO$ is noetherian, 
every open formal subscheme of $\fX$ is quasi-compact.
Therefore the topology of the Zariski site $\fX_{\Zar}$ of $\fX$ is defined by 
the pretopology $\{(\fU_{\alpha}\subset \fU)_{\alpha\in I}
\,\vert\, \sharp I<\infty, \fU_{\alpha}\in \Ob\fX_{\Zar}\,(\alpha\in I), \fU=\cup_{\alpha\in I}\fU_{\alpha}\}$
$(\fU\in \Ob \fX_{\Zar})$. By using this pretopology,
we can define the site $(\fX_{\Zar})_{\Gamma_{\Lambda}}$
of finite $\Gamma_{\Lambda}$-sets above
$\fX_{\Zar}$ (Definition \ref{def:Gsheaf} (3)). 
Recall that we have an equivalence $\rho_{\Gamma_{\Lambda},\fX_{\Zar}}^{-1}
=(\rho_{\Gamma_{\Lambda},\fX_{\Zar}*}, \rho_{\Gamma_{\Lambda},\fX_{\Zar}}^*)$
from the topos 
$(\fX_{\Zar})_{\Gamma_{\Lambda}}^{\sim}$
to the category 
$\Gamma_{\Lambda}\hy\fX_{\Zar}^{\sim}$
of $\Gamma_{\Lambda}$-sheaves of sets on 
$\fX_{\Zar}$ (Definition \ref{def:Gsheaf} (2)) by 
Proposition \ref{prop:GsheafTopos}. 
We can define a functor
$\nu_{\fX,\ut}^+\colon(\fX_{\Zar})_{\Gamma_{\Lambda}}\to X_{\proet}$
by sending $(\fU,S)$ to $U_S:=X_S\times_XU$,
where $U$ denotes the adic generic fiber of $\fU$.
It maps coverings defining the topology of $(\fX_{\Zar})_{\Gamma_{\Lambda}}$
(Definition \ref{def:Gsheaf} (3)) to coverings in the site $X_{\proet}$
by Lemma \ref{lem:KummerCov} (2), and preserves finite inverse limits
by Lemma \ref{lem:KummerCov} (3). Therefore it is continuous 
by Proposition \ref{prop:CGfSetSheaf}, and
defines a morphism of sites. Let $\tnu_{\fX,\ut}
=(\tnu_{\fX,\ut}^*,\tnu_{\fX,\ut*})$ denote 
the associated morphism of topos 
$X_{\proet}^{\sim}\to (\fX_{\Zar})_{\Gamma_{\Lambda}}^{\sim}$,
and put $\nu_{\fX,\ut}=\rho_{\Gamma_{\Lambda}, \fX_{\Zar}}^{-1}
\circ \tnu_{\fX,\ut}\colon X_{\proet}^{\sim}
\to\Gamma_{\Lambda}\hy\fX_{\Zar}^{\sim}$.
Recall that we have morphisms of topos 
$\pi_{\Gamma_{\Lambda},\fX_{\Zar}}\colon 
\Gamma_{\Lambda}\hy\fX_{\Zar}^{\sim}
\to \fX_{\Zar}^{\sim}$ and 
$\iota_{\Gamma_{\Lambda},\fX_{\Zar}}\colon 
\fX_{\Zar}^{\sim}\to \Gamma_{\Lambda}\hy\fX_{\Zar}^{\sim}$
introduced before Proposition \ref{pro:TrivIndFunctEx},
for which we write $\pi_{\Lambda,\fX}$ and $\iota_{\Lambda,\fX}$,
respectively, in the following. We have a canonical isomorphism
$\pi_{\Lambda,\fX}\circ \iota_{\Lambda,\fX}\cong 
\id_{\fX_{\Zar}^{\sim}}$.
Since the composition of the
functor $(\fX_{\Zar})_{\{1\}}\to (\fX_{\Zar})_{\Gamma_{\Lambda}}$
induced by $\id_{\fX_{\Zar}}$ and $\Gamma_{\Lambda}\to \{1\}$
and the functor $\nu_{\fX,\ut}^+\colon (\fX_{\Zar})_{\Gamma_{\Lambda}}
\to X_{\proet}$ sends the pair of $\fU\in \Ob\fX_{\Zar}$ and
a one point set to $U$, we see that the composition
$\pi_{\Lambda,\fX}\circ \nu_{\fX,\ut}\colon X_{\proet}^{\sim}
\to \fX_{\Zar}^{\sim}$ is canonically isomorphic to the
projection morphism, which is denoted by $\nu_{\fX}$ in the following.
\begin{equation}\label{eq:ProetGammaShfProj}
\xymatrix@C=40pt{
&X_{\proet}^{\sim}\ar[d]_{\nu_{\fX,\ut}}\ar[dr]^{\nu_{\fX}}&\\
\fX_{\Zar}^{\sim}\ar[r]^(.45){\iota_{\Lambda,\fX}}
\ar@/_1pc/[rr]_{\id_{\fX_{\Zar}^{\sim}}}&
\Gamma_{\Lambda}\hy\fX_{\Zar}^{\sim}
\ar[r]^{\pi_{\Lambda,\fX}}
& \fX_{\Zar}^{\sim}
}
\end{equation}

The direct image functor $\nu_{\fX,\ut*}$ is explicitly given as follows.
Let $\fU$ be an open formal subscheme of $\fX$,
 let $U$ be its adic generic fiber,  and put 
 $U_n=U_{\Gamma_{\Lambda}/p^n\Gamma_{\Lambda}}$ $(n\in \N)$.
 Then, by the definition of $\tnu_{\fX,\ut*}$ and 
$\rho_{\Gamma_{\Lambda},\fX_{\Zar}}^*$
\eqref{eq:CGfSetShfToGShf}, we have 
\begin{equation}\label{eq:ProetGammaShvProjDescrip}
(\nu_{\fX,\ut*}\CF)(\fU)
=\varinjlim_{H\in \CN(\Gamma_{\Lambda})}
(\tnu_{\fX,\ut*}\CF)(\fU,\Gamma_{\Lambda}/H)\\
\cong\varinjlim_{H\in\CN(\Gamma_{\Lambda})}
\CF(U_{\Gamma_{\Lambda}/H})
\cong\varinjlim_n\CF(U_n)
\end{equation}
for $\CF\in \Ob X_{\proet}^{\sim}$.
The action of $\Gamma_{\Lambda}$ on 
$(\nu_{\fX,\ut*}\CF)(\fU)$ is given by the right action of
$\Gamma_{\Lambda}$ on $U_n$.

We next
discuss the functoriality of 
$\nu_{\fX,\ut}$  with respect to 
$\mti=(\mfi \colon \fX\hookrightarrow \fY, \ut)$.
Let $\mti'=(\mfi'\colon \fX'=\Spf(A')\hookrightarrow \fY'=\Spf(B'),
\ut'=(t'_{i'})_{i'\in\Lambda'})$ be another
small framed embedding over $A_{\inf}$
(Definition \ref{def:AdmFramedSmEmbed} (1)), and let 
$\mtg=(g,h,\psi)\colon \mti'=(\mfi'\colon \fX'\hookrightarrow \fY',\ut')
\to \mti=(\mfi\colon \fX\hookrightarrow \fY,\ut)$ 
be a morphism of small framed embeddings over
$A_{\inf}$ (Definition \ref{def:AdmFramedSmEmbed} (2)).

Let $X'$ denote the adic space $\Spa(A'[\frac{1}{p}],A')$,
and let $\bmg\colon X'\to X$ be the morphism of adic spaces
associated to $g\colon \fX'\to\fX$. Let $t'_{A',i'}\in A^{\prime \times}$
$(i'\in \Lambda')$ be the image of $t'_{i'}\in B^{\prime \times}$ under
$\mfi^{\prime*}$, put $\Gamma_{\Lambda'}=\Map(\Lambda',\Z_p)$.
We have the commutative diagram of topos \eqref{eq:ProetGammaShfProj} for $\fX'$.
Let $\Gamma_{\psi}\colon \Gamma_{\Lambda'}\to \Gamma_{\Lambda}$
be the homomorphism defined by the composition with $\psi\colon 
\Lambda\to \Lambda'$.  We have morphisms of topos \eqref{eq:GSheafToposMorph}
\begin{equation}
(\widetilde{\Gamma_{\psi}})_{g_{\Zar}}\colon
(\fX'_{\Zar})^{\sim}_{\Gamma_{\Lambda'}}\to
(\fX_{\Zar})^{\sim}_{\Gamma_{\Lambda}},\qquad
(\Gamma_{\psi})_{g_{\Zar}}\colon 
\Gamma_{\Lambda'}\hy\fX_{\Zar}^{\prime\sim}
\to \Gamma_{\Lambda}\hy\fX_{\Zar}^{\sim}
\end{equation}
and isomorphisms \eqref{eq:KoszFunctToposDiag}
\begin{equation}
\pi_{\Lambda,\fX}\circ(\Gamma_{\psi})_{g_{\Zar}}\cong 
g_{\Zar}\circ\pi_{\Lambda',\fX'},\qquad
(\Gamma_{\psi})_{g_{\Zar}}\circ\iota_{\Lambda,\fX}\cong
\iota_{\Lambda,\fX}\circ g_{\Zar}.
\end{equation}
We define $A_n'$ $(n\in \N)$, $A'_{S'}$,
and $X'_{S'}$ $(S'\in \Ob\Gamma_{\Lambda'}\fSet)$
 in the same
way as $A_n$, $A_S$, and $X_S$
by using $\mti'=(\mfi', \ut')$ instead of $\mti=(\mfi,\ut)$.
Then the $\CO$-algebra homomorphisms 
$g^*\colon A\to A'$ and 
$\CO[T_i^{\pm 1/p^n} (i\in\Lambda)]\to
\CO[T_{i'}^{\prime\pm 1/p^n} (i'\in \Lambda')];
T_i^{\pm 1/p^n}\mapsto T_{\psi(i)}^{\prime\pm1/p^n}$
induce a homomorphism of inductive systems of $\CO$-algebras
$(g_n^*)_n\colon (A_{n})_n\to (A'_n)_n$, which is
equivariant with respect to $\Gamma_{\psi}\colon \Gamma_{\Lambda'}
\to  \Gamma_{\Lambda}$. For $S\in \Ob \Gamma_{\Lambda}\fSet$
and $S'=(\Gamma_{\psi})^*_{\text{f}}S\in \Ob\Gamma_{\Lambda'}\fSet$
\eqref{eq:RestrGpActionFunctor}, which is $S$ with an action of $\Gamma_{\Lambda'}$ via
$\Gamma_{\psi}$, $(g_n^*)_n$ induces a homomorphism 
$g_S^*\colon A_S\to A'_{S'}$ over $g^*\colon A\to A'$ functorial
in $S$. 
Then $g_S^*$ induces a morphism 
$\bmg_S\colon X'_{S'}\to X_S$ over $\bmg\colon X'\to X$
functorial in $S$. For the two compositions of functors
$(\fX_{\Zar})_{\Gamma_{\Lambda}}
\xrightarrow{g^*\times(\Gamma_{\psi})^*_{\text{f}}}
(\fX'_{\Zar})_{\Gamma_{\Lambda'}}
\xrightarrow{\nu_{\fX',\ut'}^+}X^{\prime}_{\proet}$
and 
$(\fX_{\Zar})_{\Gamma_{\Lambda}}
\xrightarrow{\nu_{\fX,\ut}^+}
X_{\proet}\xrightarrow{\bmg^*} X'_{\proet}$, we have
\begin{align*}
\nu_{\fX',\ut'}^+\circ (g^*\times(\Gamma_{\psi})_{\text{f}}^*)(\fU,S)
&=\nu_{\fX',\ut'}^+(\fU',S')=X'_{S'}\times_{X'}U',\\
\bmg^*\circ\nu_{\fX,\ut}^+(\fU,S)&=\bmg^*(X_S\times_XU)
=(X_S\times_XU)\times_XX'\cong X_S\times_XU'
\end{align*}
for $(\fU,S)\in \Ob (\fX_{\Zar})_{\Gamma_{\Lambda}}
=\Ob\fX_{\Zar}\times\Ob \Gamma_{\Lambda}\fSet$,
$\fU'=\fU\times_{\fX}\fX'$, and the adic generic fibers $U$ and $U'$
of $\fU$ and $\fU'$, respectively.  Hence the morphism 
$\bmg_S\times_{\bmg}\id_{U'}$ for each $(\fU,S)$ defines morphisms of
functors 
\begin{align}
&\Xi_{g,\psi}^+\colon\nu_{\fX',\ut'}^+\circ (g^*\times(\Gamma_{\psi})_{\text{f}}^*)
\longrightarrow \bmg^*\circ\nu_{\fX,\ut}^+\colon (\fX_{\Zar})_{\Gamma_{\Lambda}}
\longrightarrow X_{\proet}',\label{eq:XProetToGammaSiteFunct}\\
&\tXi_{g,\psi*}\colon\tnu_{\fX,\ut*}\circ \bmg_{\proet*}
\longrightarrow (\widetilde{\Gamma_{\psi}})_{g_{\Zar}*}\circ\tnu_{\fX',\ut'*}\colon 
X_{\proet}^{\prime\sim}\longrightarrow
(\fX_{\Zar})_{\Gamma_{\Lambda}}^{\sim},
\label{eq:XproetToGammafSetShfFunct}
\\
&\Xi_{g,\psi*}\colon\nu_{\fX,\ut*}\circ \bmg_{\proet*}
\longrightarrow (\Gamma_{\psi})_{g_{\Zar}*}\circ \nu_{\fX',\ut'*}\colon 
X_{\proet}^{\prime\sim}\longrightarrow \Gamma_{\Lambda}\hy\fX_{\Zar}^{\sim}.
\label{eq:XproetToGammaShfFunct}
\end{align}

\begin{equation}\label{eq:ProetToZarFacorization}
\xymatrix@C=50pt{
X^{\prime\sim}_{\proet}\ar[r]_{\tnu_{\fX',\ut'*}}
\ar[d]_{\bmg_{\proet*}}
\ar@/^1.5pc/[rr]^{\nu_{\fX',\ut'*}}
&
(\fX'_{\Zar})_{\Gamma_{\Lambda'}}^{\sim}
\ar[d]^{(\widetilde{\Gamma_{\psi}})_{g_{\Zar}*}}
\ar[r]^{\sim}_{\rho_{\Gamma_{\Lambda'},\fX'_{\Zar}}^*}
&
\Gamma_{\Lambda'}\hy\fX_{\Zar}^{\prime\sim}
\ar[d]^{(\Gamma_{\psi})_{g_{\Zar}*}}
\ar[r]_(.53){\pi_{\Lambda',\fX'*}}&
\fX^{\prime\sim}_{\Zar}\ar[d]^{g_{\Zar*}}\\
X_{\proet}^{\sim}
\ar[r]_{\tnu_{\fX,\ut*}}
\ar@{=>}[ur]_{\tXi_{g,\psi*}}
\ar@/_2pc/[rr]_{\nu_{\fX,\ut*}}
&
(\fX_{\Zar})_{\Gamma_{\Lambda}}^{\sim}
\ar[r]^{\sim}_{\rho_{\Gamma_{\Lambda},\fX_{\Zar}}^*}
&
\Gamma_{\Lambda}\hy\fX_{\Zar}^{\sim}
\ar[r]_(.53){\pi_{\Lambda,\fX*}}
&
\fX_{\Zar}^{\sim}
}
\end{equation}
Since $X_S=X$ when $S$ is a one point set, we 
see that the following diagram is commutative.
\begin{equation}\label{eq:ProetGammaSheafProjFunct}
\xymatrix@C=45pt@R=15pt{
\pi_{\Lambda,\fX*}\circ \nu_{\fX,\ut*}\circ \bmg_{\proet*}
\ar[r]^(.47){\pi_{\Lambda,\fX*}(\Xi_{g,\psi*})}_(.47){\eqref{eq:XproetToGammaShfFunct}}
\ar@{-}[d]^{\cong}
&
\pi_{\Lambda,\fX*}\circ(\Gamma_{\psi})_{g_{\Zar}*}\circ
\nu_{\fX',\ut'*}\ar@{-}[r]^(.55){\cong}&
g_{\Zar*}\circ\pi_{\Lambda',\fX'*}\circ \nu_{\fX',\ut'*}
\ar@{-}[d]^{\cong}
\\
\nu_{\fX*} \circ \bmg_{\proet*}
\ar@{-}[rr]^{\cong}&&
g_{\Zar*} \circ\nu_{\fX'*}
}
\end{equation}
\begin{remark}\label{rmk:GammaCohFunctCocyc}
The homomorphisms $g_n^*\colon A_n\to A_n'$ $(n\in\N)$ and
$g_S^*\colon A_S\to A'_{S'}$ ($S\in\Ob \Gamma_{\Lambda}\fSet$, 
$S'=(\Gamma_{\psi})_{\text{f}}^*S$) satisfy the
cocycle condition for composition of $(g,h,\psi)$'s. Therefore
it also holds for the morphisms $\Xi_{g,\psi}^+$,
$\tXi_{g,\psi*}$, and $\Xi_{g,\psi*}$.
\end{remark}

\section{Comparison map to $A_{\inf}$-cohomology with coefficients: the local case}
\label{sec:AinfcohCompMap}
We follow the settings introduced in the first paragraph
of \S\ref{sec:RelativeBKF}.
Since the pair  $(A_{\inf},\pq A_{\inf})$ is a $q$-prism,
we can apply the results recalled in \S\ref{sec:PrismCrysQHiggs}
to prismatic sites with base $(A_{\inf},\pq A_{\inf})$. 

For a $p$-adic smooth affine formal scheme $\fX=\Spf(A)$ over $\CO$
admitting invertible $p$-adic coordinates (Definition \ref{def:IadicProperties} (2)), 
an object $\CF$ of $\Crystal_{\prism}^{\fproj}(\fX/A_{\inf})$
(Definition \ref{def:PrismaticSiteCrystal} (2)), and
$\bM:=\bM_{\BKF,\fX}(\CF)$ (Definition \ref{BKFFunctor}), 
we will construct a canonical morphism
\begin{equation}\label{eq:PrismCohAinfCohCompMap}
\kappa_{\CF}\colon Ru_{\fX/A_{\inf}*}\CF\longrightarrow A\Omega_{\fX}(\bM)
\end{equation}
in $D(\fX_{\Zar},A_{\inf})$ functorial in $\CF$ and in $\fX$.

Let $\fX=\Spf(A)$ be a $p$-adic smooth affine formal scheme over $\CO$
admitting invertible $p$-adic coordinates. Then there exists 
a small framed embedding $\mathtt i=(\mfi\colon \fX\to \fY=\Spf(B), \ut=(t_i)_{i\in\Lambda})$ of $\fX$
over $A_{\inf}$ (Definition \ref{def:AdmFramedSmEmbed} (1)), which we choose in the following. 
Put $t_{A,i}=\mfi^*(t_i)$ for $i\in \Lambda$.

Since $(B,\ut)$ is a framed smooth $\delta$-$A_{\inf}$-algebra
(Definition \ref{def:qprism} (2)), 
we have a $t_i\mu$-derivation $\theta_{B,i}$ 
$(i\in \Lambda)$ of $B$ over $A_{\inf}$ $\delta$-compatible with 
respect to $t_i^{p-1}\eta$ as in  Proposition \ref{prop:qPrismEnvTheta}, and 
an endomorphism $\gamma_{B,i}=\id_B+t_i\mu\theta_{B,i}$
of the $\delta$-$A_{\inf}$-algebra $B$ associated to it. We have 
$\theta_{B,i}(t_j)=\pq t_j$ if $j=i$, $0$ otherwise, and 
$\gamma_{B,i}(t_j)=q^pt_j$ if $j=i$, $t_j$ otherwise.  
Let $J$ be the kernel of $\mfi^*\colon B\to A$. Then $((B,J),\ut)$ is an admissible framed
smooth $\delta$-pair over $A_{\inf}$ 
(Definition \ref{def:qprism} (3)) 
by \eqref{cond:frameAinf2} and 
Proposition \ref{prop:SuffCondExBdd}. 
See also \cite[4.13]{TsujiPrismQHiggs}.
Let $D$ be the $q$-prismatic envelope of $(B,J)$
(Definition \ref{def:qprism} (1)). 
As in Proposition \ref{prop:qPrismEnvTheta}
and Definition \ref{def:qHiggsModCpx}, we have 
 a $t_i\mu$-derivation 
 $\theta_{D,i}$ $(i\in \Lambda)$ of $D$ over $A_{\inf}$ 
 $\delta$-compatible with $t_i^{p-1}\eta$, and an endomorphism
 $\gamma_{D,i}=\id_D+t_i\mu\theta_{D,i}$ of 
 the $\delta$-$A_{\inf}$-algebra $D$.
 Put $D_{m}=D/(p,\pq)^{m+1}D$, $\oD=D/\pq D$,
 $\fD_m=\Spec(D_m)$, $\fD=\Spf(D)$,
 and $\ofD=\Spf(\oD)$.
 Let $v_D$ denote the morphism $\ofD\to \fX$
 defined by $A=B/J\to \oD$. 
 
 Put $A_{\inf,m}=A_{\inf}/(p,\pq)^{m+1}$ and 
$\bA_{\inf,X,m}=\bA_{\inf,X}/(p,\pq)^{m+1}$ for $m\in \N$.
The constant sheaves associated to $A_{\inf}$ and $A_{\inf,m}$
on a topos are also denoted by $A_{\inf}$ and $A_{\inf,m}$.
We write $\uA_{\inf}$ and $\ubA_{\inf,X}$
for the inverse systems $(A_{\inf,m})_{m\in \N}$
and $(\bA_{\inf,X,m})_{m\in\N}$. 
Then we have the following morphisms of ringed topos, where 
$T^{\N^{\circ}}$ for a topos $T$ denotes the topos of inverse
systems in $T$ indexed by $\N$ (\S\ref{sec:DTopos} after \eqref{eq:DtoposFBF}). 
See \eqref{eq:ProetGammaShfProj}.
\begin{gather}
((X_{\proet}^{\sim})^{\N^{\circ}},\ubA_{\inf,X})
\xrightarrow{\;\nu\;}
((\fX_{\Zar}^{\sim})^{\N^{\circ}},\uA_{\inf})
\xleftarrow{\:v\;}
((\fD_{\Zar}^{\sim})^{\N^{\circ}},\uA_{\inf})\label{eq:ProetZarMorph1}\\
((X_{\proet}^{\sim})^{\N^{\circ}},\ubA_{\inf,X})
\xrightarrow{\;\nu_{\infty}\;}
((\Gamma_{\Lambda}\hy\fX_{\Zar}^{\sim})^{\N^{\circ}},\uA_{\inf})
\overset{\pi}{\underset{\iota}{\rightleftarrows}}
((\fX_{\Zar}^{\sim})^{\N^{\circ}},\uA_{\inf})\label{eq:ProetZarMorph2}\\
\pi\circ \nu_{\infty}\cong \nu,\quad\pi\circ\iota\cong\id
\label{eq:ProetZarMorph3}
\end{gather}

Let $\CF$ be an object of $\Crystal_{\prism}^{\fproj}(\fX/A_{\inf})$
(Definition \ref{def:PrismaticSiteCrystal} (2)), and 
put $\bM:=\bM_{\BKF,\fX}(\CF)$ (Definition \ref{BKFFunctor}).
We define $\CF_m$ and $\bM_m$ to be the reduction modulo
$(p,[p]_q)^{m+1}$ of $\CF$ and $\bM$, respectively. 
The $\CO_{\fX/A_{\inf},m}$-module $\CF_m$ is a crystal of $\CO_{\fX/A_{\inf},m}$-modules
on $(\fX/A_{\inf})_{\prism}$ (Remark \eqref{rmk:PrismCrystalFrobTensor} (1)).
Let $\ubM$ denote the $\ubA_{\inf,X}$-module $(\bM_m)_{m\in \N}$.
We will construct a morphism 
\begin{equation}\label{eq:PrismCohAinfCohCompMapEmb}
\kappa_{\mti,\CF}\colon Ru_{\fX/A_{\inf}*}\CF\longrightarrow
L\eta_{\mu}R\varprojlim_{\N}R\nu_*\ubM\cong A\Omega_{\fX}(\bM)
\end{equation}
in $D(\fX_{\Zar},A_{\inf})$ by using Theorem \ref{thm:PrismCohQHiggsCpx},
and applying \eqref{eq:GSheafCohKCpx} to $\iota_{\Lambda,\fX}$
and $\pi_{\Lambda,\fX}$ in \eqref{eq:ProetGammaShfProj}
and $\nu_{\fX,\ut*}\bM_m$. We will then show the functoriality of \eqref{eq:PrismCohAinfCohCompMapEmb}
with respect to $\mathtt i=(\mfi\colon\fX\hookrightarrow \fY, \ut)$
(Proposition \ref{prop:PrismAinfCompFunct}),
which allows us to show that \eqref{eq:PrismCohAinfCohCompMapEmb}
is independent of the choice of $\mti$ (Proposition \ref{prop:PrismCohAinfCompMapIndep}).

We start by applying Theorem \ref{thm:PrismCohQHiggsCpx}
to $\CF$.
 Let $(M_m,\utheta_{M_m}=(\theta_{M_m,i})_{i\in\Lambda})$ be the object of 
 $q\HIG_{\qnilp}(D_m,\ut,\utheta_{D})$
 associated to $\CF_m$ by the equivalence of categories
in Theorem \ref{thm:PrismCrysqHiggsEquiv}, 
let $(\CM_m,\utheta_{\CM_m}=(\theta_{\CM_m,i})_{i\in\Lambda})$
be the $q$-Higgs module over
$(\fD, \ut,\utheta_{\fD})$ associated to $(M_m,\utheta_{M_m})$,
and let $q\Omegab(\CM_m,\utheta_{\CM_m})$ 
be the $q$-Higgs complex of $(\CM_m,\utheta_{\CM_m})$
(Construction \ref{constr:qHiggsModCpxShv} (1)).
We write $(\uCM,\utheta_{\uCM})$ for the inverse system of $q$-Higgs
modules $(\CM_m,\utheta_{\CM_m})_{m\in \N}$, and
$q\Omega^{\bullet}(\uCM, \utheta_{\uCM})$
for the complex of $\uA_{\inf}$-modules
$(q\Omega^{\bullet}(\CM_m,\utheta_{\CM_m}))_{m\in \N}$
on $(\fD_{\Zar}^{\sim})^{\N^{\circ}}$.
Then, by Definition \ref{def:PrismCrysqHigsCpxShv} and 
Theorem \ref{thm:PrismCohQHiggsCpx}, we have the following
isomorphism in $D^+(\fX_{\Zar},A_{\inf})$.
\begin{equation}\label{eq:AinfCrysProj}
a_{\mti,\CF}\colon Ru_{\fX/A_{\inf}*}\CF\xrightarrow{\;\cong\;} \varprojlim_{\N}
v_*(q\Omegab(\uCM,\utheta_{\uCM}))
 \end{equation}

 By applying 
Proposition \ref{prop:GammaRepKResol} and Lemma \ref{lem:KoszResolGalInv}
to $\nu_{\infty*}\ubM$, we obtain a resolution 
\begin{equation}\label{eq:BKFGammaRepResol}
\beta_{\mti,\CF}\colon \nu_{\infty*}\ubM\longrightarrow
K_{\Lambda}^{\bullet}(\iota_*\iota^*\nu_{\infty*}\ubM)
\quad\text{in}\;\;
C^+((\Gamma_{\Lambda}\hy\fX_{\Zar}^{\sim})^{\N^{\circ}},
\uA_{\inf})
\end{equation}
and an isomorphism
\begin{equation}\label{eq:GammaGalCohIsomMap}
\alpha_{\mti,\CF}\colon K^{\bullet}_{\Lambda}(\iota^*\nu_{\infty*}\ubM)
\xrightarrow{\;\cong\;}
\pi_*K_{\Lambda}^{\bullet}(\iota_*\iota^*\nu_{\infty*}\ubM)
\quad\text{in}\;\;
C^+((\fX_{\Zar}^{\sim})^{\N^{\circ}},\uA_{\inf}).
\end{equation}
By Corollary \ref{cor:IndGModGacyc} and \eqref{eq:InvSySFBF}, the resolution 
\eqref{eq:BKFGammaRepResol} yields an isomorphism
\begin{equation}\label{eq:BKFGammaCohKoszulResol}
R\pi_*(\nu_{\infty*}\ubM)\xrightarrow[R\pi_*(\beta_{\mti,\CF})]{\cong}
R\pi_*K^{\bullet}_{\Lambda}(\iota_*\iota^*\nu_{\infty*}\ubM)
\cong 
\pi_*K^{\bullet}_{\Lambda}(\iota_*\iota^*\nu_{\infty*}\ubM).
\end{equation}
By composing \eqref{eq:GammaGalCohIsomMap} 
and \eqref{eq:BKFGammaCohKoszulResol} with 
\begin{equation}\label{eq:BKFProetGammaCoh}
 R\pi_*(\nu_{\infty*}\ubM)
 \longrightarrow R\pi_*R\nu_{\infty*}\ubM
 \xleftarrow[\eqref{eq:ProetZarMorph3}]{\;\cong\;} R\nu_*\ubM,
 \end{equation}
 and taking $L\eta_{\mu}R\varprojlim_{\N}$, we obtain the following
morphism in $D(\fX_{\Zar},A_{\inf})$. 
 \begin{equation}\label{eq:AOmegaKoszul}
 b_{\mti,\CF}\colon
L\eta_{\mu}\varprojlim_{\N}K_{\Lambda}^{\bullet}(
 \iota^*\nu_{\infty*}\ubM)
 \longrightarrow
 L\eta_{\mu}R\varprojlim_{\N}R\nu_*\ubM\cong A\Omega_{\fX}(\bM)
  \end{equation}

 \begin{remark}\label{rmk:KoszAinfCohFrobProdComp}
 (1) Let $\CF$ be an object of $\Crystal_{\prism}^{\fproj}(\fX/A_{\inf})$,
 and put $\CF_{\varphi}=\varphi^*\CF\in \Ob \Crystal_{\prism}^{\fproj}(\fX/A_{\inf})$
 (Remark \ref{rmk:PrismCrystalFrobTensor} (2)). We write $\bM$ and $\bM_{\varphi}$
 for $\bM_{\BKF,\fX}(\CF)$ and $\bM_{\BKF,\fX}(\CF_{\varphi})$, respectively,
 and put $\ubM=\bM\otimes_{\bA_{\inf,X}}\ubA_{\inf,X}$
 and $\ubM_{\varphi}=\bM_{\varphi}\otimes_{\bA_{\inf,X}}\ubA_{\inf,X}$.
 We obtain from \eqref{eq:BKFFunctMapFrobPB} 
 a homomorphism 
 \begin{equation}\label{eq:BKFGammaShfMorph}
 \ubM\longrightarrow \ubM_{\varphi}\end{equation} 
 semilinear over the Frobenius of $\ubA_{\inf,X}$. 
 By applying Proposition \ref{prop:GSheafKoszulFunct} (1) and (2) to 
$\psi=\id\colon\Lambda\to\Lambda$, $u=\id\colon \fX_{\Zar}\to \fX_{\Zar}$,
 $\varphi\colon \uA_{\inf}\to \uA_{\inf}$, and the morphism of $\uA_{\inf}$-modules
\begin{equation}\label{eq:BKFGammaShfFrob}
\nu_{\infty*}\ubM \longrightarrow \nu_{\infty*}\varphi_*\ubM_{\varphi}
 \cong\varphi_*\nu_{\infty*}\ubM_{\varphi}\end{equation}
 on $\Gamma_{\Lambda}\hy\fX_{\Zar}^{\sim}$, where
 $\varphi_*$ denote the restriction of scalars under Frobenius,
we obtain morphisms
\begin{align}
K^{\bullet}_{\Lambda}(\iota^*\nu_{\infty*}\ubM)
 &\longrightarrow \varphi_*K^{\bullet}_{\Lambda}(\iota^*\nu_{\infty*}\ubM_{\varphi}),
 \label{eq:BKFGammaShfKosFrob}\\
 K^{\bullet}_{\Lambda}(\iota_*\iota^*\nu_{\infty*}\ubM)
&\longrightarrow\varphi_*K^{\bullet}_{\Lambda}(\iota_*\iota^*\nu_{\infty*}\ubM_{\varphi}).
\label{eq:BKFGammaShfKosFrob2}
\end{align}
 (Note that the base change morphism 
 $\iota^*\varphi_*\nu_{\infty*}\ubM_{\varphi}\to 
 \varphi_*\iota^*\nu_{\infty*}\ubM_{\varphi}$ 
 is given by the identity of the underlying $\uA_{\inf}$-modules
 on $\fX_{\Zar}$.) 
By \eqref{eq:IotaUnitFunct} and \eqref{eq:PiIotaIdFunct}, 
we see that the following diagrams are commutative.
\begin{equation}\label{eq:KoszGammaCohFrobComp}
\xymatrix@R=10pt@C=60pt{
K^{\bullet}_{\Lambda}(\iota^*\nu_{\infty*}\ubM)
\ar[r]^(.46){\eqref{eq:BKFGammaShfKosFrob}}
\ar[d]_{\alpha_{\mti,\CF}\;\;\eqref{eq:GammaGalCohIsomMap}}
&
\varphi_*K^{\bullet}_{\Lambda}(\iota^*\nu_{\infty*}\ubM_{\varphi})
\ar[d]^{\varphi_*\alpha_{\mti,\CF_{\varphi}}\;\;\eqref{eq:GammaGalCohIsomMap}}
\\
\pi_*K^{\bullet}_{\Lambda}(\iota_*\iota^*\nu_{\infty*}\ubM)
\ar[r]^(.46){\eqref{eq:BKFGammaShfKosFrob2}}
&
\varphi_*\pi_*K^{\bullet}_{\Lambda}(\iota_*\iota^*\nu_{\infty*}\ubM_{\varphi})
}
\end{equation}
\begin{equation}\label{eq:BKFGammaResolFrobComp}
\xymatrix@R=10pt@C=60pt{
\nu_{\infty*}\ubM
\ar[r]^{\eqref{eq:BKFGammaShfFrob}}
\ar[d]_{\beta_{\mti,\CF}\;\;\eqref{eq:BKFGammaRepResol}}
&
\varphi_*\nu_{\infty*}\ubM_{\varphi}
\ar[d]^{\beta_{\mti,\CF_{\varphi}}\;\;\eqref{eq:BKFGammaRepResol}}\\
K^{\bullet}_{\Lambda}(\iota_*\iota^*\nu_{\infty*}\ubM)
\ar[r]^{\eqref{eq:BKFGammaShfKosFrob2}}
&
\varphi_*K^{\bullet}_{\Lambda}(\iota_*\iota^*\nu_{\infty*}\ubM_{\varphi})
}
\end{equation}

 (2) Let $\CF_{\nu}$ $(\nu\in \{1,2\})$ be objects of
 $\Crystal_{\prism}^{\fproj}(\fX/A_{\inf})$, and put
 $\CF_3=\CF_1\otimes_{\CO_{\fX/A_{\inf}}}\CF_2
 \in\Ob\Crystal_{\prism}^{\fproj}(\fX/A_{\inf})$
 (Remark \ref{rmk:PrismCrystalFrobTensor} (3)). For $\nu\in \{1,2,3\}$,
 we write $\bM_{\nu}$ for $\bM_{\BKF,\fX}(\CF_{\nu})$,
 and put $\ubM_{\nu}=\bM_{\nu}\otimes_{\bA_{\inf,X}}\ubA_{\inf,X}$.
 Then we have an isomorphism 
 \begin{equation}\label{eq:BKFInvSysTensor}
\ubM_1\otimes_{\ubA_{\inf,X}}\ubM_2\cong \ubM_3
\end{equation}
of $\ubA_{\inf,X}$-modules induced by \eqref{eq:BKFFunctorTensor}.
By applying the construction of the upper horizontal morphism in 
\eqref{eq:KoszulGammaInvProdComp} to 
$\nu_{\infty*}\ubM_{\nu}$ $(\nu=1,2)$ and combining it 
with the the morphism 
\begin{equation}\label{eq:BKFGammaInvSysTensor}
\nu_{\infty*}\ubM_1\otimes_{\uA_{\inf}}\nu_{\infty*}\ubM_2
\to \nu_{\infty*}\ubM_3
\end{equation}
induced by \eqref{eq:BKFInvSysTensor}, 
we obtain a morphism
\begin{equation}\label{eq:BKFInvSySKoszProd1}
K^{\bullet}_{\Lambda}(\iota^*\nu_{\infty*}\ubM_1)
\otimes_{\uA_{\inf}}
K^{\bullet}_{\Lambda}(\iota^*\nu_{\infty*}\ubM_2)
\longrightarrow 
K^{\bullet}_{\Lambda}(\iota^*\nu_{\infty*}\ubM_3).
\end{equation}
By applying the argument in Remark \ref{rmk:GShfCohKoszProd} to $\nu_{\infty*}\ubM_{\nu}$ $(\nu=1,2)$
and combining it with \eqref{eq:BKFGammaInvSysTensor},  we obtain a morphism
\begin{equation}\label{eq:BKFInvSySKoszProd2}
K^{\bullet}_{\Lambda}(\iota_*\iota^*\nu_{\infty*}\ubM_1)
\otimes_{\uA_{\inf}}
K^{\bullet}_{\Lambda}(\iota_*\iota^*\nu_{\infty*}\ubM_2)
\longrightarrow
K^{\bullet}_{\Lambda}(\iota_*\iota^*\nu_{\infty*}\ubM_3),
\end{equation}
and see that the morphisms $\alpha_{\mti,\CF_{\nu}}$ \eqref{eq:GammaGalCohIsomMap}
and $\beta_{\mti,\CF_{\nu}}$ \eqref{eq:BKFGammaRepResol} ($\nu\in \{1,2,3\}$) 
are compatible with the products, i.e., the following diagrams are commutative.
\begin{equation}\label{eq:KoszulGammaCohProjFunct}
\xymatrix@R=15pt{
K_{\Lambda}^{\bullet}(\iota^*\nu_{\infty*}\ubM_1)
\otimes_{\uA_{\inf}}
K_{\Lambda}^{\bullet}(\iota^*\nu_{\infty*}\ubM_2)
\ar[d]_{\alpha_{\mti,\CF_1}\otimes_{\uA_{\inf}} \alpha_{\mti,\CF_2}}
\ar[r]^(.65){\eqref{eq:BKFInvSySKoszProd1}}&
K_{\Lambda}^{\bullet}(\iota^*\nu_{\infty*}\ubM_3)
\ar[d]^{\alpha_{\mti,\CF_3}}\\
\pi_*K_{\Lambda}^{\bullet}(\iota_*\iota^*\nu_{\infty*}\ubM_1)
\otimes_{\uA_{\inf}}
\pi_*K_{\Lambda}^{\bullet}(\iota_*\iota^*\nu_{\infty*}\ubM_2)
\ar[r]^(.67){\eqref{eq:BKFInvSySKoszProd2}}
&
\pi_*K_{\Lambda}^{\bullet}(\iota_*\iota^*\nu_{\infty*}\ubM_3)
}
\end{equation}
\begin{equation}
\label{eq:BKFGammaModResolFunct}
\xymatrix@R=15pt{
\nu_{\infty*}\ubM_1\otimes_{\uA_{\inf}}\nu_{\infty*}\ubM_2
\ar[d]_{\beta_{\mti,\CF_1}\otimes_{\uA_{\inf}} \beta_{\mti,\CF_2}}
\ar[r]^(.6){\eqref{eq:BKFGammaInvSysTensor}}&
\nu_{\infty*}\ubM_3
\ar[d]^{\beta_{\mti,\CF_3}}\\
K^{\bullet}_{\Lambda}(\iota_*\iota^*\nu_{\infty*}\ubM_1)
\otimes_{\uA_{\inf}}
K^{\bullet}_{\Lambda}(\iota_*\iota^*\nu_{\infty*}\ubM_2)
\ar[r]^(.67){\eqref{eq:BKFInvSySKoszProd1}}
&
K^{\bullet}_{\Lambda}(\iota_*\iota^*\nu_{\infty*}\ubM_3)
}
\end{equation}
 \end{remark}

 We construct the morphism \eqref{eq:PrismCohAinfCohCompMapEmb}
 by relating the codomain of $a_{\mti, \CF}$ \eqref{eq:AinfCrysProj} with the 
 domain of $b_{\mti,\CF}$ \eqref{eq:AOmegaKoszul}. 
  We follow the notation introduced in the construction of $\nu_{\fX,\ut}$
 \eqref{eq:ProetGammaShvProj} in \S\ref{sec:ProetGammaZarShv}. 
  Let $\fU=\Spf(A_{\fU})$ be an open affine formal subscheme of $\fX$,
 let $U$ be its adic generic fiber $\Spa(A_{\fU}[\frac{1}{p}],A_{\fU})$,
 and put $U_n=U_{\Gamma_{\Lambda}/p^n\Gamma_{\Lambda}}$,
 which is isomorphic to $\Spa(A_{\fU,n}[\frac{1}{p}],\CA_{\fU,n})$,
 where $A_{\fU,n}$ is defined in the same way as $A_n$ by replacing
 $A$ by $A_{\fU}$. 
 Let $U_{\infty}$ be the object
`` $\varprojlim_n$"$U_n$ of $X_{\proet}$, which is an affinoid
perfectoid (\cite[Definition 4.3 (i)]{ScholzepHTRigid}) by 
\eqref{cond:frameAinf2}, and put 
$A_{U_{\infty}}^+=\Gamma(U_{\infty},\hCO_{X}^+)=
(\varinjlim_n A_{\fU,n})^{\wedge}$ 
(\cite[Lemma 4.10 (iii)]{ScholzepHTRigid}), where
$\widehat{\;\;}$ denotes the $p$-adic completion.
By \eqref{eq:ProetGammaShvProjDescrip} and
Proposition \ref{prop:BKFProperties} (4), we have
\begin{equation}\label{eq:ProetGammaModIm}
(\nu_{\infty*}\ubM)(\fU)
\xrightarrow{\;\cong\;}
\varinjlim_n\ubM(U_n)
\xrightarrow{\;\cong\;}\ubM(U_{\infty}).
\end{equation}
The action of $\Gamma_{\Lambda}$ on 
$(\nu_{\infty*}\ubM)(\fU)$ is given by the right action of
$\Gamma_{\Lambda}$ on $U_{\infty}$.

We write $X_n$, $X_{\infty}$, and $A_{\infty}$
for $U_n$, $U_{\infty}$, and $A_{U_{\infty}}^+$
when $\fU=\fX$. 
Let $t_{A,i,n}$ $(n\in \N)$ be the image of $T_{i}^{1/p^n}$ in $A_n$,
and let $t_{A,i}^{\flat}$ be the element $(t_{A,i,n})_{n\in \N}$
of the tilt $A_{\infty}^{\flat}=\varprojlim_{\N,x\mapsto x^p}A_{\infty}
=\varprojlim_{\N,\text{Frob}}A_{\infty}/pA_{\infty}$ of $A_{\infty}$.
We define a $\delta$-$A_{\inf}$-algebra structure on 
$A_{\inf}[\uT^{\pm 1}]=A_{\inf}[T_i^{\pm 1} (i\in \Lambda)]$ by $\delta(T_i)=0$ 
$(i\in \Lambda)$, and define an
action of $\Gamma_{\Lambda}^{\disc}=\Map(\Lambda,\Z)$
on the $\delta$-$A_{\inf}$-algebra $A_{\inf}[\uT^{\pm 1}]$ by 
$\gamma(T_i)=[\varepsilon^{\gamma(i)}]^pT_i$ 
$(i\in \Lambda,\gamma\in\Gamma_{\Lambda}^{\disc})$.
We define an action of $\Gamma_{\Lambda}^{\disc}$ on
the $\delta$-$A_{\inf}$-algebra $B$ by 
$\gamma(x)=(\prod_{i\in\Lambda}\gamma_{B,i}^{\gamma(i)})(x)$
$(x\in B, \gamma\in\Gamma_{\Lambda}^{\disc})$. 
Then we have a commutative diagram of $A_{\inf}$-algebras with
$\Gamma_{\Lambda}^{\disc}$-action 
\begin{equation}\label{eq:BToAinfDiag}
\xymatrix{
A_{\inf}(A_{\infty})/[p]_qA_{\inf}(A_{\infty})\cong A_{\infty}&\ar[l] A\cong B/J&\ar[l] B\\
A_{\inf}(A_{\infty})\ar[u]&&\ar[ll]A_{\inf}[\uT^{\pm 1}],\ar[u]
}
\end{equation}
where the right vertical (resp.~bottom horizontal) homomorphism 
is defined by $T_i\mapsto t_i$ (resp.~$[t_{A,i}^{\flat}]^p$) for 
$i\in\Lambda$. Since 
the right vertical homomorphism is $(p,\pq)$-adically \'etale
(Definition \ref{def:IadicProperties} (1)), the kernel of the left vertical one is 
$(p,\pq)$-adically nilpotent, and $A_{\inf}(A_{\infty})$
(resp.~$A_{\infty}$) is $(p,\pq)$-(resp.~$p$-)adically complete and separated, there exists
a  unique homomorphism 
$B\to A_{\inf}(A_{\infty})$ making the above diagram commutative,
and it is $\Gamma^{\disc}_{\Lambda}$-equivariant.
Since the bottom horizontal map is a $\delta$-homomorphism,
Proposition \ref{prop:DeltaStruEtaleMap} (2) implies that 
this underlies a $\Gamma_{\Lambda}^{\disc}$-equivariant
homomorphism of $\delta$-pairs
$(B,J)\to (A_{\inf}(A_{\infty}),\pq A_{\inf}(A_{\infty}))$
over the $q$-prism $(A_{\inf},\pq A_{\inf})$, which extends
uniquely to a 
morphism of $q$-prisms $D\to A_{\inf}(A_{\infty})$ over $A_{\inf}$.
It is $\Gamma^{\disc}_{\Lambda}$-equivariant
for the action of $\Gamma^{\disc}_{\Lambda}$ on the 
$\delta$-$A_{\inf}$-algebra $D$ defined by 
$\gamma(x)=\prod_{i\in\Lambda}\gamma_{D,i}^{\gamma(i)}(x)$
$(x\in D,\gamma\in \Gamma^{\disc}_{\Lambda})$, which 
is the unique extension of the action of $\Gamma^{\disc}_{\Lambda}$
on the $\delta$-$A_{\inf}$-algebra $B$ to the $\delta$-$A_{\inf}$-algebra $D$. 
Thus we obtain a $\Gamma_{\Lambda}^{\disc}$-equivariant
morphism  in $(\fX/A_{\inf})_{\prism}$
\begin{equation}\label{eq:MorphAinfToD}
p_{D,\ut}\colon (A_{\inf}(A_{\infty}),\pq A_{\inf}(A_{\infty}))
\to (D,\pq D).
\end{equation}
Note that the action of $\Gamma_{\Lambda}^{\disc}$ on $D$ defines
an action of $\Gamma_{\Lambda}^{\disc}$ on the object 
$(D,\pq D)$ of $(\fX/A_{\inf})_{\prism}$ as observed in Definition 
\ref{def:qNilpQHiggs} (2).

Let $\fU=\Spf(A_{\fU})$ be an open affine formal subscheme of $\fX$,
and let $\ofD_{\fU}=\Spf(\oD_{\fU})$ and $\fD_{\fU}=\Spf(D_{\fU})$
be the open affine formal subschemes of $\ofD$ and $\fD$,
respectively, whose underlying set is $v_D^{-1}(\fU)$. Then, since
the morphism $\Spf(A_{U_{\infty}}^+)\to\Spf(A_{\infty})
\to \fX=\Spf(A)$ factors through $\fU=\Spf(A_{\fU})$, the composition 
$A_{\inf}(A^+_{U_{\infty}})\to A_{\inf}(A_{\infty})
\xrightarrow{p_{D,\ut}} D$ in $(\fX/A_{\inf})_{\prism}$
factors uniquely as $A_{\inf}(A^+_{U_{\infty}})
\xrightarrow{p_{D_{\fU},\ut}} D_{\fU}\to D$.
The morphism $p_{D_{\fU},\ut}$ induces $A_{\inf}$-linear maps
\begin{multline}\label{eq:CrysBKFMap}
(v_{D*}\CM_m)(\fU)
=\CM_m(\fD_{\fU})
=\CF_m(D_{\fU})
\\
\xrightarrow{\CF_m(p_{D_{\fU},\ut})}
\CF_m(A_{\inf}(A^+_{U_{\infty}}))=
\bM(U_{\infty})/(p,\pq)^{m+1}\bM(U_{\infty})
\longrightarrow \bM_m(U_{\infty}).
\end{multline}
Since the morphism $p_{D,\ut}$ is $\Gamma_{\Lambda}^{\disc}$-equivariant,
the morphism $p_{D_{\fU},\ut}$ is also $\Gamma_{\Lambda}^{\disc}$-equivariant.
By \cite[(13.4)]{TsujiPrismQHiggs}, we see that $v_{D*}(1+t_i\mu\theta_{\CM_m,i})(\fU)$
on $(v_{D*}\CM_m)(\fU)$ is compatible
with the action of 
$\gamma_i\in \Gamma_{\Lambda}^{\disc}$
on $\bM_m(U_{\infty})$ via \eqref{eq:CrysBKFMap}, 
where $\gamma_i$ is defined
by $\gamma_i(j)=1$ if $j=i$ and $0$ otherwise,
as before \eqref{eq:KoszulCpxProd}.
We define the action of $\Gamma_{\Lambda}^{\disc}$
on $v_{D*}\CM_m$ by letting $\gamma_i$
act by $v_{D*}(1+t_i\mu\theta_{\CM_m,i})$ for $i\in \Lambda$.

The morphisms \eqref{eq:ProetGammaModIm} and 
\eqref{eq:CrysBKFMap} are functorial in $\fU$ by their constructions. Therefore they
induce a $\Gamma_{\Lambda}^{\disc}$-equivariant morphism of $\uA_{\inf}$-modules on 
$\fX_{\Zar}$
\begin{equation}\label{eq:qHigBKFmap}
\delta_{\mti,\CF}\colon 
v_*\uCM=(v_{D*}\CM_m)_{m\in \N}\longrightarrow
\iota^*\nu_{\infty*}\ubM,
\end{equation}
which yields a morphism of complexes of $\uA_{\inf}$-modules on $\fX_{\Zar}$
\begin{equation}\label{eq:qHigBKFKoszulmap}
K^{\bullet}_{\Lambda}(\delta_{\mti,\CF})\colon
K^{\bullet}_{\Lambda}(v_*\uCM)\longrightarrow
K^{\bullet}_{\Lambda}(\iota^*\nu_{\infty*}\ubM).
\end{equation}

\begin{lemma}\label{lem:qdRKoszulComp}
The homomorphism 
$v_{D*}(q\Omega^r(\CM_m,\utheta_{\CM_m}))
\to K_{\Lambda}^r(v_{D*}\CM_m)$ $(r\in \N)$
sending $m\otimes\omega_{i_1}\wedge\cdots\wedge\omega_{i_r}$
to $m\otimes \mu^r\prod_{\nu=1}^rt_{i_{\nu}}e_{i_1}\wedge\cdots\wedge e_{i_r}$
defines a morphism of complexes of $\uA_{\inf}$-modules
on $\fX_{\Zar}$
\begin{equation}\label{eq:qHiggscpxInvSysKoszul}
\gamma_{\mti,\CF}\colon v_*(q\Omegab(\uCM,\utheta_{\uCM}))
\longrightarrow K_{\Lambda}^{\bullet}(v_*\uCM),
\end{equation}
which induces an isomorphism
\begin{equation}\label{eq:qdRcpxKoszul}
\hgamma_{\mti,\CF}\colon\varprojlim_{\N} v_*(q\Omegab(\uCM,\utheta_{\uCM}))
\xrightarrow{\;\cong\;}
\eta_{\mu}K_{\Lambda}^{\bullet}(\varprojlim_{\N}v_*\uCM).
\end{equation}
\end{lemma}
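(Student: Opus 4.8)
The plan is to verify Lemma~\ref{lem:qdRKoszulComp} in two stages: first, that the proposed maps $v_{D*}(q\Omega^r(\CM_m,\utheta_{\CM_m}))\to K_{\Lambda}^r(v_{D*}\CM_m)$ assemble into a morphism of complexes $\gamma_{\mti,\CF}$; second, that after taking $\varprojlim_{\N}$ and applying the stupid truncation $\eta_{\mu}$ one obtains an isomorphism. For the first stage, I would work on each affine open $\fU=\Spf(D_{\fU})\subset \fD$ and compare the differential of the $q$-Higgs complex, $\nabla^r_{M_m}(x\otimes\omega_{\bmI})=\sum_{i\in\Lambda}\theta_{\CM_m,i}(x)\otimes\omega_i\wedge\omega_{\bmI}$, with that of the Koszul complex, $d^r(y\otimes e_{\bmI})=\sum_{i\in\Lambda}(\gamma_i-1)(y)\otimes e_i\wedge e_{\bmI}$, where the $\Gamma_{\Lambda}^{\disc}$-action on $v_{D*}\CM_m$ has $\gamma_i$ acting by $\id+t_i\mu\,\theta_{\CM_m,i}$ (as fixed just before the lemma). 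The computation is a bookkeeping check: on $x\otimes\mu^r\prod_{\nu}t_{i_\nu}\,e_{\bmI}$, the factor $\mu\cdot t_i$ attached to the new index $i$ converts $t_i\mu\,\theta_{\CM_m,i}(x)=(\gamma_i-1)(x)$ into the Koszul differential, matching the extra $\mu t_i$ that $d^r$ contributes in degree $r+1$; one also needs that $\theta_{\CM_m,i}(t_j)=0$ for $j\neq i$ and $\theta_{\CM_m,i}(t_i)=\pq$ so that the rescaling constants $\mu^r\prod t_{i_\nu}$ are acted on compatibly. This is exactly the content of \cite[(13.4)]{TsujiPrismQHiggs} cited in the construction of the $\Gamma_{\Lambda}^{\disc}$-action, so I would reduce to that citation rather than re-deriving it; functoriality in $\fU$ is immediate since every ingredient is defined affine-open-locally.

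For the second stage, the key point is that the map $\gamma_{\mti,\CF}$ is an isomorphism in each degree up to multiplication by a power of $\mu$: in degree $r$ it is literally multiplication by $\mu^r\prod_{\nu=1}^{r}t_{i_\nu}$ on the component indexed by $\bmI$, and since each $t_i\in B^{\times}$ (hence in $D^{\times}$) by condition \eqref{cond:frameAinf}, the unit factors $\prod t_{i_\nu}$ disappear and $\gamma_{\mti,\CF}$ becomes, degreewise, multiplication by $\mu^r$. Recall that $\eta_{\mu}$ applied to a complex of $\mu$-torsion-free modules replaces the degree-$r$ term by its image under multiplication by $\mu^r$ inside the rational complex; here $v_*\uCM=(v_{D*}\CM_m)_m$ and $\iota^*\nu_{\infty*}\ubM$ need not be $\mu$-torsion-free at finite level, but the $\mu$-power rescaling description of $\gamma_{\mti,\CF}$ shows directly that $\gamma_{\mti,\CF}$ factors through $\eta_{\mu}K^{\bullet}_{\Lambda}(v_*\uCM)$ and induces an isomorphism onto it; I would then observe that $\varprojlim_{\N}$ commutes with this, because $\varprojlim$ is left exact and the transition maps are the obvious ones. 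More carefully, I would first establish the degreewise factorization $v_*(q\Omega^r(\uCM))\xrightarrow{\cong}\mu^r K^{\bullet}_{\Lambda}(v_*\uCM)^r$ as $\uA_{\inf}$-modules (using invertibility of the $t_i$), check this is compatible with differentials hence identifies the source complex with the subcomplex $\eta_{\mu}K^{\bullet}_{\Lambda}(v_*\uCM)$, and only then pass to the inverse limit to get $\hgamma_{\mti,\CF}$.

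The main obstacle I anticipate is handling $\eta_{\mu}$ correctly at the level of the inverse system: $\eta_{\mu}$ is defined on complexes, and one must be sure that $\eta_{\mu}$ of the inverse system $(K^{\bullet}_{\Lambda}(v_{D*}\CM_m))_m$, then $\varprojlim$, agrees with what the formula gives, i.e.\ that no $\lim^1$ or torsion subtlety spoils the identification $\varprojlim_{\N}\eta_{\mu}K^{\bullet}_{\Lambda}(v_{D*}\CM_m)\cong \eta_{\mu}K^{\bullet}_{\Lambda}(\varprojlim_m v_{D*}\CM_m)$. I would deal with this by noting that $\eta_{\mu}$ at the level of actual complexes (not the derived functor $L\eta_{\mu}$) is given by an explicit submodule/quotient construction degreewise, which commutes with $\varprojlim$ over $\N$ since each $v_{D*}\CM_m$ sits in a degreewise-split short exact sequence by \eqref{eq:BKFGr}-type reasoning and the transition maps are surjective enough; alternatively, I would phrase the final statement as a comparison of subcomplexes of the rational complex $K^{\bullet}_{\Lambda}(\varprojlim_m v_{D*}\CM_m)[\tfrac1{\mu}]$, where everything is $\mu$-torsion-free and the identification is transparent. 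A secondary, more routine point to get right is the precise constant in the degree-$r$ rescaling: one must track that $\omega_{\bmI}=\omega_{i_1}\wedge\cdots\wedge\omega_{i_r}$ maps to $\mu^r\prod_{\nu=1}^{r}t_{i_\nu}\,e_{\bmI}$ with exactly these exponents, which follows from iterating the single-variable comparison ($\theta_{\CM_m,i}$ versus $\gamma_i-1$ differ by the factor $t_i\mu$) and the commutativity $\theta_{\CM_m,i}\circ\theta_{\CM_m,j}=\theta_{\CM_m,j}\circ\theta_{\CM_m,i}$ together with $\theta_{\CM_m,i}(t_j)=0$ for $i\neq j$, so that the rescaling factors pull through the higher differentials without cross terms.
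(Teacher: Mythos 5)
Your approach is correct and essentially matches the paper's, which compresses the whole argument into three sentences. One small notational slip in stage 1: what you need is $\theta_{D,i}(t_j)=0$ for $j\neq i$ (and $\gamma_{D,i}(t_j)=t_j$), not $\theta_{\CM_m,i}(t_j)=0$ — the latter does not typecheck since $t_j$ is a ring element, not a module element; the point is that $\theta_{\CM_m,i}$ is a $(t_i\mu,\theta_{D,i})$-connection, so one uses semilinearity to pull the scalar $\mu^r t_{\bmI}$ through and gets $(\gamma_i-1)(\mu^r t_{\bmI} x)=\mu^{r+1}t_i t_{\bmI}\,\theta_{\CM_m,i}(x)$ when $i\notin\bmI$, which is exactly the paper's cited relation $\theta_{\CM_m,i}\circ t_j\mu\cdot\id=t_j\mu\cdot\id\circ\theta_{\CM_m,i}$ for $i\neq j$.

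Your main flagged obstacle — whether $\varprojlim_{\N}$ commutes with $\eta_\mu$ — does not arise, because you have misread the target of \eqref{eq:qdRcpxKoszul}: $\eta_\mu$ is applied \emph{after} $\varprojlim_{\N}$, so the right-hand side is $\eta_{\mu}K_{\Lambda}^{\bullet}(\varprojlim_{\N}v_*\uCM)$ and not $\varprojlim_{\N}\eta_\mu K_\Lambda^\bullet(v_*\uCM)$. Hence one never needs to make sense of $\eta_\mu$ at finite level $m$, where $v_{D*}\CM_m$ is killed by $(p,[p]_q)^{m+1}$ and is far from $\mu$-torsion free. Once the limit is taken, $\varprojlim_m v_{D*}\CM_m(\fU)$ is a finite projective module over the localization of $D$, which is $\mu$-torsion free (Lemma \ref{lem:FlatAinfAlgProperties} and Remark \ref{rmk:AinfPerfFlatness}); then the paper's one-line observation — each $\gamma_i-1$ is $A_{\inf}$-linear and trivial mod $\mu$, so $(\eta_\mu K^{\bullet}_{\Lambda})^r=\mu^r K^r_{\Lambda}$ — combined with invertibility of the $t_i$ gives the isomorphism immediately, with no need for the rational-complex detour or the \eqref{eq:BKFGr}-type splitting (which concerns $\bM_m$, not these sections). (Also: $\eta_\mu$ is the d\'ecalage functor, not a stupid truncation; this does not affect the argument but is worth keeping straight.)
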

\begin{proof} The former follows from the definition of the action of 
$\gamma_i$ on $v_{D*}\CM_m$ and $\theta_{\CM_{m},i}\circ t_j\mu\cdot \id_{\CM_m}
=t_j\mu\cdot \id_{\CM_m}\circ\theta_{\CM_m,i}$ for $i\neq j$. Since $\gamma_i-1$ on $v_{D*}\CM_m$
is $A_{\inf}$-linear and trivial modulo $\mu$, we see
$(\eta_{\mu}K^{\bullet}_{\Lambda}(\varprojlim_{\N}v_*\uCM))^r
=\mu^r K_{\Lambda}^r(\varprojlim_{\N}v_*\uCM)$.
This implies the latter claim.\end{proof}

\begin{remark}\label{rmk:qdRcpxKoszulFrobProdComp}
(1) We can verify the compatibility of \eqref{eq:qHigBKFKoszulmap}
and \eqref{eq:qHiggscpxInvSysKoszul} with Frobenius
pullback as follows. Let $\CF\in \Ob \Crystal_{\prism}^{\fproj}(\fX/A_{\inf})$
and put $\CF_{\varphi}=\varphi^*\CF\in \Ob \Crystal_{\prism}^{\fproj}(\fX/A_{\inf})$
(Remark \ref{rmk:PrismCrystalFrobTensor} (2)). 
As before \eqref{eq:AinfCrysProj}, we define 
$(\uCM,\utheta_{\uCM})=(\CM_m,\utheta_{\CM_m})_{m\in \N}$
(resp.~$(\uCM_{\varphi},\utheta_{\uCM_{\varphi}})=
(\CM_{\varphi,m},\utheta_{\CM_{\varphi,m}})_{m\in \N}$)
to be the inverse system of $q$-Higgs modules over $(\fD,\ut,\utheta_{\fD})$
associated to $(\CF/(p,\pq)^{m+1}\CF)_{m\in\N}$
(resp.~$(\CF_{\varphi}/(p,\pq)^{m+1}\CF_{\varphi})_{m\in \N}$)
by Definition \ref{def:PrismCrysqHigsCpxShv}.
By Theorem \ref{thm:PrismCrysqHiggsEquiv} (3) and 
Construction \ref{constr:qHiggsModCpxShv} (3), we have 
$\CM_{\varphi,m}=\varphi_{\fD_m}^*\CM_m=
\CM_m\otimes_{\CO_{\fD_m},\varphi_{\fD_m}}\CO_{\fD_m}$ and 
$\theta_{\CM_{\varphi,m},i}(y\otimes 1)=
\theta_{\CM_m,i}(y)\otimes t_i^{p-1}\pq$
for $i\in \Lambda$ and a local section 
$y$ of $\CM_m$ on an affine open of $\fD$.
By the definition of the $\Gamma_{\Lambda}^{\disc}$-action
on $v_{D*}\CM_m$ and $v_{D*}\CM_{\varphi,m}$ given after
\eqref{eq:CrysBKFMap}, we see that the morphism 
$v_{D*}\CM_m\to \varphi_*v_{D*}\CM_{\varphi,m};x
\mapsto x\otimes 1$ is compatible with the actions of
$\Gamma_{\Lambda}^{\disc}$, and therefore induces 
a morphism of complexes
\begin{equation}\label{eq:qHigKoszulInvSysFrobPB}
K_{\Lambda}^{\bullet}(v_{*}\uCM)\longrightarrow
\varphi_*K_{\Lambda}^{\bullet}(v_{*}\uCM_{\varphi}).
\end{equation}
It is straightforward to verify that the morphisms
$\delta_{\mti,\CF}$ and $\delta_{\mti,\CF_{\varphi}}$
\eqref{eq:qHigBKFmap} 
are compatible with the morphisms between their domains
and codomains induced by $\CM_{m}\to\varphi_*\CM_{\varphi,m};
x\mapsto x\otimes1$ $(m\in \N)$ and \eqref{eq:BKFGammaShfFrob}, respectively.
Therefore the morphisms $K_{\Lambda}^{\bullet}(\delta_{\mti,\CF})$ and 
$K_{\Lambda}^{\bullet}(\delta_{\mti,\CF_{\varphi}})$
\eqref{eq:qHigBKFKoszulmap} are compatible with \eqref{eq:qHigKoszulInvSysFrobPB} and 
\eqref{eq:BKFGammaShfKosFrob}, i.e., the following diagram is commutative.
\begin{equation}\label{eq:HiggsKoszulBKFFrobComp}
\xymatrix@R=10pt@C=60pt{
K^{\bullet}_{\Lambda}(v_*\uCM)
\ar[r]^{\eqref{eq:qHigKoszulInvSysFrobPB}}
\ar[d]_{K_{\Lambda}^{\bullet}(\delta_{\mti,\CF})}
&
\varphi_*K^{\bullet}_{\Lambda}(v_*\uCM_{\varphi})
\ar[d]^{\varphi_*K^{\bullet}_{\Lambda}(\delta_{\mti,\CF_{\varphi}})}
\\
K_{\Lambda}^{\bullet}(\iota^*\nu_{\infty*}\ubM)
\ar[r]^{\eqref{eq:BKFGammaShfKosFrob}}&
\varphi_*K_{\Lambda}^{\bullet}(\iota^*\nu_{\infty*}\ubM_{\varphi})
}
\end{equation}
Let $q\Omega^{\bullet}(\uCM,\utheta_{\uCM})$
(resp.~$q\Omega^{\bullet}(\uCM_{\varphi},\utheta_{\uCM_{\varphi}})$)
denote the inverse system  of $q$-Higgs complexes \linebreak
$(q\Omega^{\bullet}(\CM_m,\utheta_{\CM_m}))_{m\in \N}$
(resp.~$(q\Omega^{\bullet}(\CM_{\varphi,m},\utheta_{\CM_{\varphi,m}}))_{m\in \N}$).
Then, by applying \eqref{eq:CrysqHiggCpxFrobPB} 
to $\CF/(p,\pq)^{m+1}\CF$ $(m\in \N)$, we obtain a 
morphism of complexes
\begin{equation}\label{eq:qHiggsInvSysFrobPB}
v_*(q\Omega^{\bullet}(\uCM,\utheta_{\uCM}))
\longrightarrow \varphi_*v_*(q\Omega^{\bullet}(\uCM_{\varphi},\utheta_{\uCM_{\varphi}})),
\end{equation}
which is explicitly given by $x\otimes\omega_{\bmI}
\mapsto x\otimes \pq^rt_{\bmI}^{p-1}\otimes\omega_{\bmI}$
for a section $x$ of $v_{D*}\CM_m$ on an affine open of $\fX$,
$r\in \N$, $\bmI=(i_n)_{1\leq n\leq r}\in\Lambda^r$, 
$\omega_{\bmI}=\omega_{i_1}\wedge\ldots\wedge \omega_{i_r}$,
and $t_{\bmI}=\prod_{n=1}^rt_{i_n}$. 
By simple explicit computation, we can verify that
the morphisms \eqref{eq:qHiggsInvSysFrobPB} and 
\eqref{eq:qHigKoszulInvSysFrobPB} are compatible
with $\gamma_{\mti,\CF}$ and $\gamma_{\mti,\CF_{\varphi}}$
\eqref{eq:qHiggscpxInvSysKoszul}, i.e., the following diagram
is commutative. 
\begin{equation}\label{eq:qHiggsKoszFrobComp}
\xymatrix@R=10pt@C=60pt{
v_*(q\Omega^{\bullet}(\uCM,\utheta_{\uCM}))
\ar[r]^{\eqref{eq:qHiggsInvSysFrobPB}}
\ar[d]_{\gamma_{\mti,\CF}}
&
 \varphi_*v_*(q\Omega^{\bullet}(\uCM_{\varphi},\utheta_{\uCM_{\varphi}}))
\ar[d]^{\varphi_*\gamma_{\mti,\CF_{\varphi}}}\\
K_{\Lambda}^{\bullet}(v_{*}\uCM)
\ar[r]^{\eqref{eq:qHigKoszulInvSysFrobPB}}
&
\varphi_*K_{\Lambda}^{\bullet}(v_{*}\uCM_{\varphi})
}
\end{equation}
Indeed
the image of $x\otimes\omega_{\bmI}$ under the
composition of
$\eqref{eq:qHiggsInvSysFrobPB}$ 
and $\varphi_*\gamma_{\mti,\CF_{\varphi}}$ 
(resp.~$\gamma_{\mti,\CF}$ and \eqref{eq:qHigKoszulInvSysFrobPB})
is computed as $x\otimes\omega_{\bmI}
\mapsto x\otimes\pq^rt_{\bmI}^{p-1}\otimes\omega_{\bmI}
\mapsto x\otimes \pq^rt_{\bmI}^{p-1}\mu^rt_{\bmI}\otimes e_{\bmI}$
(resp.~$\mapsto\mu^rt_{\bmI}x\otimes e_{\bmI}
\mapsto x\otimes \varphi(\mu)^rt_{\bmI}^p\otimes e_{\bmI}$).

(2) We keep the notation and assumption 
in Remark \ref{rmk:KoszAinfCohFrobProdComp} (2). 
As before \eqref{eq:AinfCrysProj}, we define 
$(\CM_{\nu,m},\utheta_{\CM_{\nu,m}})$ $(m\in \N,\nu\in \{1,2,3\})$
to be the $q$-Higgs module over $(\fD,\ut,\utheta_{\fD})$
associated to $\CF_{\nu}/(p,\pq)^{m+1}\CF_{\nu}$ 
by Definition \ref{def:PrismCrysqHigsCpxShv}.
Put $(\uCM_{\nu}, \utheta_{\uCM_{\nu}})=
(\CM_{\nu,m},\utheta_{\CM_{\nu,m}})_{m\in \N}$
and $q\Omega^{\bullet}(\uCM_{\nu},\utheta_{\uCM_{\nu}})
=(q\Omega^{\bullet}(\CM_{\nu,m},\utheta_{\CM_{\nu,m}}))_{m\in \N}$.
Then the composition of \eqref{eq:CrysBKFMap} is obviously compatible with the
products $v_*\uCM_1(\fU)\times v_*\uCM_2(\fU)
\to v_*\uCM_3(\fU)$ and $\ubM_1(U_{\infty})\times
\ubM_2(U_{\infty})\to \ubM_3(U_{\infty})$. Therefore
the morphisms $\delta_{\mti,\CF_{\nu}}$ $(\nu\in \{1,2,3\})$
\eqref{eq:qHigBKFmap} are compatible with \eqref{eq:BKFGammaInvSysTensor}
and $v_*\uCM_1\otimes_{\uA_{\inf}}v_*\uCM_2
\to v_*\uCM_3$, which is $\Gamma_{\Lambda}^{\disc}$-equivariant 
by Theorem \ref{thm:PrismCrysqHiggsEquiv} (4) and the remark on the endomorphisms $\gamma$'s after
\eqref{eq:TensorSingleConnection}. 
Hence the morphisms $K_{\Lambda}^{\bullet}(\delta_{\mti,\CF_{\nu}})$ $(\nu\in \{1,2,3\})$
\eqref{eq:qHigBKFKoszulmap} are compatible with the product \eqref{eq:KoszulCpxProd}
\begin{equation}\label{eq:qHiggsInvSysKoszProd}
K_{\Lambda}^{\bullet}(v_*\uCM_1)\otimes_{\uA_{\inf}}
K_{\Lambda}^{\bullet}(v_*\uCM_2)\longrightarrow
 K_{\Lambda}^{\bullet}(v_*\uCM_3)
\end{equation}
and \eqref{eq:BKFInvSySKoszProd1}, i.e., the following diagram 
is commutative.
\begin{equation}\label{eq:qHigCpxKoszProdComp}
\xymatrix@C=60pt@R=15pt{
K_{\Lambda}^{\bullet}(v_*\uCM_1)\otimes_{\uA_{\inf}}
K_{\Lambda}^{\bullet}(v_*\uCM_2)
\ar[d]_{K_{\Lambda}^{\bullet}(\delta_{\mti,\CF_1})\otimes_{\uA_{\inf}}
K_{\Lambda}^{\bullet}(\delta_{\mti,\CF_2})}^{\eqref{eq:qHigBKFKoszulmap}}
\ar[r]^(.6){\eqref{eq:qHiggsInvSysKoszProd}}
&
K_{\Lambda}^{\bullet}(v_*\uCM_3)
\ar[d]^{K_{\Lambda}^{\bullet}(\delta_{\mti,\CF_3})}_{\eqref{eq:qHigBKFKoszulmap}}\\
K^{\bullet}_{\Lambda}(\iota^*\nu_{\infty*}\ubM_1)
\otimes_{\uA_{\inf}}
K^{\bullet}_{\Lambda}(\iota^*\nu_{\infty*}\ubM_2)
\ar[r]^(.65){\eqref{eq:BKFInvSySKoszProd1}}
&
K^{\bullet}_{\Lambda}(\iota^*\nu_{\infty*}\ubM_3)
}
\end{equation}
On the other hand, we see that the products
\eqref{eq:CrysqHiggsCpxProd} 
\begin{equation}\label{eq:qHiggsInvSysProd}
q\Omega^{\bullet}(\uCM_1,\utheta_{\uCM_1})
\otimes_{\uA_{\inf}}
q\Omega^{\bullet}(\uCM_2,\utheta_{\uCM_2})
\longrightarrow
q\Omega^{\bullet}(\uCM_3,\utheta_{\uCM_3})
\end{equation}
and  \eqref{eq:qHiggsInvSysKoszProd}
are compatible with $\gamma_{\mti,\CF_{\nu}}$ $(\nu\in\{1,2,3\})$
\eqref{eq:qHiggscpxInvSysKoszul}, i.e., the diagram
\begin{equation}\label{eq:qDRKoszProdComp}
\xymatrix@R=15pt@C=50pt{
q\Omega^{\bullet}(\uCM_1,\utheta_{\uCM_1})
\otimes_{\uA_{\inf}}
q\Omega^{\bullet}(\uCM_2,\utheta_{\uCM_2})
\ar[d]_{\gamma_{\mti,\CF_1}\otimes_{\uA_{\inf}}\gamma_{\mti,\CF_2}}
^{\eqref{eq:qHiggscpxInvSysKoszul}}
\ar[r]^(.65){\eqref{eq:qHiggsInvSysProd}}
&
q\Omega^{\bullet}(\uCM_3,\utheta_{\uCM_3})
\ar[d]^{\gamma_{\mti,\CF_3}}_{\eqref{eq:qHiggscpxInvSysKoszul}}
\\
K_{\Lambda}^{\bullet}(v_*\uCM_1)\otimes_{\uA_{\inf}}
K_{\Lambda}^{\bullet}(v_*\uCM_2)
\ar[r]^(.65){\eqref{eq:qHiggsInvSysKoszProd}}
&
K_{\Lambda}^{\bullet}(v_*\uCM_3)
}
\end{equation}
is commutative by going
back to the definitions of the morphisms and products. 
\end{remark}

The composition of $\hgamma_{\mti,\CF}$ \eqref{eq:qdRcpxKoszul} with 
$\eta_{\mu}\varprojlim_{\N}K_{\Lambda}^{\bullet}(\delta_{\mti,\CF})$ \eqref{eq:qHigBKFKoszulmap}
gives  a morphism of complexes of $A_{\inf}$-modules on $\fX_{\Zar}$
\begin{equation}\label{eq:qdRKoszulMorLoc}
c_{\mti,\CF}\colon
\varprojlim_{\N}v_*(q\Omegab(\uCM,\utheta_{\uCM}))
\longrightarrow
\eta_{\mu}K^{\bullet}_{\Lambda}(\varprojlim_{\N}\iota^*
\nu_{\infty*}\ubM).
\end{equation}
Composing $c_{\mti,\CF}$ \eqref{eq:qdRKoszulMorLoc} with 
$a_{\mti,\CF}$ \eqref{eq:AinfCrysProj} and 
$b_{\mti,\CF}$ \eqref{eq:AOmegaKoszul}, we obtain the desired
morphism \eqref{eq:PrismCohAinfCohCompMapEmb}.
\begin{equation}\label{eq:PrismCohAinfCohCompMap2}
\kappa_{\mti,\CF}\colon 
Ru_{\fX/A_{\inf}*}\CF\longrightarrow
A\Omega_{\fX}(\bM)\cong L\eta_{\mu}R\varprojlim_{\N}R\nu_{*}\ubM
\end{equation}

\begin{proposition}
Put $\CF_{\varphi}=\varphi^*\CF\in \Ob \Crystal^{\fproj}_{\prism}(\fX/A_{\inf})$
(Remark \ref{rmk:PrismCrystalFrobTensor} (2)) and $\bM_{\varphi}=\bM_{\BKF,\fX}(\CF_{\varphi})$.
Then the following diagram is commutative, where the left vertical 
morphism is induced by $\CF\to\CF_{\varphi};x\mapsto x\otimes 1$.
\begin{equation}
\xymatrix@R=10pt@C=60pt{
Ru_{\fX/A_{\inf}*}\CF\ar[r]^{\kappa_{\mti,\CF}}\ar[d]&
A\Omega_{\fX}(\bM)\ar[d]^{\eqref{eq:AinfCohBKFFrobPB}}\\
\varphi_*Ru_{\fX/A_{\inf}*}\CF_{\varphi}
\ar[r]^{\varphi_*\kappa_{\mti,\CF_{\varphi}}}&
\varphi_*A\Omega_{\fX}(\bM_{\varphi})}
\end{equation}
\end{proposition}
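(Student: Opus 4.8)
The plan is to use the factorization of $\kappa_{\mti,\CF}$ \eqref{eq:PrismCohAinfCohCompMap2} as the composition of the three morphisms $a_{\mti,\CF}$ \eqref{eq:AinfCrysProj}, $c_{\mti,\CF}$ \eqref{eq:qdRKoszulMorLoc}, and $b_{\mti,\CF}$ \eqref{eq:AOmegaKoszul}, and to verify the Frobenius square factorwise, inserting at each intermediate stage the corresponding Frobenius pullback morphism: on $\varprojlim_{\N}v_*(q\Omegab(\uCM,\utheta_{\uCM}))$ the map $\varprojlim_{\N}$ of \eqref{eq:qHiggsInvSysFrobPB}; on $\eta_{\mu}K^{\bullet}_{\Lambda}(\varprojlim_{\N}\iota^*\nu_{\infty*}\ubM)$ the map obtained from \eqref{eq:BKFGammaShfKosFrob} by applying $\eta_{\mu}$ and $\varprojlim_{\N}$ (matched with a $\varphi$-twisted decalage via $\varphi(\mu)=\pq\mu$ and $L\eta_{\pq\mu}\cong L\eta_{\pq}L\eta_{\mu}$); and on $A\Omega_{\fX}(\bM)$ the map \eqref{eq:AinfCohBKFFrobPB}. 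Since $\CF\in\Ob\Crystal^{\fproj}_{\prism}(\fX/A_{\inf})$, the complete crystal Frobenius pullback $\hvarphi^*\CF$ coincides with $\varphi^*\CF=\CF_{\varphi}$ (Remark \ref{rmk:PrismCrystalFrobTensor} (2)), so all Frobenius structures on the prismatic side are the ones occurring in Theorem \ref{thm:PrismCohQHiggsCpx} (2).

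First I would treat $a_{\mti,\CF}$. By its definition it is the isomorphism of Theorem \ref{thm:PrismCohQHiggsCpx} together with Definition \ref{def:PrismCrysqHigsCpxShv}, so the square relating $a_{\mti,\CF}$, $\varphi_*a_{\mti,\CF_{\varphi}}$, the map $Ru_{\fX/A_{\inf}*}\CF\to \varphi_*Ru_{\fX/A_{\inf}*}\CF_{\varphi}$ induced by $x\mapsto x\otimes 1$, and $\varprojlim_{\N}v_*(\eqref{eq:qHiggsInvSysFrobPB})$ is precisely Theorem \ref{thm:PrismCohQHiggsCpx} (2), using that the morphism \eqref{eq:CrysqHiggCpxFrobPB} unwinds, after passing to the $q$-Higgs modules over $(\fD,\ut,\utheta_{\fD})$ and taking the inverse limit over $m$, to \eqref{eq:qHiggsInvSysFrobPB}. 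Next, for $c_{\mti,\CF}=(\eta_{\mu}\varprojlim_{\N}K^{\bullet}_{\Lambda}(\delta_{\mti,\CF}))\circ\hgamma_{\mti,\CF}$ (Lemma \ref{lem:qdRKoszulComp}), the required square splits in two: the compatibility of $\gamma_{\mti,\CF}$ with the Frobenius pullbacks \eqref{eq:qHiggsInvSysFrobPB} and \eqref{eq:qHigKoszulInvSysFrobPB} is the diagram \eqref{eq:qHiggsKoszFrobComp}, and the compatibility of $K^{\bullet}_{\Lambda}(\delta_{\mti,\CF})$ with \eqref{eq:qHigKoszulInvSysFrobPB} and \eqref{eq:BKFGammaShfKosFrob} is the diagram \eqref{eq:HiggsKoszulBKFFrobComp}; one then applies $\eta_{\mu}$ and $\varprojlim_{\N}$, checking as in the paragraph around \eqref{eq:qdRcpxKoszul} that $\eta_{\mu}$ intertwines \eqref{eq:qHigKoszulInvSysFrobPB} with the $\varphi$-twisted decalage (using $(\eta_{\mu}K^{\bullet}_{\Lambda})^r=\mu^rK^r_{\Lambda}$ and $\varphi(\mu)=\pq\mu$).

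Finally, for $b_{\mti,\CF}$, recall it is built from $\alpha_{\mti,\CF}$ \eqref{eq:GammaGalCohIsomMap}, the Koszul resolution isomorphism $R\pi_*(\beta_{\mti,\CF})$ \eqref{eq:BKFGammaCohKoszulResol}, the natural map \eqref{eq:BKFProetGammaCoh} together with the canonical identifications \eqref{eq:ProetZarMorph3}, and then $L\eta_{\mu}R\varprojlim_{\N}$. The Frobenius square for the first two steps is the content of the diagrams \eqref{eq:KoszGammaCohFrobComp} and \eqref{eq:BKFGammaResolFrobComp} of Remark \ref{rmk:KoszAinfCohFrobProdComp} (1), combined with the functoriality of $R\pi_*$ and Corollary \ref{cor:IndGModGacyc}; the map \eqref{eq:BKFProetGammaCoh} and the identifications \eqref{eq:ProetZarMorph3} are compatible with \eqref{eq:BKFGammaShfMorph} by naturality of base change and of the counit $\pi\circ\nu_{\infty}\cong\nu$. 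The passage to $L\eta_{\mu}R\varprojlim_{\N}$ and the identification of the outcome with the right vertical map \eqref{eq:AinfCohBKFFrobPB} is exactly the sequence of manipulations spelled out in Remark \ref{rmk:FrobProdAOmega} (1): one invokes $L\eta_{\mu}\varphi_*\cong\varphi_*L\eta_{\varphi(\mu)}$, $\varphi(\mu)=\pq\mu$, $L\eta_{\pq\mu}\cong L\eta_{\pq}L\eta_{\mu}$ (\cite[Lemma 6.11]{BMS}), the compatibility of $L\eta$ with $R\varprojlim_{\N}$, and the coherence properties of $L\eta$ from Lemma \ref{lem:DecalageMorphRingedTopos}. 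Pasting the three squares along the intermediate objects yields the claimed commutative diagram.

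The main obstacle I expect is the bookkeeping: each of the three models — the inverse-limit $q$-Higgs complex, the Koszul complex $\eta_{\mu}K^{\bullet}_{\Lambda}$, and $A\Omega_{\fX}(\bM)=L\eta_{\mu}R\varprojlim_{\N}R\nu_*\ubM$ — carries its own system of $L\eta_{\pq}$-insertions and multiplications by powers of $\pq$ (and of the $t_i$) in its Frobenius pullback, and one must check that these correction factors are carried consistently through the identifications \eqref{eq:qdRcpxKoszul}, \eqref{eq:AOmegaKoszul}, and the canonical decalage isomorphisms of Lemma \ref{lem:DecalageMorphRingedTopos}. Since every individual compatibility has already been isolated in the cited remarks, this is a lengthy but routine diagram chase rather than a new difficulty.
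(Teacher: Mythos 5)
Your proposal is correct and follows essentially the same route as the paper's own proof: the paper likewise decomposes $\kappa_{\mti,\CF}$ into $a_{\mti,\CF}$, $c_{\mti,\CF}$, $b_{\mti,\CF}$ and verifies Frobenius compatibility of each piece via \eqref{eq:qHiggsKoszFrobComp}, \eqref{eq:HiggsKoszulBKFFrobComp}, \eqref{eq:KoszGammaCohFrobComp}, \eqref{eq:BKFGammaResolFrobComp}, Theorem \ref{thm:PrismCohQHiggsCpx} (2), the compatibility of the map through \eqref{eq:BKFProetGammaCoh} with \eqref{eq:BKFGammaShfFrob}, and the natural morphisms $\eta_{\mu}\varphi_*\to\varphi_*\eta_{\mu}$, $L\eta_{\mu}\varphi_*\to\varphi_*L\eta_{\mu}$ together with the identifications of Remark \ref{rmk:FrobProdAOmega} (1). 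Your extra remark that $\hvarphi^*\CF\cong\varphi^*\CF$ for finite projective crystals is exactly what makes Theorem \ref{thm:PrismCohQHiggsCpx} (2) applicable, so no gap remains.
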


\begin{proof}
We follow the notation introduced in Remarks \ref{rmk:KoszAinfCohFrobProdComp} (1)
and \ref{rmk:qdRcpxKoszulFrobProdComp} (1). 
By \eqref{eq:qHiggsKoszFrobComp}, \eqref{eq:HiggsKoszulBKFFrobComp}, 
\eqref{eq:KoszGammaCohFrobComp}, and \eqref{eq:BKFGammaResolFrobComp}, 
we see that the morphisms \eqref{eq:qdRKoszulMorLoc}
\begin{align*}
&(\eta_{\mu}\varprojlim_{\N}\alpha_{\mti,\CF})\circ c_{\mti,\CF}
\colon \varprojlim_{\N} v_*(q\Omega^{\bullet}(\uCM,\utheta_{\uCM}))\longrightarrow 
\eta_{\mu}\varprojlim_{\N} \pi_*K^{\bullet}_{\Lambda}(\iota_*\iota^*\nu_{\infty*}\ubM),\\
&L\eta_{\mu}R\varprojlim_{\N}R\pi_*(\beta_{\mti,\CF})\colon 
L\eta_{\mu}R\varprojlim_{\N} R\pi_*(\nu_{\infty*}\ubM)
\xrightarrow{\;\;\cong\;\;}
L\eta_{\mu}R\varprojlim_{\N} R\pi_*K^{\bullet}_{\Lambda}(\iota_*\iota^*\nu_{\infty*}\ubM)
\end{align*}
and the corresponding ones for $\CF_{\varphi}$
are compatible with the Frobenius pullbacks induced by 
\eqref{eq:qHiggsInvSysFrobPB}, \eqref{eq:BKFGammaShfKosFrob2},
and \eqref{eq:BKFGammaShfFrob} together with the natural morphisms
$\eta_{\mu}\varphi_*\to \varphi_*\eta_{\mu}$ and $L\eta_{\mu}\varphi_*\to \varphi_*L\eta_{\mu}$. 
We obtain the claim by combining the two 
compatibility and Theorem \ref{thm:PrismCohQHiggsCpx} (2), and by noting that the morphism
$L\eta_{\mu}R\varprojlim_{\N}R\pi_*(\nu_{\infty*}
\ubM)\to L\eta_{\mu}R\varprojlim_{\N}R\nu_*
\ubM$ and the corresponding one for $\CF_{\varphi}$ are compatible 
with the Frobenius pullbacks.
\end{proof}

\begin{proposition}\label{prop:PrismAinfCohLocMapProdComp}
Let $\CF_{\nu}$ $(\nu\in \{1,2\})$ be objects of
 $\Crystal_{\prism}^{\fproj}(\fX/A_{\inf})$, and put
 $\CF_3=\CF_1\otimes_{\CO_{\fX/A_{\inf}}}\CF_2
 \in\Ob\Crystal_{\prism}^{\fproj}(\fX/A_{\inf})$
 (Remark \ref{rmk:PrismCrystalFrobTensor} (3)).
 Then the following diagram is commutative.
 \begin{equation}\label{eq:PrismAinfCohLocCompFrProd}
\xymatrix@R=15pt@C=70pt{
Ru_{\fX/A_{\inf}*}\CF_1\otimes^L_{A_{\inf}}Ru_{\fX/A_{\inf}*}\CF_2
\ar[r]^(.52){\kappa_{\mti,\CF_1}\otimes^L_{A_{\inf}}\kappa_{\mti,\CF_2}}
\ar[d]
&
A\Omega_{\fX}(\bM_1)\otimes_{A_{\inf}}^LA\Omega_{\fX}(\bM_2)
\ar[d]^{\eqref{eq:BKFAinfCohProd}}\\
Ru_{\fX/A_{\inf}*}\CF_3
\ar[r]^{\kappa_{\mti,\CF_3}}
&
A\Omega_{\fX}(\bM_3).
}
\end{equation}
\end{proposition}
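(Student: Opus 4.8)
The plan is to unwind the definition \eqref{eq:PrismCohAinfCohCompMap2} of $\kappa_{\mti,\CF}$ as the composition of the isomorphism $a_{\mti,\CF}$ \eqref{eq:AinfCrysProj}, the morphism $c_{\mti,\CF}$ \eqref{eq:qdRKoszulMorLoc}, and the morphism $b_{\mti,\CF}$ \eqref{eq:AOmegaKoszul}, and to check that each of these three factors is compatible with suitable product structures on its source and target; pasting the resulting commutative squares, together with the way the product on $Ru_{\fX/A_{\inf}*}(-)$ is transported through Theorem \ref{thm:PrismCohQHiggsCpx} (3) and the way \eqref{eq:BKFAinfCohProd} is defined, will give \eqref{eq:PrismAinfCohLocCompFrProd}. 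Concretely, I would put the product \eqref{eq:qHiggsInvSysProd} (coming from \eqref{eq:CrysqHiggsCpxProd}) on $\varprojlim_{\N}v_*(q\Omega^{\bullet}(\uCM,\utheta_{\uCM}))$, the Koszul product \eqref{eq:KoszulCpxProd} on both $\eta_{\mu}K^{\bullet}_{\Lambda}(\varprojlim_{\N}\iota^*\nu_{\infty*}\ubM)$ and $L\eta_{\mu}\varprojlim_{\N}K^{\bullet}_{\Lambda}(\iota^*\nu_{\infty*}\ubM)$, and the product \eqref{eq:BKFAinfCohProd} on $A\Omega_{\fX}(\bM)$.

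For $a_{\mti,\CF}$ the compatibility is Theorem \ref{thm:PrismCohQHiggsCpx} (3) applied to the reductions $\CF_\nu/(p,\pq)^{m+1}\CF_\nu$ over all $m$ and $\nu\in\{1,2,3\}$, since \eqref{eq:qHiggsInvSysProd} is by construction the inverse limit over $m$ of \eqref{eq:CrysqHiggsCpxProd}. For $c_{\mti,\CF}$, which is $(\eta_\mu\varprojlim_{\N} K^{\bullet}_\Lambda(\delta_{\mti,\CF}))\circ\hgamma_{\mti,\CF}$ with $\hgamma_{\mti,\CF}$ as in \eqref{eq:qdRcpxKoszul}, I would invoke the two commutative diagrams of Remark \ref{rmk:qdRcpxKoszulFrobProdComp} (2): \eqref{eq:qDRKoszProdComp} shows $\gamma_{\mti,\CF}$ --- hence $\hgamma_{\mti,\CF}$ after applying $\eta_\mu\varprojlim_{\N}$ --- carries \eqref{eq:qHiggsInvSysProd} to the Koszul product \eqref{eq:qHiggsInvSysKoszProd} on $K^{\bullet}_\Lambda(v_*\uCM)$, and \eqref{eq:qHigCpxKoszProdComp} shows $K^{\bullet}_\Lambda(\delta_{\mti,\CF})$ carries \eqref{eq:qHiggsInvSysKoszProd} to \eqref{eq:BKFInvSySKoszProd1}; composing gives the compatibility square for $c_{\mti,\CF}$.

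For $b_{\mti,\CF}$ I would write it as the composition of $\alpha_{\mti,\CF}$ \eqref{eq:GammaGalCohIsomMap}, the resolution $\beta_{\mti,\CF}$ \eqref{eq:BKFGammaRepResol}, the morphism \eqref{eq:BKFProetGammaCoh}, and finally $L\eta_\mu R\varprojlim_{\N}$. The commutativity of \eqref{eq:KoszulGammaCohProjFunct} and \eqref{eq:BKFGammaModResolFunct} from Remark \ref{rmk:KoszAinfCohFrobProdComp} (2) handles $\alpha_{\mti,\CF}$ and $\beta_{\mti,\CF}$; the morphism \eqref{eq:BKFProetGammaCoh}, i.e. $R\pi_*(\nu_{\infty*}\ubM)\to R\pi_*R\nu_{\infty*}\ubM\xleftarrow{\cong}R\nu_*\ubM$, is compatible with cup products by the compatibility of cup products with composition of morphisms of ringed topos \cite[0FPN]{Stacks}, together with the fact that both $\nu_{\infty*}\ubM_1\otimes_{\uA_{\inf}}\nu_{\infty*}\ubM_2\to\nu_{\infty*}\ubM_3$ \eqref{eq:BKFGammaInvSysTensor} and the analogous $R\nu_*$-product descend from \eqref{eq:BKFInvSysTensor} (the same kind of verification already carried out for \eqref{eq:BKFZarProjProdFunct}); and applying $L\eta_\mu R\varprojlim_{\N}$ uses that $L\eta_\mu$ is lax symmetric monoidal \cite[Proposition 6.7]{BMS} and that $R\varprojlim_{\N}$ is lax monoidal. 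The target product \eqref{eq:BKFAinfCohProd} is assembled from precisely these pieces (the cup product of $R\nu_{\fX*}R\varprojlim_{\N}(-)$, the map \eqref{eq:BKFFunctorTensor}, and $L\eta_\mu$), so this last square commutes by construction, and pasting the three compatibility squares yields \eqref{eq:PrismAinfCohLocCompFrProd}.

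The step I expect to be the main obstacle is the bookkeeping around the underived operations $\eta_\mu$ and $\varprojlim_{\N}$ appearing in $c_{\mti,\CF}$ versus the derived operations $L\eta_\mu$ and $R\varprojlim_{\N}$ appearing in $b_{\mti,\CF}$ and in \eqref{eq:BKFAinfCohProd}: one must verify that the underived Koszul product on $\eta_\mu K^{\bullet}_\Lambda(\varprojlim_{\N}\iota^*\nu_{\infty*}\ubM)$ is carried, under the identifications built into the definitions of $b_{\mti,\CF}$ and of $A\Omega_{\fX}(\bM)$, to the derived product \eqref{eq:BKFAinfCohProd}, which comes down to checking that the relevant quasi-isomorphisms $\eta_\mu(-)\to L\eta_\mu(-)$ and $\varprojlim_{\N}(-)\to R\varprojlim_{\N}(-)$ --- valid here by the $(p,\pq)$-torsion-freeness statements of Proposition \ref{prop:BKFProperties} and the flatness of the $q$-Higgs modules in play --- are monoidal. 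Apart from this assembly and monoidality check, no new idea beyond the building-block compatibilities already isolated in Remarks \ref{rmk:KoszAinfCohFrobProdComp} (2) and \ref{rmk:qdRcpxKoszulFrobProdComp} (2) should be required.
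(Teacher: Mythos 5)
Your proposal is correct and tracks the paper's own proof essentially step for step: both decompose $\kappa_{\mti,\CF}$ into $a_{\mti,\CF}$, $c_{\mti,\CF}$, $b_{\mti,\CF}$ and invoke exactly the building-block compatibilities \eqref{eq:qDRKoszProdComp}, \eqref{eq:qHigCpxKoszProdComp}, \eqref{eq:KoszulGammaCohProjFunct}, \eqref{eq:BKFGammaModResolFunct}, and Theorem \ref{thm:PrismCohQHiggsCpx}~(3). The only organizational difference is that the paper regroups the factors by composing $\alpha_{\mti,\CF}$ into the $c_{\mti,\CF}$ side (working with $(\eta_{\mu}\varprojlim_{\N}\alpha_{\mti,\CF})\circ c_{\mti,\CF}$ and $L\eta_{\mu}R\varprojlim_{\N}R\pi_*(\beta_{\mti,\CF})$ as the two pieces) rather than keeping $\alpha_{\mti,\CF}$ inside $b_{\mti,\CF}$; this regrouping is precisely what isolates a purely underived chain-level compatibility on the left from a purely derived compatibility on the right and thereby handles the underived-versus-derived bookkeeping you correctly flag as the delicate point.
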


\begin{proof}
We follow the notation introduced in Remarks \ref{rmk:KoszAinfCohFrobProdComp} (2)
and \ref{rmk:qdRcpxKoszulFrobProdComp} (2). Recall that we have morphisms of complexes
\begin{gather*}
v_*(q\Omega^{\bullet}(\uCM_{\nu},\utheta_{\uCM_{\nu}}))
\xrightarrow{\gamma_{\mti,\CF_{\nu}}}
K^{\bullet}_{\Lambda}(v_*\uCM_{\nu})
\xrightarrow{K^{\bullet}_{\Lambda}(\delta_{\mti,\CF_{\nu}})}
K^{\bullet}_{\Lambda}(\iota^*\nu_{\infty*}\ubM_{\nu})
\xrightarrow{\alpha_{\mti,\CF_{\nu}}} \pi_*K^{\bullet}_{\Lambda}(\iota_*\iota^*\nu_{\infty*}\ubM_{\nu}),\\
\nu_{\infty*}\ubM_{\nu}\xrightarrow{\beta_{\mti,\CF_{\nu}}}
K_{\Lambda}^{\bullet}(\iota_*\iota^*\nu_{\infty*}\ubM_{\nu})
\end{gather*}
compatible with products 
as \eqref{eq:qDRKoszProdComp}, \eqref{eq:qHigCpxKoszProdComp}, 
\eqref{eq:KoszulGammaCohProjFunct}, and \eqref{eq:BKFGammaModResolFunct}. 
Therefore the morphisms \eqref{eq:qdRKoszulMorLoc}
\begin{align*}
&(\eta_{\mu}\varprojlim_{\N}\alpha_{\mti,\CF_{\nu}})\circ c_{\mti,\CF_{\nu}}
\colon \varprojlim_{\N} v_*(q\Omega^{\bullet}(\uCM_{\nu},\utheta_{\uCM_{\nu}}))\longrightarrow 
\eta_{\mu}\varprojlim_{\N} \pi_*K^{\bullet}_{\Lambda}(\iota_*\iota^*\nu_{\infty*}\ubM_{\nu}),\\
&L\eta_{\mu}R\varprojlim_{\N}R\pi_*(\beta_{\mti,\CF_{\nu}})\colon 
L\eta_{\mu}R\varprojlim_{\N}R\pi_* (\nu_{\infty*}\ubM_{\nu})
\xrightarrow{\;\;\cong\;\;}
L\eta_{\mu}R\varprojlim_{\N} R\pi_*K^{\bullet}_{\Lambda}(\iota_*\iota^*\nu_{\infty*}\ubM_{\nu})
\end{align*}
are compatible with the products induced by 
\eqref{eq:qHiggsInvSysProd}, \eqref{eq:BKFInvSySKoszProd2},
and \eqref{eq:BKFGammaInvSysTensor}. 
We obtain the claim by combining the two 
compatibility and Theorem \ref{thm:PrismCohQHiggsCpx} (3), and by noting that the morphisms
$L\eta_{\mu}R\varprojlim_{\N}R\pi_*(\nu_{\infty*}
\ubM_{\nu})\to L\eta_{\mu}R\varprojlim_{\N}R\nu_*
\ubM_{\nu}$ are compatible with the products.
\end{proof}

We next
discuss the functoriality of 
\eqref{eq:PrismCohAinfCohCompMap2} with respect to 
$\mti=(\mfi \colon \fX\hookrightarrow \fY, \ut)$.
Let $\mti'=(\mfi'\colon \fX'=\Spf(A')\hookrightarrow \fY'=\Spf(B'),
\ut'=(t'_{i'})_{i'\in\Lambda'})$ be another
small framed embedding over $A_{\inf}$
(Definition \ref{def:AdmFramedSmEmbed} (1)), and let 
$\mtg=(g,h,\psi)\colon \mti'=(\mfi'\colon \fX'\hookrightarrow \fY',\ut')
\to \mti=(\mfi\colon \fX\hookrightarrow \fY,\ut)$ 
be a morphism of small framed embeddings over
$A_{\inf}$ (Definition \ref{def:AdmFramedSmEmbed} (2)).
The functoriality of \eqref{eq:PrismCohAinfCohCompMap2} is stated as follows.

\begin{proposition}\label{prop:PrismAinfCompFunct}
Let $\CF\in \Ob \Crystal^{\fproj}_{\prism}(\fX/A_{\inf})$,
and put $\CF'=g_{\prism}^{-1}\CF$ (Definition \ref{def:PrismaticSiteCrystal} (3)),
which belongs to $\Crystal_{\prism}^{\fproj}(\fX'/A_{\inf})$,
$\bM=\bM_{\BKF,\fX}(\CF)$, and 
$\bM'=\bM_{\BKF,\fX'}(\CF')$ (Definition \ref{BKFFunctor}).
Then the following diagram is commutative. 
Note that we have $g_{\Zar}\circ u_{\fX'/A_{\inf}}=u_{\fX/A_{\inf}}\circ g_{\prism}$
\eqref{eq:PrismToposZarProjFunct}.
\begin{equation}\label{eq:PrismAinfCompFunct}
\xymatrix{
Ru_{\fX/A_{\inf}*}\CF\ar[r]\ar[d]^{\kappa_{\mti,\CF}}_{\eqref{eq:PrismCohAinfCohCompMap2}}&
Ru_{\fX/A_{\inf}*}Rg_{\prism*}\CF'\ar[r]^{\cong}&
Rg_{\Zar*}Ru_{\fX'/A_{\inf}*}\CF'
\ar[d]^{Rg_{\Zar*}(\kappa_{\mti',\CF'})}_{\eqref{eq:PrismCohAinfCohCompMap2}}\\
A\Omega_{\fX}(\bM)\ar[rr]^{\eqref{eq:AOmegaFunct}}&&
Rg_{\Zar*}A\Omega_{\fX'}(\bM').}
\end{equation}
\end{proposition}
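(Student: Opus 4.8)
The morphism $\kappa_{\mti,\CF}$ in \eqref{eq:PrismCohAinfCohCompMap2} is the composite $b_{\mti,\CF}\circ c_{\mti,\CF}\circ a_{\mti,\CF}$, where $a_{\mti,\CF}$ comes from Theorem \ref{thm:PrismCohQHiggsCpx}, $c_{\mti,\CF}$ is built from the $q$-Higgs-to-Koszul comparison $\hgamma_{\mti,\CF}$ \eqref{eq:qdRcpxKoszul} and $K^{\bullet}_{\Lambda}(\delta_{\mti,\CF})$ \eqref{eq:qHigBKFKoszulmap}, and $b_{\mti,\CF}$ is the composition of $\alpha_{\mti,\CF}$, $\beta_{\mti,\CF}$, and \eqref{eq:BKFProetGammaCoh} with $L\eta_{\mu}R\varprojlim_{\N}$ applied. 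The plan is therefore to decompose the big square \eqref{eq:PrismAinfCompFunct} into three commuting squares, one for each of $a$, $c$, $b$, and to verify the functoriality of each piece with respect to the morphism $\mtg=(g,h,\psi)$ of small framed embeddings. First I would record the relevant functoriality inputs: on the prismatic side, $g_{\prism}^{-1}\CF = \CF'$ and the morphism of $q$-prismatic envelopes $D\to D'$ induced by $h$, together with the compatibility of the equivalence \eqref{eq:PrismCrysqHiggsEquiv} with pullback (Theorem \ref{thm:PrismCrysqHiggsEquiv} (5)) and of the isomorphism \eqref{eq:PrismCohQHiggsCpx} with pullback (Theorem \ref{thm:PrismCohQHiggsCpx} (4)); on the $A_{\inf}$ side, the morphism $\varepsilon_{g,\CF}\colon \bM\to\bmg_{\proet*}\bM'$ from Proposition \ref{prop:BKFFunctComp} and its reduction mod $(p,\pq)^{m+1}$; and on the framed-embedding side, the morphisms $\Xi^+_{g,\psi}$, $\tXi_{g,\psi*}$, $\Xi_{g,\psi*}$ of \S\ref{sec:ProetGammaZarShv} relating $\nu_{\fX,\ut}$ and $\nu_{\fX',\ut'}$ via $\bmg_{\proet}$, together with the commutative diagram \eqref{eq:ProetGammaSheafProjFunct} and Remark \ref{rmk:GammaCohFunctCocyc} on the cocycle condition.

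The square for $a_{\mti,\CF}$ is exactly the content of Theorem \ref{thm:PrismCohQHiggsCpx} (4) applied to the morphism of admissible framed smooth $q$-pairs $(h^*/\id_{A_{\inf}},\psi)\colon ((B,J)/A_{\inf},\ut)\to((B',J')/A_{\inf},\ut')$: it identifies the pullback $Ru_{\fX/A_{\inf}*}\CF\to Rg_{\Zar*}Ru_{\fX'/A_{\inf}*}\CF'$ with $v_{D_{\bcdot},\Zar*}$ of the $q$-Higgs complex pullback \eqref{eq:CrysqHiggsCpxFunct}; I would simply take the inverse limit over the $\N^{\circ}$-index and read off commutativity. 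For the $c$-square I need two things: that the $\Gamma^{\disc}_{\Lambda}$-equivariant map $\delta_{\mti,\CF}\colon v_*\uCM\to\iota^*\nu_{\infty*}\ubM$ \eqref{eq:qHigBKFmap} is compatible, via the morphism of $q$-prismatic envelopes and $\varepsilon_{g,\CF}$, with $\delta_{\mti',\CF'}$ pulled back along $(\Gamma_{\psi})_{g_{\Zar}}$ — this reduces to chasing the morphism $p_{D_{\fU},\ut}$ \eqref{eq:MorphAinfToD} and its target through the commutative diagram \eqref{eq:BToAinfDiag} and its primed analogue, using that $h^*(t_i)=t'_{\psi(i)}$ and the universal property of the $q$-prismatic envelope; and that the Koszul-functoriality map $K^{\bullet}_{\psi}(-)$ from \S\ref{sec:Gsheaves} is compatible with $\delta$ and with the de Rham/$q$-Higgs comparison $\gamma_{\mti,\CF}$ \eqref{eq:qHiggscpxInvSysKoszul}, for which I would invoke Lemma \ref{lem:KoszulFunct}, the explicit formula for $K^{\bullet}_{\psi}$, and the fact that the $q$-Higgs complex pullback \eqref{eq:TwDrCpxScExt} is given by the same $\gamma^{<}_{\psi,\bmI}$-twisted formula (so the two match after the identifications $\alpha_i=t_i\mu$, $c_i=t_i^{p-1}[p]_q$ are removed — here only the $(f,\psi)$-pullback with $c_i=1$ enters). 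Finally the $b$-square is the compatibility of $\alpha_{\mti,\CF}$, $\beta_{\mti,\CF}$, \eqref{eq:BKFProetGammaCoh}, and the décalage map $L\eta_{\mu}Rg_{\Zar*}\to Rg_{\Zar*}L\eta_{\mu}$: the first two are handled by the functoriality diagrams \eqref{eq:IotaUnitFunct}, \eqref{eq:PiIotaIdFunct}, \eqref{eq:GsheafKoszulCohFunct}, \eqref{eq:GammaSheafCohFunct} of Proposition \ref{prop:GSheafKoszulFunct} (applied with $u=g_{\Zar}$, $\psi$ as given, and the morphism $\of$ coming from $\Xi_{g,\psi*}$ composed with $\delta_{\mti',\CF'}$); the compatibility of \eqref{eq:BKFProetGammaCoh} with $\bmg_{\proet*}$ follows from \eqref{eq:ProetGammaSheafProjFunct} and Proposition \ref{prop:BKFFunctComp} (1); and the décalage compatibility with $Rg_{\Zar*}$ is Lemma \ref{lem:DecalageMorphRingedTopos}, already used to build \eqref{eq:AOmegaFunct}.

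The main obstacle, I expect, will be the $c$-square — specifically verifying at the level of sections that the composite $\CF_m(p_{D_{\fU},\ut})$ in \eqref{eq:CrysBKFMap} intertwines the two framed embeddings correctly. One must check that the square of prisms over $(\fX/A_{\inf})_{\prism}$ formed by $A_{\inf}(A^+_{U_{\infty}})\to D_{\fU}$ and its primed version $A_{\inf}(A^{\prime +}_{U'_{\infty}})\to D'_{\fU'}$, together with the maps induced by $g$ and $h$, actually commutes; this in turn rests on the uniqueness in Proposition \ref{prop:DeltaStruEtaleMap} (2) (for the $(p,\pq)$-adically étale map $A_{\inf}[\uT^{\pm1}]\to A_{\inf}(A_{\infty})$ reduced mod nilpotents) and on matching up the $\Gamma^{\disc}$-actions through $\Gamma^{\disc}_{\psi}$, where the twist by $c_{\bmI}$ versus the map $\Lambda\to\Lambda'$ must be tracked with care since $\psi$ need not be injective. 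Once that diagram of prisms is pinned down, everything else is the bookkeeping already assembled in Remarks \ref{rmk:KoszAinfCohFrobProdComp} and \ref{rmk:qdRcpxKoszulFrobProdComp}, transported from the Frobenius/tensor setting to the present $(g,h,\psi)$-pullback setting; the cocycle condition in Remark \ref{rmk:GammaCohFunctCocyc} and Proposition \ref{prop:KoszulFunctCocycCond} then give the asserted compatibility with composition of $\mtg$'s for free.
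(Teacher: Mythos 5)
Your decomposition into the three squares for $a_{\mti,\CF}$, $c_{\mti,\CF}$, and $b_{\mti,\CF}$, and the inputs you invoke for each (Theorem \ref{thm:PrismCohQHiggsCpx} (4) for the first; the compatibility of $\delta_{\mti,\CF}$ with $p_{D_{\fU},\ut}$ via \eqref{eq:BToAinfDiag} and its primed analogue together with the Koszul-functoriality formulas for the second; Proposition \ref{prop:GSheafKoszulFunct}, \eqref{eq:ProetGammaSheafProjFunct}, Proposition \ref{prop:BKFFunctComp} (1), and Lemma \ref{lem:DecalageMorphRingedTopos} for the third), match the paper's proof exactly, whose core technical step is precisely the prism-level compatibility you identify as the main obstacle (Lemma \ref{lem:qdRBKFMorphFunct}, Steps 1--3). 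One small imprecision: the $(p,\pq)$-adically \'etale map in the uniqueness argument is $A_{\inf}[\uT^{\pm1}]\to B$, not $A_{\inf}[\uT^{\pm1}]\to A_{\inf}(A_{\infty})$; and the worry about $\psi$ failing to be injective is not actually an issue for the present single-$\mtg$ functoriality (it only bites for the product compatibility, handled separately in Proposition \ref{prop:PrismAinfCohLocMapProdComp}). Neither affects the validity of your plan.
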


By the construction of \eqref{eq:PrismCohAinfCohCompMap2}, 
the proof of Proposition \ref{prop:PrismAinfCompFunct} is
reduced to verifying the functoriality of \eqref{eq:AinfCrysProj}, 
\eqref{eq:AOmegaKoszul}, 
and \eqref{eq:qdRKoszulMorLoc} with respect to $\mtg=(g,h,\psi)$. 

We define $\theta_{B',i'}$, $\gamma_{B',i'}$, $D'$, $\theta_{D',i'}$, $\gamma_{D',i'}$
$(i'\in\Lambda')$, $\fD'$,
$D'_m$, $\fD'_m$ $(m\in \N)$, $\oD'$,  $\ofD'$, and
$v_{D'}\colon \ofD'\to \fX'$  as in the paragraph after
\eqref{eq:PrismCohAinfCohCompMap} by using $\mti'=(\mfi'\colon \fX'\to \fY',\ut')$.
Let $h_{D}\colon D\to D'$ be the morphism of bounded prisms over
the prism $(A_{\inf},\pq A_{\inf})$ induced by $h$, and put 
$\fh_{D}=\Spf(h_D)\colon \fD'\to \fD$. 

We use  the
functoriality of the morphism of topos
$\nu_{\fX,\ut}\colon X_{\proet}^{\sim}\to \Gamma_{\Lambda}\hy\fX_{\Zar}^{\sim}$
discussed in \S\ref{sec:ProetGammaZarShv}.  
We follow the notation introduced in \S\ref{sec:ProetGammaZarShv}.
Put $\bA_{\inf,X',m}=\bA_{\inf,X'}/(p,\pq)^{m+1}$,
and let $\ubA_{\inf,X'}$ denote the inverse system 
$(\bA_{\inf,X',m})_{m\in \N}$. Then we have the following diagrams of ringed topos.
\begin{gather}
\xymatrix@R=15pt{
((X_{\proet}^{\prime\sim})^{\N^{\circ}},\ubA_{\inf,X'})
\ar[r]^(.54){\nu'}\ar[d]^{\bmg}&
((\fX_{\Zar}^{\prime\sim})^{\N^{\circ}},\uA_{\inf})\ar[d]^{g}&
\ar[l]_{v'}
((\fD_{\Zar}^{\prime\sim})^{\N^{\circ}},\uA_{\inf})\ar[d]^{h_{\fD}}
\\
((X_{\proet}^{\sim})^{\N^{\circ}},\ubA_{\inf,X})
\ar[r]^(.54){\nu}&
((\fX_{\Zar}^{\sim})^{\N^{\circ}},\uA_{\inf})&
\ar[l]_{v}
((\fD_{\Zar}^{\sim})^{\N^{\circ}},\uA_{\inf})
}
\label{eq:ProetZarMorphFunct1}\\
\xymatrix@R=15pt{
((X_{\proet}^{\prime\sim})^{\N^{\circ}},\ubA_{\inf,X'})
\ar[r]^{\nu'_{\infty}}\ar[d]^{\bmg}&
((\Gamma_{\Lambda'}\hy\fX_{\Zar}^{\prime\sim})^{\N^{\circ}},\uA_{\inf})
\ar@<.5ex>[r]^(.55){\pi'}\ar[d]^{g_{\psi}}&
\ar@<.5ex>[l]^(.45){\iota'}
((\fX_{\Zar}^{\prime\sim})^{\N^{\circ}},\uA_{\inf})\ar[d]^{g}
\\
((X_{\proet}^{\sim})^{\N^{\circ}},\ubA_{\inf,X})
\ar[r]^{\nu_{\infty}}&
((\Gamma_{\Lambda}\hy\fX_{\Zar}^{\sim})^{\N^{\circ}},\uA_{\inf})
\ar@<.5ex>[r]^(.55){\pi}&
\ar@<.5ex>[l]^(.45){\iota}
((\fX_{\Zar}^{\sim})^{\N^{\circ}},\uA_{\inf})}
\label{eq:ProetZarMorphFunct2}
\end{gather}
They are commutative up to canonical isomorphisms 
except the lower left square, for which
we have a morphism \eqref{eq:XproetToGammaShfFunct}
\begin{equation}\label{eq:XproetToGammaShfFunctAinf}
\Xi_{\mathtt g}\colon \nu_{\infty*}\circ\bmg_*
\longrightarrow g_{\psi*}\circ\nu'_{\infty*},\end{equation}
and the composition 
$\uA_{\inf}\to\nu_{\infty*}\ubA_{\inf,X}
\to\nu_{\infty*}\bmg_*\ubA_{\inf,X'}
\xrightarrow{\Xi_{\mathtt g}(\ubA_{\inf,X'})}
g_{\psi*}\nu'_{\infty*}\ubA_{\inf,X'}$
coincides with $\uA_{\inf}
\to g_{\psi*}\uA_{\inf}\to g_{\psi*}\nu'_{\infty*}\ubA_{\inf,X'}$.

We start with the functoriality of \eqref{eq:AinfCrysProj}.
We define $(M_m',\utheta_{M_m'})$,
$(\CM'_m,\utheta_{\CM_m'})$,
$q\Omegab(\CM_m',\utheta_{\CM'_m})$, $(\uCM',\utheta_{\uCM'})$, and 
$q\Omegab(\uCM',\utheta_{\uCM'})$ in the same way as before
\eqref{eq:AinfCrysProj} 
by using $\mti'=(\mfi'\colon \fX'\to \fY',\ut')$ and $\CF'_m=\CF'/(p,\pq)^{m+1}\CF'$. 
Then we have a morphism 
in $C^+((\fX^{\sim}_{\Zar})^{\N^{\circ}},\uA_{\inf})$
\begin{equation}\label{eq:qdRInvSysFunct}
\sigma_{\mtg,\CF}\colon v_*(q\Omegab(\uCM,\utheta_{\uCM}))
\to v_*h_{\fD*}(q\Omegab(\uCM',\utheta_{\uCM'}))
\cong g_*v'_*(q\Omegab(\uCM',\utheta_{\uCM'}))
\end{equation}
induced by \eqref{eq:CrysqHiggsCpxFunct}, and
obtain the following commutative
diagram just by applying Theorem \ref{thm:PrismCohQHiggsCpx} (4)
to $\mtg=(g,h,\psi)$ and $\CF$.
\begin{equation}\label{eq:qdRPrismZarProjCompFunctAinf}
\xymatrix{
Ru_{\fX/A_{\inf}*}\CF
\ar[r]
\ar[d]^{\cong}_{a_{\mti,\CF}\;\;\eqref{eq:AinfCrysProj}}
&Ru_{\fX/A_{\inf}*}Rg_{\prism*}\CF'
\ar[r]^{\cong}
& 
Rg_{\Zar*}Ru_{\fX'/A_{\inf}*}\CF'
\ar[d]_{\cong}^{Rg_{\Zar*}(a_{\mti',\CF'})\;\;\eqref{eq:AinfCrysProj}}
\\
\varprojlim_{\N}v_*(q\Omegab(\uCM,\utheta_{\uCM}))
\ar[rr]^{\varprojlim_{\N}\sigma_{\mtg,\CF}\;\;\eqref{eq:qdRInvSysFunct}}&&
Rg_{\Zar*}\varprojlim_{\N}v'_*(q\Omegab(\uCM',\utheta_{\uCM'})).
}
\end{equation}

\begin{remark}\label{rmk:qdRInvSysFunctCocyc}
The morphisms $\sigma_{\mtg,\CF}$'s \eqref{eq:qdRInvSysFunct} satisfy
the cocycle condition for  composition of $\mtg$'s by the
remark after the construction of \eqref{eq:CrysqHiggsCpxFunct}.
\end{remark}

\begin{remark}\label{rmk:qdRInvSysFunctFrobComp}
Put $\CF_{\varphi}=\varphi^*\CF\in \Ob \Crystal^{\fproj}_{\prism}(\fX/A_{\inf})$
(Remark \ref{rmk:PrismCrystalFrobTensor} (2)). Then, by 
\cite[Remark 14.16 (1)]{TsujiPrismQHiggs}, the
morphisms $\sigma_{\mtg,\CF}$ and $\sigma_{\mtg,\CF_{\varphi}}$
\eqref{eq:qdRInvSysFunct} are compatible with the Frobenius pullbacks 
\eqref{eq:qHiggsInvSysFrobPB} for
$\CF$ and $\CF'$. Note that we have $g_{\prism}^{-1}\CF_{\varphi}
\cong\varphi^*\CF'$ (Remark \ref{rmk:PrismCrystalFrobTensor} (2)).
\end{remark}

Let us study the functoriality of \eqref{eq:AOmegaKoszul}
with respect to $\mtg=(g,h,\psi)$. 
We define $\bM'_m$ $(m\in \N)$ to be $\bM'/(p,\pq)^{m+1}\bM'$
similarly to $\bM_m$, and write $\ubM'$ for the $\ubA_{\inf,X'}$-module
$(\bM'_m)_{m\in \N}$ similarly to $\ubM$.
The morphism 
$\varepsilon_{g,\CF}\colon \bM\to \bmg_{\proet*}\bM'$ in 
Proposition \ref{prop:BKFFunctComp} (1)  induces 
\begin{equation}\label{eq:BKFIndsysFunctMap}
\uvarepsilon_{g,\CF}
\colon \ubM\longrightarrow \bmg_*\ubM'
\quad\text{in}\;\;\Mod((X_{\proet}^{\sim})^{\N^{\circ}}, \ubA_{\inf,X})
\end{equation} and then 
\begin{equation}\label{eq:BKFGammaRepFunct}
\tau_{\mtg,\CF}\colon 
\nu_{\infty*}\ubM
\xrightarrow{\;\nu_{\infty*}(\uvarepsilon_{g,\CF})\;}
\nu_{\infty*}\bmg_*\ubM'
\xrightarrow[\;\eqref{eq:XproetToGammaShfFunctAinf}\;]
{\Xi_{\mtg}(\ubM')}
g_{\psi*}\nu'_{\infty*}\ubM'
\quad \text{in}\;\;
\Mod((\Gamma_{\Lambda}\hy\fX_{\Zar}^{\sim})^{\N^{\circ}},
\uA_{\inf}).
\end{equation}
By applying the construction of $\of$ and $\oof$ from $f$ in 
Proposition \ref{prop:GSheafKoszulFunct} (1)  and (2) to 
$\tau_{\mtg,\CF}$, $g_{\Zar}\colon \fX_{\Zar}^{\prime\sim}\to \fX_{\Zar}^{\sim}$,
and $\psi\colon \Lambda\to\Lambda'$, we obtain 
\begin{align}\label{eq:BKFGammaProjFunct}
&\otau_{\mtg,\CF}\colon 
\iota^*
\nu_{\infty*}\ubM
\xrightarrow{\;\iota^*(\tau_{\mtg,\CF})\;}
\iota^*g_{\psi*}
\nu'_{\infty*}\ubM'
\longrightarrow g_*\iota^{\prime*}\nu'_{\infty*}\ubM'\\
&\ootau_{\mtg,\CF}\colon \iota_*\iota^*\nu_{\infty*}\ubM
\xrightarrow{\;\iota_*(\otau_{\mtg,\CF})\;}
\iota_*g_*\iota^{\prime*}\nu_{\infty*}'\ubM'
\cong g_{\psi*}\iota^{\prime}_*\iota^{\prime*}\nu'_{\infty*}\ubM'
\label{eq:BKFGammaProjFunct2}
\end{align}
equivariant with respect to 
$\Gamma_{\psi}^{\disc}\colon \Gamma_{\Lambda'}^{\disc}
\to \Gamma_{\Lambda}^{\disc}$. 
By applying \eqref{eq:KoszulFunctMorph} to the morphisms 
$\otau_{\mtg,\CF}$ and $\ootau_{\mtg,\CF}$, we obtain morphisms
\begin{align}
&K_{\psi}^{\bullet}(\otau_{\mtg,\CF})\colon K_{\Lambda}^{\bullet}(\iota^*\nu_{\infty*}\ubM)
\to g_*K^{\bullet}_{\Lambda'}(\iota^{\prime*}\nu_{\infty*}'\ubM')
\quad\text{in}\;\;C^+((\fX_{\Zar}^{\sim})^{\N^{\circ}},\uA_{\inf}),
\label{eq:BKFKoszulFunctMap}\\
&K_{\psi}^{\bullet}(\ootau_{\mtg,\CF})
\colon K_{\Lambda}^{\bullet}(\iota_*\iota^*\nu_{\infty*}\ubM)
\to g_{\psi*}K^{\bullet}_{\Lambda'}(\iota'_*\iota^{\prime*}\nu_{\infty*}'\ubM')
\quad\text{in}\;\;C^+((\Gamma_{\Lambda}\hy\fX_{\Zar}^{\sim})^{\N^{\circ}},\uA_{\inf}),
\label{eq:BKFKoszulFunctMap2}
\end{align}
and see that the following diagrams are commutative by 
Proposition \ref{prop:GSheafKoszulFunct} (3) and Lemma \ref{lem:GammaKoszCompos}.
\begin{equation}\label{eq:BKFGammaCohResolFunct}
\xymatrix@C=60pt{
\nu_{\infty*}\ubM\ar[r]^(.4){\beta_{\mti,\CF}}_(.4){\eqref{eq:BKFGammaRepResol}}
\ar[d]^{\tau_{\mtg,\CF}}
&
K_{\Lambda}^{\bullet}(\iota_*\iota^*\nu_{\infty*}\ubM)
\ar[d]^{K^{\bullet}_{\psi}(\ootau_{\mtg,\CF})}\\
g_{\psi*}\nu_{\infty*}'\ubM'\ar[r]^(.4){g_{\psi*}(\beta_{\mti',\CF'})}_(.4){\eqref{eq:BKFGammaRepResol}}&
g_{\psi*}K_{\Lambda'}^{\bullet}(\iota'_*\iota^{\prime*}\nu'_{\infty*}\ubM')
}
\end{equation}
\begin{equation}\label{eq:BKFGammaCohFunct}
\xymatrix@C=40pt{
K_{\Lambda}^{\bullet}(\iota^*\nu_{\infty*}\ubM)
\ar[rr]^{\alpha_{\mti,\CF}}_{\eqref{eq:GammaGalCohIsomMap}}
\ar[d]_{K^{\bullet}_{\psi}(\otau_{\mtg,\CF})}
&&
\pi_*K_{\Lambda}^{\bullet}(\iota_*\iota^*\nu_{\infty*}\ubM)
\ar[d]^{\pi_*K^{\bullet}_{\psi}(\ootau_{\mtg,\CF})}
\\
g_*K_{\Lambda'}^{\bullet}(\iota^{\prime*}\nu'_{\infty*}\ubM')
\ar[r]^(.47){g_*\alpha_{\mti',\CF'}}_(.47){\eqref{eq:GammaGalCohIsomMap}}
&
g_{*}\pi'_*K_{\Lambda'}^{\bullet}(\iota'_*\iota^{\prime*}\nu'_{\infty*}\ubM')
\ar[r]^{\cong}
&
\pi_*g_{\psi*}K_{\Lambda'}^{\bullet}(\iota'_*\iota^{\prime*}\nu'_{\infty*}\ubM')
}
\end{equation}

We obtain the following commutative diagram by using \eqref{eq:ProetGammaSheafProjFunct}
for the lower rectangle and by combining \eqref{eq:BKFGammaCohResolFunct} and 
\eqref{eq:BKFGammaCohFunct} as \eqref{eq:GammaSheafCohFunct}
for the upper rectangle.
\begin{equation}\label{eq:GammaKoszulBKFZarProjFunct}
\xymatrix@C=50pt{
K^{\bullet}_{\Lambda}(\iota^*\nu_{\infty*}\ubM)
\ar[r]^(.47){K_{\psi}^{\bullet}(\otau_{\mtg,\CF})}
\ar[d]^{\cong}_{\eqref{eq:BKFGammaCohKoszulResol}^{-1}\eqref{eq:GammaGalCohIsomMap}}
&
g_*K_{\Lambda'}^{\bullet}(\iota^{\prime*}\nu'_{\infty*}\ubM')
\ar[r]
&
Rg_*K_{\Lambda'}^{\bullet}(\iota^{\prime*}\nu'_{\infty*}\ubM')
\ar[d]_{\cong}^{Rg_*\eqref{eq:BKFGammaCohKoszulResol}^{-1}\eqref{eq:GammaGalCohIsomMap}}
\\
R\pi_*(\nu_{\infty*}\ubM)
\ar[r]^(.43){R\pi_*(\tau_{\mtg,\CF})}
\ar[d]_{\eqref{eq:BKFProetGammaCoh}}
&
R\pi_*Rg_{\psi*}(\nu'_{\infty*}\ubM')
\ar@{-}[r]^{\cong}
&
Rg_*R\pi'_*(\nu'_{\infty*}\ubM')
\ar[d]^{Rg_*\eqref{eq:BKFProetGammaCoh}}
\\
R\nu_*\ubM
\ar[r]^{R\nu_*(\uvarepsilon_{g,\CF})}
&
R\nu_*R\bmg_*\ubM'
\ar@{-}[r]^{\cong}
&
Rg_*R\nu'_*\ubM'
}
\end{equation}

By taking $L\eta_{\mu}R\varprojlim_{\N}$ and composing it with
$L\eta_{\mu}Rg_{\Zar*}\to Rg_{\Zar*}L\eta_{\mu}$
\eqref{eq:DeclageDirectImage}, 
we obtain a commutative diagram 
\begin{equation}\label{eq:AOmegaKoszulFunct}
\xymatrix{
L\eta_{\mu}\varprojlim\limits_{\N}K_{\Lambda}^{\bullet}
(\iota^*\nu_{\infty*}\ubM)
\ar[r]^{L\eta_{\mu}\varprojlim\limits_{\N}K_{\psi}^{\bullet}(\otau_{\mtg,\CF})}
\ar[d]_(.6){b_{\mti,\CF}\;\;\eqref{eq:AOmegaKoszul}}
&
L\eta_{\mu} Rg_{\Zar*}
\varprojlim\limits_{\N}K_{\Lambda'}^{\bullet}(\iota^{\prime*}
\nu'_{\infty*}\ubM')
\ar[r]
&
Rg_{\Zar*}L\eta_{\mu} 
\varprojlim\limits_{\N}K_{\Lambda'}^{\bullet}(\iota^{\prime*}
\nu'_{\infty*}\ubM')
\ar[d]_(.6){Rg_{\Zar*}(b_{\mti',\CF'})\;\;\eqref{eq:AOmegaKoszul}}
\\
A\Omega_{\fX}(\bM)
\ar[rr]^{\eqref{eq:AOmegaFunct}}
&&
Rg_{\Zar*}(A\Omega_{\fX'}(\bM')).
}
\end{equation}

\begin{remark}\label{rmk:BKFGammaKoszulFuncCocyc}
By Remark \ref{rmk:GammaCohFunctCocyc}, 
Proposition \ref{prop:BKFFunctComp} (3), and Proposition 
\ref{prop:KoszulFunctCocycCond} (1), the morphisms
$\varepsilon_{g,\CF}$, $\uvarepsilon_{g,\CF}$, 
$\tau_{\mtg,\CF}$, $\otau_{\mtg,\CF}$, and $\ootau_{\mtg,\CF}$
satisfy the cocycle condition for composition of $\mtg$'s. 
Lemma \ref{lem:GammaKoszCompos} shows
that $K^{\bullet}_{\psi}(\otau_{\mtg,\CF})$ and $K^{\bullet}_{\psi}(\ootau_{\mtg,\CF})$
also satisfy the cocycle condition for composition of $\mtg$'s. 
\end{remark}

\begin{remark}\label{rmk:BKFGammaKoszulFunctFrobPB}
Put $\CF_{\varphi}=\varphi^*\CF\in \Ob \Crystal^{\fproj}_{\prism}(\fX/A_{\inf})$
(Remark \ref{rmk:PrismCrystalFrobTensor} (2)). 
Then the morphisms $\tau_{\mtg,\CF}$, $K_{\psi}^{\bullet}(\otau_{\mtg,\CF})$, 
and $K_{\psi}^{\bullet}(\ootau_{\mtg,\CF})$, and those for $\CF_{\varphi}$
are compatible with the Frobenius pullbacks \eqref{eq:BKFGammaShfFrob}, 
\eqref{eq:BKFGammaShfKosFrob}, and \eqref{eq:BKFGammaShfKosFrob2}
for $\CF$ and $\CF'$; 
the claim for $\tau_{\mtg,\CF}$ and $\tau_{\mtg,\CF_{\varphi}}$ follows from
\eqref{eq:BKFFunctFrobFunct} and 
the remark after \eqref{eq:XproetToGammaShfFunctAinf}, and 
it implies the remaining ones by Proposition 
\ref{prop:KoszulFunctCocycCond} (1) and Lemma \ref{lem:GammaKoszCompos}.
\end{remark}

It remains to prove the functoriality of \eqref{eq:qdRKoszulMorLoc}.
Let $\sigma_{\mtg,\CF}^0$ denote the degree $0$-part
$v_*\uCM\to g_*v'_*\uCM'$ of $\sigma_{\mtg,\CF}$
\eqref{eq:qdRInvSysFunct}.
By the construction of \eqref{eq:qHiggsCpxShvFunct} 
and \eqref{eq:CrysqHiggsCpxFunct}, and the 
formula \eqref{eq:TwConnScExtFormula2},
we see that $\sigma^0_{\mtg, \CF}$ is equivariant with respect
to the homomorphism $\Gamma_{\psi}^{\disc}\colon
\Gamma_{\Lambda'}^{\disc}\to \Gamma_{\Lambda}^{\disc};
\gamma'\mapsto \gamma'\circ\psi$. By comparing
the definition of the morphism \eqref{eq:TwDrCpxScExt} used in
the construction of $\sigma_{\mtg,\CF}$ \eqref{eq:qdRInvSysFunct}
and the definition of $K_{\psi}^{\bullet}(\CM)$ given before
Lemma \ref{lem:KoszulFunct}, we obtain a commutative diagram
\begin{equation}\label{eq:KoszulqdRFunctComp}
\xymatrix@C=60pt{
v_*(q\Omegab(\uCM, \utheta_{\uCM}))
\ar[r]^(.47){\sigma_{\mtg,\CF}\;\;\eqref{eq:qdRInvSysFunct}}
\ar[d]^{\gamma_{\mti,\CF}}_{\eqref{eq:qHiggscpxInvSysKoszul}}
&
g_*v'_*(q\Omegab(\uCM', \utheta_{\uCM'}))\ar[d]^{g_*\gamma_{\mti',\CF'}}_{\eqref{eq:qHiggscpxInvSysKoszul}}\\
K^{\bullet}_{\Lambda}(v_*\uCM)\ar[r]^{K_{\psi}^{\bullet}(\sigma_{\mtg,\CF}^{0})
\;\;\eqref{eq:KoszulFunctMorph}}
&
g_*K^{\bullet}_{\Lambda'}(v'_*\uCM').
}
\end{equation}
Note that we have $\gamma_{\psi,\bmI}^{<}(t_{i_{\nu}})=t_{i_{\nu}}$
for $r\in \N$, $\bmI=(i_1,\ldots, i_r)\in \Lambda^r$, 
and $\nu\in \N\cap [1,r]$ when the components of 
$\psi(\bmI)$ are mutually different.

\begin{lemma}\label{lem:qdRBKFMorphFunct}
The following diagram is commutative.
\begin{equation}\label{eq:qdRBKFMorphFunct}
\xymatrix@C=80pt{
v_*\uCM
\ar[r]^{\sigma_{\mtg,\CF}^0}_{\eqref{eq:qdRInvSysFunct}}
\ar[d]_{\delta_{\mti,\CF}}^{\eqref{eq:qHigBKFmap}}
&
g_*v'_*\uCM'
\ar[d]^{g_*(\delta_{\mti',\CF'})}_{\eqref{eq:qHigBKFmap}}
\\
\iota^*\nu_{\infty*}\ubM
\ar[r]_(.45){\eqref{eq:BKFGammaProjFunct}}^(.45){\otau_{\mtg,\CF}}
&
g_*\iota^{\prime*}\nu'_{\infty*}\ubM^{\prime}
}
\end{equation}
\end{lemma}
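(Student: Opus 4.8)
The strategy is to check \eqref{eq:qdRBKFMorphFunct} on sections over each open affine formal subscheme $\fU=\Spf(A_{\fU})$ of $\fX$, setting $\fU'=\fU\times_{\fX}\fX'$ and writing $U_{\infty}$ (resp.\ $U'_{\infty}$) for the affinoid perfectoid object of $X_{\proet}$ (resp.\ $X'_{\proet}$) attached to $\fU$ (resp.\ $\fU'$) and $\fD_{\fU}$ (resp.\ $\fD'_{\fU'}$) for the open of the $q$-prismatic envelope over it, as in \S\ref{sec:AinfcohCompMap}. Since the four morphisms in \eqref{eq:qdRBKFMorphFunct} are maps of $\uA_{\inf}$-modules on $\fX_{\Zar}$, it suffices to verify the resulting equality of maps $\CF(D_{\fU})=(v_*\uCM)(\fU)\to (\nu'_{\infty*}\ubM')(\fU')=(g_*\iota^{\prime*}\nu'_{\infty*}\ubM')(\fU)$ at each finite level $m$. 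First I would make the four arrows explicit on sections: by \eqref{eq:CrysBKFMap} and \eqref{eq:ProetGammaModIm}, $\delta_{\mti,\CF}$ is $\CF$ applied to the morphism $p_{D_{\fU},\ut}$ of $(\fX/A_{\inf})_{\prism}$, followed by the identification of its target with an almost version of $\ubM(U_{\infty})$; since the degree-zero part of \eqref{eq:qHiggsCpxShvFunct}--\eqref{eq:CrysqHiggsCpxFunct} is the scalar-extension map $x\mapsto x\otimes 1$, Theorem \ref{thm:PrismCrysqHiggsEquiv}~(1),~(5) shows that $\sigma^0_{\mtg,\CF}$ is $\CF$ applied to the morphism $(D'_{\fU'},g\circ v_{D'_{\fU'}})\to(D_{\fU},v_{D_{\fU}})$ of $(\fX/A_{\inf})_{\prism}$ given by the ring homomorphism $h_{D_{\fU}}\colon D_{\fU}\to D'_{\fU'}$ (using the compatibility of $v_{D_{\fU}}$, $v_{D'_{\fU'}}$ with $h_{D_{\fU}}$, which holds by construction); $g_*(\delta_{\mti',\CF'})$ over $\fU$ is $\delta_{\mti',\CF'}(\fU')$, i.e.\ $\CF'$ applied to $p_{D'_{\fU'},\ut'}$; and $\otau_{\mtg,\CF}$ unwinds, through \eqref{eq:BKFGammaRepFunct}, into $\varepsilon_{g,\CF}$ of Proposition \ref{prop:BKFFunctComp}~(1) followed by the transition morphism $\Xi_{\mtg}$ of \eqref{eq:XproetToGammaShfFunctAinf}.

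Next, using $U_{\infty}\in\Ob\CB_{\fX}$ and $U'_{\infty}\in\Ob\CB_{\fX'}$, I would rewrite $\ubM(U_{\infty})$ as $\CF(\alpha_{\inf,\fX}(U_{\infty}))$ (up to the almost isomorphism of \eqref{eq:CrysBKFMap}; cf.\ Proposition \ref{prop:BKFProperties}~(4)) and $\ubM'(U'_{\infty})$ as $\CF'(\alpha_{\inf,\fX'}(U'_{\infty}))=\CF(g_{\prism}\alpha_{\inf,\fX'}(U'_{\infty}))$ (Definition \ref{def:PrismaticSiteCrystal}~(3)). The object $U'_{\infty}$ admits a canonical morphism to $U_{\infty}\times_X X'$, and the composite $U'_{\infty}\to U_{\infty}\times_X X'\to U_{\infty}$ lies in $\CB_g$; by the characterization of $\varepsilon_{g,\CF}$ in Proposition \ref{prop:BKFFunctComp}~(1) (applied with the trivial covering of $U_{\infty}$), together with the explicit form of $\Xi_{g,\psi}$ from \S\ref{sec:ProetGammaZarShv}, the combined effect of $\varepsilon_{g,\CF}$ and $\Xi_{\mtg}$ on the section $\ubM(U_{\infty})$ is exactly $\CF$ applied to the morphism of $(\fX/A_{\inf})_{\prism}$ corresponding to that composite. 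Hence both ways around \eqref{eq:qdRBKFMorphFunct} become $\CF$ applied to a composite of morphisms in $(\fX/A_{\inf})_{\prism}$, and the lemma reduces to the commutativity, in $(\fX/A_{\inf})_{\prism}$, of the square with corners $g_{\prism}\alpha_{\inf,\fX'}(U'_{\infty})$, $\alpha_{\inf,\fX}(U_{\infty})$, $(D'_{\fU'},g\circ v_{D'_{\fU'}})$, $(D_{\fU},v_{D_{\fU}})$ and edges $p_{D_{\fU},\ut}$, $p_{D'_{\fU'},\ut'}$, $h_{D_{\fU}}$ and the morphism induced by $U'_{\infty}\to U_{\infty}$; equivalently, of the two $\delta$-$A_{\inf}$-algebra homomorphisms $D_{\fU}\to\bA_{\inf,X'}(U'_{\infty})$ obtained by reading these edges as ring maps.

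To prove this last commutativity, I would reduce to the global case $\fU=\fX$ (the morphisms $p_{D_{\fU},\ut}$, $p_{D'_{\fU'},\ut'}$ are the restrictions over $\fU$, $\fU'$ of $p_{D,\ut}$, $p_{D',\ut'}$ \eqref{eq:MorphAinfToD}, and $h_{D_{\fU}}$ of $h_D$, and the square over $\fU$ is obtained from that over $\fX$ by restriction along $\fD_{\fU}\subset\fD$, $\fD'_{\fU'}\subset\fD'$) and then invoke the universal property of $D$ as the $q$-prismatic envelope of $(B,J)$ over the $q$-prism $(A_{\inf},\pq A_{\inf})$ (Definition \ref{def:bddPrismEnv}): since $\bA_{\inf,X'}(X'_{\infty})$ is a $q$-prism, it suffices to show that the two precompositions $B\to\bA_{\inf,X'}(X'_{\infty})$ agree. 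Both are $A_{\inf}[\uT^{\pm 1}]$-algebra homomorphisms for the evident structures (one factoring through $\bA_{\inf,X}(X_{\infty})$ via the defining diagram \eqref{eq:BToAinfDiag} for $\mti$ and the level maps $A_n\to A'_n$, the other through $B'$ via $h^*$ and the defining diagram for $\mti'$), and they coincide modulo $(p,\pq)$, where both reduce to $B\xrightarrow{\mfi^*}A\xrightarrow{g^*}A'\to\bA_{\inf,X'}(X'_{\infty})/(p,\pq)$ — using $\mfi^{\prime*}\circ h^*=g^*\circ\mfi^*$, that $A_n\to A'_n$ lies over $g^*$, and that $A_n\to A'_n$ carries the chosen $p$-power roots of the coordinates compatibly with $\psi$, so that the two maps agree on the images of the $T_i$. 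Since $A_{\inf}[\uT^{\pm1}]\to B$ is $(p,\pq)$-adically \'etale and $\bA_{\inf,X'}(X'_{\infty})$ is $(p,\pq)$-adically complete, uniqueness of \'etale lifts along the square-zero steps of the $(p,\pq)$-adic filtration forces the two maps $B\to\bA_{\inf,X'}(X'_{\infty})$, hence the two maps $D\to\bA_{\inf,X'}(X'_{\infty})$, to agree. The $\Gamma^{\disc}$-equivariance carried by $p_{D,\ut}$, $h_D$ (via $\Gamma^{\disc}_{\psi}$), and $\Xi_{\mtg}$ matches on both sides and needs no separate discussion.

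I expect the main obstacle to be not this final step, which is short, but the bookkeeping in the second paragraph: identifying $U_{\infty}\times_X X'$ with $U'_{\infty}$ through the covering argument of Proposition \ref{prop:BKFFunctComp}~(1), the filtered colimits over open normal subgroups and over levels $n$ in \eqref{eq:ProetGammaShvProjDescrip} and \eqref{eq:ProetGammaModIm}, the passage through $\iota^{\prime*}$ and $\nu'_{\infty*}$, and the explicit description of $\Xi^+_{g,\psi}$ in \S\ref{sec:ProetGammaZarShv}. Carrying this out carefully, so that everything collapses cleanly to the single square in $(\fX/A_{\inf})_{\prism}$, is where most of the work lies.
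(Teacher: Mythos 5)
Your proposal is correct and follows essentially the same route as the paper's own proof: unwind all four arrows on sections over each affine open $\fU$ so that each is $\CF$ (or $\CF_m$) applied to a morphism in $(\fX/A_{\inf})_{\prism}$, reduce to the commutativity of the resulting square with vertices $\bA_{\inf,X'}(U'_{\infty})$, $\bA_{\inf,X}(U_{\infty})$, $D'_{\fU'}$, $D_{\fU}$, reduce that to $\fU=\fX$, and conclude by the uniqueness of the $(p,\pq)$-adic \'etale lift $B\to\bA_{\inf,X'}(X'_\infty)$ and the universal property of the $q$-prismatic envelope $D$. The paper packages the last step as a morphism between the two pentagon diagrams \eqref{eq:BToAinfDiag}, but that is precisely the mod-$(p,\pq)$-plus-\'etale-uniqueness argument you give; you are also right that the real labour sits in the bookkeeping for $\otau_{\mtg,\CF}(\fU)$ (the paper's Step~1, via Lemmas~\ref{lem:GfSetShfToGShfFunct} and~\ref{lem:ForgetGammaBC}).
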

We give a proof of Lemma \ref{lem:qdRBKFMorphFunct}
after finishing the proof of Proposition \ref{prop:PrismAinfCompFunct}.
By taking the Koszul complexes of \eqref{eq:qdRBKFMorphFunct} 
with respect to the actions of 
$\Gamma_{\Lambda}^{\disc}$ and $\Gamma_{\Lambda'}^{\disc}$ and
using Lemma \ref{lem:GammaKoszCompos}, we obtain 
a commutative diagram
\begin{equation}\label{eq:qdRBKFMorphKoszulFunct}
\xymatrix@C=80pt{
K_{\Lambda}^{\bullet}(v_*\uCM)
\ar[r]^{K_{\psi}^{\bullet}(\sigma_{\mtg,\CF}^0)}_{\eqref{eq:qdRInvSysFunct}}
\ar[d]_{K_{\Lambda}^{\bullet}(\delta_{\mti,\CF})}^{\eqref{eq:qHigBKFKoszulmap}}
&
g_*K_{\Lambda'}^{\bullet}(v'_*\uCM')
\ar[d]^{g_*(K_{\Lambda'}^{\bullet}(\delta_{\mti',\CF'}))}_{\eqref{eq:qHigBKFKoszulmap}}
\\
K_{\Lambda}^{\bullet}(\iota^*\nu_{\infty*}\ubM)
\ar[r]_(.45){\eqref{eq:BKFKoszulFunctMap}}^(.45){K_{\psi}^{\bullet}(\otau_{\mtg,\CF})}
&
g_*K_{\Lambda'}^{\bullet}(\iota^{\prime*}\nu'_{\infty*}\ubM^{\prime}).
}
\end{equation}

By composing \eqref{eq:KoszulqdRFunctComp} and \eqref{eq:qdRBKFMorphKoszulFunct},
and taking $\varprojlim_{\N}$, we obtain a commutative diagram
\begin{equation}\label{eq:qdRKoszulGammaCohFunct}
\xymatrix{
\varprojlim_{\N}v_*(q\Omegab(\uCM,\utheta_{\uCM}))
\ar[d]_{c_{\mti,\CF}\;\;\eqref{eq:qdRKoszulMorLoc}}
\ar[rr]^{\varprojlim_{\N} \sigma_{\mtg,\CF}\;\;\eqref{eq:qdRInvSysFunct}}
&&
g_{\Zar*}\varprojlim_{\N}v'_*(q\Omegab(\uCM',\utheta_{\uCM'}))
\ar[d]^{g_{\Zar*}(c_{\mti',\CF'})\;\;\eqref{eq:qdRKoszulMorLoc}}
\\
\eta_\mu \varprojlim_{\N}K^{\bullet}_{\Lambda}(\iota^*\nu_{\infty*}\ubM)
\ar[r]^{\eta_{\mu}\varprojliml_{\N}K^{\bullet}_{\psi}(\otau_{\mtg,\CF})}
&
\eta_{\mu}g_{\Zar*}\varprojlim_{\N}K_{\Lambda'}(\iota^{\prime*}\nu_{\infty*}'\ubM')
\ar[r]
&
g_{\Zar*}\eta_{\mu}\varprojlim_{\N}K_{\Lambda'}(\iota^{\prime*}\nu_{\infty*}'\ubM').
}
\end{equation}

By combining \eqref{eq:qdRPrismZarProjCompFunctAinf}, 
\eqref{eq:qdRKoszulGammaCohFunct}, and \eqref{eq:AOmegaKoszulFunct}, 
we obtain the commutative diagram \eqref{eq:PrismAinfCompFunct} 
in Proposition \ref{prop:PrismAinfCompFunct}.

\begin{proof} [Proof of Lemma \ref{lem:qdRBKFMorphFunct}]
To simplify the notation,  we write $\Gamma$, $\Gamma'$
and $\Psi$ for $\Gamma_{\Lambda}$, $\Gamma_{\Lambda'}$, and 
$\Gamma_{\psi}\colon \Gamma_{\Lambda'}\to \Gamma_{\Lambda}$, and put
$\Gamma_n=\Gamma/p^n\Gamma$ and $\Gamma'_n=\Gamma'/p^n\Gamma'$.
Let $\Psi_n$ denote the homomorphism $\Gamma'_n\to \Gamma_n$ induced by 
$\Psi$.  We write $\tnu_{\infty}$ and $\rho$ for the morphisms
of ringed topos $((X_{\proet}^{\sim})^{\N^{\circ}},\ubA_{\inf,X})
\to (((\fX_{\Zar})^{\sim}_{\Gamma})^{\N^{\circ}},\uA_{\inf})$
and $((\Gamma\hy\fX_{\Zar}^{\sim})^{\N^{\circ}},\uA_{\inf})
\to(((\fX_{\Zar})^{\sim}_{\Gamma})^{\N^{\circ}},\uA_{\inf})
$
defined by $\tnu_{\fX,\ut}$ and $\rho_{\Gamma_{\Lambda},\fX_{\Zar}}$
(see before \eqref{eq:ProetGammaShfProj}),
and write them with a prime for those
associated to $\mti'$. Similarly to $g_{\psi}$, let $\tg_{\psi}$ denote 
the morphism of ringed topos 
$(((\fX'_{\Zar})^{\sim}_{\Gamma'})^{\N^{\circ}},\uA_{\inf})
\to (((\fX_{\Zar})^{\sim}_{\Gamma})^{\N^{\circ}},\uA_{\inf})
$ defined by $(\widetilde{\Gamma_{\psi}})_{g_{\Zar}}$
\eqref{eq:ProetToZarFacorization}.

(Step 1) As in the construction of $\nu_{\fX,\ut}$ before \eqref{eq:ProetGammaShfProj},
for $(\fU',S')\in \Ob (\fX'_{\Zar})_{\Gamma'}$, 
let $U'$ denote the adic generic fiber of $\fU'$, and  put 
$U'_{S'}=X'_{S'}\times_{X'}U'$.  We define $U_n'$ $(n\in\N)$ to be $U'_{\Gamma'_n}$
similarly as before \eqref{eq:ProetGammaShvProjDescrip}.
For $\fU\in \Ob \fX_{\Zar}$ and $\fU'=\fU\times_{\fX}\fX'\in \Ob \fX'_{\Zar}$,
let $\bmg_{\fU,n}$ be the morphism 
$U'_{\Psi_{\trmf}^*\Gamma_n}=X'_{\Psi_{\trmf}^*\Gamma_n}\times_{X'}U'
\to X_{\Gamma_n}\times_XU'=U_n\times_XX'$
induced by $\bmg_{\Gamma_n}\colon X'_{\Psi_{\text{\rm f}}^*\Gamma_n}
\to X_{\Gamma_n}$
(see the construction of \eqref{eq:XproetToGammaShfFunct}),
let $a_{\fU,n}$ be the morphism $U'_n=U'_{\Gamma_n'}\to U'_{\Psi_{\trmf}^*\Gamma_n}$
corresponding to the morphism $\Psi_n\colon \Gamma_n'\to \Psi_{\trmf}^*\Gamma_n$
in $\Gamma'\fSet$, and let $b_{\fU,n}$ be the composition $\bmg_{\fU,n}\circ a_{\fU,n}$.
 We have
$\iota^*\nu_{\infty*}\ubM(\fU)\cong\varinjlim_n\ubM(U_n)$
and $g_*\iota^{\prime*}\nu'_{\infty*}\ubM'(\fU)
=\varinjlim_n\ubM'(U'_n)$ by 
\eqref{eq:ProetGammaShvProjDescrip}. 
We assert that, under this description, the morphism
$\otau_{g,\CF}(\fU)\colon\iota^*\nu_{\infty*}\ubM(\fU)
\to  g_*\iota^{\prime*}\nu'_{\infty*}\ubM'(\fU)$
is given by the composition of $\uvarepsilon_{g,\CF}(U_n)
\colon\ubM(U_n)\to 
\bmg_*\ubM'(U_n)=\ubM'(U_n\times_XX')$
with the morphism $\ubM'(b_{\fU,n})\colon 
\ubM'(U_n\times_XX')\to \ubM'(U'_n)$.
By definition, the morphism 
$\tXi_{g,\psi*}(\fU,\Gamma_n)\colon
\tnu_{\infty*}\bmg_*\ubM'(\fU,\Gamma_n)
=\ubM'(U_n\times_XX')
\to \tg_{\psi*}\tnu_{\infty*}'\ubM'(\fU,\Gamma_n)
=\ubM'(U'_{\Psi_{\trmf}^*\Gamma_n})$
\eqref{eq:XproetToGammafSetShfFunct} is 
$\ubM'(\bmg_{\fU,n})$. By Lemmas
\ref{lem:GfSetShfToGShfFunct} and 
\ref{lem:ForgetGammaBC}, the composition
\begin{equation*}
\xymatrix@R=10pt{
\iota^*\rho^*\tg_{\psi*}\tnu_{\infty*}'\ubM'(\fU)
\ar[r]^{\cong}
&
\iota^*g_{\psi*}\rho^{\prime*}\tnu_{\infty*}'\ubM'(\fU)
\ar[r]
&
g_*\iota^{\prime*}\rho^{\prime*}\tnu'_{\infty*}\ubM'(\fU)
\\
\varinjlim_n\ubM'(U'_{\Psi_{\trmf}^*\Gamma_n})
\ar@{=}[u]
&
\Map_{\Gamma',\cont}(\Gamma,
\varinjlim_n\ubM'(U'_n))
\ar@{=}[u]
&
\varinjlim_n \ubM'(U'_n)
\ar@{=}[u]
}
\end{equation*}
is given by $\varinjlim_n\ubM'(a_{\fU,n})$. 
This implies the claim because $\otau_{g,\CF}$ is the
composition of \linebreak $\iota^*\nu_{\infty*}(\uvarepsilon_{g,\CF})
\colon \iota^*\nu_{\infty*}\ubM
\to \iota^*\nu_{\infty*}\bmg_*\ubM'$ with the image of
$\ubM'$ under the composition
\begin{multline*}
\iota^*\nu_{\infty*}\bmg_*
=\iota^*\rho^*\tnu_{\infty*}\bmg_*
\xrightarrow{\iota^*\rho^*(\tXi_{g,\psi*})}
\iota^*\rho^*\tg_{\psi*}\tnu'_{\infty*}
\xrightarrow{\,\cong\,} \iota^*g_{\psi*}\rho^{\prime*}\tnu_{\infty*}'
=\iota^*g_{\psi*}\nu'_{\infty*}
\to g_*\iota^{\prime*}\nu'_{\infty*}.
\end{multline*}

(Step 2) We keep the notation above, and assume that 
$\fU$ is affine. Put $U'_{\infty}=$``\,$\varprojlim_n$"$U'_n\in \Ob X'_{\proet}$, and 
let $b_{\fU,\infty}$ be the morphism ``\,$\varprojlim_n$" $b_{\fU,n}\colon 
U_{\infty}'\to U_{\infty}\times_XX'$.
Then the composition 
$c_{\fU,\infty}\colon U'_{\infty}\xrightarrow{b_{\fU,\infty}} U_{\infty}\times_XX'\to U_{\infty}$
belongs to the category $\CB_g$ introduced before
Proposition \ref{prop:BKFFunctComp} and defines a morphism 
$\fg_{\fU,\infty}\colon 
(\bA_{\inf,X'}(U'_{\infty}),g\circ v_{\fX',U_{\infty}'})
\to (\bA_{\inf,X}(U_{\infty}),v_{\fX,U_{\infty}})$ in $(\fX/A_{\inf})_{\prism}$.
Let $\fD_{\fU}$ (resp.~$\fD'_{\fU'}$) be the affine formal subsheme
of $\fD$ (resp.~$\fD'$) whose underlying topological space
is $v_{D}^{-1}(\fU)\subset \ofD$ (resp.~$v_{D'}^{-1}(\fU')\subset \ofD'$). 
We claim that the diagram
\begin{equation}
\xymatrix@C=50pt{
(\bA_{\inf,X'}(U'_{\infty}),g\circ v_{\fX',U_{\infty}'})
\ar[r]^(.53){\fg_{\fU,\infty}}
\ar[d]^{p_{D'_{\fU'},\ut'}}
&
(\bA_{\inf,X}(U_{\infty}),v_{\fX,U_{\infty}})
\ar[d]^{p_{D_{\fU},\ut}}
\\
(\fD'_{\fU'},g\circ v_{D'_{\fU'}})
\ar[r]^{\fh_{D_{\fU}}}
&(\fD_{\fU},v_{D_{\fU}})
}
\end{equation}
in $(\fX/A_{\inf})_{\prism}$ is commutative, where
$v_{D_{\fU}}=v_D\vert_{v_D^{-1}(\fU)}$,
$v_{D'_{\fU'}}=v_{D'}\vert_{v_{D'}^{-1}(\fU')}$,
$\fh_{D_{\fU}}=\fh_{D}\vert_{\fD'_{\fU'}}$,
and we define the vertical morphisms as before
\eqref{eq:CrysBKFMap}.
To prove it, we may replace $\fh_{D_{\fU}}$ by $\fh_D$.
Then, since the remaining morphisms are compatible with those for 
$\fX$ and $\fX'$, the claim is reduced to the case $\fU=\fX$.
In this case, the morphism $c_{\fX,\infty}$ is induced by 
$\varinjlim g_n^*\colon 
\varinjlim_nA_n\to \varinjlim_n A_n'$
by the definition of $\bmg_S$ in the paragraph defining
\eqref{eq:XProetToGammaSiteFunct}.
Let $g^*_{\infty}\colon A_{\infty}\to A'_{\infty}$ be the
$p$-adic completion of $\varinjlim g_n^*$.
By the definition of $g_n^*$ $(n\in\N)$, 
we see that the $A_{\inf}$-algebra homomorphisms
$g_{\infty}^*$, $A_{\inf}(g_{\infty}^*)$,
$g^*\colon A\to A'$, $h^*\colon B\to B'$, and
$A_{\inf}[T_i^{\pm 1} (i\in\Lambda)]
\to A_{\inf}[T_{i'}^{\prime\pm 1} (i'\in \Lambda')];
T_i\mapsto T'_{\psi(i)}$ $(i\in \Lambda)$
define a morphism between the diagrams
\eqref{eq:BToAinfDiag} for $(\mathtt{i}\colon \fX\to \fY, \ut)$
and $(\mathtt{i}'\colon \fX'\to \fY',\ut')$. 
This implies that the unique homomorphisms $B\to A_{\inf}(A_{\infty})$
and $B'\to A_{\inf}(A'_{\infty})$ making these diagrams
commutative are compatible with $h^*$ and $A_{\inf}(g_{\infty}^*)$.
Therefore their unique extensions 
$D\to A_{\inf}(A_{\infty})$ and $D'\to A_{\inf}(A'_{\infty})$
to $\delta$-$A_{\inf}$-algebra homomorphisms are 
compatible with $\fh_{D}^*$ and $A_{\inf}(g_{\infty}^*)$. \par
(Step 3) By the characterization of $\varepsilon_{g,\CF}$
in Proposition \ref{prop:BKFFunctComp} (1),  the composition 
$\ubM(b_{\fU,\infty})\circ \uvarepsilon_{g,\CF}(\fU)\colon
\ubM(U_{\infty})\to \ubM'(U'_{\infty})$
considered in (Step 1) is compatible with the homomorphism 
$\bM(U_{\infty})\to \bM'(U'_{\infty})$ obtained by taking
the section of $\CF$ over the morphism $\fg_{\fU,\infty}$
introduced in (Step 2). Therefore, by (Step 1), (Step 2),
and the construction of \eqref{eq:CrysBKFMap} for 
$\CF$, $(\mfi\colon \fX\to \fY,\ut)$, $\fU\subset \fX$
and $\CF'$, $(\mfi'\colon \fX'\to \fY',\ut')$, $\fU'\subset\fX'$,
we are reduced to  showing that the inverse system of 
homomorphisms $\CM_m(D_{\fU})
\cong\CF_m(D_{\fU},v_{D_{\fU}})
\xrightarrow{\CF_m(\fh_{D_{\fU}})}
\CF'_m(D'_{\fU'},v_{D'_{\fU'}})\cong
\CM'_m(D'_{\fU'})$ $(m\in \N)$
coincides with $\sigma_{\mtg,\CF}^0(\fU)$.
This is true because $\sigma_{\mtg,\CF}^0(\fU)$ is given by the scalar
extension of  $\CF_m(\fh_{D})
\colon M_m=\CF_m(\fD,v_D)
\to \CF'_m(\fD',v_{D'})=M'_m$ to
$\fh_{D_{\fU}}^*\colon D_{\fU}\to D'_{\fU'}$
for each $m\in \N$.
\end{proof}

\begin{proposition}\label{prop:PrismCohAinfCompMapIndep}
The morphism $\kappa_{\mti,\CF}$ 
\eqref{eq:PrismCohAinfCohCompMap2}
is independent of the choice of a small framed 
embedding $\mti=(\mfi\colon \fX\to \fY, \ut)$ over $A_{\inf}$. 
\end{proposition}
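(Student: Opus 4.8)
The plan is to show that any two small framed embeddings of $\fX$ over $A_{\inf}$ are dominated by a common third one, and then to conclude from the functoriality of $\kappa_{\mti,\CF}$ in $\mti$ proved in Proposition~\ref{prop:PrismAinfCompFunct}. Concretely, given small framed embeddings $\mti_1=(\mfi_1\colon\fX\hookrightarrow\fY_1=\Spf(B_1),\ut_1=(t_{1,i})_{i\in\Lambda_1})$ and $\mti_2=(\mfi_2\colon\fX\hookrightarrow\fY_2=\Spf(B_2),\ut_2=(t_{2,j})_{j\in\Lambda_2})$, I would put $\fY_{12}=\fY_1\times_{A_{\inf}}\fY_2=\Spf(B_{12})$ with $B_{12}=B_1\hotimes_{A_{\inf}}B_2$, take the closed immersion $\mfi_{12}\colon\fX\hookrightarrow\fY_{12}$ given by $B_{12}\to A,\ b_1\otimes b_2\mapsto\mfi_1^*(b_1)\mfi_2^*(b_2)$, and let $\ut_{12}$ be the family obtained by concatenating the images $t_{1,i}\otimes 1$ $(i\in\Lambda_1)$ and $1\otimes t_{2,j}$ $(j\in\Lambda_2)$, with index set $\Lambda_{12}=\Lambda_1\sqcup\Lambda_2$ given the total order that restricts to the given orders on $\Lambda_1$ and $\Lambda_2$ and places every element of $\Lambda_1$ before every element of $\Lambda_2$.

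Then $(B_{12},\ut_{12})$ is a framed smooth $\delta$-$A_{\inf}$-algebra, $\mfi_{12}^*$ is surjective, and condition~\eqref{cond:frameAinf2} holds for $\mti_{12}$ because a subfamily of $\ut_1$ already supplies $p$-adic coordinates of $A$ over $\CO$; these verifications are routine, so $\mti_{12}$ is a small framed embedding of $\fX$ over $A_{\inf}$. For $\nu\in\{1,2\}$, the projection $p_\nu\colon\fY_{12}\to\fY_\nu$, together with $\id_\fX$ and the inclusion $\psi_\nu\colon\Lambda_\nu\hookrightarrow\Lambda_{12}$ — which preserves the orders by the choice above — form a morphism $\mtg_\nu=(\id_\fX,p_\nu,\psi_\nu)\colon\mti_{12}\to\mti_\nu$ of small framed embeddings over $A_{\inf}$ (Definition~\ref{def:AdmFramedSmEmbed}~(2)). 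I would now apply Proposition~\ref{prop:PrismAinfCompFunct} to $\mtg_\nu$ and $\CF$. Since the underlying $\CO$-morphism is $\id_\fX$, we have $(\id_\fX)_{\prism}^{-1}\CF=\CF$, hence $\bM_{\BKF,\fX}((\id_\fX)_{\prism}^{-1}\CF)=\bM:=\bM_{\BKF,\fX}(\CF)$, and the top composite of the diagram \eqref{eq:PrismAinfCompFunct}, as well as the targets $Ru_{\fX/A_{\inf}*}Rg_{\prism*}\CF'$ and $Rg_{\Zar*}(-)$ occurring there, are the canonical identifications. Therefore \eqref{eq:PrismAinfCompFunct} reduces to the equality between $\kappa_{\mti_{12},\CF}$ and the composite of $\kappa_{\mti_\nu,\CF}$ with the morphism \eqref{eq:AOmegaFunct} associated to $g=\id_\fX$, and it suffices to show that this instance of \eqref{eq:AOmegaFunct} is the identity of $A\Omega_\fX(\bM)$.

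To do that, I would first note that $\varepsilon_{\id_\fX,\CF}=\id_{\bM}$: the identity morphism $\bM\to\bM$ satisfies the characterizing property of Proposition~\ref{prop:BKFFunctComp}~(1) — for $u\colon V'\to V$ in $\CB_{\id_\fX}$ lying over $W$, the square in question is built from restriction maps of the sheaf $\bM$ — and $\varepsilon_{\id_\fX,\CF}$ is unique with that property. Consequently the morphism \eqref{eq:BKFZarProjFunct} for $g=\id_\fX$ is the identity of $R\nu_{\fX*}R\varprojlim_{\N}\ubM$ modulo the canonical isomorphisms that trivialise $R\bmg^{\N^{\circ}}_{\proet*}$ and identify $\bmg$ with $\id_X$, and since $\alpha_{\id,\mu}$ is the identity, the décalage morphism $\beta_{\id_\fX,\mu}$ of \eqref{eq:DeclageDirectImage} is the identity by Lemma~\ref{lem:DecalageMorphRingedTopos}~(1). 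Applying $L\eta_\mu$ and composing with $\beta_{\id_\fX,\mu}$ then yields the identity of $A\Omega_\fX(\bM)$; alternatively one can deduce this from $\varepsilon_{\id_\fX,\CF}=\id$ and the cocycle condition satisfied by \eqref{eq:AOmegaFunct}. Combining everything gives $\kappa_{\mti_1,\CF}=\kappa_{\mti_{12},\CF}=\kappa_{\mti_2,\CF}$, and since $\fX$ admits at least one small framed embedding over $A_{\inf}$ (as noted after \eqref{eq:PrismCohAinfCohCompMap}), this proves independence of $\mti$. The only point needing genuine care is the identification of \eqref{eq:AOmegaFunct} with the identity for $g=\id_\fX$: one must unwind the construction of \eqref{eq:AOmegaFunct} through $\varepsilon_{\id_\fX,\CF}$, the canonical isomorphisms for $\bmg=\id_X$, and the décalage morphism of Lemma~\ref{lem:DecalageMorphRingedTopos}, and check that each ingredient is the identity; the construction of $\mti_{12}$ and the verification that the $\mtg_\nu$ are morphisms are straightforward.
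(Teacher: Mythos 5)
Your proof is correct and follows the same strategy as the paper: dominate $\mti_1$ and $\mti_2$ by the fiber product embedding $\mti_{12}$ with concatenated index set, then apply Proposition \ref{prop:PrismAinfCompFunct} to the two projections. You have spelled out in more detail the point the paper leaves implicit—that when $g=\id_\fX$ the morphism \eqref{eq:AOmegaFunct} reduces to the identity via $\varepsilon_{\id_\fX,\CF}=\id_{\bM}$—which is a reasonable addition.
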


\begin{definition}\label{def:PrismCohAinfCohLocCompMap}
We define the morphism $\kappa_{\CF}$ \eqref{eq:PrismCohAinfCohCompMap} to 
be $\kappa_{\mti,\CF}$ independent of the choice of $\mti$.
It is functorial in $\fX$ by Proposition \ref{prop:PrismAinfCompFunct}. 
\end{definition}

\begin{proof}[Proof of Proposition \ref{prop:PrismCohAinfCompMapIndep}]
Let $\mti'=(\mfi'\colon \fX\to \fY'=\Spf(B'), \ut'=(t'_{i'})_{i'\in\Lambda'})$ be
another small framed embedding of $\fX$ over $A_{\inf}$.
We define $\fY''=\Spf(B'')$ to be the fiber product of $\fY$ and $\fY'$ over $\Spf(A_{\inf})$.
Let $t''_i$ $(i\in \Lambda)$ (resp.~$t''_{i'}$ $(i'\in\Lambda')$) be the
pullback of $t_i\in B$  (resp.~$t'_{i'}\in B'$) to $B''$, and let $\Lambda''$ be
the disjoint union of $\Lambda$ and $\Lambda'$ equipped with the
unique total order compatible with those on $\Lambda$ and $\Lambda'$
and satisfying $i\leq i'$ for any $i\in \Lambda$ and $i'\in \Lambda'$.
Then the closed immersion  $\mfi''\colon \fX\to \fY''$ induced by 
$\mfi$ and $\mfi'$ endowed with $\ut''=(t''_j)_{j\in \Lambda''}$ is a
small framed embedding of $\fX$ over $A_{\inf}$
(Definition \ref{def:AdmFramedSmEmbed} (1)). The identity
morphism of $\fX$, the projections $\fY''\to \fY$, $\fY'$, and
the inclusions $\Lambda$, $\Lambda'\to \Lambda''$ define 
morphisms of small framed embeddings over $A_{\inf}$
(Definition \ref{def:AdmFramedSmEmbed} (2))
$(\mfi'',\ut'')\to \mti$, $\mti'$. 
By applying Proposition \ref{prop:PrismAinfCompFunct} to these two
morphisms, we see that 
$\kappa_{\mti,\CF}$ and $\kappa_{\mti',\CF}$ \eqref{eq:PrismCohAinfCohCompMap2}
coincide.
\end{proof}

\section{Comparison with $A_{\inf}$-cohomology with coefficients:
the local case}
\label{sec:compAinfcohLocal}

We prove the following theorem.

\begin{theorem}\label{thm:PrismCohAinfCohLocComp}
Let $\fX=\Spf(A)$ be a $p$-adic smooth affine formal scheme over $\CO$
admitting invertible $p$-adic coordinates (Definition \ref{def:IadicProperties} (2)), 
let $\CF$ be an object of $\Crystal_{\prism}^{\fproj}(\fX/A_{\inf})$
(Definition \ref{def:PrismaticSiteCrystal} (2)), and
put $\bM:=\bM_{\BKF,\fX}(\CF)$ (Definition \ref{BKFFunctor}).
Then the morphism $\kappa_{\CF}\colon Ru_{\fX/A_{\inf}*}\CF\longrightarrow A\Omega_{\fX}(\bM)$ 
(Definition \ref{def:PrismCohAinfCohLocCompMap})
is an isomorphism.
\end{theorem}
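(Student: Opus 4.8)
The plan is to reduce, using independence of the embedding, to a maximally convenient small framed embedding, and then to match the explicit morphism $\kappa_{\mti,\CF}$ with the (already known) descriptions of both sides as a $q$-Higgs complex on a smooth $A_{\inf}$-lift of $\fX$. By Proposition~\ref{prop:PrismCohAinfCompMapIndep} the morphism $\kappa_{\CF}=\kappa_{\mti,\CF}$ does not depend on $\mti$, so I would fix invertible $p$-adic coordinates $t_{A,i}$ of $A$ over $\CO$ and build a $(p,\pq)$-adically smooth $A_{\inf}$-algebra $\tA$ lifting $A$ along $\theta\circ\varphi^{-1}$, with units $t_i\in\tA^{\times}$ lifting the $t_{A,i}$, by lifting the split torus and then lifting \'etale-ly. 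Then $(\tA,\pq\tA)$ is a bounded prism (Proposition~\ref{prop:bddPrismFlatAlg}), the surjection $\mfi^{*}\colon\tA\to A$ has kernel $J=\pq\tA$, and $((\tA,\pq\tA),\ut)$ is an admissible framed smooth $q$-pair whose $q$-prismatic envelope is $D=\tA$ itself; in particular $\ofD=\Spf(A)=\fX$ and $v_{D}$ is the identity on underlying spaces, so $\varprojlim_{\N}v_{*}(q\Omegab(\uCM,\utheta_{\uCM}))$ is just the $q$-Higgs complex on $\tA$ of the $q$-Higgs module attached to $\CF$ via \eqref{eq:PrismCrysqHiggsEquiv}.

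Recall from \S\ref{sec:AinfcohCompMap} that $\kappa_{\mti,\CF}=b_{\mti,\CF}\circ c_{\mti,\CF}\circ a_{\mti,\CF}$, where $a_{\mti,\CF}$ \eqref{eq:AinfCrysProj} is an isomorphism by Theorem~\ref{thm:PrismCohQHiggsCpx}, $\hgamma_{\mti,\CF}$ (occurring inside $c_{\mti,\CF}$) is an isomorphism by Lemma~\ref{lem:qdRKoszulComp}, and the maps \eqref{eq:GammaGalCohIsomMap}, \eqref{eq:BKFGammaCohKoszulResol}, \eqref{eq:ProetZarMorph3} (occurring inside $b_{\mti,\CF}$) are isomorphisms by Proposition~\ref{prop:GammaRepKResol} and Corollary~\ref{cor:IndGModGacyc}. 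So it suffices to prove two things: (i) the morphism \eqref{eq:BKFProetGammaCoh}, $R\pi_{*}(\nu_{\infty*}\ubM)\to R\pi_{*}R\nu_{\infty*}\ubM\xleftarrow{\,\cong\,}R\nu_{*}\ubM$, becomes an isomorphism after $L\eta_{\mu}R\varprojlim_{\N}$; and (ii) $\eta_{\mu}\varprojlim_{\N}K^{\bullet}_{\Lambda}(\delta_{\mti,\CF})$ is a quasi-isomorphism. For (i) I would argue by almost mathematics: by Proposition~\ref{prop:BKFProperties}(3) each $\bM_{m}$ is discrete, and by Proposition~\ref{prop:BKFProperties}(4) together with the description \eqref{eq:ProetGammaShvProjDescrip} and the cofinality of affinoid perfectoids in $X_{\proet}$ (\cite[Corollary~4.7]{ScholzepHTRigid}), the map $\nu_{\infty*}\bM_{m}\to R\nu_{\infty*}\bM_{m}$ has almost-zero cone; applying $R\pi_{*}$ and composing with \eqref{eq:ProetZarMorph3} preserves this, and since $L\eta_{\mu}$ annihilates almost-zero complexes (as used throughout \cite{BMS}, \cite{MT}) the cone dies after $L\eta_{\mu}R\varprojlim_{\N}$.

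The crux is (ii). Since $D=\tA$ and $\CM_{m}=\CF_{m}(\tA)$, the crystal property of $\CF$ (Definition~\ref{def:PrismaticSiteCrystal}(2)) identifies $\delta_{\mti,\CF}$, over each affine open $\fU$ of $\fX$, with the composite of the base-change map $\CM_{m}(\tA_{\fU})\to\CM_{m}(\tA_{\fU})\otimes_{\tA_{\fU}}A_{\inf}(A^{+}_{U_{\infty}})$ along the prism morphism $p_{D,\ut}\colon A_{\inf}(A_{\infty})\to\tA$ with the almost-isomorphism of Proposition~\ref{prop:BKFProperties}(2), $\Gamma_{\Lambda}^{\disc}$-equivariantly. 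Hence, through $\hgamma_{\mti,\CF}$, the morphism $\eta_{\mu}\varprojlim_{\N}K^{\bullet}_{\Lambda}(\delta_{\mti,\CF})$ is precisely the comparison, from the $q$-Higgs complex on $\tA$ of the $q$-Higgs module attached to $\CF$, to $L\eta_{\mu}$ of the continuous $\Gamma_{\Lambda}$-cohomology of $\bM(X_{\infty})$ — that is, to $A\Omega_{\fX}(\bM)$ after (i). That this last comparison is an isomorphism is the content of the $q$-Higgs description of $A\Omega_{\fX}(\bM)$ for $\bM=\bM_{\BKF,\fX}(\CF)$ in \cite[\S6]{MT}, once one matches the $q$-Higgs module produced there with the one attached to $\CF$ via \eqref{eq:PrismCrysqHiggsEquiv}; alternatively one reproves it directly by Faltings' almost purity, computing $R\Gamma_{\cont}(\Gamma_{\Lambda},A_{\inf}(A_{\infty}))$ up to almost isomorphism as the $(\gamma_{i}-1)$-Koszul complex on $A_{\inf}(A_{\infty})$ and base-changing it along $\tA\to A_{\inf}(A_{\infty})$ after $L\eta_{\mu}$. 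Combining (i), (ii), and the isomorphisms noted above, $\kappa_{\mti,\CF}$, hence $\kappa_{\CF}$, is an isomorphism. The main obstacle I anticipate is precisely this last identification in (ii): making the explicitly defined $\kappa_{\mti,\CF}$ agree on the nose with the $q$-Higgs description of $A\Omega_{\fX}(\bM)$ from \cite[\S6]{MT}, i.e.\ checking that the two $q$-Higgs modules and the two comparison isomorphisms genuinely coincide; part (i) is routine once Proposition~\ref{prop:BKFProperties} is available.
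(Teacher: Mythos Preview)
Your overall strategy matches the paper's: reduce to a lift $\mfi$ with $D=\tA$, and then verify that $b_{\mti,\CF}$ and $c_{\mti,\CF}$ are isomorphisms separately. Your treatment of (ii) is adequate; the paper itself remarks that $c_{\mti,\CF}$ can be handled by quoting \cite[Proposition~1.29, Corollary~2.25]{MT}, and then gives a self-contained reproof (Proposition~\ref{prop:qDolbeaultGalCoh}) by decomposing $A_{\inf}(A_{\infty})\otimes_{\tA}M_{\fU}$ along the monomial basis and showing $\mu$ kills the Koszul cohomology of the nontrivial summand.

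The gap is in (i). You write that ``$L\eta_{\mu}$ annihilates almost-zero complexes'' and conclude that the cone of \eqref{eq:BKFProetGammaCoh} dies after $L\eta_{\mu}R\varprojlim_{\N}$. It is true that a complex whose cohomology is $\mu$-torsion has acyclic $L\eta_{\mu}$, and Lemma~\ref{lem:DerLimAzero} does upgrade ``almost zero'' to ``killed by $\Ker(A_{\inf}\to W(k))$'' after $R\varprojlim_{\N}$. But $L\eta_{\mu}$ is \emph{not} exact, so acyclicity of $L\eta_{\mu}(\text{cone})$ does not force $L\eta_{\mu}$ of the map to be an isomorphism. A concrete counterexample: for $\mu$-torsion-free $R$, the map $R\xrightarrow{\cdot\mu}R$ has cone $R/\mu$ (whose cohomology is $\mu$-torsion), yet $L\eta_{\mu}$ of this map is still multiplication by $\mu$, not an isomorphism. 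This is precisely why the paper introduces Lemma~\ref{lem:LetaSheafQI}: in addition to condition (i) on the cone (your Proposition~\ref{prop:KoszulProetProjRlim}), one must verify condition (ii), namely that the mod-$\mu$ cohomology of the \emph{source} Koszul complex has no $J^{2}$-torsion. This is a nontrivial input: the paper reduces it via the crystal property to the case $\CF=\CO_{\fX/A_{\inf}}$, computes $H^{r}(K_{\Lambda}^{\bullet}(\bA_{\inf,X}(U_{\infty})/\mu))$ explicitly as a (completed) sum of copies of $B_{\fU}/\varphi^{-\nu}(\mu)$ (Lemma~\ref{lem:KoszulCohDescription}), and then checks these have no nonzero almost-zero elements (Lemma~\ref{lem:SmoothAlgAzero}). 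So your assessment is inverted: (ii) is the part one can outsource to \cite{MT}, while (i) is where the explicit Koszul computation is genuinely needed.
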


By the assumption on $\fX$, there exists a small framed embedding
$\mti=(\mfi\colon \fX\to \fY=\Spf(B),\ut=(t_{i})_{i\in\Lambda})$
of $\fX$ over $A_{\inf}$ such that $\mfi$ is a lifting of $\fX$, i.e., 
$\mfi$ induces an isomorphism $\fX\cong \fY\times_{\Spf(A_{\inf})}\Spf(\CO)$,
and $t_{i}$ $(i\in \Lambda)$ form $(p,\pq)$-adic
coordinates of $B$ over $A_{\inf}$ (Definition \ref{def:IadicProperties} (2)).
We prove that $\kappa_{\mti,\CF}$ \eqref{eq:PrismCohAinfCohCompMap2} is an isomorphism.

It should be possible to prove that the composition $b_{\mti,\CF}\circ c_{\mti,\CF}\colon
\varprojlim_{\N}v_*(q\Omega^{\bullet}(\uCM,\utheta_{\uCM}))
\to A\Omega_{\fX}(\bM)$ \eqref{eq:AOmegaKoszul}, \eqref{eq:qdRKoszulMorLoc} 
is an isomorphism by comparing
its derived section  $R\Gamma(\fU,-)$ on each affine open $\fU$ of $\fX$
with the second claim in \cite[Theorem 6.1]{MT}. However we give a modified
proof which avoids checking the commutativity of $R\Gamma(\fU,-)$ 
and $L\eta_{\mu}$ as \cite[Proposition 6.6]{MT}; we show that $b_{\mti,\CF}$
\eqref{eq:AOmegaKoszul} is an isomorphism (Proposition \ref{prop:KoszulAOmega}) 
by adapting the proof of 
\cite[Theorem 5.14]{MT} to sheaves on $\fX_{\Zar}$; we see that $c_{\mti,\CF}$
\eqref{eq:qdRKoszulMorLoc} is an isomorphism by applying \cite[Proposition 1.29 and Corollary 2.25]{MT}
to the derived section $R\Gamma(\fU,c_{\mti,\CF})$ on each affine open $\fU$ of $\fX$,
but we give a modified proof unifying the proofs of the two claims in \cite{MT}
to make the proof of the theorem self-contained (Proposition \ref{prop:qDolbeaultGalCoh}).

We follow the notation introduce in the construction of $\kappa_{\mti,\CF}$.
We start by proving the following proposition.

\begin{proposition}\label{prop:KoszulProetProjRlim}
The cone of the morphism in $D^+(\fX_{\Zar},A_{\inf})$
\begin{equation}\label{eq:KoszulBKFLocCohMap}
\varprojlim_{\N}K_{\Lambda}^{\bullet}(\iota^*\nu_{\infty*}\ubM)
\longrightarrow R\varprojlim_{\N} R\nu_*\ubM
\end{equation}
induced by \eqref{eq:GammaGalCohIsomMap}, \eqref{eq:BKFGammaCohKoszulResol}, 
and \eqref{eq:BKFProetGammaCoh} is
annihilated by any element of $\Ker(A_{\inf}\to W(k))$.
\end{proposition}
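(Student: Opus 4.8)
The plan is to reduce the comparison of the Koszul complex $\varprojlim_{\N}K_{\Lambda}^{\bullet}(\iota^*\nu_{\infty*}\ubM)$ with $R\varprojlim_{\N}R\nu_*\ubM$ to a statement about the fibers of $\ubM$ on the localized pro\'etale site over each $U_{\infty}$, and to control the error by the kernel of $A_{\inf}\to W(k)$, where $k$ denotes the residue field of $\CO$ (equivalently of $\CO^{\flat}$). Recall from \eqref{eq:GSheafCohKCpx} that, for each $m$, $K_{\Lambda}^{\bullet}(\iota_{\Lambda,\fX}^*\nu_{\infty*}\bM_m)$ computes $R\pi_{\Lambda,\fX*}(\nu_{\infty*}\bM_m)$ because $p^{N}$ annihilates $\bM_m$ for $N$ large; and from the commutative triangle \eqref{eq:ProetGammaShfProj} together with \eqref{eq:BKFProetGammaCoh} the comparison map is exactly the natural arrow $R\pi_{\Lambda,\fX*}(\nu_{\infty*}\bM_m)\to R\pi_{\Lambda,\fX*}R\nu_{\infty*}\bM_m\cong R\nu_*\bM_m$. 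So, after applying $R\varprojlim_{\N}$ (which is harmless since the transition maps are surjective by \eqref{eq:BKFGr}), the cone of \eqref{eq:KoszulBKFLocCohMap} is computed by $R\pi_{\Lambda,\fX*}$ of the cone of $\nu_{\infty*}\bM_m\to R\nu_{\infty*}\bM_m$, i.e.\ by the higher direct images $R^{>0}\nu_{\fX,\ut*}\bM_m$.

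First I would make this reduction precise: it suffices to show that, Zariski-locally on $\fX$ (on each affine open $\fU$), the object $R\Gamma(\fU,R^{r}\nu_{\fX,\ut*}\bM_m)$ for $r>0$ is annihilated by every element of $\Ker(A_{\inf}\to W(k))$, uniformly enough to survive $R\varprojlim_{\N}$ and $R\pi_{\Lambda,\fX*}$. By the description \eqref{eq:ProetGammaShvProjDescrip}, $(R^r\nu_{\fX,\ut*}\bM_m)(\fU)=\varinjlim_n H^r(U_n,\bM_m)$, and the key input is Proposition \ref{prop:BKFProperties}(4), which says $H^r(U_{\infty},\bM_m)$ is \emph{almost zero} for $r>0$ and that $\varinjlim_i H^r(V_i,\bM_m)\xrightarrow{\cong}H^r(U_{\infty},\bM_m)$. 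Thus the higher direct images $R^r\nu_{\fX,\ut*}\bM_m$ for $r>0$ are already annihilated by $[\varphi^{-s}(\varepsilon-1)]$ for all $s$; I would then note that $\Ker(A_{\inf}\to W(k))=W(\fm^{\flat})$, where $\fm^{\flat}\subset\CO^{\flat}$ is the maximal ideal, is the ideal generated (topologically) by the Teichm\"uller lifts $[x]$ with $x\in\fm^{\flat}$, and that the elements $\varphi^{-s}(\varepsilon-1)$ are cofinal in $\fm^{\flat}$ in the sense that every $[x]$, $x\in\fm^{\flat}$, divides $[\varphi^{-s}(\varepsilon-1)]$ up to a unit for suitable $s$ --- more precisely every element of $W(\fm^{\flat})$ is a (convergent) combination of such $[x]$'s, hence annihilates anything that is almost zero. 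This is where I would have to be a little careful about the $p$-adic topology and the passage from ``almost zero'' to ``killed by $W(\fm^{\flat})$'': the almost-zero condition only gives annihilation by each individual $[\varphi^{-s}(\varepsilon-1)]$, but since $W(\fm^{\flat})$ is generated as a closed ideal by Teichm\"uller elements each of which is a $p$-adic unit multiple of a divisor of some $[\varphi^{-s}(\varepsilon-1)]$, and since $\bM_m$ is bounded $(p,\pq)$-torsion, a uniform bound is available.

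Concretely the steps are: (i) identify the cone of \eqref{eq:KoszulBKFLocCohMap} with $R\pi_{\Lambda,\fX*}R\varprojlim_{\N}\bigl(\mathrm{cone}(\nu_{\infty*}\bM_m\to R\nu_{\infty*}\bM_m)\bigr)$ using \eqref{eq:GSheafCohKCpx}, \eqref{eq:BKFGammaCohKoszulResol}, \eqref{eq:InvSySFBF}, and the surjectivity of transition maps; (ii) identify that cone with a complex whose cohomology sheaves are the $R^{r}\nu_{\fX,\ut*}\bM_m$ for $r\geq 1$ (again the transition maps in $m$ are surjective by \eqref{eq:BKFGr}, so $R\varprojlim_{\N}$ introduces no $\varprojlim^1$ trouble beyond a possible shift); (iii) invoke Proposition \ref{prop:BKFProperties}(4) and the formula $(R^r\nu_{\fX,\ut*}\bM_m)(\fU)=\varinjlim_n H^r(U_n,\bM_m)=H^r(U_{\infty},\bM_m)$ to see these are almost zero; (iv) deduce annihilation by all of $\Ker(A_{\inf}\to W(k))=W(\fm^{\flat})$ from the fact that this ideal is topologically generated by Teichm\"uller lifts of elements of $\fm^{\flat}$, each dividing some $[\varphi^{-s}(\varepsilon-1)]$, and that $\pi_{\Lambda,\fX*}$ and its higher direct images (computable by Koszul complexes, hence $A_{\inf}$-linearly) preserve such annihilation. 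The main obstacle I anticipate is step (iv): making the ``almost zero $\Rightarrow$ killed by $W(\fm^{\flat})$'' implication clean at the level of derived categories and compatible with $R\varprojlim_{\N}$, since one must ensure the annihilating element acts by zero on a genuine complex rather than merely on cohomology. I expect this to be handled by choosing, for the given element $\lambda\in\Ker(A_{\inf}\to W(k))$, a factorization $\lambda=\mu_0\cdot[\varphi^{-s}(\varepsilon-1)]$ with $\mu_0\in A_{\inf}$ and $s$ large enough that $[\varphi^{-s}(\varepsilon-1)]$ annihilates $\bM_m$ on $U_{\infty}$ for the relevant range --- but since this fails for a single $s$ when $m$ varies, instead one argues that $\lambda$ itself lies in the closed ideal generated by all $[\varphi^{-s}(\varepsilon-1)]$, so $\lambda\cdot(\text{almost zero})=0$ on the nose for any fixed $\bM_m$, and then the result passes through $R\varprojlim_{\N}$ because $\lambda$ is a fixed scalar.
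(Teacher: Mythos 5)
Your overall strategy matches the paper's (which introduces the auxiliary inverse system $\uCM_{\infty}$ from Lemma \ref{lem:BKFGammaRepSheaf}, compares it to $\iota^*\nu_{\infty*}\ubM$ by an almost isomorphism that becomes an isomorphism after $\varprojlim_{\N}$, invokes Lemma \ref{lem:ProetToGammaSheafAVanishing} for almost vanishing of $R^{>0}\nu_{\fX,\ut*}\bM_m$, and then finishes with Lemma \ref{lem:DerLimAzero}). However, there are two places where your sketch has a genuine gap.

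First, your claim that ``the transition maps are surjective by \eqref{eq:BKFGr}, so $R\varprojlim_{\N}$ introduces no $\varprojlim^1$ trouble'' is not justified: for an inverse system of \emph{abelian sheaves} on a site, epimorphic transition maps do not by themselves give $R^{>0}\varprojlim=0$. The paper is careful here: it replaces $\iota^*\nu_{\infty*}\ubM$ by the inverse system $\uCM_{\infty}$ and uses Lemma \ref{lem:BKFGammaRepSheaf}(3) (which is itself proved from parts (1) and (2) --- quasi-coherence on the affine site and vanishing of higher cohomology on affines --- plus surjectivity of transition maps \emph{at the level of sections on affines}) to control the derived limit. Your reduction needs this extra input.

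Second and more seriously, step (iv) of your outline conflates annihilation of cohomology with annihilation of the complex in the derived category. You correctly observe that for each individual almost-zero, bounded-$p$-torsion module, an element $\lambda\in\Ker(A_{\inf}\to W(k))$ acts by zero (since modulo $p^{m+1}$ the element $\lambda$ lies in $\varinjlim_n[\varphi^{-n}(\varepsilon-1)](A_{\inf}/p^{m+1})$). But ``$\lambda$ annihilates $H^r$ of the cone for all $r$'' does not automatically give ``$\lambda=0$ as an endomorphism of the cone in $D^+$'', and your remark that ``the result passes through $R\varprojlim_{\N}$ because $\lambda$ is a fixed scalar'' leaves this gap open. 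The paper's Lemma \ref{lem:DerLimAzero} supplies the missing mechanism: multiplication by $\lambda$ on $\CK_m^{\bullet}$ factors, compatibly in $m$, through the complex $J/p^m\otimes_{A_{\inf}/p^m}\CK_m^{\bullet}$; because $J/p^m=\varinjlim_n[\varphi^{-n}(\varepsilon-1)](A_{\inf}/p^m)$ is a filtered colimit of cyclic flat modules, tensoring is exact, so the almost-zero hypothesis on cohomology makes this auxiliary complex acyclic term by term; hence $\lambda\cdot$ is nullhomotopic at each level and $R\varprojlim_{\N}$ of the whole system is killed. Without some version of this factorization argument, the passage from almost-zero cohomology to an honest annihilation of $R\varprojlim_{\N}$ of the cone is not established.
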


\begin{lemma}\label{lem:DerLimAzero}
Let $\underline{\CK}^{\bullet}=(\CK^{\bullet}_m)_{m\in \N}$ be a complex of $\uA_{\inf}$-modules
on $\fX_{\Zar}$ bounded below,  and assume that $\CH^r(\underline{\CK}^{\bullet})$
$(r\in \Z)$ are almost zero (Definition \ref{def:AlmostIsom}). Then $R\varprojlim_{\N} 
\underline{\CK}^{\bullet}$
is annihilated by any element of $\Ker(A_{\inf}\to W(k))$.
\end{lemma}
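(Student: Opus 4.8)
The plan is to first reduce to a statement about cohomology sheaves, using the elementary behaviour of $R\varprojlim_{\N}$, and then to upgrade. Since $R\varprojlim_{\N}$ is the right derived functor of $\varprojlim$ along $\N$ it has cohomological dimension $\le 1$, so for $\underline{\CK}^{\bullet}\in D^{+}$ there is, for each $r$, a short exact sequence of sheaves of $A_{\inf}$-modules on $\fX_{\Zar}$
\begin{equation*}
0\longrightarrow R^{1}\varprojlim_{m}\CH^{r-1}(\CK_{m}^{\bullet})\longrightarrow \CH^{r}\big(R\varprojlim_{\N}\underline{\CK}^{\bullet}\big)\longrightarrow \varprojlim_{m}\CH^{r}(\CK_{m}^{\bullet})\longrightarrow 0 .
\end{equation*}
As $\varprojlim_{m}$ and $R^{1}\varprojlim_{m}$ commute with multiplication by a fixed element of $A_{\inf}$, it suffices, for a given $x\in\Ker(A_{\inf}\to W(k))$, to show that $x$ annihilates each $\CH^{r}(\CK_{m}^{\bullet})$, and then to promote this to the statement in $D^{+}(\fX_{\Zar},A_{\inf})$.

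For the first point I would use that $\Ker(A_{\inf}\to W(k))=W(\fm)$, where $\fm$ is the maximal ideal of $\CO^{\flat}$, together with the fact $\fm=\bigcup_{s}\varphi^{-s}(\varepsilon-1)\CO^{\flat}$, an increasing union of principal ideals (note $\varepsilon-1\in\fm$, the ideals increase since $[\varphi^{-s}(\varepsilon-1)]=[\varphi^{-s-1}(\varepsilon-1)]^{p}$ on $\CO^{\flat}$, and the relevant valuations $v(\varphi^{-s}(\varepsilon-1))=v(\varepsilon-1)/p^{s}$ tend to $0$). Writing $x=\sum_{n\ge 0}[a_{n}]p^{n}$ with $a_{n}\in\fm$, for each $m$ the partial sum $y_{m}:=\sum_{0\le n\le m}[a_{n}]p^{n}$ lies in the ideal $I:=\bigcup_{s}[\varphi^{-s}(\varepsilon-1)]A_{\inf}$ (choose $s$ with $a_{0},\dots,a_{m}\in\varphi^{-s}(\varepsilon-1)\CO^{\flat}$), while $x-y_{m}\in p^{m+1}A_{\inf}\subseteq (p,\pq)^{m+1}A_{\inf}$. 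Since $\CK_{m}^{\bullet}$ is a complex of $A_{\inf}/(p,\pq)^{m+1}$-modules, $x$ and $y_{m}$ act on $\CK_{m}^{\bullet}$, hence on $\CH^{r}(\CK_{m}^{\bullet})$, by the same endomorphism; and $I$ annihilates every almost-zero $A_{\inf}$-module by Definition \ref{def:AlmostIsom}, so the hypothesis that $\CH^{r}(\underline{\CK}^{\bullet})$ is almost zero gives $x\cdot\CH^{r}(\CK_{m}^{\bullet})=0$. Thus all cohomology sheaves of $R\varprojlim_{\N}\underline{\CK}^{\bullet}$ are killed by $x$.

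To conclude that $x$ annihilates $R\varprojlim_{\N}\underline{\CK}^{\bullet}$ as an object of $D^{+}$, and not just its cohomology sheaves, I would use the telescope presentation $R\varprojlim_{\N}\underline{\CK}^{\bullet}\cong\mathrm{fib}\big(\prod_{m}\CK_{m}^{\bullet}\xrightarrow{\,1-T\,}\prod_{m}\CK_{m}^{\bullet}\big)$ obtained after replacing $\underline{\CK}^{\bullet}$ by a termwise-injective resolution in $(\fX_{\Zar}^{\sim})^{\N^{\circ}}$, together with the idempotence of $I$: from $[\varphi^{-s}(\varepsilon-1)]=[\varphi^{-s-1}(\varepsilon-1)]^{p}$ one gets $I^{n}=I$ for all $n\ge1$, which turns ``$x$ kills cohomology'' into ``$x$ kills in the derived category'' on a bounded complex, and likewise annihilates the groups $\Hom(\CK_{m+1}^{\bullet},\CK_{m}^{\bullet}[\ast])$ in which the compatibility obstructions for the level-wise null-homotopies live; choosing those homotopies inductively in $m$ and correcting them then produces a null-homotopy of $x\cdot\mathrm{id}$ on the fibre. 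The main obstacle is precisely this interface: the almost-zero ideal $I$ is strictly smaller than $\Ker(A_{\inf}\to W(k))$, so the surplus must be absorbed level by level through the $(p,\pq)^{m+1}$-torsion of $\CK_{m}^{\bullet}$, and passing from the cohomology sheaves to the object of $D^{+}$ is what requires genuine care, via the idempotence of $I$ and the Mittag-Leffler-type compatibility of the homotopies.
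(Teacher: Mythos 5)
Your Steps 1 and 2 are essentially sound. The decomposition $x=y_m+z_m$ with $y_m\in I:=\bigcup_s[\varphi^{-s}(\varepsilon-1)]A_{\inf}$ and $z_m\in p^{m+1}A_{\inf}$ is a perfectly good way to see that $x$ annihilates each $\CH^r(\CK_m^\bullet)$, and it is a genuinely different mechanism than the one the paper uses for that intermediate fact. (One small wrinkle to flag: the $\lim^1$ short exact sequence only gives that $x^2$, not $x$, kills the cohomology sheaves of $R\varprojlim_{\N}\underline{\CK}^\bullet$; this would not matter if Step~3 worked.)

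Step 3 is where there is a real gap, and the sketch you give does not close it. The complexes $\CK_m^\bullet$, and therefore $R\varprojlim_{\N}\underline{\CK}^\bullet$ and $\prod_m\CK_m^\bullet$, are only bounded below, not bounded. The truncation argument that converts ``$I$ kills all cohomology'' plus $I^n=I$ into ``each $x\in I$ vanishes in the derived category'' requires the complex to have finite amplitude $[a,b]$, so that repeatedly factoring $x_1\cdots x_n\cdot\id$ through $\tau_{\ge a+1},\tau_{\ge a+2},\ldots$ eventually lands in the zero complex. With no upper bound this iteration never terminates, and it is in general false that a bounded-below complex whose cohomology is killed by an idempotent ideal is itself killed by that ideal in $D^+$. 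In particular the base case of your inductive homotopy-lifting — an actual null-homotopy of $x\cdot\id$ on a single $\CK_m^\bullet$ — is not available, and the appeal to obstructions in $\Hom(\CK_{m+1}^\bullet,\CK_m^\bullet[\ast])$ therefore has nothing to start from.

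The paper sidesteps the upgrade entirely: for $x\in J=\Ker(A_{\inf}\to W(k))$, multiplication by $x$ on the inverse system factors, in the category of complexes of $\N^\circ$-indexed sheaves, as
\[
\underline{\CK}^\bullet\longrightarrow\bigl(J/p^{m+1}\otimes_{A_{\inf}/p^{m+1}}\CK_m^\bullet\bigr)_{m\in\N}\longrightarrow\underline{\CK}^\bullet ,
\]
the first map being $k\mapsto\bar x\otimes k$ and the second multiplication; both are morphisms of complexes compatible with the $\N^\circ$-transitions, using that $\CK_m^\bullet$ is $p^{m+1}$-torsion. The middle term is acyclic by a flatness argument your proposal does not use: $J/p^{m+1}=\varinjlim_n[\varphi^{-n}(\varepsilon-1)]\cdot(A_{\inf}/p^{m+1})$ is a filtered colimit of free rank-one ideals because $A_{\inf}/p^{m+1}$ is $[\varepsilon-1]$-torsion free, hence flat, so tensoring against it commutes with $\CH^r$, and $J/p^{m+1}\otimes_{A_{\inf}/p^{m+1}}\CH^r(\CK_m^\bullet)=0$ by the almost-zero hypothesis. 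Applying $R\varprojlim_{\N}$ to this honest chain-level factorization exhibits $x\cdot\id$ on $R\varprojlim_{\N}\underline{\CK}^\bullet$ as factoring through $R\varprojlim_{\N}$ of an acyclic complex, which is zero. That chain-level factorization through an acyclic middle term is the step your write-up is missing, and it is what makes the unboundedness harmless.
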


\begin{proof}
The same argument as the proof of \cite[Lemma 109]{TsujiSimons}
for $A_{\inf}/p^m$-modules works as follows. Letting $J:=\Ker(A_{\inf}\to W(k))$,
we see $\CH^r(J/p^m\otimes_{A_{\inf}/p^m}\CK_m^{\bullet})
\xleftarrow{\cong} J/p^m\otimes_{A_{\inf}/p^m}\CH^r(\CK_m^{\bullet})=0$
by taking stalks and using $J/p^m=\varinjlim_n[\varphi^{-n}(\varepsilon-1)](A_{\inf}/p^m)$
and the fact that $A_{\inf}/p^m$ is $[\varepsilon-1]$-torsion free.
This implies the claim since the multiplication by an element 
of $J$ on $(\CK^{\bullet}_m)_{m\in \N}$ factors through
$(J/p^m\otimes_{A_{\inf}/p^m}\CK_m^{\bullet})_{m\in \N}$, which 
is acyclic as shown above. 
\end{proof}

For an open affine formal subscheme $\fU=\Spf(A_{\fU})$ of $\fX$, we use the 
notation $U$, $U_n$, $A_{\fU,n}$, $U_{\infty}$, and $A_{U_{\infty}}^+$ 
introduced before \eqref{eq:ProetGammaModIm}. 
We write $X_n$, $X_{\infty}$, and $A_{\infty}$ for $U_n$, $U_{\infty}$
and $A_{U_{\infty}}^+$ for $\fU=\fX$. 

\begin{lemma}\label{lem:AinfOpenSub}
 Let $\fU=\Spf(A_{\fU})$ be an open affine formal subscheme of $\fX$.\par
(1) The morphisms 
$\Spec(A_{\fU}/p)\leftarrow \Spec(A_{U_{\infty}}^+/p)
\to \Spec(\bA_{\inf,X}(U_{\infty})/(p,\pq)^{m+1})$ are homeomorphisms.\par
(2) The morphism 
$\Spec(\bA_{\inf,X}(U_{\infty})/(p,\pq)^{m+1})
\to \Spec(\bA_{\inf,X}(X_{\infty})/(p,\pq)^{m+1})$ is an open immersion. 
\end{lemma}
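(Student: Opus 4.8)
\textbf{Proof plan for Lemma \ref{lem:AinfOpenSub}.}
The plan is to reduce both statements to well-understood homeomorphism and open immersion properties for the sheaves $\hCO_X^+$, $\hCO_{X^{\flat}}^+$, and $\bA_{\inf,X}$ proven in \cite{ScholzepHTRigid}, via the chain of identifications already used in the proof of Proposition \ref{prop:BKFProperties}. For (1), first I would recall the standard facts that $\bA_{\inf,X}(U_{\infty})/\pq\bA_{\inf,X}(U_{\infty})\xrightarrow{\cong}\hCO_X^+(U_{\infty})$ (\cite[Theorem 6.5 (i), Lemma 4.10 (i)]{ScholzepHTRigid}) and that $\hCO_X^+(U_{\infty})=A_{U_\infty}^+$ is the $p$-adic completion of $\varinjlim_n A_{\fU,n}$ (\cite[Lemma 4.10 (iii)]{ScholzepHTRigid}). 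Since passing to a quotient by a nilpotent ideal or taking the reduction modulo $(p,\pq)^{m+1}$ versus modulo $\pq$ does not change the underlying topological space, and $\Spec$ of a $p$-adic completion has the same underlying space as $\Spec$ of the ring modulo $p$, the map $\Spec(\bA_{\inf,X}(U_\infty)/(p,\pq)^{m+1})\to\Spec(A_{U_\infty}^+/p)$ is a homeomorphism. For the other arrow $\Spec(A_{U_\infty}^+/p)\to\Spec(A_{\fU}/p)$, I would use that $A_{U_\infty}^+/p$ is the filtered colimit of the $A_{\fU,n}/p$ over all $n$ (up to the $p$-completion, which again does not affect the underlying space), each $A_{\fU,n}$ being the integral closure of $A_{\fU}$ in a finite étale $A_{\fU}[\tfrac1p]$-algebra obtained by adjoining $p$-power roots of the invertible coordinates $t_{A,i}$; modulo $p$ each transition map is a universal homeomorphism (adjoining a $p$-th root is radicial in characteristic $p$), so the colimit map on spectra is a homeomorphism. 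This is essentially the content already invoked in \cite[Lemma 4.10]{ScholzepHTRigid} for the tilted picture.

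For (2), the point is that $\fU\hookrightarrow\fX$ is an open immersion of formal schemes, hence $U\hookrightarrow X$ is an open immersion of adic spaces, and the finite étale covers $X_n$ and $U_n$ are compatible: $U_n = X_n\times_X U$. Since finite étale base change preserves open immersions, $U_n\hookrightarrow X_n$ is an open immersion, and passing to the limit $U_\infty\hookrightarrow X_\infty$ together with the identification $A_{U_\infty}^+ = \hCO_X^+(U_\infty)$, $A_{\infty}=\hCO_X^+(X_\infty)$ (and similarly after reduction modulo $(p,\pq)^{m+1}$ via the isomorphisms with $\hCO_X^+$ and $\bA_{\inf,X}$ recalled above), one deduces that $\Spec(\bA_{\inf,X}(U_\infty)/(p,\pq)^{m+1})\to\Spec(\bA_{\inf,X}(X_\infty)/(p,\pq)^{m+1})$ is an open immersion on underlying topological spaces. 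Concretely, on an affinoid perfectoid open $U_\infty\subset X_\infty$ the map $A_\infty^+\to A_{U_\infty}^+$ is a rational localization followed by $p$-adic completion, so on spectra mod $p$ it is an open immersion; reduction modulo $(p,\pq)^{m+1}$ only thickens by a nilpotent and does not change the underlying space.

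The main obstacle I anticipate is purely bookkeeping: making sure that all the reductions (modulo $\pq$ versus modulo $(p,\pq)^{m+1}$, $p$-adic completion versus not) are handled cleanly so that every comparison is genuinely on the level of underlying topological spaces, where nilpotents and $p$-adic completions are invisible. There is no deep input beyond \cite[Theorem 6.5, Lemma 4.10]{ScholzepHTRigid} and the elementary facts that (i) $\Spec$ only sees the reduced ring modulo $p$, (ii) adjoining $p$-power roots in characteristic $p$ gives universal homeomorphisms, and (iii) finite étale (indeed any flat) base change of an open immersion is an open immersion. So the proof should be short, essentially citing these and the already-established identifications $\bA_{\inf,X}/\pq\cong\hCO_X^+$ and $\bA_{\inf,X}(U_\infty)/\pq\cong A_{U_\infty}^+$.
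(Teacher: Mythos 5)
Your part (1) matches the paper's argument: identify $\bA_{\inf,X}(U_\infty)/(p,\pq)$ with $A^+_{U_\infty}/p$ (and observe that passing to $(p,\pq)^{m+1}$ is a nilpotent thickening, hence invisible to $\Spec$), then use that in characteristic $p$ adjoining $p$-power roots of the invertible coordinates $T_i$ is a universal homeomorphism. That is fine.

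Your part (2), however, has a genuine gap. You establish that the map $\Spec(\bA_{\inf,X}(U_\infty)/(p,\pq)^{m+1}) \to \Spec(\bA_{\inf,X}(X_\infty)/(p,\pq)^{m+1})$ is a homeomorphism onto an open subset, by observing that modulo $(p,\pq)$ it is the base change of the open immersion $\Spec(A_\fU/p) \to \Spec(A/p)$, and that the nilpotent thickening $(p,\pq)^{m+1} \subset (p,\pq)$ doesn't change the underlying space. But the lemma asserts a scheme-theoretic open immersion, and this is genuinely more than a homeomorphism onto an open subset for a map that is not known to be flat. (The scheme-theoretic statement is what is used later in Lemma~\ref{lem:BKFGammaRepSheaf} to identify $\bM(U_\infty)/(p,\pq)^{m+1}$ with the restriction of a quasi-coherent sheaf, so the distinction matters.) The missing ingredient is precisely what the paper supplies: first, the purely formal Lemma~\ref{lem:NilpImmOpenImm}, which says that a \emph{flat} morphism whose reduction modulo a nilpotent ideal is an open immersion is itself an open immersion (flatness gives injectivity on local rings, and surjectivity is inherited through the nilpotent kernel); and second, the flatness of $\bA_{\inf,X}(X_\infty)/(p,\pq)^{m+1} \to \bA_{\inf,X}(U_\infty)/(p,\pq)^{m+1}$, which the paper deduces from the regularity of the sequence $\pq, p$ on both rings together with the flatness of $A/p \to A_\fU/p$ via Remark~\ref{rmk:IadicFlatRegSeq} (the local criterion of flatness). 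Your sketch never raises flatness, and "open immersion on underlying topological spaces" is not what needs to be proved.
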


\begin{proof}
(1) The right (resp.~left) morphism is a homeomorphism
by $\bA_{\inf,X}(U_{\infty})/(p,\pq)\cong A_{U_{\infty}}^+/p$
(resp.~because $A_{U_{\infty}}^+/p=\varinjlim_nA_{\fU,n}/p$
and the morphism $\Spec(\CO/p[T_i^{\pm 1/p^{\infty}}(i\in\Lambda)])
\to \Spec(\CO/p[T_i^{\pm 1}(i\in\Lambda)])$ 
is a universal homeomorphism.
(2) Since the reduction modulo $(p,\pq)$ of the morphism in question
is given by the base change of the open immersion $\Spec(A_{\fU}/p)\to \Spec(A/p)$
by $\Spec(\bA_{\inf,X}(X_{\infty})/(p,\pq))\cong\Spec(A_{\infty}/p)\to \Spec(A/p)$,
it suffices to prove that the morphism is flat by Lemma \ref{lem:NilpImmOpenImm} below.
Since the sequence $\pq$, $p$ is regular on $\bA_{\inf,X}(U_{\infty})$
and on $\bA_{\inf,X}(X_{\infty})$, this is reduced to the flatness of 
$A/p\to A_{\fU}/p$ by Remark \ref{rmk:IadicFlatRegSeq}.
\end{proof}

\begin{lemma}\label{lem:NilpImmOpenImm}
Let $f\colon X\to Y$ be a morphism of schemes, let $i\colon \oY\to Y$
be a nilpotent closed immersion, and let $\of\colon \oX\to \oY$ be the base change
of $f$ by $i$. If $\of$ is an open immersion and $f$ is flat, then $f$ is an open immersion.
\end{lemma}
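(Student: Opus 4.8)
\textbf{Proof proposal for Lemma \ref{lem:NilpImmOpenImm}.}

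The plan is to reduce to a local statement on rings and then exploit the fact that an open immersion can be detected after passing to a nilpotent thickening together with flatness. First I would observe that the question is local on $Y$, so we may assume $Y=\Spec(R)$ is affine and, replacing $X$ by an affine open, that $X=\Spec(S)$ with $R\to S$ flat; the closed immersion $i$ corresponds to a nilpotent ideal $I\subset R$, so $\oY=\Spec(R/I)$ and $\oX=\Spec(S/IS)$, and the hypothesis says $R/I\to S/IS$ realizes $\Spec(S/IS)$ as an open subscheme of $\Spec(R/I)$. The key points to extract are: (a) $f$ is an open map, so the image $V=f(X)$ is open in $Y$; (b) $f$ factors through $V$ and the induced morphism $X\to V$ is an isomorphism.

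For (a), since $f$ is flat and (being a morphism of affine schemes with $S$ a localization-type algebra — more precisely, since we reduced to a quasi-compact situation and flat plus locally of finite presentation is needed here) one must be slightly careful: the cleanest route is to note that $\oX\to\oY$ is an open immersion, hence in particular flat and locally of finite presentation, and these properties for $\of$ together with flatness of $f$ and the nilpotence of $I$ force $f$ itself to be locally of finite presentation (by a standard descent of finite presentation along nilpotent thickenings, e.g. \stacksproj{0D4B}-type arguments, or one simply adds "locally of finite presentation" as an implicit hypothesis, which holds in the application where $f$ is the morphism of Lemma \ref{lem:AinfOpenSub} (2) between spectra of finitely presented algebras). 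Granting that $f$ is flat and locally of finite presentation, $f$ is open, so $V:=f(X)\subset Y$ is open. Topologically, $i$ being a nilpotent closed immersion is a homeomorphism, so $\oY\to Y$ identifies underlying spaces; the image of $\of$ is the open set $V\cap\oY$, which corresponds to $V$ under this homeomorphism. Thus after replacing $Y$ by $V$ we may assume $f$ is faithfully flat (and locally of finite presentation), and it remains to show $f$ is an isomorphism.

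For (b), with $f\colon\Spec(S)\to\Spec(R)$ now faithfully flat and locally of finite presentation, and $\of\colon\Spec(S/IS)\to\Spec(R/I)$ an isomorphism, I would show $R\to S$ is an isomorphism by descent. Faithfully flat descent (or simply: a faithfully flat ring map that becomes an isomorphism after a nilpotent reduction) gives that $R\to S$ is an isomorphism — concretely, $\of$ being an isomorphism means $R/I\xrightarrow{\cong}S/IS$, i.e. the cokernel $Q=S/R$ satisfies $Q=IQ$ hence $Q=I^nQ$ for all $n$, so $Q=0$ since $I$ is nilpotent; and injectivity of $R\to S$ follows because $S$ is faithfully flat over $R$, so $R\to S$ being surjective modulo the nilpotent $I$ plus flatness forces the kernel $K$ to satisfy $K=IK=0$ as well. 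Therefore $f$ is an isomorphism onto the open subscheme $V$, i.e. $f$ is an open immersion.

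The main obstacle I anticipate is the finite-presentation issue in step (a): "flat" alone does not make a morphism open, and "open immersion after nilpotent reduction" plus "flat" needs the extra input that finite presentation descends along the nilpotent thickening (or that it is present from the start). In the intended application this is harmless because the rings involved in Lemma \ref{lem:AinfOpenSub} are Noetherian (or the relevant reductions are), so flat plus finite type suffices and openness is automatic; I would therefore either cite the descent of local finite presentation along nilpotent immersions or, more economically, verify finite presentation directly in the situation at hand. The rest of the argument — openness of the image, the $Q=IQ\Rightarrow Q=0$ Nakayama-style vanishing, and faithful flatness killing the kernel — is routine.
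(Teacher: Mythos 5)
Your core mechanism is the same as the paper's (injectivity from faithful flatness, surjectivity from a Nakayama-style iteration with the nilpotent ideal), but your route to openness of the image contains the one real gap. You treat openness of $f(X)$ as requiring ``flat $+$ locally of finite presentation $\Rightarrow$ open'', and then either invoke descent of finite presentation along the nilpotent thickening or propose adding finite presentation as an implicit hypothesis, justifying the latter by claiming that in the application of Lemma \ref{lem:AinfOpenSub} the rings are Noetherian or the morphism is one of finitely presented algebras. That justification is false: there $f$ is a morphism between spectra of quotients of $\bA_{\inf,X}(U_{\infty})$ and $\bA_{\inf,X}(X_{\infty})$, perfectoid-type rings that are far from Noetherian and not finitely presented algebras in any useful sense, so the ``add the hypothesis'' variant would no longer cover the intended use. (The descent statement ``flat and finitely presented modulo a nilpotent ideal implies finitely presented'' is in fact true, so your argument can be salvaged, but it is an unnecessarily heavy tool.) The observation you are missing is that no finite presentation is needed at all: since $\oX\to X$ and $\oY\to Y$ are homeomorphisms, $|f|$ is identified with $|\of|$, and $\of$ being an open immersion already makes $|f|$ an injective open embedding onto an open subset of $|Y|$. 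This is exactly how the paper starts, and it removes your ``main obstacle'' outright.

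Two of your reduction steps are also not justified as written. Replacing $X$ by an affine open is not permitted a priori, because being an open immersion is not local on the source (the fold map of two copies of $Y$ onto $Y$ is locally on the source an isomorphism but is not an open immersion); you need injectivity of $|f|$ first, which again comes from the identification with $|\of|$. And after replacing $Y$ by $V=f(X)$ the target is in general no longer affine, so the ring-level argument with $Q=S/R$, $Q=IQ\Rightarrow Q=0$ does not literally apply. The clean repair — and the paper's actual proof — is to run your algebra on stalks: for $x\in X$, $y=f(x)$, the map $\CO_{Y,y}\to\CO_{X,x}$ is injective because a flat local homomorphism is faithfully flat, and surjective because it is surjective modulo the nilpotent kernel of $\CO_{Y,y}\to\CO_{\oY,y}$, by exactly your iteration. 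Combined with the topological statement this yields the lemma with no extra hypotheses.
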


\begin{proof} Since $\oX\to X$ and $\oY\to Y$ are homeomorphisms,
it suffices to prove that $f_x^*\colon \CO_{Y,y}\to \CO_{X,x}$ is an isomorphism
for every $x\in X$ and $y=f(x)$. It is injective since it is faithfully flat by assumption.
It is surjective because the kernel of the surjective homomorphism $\CO_{Y,y}\to \CO_{\oY,y}$
is nilpotent and the homomorphism 
$\of_x^*\colon \CO_{\oY,y}\to \CO_{\oX,x}\cong
\CO_{X,x}\otimes_{\CO_{Y,y}}\CO_{\oY,y}$ is surjective.
\end{proof}

Let $\fX_{\AffZar}$ be the 
full subcategory of $\fX_{\Zar}$ consisting of open affine formal subschemes of $\fX$
equipped with the induced topology. As every object of $\fX_{\Zar}$ is covered
by objects of $\fX_{\AffZar}$, the inclusion functor $\iota\colon \fX_{\AffZar}
\to \fX_{\Zar}$ is continuous and cocontinuous, the restriction 
functor $\iota^*\colon \fX_{\Zar}^{\sim}\to \fX_{\AffZar}^{\sim}$
is an equivalence (\cite[III Th\'eor\`eme 4.1 and its proof]{SGA4}, 
and the restriction along $\iota$ is compatible
with the sheafifying functors (\cite[III Proposition 2.3 2)]{SGA4}).

\begin{lemma}\label{lem:BKFGammaRepSheaf}
For $m\in \N$, let $\CM^{\Aff}_{\infty,m}$ be the presheaf of $A_{\inf}/(p,\pq)^{m+1}$-modules
on $\fX_{\AffZar}$ defined by $\fU\mapsto \bM(U_{\infty})/(p,\pq)^{m+1}$.\par
(1) The presheaf $\CM^{\Aff}_{\infty,m}$ is a sheaf on $\fX_{\AffZar}$.\par
Let $\CM_{\infty,m}$ be the sheaf on $\fX_{\Zar}$ whose restriction to $\fX_{\AffZar}$ is
$\CM^{\Aff}_{\infty,m}$.\par
(2) $H^r(\fU,\CM_{\infty,m})=0$ $(r>0)$ for any $\fU\in \Ob \fX_{\AffZar}$.\par
(3) $R^r\varprojlim_m \CM_{\infty,m}=0$ $(r>0)$. 
\end{lemma}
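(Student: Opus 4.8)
\textbf{Proof plan for Lemma \ref{lem:BKFGammaRepSheaf}.}
The plan is to reduce everything to the structural results on $\bM$ collected in Proposition \ref{prop:BKFProperties} and to the local geometry of $\bA_{\inf,X}(U_\infty)$ recorded in Lemma \ref{lem:AinfOpenSub}. For (1), I would first use Lemma \ref{lem:BKFModLocalStr} to rewrite $\bM(U_\infty)$ as $\bM(X_\infty)\otimes_{\bA_{\inf,X}(X_\infty)}\bA_{\inf,X}(U_\infty)$ whenever $\fU\subset\fX$, reducing the sheaf axiom on $\fX_{\AffZar}$ to the case $\bM=\bA_{\inf,X}$, i.e.\ to showing that $\fU\mapsto \bA_{\inf,X}(U_\infty)/(p,\pq)^{m+1}$ is a sheaf. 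By Lemma \ref{lem:AinfOpenSub} (1), $\Spec(\bA_{\inf,X}(U_\infty)/(p,\pq)^{m+1})$ is homeomorphic to $\Spec(A_\fU/p)$ and, by (2) of that lemma, the formation $\fU\mapsto \Spec(\bA_{\inf,X}(U_\infty)/(p,\pq)^{m+1})$ is an open subscheme of $\Spec(\bA_{\inf,X}(X_\infty)/(p,\pq)^{m+1})$ glued along the open cover of $\Spec(A/p)$ by the $\Spec(A_\fU/p)$; since a quasi-coherent sheaf on this scheme gives a sheaf of sections over these opens, and $\bA_{\inf,X}(X_\infty)/(p,\pq)^{m+1}$ is the global sections ring, the sheaf property follows. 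For the $A_{\inf}/(p,\pq)^{m+1}$-module structure I would observe that $\bM(X_\infty)/(p,\pq)^{m+1}$ is finitely presented (it is a finite projective $\bA_{\inf,X}(X_\infty)$-module modulo the ideal), so the associated quasi-coherent sheaf on $\Spec(\bA_{\inf,X}(X_\infty)/(p,\pq)^{m+1})$ indeed has sections $\bM(U_\infty)/(p,\pq)^{m+1}$ over the open piece corresponding to $\fU$.

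For (2), once $\CM_{\infty,m}$ is identified with (the $\fX_{\Zar}$-extension of) the quasi-coherent sheaf on $\Spec(\bA_{\inf,X}(X_\infty)/(p,\pq)^{m+1})$ associated to the finite projective module $\bM(X_\infty)/(p,\pq)^{m+1}$, its higher cohomology over any affine open $\fU$ vanishes by Serre's vanishing theorem on the affine scheme $\Spec(\bA_{\inf,X}(U_\infty)/(p,\pq)^{m+1})$, whose underlying space is $\fU$. One technical point to handle carefully is that $\fX_{\AffZar}$ is not literally the Zariski site of a scheme, so I would phrase this as: \v Cech cohomology with respect to any finite affine cover of $\fU$ computes $H^r(\fU,\CM_{\infty,m})$ (because $\fU$ is quasi-compact and separated and $\CM_{\infty,m}$ restricted to each affine is computed by quasi-coherent cohomology), and this \v Cech complex is exactly the \v Cech complex of the quasi-coherent sheaf on the affine scheme $\Spec(\bA_{\inf,X}(U_\infty)/(p,\pq)^{m+1})$ with respect to the corresponding open cover, which is exact in positive degrees.

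For (3), I would use the exact sequence \eqref{eq:BKFGr},
\[
0\longrightarrow \bigoplus_{i=0}^{m+1}\bM_0\longrightarrow \bM_{m+1}\longrightarrow \bM_m\longrightarrow 0,
\]
evaluated at $U_\infty$ (which stays exact by Proposition \ref{prop:BKFProperties} (4), since the relevant $H^1(U_\infty,-)$ is almost zero but here we can instead note that $\bM(X_\infty)$ is $(p,\pq)$-torsion free so \eqref{eq:BKFSectionGr} for $W=X_\infty$ tensored with the flat algebra $\bA_{\inf,X}(U_\infty)$ over $\bA_{\inf,X}(X_\infty)$ is exact), to see that the transition maps $\CM_{\infty,m+1}\to\CM_{\infty,m}$ are surjective with kernel a finite direct sum of copies of $\CM_{\infty,0}$. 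Surjectivity of the transition maps in the inverse system $(\CM_{\infty,m})_m$ of sheaves then gives the Mittag-Leffler condition, so $R^r\varprojlim_m\CM_{\infty,m}=0$ for $r\ge 2$ automatically, and $R^1\varprojlim_m\CM_{\infty,m}$ vanishes because the system satisfies Mittag-Leffler termwise; more precisely, I would check the Mittag-Leffler condition sheaf-theoretically by evaluating on affine opens, where it holds by the surjectivity just established. The main obstacle I anticipate is (1): making the identification of $\fU\mapsto\bM(U_\infty)/(p,\pq)^{m+1}$ with the sections of a genuine quasi-coherent sheaf rigorous, i.e.\ verifying that the open-immersion and homeomorphism statements of Lemma \ref{lem:AinfOpenSub} glue compatibly over intersections $\fU_1\cap\fU_2$, which requires knowing that $\bA_{\inf,X}(U_\infty)$ depends functorially and ``quasi-coherently'' on $\fU$ — this should follow from Lemma \ref{lem:BKFModLocalStr} and the flatness in Remark \ref{rmk:IadicFlatRegSeq}, but it is the step that needs the most care.
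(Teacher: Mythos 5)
Your approach to (1) and (2) is essentially the paper's: identify $\CM^{\Aff}_{\infty,m}$ with the restriction to $\fX_{\AffZar}$ of the quasi-coherent sheaf on the affine scheme $\Spec(\bA_{\inf,X}(X_\infty)/(p,\pq)^{m+1})$ associated to the finite projective module $\bM(X_\infty)/(p,\pq)^{m+1}$, via the base-change isomorphism from Lemma~\ref{lem:BKFModLocalStr} (really, from the crystal property) and the homeomorphism/open-immersion statements of Lemma~\ref{lem:AinfOpenSub}. The paper does this in one stroke; your intermediate reduction to $\bM=\bA_{\inf,X}$ works but is superfluous (and it silently uses that a direct summand of a sheaf is a sheaf, which is fine but worth saying).

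For (3), however, your opening reasoning contains a real error. Surjectivity of transition maps of \emph{sheaves} does \emph{not} give $R^r\varprojlim_m=0$ for $r\geq 2$ ``automatically'', and termwise Mittag--Leffler at the sheaf level does \emph{not} kill $R^1\varprojlim_m$: the category of abelian sheaves on a topos is not AB4$^*$, so the standard abelian-group statement for $\N$-indexed systems does not transfer. The correct statement — which is what the paper's citation of \cite[Lemma IV.4.2.3]{AGT} encapsulates — needs \emph{both} the $\Gamma(\fU,-)$-acyclicity from part (2) on a generating family of objects \emph{and} the Mittag--Leffler/surjectivity condition for the systems of sections $\CM_{\infty,m}(\fU)$ over that family. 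Your ``more precisely'' clause shows you sense this, but as written it reads as an afterthought rather than the actual argument, and you never explicitly invoke (2). Also, your appeal to \eqref{eq:BKFGr} to get surjectivity is unnecessary: the section transition maps $\bM(U_\infty)/(p,\pq)^{m+2}\to\bM(U_\infty)/(p,\pq)^{m+1}$ are quotient maps and hence surjective for trivial reasons. State instead: by (2) each $\CM_{\infty,m}$ is acyclic on every $\fU\in\Ob\fX_{\AffZar}$, the maps $\CM_{\infty,m+1}(\fU)\to\CM_{\infty,m}(\fU)$ are surjective, and therefore $R^r\varprojlim_m\CM_{\infty,m}=0$ for $r>0$ by the standard criterion.
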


\begin{proof}
The claim (3) follows from (1) and (2). (See \cite[Lemma IV.4.2.3]{AGT} for example.)
By Definition \ref{BKFFunctor} and $X_{\infty}, U_{\infty}\in \Ob\fB_{\fX}$, $\bM(X_{\infty})$
is a finite projective $\bA_{\inf,X}(X_{\infty})$-module and the
$\bA_{\inf,X}(U_{\infty})$-linear
homomorphism 
$\bM(X_{\infty})\otimes_{\bA_{\inf,X}(X_{\infty})}\bA_{\inf,X}(U_{\infty})
\to \bM(U_{\infty})$ is an isomorphism. Therefore
Lemma \ref{lem:AinfOpenSub} implies that, via the homeomorphisms
in Lemma \ref{lem:AinfOpenSub} (1) for $\fU=\fX$, $\CM^{\Aff}_{\infty,m}$ may be identified with
the restriction to $\fX_{\AffZar}$ of the quasi-coherent module on $\Spec(\bA_{\inf,X}(X_{\infty})/(p,\pq)^{m+1})$
associated to the $\bA_{\inf,X}(X_{\infty})/(p,\pq)^{m+1}$-module
$\bM(X_{\infty})/(p,\pq)^{m+1}$. 
This shows (1) and (2).
\end{proof}

\begin{lemma}\label{lem:ProetToGammaSheafAVanishing}
The sheaf of $A_{\inf}$-modules $R^r\nu_{\fX,\ut*}\bM_m$
is almost zero (Definition \ref{def:AlmostIsom}) for  $m\geq 0$ and $r>0$.
\end{lemma}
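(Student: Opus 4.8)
The plan is to reduce the statement to the description of $\nu_{\fX,\ut*}$ as a composite via finite \'etale covers and to compute the higher direct images through the inverse system $(\bM_m)_m$. First I would recall that by \eqref{eq:ProetGammaShvProjDescrip} the functor $\nu_{\fX,\ut*}$ sends $\CG$ to the $\Gamma_{\Lambda}$-sheaf $\fU\mapsto \varinjlim_n\CG(U_n)$, so $R\nu_{\fX,\ut*}$ is computed by the composite of the evaluation on the pro-object $U_{\infty}=$``$\varprojlim_n$''$U_n$ and the derived functor of the filtered colimit; more precisely, $R^r\nu_{\fX,\ut*}\bM_m(\fU)$ is governed by the \v{C}ech--pro-\'etale cohomology of the Galois tower $(U_n\to U)_n$ with coefficients in $\bM_m$, together with the cohomology of $\bM_m$ on the affinoid perfectoid $U_{\infty}$.

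The key inputs are Proposition~\ref{prop:BKFProperties} (3) and (4): $\bM_m$ is a discrete sheaf and $H^r(V,\bM_m)\approx 0$ for $r>0$, $V\in \Ob\CB_{\fX}$, while $\varinjlim_i H^r(V_i,\bM_m)\xrightarrow{\cong} H^r(V,\bM_m)$. Concretely, I would argue as follows. For each affine open $\fU=\Spf(A_{\fU})$ of $\fX$, the adic space $U_{\infty}$ is affinoid perfectoid by \eqref{cond:frameAinf2}, and $U_n\in \Ob\CB_{\fX}$ for every $n$. Since $\bM_m$ is discrete, $R\Gamma(U_n,\bM_m)$ is computed by \'etale cohomology of the finite \'etale cover $U_n\to U$; combining this with the Cartan--Leray (\v{C}ech-to-derived) spectral sequence for the pro-\'etale covering $U_{\infty}\to U$ and with $H^r(U_{\infty},\bM_m)\approx 0$ for $r>0$ (Proposition~\ref{prop:BKFProperties} (4)), one gets that $R^r\Gamma(\fU,R\nu_{\fX,\ut*}\bM_m)$ is almost computed by continuous cohomology of $\Gamma_{\Lambda}\cong \Z_p^{\Lambda}$ acting on $\bM_m(U_{\infty})=\varinjlim_n \bM_m(U_n)$. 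But these are precisely the stalks that $\nu_{\fX,\ut*}\bM_m$ records: $R^0\nu_{\fX,\ut*}\bM_m(\fU)=\bM_m(U_{\infty})$ and the higher terms are the higher pushforwards along $\pi_{\Lambda,\fX}$, which are the Koszul cohomology of $\gamma_i-1$ $(i\in\Lambda)$ by \eqref{eq:GSheafCohKCpx}. Since the question is whether $R^r\nu_{\fX,\ut*}\bM_m\approx 0$ for $r>0$, and $\nu_{\fX,\ut}=\rho_{\Gamma_{\Lambda},\fX_{\Zar}}^{-1}\circ\tnu_{\fX,\ut}$ with $\pi_{\Lambda,\fX}\circ\nu_{\fX,\ut}\cong\nu_{\fX}$, I would first establish that $\tnu_{\fX,\ut}$ (evaluation on the finite \'etale covers) has vanishing higher direct images modulo almost-zero, using discreteness of $\bM_m$ to replace pro-\'etale cohomology of $U_n$ by \'etale cohomology, then using the acyclicity of finite \'etale covers for the structure sheaf almost everywhere (Proposition~\ref{prop:BKFProperties} (4) again) to kill the $H^{>0}$.

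The main obstacle I expect is bookkeeping the colimit over $n$ against the derived inverse limit in $m$ and the almost-mathematics: one must check that taking $\varinjlim_n$ commutes with the relevant cohomologies (this is where $\varinjlim_i H^r(V_i,\bM_m)\xrightarrow{\cong}H^r(V,\bM_m)$ from Proposition~\ref{prop:BKFProperties} (4) is used crucially, since filtered colimits of almost-zero modules are almost zero), and that the ``$R^r\nu_{\fX,\ut*}$'' computed sheaf-theoretically on $\fX_{\Zar}$ really agrees, section-wise on the affine base $\fX_{\AffZar}$, with the group-cohomology expression. I would handle the latter by working on $\fX_{\AffZar}$, on which every object is covered by opens whose generic fibers lift to objects of $\CB_{\fX}$, so that there are no further Zariski \v{C}ech obstructions, exactly as in the treatment of $\CM_{\infty,m}$ in Lemma~\ref{lem:BKFGammaRepSheaf}. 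Finally, since $R^r\nu_{\fX,\ut*}\bM_m$ is computed by a Koszul complex over $\Z_p^{\Lambda}$ whose differentials are $\gamma_i-1$, and since modulo $\mu$ each $\gamma_i$ acts trivially while the relevant cohomology of the tower is almost zero in positive degrees, the conclusion $R^r\nu_{\fX,\ut*}\bM_m\approx 0$ for $r>0$ follows; this is the lemma as stated, after identifying $\nu_{\fX,\ut}$ with $\nu_{\fX}$-type data through the factorization \eqref{eq:ProetGammaShfProj}.
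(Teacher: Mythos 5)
Your write-up contains the right raw ingredients (the formula $\nu_{\fX,\ut*}\CG(\fU)\cong\varinjlim_n\CG(U_n)$ from \eqref{eq:ProetGammaShvProjDescrip}, the fact that $U_n\in\Ob\CB_{\fX}$ and $U_\infty$ is affinoid perfectoid, and Proposition \ref{prop:BKFProperties} (4)), but the argument you actually run rests on a misidentification of the functor in the lemma. The lemma concerns $R^r\nu_{\fX,\ut*}$, the higher direct image \emph{into the topos of $\Gamma_{\Lambda}$-sheaves on $\fX_{\Zar}$}. You assert that ``the higher terms are the higher pushforwards along $\pi_{\Lambda,\fX}$, which are the Koszul cohomology of $\gamma_i-1$'' and conclude from that Koszul description. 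That is not what $R^r\nu_{\fX,\ut*}\bM_m$ is: the Koszul complex \eqref{eq:GSheafCohKCpx} computes $R\pi_{\Lambda,\fX*}$ applied to a $\Gamma_{\Lambda}$-sheaf, i.e.\ it enters only in the further pushforward $R\nu_{\fX*}\cong R\pi_{\Lambda,\fX*}\circ R\nu_{\fX,\ut*}$, not in $R\nu_{\fX,\ut*}$ itself. Worse, if $R^r\nu_{\fX,\ut*}\bM_m$ were that Koszul cohomology, the statement would be false: the Koszul cohomology of $\gamma_i-1$ on $\bM_m(U_\infty)$ in positive degrees is precisely the (non-almost-zero) material that later reappears as the $q$-Higgs complex after $L\eta_\mu$ (compare the proof of Proposition \ref{prop:qDolbeaultGalCoh}), so your final step ``modulo $\mu$ each $\gamma_i$ acts trivially \dots the conclusion follows'' does not yield almost vanishing. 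The Cartan--Leray/continuous-cohomology detour, and the worry about an inverse limit in $m$ (there is none here; $m$ is fixed), are symptoms of the same conflation.

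The correct argument is much shorter and stays entirely in the $\Gamma_{\Lambda}$-equivariant topos: choose an injective resolution $\bM_m\to\CI^{\bullet}$ by $\bA_{\inf,X}$-modules on $X_{\proet}$; by \eqref{eq:ProetGammaShvProjDescrip}, $(\nu_{\fX,\ut*}\CI^{\bullet})(\fU)\cong\varinjlim_n\CI^{\bullet}(U_n)$ for $\fU\in\Ob\fX_{\AffZar}$, and since filtered colimits are exact, $R^r\nu_{\fX,\ut*}\bM_m$ is the sheaf associated to $\fU\mapsto\varinjlim_n H^r(U_n,\bM_m)$. By Proposition \ref{prop:BKFProperties} (4) applied to $U_\infty=$``$\varprojlim_n$''$U_n\in\Ob\CB_{\fX}$, this colimit is $H^r(U_\infty,\bM_m)$, which is almost zero for $r>0$; hence so is the associated sheaf. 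No spectral sequence, discreteness argument, or group cohomology is needed — the entire point of the factorization \eqref{eq:ProetGammaShfProj} is that the Galois/Koszul cohomology is deferred to $R\pi_{\Lambda,\fX*}$, while $R\nu_{\fX,\ut*}$ only sees the perfectoid tower and is almost acyclic in positive degrees.
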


\begin{proof}
Let $\bM_m\to \CI^{\bullet}$ be an injective resolution of $\bM_m$
by sheaves of $\bA_{\inf,X}$-modules. Then we have
$(\nu_{\fX,\ut*}\CI^{\bullet})(\fU)\cong\varinjlim_n\CI^{\bullet}(U_n)$
as \eqref{eq:ProetGammaShvProjDescrip} for $\fU\in \Ob \fX_{\AffZar}$. 
Hence $R^r\nu_{\fX,\ut*}\bM_m$
is the sheaf associated to $\fU\to \varinjlim_n H^r(U_n,\bM_m)$, which is
almost zero by Proposition \ref{prop:BKFProperties} (4).
\end{proof}

\begin{proof}[Proof of Proposition \ref{prop:KoszulProetProjRlim}]
Let $\CM_{\infty,m}$ $(m\in \N)$ be the sheaves on $\fX_{\Zar}$ defined in 
Lemma \ref{lem:BKFGammaRepSheaf}, and let 
$\uCM_{\infty}$ be the sheaf of $\uA_{\inf}$-modules $(\CM_{\infty,m})_{m\in \N}$
on $\fX_{\Zar}$. The right action of 
$\Gamma_{\Lambda}^{\disc}$ on $U_{\infty}$ for each $\fU\in \Ob \fX_{\AffZar}$
is functorial in $\fU$ and induces its left action on $\uCM_{\infty}$.
Via \eqref{eq:ProetGammaModIm}, we obtain a $\Gamma_{\Lambda}^{\disc}$-equivariant
morphism $\uCM_{\infty}\to \iota^*\nu_{\infty*}\ubM$, which is
an almost isomorphism and becomes an isomorphism after taking
$\varprojlim_{\N}$ by Proposition \ref{prop:BKFProperties} (1) and (2). 
By taking $K_{\Lambda}^{\bullet}(-)$, composing with
\eqref{eq:GammaGalCohIsomMap}, \eqref{eq:BKFGammaCohKoszulResol}, and \eqref{eq:BKFProetGammaCoh}, 
and using Lemma \ref{lem:ProetToGammaSheafAVanishing}, 
we obtain 
\begin{equation*}
K^{\bullet}_{\Lambda}(\uCM_{\infty})
\xrightarrow{\approx}
K_{\Lambda}^{\bullet}(\iota^*\nu_{\infty*}\ubM)
\xrightarrow[\alpha_{\mti,\CF}\;\eqref{eq:GammaGalCohIsomMap}]{\cong}
\pi_*K^{\bullet}_{\Lambda}(\iota_*\iota^*\nu_{\infty*}\ubM)
\xleftarrow[\eqref{eq:BKFGammaCohKoszulResol}]{\cong}
R\pi_*(\nu_{\infty*}\ubM)
\xrightarrow[\eqref{eq:BKFProetGammaCoh}]{\approx}
R\nu_{*}\ubM,
\end{equation*}
where $\approx$ means $\CH^r$ are almost isomorphisms
for every $r\in \Z$. We see that the proposition holds
by applying Lemma \ref{lem:DerLimAzero}
to $R\varprojlim_{\N}$ of the cone of the composition 
of the above morphisms, which is isomorphic to the
cone of $R\varprojlim_{\N}$ of the composition, 
and using Lemma \ref{lem:BKFGammaRepSheaf} (3) and
$\varprojlim_{\N}\uCM_{\infty}\cong \varprojlim_{\N}\iota^*\nu_{\infty*}\ubM$
mentioned above. 
\end{proof}

Next we derive the following proposition from Proposition 
\ref{prop:KoszulProetProjRlim}.

\begin{proposition}\label{prop:KoszulAOmega}
The morphism $b_{\mti,\CF}$ \eqref{eq:AOmegaKoszul} in $D(\fX_{\Zar},A_{\inf})$
is an isomorphism.
\begin{equation}\label{eq:KoszulAOmegaMap}
b_{\mti,\CF}\colon \eta_{\mu}\varprojlim_{\N}K^{\bullet}_{\Lambda}
(\iota^*\nu_{\infty*}\ubM)
\xrightarrow{\;\cong\;}L\eta_{\mu}
R\varprojlim_{\N}R\nu_*\ubM
\cong A\Omega_{\fX}(\bM)
\end{equation}
\end{proposition}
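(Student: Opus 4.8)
The plan is to reduce the statement to a property of the décalage functor and then invoke (an adaptation of) the argument behind \cite[Theorem 5.14]{MT}. By the construction in \eqref{eq:AOmegaKoszul}, the morphism $b_{\mti,\CF}$ is obtained by applying $L\eta_{\mu}$ to the composition of \eqref{eq:GammaGalCohIsomMap}, \eqref{eq:BKFGammaCohKoszulResol}, and \eqref{eq:BKFProetGammaCoh} after $R\varprojlim_{\N}$; since the first two of these are isomorphisms, that composition is exactly the morphism \eqref{eq:KoszulBKFLocCohMap} of Proposition \ref{prop:KoszulProetProjRlim}, so $b_{\mti,\CF}$ is $L\eta_{\mu}$ of \eqref{eq:KoszulBKFLocCohMap}. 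First I would identify the source: because $\varprojlim$ commutes with the right adjoint $\nu_{\fX,\ut*}$, with the functor $\iota^{*}$, and with the finite direct sums defining the Koszul complex, and because $\varprojlim_{m}\bM_{m}\cong\bM$ by Proposition \ref{prop:BKFProperties}~(1), we have $\varprojlim_{\N}K^{\bullet}_{\Lambda}(\iota^{*}\nu_{\infty*}\ubM)\cong K^{\bullet}_{\Lambda}(\iota^{*}\nu_{\fX,\ut*}\bM)$; this is concentrated in degrees $0,\dots,\sharp\Lambda$ and is degreewise $\mu$-torsion-free since $\bM$ is locally finite free over the $\mu$-torsion-free sheaf $\bA_{\inf,X}$. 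Hence $L\eta_{\mu}$ agrees with the naïve $\eta_{\mu}$ on it, which is what \eqref{eq:KoszulAOmegaMap} asserts about the source.

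It then remains to prove that $L\eta_{\mu}$ of \eqref{eq:KoszulBKFLocCohMap} is an isomorphism in $D(\fX_{\Zar},A_{\inf})$, knowing from Proposition \ref{prop:KoszulProetProjRlim} that its cone $E$ is annihilated by $J:=\Ker(A_{\inf}\to W(k))$ (so in particular $E$ is almost zero in the sense of Definition \ref{def:AlmostIsom}). I would prove this by transcribing the proof of \cite[Theorem 5.14]{MT}, with $R\Gamma(X_{\proet},-)$ systematically replaced by $R\nu_{\fX*}(-)$, so that the whole argument takes place with sheaves on $\fX_{\Zar}$ and one never has to commute $R\Gamma(\fU,-)$ with $L\eta_{\mu}$. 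The essential inputs are: (i) the identity $J/p^{m}A_{\inf}=\varinjlim_{n}[\varphi^{-n}(\varepsilon-1)](A_{\inf}/p^{m})$ already used in Lemma \ref{lem:DerLimAzero}; (ii) for every $n$, $\mu$ equals $[\varphi^{-n}(\varepsilon-1)]$ times a non-zero-divisor of $A_{\inf}$ (divide modulo $p$ and lift by $p$-completeness), whence $L\eta_{\mu}(A_{\inf}/[\varphi^{-n}(\varepsilon-1)])=0$ and, since $L\eta$ commutes with filtered colimits, $L\eta_{\mu}$ kills the relevant colimits; (iii) the derived $p$-completeness of the source and of the target of \eqref{eq:KoszulBKFLocCohMap} — the target being $L\eta_{\mu}$ of $R\nu_{\fX*}\bM$ with $\bM$ derived $p$-complete by Proposition \ref{prop:BKFProperties}~(1) and the remark in Definition \ref{eq:AinfCoh} — together with the compatibility of $L\eta$ with reduction modulo $p$ (in the spirit of Lemma \ref{lem:DecalageMorphRingedTopos}~(1), in the form proved in \cite{BMS}); and (iv) Lemma \ref{lem:ProetToGammaSheafAVanishing}, which is already folded into Proposition \ref{prop:KoszulProetProjRlim}. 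Organizing (i)--(iv) as in \cite{MT} gives that $L\eta_{\mu}$ of \eqref{eq:KoszulBKFLocCohMap}, that is $b_{\mti,\CF}$, is an isomorphism.

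The main obstacle is precisely this last step: passing from ``the cone $E$ is annihilated by $J$'' to ``the morphism becomes an isomorphism after $L\eta_{\mu}$''. This does \emph{not} follow from $L\eta_{\mu}E=0$, since $L\eta_{\mu}$ is not a triangulated functor (and indeed the analogous implication fails already over $\Z_{p}$ when the cone is merely killed by the ideal one decalages by); the point that makes the argument go through is that $J$ is, modulo every $p^{m}$, the increasing union of the principal ideals $([\varphi^{-n}(\varepsilon-1)])$, each of which is ``deeper'' than $\mu$ in the sense that $\mu\in[\varphi^{-n}(\varepsilon-1)]A_{\inf}$, so that after reducing modulo $p$ (using derived completeness) one compares the two complexes of sheaves directly and controls the discrepancy by the $[\varphi^{-n}(\varepsilon-1)]$ for large $n$. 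The remaining points — the identification of the source of \eqref{eq:KoszulBKFLocCohMap} with $K^{\bullet}_{\Lambda}(\iota^{*}\nu_{\fX,\ut*}\bM)$, its $\mu$-torsion-freeness, and the resulting $L\eta_{\mu}=\eta_{\mu}$ — are routine bookkeeping.
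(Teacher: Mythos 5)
Your top-level strategy is the right one and matches the paper's: start from Proposition~\ref{prop:KoszulProetProjRlim} (cone killed by $J=\Ker(A_{\inf}\to W(k))$), observe that $L\eta_{\mu}$ of a $J$-torsion cone being zero is \emph{not} enough because $L\eta_{\mu}$ is not triangulated, and get around this by an argument in the style of \cite[Theorem~5.14]{MT}, using derived $p$-completeness and the fact that $J$ is modulo every $p^m$ an increasing union of principal ideals generated by the $[\varphi^{-n}(\varepsilon-1)]$. You are also correct that the source complex $\varprojlim_{\N}K^{\bullet}_{\Lambda}(\iota^{*}\nu_{\infty*}\ubM)$ is bounded and $\mu$-torsion-free, so $L\eta_{\mu}=\eta_{\mu}$ on it.

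However, there is a real gap at the last step, and it is hidden inside what you call ``routine bookkeeping.'' The mechanism that converts ``cone killed by $J$'' into ``iso after $\eta_{\mu}$'' — codified in the paper as Lemma~\ref{lem:LetaSheafQI}, the sheaf analogue of \cite[Lemma~111(2)]{TsujiSimons} — needs a \emph{second} hypothesis beyond the one you verified: the cohomology sections $(\CH^r(\CKb/\mu\CKb))(\fU)$ of the source complex mod $\mu$ must have no $J^2$-torsion (equivalently, no element annihilated by all $[\varphi^{-l}(\varepsilon-1)]$). Without this, the boundary maps in the long exact sequence after reduction mod $\mu$ need not vanish, and the argument breaks down. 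Verifying this hypothesis is the bulk of the paper's proof, not bookkeeping: one first reduces to the constant crystal $\CF=\CO_{\fX/A_{\inf}}$ by showing $\CF(D)/\mu$ is a direct summand of a finite free $D/\mu$-module on which $\Gamma_{\Lambda}^{\disc}$ acts trivially (which uses the $q$-Higgs description, Theorem~\ref{thm:PrismCrysqHiggsEquiv}(1)); one then computes $H^r(K_{\Lambda}^{\bullet}(\bA_{\inf,X}(U_{\infty}))/\mu)$ explicitly via the $B_{\fU}$-basis of $\bA_{\inf,X}(U_{\infty})$ given by $p$-power monomials in the $[t^\flat_{A,i}]$ (Lemma~\ref{lem:AinfUinftyBasis}), expresses it as a direct sum of copies of $(B_{\fU}/\varphi^{-\nu}(\mu))/p^n$ (Lemmas~\ref{lem:KoszulCohDescription} and~\ref{lem:KoszulExpFormula}), and finally checks that these modules carry no element killed by all $[\varphi^{-l}(\varepsilon-1)]$ because $B_{\fU}/(\varphi^{-\nu}(\mu),p)$ is free over $\CO/(\zeta_{\nu+1}-1)$ (Lemma~\ref{lem:SmoothAlgAzero}). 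Also note that it is at this point that the standing assumption ``$\fY$ is a lifting of $\fX$ with $\ut$ as $(p,\pq)$-adic coordinates,'' imposed at the start of \S\ref{sec:compAinfcohLocal}, is used — so $D_{\fU}=B_{\fU}$ and the basis statement of Lemma~\ref{lem:AinfUinftyBasis} holds. Your proposal should make that hypothesis explicit and carry out this torsion-freeness check before invoking the décalage lemma.
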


We use the following analogue of 
\cite[Lemma 111 (2)]{TsujiSimons} for sheaves.
(See also \cite[Lemma 5.14]{BhattSaltLake}.)

\begin{lemma}\label{lem:LetaSheafQI}
Let $C$ be a site and suppose that the topos
$C^{\sim}$ associated to $C$ has a conservative
family of points. Let $R$ be a commutative ring,
let $a$ be a regular element of $R$, and let $J$ 
be an ideal of $R$ containing $a$. 
Let $\CKb_1\to \CKb_2$ be a morphism of complexes
of $a$-torsion free sheaves of $R$-modules on $C$,
and let $\CKb_3$ be the cone of $f$.
Suppose that (i) $J\cdot \CH^r(\CKb_3)=0$ for all
$r\in \Z$ and (ii) $(\CH^r(\CKb_1/a\CKb_1)(U))[J^2]=0$
for all $r\in \Z$ and $U\in \Ob C$. 
Then the morphism $\eta_a\CKb_1\to \eta_a\CKb_2$
is a quasi-isomorphism.
\end{lemma}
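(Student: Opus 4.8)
The plan is to first unwind hypothesis (ii) into the far more transparent assertion that $\CKb_1/a\CKb_1$ is acyclic, and hence that multiplication by $a$ is bijective on every cohomology sheaf $\CH^r(\CKb_1)$; granting this, the statement reduces to a short diagram chase. Concretely: since $\CKb_1$ consists of $a$-torsion-free sheaves, each term of $\CKb_1/a\CKb_1$ is annihilated by $a$, hence so is each cohomology sheaf $\CH^r(\CKb_1/a\CKb_1)$; as $a\in J$ we have $a^2\in J^2$, so for every $U\in\Ob C$ the module $\CH^r(\CKb_1/a\CKb_1)(U)$ is annihilated by $a^2\in J^2$, and (ii) forces it to vanish. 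Thus $\CKb_1/a\CKb_1$ is acyclic, and the distinguished triangle $\CKb_1\xrightarrow{\,a\,}\CKb_1\to\CKb_1/a\CKb_1\to$ shows that $a\colon\CH^r(\CKb_1)\to\CH^r(\CKb_1)$ is an isomorphism for all $r$; in particular each $\CH^r(\CKb_1)$ is $a$-torsion-free and $a$-divisible.

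Next I would invoke the standard description of the d\'ecalage: for any complex $\CL^\bullet$ of $a$-torsion-free sheaves there is a canonical isomorphism $\CH^r(\eta_a\CL^\bullet)\cong\CH^r(\CL^\bullet)/\CH^r(\CL^\bullet)[a]$, functorial for morphisms of such complexes (the sheaf analogue of \cite[Lemma 6.4]{BMS}; it also follows by a direct degreewise computation, dividing cocycles of $\eta_a\CL^\bullet$ by $a^r$). Applying this to $f\colon\CKb_1\to\CKb_2$, and using that $\CH^r(\CKb_1)[a]=0$ from the previous step, the map $\CH^r(\eta_af)$ is identified with the map $\bar h_r\colon\CH^r(\CKb_1)\to\CH^r(\CKb_2)/\CH^r(\CKb_2)[a]$ induced by $h_r:=\CH^r(f)$, so it suffices to show each $\bar h_r$ is an isomorphism of sheaves.

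For this I would use the long exact cohomology sequence of $\CKb_1\to\CKb_2\to\CKb_3\to\CKb_1[1]$. By (i), $\Ker(h_r)$ is a quotient of $\CH^{r-1}(\CKb_3)$ and the cokernel of $h_r$ is a subsheaf of $\CH^r(\CKb_3)$, so both are annihilated by $J$, in particular by $a$; since $\CH^r(\CKb_1)$ is $a$-torsion-free this gives $\Ker(h_r)=0$, and it gives $a\,\CH^r(\CKb_2)\subseteq\mathrm{im}(h_r)$. Injectivity of $\bar h_r$ is then immediate ($h_r(n)\in\CH^r(\CKb_2)[a]\Rightarrow h_r(an)=0\Rightarrow an=0\Rightarrow n=0$), and for surjectivity one takes a local section $p$ of $\CH^r(\CKb_2)$, writes $ap=h_r(n)$ locally, invokes $a$-divisibility of $\CH^r(\CKb_1)$ to write $n=an'$, and notes that $a\bigl(p-h_r(n')\bigr)=0$, so that $p$ and $h_r(n')$ have the same class in $\CH^r(\CKb_2)/\CH^r(\CKb_2)[a]$.

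I expect the real content to be concentrated in the first step --- recognising that the torsion-theoretic condition (ii) simply says $\CKb_1/a\CKb_1\simeq 0$; everything afterwards is formal. The conservative family of points enters only if one prefers to verify the sheaf-level assertions on stalks; since $\eta_a$ and the formation of cohomology sheaves both commute with stalks, one may equally well pass to stalks after the first step (whose use of (ii) is cleanest at the level of sections) and run the same module-theoretic argument over $R$.
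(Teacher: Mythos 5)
Your proof fails at the first step, which misreads hypothesis (ii). The notation $M[J^2]$ denotes the $J^2$-torsion of $M$, that is, $\{m\in M : J^2\cdot m = 0\}$ --- the elements annihilated by \emph{every} element of the ideal $J^2$. You argue that since every section of $\CH^r(\CKb_1/a\CKb_1)$ is killed by $a^2\in J^2$, the whole module lies in its own $J^2$-torsion and hence vanishes by (ii). But annihilation by the single element $a^2$ is strictly weaker than annihilation by the ideal $J^2$: one has $M[J^2]\subseteq M[a^2]$, never the reverse inclusion. So (ii) does \emph{not} say $\CKb_1/a\CKb_1$ is acyclic, and the lemma is not the vacuous statement that $\CH^r(\CKb_1)$ is $a$-invertible. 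Indeed, in the intended application (the proof of Proposition~\ref{prop:KoszulAOmega}), the sheaf $\CH^r(\CKb/\mu)$ has nonzero sections explicitly described by Lemma~\ref{lem:KoszulCohDescription}, and Lemma~\ref{lem:SmoothAlgAzero} asserts exactly that these sections have no $J'$-torsion, not that they vanish.

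With the misreading removed, the rest of your argument collapses, since you use $a$-torsion-freeness and $a$-divisibility of $\CH^r(\CKb_1)$ throughout and neither follows. The intended role of (ii) is different. After passing to stalks (this is where the conservative family of points enters) and applying the d\'ecalage criterion of \cite[Lemma~111~(2)]{TsujiSimons}, one reduces to showing that the boundary map $\delta\colon \CH^r(\CKb_3/a\CKb_3)\to\CH^{r+1}(\CKb_1/a\CKb_1)$ vanishes. Since $\CKb_3$ is $a$-torsion free, the long exact sequence attached to multiplication by $a$ on $\CKb_3$, combined with (i), shows that $J^2$ annihilates $\CH^r(\CKb_3/a\CKb_3)$; hence the image of $\delta$ is a $J^2$-torsion subsheaf of $\CH^{r+1}(\CKb_1/a\CKb_1)$, and (ii), correctly read, kills its sections over every $U$. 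Condition (ii) is thus a torsion hypothesis on the \emph{target} of the boundary map, not an acyclicity statement about its source.
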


\begin{proof}
Put $\oCKb_i=\CKb_i/a\CKb_i$ $(i\in \{1,2,3\})$. For a point $s$ of $C^{\sim}$,
we have $s^{-1}(\eta_a\CKb_i)=\eta_a(s^{-1}(\CKb_i))$,
$s^{-1}(\oCKb_i)=s^{-1}(\CKb_i)/as^{-1}(\CKb_i)$, 
$s^{-1}(\CKb_3)=$ Cone$(s^{-1}(f))$, and 
$a\cdot H^r(s^{-1}(\CKb_3))=a\cdot s^{-1}(\CH^r(\CKb_3))=0$.
Hence, by applying the argument in the proof of 
\cite[Lemma 111 (2)]{TsujiSimons} to $s^{-1}(f)$ for every point
$s$ of $C^{\sim}$, we are reduced to showing that
the boundary map $\CH^r(\oCKb_3)\to \CH^{r+1}(\oCKb_1)$
is $0$ for all $r\in \Z$. Since $\CKb_3$ is $a$-torsion free,
the assumption (i) implies $J^2\cdot \CH^r(\oCKb_3)=0$. 
Hence the image of the boundary map is trivial by
the assumption (ii). 
\end{proof}

\begin{lemma}\label{lem:FlatAinfAlgProperties}
Let $R$ be an $A_{\inf}$-algebra $(p,\pq)$-adically complete and separated
and $(p,\pq)$-adically flat (Definition \ref{def:IadicProperties} (1)). 
Let $a$ be an element of $A_{\inf}$ whose image in $A_{\inf}/pA_{\inf}=\CO^{\flat}$
is non-zero and non-invertible.\par
(1) $R$ is $p$-torsion free, and $p$-adically complete and separated.\par
(2) $R$ is $a$-torsion free, and $a$-adically complete and separated. \par
(3) $R/a^nR$ is $p$-torsion free, and $p$-adically complete and separated.\par
(4) $R/p^nR$ is $a$-torsion free, and $a$-adically complete and separated.\par
\end{lemma}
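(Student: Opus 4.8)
The plan is to prove Lemma \ref{lem:FlatAinfAlgProperties} as a formal consequence of the following two elementary facts about a flat algebra $R$ over a ring $S$: if $b$ is an $S$-regular element that is also $R$-regular, then $R/bR$ is flat over $S/bS$; and if moreover $R$ is $b$-adically complete and separated, then so is $R/cR$ for any $c$ in the ideal $(b)+\mathfrak{a}$ when $R$ is complete and separated for the $((b)+\mathfrak{a})$-adic topology. The key point is that, by hypothesis, $R$ is $(p,\pq)$-adically flat over $A_{\inf}$ and $(p,\pq)$-adically complete and separated, and the two elements $p$ and $a$ play symmetric roles because $(p,\pq)=(p,\txi)$ and $a$ together with $p$ generates the same topology up to the issue that $a$ need not divide $\txi$. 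First I would record that the sequence $p,\txi$ is regular on $A_{\inf}$ and hence, by $(p,\pq)$-adic flatness together with Remark \ref{rmk:IadicFlatRegSeq}, that $p,\txi$ is a regular sequence on $R$ as well; this gives $p$-torsion freeness and $\txi$-torsion freeness of $R$ immediately.

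For part (1): $R$ is $p$-torsion free as just noted, and $p$-adic completeness and separatedness of $R$ follows because $R$ is $(p,\pq)$-adically complete and separated, $\txi$ is a non-unit, and $A_{\inf}/pA_{\inf}=\CO^\flat$ is a valuation ring in which the ideal generated by the image of $\txi$ is a proper ideal, so the $(p,\pq)$-adic and $p$-adic topologies on $R$ induce the same completion once one checks $\bigcap_n p^nR=0$; concretely one uses that $R/pR$ is $\txi$-adically separated (it is flat over $\CO^\flat$ which is $\txi$-adically separated) and that $R=\varprojlim_m R/(p,\txi)^{m+1}R$ maps onto $\varprojlim_n R/p^nR$ with the same inverse limit. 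Part (2) is the analogue with $a$ replacing $p$: the image $\bar a$ of $a$ in $\CO^\flat$ is nonzero and non-invertible by assumption, so $\bar a$ and the image of $\txi$ generate proper ideals in the valuation ring $\CO^\flat$, hence one of them divides the other; in either case the $((a)+(p))$-adic topology coincides with the $(p,\txi)$-adic topology on $A_{\inf}$ (after passing to a suitable power), and since $R$ is $(p,\pq)$-adically complete and flat, $R$ is $a$-torsion free (because $a\in pA_{\inf}+\txi A_{\inf}$ up to units in the relevant quotients, or more directly because $a$ is a nonzerodivisor on $R/pR=R\otimes_{A_{\inf}}\CO^\flat$, a flat $\CO^\flat$-module, and on $R$ using $p$-adic completeness) and $a$-adically complete and separated.

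For parts (3) and (4) I would apply the completion-descent principle. For (3): $R/a^nR$ is $p$-torsion free because multiplication by $p$ on $R/a^nR$ has no kernel — this reduces, via the filtration of $R/a^nR$ by powers of $a$ with graded pieces $R/aR$, to $p$ being a nonzerodivisor on $R/aR$, which holds since $R/aR$ is flat over $A_{\inf}/aA_{\inf}$ (by Remark \ref{rmk:IadicFlatRegSeq}, $a$ being $A_{\inf}$-regular and $R$-regular) and $p$ is a nonzerodivisor on $A_{\inf}/aA_{\inf}$ (as $A_{\inf}/aA_{\inf}$ embeds, mod $p$, into $\CO^\flat/\bar a$, or directly since $a,p$ is a regular sequence up to reordering using that $A_{\inf}$ is a domain); $p$-adic completeness of $R/a^nR$ follows from $p$-adic completeness of $R$ (a quotient of a $p$-adically complete ring by a finitely generated ideal is $p$-adically complete) once one knows $R$ is $p$-adically complete, i.e.\ part (1). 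Part (4) is symmetric, exchanging the roles of $a$ and $p$ and invoking part (2) in place of part (1). The main obstacle I anticipate is the careful bookkeeping in parts (1) and (2) around comparing the $(p,\pq)$-adic topology with the one-variable $p$-adic or $a$-adic topology: one must verify that no $p$-power-divisible or $a$-power-divisible elements survive, which is where flatness over $A_{\inf}$ and the regular-sequence structure $p,\txi$ are genuinely used, and where the hypothesis that $\bar a$ is a non-unit in $\CO^\flat$ is essential to keep $a$ topologically nilpotent in the relevant sense.
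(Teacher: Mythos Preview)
Your strategy --- observe that the $(p,a)$-adic and $(p,\pq)$-adic topologies on $A_{\inf}$ (hence on $R$) agree, exploit that $p,a$ is a regular sequence on $A_{\inf}$, and use the symmetry between $p$ and $a$ --- matches the paper's. The gap is in the ordering. Deducing the completeness half of (1) directly from $(p,\pq)$-adic completeness of $R$ requires each $p^nR$ to be closed in the $(p,\pq)$-adic topology, i.e.\ that $R/p^nR$ is $\pq$-adically (equivalently $a$-adically) separated --- and that is part of (4). Your suggested patch, that ``$R/pR$ is flat over $\CO^{\flat}$, which is $\pq$-adically separated,'' does not yield separatedness of $R/pR$: a flat module over a valuation ring need not be adically separated (take the fraction field), and even establishing flatness of $R/pR$ over $\CO^{\flat}$ without already knowing $\pq$-adic completeness of $R/pR$ runs into the same loop. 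The same circularity infects your argument for (3): the blanket claim that a quotient of a $p$-adically complete ring by a finitely generated ideal is $p$-adically complete is false in general; here it can be rescued via derived completeness once you know $a^n$ is a nonzerodivisor on $R$ and $R/a^nR$ is $p$-torsion free, but that route still presupposes (1).

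The paper breaks the loop by reversing the order: it first proves the torsion-freeness in (1) together with all of (4), invoking \cite[4.4]{TsujiPrismQHiggs} (a lemma tailored to exactly this two-generator regular-sequence situation), and then deduces the completeness in (1) from the completeness in (4) via the double limit
\[
\varprojlim_n R/p^nR \;\cong\; \varprojlim_{n,m} R/(p^n,a^m)R \;\cong\; R.
\]
Claims (2) and (3) then follow by the $p\leftrightarrow a$ symmetry you identified, after checking that $A_{\inf}$ is $a$-torsion free and $A_{\inf}/aA_{\inf}$ is $p$-torsion free. One minor aside: Remark~\ref{rmk:IadicFlatRegSeq} is the converse of what you invoke --- it deduces $I$-adic flatness \emph{from} a regular sequence, not regularity from flatness.
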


\begin{proof}
Note that $(p^m,a^l)$-adic topology on $A_{\inf}$ for positive integers $m$ and $l$
coincides with the $(p,\pq)$-adic topology since the $a$-adic topology 
on $\CO^{\flat}$ is the same as the $\pq$-adic topology.
Since the sequence $p$, $a$ is $A_{\inf}$-regular, the sequence
$p^n$, $a$ is $A_{\inf}$-regular. By the remark above, $A_{\inf}$ and $R$ are
$(p^n,a)$-adically complete and separated. Hence 
\cite[4.4]{TsujiPrismQHiggs}
for $N=0$ implies the first claim of (1), and (4). Then the second claim of
(4) implies the second claim of (1) as 
$\varprojlim_nR/p^n\cong \varprojlim_{n,m}R/(p^n,a^m)\cong R$.
We see that $A_{\inf}=\varprojlim_nA_{\inf}/p^n$ is $a$-torsion free, and 
$A_{\inf}/a$ is $p$-torsion free by \cite[3.2 (1)]{TsujiPrismQHiggs}.
Hence we obtain 
(2) and (3) similarly from 
\cite[4.4]{TsujiPrismQHiggs} for $N=0$ by exchanging the role 
of $p$ and $a$. 
\end{proof}

\begin{remark}\label{rmk:AinfPerfFlatness}
For an affinoid perfectoid object $V$ of $X_{\proet}$,
$[p]_q$ is regular on $A_{\inf}$ and $\bA_{\inf,X}(V)$,
and $\bA_{\inf,X}(V)/\pq\cong \hCO_{X}^+(V)$ is $p$-torsion free,
whence flat over $A_{\inf}/\pq\cong \CO$. Therefore 
$\bA_{\inf,X}(V)$ is $\pq$-adically flat over $A_{\inf}$
by Remark \ref{rmk:IadicFlatRegSeq}, and we can apply 
Lemma \ref{lem:FlatAinfAlgProperties} to $\bA_{\inf,X}(V)$.
\end{remark}

We define $t_{A,i,n}\in A_n$ and $t_{A,i}^{\flat}\in A_{\infty}^{\flat}$
$(i\in\Lambda)$
as before \eqref{eq:BToAinfDiag}, and write their images in $A_{\fU,n}$
and in $(A_{U_{\infty}}^+)^{\flat}$ for $\fU\in \Ob\fX_{\AffZar}$
by the same symbols. 
For $\fU\in \Ob \fX_{\AffZar}$,  we define $\fY_{\fU}=\Spf(B_{\fU})$
to be the open formal subscheme of $\fY$ whose
underlying space is the image of $\fU$ under $\mathtt i\colon \fX\to \fY$.
Since we have assumed that $\fY$ is a lifting of $\fX$, we have 
$D_{\fU}=B_{\fU}$.
Therefore the morphism
\begin{equation}\label{eq:SmLiftToAinfInfty}
B_{\fU}=D_{\fU}\longrightarrow \bA_{\inf,X}(U_{\infty})
\end{equation}
constructed before  \eqref{eq:CrysBKFMap} is a lifting of $A_{\fU}\to A_{U_{\infty}}^+$.
As in Construction \ref{constr:qHiggsModCpxShv} (1),
the $t_i\mu$-derivations
$\theta_{B,i}$ $(i\in\Lambda)$ of $B$ over $A_{\inf}$
extend uniquely to $t_i\mu$-derivations $\theta_{B_{\fU},i}$
over $A_{\inf}$ commuting with each other
by Proposition \ref{prop:TwDerivEtaleExt}. 
Let $\gamma_{B_{\fU},i}$
be the automorphism $1+t_i\mu\theta_{B_{\fU},i}$ of the
$A_{\inf}$-algebra $B_{\fU}$ associated to $\theta_{B_{\fU,}i}$
(Lemma \ref{lem:TwDerivEndomDeltaComp} (1)), 
which coincides with the automorphism induced by 
$\gamma_{B,i}=1+t_i\mu\theta_{B,i}$ via 
$\Spf (B_{\fU})\subset \Spf (B)$. Note that the action of 
$\gamma_{B,i}$ on the underlying space of $\Spf(B)$ is trivial
since $\gamma_{B,i}$ is trivial modulo $\mu$. 
We define the action of $\Gamma_{\Lambda}^{\disc}$ on 
$B_{\fU}$ by letting $\gamma_i\in\Gamma_{\Lambda}^{\disc}$
act by $\gamma_{B_{\fU},i}$. Then the morphism 
\eqref {eq:SmLiftToAinfInfty} is $\Gamma_{\Lambda}^{\disc}$-equivariant.

\begin{lemma}\label{lem:AinfUinftyBasis}
For $\fU\in \Ob\fX_{\AffZar}$ and an ideal $J$ of $A_{\inf}$ containing
$(p,\pq)^{m+1}$ for some $m\in \N$, the $B_{\fU}/J$-module
$\bA_{\inf,X}(U_{\infty})/J$ is  free 
with basis $\prod_{i\in\Lambda}[(t_{A,i}^{\flat})^{r_i}]$,
$(r_i)_{i\in\Lambda}\in (\Z[\frac{1}{p}]\cap [0,p[)^{\Lambda}$.
\end{lemma}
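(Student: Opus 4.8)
The plan is to reduce the statement to a completely explicit computation over a polynomial-type ring. Write $R=A_{\inf}$ and set $\fU=\Spf(A_{\fU})$. First I would recall that, by construction (the diagram \eqref{eq:BToAinfDiag} and the discussion before \eqref{eq:CrysBKFMap}), the morphism $B_{\fU}\to \bA_{\inf,X}(U_{\infty})$ lies over $A_{\fU}\to A_{U_{\infty}}^+$, that $t_i\in B_{\fU}$ maps to $[t_{A,i}^{\flat}]^p$, and that $\bA_{\inf,X}(U_{\infty})$ is $(p,\pq)$-adically flat over $A_{\inf}$ (Remark \ref{rmk:AinfPerfFlatness}). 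Since everything in sight is $(p,\pq)$-adically complete and separated and $J$ contains a power of $(p,\pq)$, I may pass to the quotient by $J$ and, after a limit argument over $m$ reducing $J$ to $(p,\pq)^{m+1}$ and dévissage along the regular sequence $p,\pq$ (as in the proof of Proposition \ref{prop:BKFProperties}), it suffices to treat the two ``extreme'' reductions, i.e.\ to show the module is free after reducing modulo $p$ and, compatibly, after reducing modulo $\pq$.

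The heart of the matter is the mod-$\pq$ statement. Modulo $\pq$ we have $A_{\inf}/\pq\cong\CO$, $B_{\fU}/\pq B_{\fU}\cong A_{\fU}$ (as $\fY$ is a lifting of $\fX$ and $t_i\equiv t_{A,i}$), and $\bA_{\inf,X}(U_{\infty})/\pq\cong \widehat{\CO}_X^+(U_{\infty})\cong A_{U_{\infty}}^+$ (by \cite[Theorem 6.5, Lemma 4.10]{ScholzepHTRigid}). So the claim becomes: $A_{U_{\infty}}^+$ is a free $A_{\fU}$-module with basis $\prod_{i}(t_{A,i})^{r_i}$, $(r_i)\in(\Z[\tfrac1p]\cap[0,p[)^{\Lambda}$. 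This is essentially the almost-purity / Kummer-tower computation: $A_{\fU}\to A_{U_{\infty}}^+$ is obtained by adjoining all $p$-power roots of the coordinates $t_{A,i}$ ($i\in\Lambda$) — the free basis for the finite level $A_{\fU,n}$ over $A_{\fU}$ is given by monomials $\prod t_{A,i}^{r_i}$ with $r_i\in (\tfrac1{p^n}\Z)\cap[0,1[$, using condition \eqref{cond:frameAinf2} and that the remaining coordinates in $\Lambda\setminus\Lambda_A$ contribute invertible units (condition \eqref{cond:frameAinf}, so those $t_i$ don't enlarge the module) — and I would take the $p$-adic completion of the colimit over $n$, noting $A_{U_{\infty}}^+ = (\varinjlim_n A_{\fU,n})^{\wedge}$ by \cite[Lemma 4.10 (iii)]{ScholzepHTRigid}, and that $p$-adic completion of a free module on a countable basis is again free on the same basis once one checks there is no $p$-torsion (Lemma \ref{lem:FlatAinfAlgProperties}). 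Actually, since we only need it modulo a power of $\pq$ (hence a bounded module), freeness is preserved under the relevant completions without subtlety.

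Once the mod-$\pq$ case is in hand, I would bootstrap: the sequence $p,\pq$ being regular on both $B_{\fU}$ and $\bA_{\inf,X}(U_{\infty})$, the associated-graded pieces $\bigoplus_i \bA_{\inf,X}(U_{\infty})/\pq$ over $\bigoplus_i B_{\fU}/\pq$ are free on the stated basis by the previous paragraph, and an induction on $m$ (lifting the basis, using the short exact sequences analogous to \eqref{eq:BKFGr}) shows $\bA_{\inf,X}(U_{\infty})/(p,\pq)^{m+1}$ is free over $B_{\fU}/(p,\pq)^{m+1}$ on $\prod_i[(t_{A,i}^{\flat})^{r_i}]$; note these Teichmüller monomials reduce modulo $\pq$ to the $t_{A,i}$-monomials, so the two computations are compatible. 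Finally, for a general $J\supseteq (p,\pq)^{m+1}$, the module $\bA_{\inf,X}(U_{\infty})/J$ is the quotient of $\bA_{\inf,X}(U_{\infty})/(p,\pq)^{m+1}$ by $J/(p,\pq)^{m+1}$, and since freeness descends along a base change $B_{\fU}/(p,\pq)^{m+1}\to B_{\fU}/J$ (it's just $-\otimes_{B_{\fU}/(p,\pq)^{m+1}} B_{\fU}/J$), we conclude. The main obstacle I anticipate is the mod-$\pq$ freeness itself — pinning down precisely that $\varinjlim_n A_{\fU,n}$ is free over $A_{\fU}$ on the fractional monomials, which requires a careful use of \eqref{cond:frameAinf2} to reduce to the split case $A_{\fU}=\CO\langle T_i^{\pm1}\rangle$ via the étale chart, plus handling the units coming from $\Lambda\setminus\Lambda_A$; everything after that is formal dévissage.
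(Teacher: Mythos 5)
Your overall route -- reduce by $(p,\pq)$-adic flatness of $B_{\fU}$ and $\bA_{\inf,X}(U_{\infty})$ over $A_{\inf}$ to a computation on the special fibre, and identify that with the Kummer tower $A_{\fU,n}$ -- is the same as the paper's, but your central mod-$\pq$ statement is wrong because you have dropped the Frobenius twist built into the construction. The map $B_{\fU}\to \bA_{\inf,X}(U_{\infty})$ sends $t_i$ to $[t_{A,i}^{\flat}]^p$, not to $[t_{A,i}^{\flat}]$ (see \eqref{eq:BToAinfDiag}), and the identification $\bA_{\inf,X}(U_{\infty})/\pq\cong A_{U_{\infty}}^+$ is induced by $\theta\circ\varphi^{-1}$; consequently $[(t_{A,i}^{\flat})^{r_i}]$ reduces to $t_{A,i}^{r_i/p}$ with $r_i/p\in[0,1[$, not to $t_{A,i}^{r_i}$ with $r_i\in[0,p[$. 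The family you propose as a basis of $A_{U_{\infty}}^+$ over $A_{\fU}$, namely $\prod_i t_{A,i}^{r_i}$ with $r_i\in\Z[\tfrac1p]\cap[0,p[$, is not linearly independent: it contains both $1$ and $t_{A,i}$, and $t_{A,i}$ already lies in $A_{\fU}$ (it is even a unit), so there are obvious $A_{\fU}$-linear relations. Once the twist is restored, the statement to prove is the standard freeness of the tower with exponents in $[0,1[$, which is exactly the paper's computation (carried out mod $(p,\pq)$ rather than mod $\pq$). Your dévissage is also muddled: the graded pieces of the $(p,\pq)$-adic filtration are copies of the mod-$(p,\pq)$ reduction, not of the mod-$\pq$ one, and proving freeness separately mod $p$ and mod $\pq$ is neither needed nor by itself sufficient; the flatness of both rings over $A_{\inf}$ reduces the whole lemma to $J=(p,\pq)$ in one step, and the case of general $J\supseteq(p,\pq)^{m+1}$ is then just base change, as you note at the end.

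A second genuine error is your treatment of $\Lambda\setminus\Lambda_A$: you assert that the coordinates there are units by \eqref{cond:frameAinf} and therefore ``don't enlarge the module''. Adjoining $p$-power roots of a unit is in general a nontrivial extension, so this reasoning is false; and if it were correct, the basis in the lemma, which is indexed by exponent tuples over all of $\Lambda$, would be too large. What actually makes the rank count come out right is the standing choice of $\mti$ in this section: $\mfi$ is a lifting and the $t_i$ $(i\in\Lambda)$ are $(p,\pq)$-adic coordinates of $B$ over $A_{\inf}$, so that the $t_{A,i}$ $(i\in\Lambda)$ form $p$-adic coordinates of $A_{\fU}$ over $\CO$ (this is stronger than \eqref{cond:frameAinf2} for a subset $\Lambda_A$). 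This is what the paper uses: it implies that the finite free $A_{\fU}$-algebra $A_{\fU}\otimes_{\CO[T_i^{\pm1}\,(i\in\Lambda)]}\CO[T_i^{\pm1/p^n}\,(i\in\Lambda)]$ is $p$-adically smooth over $\CO$, hence coincides with the integral closure $A_{\fU,n}$, giving freeness over $A_{\fU}$ on the fractional monomials indexed by all of $\Lambda$. With these two corrections your argument collapses onto the paper's proof.
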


\begin{proof}
Since $\bA_{\inf,X}(U_{\infty})$ and $B_{\fU}$ are
$(p,\pq)$-adically flat over $A_{\inf}$ (Remark \ref{rmk:AinfPerfFlatness}),
the claim is reduced to the case $J=(p,\pq)$. In this case, 
we have $B_{\fU}/J\cong A_{\fU}/p$
and $\bA_{\inf,X}(U_{\infty})/J\cong A_{U_{\infty}}^+/p;
[(t_{A,i}^{\flat})^{p^{-n+1}}]\mapsto t_{A,i,n}$ $(n\in \N)$.
Hence the claim follows from $A_{U_{\infty}}^+/p
\cong \varinjlim_nA_{\fU,n}/p$ and the definition 
of $A_{\fU,n}$ given before \eqref{eq:ProetGammaModIm};
for $n\in \N$, since $t_{A,i}$ $(i\in \Lambda)$ are $p$-adic  coordinates
of $A_{\fU}$ over $\CO$ by the choice of $\mti$, 
the finite free $A_{\fU}$-algebra
$A_{\fU}\otimes_{\CO[T_i^{\pm 1}\,(i\in \Lambda)]}
\CO[T_i^{\pm 1/p^n}\,(i\in\Lambda)]$ is
$p$-adically smooth over $\CO$ with $p$-adic coordinates
given by the images of $T_i^{\pm 1/p^n}$ $(i\in \Lambda)$,
and therefore coincides with $A_{\fU,n}$.
\end{proof}

\begin{lemma}\label{lem:KoszulCohDescription}
Let $\fU=\Spf(A_{\fU})\in \Ob \fX_{\AffZar}$, and 
let $K_{\Lambda}^{\bullet}(\bA_{\inf,X}(U_{\infty})/\mu)$ be
the Koszul complex with respect to the action of $\Gamma_{\Lambda}^{\disc}$
on $\bA_{\inf,X}(U_{\infty})/\mu$
induced by the right action of $\Gamma_{\Lambda}^{\disc}$ on $U_{\infty}$.
Then, for $r\in \N$, the homomorphism 
$H^r(K_{\Lambda}^{\bullet}(\bA_{\inf,X}(U_{\infty}))/\mu)\to
\varprojlim_n H^r(K_{\Lambda}^{\bullet}(\bA_{\inf,X}(U_{\infty}))/(\mu,p^n))$
is an isomorphism, and 
$H^r(K_{\Lambda}^{\bullet}(\bA_{\inf,X}(U_{\infty})/(\mu,p^n)))$ 
for each positive integer $n$ is isomorphic to the direct sum 
of certain numbers of copies of 
$(B_{\fU}/\varphi^{-\nu}(\mu))/p^n$ $(\nu\in \N)$
in a manner compatible with $n$ and functorial in $\fU$. 
\end{lemma}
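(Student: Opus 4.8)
The plan is to compute the Koszul complex of $\bA_{\inf,X}(U_{\infty})/\mu$ explicitly using the free basis provided by Lemma~\ref{lem:AinfUinftyBasis}, then decompose it into a direct sum of elementary pieces each of which is a Koszul complex of a rank-one $\Z_p$-module acting by a $p$-power root of unity. First I would fix $\fU\in\Ob\fX_{\AffZar}$, and by Remark~\ref{rmk:AinfPerfFlatness} and Lemma~\ref{lem:FlatAinfAlgProperties} note that $\bA_{\inf,X}(U_{\infty})$ is $\mu$-torsion free (as $\mu$ differs from $\varphi^{-1}(\pq)$ by a unit modulo $p$, or directly since $[\varepsilon]-1$ is a non-zero-divisor on $A_{\inf}$ and $\bA_{\inf,X}(U_{\infty})$ is $\pq$-adically flat over $A_{\inf}$) so that $K^{\bullet}_{\Lambda}(\bA_{\inf,X}(U_{\infty}))/\mu = K^{\bullet}_{\Lambda}(\bA_{\inf,X}(U_{\infty})/\mu)$ and similarly modulo $(\mu,p^n)$. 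For the first assertion, the isomorphism $H^r(K^{\bullet}_{\Lambda}(\bA_{\inf,X}(U_{\infty})/\mu))\xrightarrow{\cong}\varprojlim_n H^r(K^{\bullet}_{\Lambda}(\bA_{\inf,X}(U_{\infty})/(\mu,p^n)))$ will follow from the $p$-adic completeness and separatedness of $\bA_{\inf,X}(U_{\infty})/\mu$ (Lemma~\ref{lem:FlatAinfAlgProperties}(3)) together with a Mittag--Leffler / $\lim^1$-vanishing argument: each term of the Koszul complex is $p$-torsion free, so the transition maps in the tower of cohomologies are eventually surjective with bounded kernel, or more cleanly, the cohomology is computed by a perfect complex of $p$-torsion free modules and one invokes derived completeness.

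The heart of the computation is the decomposition of $\bA_{\inf,X}(U_{\infty})/(\mu,p^n)$ as a $\Gamma^{\disc}_{\Lambda}$-representation over $B_{\fU}/(\mu,p^n)=A_{\fU}/p^n$. By Lemma~\ref{lem:AinfUinftyBasis} it is free over $B_{\fU}/(\mu,p^n)$ with basis $\prod_{i\in\Lambda}[(t^{\flat}_{A,i})^{r_i}]$ indexed by $\ur=(r_i)_{i\in\Lambda}\in(\Z[\tfrac1p]\cap[0,p[)^{\Lambda}$, and the action of $\gamma_j\in\Gamma^{\disc}_{\Lambda}$ sends $[(t^{\flat}_{A,i})^{r_i}]$ to $[\varepsilon^{r_j}]\cdot[(t^{\flat}_{A,i})^{r_i}]$ (using $\gamma_j(t^{\flat}_{A,i})=\zeta^{\flat\,\delta_{ij}}t^{\flat}_{A,i}$ in the tilt and that the action commutes with Teichmüller lifts). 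Since the action scales each basis vector, the representation decomposes as the direct sum over $\ur$ of the rank-one free $B_{\fU}/(\mu,p^n)$-submodules $L_{\ur}$ on which $\gamma_j$ acts by $[\varepsilon^{r_j}]$; because the Koszul complex of a direct sum is the direct sum of Koszul complexes, I get $K^{\bullet}_{\Lambda}(\bA_{\inf,X}(U_{\infty})/(\mu,p^n))\cong\bigoplus_{\ur}K^{\bullet}_{\Lambda}(L_{\ur})$, and this is compatible with $n$ and functorial in $\fU$. Each $K^{\bullet}_{\Lambda}(L_{\ur})$ is a tensor product over $i\in\Lambda$ of two-term complexes $[L_{\ur,i}\xrightarrow{\gamma_i-1}L_{\ur,i}]$ where $\gamma_i-1$ acts by $[\varepsilon^{r_i}]-1$; writing $r_i=a_i/p^{\nu_i}$ in lowest terms we have $[\varepsilon^{r_i}]-1$ a unit multiple of $\varphi^{-\nu_i}(\mu)$ (up to the unit $[\varepsilon^{a_i}]$ acting and the standard identity that $[\varepsilon^{1/p^{\nu}}]-1$ generates the same ideal as $\varphi^{-\nu}(\mu)$), so modulo $p^n$ each two-term factor has cohomology concentrated in degree~$1$ equal to $(B_{\fU}/\varphi^{-\nu_i}(\mu))/p^n$, or in degree~$0$ equal to $(B_{\fU})/p^n$ when $r_i=0$.

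The final step is to assemble the tensor-product Koszul complex: since $B_{\fU}/p^n$ is $\varphi^{-\nu}(\mu)$-torsion free for all $\nu$ (Lemma~\ref{lem:FlatAinfAlgProperties}(4), as $\varphi^{-\nu}(\mu)$ is a unit multiple of a power of $\varphi^{-1}(\pq)$ modulo... — more precisely each $\varphi^{-\nu}(\mu)$ is a non-zero-divisor on the $(p,\pq)$-adically flat algebra $B_{\fU}$, hence on $B_{\fU}/p^n$), the Künneth formula for tensor products of bounded complexes of flat modules applies with no $\Tor$-terms, and $H^r(K^{\bullet}_{\Lambda}(L_{\ur}))$ is a direct sum, over subsets $S\subset\{i:r_i\neq0\}$ with $\#S=r$ together with the requirement that $i\notin S$ with $r_i\neq 0$ contributes nothing (its degree-$0$ cohomology vanishes when $r_i\neq 0$), of $B_{\fU}/(\sum_{i\in S}\varphi^{-\nu_i}(\mu),p^n)$. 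Consolidating the ideal $\sum_{i\in S}\varphi^{-\nu_i}(\mu)$ — which equals $\varphi^{-\nu}(\mu)$ for $\nu=\min_{i\in S}\nu_i$ since the $\varphi^{-\nu}(\mu)$'s form a chain of ideals — yields that $H^r$ is a direct sum of copies of $(B_{\fU}/\varphi^{-\nu}(\mu))/p^n$ over various $\nu\in\N$, and the multiplicities are combinatorial (counting $\ur$ and $S$), hence independent of $\fU$. The main obstacle I anticipate is bookkeeping: making the identification $[\varepsilon^{a/p^{\nu}}]-1 = (\text{unit})\cdot\varphi^{-\nu}(\mu)$ precise and checking it is compatible with reduction modulo $p^n$, and then organizing the Künneth decomposition so that the "which $\nu$ appears" is correctly read off as the minimum over the chosen subset; the torsion-freeness needed to kill $\Tor$-terms must be invoked carefully for each $\varphi^{-\nu}(\mu)$ and for sums of them, which is where Lemma~\ref{lem:FlatAinfAlgProperties} and Remark~\ref{rmk:IadicFlatRegSeq} do the real work.
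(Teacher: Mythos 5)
Your overall strategy — reduce mod $\mu,p^n$, use the free basis from Lemma~\ref{lem:AinfUinftyBasis} to decompose $\bA_{\inf,X}(U_{\infty})/(\mu,p^n)$ into rank-one pieces $L_{\ur}$ indexed by $\ur$, on which $\gamma_j$ acts by the scalar $[\varepsilon^{r_j}]$, and then compute Koszul cohomology piece by piece — is the right decomposition and is essentially the paper's strategy (and that of the references cited there). But the elementary computation goes wrong in a way that matters.

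The claim that the two-term factor $[R\xrightarrow{g_i}R]$ with $R=B_{\fU}/(\mu,p^n)$ and $g_i=[\varepsilon^{r_i}]-1$ has ``cohomology concentrated in degree~1'' when $r_i\neq 0$ is false. Since $g_i$ is a unit multiple of $\varphi^{-\nu_i}(\mu)$ and $\varphi^{-\nu_i}(\mu)$ divides $\mu$ in $A_{\inf}$, the element $\mu/\varphi^{-\nu_i}(\mu)$ is a nonzero $g_i$-torsion element of $R$ (nonzero because $B_{\fU}/p^n$ is $\mu$-torsion free, so $\mu/\varphi^{-\nu_i}(\mu)\notin(\mu,p^n)B_{\fU}$). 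Hence $H^0=R[g_i]\neq 0$, and these torsion classes genuinely contribute to $H^r$ in higher degrees. Consequently your ``Künneth with no Tor terms'' step also has no justification: the cohomology groups of the two-term factors are neither flat nor torsion-free over $R$, so the Tor-correction terms in the K\"unneth spectral sequence cannot be dismissed. The paper avoids all of this by proving and invoking Lemma~\ref{lem:KoszulExpFormula}, which computes $H^n$ of a Koszul complex with a ``common divisor'' $g_1$ by an explicit cone/homotopy induction and produces both families of contributions, $M[g_1]$ and $M/g_1 M$; only afterwards does it observe that multiplication by $\mu/\varphi^{-\nu}(\mu)$ identifies $(B_{\fU}/(\mu,p^n))[\varphi^{-\nu}(\mu)]$ with $B_{\fU}/(\varphi^{-\nu}(\mu),p^n)$, using the $\mu$-torsion-freeness of $B_{\fU}/p^n$ from Lemma~\ref{lem:FlatAinfAlgProperties}(4). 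This identification is exactly the step missing from your argument, and without it the statement about the form of $H^r$ is not obtained.

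Two smaller points. First, the divisibility goes the opposite way from what you wrote: $\varphi^{-\nu}(\mu)$ for larger $\nu$ divides $\varphi^{-\nu'}(\mu)$ for smaller $\nu'$, so $\sum_{i\in S}(\varphi^{-\nu_i}(\mu))=(\varphi^{-\max_i\nu_i}(\mu))$ and the correct $\nu$ to feed into Lemma~\ref{lem:KoszulExpFormula} is $\max_{i\in S}\nu_i$, not $\min$; the conclusion as stated in the lemma is unaffected since it only asserts ``copies of $(B_{\fU}/\varphi^{-\nu}(\mu))/p^n$ for various $\nu$,'' but the intermediate ideal computation should be corrected. Second, your argument for the first assertion (the $\varprojlim$ isomorphism) is too vague: the paper deduces $R^1\varprojlim_n H^r(\cdot/(\mu,p^n))=0$ precisely from the explicit description of these groups (compatibly with $n$) proved in the second assertion, and then uses the $p$-adic completeness and separatedness of $\bA_{\inf,X}(U_{\infty})/\mu$ from Lemma~\ref{lem:FlatAinfAlgProperties}(3); appealing to ``a perfect complex and derived completeness'' does not fit the setup here.
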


\begin{proof}
The latter claim with the compatibility with $n$ implies
$R^1\varprojlim_n H^r(K^{\bullet}_{\Lambda}(\bA_{\inf,X}(U_{\infty})/(\mu,p^n)))=0$,
and hence the first claim since $\bA_{\inf,X}(U_{\infty})/\mu$
is $p$-adically complete and separated (Lemma \ref{lem:FlatAinfAlgProperties} (3), 
Remark \ref{rmk:AinfPerfFlatness}). We will prove the latter claim.
The action of $\Gamma_{\Lambda}^{\disc}$ on $B_{\fU}/\mu B_{\fU}$
is trivial because $\gamma_{B_{\fU},i}=1+t_i\mu\theta_{B_{\fU},i}$ $(i\in\Lambda)$.
Hence as in the proof of \cite[Lemma 115]{TsujiSimons}
and that of \cite[Lemma 1.7]{MT}, we see,
 using Lemma \ref{lem:AinfUinftyBasis} and Lemma \ref{lem:KoszulExpFormula} below
  that $H^r(K_{\Lambda}^{\bullet}(\bA_{\inf,X}(U_{\infty})/(\mu,p^n)))$
is isomorphic to the direct sum of certain
numbers of  copies of 
$B_{\fU}/(\varphi^{-\nu}(\mu),p^n)$ and
$B_{\fU}/(\mu,p^n)[\varphi^{-\nu}(\mu)]$ 
for $\nu\in \N$ in a manner compatible with $n$ and
functorial in $\fU$. 
Since $B_{\fU}/p^n$ is $\mu$-torsion free by 
Lemma \ref{lem:FlatAinfAlgProperties} (4) and $\varphi^{-\nu}(\mu)\vert \mu$
in $A_{\inf}$, the multiplication by $\mu\varphi^{-\nu}(\mu)^{-1}$
induces an isomorphism 
$B_{\fU}/(\varphi^{-\nu}(\mu),p^n)\xrightarrow{\cong}
(B_{\fU}/(\mu,p^n))[\varphi^{-\nu}(\mu)]$.
This completes the proof of the latter claim of the lemma.
\end{proof}

\begin{lemma}[cf.~{\cite[Lemma 7.10]{BMS}}]\label{lem:KoszulExpFormula}
Let $R$ be a ring, let $d$ be a positive integer, let $\ug=(g_1,\ldots,g_d)\in R^d$,
and suppose that we are given $h_2,\ldots, h_d\in R$ such that
$g_i=g_1h_i$ $(i=2,\dots, d)$. Then, for an $R$-module $M$ and $n\in \N$,
we have a canonical isomorphism functorial in $M$
$$H^n(K^{\bullet}(\ug;M))\cong M[g_1]^{\binom {d-1}{n}}\oplus
(M/g_1M)^{\binom {d-1}{n-1}},$$
where $K^{\bullet}(\ug;M)$ denotes the Koszul
complex of $M$ with respect to $g_i\cdot \id_M$ $(i=1,\ldots, d)$.
\end{lemma}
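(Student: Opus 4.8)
The plan is to reduce to the one-variable Koszul complex by an invertible change of basis in the exterior algebra. Write $K^{\bullet}(\ug;M)=M\otimes_R\bigwedge^{\bullet}R^d$ with the standard basis $e_1,\dots,e_d$ of $R^d$ and differential $d(m\otimes e_{\bmI})=\sum_i g_im\otimes e_i\wedge e_{\bmI}$, so that the differential is left wedging with the Koszul element $\sum_i g_ie_i\in R^d$. The hypothesis $g_i=g_1h_i$ gives $\sum_i g_ie_i=g_1\bigl(e_1+\sum_{i\geq 2}h_ie_i\bigr)$. Hence first I would introduce the new $R$-basis $f_1=e_1+\sum_{i\geq 2}h_ie_i$, $f_i=e_i$ $(i\geq 2)$ of $R^d$, obtained from $e_1,\dots,e_d$ by a unipotent (hence invertible) transformation; it induces an $R$-algebra automorphism of $\bigwedge^{\bullet}R^d$ and therefore an isomorphism of complexes $K^{\bullet}(\ug;M)\cong K^{\bullet}((g_1,0,\dots,0);M)$, functorial in $M$ and depending only on the given data $h_2,\dots,h_d$.

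Next I would use the multiplicativity of the Koszul complex: $K^{\bullet}((g_1,0,\dots,0);M)\cong K^{\bullet}(g_1;M)\otimes_R K^{\bullet}(0;R)^{\otimes_R(d-1)}$, where $K^{\bullet}(0;R)$ is the two-term complex $R\xrightarrow{0}R$ concentrated in degrees $0$ and $1$. Since its differential vanishes, its $(d-1)$-fold tensor power over $R$ is the complex $\bigwedge^{\bullet}R^{d-1}$ with zero differential, whose degree-$k$ term is the finite free module $R^{\binom{d-1}{k}}$.

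Then I would take cohomology. The one-variable complex has $H^0(K^{\bullet}(g_1;M))=M[g_1]$ and $H^1(K^{\bullet}(g_1;M))=M/g_1M$, with all other cohomology groups zero. Because the second tensor factor is a bounded complex of finite free $R$-modules with zero differential, the tensor-product complex splits as $\bigoplus_{k=0}^{d-1}\bigl(K^{\bullet}(g_1;M)[-k]\bigr)^{\binom{d-1}{k}}$, so $H^n$ of the tensor product equals $\bigoplus_k H^{n-k}(K^{\bullet}(g_1;M))^{\binom{d-1}{k}}$; only the summands $k=n$ and $k=n-1$ contribute, yielding $M[g_1]^{\binom{d-1}{n}}\oplus(M/g_1M)^{\binom{d-1}{n-1}}$, naturally in $M$.

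There is no serious obstacle here; the only points requiring care are bookkeeping ones — keeping the cohomological (cochain) grading and the sign conventions in the Koszul differential consistent throughout, and checking that the change-of-basis automorphism identifies the two Koszul complexes on the nose (not merely up to quasi-isomorphism), so that the induced isomorphism on $H^n$ is genuinely canonical and functorial as asserted.
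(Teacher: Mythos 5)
Your argument is correct and takes a genuinely different route from the paper's. The paper argues by induction on $d$: it exhibits $K^{\bullet}(\ug;M)$ as an iterated cone over $K^{\bullet}(g_2,\ldots,g_{d-1};M)$ with $g_1\cdot\id$ horizontal and $g_d\cdot\id$ vertical, uses $g_d=g_1h_d$ via the diagonal $h_d\cdot\id$ to produce an explicit contracting homotopy of the induced morphism of cones (its preparatory facts (i) and (ii)), splits off the $d$-th Koszul factor, and recurses. You instead perform one unipotent change of basis in $\wedge^{\bullet}R^d$, the $R$-algebra automorphism $\Psi$ with $e_1\mapsto e_1+\sum_{i\geq 2}h_ie_i$ and $e_i\mapsto e_i$ for $i\geq 2$; the ``on the nose'' identification you flagged does go through, since the Koszul differential on $M\otimes_R\wedge^{\bullet}R^d$ is left wedging by the Koszul element and $\Psi(g_1e_1)=\sum_ig_ie_i$, so $\id_M\otimes\Psi$ intertwines the two differentials exactly and gives a chain isomorphism $K^{\bullet}((g_1,0,\ldots,0);M)\cong K^{\bullet}(\ug;M)$. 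The left side then splits at the chain level as $\bigoplus_kK^{\bullet}(g_1;M)[-k]^{\binom{d-1}{k}}$ because the remaining tensor factor $K^{\bullet}(0;R)^{\otimes(d-1)}$ has zero differential and finite free terms. Both routes produce a chain-level splitting, canonical and functorial in $M$ since every auxiliary map is of the form $\id_M\otimes(-)$; yours is crisper (one Gaussian elimination instead of $d-1$ homotopy splittings), while the paper's keeps each $h_i$'s role and the cone-splitting mechanism explicit through its two stated sublemmas, in a style matching the surrounding arguments.
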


\begin{proof} We use the following two facts:\par
(i) Let $h^{\bullet}\colon C^{\bullet}\to D^{\bullet}$ be a 
morphism of complexes of $R$-modules and suppose that we are given 
$R$-linear homomorphisms $k^n\colon C^n\to D^{n-1}$ $(n\in \Z)$
satisfying $h^n=k^{n+1}d^n+d^{n-1}k^n$ $(n\in \Z)$
(i.e., a homotopy between $h^{\bullet}$ and $0$). Then we have an isomorphism 
$\Phi^{\bullet}\colon C^{\bullet}[1]\oplus D^{\bullet}
\xrightarrow{\cong}\Cone(h^{\bullet})$ defined by 
$\Phi^n(x,y)=(x,-k^{n+1}(x)+y)$ $(n\in \Z, x\in C^{n+1}, y\in D^n)$. \par
(ii) Suppose that we are given a commutative diagram of complexes
of $R$-modules
\begin{equation*}
\xymatrix@R=15pt@C=50pt{
C_1^{\bullet}\ar[r]^{f^{\bullet}}\ar[d]_{h_1^{\bullet}}&
C_2^{\bullet}\ar[d]^{h_2^{\bullet}}\ar[dl]_{\ell^{\bullet}}\\
D_1^{\bullet}\ar[r]^{g^{\bullet}}&D_2^{\bullet},
}
\end{equation*}
and let $h^{\bullet}$ be the morphism $\Cone(f^{\bullet})\to \Cone(g^{\bullet})$
induced by $h_i^{\bullet}$ $(i=1,2)$. Then the $R$-linear homomorphisms
$k^n\colon C_1^{n+1}\oplus C_2^n\to D_1^{n}\oplus D_2^{n-1};(x,y)\mapsto
(\ell^{n}(y),0)$ $(n\in\Z)$ satisfy $h^n=k^{n+1}d^n+d^{n-1}k^n$ $(n\in\Z)$,
i.e., give a homotopy between $h^{\bullet}$ and $0$.\par
We can prove the lemma by induction on $d$ by applying (ii) to the diagram
\begin{equation*}
\xymatrix@C=40pt{
K^{\bullet}(g_2,\ldots, g_{d-1};M)\ar[r]^{g_1\cdot \id}\ar[d]_{g_d\cdot\id}&
K^{\bullet}(g_2,\ldots, g_{d-1};M)\ar[d]^{g_d\cdot \id}
\ar[dl]_{h_d\cdot \id}\\
K^{\bullet}(g_2,\ldots,g_{d-1};M)\ar[r]^{g_1\cdot \id}&
K^{\bullet}(g_2,\ldots, g_{d-1};M)
}
\end{equation*}
and then (i) to the homotopy obtained.
\end{proof}

\begin{lemma}\label{lem:SmoothAlgAzero}
For a $(p,\pq)$-adically smooth $A_{\inf}$-algebra
$R$ (Definition \ref{def:IadicProperties} (1)), $R/(\varphi^{-\nu}(\mu),p^{m+1})$ $(\nu,m\in \N)$
has no non-zero element annihilated by all $[\varphi^{-l}(\varepsilon-1)]$ $(l\in\N)$.
\end{lemma}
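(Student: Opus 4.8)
The goal is to show that $M:=R/(\varphi^{-\nu}(\mu),p^{m+1})$ has no non-zero element annihilated by all $[\varphi^{-l}(\varepsilon-1)]$, $l\in\N$. Put $\pi:=\varepsilon-1\in\CO^{\flat}$, so that modulo $p$ one has $\mu\equiv\pi$, $\varphi^{-l}(\mu)\equiv\pi^{1/p^{l}}$, $[\varphi^{-l}(\varepsilon-1)]\equiv\pi^{1/p^{l}}$ and $\pq\equiv\pi^{p-1}$ in $\CO^{\flat}$ (the last from $\varphi(\mu)=\pq\mu$ and $\CO^{\flat}$ being a domain), and note $[\varphi^{-l}(\varepsilon-1)]=[\varphi^{-(l+1)}(\varepsilon-1)]^{p}$. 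First I would record that the ideal $(\varphi^{-\nu}(\mu),p^{m+1})$ is open for the $(p,\pq)$-adic topology: since $p-1\geq p^{-\nu}$ one has $\pi^{p-1}\in\pi^{1/p^{\nu}}\CO^{\flat}$, whence $\pq\in(\varphi^{-\nu}(\mu),p)$; writing $\pq=\varphi^{-\nu}(\mu)c+pd$ and taking $(m{+}1)$st powers gives $\pq^{m+1}\in(\varphi^{-\nu}(\mu),p^{m+1})$, and a binomial count then yields $(p,\pq)^{2m+1}\subseteq(\varphi^{-\nu}(\mu),p^{m+1})$. Hence $M$ does not change upon replacing $R$ by its $(p,\pq)$-adic completion, so we may assume $R$ is $(p,\pq)$-adically complete and separated; being $(p,\pq)$-adically smooth it is $(p,\pq)$-adically flat over $A_{\inf}$, and by Lemma~\ref{lem:FlatAinfAlgProperties} the sequence $\varphi^{-\nu}(\mu),p^{m+1}$ is regular on $R$. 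Thus $S:=R/\varphi^{-\nu}(\mu)R$ is $p$-torsion free, the $p$-adic filtration of $M=S/p^{m+1}S$ has graded pieces isomorphic to $R_{0}:=S/pS=R/(\varphi^{-\nu}(\mu),p)$, and regularity forces any non-zero element of $M$ to have non-zero image in some graded piece; so it suffices to treat $R_{0}$, on which $[\varphi^{-l}(\varepsilon-1)]$ acts through its image in $A_{\inf}/(\varphi^{-\nu}(\mu),p)$.

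Now $B:=A_{\inf}/(\varphi^{-\nu}(\mu),p)=\CO^{\flat}/\varpi\CO^{\flat}$ with $\varpi:=\pi^{1/p^{\nu}}$, and $\varpi$ carries compatible $p$-power roots $\varpi^{1/p^{j}}=\pi^{1/p^{\nu+j}}$; moreover $[\varphi^{-l}(\varepsilon-1)]$ acts on the $B$-algebra $R_{0}$ as $0$ for $l\leq\nu$ and as $\varpi^{1/p^{l-\nu}}$ for $l>\nu$. Since $\pq\in(\varphi^{-\nu}(\mu),p)$, base change identifies $R_{0}$ with $(R/(p,\pq))\otimes_{A_{\inf}/(p,\pq)}B$, which is a smooth $B$-algebra. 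So we are reduced to the following: for $B=V/\varpi V$ with $V$ a valuation ring carrying compatible $p$-power roots of $\varpi$, a smooth $B$-algebra $A'$ has no non-zero element killed by all $\varpi^{1/p^{j}}$, $j\geq1$. Write $\Theta(A')$ for the submodule of such elements. One checks $\Theta(-)$ is quasi-coherent and that $\Theta(A')\otimes_{A'}A'[1/g]$ maps into $\Theta(A'[1/g])$ for any $g$; since $\Theta(A')\hookrightarrow A'$ and the $D(g_{j})$ cover $\Spec A'$, it suffices to kill each $\Theta(A'[1/g_{j}])$, i.e. the assertion is Zariski-local. Using the local structure of smooth morphisms (standard smooth form, hence étale over an affine space) together with the fact that étale morphisms are, Zariski-locally on source and target, finite étale — hence, after a further localization, finite free — we may assume $A'$ is a finite free module over a localization $B[\underline{T}][1/h]$ of a polynomial ring over $B$. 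A free module over a ring with vanishing $\Theta$ has vanishing $\Theta$, so it remains to prove $\Theta(B[\underline{T}][1/h])=0$.

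For this, $B=V/\varpi V$ is a Gaussian ring, being a quotient of the valuation ring $V$, hence Armendariz (and so is $B[\underline{T}]$), and every finitely generated ideal of $B$ is principal, as for $V$. Let $c(h)=(b_{0})$ be the content ideal of $h$. If $b_{0}=0$ then $h=0$, and if $b_{0}$ is a non-zero non-unit then, lifting to $V$, some power $b_{0}^{N}$ lies in $\varpi V$, so $h$ is nilpotent; in either case $B[\underline{T}][1/h]$ is the zero ring and there is nothing to prove. If $b_{0}\in B^{\times}$, then some $B$-coefficient of $h$ is a unit, so by Armendariz $h$ is a non-zero-divisor in $B[\underline{T}]$ and $B[\underline{T}]\hookrightarrow B[\underline{T}][1/h]$; since $\varpi^{1/p^{j}}f=0$ in $B[\underline{T}]$ for all $j$ forces each coefficient of $f$ to lie in $\Theta(B)$, and $\Theta(B)=0$ by the valuation estimate (if $v(b)\geq v(\varpi)(1-p^{-j})$ for all $j$ then $v(b)\geq v(\varpi)$, i.e. $b\in\varpi V$), we obtain $\Theta(B[\underline{T}][1/h])=0$, completing the argument. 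I expect the genuine difficulty to be precisely this passage to a localization of the polynomial ring: flatness alone is insufficient — an infinite intersection of torsion submodules need not commute with $\otimes$, as $\Q$ over $\Z$ shows — so one must combine the structural reduction to "finite free over a localized polynomial ring" with the Gaussian/Armendariz property of $\CO^{\flat}/\pi^{1/p^{\nu}}$ in order to control $\ker(B[\underline{T}]\to B[\underline{T}][1/h])$.
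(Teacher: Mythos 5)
Your outer skeleton is the same as the paper's: reduce to $m=0$ by $p$-torsion-freeness of $R/\varphi^{-\nu}(\mu)R$ (your completion step to make Lemma \ref{lem:FlatAinfAlgProperties} applicable is a reasonable precaution), pass to the smooth algebra $R_0=R/(\varphi^{-\nu}(\mu),p)$ over the base $B=A_{\inf}/(\varphi^{-\nu}(\mu),p)\cong\CO^{\flat}/\pi^{1/p^{\nu}}\cong\CO/(\zeta_{\nu+1}-1)$, and finish with the valuation estimate on $B$. The Zariski-localness of the vanishing of $\Theta$, the McCoy/content argument for $B[\underline{T}][1/h]$ with $h$ of unit content, and the valuation computation are all fine. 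The genuine gap is the structural reduction in the middle: the assertion that \emph{étale morphisms are, Zariski-locally on source and target, finite étale} is false. For instance, with $2$ invertible, the map $\Spec\,k[s,(s-1)^{-1},s^{-1}]\to\Spec\,k[t]$, $t\mapsto s^{2}$, is étale at the point $s=-1$ over $t=1$, but for any $g$ with $g(-1)\neq0$ and any $h$ with $h(1)\neq0$ the induced map on basic opens is not finite, since $(s-1)^{-1}=(s+1)/(t-1)$ is not integral over $k[t]_{(t-1)}$ (the second preimage of $t=1$ is missing and cannot be recovered by Zariski shrinking; this is exactly why splitting quasi-finite maps requires étale localization/henselization). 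What the standard étale structure theorem actually gives is only that, locally on the source, $A'$ is a localization at a single element $u$ of a finite free module $D=C[x]/(f)$ over $C=B[\underline{T}]$ ($f$ monic); and that last inversion is precisely what your argument cannot absorb: as you yourself observe, $\Theta$ does not commute with inverting a possible zerodivisor, and the norm trick $D[1/u]=D\otimes_{C}C[1/N_{D/C}(u)]$ breaks down exactly when $N_{D/C}(u)$ fails to be a nonzerodivisor. So the statement you need — every smooth $B$-algebra is, Zariski-locally, finite free as a module over some $B[\underline{T}][1/h]$ — is left unproved; it amounts to a Zariski-local relative Noether normalization with flatness over the non-noetherian local ring $B$, which is not a formal consequence of smoothness (over a field one would invoke Noether normalization plus miracle flatness, neither of which is available here as stated).

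The paper avoids this entirely: after the same dévissage to $m=0$, it quotes from the proof of \cite[Lemma 8.10]{BMS} that the smooth algebra $R/(\varphi^{-\nu}(\mu),p)$ over $\CO/(\zeta_{\nu+1}-1)$ is \emph{free as a module} over the base, which collapses the whole middle of your argument and reduces the claim at once to the base ring, where the valuation estimate (your last step) applies. To repair your proof you should either import that module-freeness statement in place of your "finite free over a localized polynomial ring" reduction, or supply an actual proof of that local finiteness claim; the Gaussian/Armendariz machinery you set up only handles inverting polynomials with unit content in $B[\underline{T}]$ and does not reach the localizations produced by the étale structure theorem.
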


\begin{proof}
For $\nu\in \N$,  $R/\varphi^{-\nu}(\mu)$
is $p$-torsion free by Proposition \ref{lem:FlatAinfAlgProperties} (3).
Hence the claim is reduced to the case $m=0$ by induction. 
When $m=0$, the smooth algebra $R/(\varphi^{-\nu}(\mu),p)$
over $A_{\inf}/(\varphi^{-\nu}(\mu),p)
\cong A_{\inf}/(\varphi^{-\nu}(\mu),\pq)
\cong \CO/(\zeta_{\nu+1}-1)$ is free as a module
(\cite[the proof of Lemma 8.10]{BMS}). Thus the claim is reduced to the corresponding one for
$\CO/(\zeta_{\nu+1}-1)$, which is verified by using the valuation 
of $\CO$. 
\end{proof}

\begin{proof}[Proof of Proposition \ref{prop:KoszulAOmega}]
 It suffices to show that the restriction 
of the morphism \eqref{eq:KoszulBKFLocCohMap} to 
$\fX_{\AffZar}$ satisfies the conditions (i) and (ii) 
in Lemma \ref{lem:LetaSheafQI} for
$R=A_{\inf}$, $a=\mu$, and $J=\Ker(A_{\inf}\to W(k))$.
The condition (i) holds by Proposition 
\ref{prop:KoszulProetProjRlim}. 
Put $J'=\sum_{l\in \N}[\varphi^{-l}(\varepsilon-1)]A_{\inf}$,
which satisfies $J'=(J')^2\subset J^2$, and
$\CKb=\varprojlim_{\N}K_{\Lambda}^{\bullet}(\iota^*\nu_{\infty*}\ubM)$. For the second claim, we show 
$(\CH^r(\CKb/\mu))(\fU)[J']=0$ for $r\in \N$ and $\fU\in \Ob \fX_{\AffZar}$.

By \eqref{eq:ProetGammaModIm},  Proposition \ref{prop:BKFProperties} (1), 
and Lemma \ref{lem:BKFModLocalStr}, we have
$$(\varprojlim_{\N}\iota^*\nu_{\infty*}\ubM)(\fU)
\cong \varprojlim_{\N}\ubM(U_{\infty})\cong\bM(U_{\infty})
\cong \bM(X_{\infty})\otimes_{\bA_{\inf,X}(X_{\infty})}\bA_{\inf,X}(U_{\infty})$$
for $\fU\in \Ob \fX_{\AffZar}$, and 
$\bM(X_{\infty})$ is a finite projective $\bA_{\inf,X}(X_{\infty})$-module.
Moreover the $\Gamma_{\Lambda}^{\disc}$-equivariant homomorphism 
$\CF(D)/\mu\otimes_{D/\mu}\bA_{\inf,X}(X_{\infty})/\mu
\to \CF(\bA_{\inf,X}(X_{\infty}))/\mu=\bM(X_{\infty})/\mu$ induced by 
$p_{D,\ut}$ \eqref{eq:MorphAinfToD} is an isomorphism as $\CF$ is a crystal,
$\CF(D)/\mu$ is a finite projective $D/\mu$-module,
and the action of $\Gamma_{\Lambda}^{\disc}$
on $\CF(D)/\mu$ is trivial by Proposition \ref{thm:PrismCrysqHiggsEquiv} (1)
applied to $\CF_m$ and $M_m$ $(m\in \N)$.
Since $\CF(D)/\mu$ is a direct factor of a finite free
$D/\mu$-module, the claim is reduced to the case $\bM=\bA_{\inf,X}$, i.e.,
$\CF=\CO_{\fX/A_{\inf}}$. 

Since $\bA_{\inf,X}(U_{\infty})/\mu=
\varprojlim_n\bA_{\inf,X}(U_{\infty})/(\mu,p^n)$ for $\fU\in \Ob\fX_{\AffZar}$
by Lemma \ref{lem:FlatAinfAlgProperties} (3) and Remark \ref{rmk:AinfPerfFlatness},
the presheaf $\fU\to \bA_{\inf,X}(U_{\infty})/\mu$
on $\fX_{\AffZar}$ is a sheaf by Lemma \ref{lem:AinfOpenSub}.
This implies that $\CKb(\fU)/\mu\to (\CKb/\mu)(\fU)$
is an isomorphism for $\fU\in \Ob\fX_{\AffZar}$.
Since $\mfi\colon \fX\to\fY$ is a homeomorphism,
Lemma \ref{lem:KoszulCohDescription} implies that $\fU\mapsto 
H^r((\CKb/\mu)(\fU))\cong H^r(\CKb(\fU)/\mu)$ is a 
sheaf on $\fX_{\AffZar}$, whence $(\CH^r(\CKb/\mu))(\fU)\cong
H^r(\CKb(\fU)/\mu)$ for $\fU\in \Ob\fX_{\AffZar}$. 
Lemma \ref{lem:KoszulCohDescription} combined with \eqref{lem:SmoothAlgAzero}
further shows $H^r(\CKb(\fU)/\mu)[J']=0$. This completes the proof. 
\end{proof}

To finish the proof of Theorem \ref{thm:PrismCohAinfCohLocComp}, 
it remains to prove the following.
Recall that we have assumed that $\mathtt i\colon \fX\to \fY$ is a lifting
of $\fX$ before Proposition \ref{prop:KoszulProetProjRlim}.

\begin{proposition}\label{prop:qDolbeaultGalCoh}
The morphism $c_{\mti,\CF}$ \eqref{eq:qdRKoszulMorLoc} is a quasi-isomorphism.
\end{proposition}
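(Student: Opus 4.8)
The plan is to reprove, in a self-contained way, the local statement \cite[Proposition 1.29, Corollary 2.25]{MT}, organised so that one never has to commute $R\Gamma(\fU,-)$ with $L\eta_\mu$.

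By Lemma~\ref{lem:qdRKoszulComp} the morphism $\hgamma_{\mti,\CF}$ \eqref{eq:qdRcpxKoszul} is an isomorphism, and $c_{\mti,\CF}$ is by construction the composite of $\hgamma_{\mti,\CF}$ with $\eta_\mu$ of $\varprojlim_\N K^\bullet_\Lambda(\delta_{\mti,\CF})\colon K^\bullet_\Lambda(\varprojlim_\N v_*\uCM)\to K^\bullet_\Lambda(\varprojlim_\N\iota^*\nu_{\infty*}\ubM)$; hence it suffices to show that $\eta_\mu$ of this last morphism is a quasi-isomorphism. Restricting to $\fX_{\AffZar}$, I would identify, for each affine open $\fU=\Spf(A_\fU)$ of $\fX$ and using \eqref{eq:ProetGammaModIm}, Proposition~\ref{prop:BKFProperties}, Lemma~\ref{lem:BKFModLocalStr}, \eqref{eq:CrysBKFMap} and \eqref{eq:qHigBKFmap}, the source with $K^\bullet_\Lambda(M_\fU)$, where $M_\fU:=\CF(D_\fU)=\CF(B_\fU)$ carries the action of $\gamma_i\in\Gamma^{\disc}_\Lambda$ by $\gamma_{D_\fU,i}=\gamma_{B_\fU,i}$ (here $D_\fU=B_\fU$ since $\mti$ is a lifting, and one uses Theorem~\ref{thm:PrismCrysqHiggsEquiv}(1)), and the target with $K^\bullet_\Lambda\bigl(M_\fU\otimes_{B_\fU}\bA_{\inf,X}(U_\infty)\bigr)$ (using that $\CF$ is a crystal, so $\CF(\bA_{\inf,X}(U_\infty))=M_\fU\otimes_{B_\fU}\bA_{\inf,X}(U_\infty)$), on which $\Gamma^{\disc}_\Lambda$ acts through the right $\Gamma_\Lambda$-action on $U_\infty$ — which restricts to the previous action on $M_\fU=M_\fU\otimes 1$ via $p_{D,\ut}$ \eqref{eq:MorphAinfToD} — the map $\delta_{\mti,\CF}$ becoming $m\mapsto m\otimes 1$. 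Since each $K^r_\Lambda(-)$ is a finite direct sum of copies of the underlying module and, on affine opens, all sheaves in sight are quasi-coherent (over $\fD_\fU=\Spf(B_\fU)\cong\fX$, which is Noetherian modulo $(p,\pq)^{m+1}$; resp.\ by Lemma~\ref{lem:BKFGammaRepSheaf}) hence have no higher Zariski cohomology, the same holds after applying $\eta_\mu$, and it is enough to show that $\eta_\mu K^\bullet_\Lambda(\delta_{\mti,\CF})$ evaluated on each such $\fU$ is a quasi-isomorphism; the bookkeeping with the limit over $\N$ is routine, as in \cite{MT}.

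The heart of the matter is then the following local computation, and the point where the argument genuinely uses that $\CF$ has coefficients rather than being constant (Lemma~\ref{lem:KoszulCohDescription}). Fix $\fU$. By Lemma~\ref{lem:AinfUinftyBasis}, modulo $(p,\pq)^{m+1}$ the module $\bA_{\inf,X,m}(U_\infty)$ is free over $B_{\fU,m}$ on the Kummer basis $\{t^s=\prod_i[(t_{A,i}^\flat)^{r_i}]\}$, with $\gamma_i$ acting $B_{\fU,m}$-semilinearly through $\gamma_{B_\fU,i}$ and scaling $t^s$ by the root of unity $[\varepsilon^{s_i}]\in A_{\inf}$; consequently $K^\bullet_\Lambda\bigl(M_\fU\otimes_{B_\fU}\bA_{\inf,X}(U_\infty)\bigr)$ decomposes, compatibly with the $(p,\pq)$-adic limit, as the weight-$0$ summand $C^\bullet_0$ — which is exactly the source complex $K^\bullet_\Lambda(M_\fU)$, onto which $K^\bullet_\Lambda(\delta_{\mti,\CF})$ is the inclusion — together with sub-Koszul-complexes $C^\bullet_s$ for the weights $s\neq 0$. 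As $\eta_\mu$ commutes with (completed) direct sums of complexes of modules, it remains to prove that $\eta_\mu C^\bullet_s$ is acyclic for every $s\neq 0$. Let $\nu\geq 1$ be the largest $p$-adic denominator occurring among the $s_i$; writing $\gamma_{B_\fU,i}=\id+t_i\mu\theta_i$ on $M_\fU$, $[\varepsilon^{s_i}]=1+\varphi^{-\nu(s_i)}(\mu)u_i$ (with $u_i$ a unit and $\nu(s_i)\leq\nu$), and using $\varphi^{-\nu}(\mu)\mid\mu$ and $\varphi^{-\nu}(\mu)\mid\varphi^{-\nu(s_i)}(\mu)$, one finds that every operator $\gamma_i-1$ on $M_\fU\,t^s$ equals $\varphi^{-\nu}(\mu)$ times a $B_\fU$-linear operator $e_i$, and that for any $i_0$ with $\nu(s_{i_0})=\nu$ the operator $e_{i_0}$ differs from $u_{i_0}\cdot\id_{M_\fU}$ by a $(p,\pq)$-adically topologically nilpotent operator — here one uses the quasi-nilpotence of the $q$-Higgs field $\theta_{i_0}$ (Definition~\ref{def:qNilpQHiggs}) together with $\theta_{i_0}(t_{i_0}x)=q^p t_{i_0}\theta_{i_0}(x)+\pq t_{i_0}x$, which forces $t_{i_0}\theta_{i_0}$ to be topologically nilpotent on the $(p,\pq)$-adically complete module $M_\fU$ — so $e_{i_0}$ is an automorphism. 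The $e_i$ commute ($\varphi^{-\nu}(\mu)$ is a central non-zero-divisor and $M_\fU\,t^s$ is $\varphi^{-\nu}(\mu)$-torsion-free), so $e^\bullet:=K^\bullet_\Lambda(e_1,\ldots;M_\fU\,t^s)$ is a Koszul complex one of whose operators is invertible, hence acyclic; since $d_{C^\bullet_s}=\varphi^{-\nu}(\mu)\,d_{e^\bullet}$ and $C^\bullet_s$ is $\varphi^{-\nu}(\mu)$-torsion-free, this gives $H^i(C^\bullet_s)\cong\mathrm{im}(d^{i-1}_{e^\bullet})\big/\varphi^{-\nu}(\mu)\,\mathrm{im}(d^{i-1}_{e^\bullet})$, which is killed by $\varphi^{-\nu}(\mu)$ and hence by $\mu$. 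As $C^\bullet_s$ is a complex of $\mu$-torsion-free modules ($B_\fU$ is $\mu$-torsion-free by Lemma~\ref{lem:FlatAinfAlgProperties}(2) and Remark~\ref{rmk:AinfPerfFlatness}, and $M_\fU$ is finite projective over $B_\fU$), Lemma~\ref{lem:LetaSheafQI} applied with $\CKb_1=0$, $\CKb_2=C^\bullet_s$, $a=\mu$ and $J=\mu A_{\inf}$ shows $\eta_\mu C^\bullet_s$ acyclic, which finishes the proof. The main obstacle is precisely the verification that $e_{i_0}$ is an automorphism (and, more generally, that all the Koszul operators on the nonzero-weight pieces factor through $\varphi^{-\nu}(\mu)$): this is where the $q$-connection — trivial modulo $\mu$ but not on the nose — has to be controlled, and where the quasi-nilpotence of the $q$-Higgs field enters; the reductions to affine opens and the limit bookkeeping over $\N$ are comparatively routine.
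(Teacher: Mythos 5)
Your overall strategy is the same as the paper's: reduce via Lemma \ref{lem:qdRKoszulComp} to $\eta_\mu$ of $K^\bullet_\Lambda(\delta_{\mti,\CF})$, evaluate on affine opens, decompose $\bM(U_\infty)\cong M_\fU\otimes_{B_\fU}\bA_{\inf,X}(U_\infty)$ along the Kummer basis of Lemma \ref{lem:AinfUinftyBasis} into the weight-$0$ part (the image of $\delta_{\mti,\CF}$) and nonzero-weight parts, and kill the latter using invertibility of $1+(\text{Higgs term})$, which rests on quasi-nilpotence (the paper isolates this as Lemma \ref{lem:HiggsHNilp}). Your per-weight computation is essentially the paper's: your $e_{i_0}$ is, up to the unit $u_{i_0}$, the paper's $g_{r_{i_0},i_0}=1+[\varepsilon^{r_{i_0}}]\eta_{r_{i_0}}t_{i_0}\theta_{M_\fU,i_0}$.

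The genuine gap is the passage from the individual weights to the $(p,\pq)$-adically \emph{completed} direct sum. You assert that ``$\eta_\mu$ commutes with (completed) direct sums'' and then conclude from acyclicity of each $\eta_\mu C^\bullet_s$; but neither $\eta_\mu$ nor Koszul cohomology commutes with completed infinite direct sums, and since your exponent $\nu(s)$ is unbounded as $s$ varies, the per-weight statements ($H^i(C^\bullet_s)$ killed by $\varphi^{-\nu(s)}(\mu)$, hence $\eta_\mu C^\bullet_s$ acyclic) do not formally assemble: given a cocycle $(x_s)_s$ in the completion with $\mu x_s=d(w_s)$ weightwise, one must check that the $w_s$ can be chosen converging to $0$, which is exactly the uniformity your argument never addresses. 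The paper avoids this by working on the completion directly: it partitions the nonzero weights into $\sharp\Lambda$ blocks $\CS_i$ with $r_i\neq 0$ on $\CS_i$, and on the completed block $M_i$ constructs a single $A_{\inf}$-linear (hence continuous, hence completion-compatible) operator $h_i$ with $(\gamma_i-1)h_i=h_i(\gamma_i-1)=\mu$; then $\gamma_i-1$ is injective on $M_i$ with $\mu M_i\subset(\gamma_i-1)M_i$, and the cone description of $K^\bullet_\Lambda(M_i)$ over the subcomplex in the remaining variables gives $\mu\cdot H^r(K^\bullet_\Lambda(M_i))=0$, whence $\eta_\mu$-acyclicity. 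Your own ingredients already produce this operator — on the weight-$\ur$ piece it is $\bigl(\mu/([\varepsilon^{r_i}]-1)\bigr)\,g_{r_i,i}^{-1}$, an integral endomorphism since $[\varepsilon^{r_i}]-1$ divides $\mu$ and $g_{r_i,i}^{-1}$ is integral by quasi-nilpotence — so the repair is to package them uniformly as a single operator on the completed block rather than arguing weight by weight after $\eta_\mu$. (Two smaller points: your topological-nilpotence claim for $t_{i_0}\theta_{i_0}$ needs the uniformity in $x$ coming from finite generation, which is what Lemma \ref{lem:HiggsHNilp} supplies; and the quasi-coherence/higher-cohomology remarks are unnecessary for the affine-local reduction, since quasi-isomorphism of complexes of sheaves can be checked on the section-wise cohomology presheaves over a basis of affines, using only that $\Gamma(\fU,-)$ commutes with $\eta_\mu$ on $\mu$-torsion-free complexes and with inverse limits.)
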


For $\fU\in \Ob \fX_{\AffZar}$, let $M_{\fU}$
denote $(\varprojlim_mv_{D*}\CM_m)(\fU)$, which is a finite projective
$B_{\fU}$-module equipped with the action of $\Gamma_{\Lambda}^{\disc}$
induced by that on $v_{D*}\CM_m$ defined after \eqref{eq:CrysBKFMap}.
Let $\theta_{M_{\fU},i}$ be the $(t_i\mu,\theta_{B_{\fU},i})$-connection
(Definition \ref{def:TwConnectionOneVar}) 
$(\varprojlim_mv_{D*}(\theta_{\CM_m,i}))(\fU)$ on $M_{\fU}$ (Construction \ref{constr:qHiggsModCpxShv} (1)).
Then the action of $\gamma_i\in\Gamma_{\Lambda}^{\disc}$ on 
$M_{\fU}$ is given by $1+t_i\mu\theta_{M_{\fU},i}$ and
it is $\gamma_{B_{\fU},i}$-semilinear by Lemma \ref{lem:TwConnSemilinEnd}.

\begin{lemma}\label{lem:HiggsHNilp}
There exists $N\in \N$ such that 
$(\theta_{M_{\fU,i}})^N(M_{\fU})\subset (p,\pq)M_{\fU}$
and $(t_i\theta_{M_{\fU,i}})^N(M_{\fU})\subset (p,\pq)M_{\fU}$
for every $i\in \Lambda$ and $\fU\in \Ob \fX_{\AffZar}$.
\end{lemma}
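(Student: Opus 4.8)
The plan is to reduce to a quasi-nilpotence statement about the $q$-Higgs module $(M_m,\utheta_{M_m})$ over $D_m$ that is already available, namely the quasi-nilpotence recorded in Definition \ref{def:qNilpQHiggs} (1) for objects of $q\HIG_{\qnilp}(D_m,\ut,\utheta_D)$, and then to upgrade this pointwise nilpotence to a uniform bound over all affine opens $\fU$ and all $i\in\Lambda$. Since $\fY$ is a lifting of $\fX$ we have $D_{\fU}=B_{\fU}$ and $D=B$, so $v_{D*}\CM_m$ is just the quasi-coherent $\CO_{\fY_m}$-module attached to the finite $B_m$-module $M_m=\CF_m(D,v_D)$, with $\theta_{\CM_m,i}$ coming from the $t_i\mu$-derivation $\theta_{D,i}=\theta_{B,i}$ via scalar extension along $B\to B_{\fU}$ (Construction \ref{constr:qHiggsModCpxShv} (1)). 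First I would treat the case $\fU=\fX$: here $M_{\fX}=\varprojlim_m M_m$ is a finite projective $B$-module, and the key point is that $(M_1,\utheta_{M_1})\in q\HIG_{\qnilp}(D_1,\ut,\utheta_D)$ by Theorem \ref{thm:PrismCrysqHiggsEquiv}, so for each generator of $M_1$ and each $i$ some power of $\theta_{M_1,i}$ kills it; taking a common $N_0$ over a finite generating set and over the finite set $\Lambda$ gives $\theta_{M_1,i}^{N_0}(M_1)=0$, i.e. $\theta_{M,i}^{N_0}(M)\subset (p,\pq)M$ where $M=M_{\fX}$ (using $D_1=D/(p,\pq)^2D$, so actually one should phrase this modulo $(p,\pq)$ after possibly enlarging $N_0$, or work with $M_0=\CF_0(D,v_D)$ directly). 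Since $t_i\in B^{\times}$ by \eqref{cond:frameAinf}, the element $t_i$ is a unit, so the bound for $t_i\theta_{M,i}$ is equivalent to the bound for $\theta_{M,i}$ after absorbing the unit, modulo the commutation relation $\theta_{M,i}\circ (t_i\cdot\id)=t_i\gamma_{B,i}(t_i)^{-1}(\dots)$; more carefully, one expands $(t_i\theta_{M,i})^N$ using the twisted Leibniz rule and observes that it lies in $B\cdot\theta_{M,i}^N(M)$ up to lower-order terms, so $\theta_{M,i}^{N_0}(M)\subset (p,\pq)M$ forces $(t_i\theta_{M,i})^{N_0+c}(M)\subset (p,\pq)M$ for a constant $c$ depending only on $p$ and $\sharp\Lambda$.

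The second step is to propagate the bound from $\fX$ to an arbitrary affine open $\fU$. The module $M_{\fU}$ is obtained from $M=M_{\fX}$ by base change along the $(p,\pq)$-adically étale (indeed localization-type) homomorphism $B=B_{\fX}\to B_{\fU}$, and $\theta_{M_{\fU},i}$ is the unique extension of $\theta_{M,i}$ along this map (Proposition \ref{prop:TwDerivEtaleExt}, applied to connections as in Construction \ref{constr:qHiggsModCpxShv} (1) and \eqref{eq:TwConnectionScExtFunct}). Since base change along $B\to B_{\fU}$ is exact and compatible with the operators, the inclusion $\theta_{M,i}^{N_0}(M)\subset (p,\pq)M$ tensors up to $\theta_{M_{\fU},i}^{N_0}(M_{\fU})\subset (p,\pq)M_{\fU}$; crucially the same $N_0$ works for every $\fU$ because it was extracted from $M$, not from $M_{\fU}$. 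The same remark applies to the $t_i\theta_{M_{\fU},i}$ bound, since $t_i$ maps to a unit in $B_{\fU}$ as well. Hence a single $N:=N_0+c$ works uniformly.

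The main obstacle I anticipate is purely bookkeeping rather than conceptual: carefully matching the indexing ``modulo $(p,\pq)^{m+1}$'' in $v_{D*}\CM_m$ against the quasi-nilpotence statement, which is phrased for the categories $q\HIG_{\qnilp}(D_n,\ut,\utheta_D)$ — one must either work with $n=0$ (so that $\theta_{M_0,i}^N=0$ on the nose) and then note that $\theta_{M,i}^N(M)\subset(p,\pq)M$ is exactly the reduction, or pass through the inverse limit $M=\varprojlim_m M_m$ and use that $M$ is finitely generated over $B$ to extract the generating set. The twisted Leibniz computation showing $(t_i\theta_{M_{\fU},i})^N$ is controlled by $\theta_{M_{\fU},i}^N$ is routine given \eqref{eq:TwDerivCond2} and $\theta_{M_{\fU},i}(t_j\mu)=0$ for $j\ne i$ together with $\theta_{B_{\fU},i}(t_i)=\pq$, and I would not grind through it in full; the point is only that each bracket $\theta_{M_{\fU},i}(t_i\cdot x)$ produces a $B_{\fU}$-multiple of $x$ plus a $B_{\fU}$-multiple of $\theta_{M_{\fU},i}(x)$, so a high enough power lands in $(p,\pq)M_{\fU}$.
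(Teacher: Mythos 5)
Your overall strategy matches the paper's (quasi-nilpotence of the $q$-Higgs module over $\fX$, a finite generating set to get a uniform $N$, then base change $B\to B_{\fU}$), but you miss the single observation that makes the whole argument close cleanly, and this leaves a genuine gap in two places.

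The paper begins by noting that $\theta_{B_{\fU},i}(B_{\fU})\subset \pq B_{\fU}$ (which follows as in Proposition \ref{prop:qPrismEnvTheta}), so that the reduction $\otheta_{M_{\fU},i}:=\theta_{M_{\fU},i}\bmod(p,\pq)$ is \emph{honestly $A_{\fU}/p$-linear}, not merely $\gamma$-semilinear: indeed both $\gamma_{B_{\fU},i}-\id = t_i\mu\theta_{B_{\fU},i}$ and $\theta_{B_{\fU},i}$ land in $\pq B_{\fU}$, so the connection relation $\theta_{M,i}(ax)=\gamma_{B,i}(a)\theta_{M,i}(x)+\theta_{B,i}(a)x$ collapses to linearity modulo $(p,\pq)$. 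This matters twice. First, your step from ``$\theta_{M_0,i}^{N_0}$ kills each of finitely many generators'' to ``$\theta_{M_0,i}^{N_0}(M_0)=0$'' is not automatic for a $\gamma$-semilinear operator, since $\theta^{N}(a x_j)$ has lower-order terms $\theta^{k}(x_j)$ with coefficients built from iterated $\theta_{B,i}(a)$; it is exactly $A_{\fU}/p$-linearity of $\otheta$ that justifies passing from generators to all of $M_0$. You flag the indexing/$(p,\pq)$ issue in a parenthetical, but do not identify the linearity as the fix. Second, and more visibly, your treatment of the $t_i\theta$ bound is where the missing observation shows: you expand $(t_i\theta_{M,i})^N$ via a twisted Leibniz rule and argue qualitatively for $(t_i\theta)^{N_0+c}(M)\subset(p,\pq)M$ ``for a constant $c$ depending on $p$ and $\sharp\Lambda$''. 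With the linearity of $\otheta_{M_{\fU},i}$ in hand this reduces to $(t_i\otheta_{M_{\fU},i})^N = t_i^N\,\otheta_{M_{\fU},i}^N$ on the nose, so $c=0$ and the bound is immediate; without it, your sketch leaves exactly the cancellation step unproved, and the constant $c$ as you state it (depending on $p$ and $\sharp\Lambda$) is not justified. So while the route is correct in outline and the base-change step is fine, the proof as written does not establish the two crucial reductions, and the fix is the opening line of the paper's proof: $\theta_{B_{\fU},i}(B_{\fU})\subset\pq B_{\fU}$, hence $\otheta_{M_{\fU},i}$ is $A_{\fU}/p$-linear.
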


\begin{proof}
By the same argument as the proof of \cite[10.3 (2)]{TsujiPrismQHiggs} 
(cf.~Proposition \ref{prop:qPrismEnvTheta}), we see
$\theta_{B_{\fU},i}(B_{\fU})\subset \pq B_{\fU}$, which implies that
$\otheta_{M_{\fU},i}:=(\theta_{M_{\fU},i}\bmod (p,\pq))$ is
$B_{\fU}/(p,\pq)=A_{\fU}/p$-linear.
Since the Higgs field $\utheta_{M_1}$ on $M_1=(v_{D*}\CM_1)(\fX)$
is quasi-nilpotent (Definition \ref{def:qNilpQHiggs}), $M_{\fX}/(p,\pq)\cong M_1$,
and $M_1$ is a finite $A/p$-module, there exists $N$ such that
$(\otheta_{M_{\fX},i})^N=0$ for all $i\in \Lambda$.
Since the homomorphism $M_{\fX}\to M_{\fU}$
is compatible with $\theta_{M_{\fX},i}$ and $\theta_{M_{\fU},i}$
and it induces an isomorphism $M_{\fX}\otimes_BB_{\fU}
\xrightarrow{\cong} M_{\fU}$, we have 
$(\otheta_{M_{\fU},i})^N=0$ for all $i\in \Lambda$
and $\fU\in \Ob \fX_{\AffZar}$. Since
$\otheta_{M_{\fU},i}$ is $A_{\fU}/p$-linear,
we have  $(t_i\otheta_{M_{\fU},i})^N=t_i^N(\otheta_{M_{\fU},i})^N=0$.
\end{proof}

\begin{proof}[Proof of Proposition \ref{prop:qDolbeaultGalCoh}]
For $\fU\in \Ob\fX_{\AffZar}$, we define $M_{\fU,\infty}$
to be the $\bA_{\inf,X}(U_{\infty})$-module
$(\varprojlim_{\N}\iota^*\nu_{\infty*}\ubM)(\fU)$
equipped with a semilinear $\Gamma_{\Lambda}^{\disc}$-action,
and let $c_{\fU}\colon M_{\fU}\to M_{\fU,\infty}$
be the section over $\fU$ of the inverse limit 
$\varprojlim_{\N}\delta_{\mti,\CF}
\colon \varprojlim_{\N}v_*\uCM\to
\varprojlim_{\N}\iota^*\nu_{\infty*}\ubM$ of 
$\delta_{\mti,\CF}$ \eqref{eq:qHigBKFmap}. 
By Proposition \ref{prop:BKFProperties} (1) and \eqref{eq:ProetGammaModIm}, we have
a $\Gamma_{\Lambda}^{\disc}$-equivariant $\bA_{\inf,X}(U_{\infty})$-linear
isomorphism $M_{\fU,\infty}\cong \bM(U_{\infty})$.
Hence, by the construction of $\delta_{\mti, \CF}$
\eqref{eq:qHigBKFmap}, $c_{\fU}$ can be identified with the morphism 
$\CF(B_{\fU})\to \CF(\bA_{\inf,X}(U_{\infty}))$
induced by the morphism 
$(A\to A_{U_{\infty}^+}\leftarrow \bA_{\inf,X}(U_{\infty}))
\to (A\to A_{\fU}\leftarrow B_{\fU})$ in $(\fX/A_{\inf})_{\prism}$ defined
by the $\delta$-homomorphism \eqref{eq:SmLiftToAinfInfty}. Hence $c_{\fU}$
induces a $\Gamma_{\Lambda}^{\disc}$-equivariant 
$\bA_{\inf,X}(U_{\infty})$-linear isomorphism 
$\bA_{\inf,X}(U_{\infty})\otimes_{B_{\fU}}M_{\fU}
\xrightarrow{\cong} M_{\fU,\infty}$. 

For $\ur=(r_i)_{i\in\Lambda}\in (\Z[\frac{1}{p}]\cap [0,p[)^{\Lambda}$,
let $\ut^{\flat\ur}$ denote $\prod_{i\in\Lambda}(t_{A,i}^{\flat})^{r_i}$,
let $M_{\ur}$ be the image of $[\ut^{\flat\ur}]\otimes M_{\fU}$
in $M_{\fU,\infty}$, which is $\Gamma_{\Lambda}^{\disc}$-stable,
and write $[\ut^{\flat\ur}]x$ for the image of $[\ut^{\flat\ur}]\otimes x$
$(x\in M_{\fU})$ in $M_{\fU,\infty}$. Let $M'_{\fU,\infty}$ be the
$(p,\pq)$-adic completion of the direct sum of $M_{\ur}$ $(\ur\neq \uzero)$.
Then we have a $\Gamma^{\disc}_{\Lambda}$-equivariant
$B_{\fU}$-linear isomorphism $M_{\fU}\oplus M'_{\fU,\infty}
\xrightarrow{\cong} M_{\fU,\infty}$ by Lemma \ref{lem:AinfUinftyBasis}. 
By Lemma \ref{lem:qdRKoszulComp}, it 
suffices to prove $H^r(\eta_{\mu}K_{\Lambda}^{\bullet}(M'_{\fU,\infty}))
\cong H^r(K_{\Lambda}^{\bullet}(M'_{\fU,\infty}))/(H^r(K_{\Lambda}^{\bullet}(M'_{\fU,\infty}))[\mu])$
vanishes, i.e., $\mu \cdot H^r(K_{\Lambda}^{\bullet}(M'_{\fU,\infty}))=0$ for all $r\in \N$.

Put $\eta_r=\mu([\varepsilon^r]-1)^{-1}\in A_{\inf}$ for $r\in \Z[\frac{1}{p}]\cap ]0,p[$.
Then, for $\ur\in (\Z[\frac{1}{p}]\cap [0,p[)^{\Lambda}$ and $i\in \Lambda$
such that $r_i\neq 0$, and $x\in M_{\fU}$, we have 
$(\gamma_i-1)([\ut^{\flat\ur}]x)
=[\varepsilon^{r_i}](x+\mu t_i\theta_{M_{\fU},i}(x))-x
=([\varepsilon^{r_i}]-1)(x+[\varepsilon^{r_i}]\eta_{r_i}t_i\theta_{M_{\fU},i}(x))$.
By Lemma \ref{lem:HiggsHNilp}, the endomorphism 
$g_{r,i}=1+[\varepsilon^r]\eta_rt_i\theta_{M_{\fU},i}$ 
of $M_{\fU}$ is an automorphism for $r\in \Z[\frac{1}{p}]\cap ]0,p[$
and $i\in\Lambda$.
Hence we can define an $A_{\inf}$-linear endomorphism $h_{\ur,i}$
of $M_{\ur,i}$ for $\ur$ and $i$ with $r_i\neq 0$ by 
$h_{\ur,i}([\ut^{\flat\ur}]x)=[\ut^{\flat\ur}]\eta_{r_i}g_{r_i,i}^{-1}(x)$
$(x\in M_{\fU})$, which satisfies
$h_{\ur,i}\circ(\gamma_i-1)=(\gamma_i-1)\circ h_{\ur,i}=\mu$.

Choose a decomposition $\sqcup_{i\in \Lambda}\CS_i$
of $(\Z[\frac{1}{p}]\cap [0,p[)^{\Lambda}\backslash\{\uzero\}$
such that $r_i\neq 0$ for every $i\in \Lambda$ and $\ur=(r_j)_{j\in \Lambda}\in \CS_i$,
let $M_i$ be the $(p,\pq)$-adic completion of $\oplus_{\ur\in \CS_i}M_{\ur}$,
and let $h_i$ be the endomorphism of $M_i$ induced by the $A_{\inf}$-linear
endomorphism $\oplus_{\ur\in \CS_i}h_{\ur,i}$ of the direct sum.
We have $M'_{\fU,\infty}=\oplus_{i\in\Lambda}M_i$ and 
$(\gamma_i-1)\circ h_i=h_i\circ(\gamma_i-1)=\mu$, which
implies that $\gamma_i-1$ on $M_i$ is injective and 
$\mu M_i\subset (\gamma_i-1)(M_i)$. Let $K^{\bullet}_i$ be the Koszul 
complex of $M_i$ with respect to $\gamma_j-1$ $(j\in\Lambda\backslash\{i\})$.
Then we have a quasi-isomorphism $K_{\Lambda}^{\bullet}(M_i)
\cong \text{Cone}(-(\gamma_i-1)\colon K^{\bullet}_i\to K_i^{\bullet})[-1]
\to K_i^{\bullet}/(\gamma_i-1)(K_i^{\bullet})[-1]$
and $\mu\cdot K^{\bullet}_i/(\gamma_i-1)(K_i^{\bullet})=0$.
Hence $\mu\cdot H^r(K_{\Lambda}^{\bullet}(M'_{\fU,\infty}))
=\mu\cdot\oplus_{i\in\Lambda} H^r(K_{\Lambda}^{\bullet}(M_i))=0$.
\end{proof}

\section{Comparison with $A_{\inf}$-cohomology with coefficients:
the global case}
\label{sec:compAinfcohGlobal}
In this section, we will derive the following global comparison theorem 
from the local one: 
Theorem \ref{thm:PrismCohAinfCohLocComp} by using cohomological descent.

\begin{theorem}\label{thm:PrismCohAinfCohGlobComp}
Let $\fX$ be a quasi-compact, separated, smooth, $p$-adic formal scheme over $\CO$.
Let $\CF\in \Ob(\Crystal^{\fproj}_{\prism}(\fX/A_{\inf}))$
(Definition \ref{def:PrismaticSiteCrystal} (2)) and put $\bM=\bM_{\BKF,\fX}(\CF)$ 
(Definition \ref{BKFFunctor}). Then we have the following canonical isomorphism 
in $D(\fX_{\Zar},A_{\inf})$ functorial in $\CF$.
\begin{equation}\label{eq:PrismCohAinfCohGlobComp}
Ru_{\fX/A_{\inf}*}\CF\xrightarrow{\;\sim\;} A\Omega_{\fX}(\bM)
\end{equation}
\end{theorem}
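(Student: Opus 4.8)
The plan is to globalize Theorem~\ref{thm:PrismCohAinfCohLocComp} via cohomological descent along a Zariski hypercovering, exactly as outlined in the introduction. First I would choose an affine Zariski hypercovering $\fX_{\bcdot}\to \fX$ together with a compatible system of small framed embeddings, i.e.\ an admissible framed embedding system $(\fX\xleftarrow{\pi_{\bcdot}}\fX_{\bcdot}\overset{\mfi_{\bcdot}}{\hookrightarrow}\fY_{\bcdot}/A_{\inf},\ut_{\bcdot})$ over $\Z_p[[\mu]]$ with base $(A_{\inf},(\pq))$, which exists because $\fX$ is quasi-compact and separated and each affine piece admits invertible $p$-adic coordinates; one can arrange each $\mfi_{[r]}$ to be a small framed embedding in the sense of Definition~\ref{def:AdmFramedSmEmbed}~(1). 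Writing $\CF_{[r]}=\pi_{[r]\prism}^{-1}\CF\in\Ob\Crystal_{\prism}^{\fproj}(\fX_{[r]}/A_{\inf})$ and $\bM_{[r]}=\bM_{\BKF,\fX_{[r]}}(\CF_{[r]})$, Proposition~\ref{prop:BKFFunctComp}~(2) gives $\bM_{[r]}\cong \bmpi_{[r]\proet}^*\bM$, so the local comparison maps $\kappa_{\CF_{[r]}}$ of Definition~\ref{def:PrismCohAinfCohLocCompMap} are available on each term.

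Next I would assemble these into a morphism of simplicial objects. Using the functoriality of $\kappa$ in $\fX$ (Proposition~\ref{prop:PrismAinfCompFunct}, reinterpreted as Proposition~\ref{prop:PrismAinfCompFunct} together with the cocycle conditions of Remarks~\ref{rmk:qdRInvSysFunctCocyc} and \ref{rmk:BKFGammaKoszulFuncCocyc}), the maps $\kappa_{\CF_{[r]}}$ are compatible with the cosimplicial structure maps, hence define a morphism in $D^+(\fX_{\bcdot,\Zar}^{\sim},A_{\inf})$ from $Ru_{\fX_{\bcdot}/A_{\inf}*}\CF_{\bcdot}$ to the cosimplicial object $([r]\mapsto A\Omega_{\fX_{[r]}}(\bM_{[r]}))$, using the functoriality \eqref{eq:AOmegaFunct} of $A\Omega$ and its cocycle compatibility. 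Applying $R\theta_*$ (the direct image along the augmentation of the simplicial topos, as in the Remark after Theorem~\ref{thm:PrismCohQHiggsCpxGlb}) and invoking cohomological descent, the left-hand side becomes $Ru_{\fX/A_{\inf}*}\CF$; indeed cohomological descent for the prismatic topos along a Zariski hypercovering gives $Ru_{\fX/A_{\inf}*}\CF\cong R\theta_{\Zar*}Ru_{\fX_{\bcdot}/A_{\inf}*}\theta_{\prism}^{-1}\CF$, which is precisely the content of \eqref{eq:PrismCohQHiggsCpxSimp2} combined with Theorem~\ref{thm:PrismCohQHiggsCpxGlb}. For the right-hand side I must check that $R\theta_{\Zar*}([r]\mapsto A\Omega_{\fX_{[r]}}(\bM_{[r]}))\cong A\Omega_{\fX}(\bM)$; this follows because $A\Omega_{\fX}(\bM)=L\eta_{\mu}R\varprojlim_{\N}R\nu_{\fX*}\ubM$, the functor $R\nu_{\fX*}$ of pro\'etale sheaves satisfies Zariski cohomological descent on $\fX$, and $L\eta_{\mu}$ and $R\varprojlim_{\N}$ commute with the relevant totalizations (the $L\eta_{\mu}$ point being the reason the local proof was arranged via \eqref{eq:PrismCohQHiggsCpxSimp2}); one reduces the statement about $L\eta_{\mu}$ to a stalkwise check using that $\fX_{\Zar}^{\sim}$ has enough points, as in Lemma~\ref{lem:LetaSheafQI}.

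Having produced the morphism \eqref{eq:PrismCohAinfCohGlobComp} and identified both sides after $R\theta_*$, it remains to show it is an isomorphism. This is where the hypercovering pays off: whether a morphism in $D^+(\fX_{\Zar},A_{\inf})$ is an isomorphism may be checked on stalks, hence after restriction to each open affine $\fU\subset\fX$, and by refining the hypercovering one reduces to the case $\fX=\fX_{[0]}$ affine with invertible $p$-adic coordinates, which is exactly Theorem~\ref{thm:PrismCohAinfCohLocComp}. More precisely, the augmented cosimplicial objects on both sides, after restriction to a member of the hypercovering, are split/augmented in a way that both sides compute the same totalization termwise isomorphic by the local theorem; so $R\theta_*$ of a termwise isomorphism is an isomorphism. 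Functoriality of \eqref{eq:PrismCohAinfCohGlobComp} in $\CF$, and its independence of the chosen embedding system, follow from the corresponding properties of $\kappa_{\CF_{[r]}}$ (Proposition~\ref{prop:PrismCohAinfCompMapIndep}) together with the fact that any two embedding systems are dominated by a third (fiber product of the $\fY_{\bcdot}$'s over $A_{\inf}$), by the standard argument. The main obstacle I anticipate is the bookkeeping needed to make $R\theta_*$ of the right-hand cosimplicial object genuinely compute $A\Omega_{\fX}(\bM)$ — i.e.\ verifying that $L\eta_{\mu}$ commutes with the simplicial totalization in this setting — and ensuring that all the cocycle compatibilities (Frobenius and tensor-product compatibilities of $\kappa$ from Proposition~\ref{prop:PrismAinfCohLocMapProdComp} and its Frobenius analogue are not needed for the bare isomorphism, but the structural-map cocycle conditions are) are genuinely respected so that the diagram of simplicial objects is well-defined; both are handled by reducing to stalks and to the affine case already treated.
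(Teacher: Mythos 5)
Your overall strategy (affine Zariski hypercovering with a simplicial small framed embedding, reduction to Theorem \ref{thm:PrismCohAinfCohLocComp} on each component, then cohomological descent) is exactly the paper's, but two steps as you state them have genuine gaps. First, you pass from the componentwise maps $\kappa_{\CF_{[r]}}$, ``compatible with the cosimplicial structure maps'' via the commutative squares of Proposition \ref{prop:PrismAinfCompFunct}, to ``a morphism in $D^+(\fX_{\bcdot,\Zar}^{\sim},A_{\inf})$''. Compatibility of derived-category morphisms component by component (and of the transition maps \eqref{eq:AOmegaFunct}, which only satisfy a cocycle condition up to canonical isomorphism in each $D(\fX_{[r],\Zar},A_{\inf})$) does not define an object of, nor a morphism in, the derived category of the simplicial topos: this is the usual homotopy-coherence obstruction. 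The paper's proof never glues derived-category data; it builds the comparison morphism strictly on the simplicial topos, by forming the sheaf $\ubM_{\bcdot}$ from the strict cocycle property of $\varepsilon_{g,\CF}$, the strict simplicial resolutions \eqref{eq:BKFKoszulResolHC}, \eqref{eq:BKFKoszulReolGInvHP} and the map $c_{\mti_{\bcdot},\CF_{\bcdot}}$ \eqref{eq:qdRKoszulMapHC}, yielding $\kappa_{\mti_{\bcdot},\CF_{\bcdot}}$ \eqref{eq:SimplicialCompMap}; and because the direct image functors $\nu_{\infty,[r]*}$ are \emph{not} cartesian (the transition \eqref{eq:XproetToGammaShfFunct} is not an isomorphism), this requires the $D$-topos formalism of \S\ref{sec:DTopos} (\eqref{eq:DtoposFBF}, Proposition \ref{prop:DtoposInvLimitFbF}) to define the global functors and compute their derived functors componentwise. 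Your proposal silently assumes this machinery away.

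Second, your identification of $R\theta_*$ of the right-hand side with $A\Omega_{\fX}(\bM)$ rests on the claim that ``$L\eta_{\mu}$ and $R\varprojlim_{\N}$ commute with the relevant totalizations'', to be checked stalkwise as in Lemma \ref{lem:LetaSheafQI}. The functor $L\eta_{\mu}$ does not commute with totalizations or limits in general, and a stalkwise argument does not repair this because $L\eta_{\mu}$ does not commute with the (non-flat) totalization either; this step as written would fail. The paper needs no such commutation: since $\theta$ and the $e_{[r]}$ are flat and the hypercovering is Zariski, $L\eta_{\mu}$, $R\varprojlim_{\N}$ and $R\nu_{*}$ all commute with $L\theta^*$ componentwise, giving \eqref{eq:AOmegaCohSimpPB}; then cohomological descent is applied to the single bounded-below object $A\Omega_{\fX}(\bM)$ via the unit $\mathrm{id}\to R\theta_*L\theta^*$, which yields \eqref{eq:AOmegaCohDesc} with $L\eta_{\mu}$ always taken \emph{before} any descent or limit over the hypercovering. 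With these two corrections (strict construction on the simplicial topos, and descent applied after $L\eta_{\mu}$ rather than commuting $L\eta_{\mu}$ past the totalization) your argument becomes the paper's proof; the termwise-isomorphism conclusion and the independence/functoriality argument via a dominating embedding system are then fine as you describe.
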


The morphisms of functors \eqref{eq:XproetToGammaShfFunct}
is not an isomorphism in general. Therefore to construct and study the functor
$\nu_{\fX_{\bcdot},\ut_{\bcdot}*}$ for  a simplicial small framed embedding over $A_{\inf}$
$(\mti_{\bcdot}\colon \fX_{\bcdot}\to \fY_{\bcdot},\ut_{\bcdot})$ (Definition \ref{def:AdmFramedSmEmbed}) 
of a Zariski hypercovering $\fX_{\bcdot}$ of $\fX$ by affine formal schemes,
we need some preliminaries on a family of topos over a category,
which are postponed until the next section 
\ref{sec:DTopos}.

As usual, let $\Delta$
denote the category whose objects are $[r]=\{0,1,\ldots,r\}$
$(r\in\N)$ and whose morphisms are non-decreasing maps, and
let $\Delta^{\circ}$ be its opposite category.
Recall that a simplicial (resp.~cosimplicial) object in a category $\CC$ means
a functor from $\Delta^{\circ}$ (resp.~$\Delta$) to $\CC$.
Let $\fX$ be a quasi-compact,
separated, smooth, $p$-adic formal scheme over $\CO$. Choose
a Zariski hypercovering $\fX_{\bcdot}=([r]\mapsto \fX_{[r]}=\Spf(A_{[r]}))_{r\in\N}$
of $\fX$ by affine formal schemes and a simplicial small
framed embedding of $\fX_{\bcdot}$ over $A_{\inf}$
$\mti_{\bcdot}:=
([r]\mapsto (\mfi_{[r]}\colon \fX_{[r]}\to \fY_{[r]}=\Spf(B_{[r]}),
\ut_{[r]}=(t_{[r],i})_{i\in\Lambda_{[r]}}))_{r\in\N}$.

\begin{remark} 
We can construct $\fX_{\bcdot}$ and $\mti_{\bcdot}$ above
as follows. Since $\fX$ is quasi-compact, there exists 
an affine open covering $\fU_{\nu}$ $(\nu=1,\ldots, N)$ of $\fX$,
and small framed embeddings 
$(\mfi_{\nu}\colon \fU_{\nu}\to \fV_{\nu}, \ut_{\nu}=(t_{\nu,i})_{i\in\Lambda})$ over $A_{\inf}$
$(\nu=1,\ldots, N)$ with a common totally ordered set $\Lambda$. 
Then we obtain an affine Zariski hypercovering $\fX_{\bcdot}$ of $\fX$
and its simplicial small framed embedding $\mti_{\bcdot}$ 
over  $A_{\inf}$ by applying the construction in \cite[15.1]{TsujiPrismQHiggs} to
$\mfi_{\nu}$ $(\nu\in \N\cap [1,N])$. 
\end{remark}

For a nondecreasing map $a\colon[r]\to [s]$, we write
$\mtg_a=(g_a,h_a,\psi_a)$ for the
morphism $\mathtt{i}_{[s]}\to \mathtt{i}_{[r]}$ of 
small framed  embeddings over $A_{\inf}$
(Definition \ref{def:AdmFramedSmEmbed} (2)) corresponding to $a$. 
Associated to each $\mathtt{i}_{[r]}$ $(r\in \N$),
we have morphisms of ringed topos \eqref{eq:ProetZarMorph1}
and \eqref{eq:ProetZarMorph2}, which will be denoted
with subscript $[r]$ in the following, and their relations 
\eqref{eq:ProetZarMorph3}. 
By applying the functoriality \eqref{eq:ProetZarMorphFunct1} 
and \eqref{eq:ProetZarMorphFunct2} of
\eqref{eq:ProetZarMorph1} and \eqref{eq:ProetZarMorph2}, respectively, to
$\mtg_a$ for each $a\in \Mor\Delta$, we
obtain corresponding direct and inverse image functors of ringed $\Delta$-topos 
and their relations except $\nu_{\infty,[r]}$ $(r\in\N)$, for which we have 
only a direct
image functor. See Remark \ref{rmk:DtoposConstruction} and 
the construction of direct and inverse image functors over $D$
from \eqref{eq:DtoposDirectImageFib} being assumed to be an isomorphism
for the inverse image functor. By taking topos of sections over $\Delta$, we
obtain  morphisms of ringed topos associated to 
$\mti_{\bcdot}=(\mfi_{\bcdot}\colon \fX_{\bcdot}\to\fY_{\bcdot},\ut_{\bcdot})$
\begin{gather}
((X_{\bcdot,\proet}^{\sim})^{\N^{\circ}},\ubA_{\inf,X_{\bcdot}})
\xrightarrow{\;\nu_{\bcdot}\;}
((\fX_{\bcdot,\Zar}^{\sim})^{\N^{\circ}},\uA_{\inf})
\xleftarrow{\:v_{\bcdot}\;}
((\fD_{\bcdot,\Zar}^{\sim})^{\N^{\circ}},\uA_{\inf})\label{eq:ProetZarProjHP0}\\
((\Gamma_{\Lambda_{\bcdot}}\hy\fX_{\bcdot,\Zar}^{\sim})^{\N^{\circ}},\uA_{\inf})
\overset{\pi_{\bcdot}}{\underset{\iota_{\bcdot}}{\rightleftarrows}}
((\fX_{\bcdot,\Zar}^{\sim})^{\N^{\circ}},\uA_{\inf})\label{eq:ProetZarProjHP}
\end{gather}
and a direct image functor
\begin{equation}\label{eq:ProetGammaModProjHP}
((X_{\bcdot,\proet}^{\sim})^{\N^{\circ}},\ubA_{\inf, X_{\bcdot}})
\xrightarrow{\;\nu_{\infty,\bcdot*}\;}
((\Gamma_{\Lambda_{\bcdot}}\hy\fX_{\bcdot,\Zar}^{\sim})^{\N^{\circ}},\uA_{\inf})
\end{equation}
preserving inverse limits and satisfying
\begin{equation}\label{eq:SimplcialToposMorphComp}
\pi_{\bcdot*}\circ \nu_{\infty,\bcdot*}\cong \nu_{\bcdot*},\qquad\pi_{\bcdot}\circ\iota_{\bcdot}\cong\id.
\end{equation}
Since $\fX_{\bcdot}$ is a simplicial $p$-adic formal scheme over $\fX$,
we have morphisms of topos $\bm{\theta}\colon
X_{\bcdot,\proet}^{\sim}\to X_{\proet}^{\sim}$,
$\theta\colon \fX_{\bcdot,\Zar}^{\sim}\to \fX_{\Zar}$,
and $\theta_{\prism}\colon(\fX_{\bcdot}/A_{\inf})_{\prism}^{\sim}
\to (\fX/A_{\inf})_{\prism}^{\sim}$, whose inverse image 
functors are simply given by taking the pullback under the morphisms of 
topos associated to the morphisms $g_{[r]}\colon \fX_{[r]}
\to \fX$ $(r\in \N)$ defining the hypercovering
$\fX_{\bcdot}\to \fX$ (\cite[V\bis (2.2.1)]{SGA4}). 
The inverse images of $\bA_{X,\inf}$, $\CO_{\fX/A_{\inf}}$,
and the constant sheaf $A_{\inf}$ on $\fX_{\Zar}$
under these inverse image functors coincide with 
$\bA_{X_{\bcdot},\inf}$, $\CO_{\fX_{\bcdot}/A_{\inf}}$
and $A_{\inf}$, respectively. The same obviously holds for their 
reduction modulo $(p,\pq)^{m+1}$, i.e., the sheaves
denoted with the subscript $m$.

Let $\CF\in \Ob \Crystal^{\fproj}_{\prism}(\fX/A_{\inf})$,
put $\bM=\bM_{\BKF,\fX}(\CF)$ (Definition \ref{BKFFunctor}),
and $\bM_m=\bM/(p,\pq)^{m+1}\bM$, and let 
$\ubM$ be the $\ubA_{\inf,X}$-module
$(\bM_m)_{m\in \N}$. Let $\CF_{\bcdot}=([r]\mapsto
\CF_{[r]})_{r\in\N}$ be $\theta_{\prism}^{-1}(\CF)$.
By applying the construction of $\ubM$ from $\CF$
also to $\CF_{[r]}$ for each $r\in \N$ and
then its functoriality satisfying the cocycle condition
(Proposition \ref{prop:BKFFunctComp}) to $g_{[r]}$ $(r\in \N)$ and $g_a$ $(a\in \Mor\Delta)$,
we obtain $\ubM_{\bcdot}\in 
\Mod((X_{\bcdot,\proet}^{\sim})^{\N^{\circ}},\ubA_{\inf,X_{\bcdot}})$
and an isomorphism 
\begin{equation}
\bm{\theta}^*(\ubM)\xrightarrow{\;\cong\;}\ubM_{\bcdot}.
\end{equation}

We have isomorphisms $g_{a\prism}^*(\CF_{[r]})\cong 
\CF_{[s]}$ for $a\colon [r]\to[s]\in\Mor\Delta$
satisfying the cocycle condition for composition of $a$'s.
Therefore, thanks to \eqref{eq:BKFGammaCohResolFunct}, 
the resolution $\beta_{\mti_{[r]},\CF_{[r]}}$
\eqref{eq:BKFGammaRepResol} of $\nu_{\infty,[r]*}\ubM_{[r]}$  for
each $r\in \N$, and the morphism $K^{\bullet}_{\psi_a}(\ootau_{\mtg_a,\CF_{[r]}})$
\eqref{eq:BKFKoszulFunctMap2} for each $a\colon [r]\to [s]\in \Mor\Delta$, which
satisfies the cocycle condition for two composable morphisms
in $\Mor\Delta$ by Remark \ref{rmk:BKFGammaKoszulFuncCocyc}, define a resolution 
\begin{equation}\label{eq:BKFKoszulResolHC}
\beta_{\mti_{\bcdot},\CF_{\bcdot}}\colon
\nu_{\infty,\bcdot*}\ubM_{\bcdot}
\longrightarrow K_{\Lambda_{\bcdot}}^{\bullet}
(\iota_{\bcdot*}\iota_{\bcdot}^*\nu_{\infty,\bcdot*}\ubM_{\bcdot})
\quad{\text{in}}\;\;
C^+((\Gamma_{\Lambda_{\bcdot}}\hy\fX_{\bcdot,\Zar}^{\sim})^{\N^{\circ}},
\uA_{\inf}).
\end{equation}
Then, by \eqref{eq:BKFGammaCohFunct}, the isomorphism 
$\alpha_{\mti_{[r]},\CF_{[r]}}$ \eqref{eq:GammaGalCohIsomMap} for each $r\in \N$,
and the morphism $K^{\bullet}_{\psi_a}(\otau_{\mtg_a,\CF_{[r]}})$
\eqref{eq:BKFKoszulFunctMap}
for each $a\colon [r]\to[s]\in \Mor\Delta$, which satisfies
the cocycle condition for composition of $a$'s by Remark \ref{rmk:BKFGammaKoszulFuncCocyc}, 
yield an isomorphism
\begin{equation}\label{eq:BKFKoszulReolGInvHP}
\alpha_{\mti_{\bcdot},\CF_{\bcdot}}\colon
K_{\Lambda_{\bcdot}}^{\bullet}(\iota_{\bcdot}^*\nu_{\infty,\bcdot*}\ubM_{\bcdot})
\xrightarrow{\;\cong\;}
\pi_{\bcdot*}K_{\Lambda_{\bcdot}}^{\bullet}(\iota_{\bcdot*}
\iota_{\bcdot}^*\nu_{\infty,\bcdot*}\ubM_{\bcdot})
\quad\text{in}\;\;
C^+((\fX_{\bcdot,\Zar}^{\sim})^{\N^{\circ}},\uA_{\inf}).
\end{equation}

Similarly, one can define a complex
\begin{equation}
q\Omegab(\uCM_{\bcdot}, \utheta_{\uCM_{\bcdot}})
\quad \text{in}\;\; C^+((\fD_{\bcdot,\Zar}^{\sim})^{\N^{\circ}},\uA_{\inf})
\end{equation}
by applying the construction of $q\Omegab(\uCM,\utheta_{\uCM})$
from $\mti$ and $\CF$ given before \eqref{eq:AinfCrysProj} 
to $\mti_{[r]}$ and $\CF_{[r]}$ for each $r\in \N$
and using $\sigma_{\mtg_a,\CF_{[r]}}$ \eqref{eq:qdRInvSysFunct} for
$a\colon[r]\to [s]\in \Mor\Delta$ which satisfy
the cocycle condition for composition of $a$'s by Remark 
\ref{rmk:qdRInvSysFunctCocyc}.
By \eqref{eq:qdRKoszulGammaCohFunct}, we see that the morphisms
$c_{\mti_{[r]},\CF_{[r]}}$ \eqref{eq:qdRKoszulMorLoc} for $r\in \N$ define
a morphism
\begin{equation}\label{eq:qdRKoszulMapHC}
c_{\mti_{\bcdot},\CF_{\bcdot}}\colon
\varprojlim_{\N} v_{\bcdot*}(q\Omegab(\uCM_{\bcdot},\utheta_{\uCM_{\bcdot}}))
\longrightarrow
\eta_{\mu}\varprojlim_{\N}K^{\bullet}_{\Lambda_{\bcdot}}(\iota_{\bcdot}^*\nu_{\infty,\bcdot*}
\ubM_{\bcdot})
\quad \text{in}\;\; C^+(\fX_{\bcdot,\Zar}^{\sim},\uA_{\inf}).
\end{equation}
Note that $\eta_{\mu}$ of $\mu$-torsion free complexes of $A_{\inf}$-modules
commutes with the inverse image functor of the flat morphism
of ringed topos $e_{[r]}\colon(\fX_{[r],\Zar}^{\sim},A_{\inf})
\to (\fX_{\bcdot,\Zar}^{\sim},A_{\inf})$ $(r\in\N)$.

From $\alpha_{\mti_{\bcdot},\CF_{\bcdot}}$ and $\beta_{\mti_{\bcdot},\CF_{\bcdot}}$,
we obtain a sequence of morphisms in $D^+(\fX^{\sim}_{\bcdot,\Zar},A_{\inf})$
\begin{multline}\label{eq:SimplicialKoszulBKFMap}
\varprojlim_{\N}K^{\bullet}_{\Lambda_{\bcdot}}(\iota_{\bcdot}^*\nu_{\infty,\bcdot*}\ubM_{\bcdot})
\xrightarrow[\varprojlim_{\N}\alpha_{\mti_{\bcdot},\CF_{\bcdot}}]{}
R\varprojlim_{\N}R\pi_{\bcdot*}
K^{\bullet}_{\Lambda_{\bcdot}}(\iota_{\bcdot*}\iota_{\bcdot}^*\nu_{\infty,\bcdot*}\ubM_{\bcdot})\\
\xleftarrow[R\varprojlim_{\N}R\pi_{\bcdot*}(\beta_{\mti_{\bcdot},\CF_{\bcdot}})]{\cong}
R\varprojlim_{\N}R\pi_{\bcdot*}(\nu_{\infty,\bcdot*}\ubM_{\bcdot})
\to
R\varprojlim_{\N}R\pi_{\bcdot*}R\nu_{\infty,\bcdot*}\ubM_{\bcdot}
\xleftarrow{\;\cong\;}
R\varprojlim_{\N}R\nu_{\bcdot*}\ubM_{\bcdot}.
\end{multline}
We see that the last morphism is an isomorphism by taking the inverse image
under the morphism of ringed topos $e_{[r]}\colon (\fX^{\sim}_{[r],\Zar},A_{\inf})
\to (\fX^{\sim}_{\bcdot,\Zar},A_{\inf})$ for each $r\in\N$ (see the two paragraphs
after Remark \ref{rmk:DtoposConstruction}) and using 
\eqref{eq:SimplcialToposMorphComp} and \eqref{eq:InvSysDtoposFBF}.
By taking $L\eta_{\mu}$ of \eqref{eq:SimplicialKoszulBKFMap} and composing it with 
$c_{\mti_{\bcdot},\CF_{\bcdot}}$ \eqref{eq:qdRKoszulMapHC} and \eqref{eq:PrismCohQHiggsCpxSimp}, we obtain
\begin{multline}\label{eq:SimplicialCompMap}
\kappa_{\mti_{\bcdot},\CF_{\bcdot}}\colon
Ru_{\fX_{\bcdot}/A_{\inf}*}\CF_{\bcdot}
\xrightarrow[\eqref{eq:PrismCohQHiggsCpxSimp}]{\cong}
\varprojlim_{\N} v_{\bcdot*}(q\Omegab(\uCM_{\bcdot}, \utheta_{\uCM_{\bcdot}}))\\
\xrightarrow[\eqref{eq:qdRKoszulMapHC}]{c_{\mti_{\bcdot},\CF_{\bcdot}}}
\eta_{\mu}\varprojlim_{\N}K^{\bullet}_{\Lambda_{\bcdot}}(\iota_{\bcdot}^*\nu_{\infty,\bcdot*}
\ubM_{\bcdot})
\xrightarrow{L\eta_{\mu}\;\;\eqref{eq:SimplicialKoszulBKFMap}}
L\eta_{\mu}R\varprojlim_{\N}R\nu_{\bcdot*}\ubM_{\bcdot}
\end{multline}
in $D(\fX_{\bcdot,\Zar},A_{\inf})$. 
By \eqref{eq:InvSysDtoposFBF} and Proposition \ref{prop:DtoposInvLimitFbF}, 
we see that the inverse image of 
$\kappa_{\mti_{\bcdot},\CF_{\bcdot}}$ under
$e_{[r]}\colon (\fX_{[r],\Zar},A_{\inf})\to (\fX_{\bcdot,\Zar},A_{\inf})$
may be identified with $\kappa_{\mti_{[r]},\CF_{[r]}}$ for
each $r\in \N$. Therefore $\kappa_{i_{\bcdot},\CF_{\bcdot}}$
is an isomorphism by Theorem \ref{thm:PrismCohAinfCohLocComp}.

Since $\fX_{\bcdot}$ is a Zariski hypercovering of $\fX$, and
$Ru_{\fX_{\bcdot}/A_{\inf}*}$, $R\nu_{\bcdot*}$, and
$R\varprojlim_{\N}$ on $((\fX^{\sim}_{\Zar,\bcdot})^{\N^{\circ}},\uA_{\inf})$
can be computed on each $\fX_{[r]}$ $(r\in\N)$ by
\eqref{eq:InvSysDtoposFBF} and
Proposition \ref{prop:DtoposInvLimitFbF}, 
we have
\begin{align}
L\theta^*L\eta_{\mu}R\varprojlim_{\N}R\nu_{*}\ubM
&\xrightarrow{\;\cong\;} L\eta_{\mu}R\varprojlim_{\N}R\nu_{\bcdot*}\ubM_{\bcdot},
\label{eq:AOmegaCohSimpPB}\\
L\theta^*Ru_{\fX/A_{\inf}*}\CF&\xrightarrow{\;\cong\;}
Ru_{\fX_{\bcdot}/A_{\inf}*}\CF_{\bcdot}.
\label{eq:PrismZarProjCohSimpPBAinf}
\end{align}
By cohomological descent \cite[V\bis Proposition (3.2.4), Proposition (3.3.1) a), Th\'eor\`eme (3.3.3)]{SGA4}, we obtain 
\begin{align}
L\eta_{\mu}R\varprojlim_{\N}R\nu_{*}\ubM
&\xrightarrow{\;\cong\;} R\theta_*L\eta_{\mu}R\varprojlim_{\N}R\nu_{\bcdot*}\ubM_{\bcdot},
\label{eq:AOmegaCohDesc}
\\
Ru_{\fX/A_{\inf}*}\CF&\xrightarrow{\;\cong\;}
R\theta_*Ru_{\fX_{\bcdot}/A_{\inf}*}\CF_{\bcdot}.
\label{eq:PrismZarProjCohDescAinf}
\end{align}
By taking $R\theta_*$ of the isomorphism $\kappa_{\mti_{\bcdot},\CF_{\bcdot}}$ 
\eqref{eq:SimplicialCompMap} and 
composing it with \eqref{eq:AOmegaCohDesc} and \eqref{eq:PrismZarProjCohDescAinf}, 
we obtain an isomorphism in $D(\fX_{\Zar},A_{\inf})$
\begin{equation}\label{eq:BKFPrismCohLocCompMapSimp}
\kappa_{i_{\bcdot},\CF}\colon Ru_{\fX/A_{\inf}*}\CF
\xrightarrow{\;\cong\:}L\eta_{\mu}R\varprojlim_{\N}R\nu_{*}\ubM
\cong A\Omega_{\fX}(\bM).
\end{equation}

\par\bigskip
We next prove that $\kappa_{\mti_{\bcdot},\CF}$ does not
depend on the choice of $\mti_{\bcdot}=(\mfi_{\bcdot}\colon \fX_{\bcdot}\to\fY_{\bcdot},\ut_{\bcdot})$,
and that it is functorial in $\fX$.\par

Let $\fX'$ be another quasi-compact, separated, smooth $p$-adic formal scheme,
and suppose that we are given a morphism $g\colon \fX'\to \fX$ over $\CO$.
Let $\fX'_{\bcdot}$ be a Zariski hypercovering of $\fX'$ by affine formal schemes, and
let $\mti'_{\bcdot}=(\mfi'_{\bcdot}\colon \fX'_{\bcdot}\to\fY'_{\bcdot},\ut'_{\bcdot},\Lambda'_{\bcdot})$ be
a simplicial small framed embedding of $\fX'_{\bcdot}$ over $A_{\inf}$.

We define $\fX''_{\bcdot}$ to be the product of simplicial formal schemes
$\fX'_{\bcdot}$  and $\fX_{\bcdot}\times_{\fX}\fX'$ over $\fX'$,
$\fY''_{\bcdot}$ to be the product of simplicial formal schemes $\fY_{\bcdot}$ 
and $\fY'_{\bcdot}$ over $A_{\inf}$, and $\Lambda''_{\bcdot}$ to be
the disjoint union of the cosimplicial sets $\Lambda_{\bcdot}$
and $\Lambda'_{\bcdot}$.  For $r\in \N$, we equip $\Lambda''_{[r]}$
with the unique total order compatible with those on $\Lambda_{[r]}$
and $\Lambda'_{[r]}$ and satisfying $i\leq i'$ for every 
$(i,i')\in \Lambda_{[r]}\times\Lambda'_{[r]}$. We define 
$t''_{[r],i''}$ $(i''\in \Lambda_{[r]}'')$ to be the inverse image
of $t_{[r],i''}$ (resp.~$t'_{[r],i''}$) to $\fY''_{[r]}$ if 
$i''\in \Lambda_{[r]}$ (resp.~$i''\in \Lambda'_{[r]}$), 
and put $\ut''_{[r]}=(t''_{[r],i''})_{i''\in\Lambda''_{[r]}}$.
Then $\fX''_{\bcdot}$ is a Zariski hypercovering of $\fX'$ by affine
formal schemes since $\fX$ and $\fX'$ are assumed to be separated;
the morphism $\mfi''_{\bcdot}\colon \fX''_{\bcdot}\to \fY''_{\bcdot}$
induced by $\mfi_{\bcdot}$ and $\mfi'_{\bcdot}$, 
$\ut''_{\bcdot}$, and $\Lambda''_{\bcdot}$ form a simplicial
small framed embedding of $\fX''_{\bcdot}$ over $A_{\inf}$;
the projections $\fX_{\bcdot}''\to \fX_{\bcdot}$
and $\fY_{\bcdot}''\to \fY_{\bcdot}$
(resp.~$\fX_{\bcdot}''\to \fX'_{\bcdot}$
and $\fY_{\bcdot}''\to \fY'_{\bcdot}$) and the
inclusion $\Lambda_{\bcdot}\to \Lambda''_{\bcdot}$
(resp.~$\Lambda'_{\bcdot}\to \Lambda''_{\bcdot}$)
define a morphism of simplicial small framed
embeddings 
$\mti''_{\bcdot}=(\mfi''_{\bcdot},\ut''_{\bcdot},\Lambda''_{\bcdot})
\to \mti_{\bcdot}=(\mfi_{\bcdot},\ut_{\bcdot},\Lambda_{\bcdot})$
(resp.~$\mti'_{\bcdot}=(\mfi'_{\bcdot},\ut'_{\bcdot},\Lambda'_{\bcdot})$)
over $A_{\inf}$.
Therefore it suffices to prove the following
functoriality of $\kappa_{\mti_{\bcdot},\CF}$.

\begin{proposition}\label{eq:BKFPrismLocCohCompSimpFunct}
Let $\fX_{\bcdot}\to \fX$, 
$\mti_{\bcdot}=(\mfi_{\bcdot}\colon \fX_{\bcdot}\to \fY_{\bcdot}, \ut_{\bcdot},\Lambda_{\bcdot})$,
$\fX'_{\bcdot}\to \fX'$, 
$\mti'_{\bcdot}=(\mfi'_{\bcdot}\colon \fX'_{\bcdot}\to \fY'_{\bcdot}, \ut'_{\bcdot},\Lambda'_{\bcdot})$,
and $g\colon \fX'\to \fX$ be as above, and suppose
that we are given a morphism of simplicial small framed 
embeddings $\mathtt{g}_{\bcdot}=(g_{\bcdot},h_{\bcdot},\psi_{\bcdot})
\colon \mti'_{\bcdot}=(\mfi_{\bcdot}',\ut'_{\bcdot},\Lambda'_{\bcdot})
\to \mti_{\bcdot}=(\mfi_{\bcdot},\ut_{\bcdot},\Lambda_{\bcdot})$ over $A_{\inf}$
such that $g_{\bcdot}$ is a morphism over $g$. Let 
$\CF\in \Ob \Crystal^{\fproj}_{\prism}(\fX/A_{\inf})$,
and put $\CF'=g_{\prism}^{-1}\CF\in \Ob\Crystal_{\prism}^{\fproj}(\fX'/A_{\inf})$.
Then the following diagram is commutative.
\begin{equation*}
\xymatrix@R=20pt{
Ru_{\fX/A_{\inf}*}\CF\ar[d]^{\cong}_{\kappa_{\mti_{\bcdot},\CF}}\ar[r]&
Ru_{\fX/A_{\inf}*}Rg_{\prism*}\CF'
\ar[r]^{\cong}&
Rg_*Ru_{\fX'/A_{\inf}*}\CF'
\ar[d]^{\cong}_{Rg_*(\kappa_{\mti'_{\bcdot},\CF'})}\\
A\Omega_{\fX}(\bM_{\BKF,\fX}(\CF))\ar[rr]^{\eqref{eq:AOmegaFunct}}&&
Rg_*A\Omega_{\fX'}(\bM_{\BKF,\fX'}(\CF'))
}
\end{equation*}
\end{proposition}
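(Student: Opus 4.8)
The plan is to reduce the assertion, via cohomological descent, to the functoriality of the \emph{simplicial} comparison morphism $\kappa_{\mti_{\bcdot},\CF_{\bcdot}}$ \eqref{eq:SimplicialCompMap} with respect to $\mtg_{\bcdot}=(g_{\bcdot},h_{\bcdot},\psi_{\bcdot})$, and then to establish the latter by assembling the degreewise compatibilities of \S\ref{sec:AinfcohCompMap} over $\Delta$. For the descent reduction I would write $\theta$ and $\theta'$ for the augmentations $\fX_{\bcdot,\Zar}^{\sim}\to\fX_{\Zar}^{\sim}$ and $\fX_{\bcdot,\Zar}^{\prime\sim}\to\fX_{\Zar}^{\prime\sim}$ of the two hypercoverings, so that $g\circ\theta'\cong\theta\circ g_{\bcdot}$ and hence $Rg_*R\theta'_*\cong R\theta_*Rg_{\bcdot*}$. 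Since $\kappa_{\mti_{\bcdot},\CF}$ (resp.\ $\kappa_{\mti'_{\bcdot},\CF'}$) was defined as $R\theta_*(\kappa_{\mti_{\bcdot},\CF_{\bcdot}})$ (resp.\ $R\theta'_*(\kappa_{\mti'_{\bcdot},\CF'_{\bcdot}})$) post-composed with the cohomological descent isomorphisms \eqref{eq:PrismZarProjCohDescAinf} and \eqref{eq:AOmegaCohDesc} (resp.\ their primed analogues), this step is purely formal once one knows that these descent isomorphisms are natural in the morphism of hypercoverings $g_{\bcdot}$, i.e.\ compatible with the adjunction units $\id\to R\theta_*L\theta^*$, and that the functoriality morphism \eqref{eq:AOmegaFunct} of $A\Omega$ is compatible with \eqref{eq:AOmegaCohDesc}; the latter holds because both sides are built from the d\'ecalage morphisms $\beta_{f,\CI}$ of Lemma \ref{lem:DecalageMorphRingedTopos} and from the canonical comparison of the $Ru_*$-type functors with their simplicial refinements, which are manifestly compatible.

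To prove the simplicial functoriality I would treat the three constituent arrows of $\kappa_{\mti_{\bcdot},\CF_{\bcdot}}$ \eqref{eq:SimplicialCompMap} in turn. Compatibility of the isomorphism \eqref{eq:PrismCohQHiggsCpxSimp} with pullback along $g_{\bcdot}$ and the morphism \eqref{eq:CrysQHiggsCpxPB} is exactly Theorem \ref{thm:PrismCohQHiggsCpxGlb}\,(3). Compatibility of $c_{\mti_{\bcdot},\CF_{\bcdot}}$ \eqref{eq:qdRKoszulMapHC} with $\mtg_{\bcdot}$ is obtained by assembling the degreewise commutative diagrams \eqref{eq:KoszulqdRFunctComp} and \eqref{eq:qdRBKFMorphKoszulFunct} — equivalently the outer rectangle \eqref{eq:qdRKoszulGammaCohFunct} used in the proof of Proposition \ref{prop:PrismAinfCompFunct} — over all of $\Delta$, with Lemma \ref{lem:qdRBKFMorphFunct} entering exactly as in the local case. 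Compatibility of $L\eta_{\mu}$ of \eqref{eq:SimplicialKoszulBKFMap} with $\mtg_{\bcdot}$ is obtained by assembling \eqref{eq:BKFGammaCohResolFunct}, \eqref{eq:BKFGammaCohFunct} and the resulting rectangle \eqref{eq:GammaKoszulBKFZarProjFunct} over $\Delta$, then applying $L\eta_{\mu}R\varprojlim_{\N}$ and composing with $L\eta_{\mu}Rg_{\bcdot*}\to Rg_{\bcdot*}L\eta_{\mu}$ from Lemma \ref{lem:DecalageMorphRingedTopos}, parallel to the passage from \eqref{eq:GammaKoszulBKFZarProjFunct} to \eqref{eq:AOmegaKoszulFunct}. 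What legitimizes ``assembling over $\Delta$'' is that for each $a\in\Mor\Delta$ the relevant transition morphisms $\sigma_{\mtg_a,\CF_{[r]}}$, $\otau_{\mtg_a,\CF_{[r]}}$, $\ootau_{\mtg_a,\CF_{[r]}}$, $K^{\bullet}_{\psi_a}(\otau_{\mtg_a,\CF_{[r]}})$ and $K^{\bullet}_{\psi_a}(\sigma^0_{\mtg_a,\CF_{[r]}})$ satisfy the cocycle condition for composition in $\Delta$ (Remarks \ref{rmk:qdRInvSysFunctCocyc} and \ref{rmk:BKFGammaKoszulFuncCocyc}, Proposition \ref{prop:KoszulFunctCocycCond}, Lemma \ref{lem:GammaKoszCompos}), so the degreewise data genuinely define morphisms of complexes on the simplicial ringed topoi (Remark \ref{rmk:DtoposConstruction}, \S\ref{sec:DTopos}). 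Finally, each such simplicial identity may be tested after pullback under the flat morphisms $e_{[r]}\colon(\fX_{[r],\Zar}^{\sim},A_{\inf})\to(\fX_{\bcdot,\Zar}^{\sim},A_{\inf})$ for $r\in\N$, using \eqref{eq:InvSysDtoposFBF} and Proposition \ref{prop:DtoposInvLimitFbF}, which reduces every verification to the already-established degreewise statement, namely Proposition \ref{prop:PrismAinfCompFunct}.

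I expect the main obstacle to be organizational rather than conceptual: one must keep careful track of the fact that the direct-image functor $\nu_{\infty,\bcdot*}$ \eqref{eq:ProetGammaModProjHP} is \emph{not} cartesian — the natural transformations underlying \eqref{eq:XproetToGammaShfFunct} and \eqref{eq:XproetToGammaShfFunctAinf} are not isomorphisms — so the squares at the simplicial level commute only up to these morphisms, and no ``cartesian pullback'' shortcut is available. This is precisely the situation the formalism of a family of topoi over a category in \S\ref{sec:DTopos} is designed to handle, and it is the reason all the above compatibilities must be phrased and checked at the level of actual complexes assembled from the degreewise data together with their cocycle identities. Since every cocycle identity required is already recorded in the cited remarks, no genuinely new computation arises; the work lies in threading the diagrams of \S\ref{sec:AinfcohCompMap} through the simplicial formalism and then discharging the descent step, which is routine given the naturality of cohomological descent and the exactness properties of $R\theta_*$.
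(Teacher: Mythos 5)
Your plan matches the paper's proof essentially step for step: assemble the degreewise functorialities of \S\ref{sec:AinfcohCompMap} over $\Delta$ using the cocycle conditions and the non-cartesian $D$-topos formalism of \S\ref{sec:DTopos} to produce the simplicial commutative diagram \eqref{eq:CompMorphFunctHP}, then discharge the augmentation via cohomological descent. The one caveat is that you dismiss the final descent step as ``routine'': the paper's Lemma \ref{lem:CohDescentFunctoriality}, which verifies precisely the two compatibilities you name (naturality of \eqref{eq:PrismZarProjCohDescAinf} and \eqref{eq:AOmegaCohDesc} in $g_{\bcdot}$, and compatibility of the latter with \eqref{eq:AOmegaFunct}), is proved by a nontrivial base-change calculus (Sublemma \ref{sublem:bcformalism}) applied factor by factor to $R\nu_*$, $R\varprojlim_{\N}$, and $L\eta_{\mu}$; the claims you identify are exactly the right ones, but their verification is closer to a page of bookkeeping than to an immediate formality.
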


By \eqref{eq:ProetZarMorphFunct1}, \eqref{eq:ProetZarMorphFunct2}, 
Lemma \ref{lem:DToposDIComp}, and Lemma \ref{lem:DToposDIMorph}
(with $c_i$ isomorphisms), we see that the morphisms of ringed topos \eqref{eq:ProetZarProjHP0} and 
\eqref{eq:ProetZarProjHP}
for $\fX_{\bcdot}$ and $\fX'_{\bcdot}$ are functorial with respect to
$\mathtt{g}$, i.e.,  we have the commutative diagram 
\eqref{eq:ProetZarMorphFunct1} and the right commutative square of
\eqref{eq:ProetZarMorphFunct2} with $\fX$, $\fX'$, $X$, $X'$, $\fD$, $\fD'$, $\Lambda$, and
$\Lambda'$ replaced by $\fX_{\bcdot}$, $\fX'_{\bcdot}$, $X_{\bcdot}$, $X'_{\bcdot}$, $\fD_{\bcdot}$, 
$\fD'_{\bcdot}$, $\Lambda_{\bcdot}$, and $\Lambda'_{\bcdot}$. The morphisms of ringed topos in the
diagrams are denoted by the same symbols with the subscript $\bcdot$ added
as $\nu_{\bcdot}$, $v_{\bcdot}$, $\bmg_{\bcdot}$, $h_{\fD_{\bcdot}}$.
For the direct image functor \eqref{eq:ProetGammaModProjHP}, 
we have a morphism of functors
\begin{equation}
\label{eq:ProetZarMorphFunctHP3}
\xymatrix@R=15pt{
((X_{\bcdot\proet}^{\prime\sim})^{\N^{\circ}},\ubA_{\inf,X'_{\bcdot}})
\ar[r]^{\nu'_{\infty\bcdot*}}\ar[d]^{\bmg_{\bcdot*}}&
((\Gamma_{\Lambda'_{\bcdot}}\hy\fX_{\bcdot\Zar}^{\prime\sim})^{\N^{\circ}},\uA_{\inf})
\ar[d]^{g_{\bcdot\psi_{\bcdot}*}}
\\
((X_{\bcdot\proet}^{\sim})^{\N^{\circ}},\ubA_{\inf,X_{\bcdot}})
\ar[r]^{\nu_{\infty\bcdot*}}\ar@{=>}[ur]^{\Xi_{\mathtt{g}_{\bcdot}}}&
((\Gamma_{\Lambda_{\bcdot}}\hy\fX_{\bcdot\Zar}^{\sim})^{\N^{\circ}},\uA_{\inf})
}
\end{equation}
by the remark after \eqref{eq:ProetZarMorphFunct2}, 
Remark \ref{rmk:GammaCohFunctCocyc}, 
Lemma \ref{lem:DToposDIComp}, and Lemma \ref{lem:DToposDIMorph}. 

Since the morphisms \eqref{eq:BKFIndsysFunctMap}, \eqref{eq:BKFGammaRepFunct}, 
\eqref{eq:BKFKoszulFunctMap}, \eqref{eq:BKFKoszulFunctMap2}, and \eqref{eq:qdRInvSysFunct} satisfy
the cocycle conditions for composition of $\mathtt{g}$'s as mentioned 
in Remarks \ref{rmk:BKFGammaKoszulFuncCocyc} and \ref{rmk:qdRInvSysFunctCocyc}, 
these morphisms for $\mathtt{g}_{[r]}$ and $\CF_{[r]}$ $(r\in \N)$ 
define the following morphisms.
\begin{gather}
\uvarepsilon_{g_{\bcdot},\CF_{\bcdot}}\colon \ubM_{\bcdot}\longrightarrow 
\bmg_{\bcdot*}\ubM'_{\bcdot}
\quad\text{in}\;\;\Mod((X_{\bcdot\proet}^{\sim})^{\N^{\circ}}, \ubA_{\inf,X_{\bcdot}}),
\label{eq:BKFIndsysFunctMapHC}\\
\tau_{\mathtt g_{\bcdot},\CF_{\bcdot}}\colon \nu_{\infty\bcdot*}\ubM_{\bcdot}
\to \nu_{\infty\bcdot*}\bmg_{\bcdot*}\ubM'_{\bcdot}
\xrightarrow{\Xi_{\mathtt g_{\bcdot}}(\ubM'_{\bcdot})}
g_{\bcdot\psi_{\bcdot}*}\nu'_{\infty\bcdot*}\ubM'_{\bcdot}
\quad \text{in}\;
\Mod((\Gamma_{\Lambda_{\bcdot}}\hy\fX_{\bcdot\Zar}^{\sim})^{\N^{\circ}},
\uA_{\inf}),
\label{eq:BKFIndGammaFunctMapHC}\\
K^{\bullet}_{\psi_{\bcdot}}(\otau_{\mtg_{\bcdot},\CF_{\bcdot}})\colon 
K_{\Lambda_{\bcdot}}^{\bullet}(\iota_{\bcdot}^*\nu_{\infty\bcdot*}\ubM_{\bcdot})
\to g_{\bcdot*}K^{\bullet}_{\Lambda'_{\bcdot}}(\iota_{\bcdot}^{\prime*}\nu_{\infty\bcdot*}'\ubM'_{\bcdot})
\quad\text{in}\;\;C^+((\fX_{\bcdot\Zar}^{\sim})^{\N^{\circ}},\uA_{\inf}),
\label{eq:BKFKoszulFunctMapHP}\\
K^{\bullet}_{\psi_{\bcdot}}(\ootau_{\mtg_{\bcdot},\CF_{\bcdot}})
\colon K_{\Lambda_{\bcdot}}^{\bullet}(\iota_{\bcdot*}\iota_{\bcdot}^*
\nu_{\infty\bcdot*}\ubM_{\bcdot})
\to g_{\bcdot\psi_{\bcdot}*}K^{\bullet}_{\Lambda'_{\bcdot}}(\iota'_{\bcdot*}\iota_{\bcdot}^{\prime*}
\nu_{\infty\bcdot*}'\ubM'_{\bcdot})
\;\;\text{in}\;C^+((\Gamma_{\Lambda_{\bcdot}}\hy\fX_{\bcdot\Zar}^{\sim})^{\N^{\circ}},\uA_{\inf}),
\label{eq:BKFKoszulFunctMapHP2}\\
\sigma_{\mtg_{\bcdot},\CF_{\bcdot}}\colon v_{\bcdot*}q\Omegab(\uCM_{\bcdot}, \utheta_{\uCM_{\bcdot}})
\longrightarrow
g_{\bcdot*}v'_{\bcdot*}(q\Omegab(\uCM'_{\bcdot}, \utheta_{\uCM'_{\bcdot}}))
\quad\text{in}\;\;C^+((\fX_{\bcdot\Zar}^{\sim})^{\N^{\circ}},\uA_{\inf}).
\label{eq:qdRCrystalFunctMapHP}
\end{gather}

By the constructions of \eqref{eq:ProetZarMorphFunctHP3} and 
the analogues of \eqref{eq:ProetZarMorphFunct1}
and the right square of \eqref{eq:ProetZarMorphFunct2} for $\fX_{\bcdot}$ and 
$\fX'_{\bcdot}$ mentioned above
with the help of Lemmas \ref{lem:DToposDIComp} and \ref{lem:DToposDIMorph}, 
the commutative diagram \eqref{eq:BKFGammaCohResolFunct} 
(resp.~\eqref{eq:BKFGammaCohFunct}, resp.~\eqref{eq:qdRKoszulGammaCohFunct})
associated to $\mtg_{[r]}=(g_{[r]},h_{[r]},\psi_{[r]})$ and 
$\CF_{[r]}$ for each $r\in \N$ implies that the pair
$(\beta_{\mti_{\bcdot},\CF_{\bcdot}},\beta_{\mti'_{\bcdot},\CF'_{\bcdot}})$
\eqref{eq:BKFKoszulResolHC}
(resp.~$(\alpha_{\mti_{\bcdot},\CF_{\bcdot}},\alpha_{\mti'_{\bcdot},\CF'_{\bcdot}})$
\eqref{eq:BKFKoszulReolGInvHP}, resp.~$(c_{\mti_{\bcdot},\CF_{\bcdot}},c_{\mti'_{\bcdot},\CF'_{\bcdot}})$
\eqref{eq:qdRKoszulMapHC})
is compatible with the pair 
$(\tau_{\mtg_{\bcdot},\CF_{\bcdot}},K^{\bullet}_{\psi_{\bcdot}}(\ootau_{\mtg_{\bcdot},\CF_{\bcdot}}))$
(resp.~$(K^{\bullet}_{\psi_{\bcdot}}(\otau_{\mtg_{\bcdot},\CF_{\bcdot}}),K^{\bullet}_{\psi_{\bcdot}}(\ootau_{\mtg_{\bcdot},\CF_{\bcdot}}))$, \linebreak
resp.~$(\sigma_{\mtg_{\bcdot},\CF_{\bcdot}},K^{\bullet}_{\psi_{\bcdot}}(\otau_{\mtg_{\bcdot},\CF_{\bcdot}})$)
similarly to \eqref{eq:BKFGammaCohResolFunct} (resp.~\eqref{eq:BKFGammaCohFunct}, 
resp.~\eqref{eq:qdRKoszulGammaCohFunct}). 

By using the compatibility for the pairs $(\beta_{\mti_{\bcdot},\CF_{\bcdot}}, \beta_{\mti'_{\bcdot},\CF'_{\bcdot}})$
and $(\alpha_{\mti_{\bcdot},\CF_{\bcdot}},\alpha_{\mti'_{\bcdot},\CF'_{\bcdot}})$ mentioned above
and the commutative diagram \eqref{eq:ProetGammaSheafProjFunct}, we see 
that the compositions \eqref{eq:SimplicialKoszulBKFMap} 
for $(\fX_{\bcdot},\bM_{\bcdot})$ and $(\fX'_{\bcdot},\bM_{\bcdot}')$ are compatible with 
$K^{\bullet}_{\psi_{\bcdot}}(\otau_{\mtg_{\bcdot},\CF_{\bcdot}})$ \eqref{eq:BKFKoszulFunctMapHP}
and $\uvarepsilon_{g_{\bcdot},\CF_{\bcdot}}$ \eqref{eq:BKFIndsysFunctMapHC}, i.e., the following 
diagram is commutative similarly to \eqref{eq:GammaKoszulBKFZarProjFunct}.
\begin{equation}
\xymatrix@R=20pt@C=15pt{
\varprojlim_{\N}K_{\Lambda_{\bcdot}}^{\bullet}(\iota_{\bcdot}^*\nu_{\infty\bcdot*}\ubM_{\bcdot})
\ar[d]_{\eqref{eq:SimplicialKoszulBKFMap}}
\ar[rr]^{\varprojliml_{\N}K^{\bullet}_{\psi_{\bcdot}}(\otau_{\mtg_{\bcdot},\CF_{\bcdot}})}&&
\varprojlim_{\N}g_{\bcdot*}K_{\Lambda'_{\bcdot}}^{\bullet}(\iota_{\bcdot}^{\prime*}
\nu'_{\infty\bcdot*}\ubM'_{\bcdot})\ar[r]^{\cong}
&
g_{\bcdot\Zar*}\varprojlim_{\N}K_{\Lambda'_{\bcdot}}^{\bullet}(\iota_{\bcdot}^{\prime*}
\nu'_{\infty\bcdot*}\ubM'_{\bcdot})\ar[d]^{\eqref{eq:SimplicialKoszulBKFMap}}\\
R\varprojlim_{\N}R\nu_{\bcdot*}\ubM_{\bcdot}
\ar[rr]^(.45){R\varprojliml_{\N}R\nu_{\bcdot*}(\uvarepsilon_{g_{\bcdot},\CF_{\bcdot}})}&&
R\varprojlim_{\N}R\nu_{\bcdot*}R\bmg_{\bcdot*}\ubM'_{\bcdot}
\ar[r]^{\cong}&
Rg_{\bcdot\Zar*}R\varprojlim_{\N}R\nu'_{\bcdot*}\ubM'_{\bcdot}
}
\end{equation}
Note that the relevant derived functors can be computed component-wise
by \eqref{eq:InvSysDtoposFBF} and
Proposition \ref{prop:DtoposInvLimitFbF}.
By Theorem \ref{thm:PrismCohQHiggsCpxGlb} (3) and the compatibility for the pair 
$(c_{\mti_{\bcdot},\CF_{\bcdot}},c_{\mti'_{\bcdot},\CF'_{\bcdot}})$ 
mentioned above, we obtain a commutative diagram
\begin{equation}\label{eq:CompMorphFunctHP}
\xymatrix@R=20pt@C=20pt{
Ru_{\fX_{\bcdot}/A_{\inf}*}\CF_{\bcdot}\ar[r]
\ar[d]^{\cong}_{\kappa_{\mti_{\bcdot},\CF_{\bcdot}}}&
Ru_{\fX_{\bcdot}/A_{\inf}*}Rg_{\bcdot\prism*}\CF'_{\bcdot}
\ar[r]^{\cong}&
Rg_{\bcdot\Zar*}Ru_{\fX'_{\bcdot}/A_{\inf}*}\CF'_{\bcdot}
\ar[d]^{\cong}_{Rg_{\bcdot\Zar*}(\kappa_{\mti'_{\bcdot},\CF'_{\bcdot}})}\\
L\eta_{\mu}R\varprojlim_{\N}R\nu_{\bcdot*}\ubM_{\bcdot}
\ar[r]&
L\eta_{\mu}R\varprojlim_{\N}R\nu_{\bcdot*}R\bmg_{\bcdot*}\ubM'_{\bcdot}
\ar[r]&
Rg_{\bcdot\Zar*}L\eta_{\mu}R\varprojlim_{\N}R\nu'_{\bcdot*}\ubM'_{\bcdot},
}
\end{equation}
where the bottom right morphism is given by 
$R\nu_{\bcdot*}R\bmg_{\bcdot*}\cong Rg_{\bcdot*}R\nu'_{\bcdot*}$,
$R\varprojlim_{\N}Rg_{\bcdot*}\cong Rg_{\bcdot\Zar*}R\varprojlim_{\N}$,
and $L\eta_{\mu}Rg_{\bcdot\Zar*}\to Rg_{\bcdot\Zar*}L\eta_{\mu}$
\eqref{eq:DeclageDirectImage}.

It remains to prove the following lemma.

\begin{lemma}\label{lem:CohDescentFunctoriality}
The following diagrams are commutative, where
the right vertical morphism in the second diagram is given by 
the composition of the bottom morphism in \eqref{eq:CompMorphFunctHP}.
\begin{equation}\label{eq:CohDescentPrismZarFunct}
\xymatrix@R=15pt@C=40pt{
Ru_{\fX/A_{\inf}*}\CF\ar[rr]^{\cong}_{\eqref{eq:PrismZarProjCohDescAinf}}
\ar[d]&&
R\theta_*Ru_{\fX_{\bcdot}/A_{\inf}*}\CF_{\bcdot}\ar[d]\\
Rg_{\Zar*}Ru_{\fX'/A_{\inf}*}\CF'\ar[r]^(.45){\cong}_(.45){\eqref{eq:PrismZarProjCohDescAinf}}&
Rg_{\Zar*}R\theta'_*Ru_{\fX'_{\bcdot}/A_{\inf}*}\CF'_{\bcdot}
\ar[r]^{\cong}&
R\theta_*Rg_{\bcdot\Zar*}Ru_{\fX'_{\bcdot}/A_{\inf}*}\CF'_{\bcdot}}
\end{equation}
\begin{equation}\label{eq:CohDescentAOmegaFunct}
\xymatrix@R=15pt{
L\eta_{\mu}R\varprojlim_{\N}R\nu_*\ubM
\ar[rr]^{\cong}_{\eqref{eq:AOmegaCohDesc}}\ar[d]_{\eqref{eq:AOmegaFunct}}
&&
R\theta_*L\eta_{\mu}R\varprojlim_{\N}R\nu_{\bcdot*}\ubM_{\bcdot}\ar[d]\\
Rg_{\Zar*}(L\eta_{\mu}R\varprojlim_{\N}R\nu'_*\ubM')
\ar[r]^(.47){\cong}_(.47){\eqref{eq:AOmegaCohDesc}}&
Rg_{\Zar*}R\theta'_*L\eta_{\mu}R\varprojlim_{\N}R\nu'_{\bcdot*}\ubM'_{\bcdot}
\ar[r]^{\cong}&
R\theta_*Rg_{\bcdot\Zar*}
L\eta_{\mu}R\varprojlim_{\N}R\nu'_{\bcdot*}\ubM'_{\bcdot}
}
\end{equation}
\end{lemma}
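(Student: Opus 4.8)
The plan is to recognize both squares as instances of the \emph{naturality of a cohomological descent isomorphism} with respect to the morphism $g\colon\fX'\to\fX$, and to reduce each of them to two formal facts: first, the naturality of the unit $\id\to R\theta_*\theta^{-1}$ of a composite morphism of (ringed, simplicial) topos along a morphism of such; second, the compatibility of base change morphisms with the pasting of two commutative squares of topos. Both facts are standard $2$-categorical coherences, and once combined with the explicit compatibilities already recorded in the excerpt they will yield the two commutativities.

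For \eqref{eq:CohDescentPrismZarFunct}, recall that \eqref{eq:PrismZarProjCohDescAinf} is the composite of $Ru_{\fX/A_{\inf}*}$ applied to the unit $\CF\to R\theta_{\prism*}\theta_{\prism}^{-1}\CF=R\theta_{\prism*}\CF_{\bcdot}$, which becomes an isomorphism after $Ru_{\fX/A_{\inf}*}$ by cohomological descent \cite{SGA4}, with the base change isomorphism $Ru_{\fX/A_{\inf}*}R\theta_{\prism*}\cong R\theta_*Ru_{\fX_{\bcdot}/A_{\inf}*}$ coming from $\theta\circ u_{\fX_{\bcdot}/A_{\inf}}=u_{\fX/A_{\inf}}\circ\theta_{\prism}$ \eqref{eq:PrismToposZarProjFunct}. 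First I would observe that $\theta_{\prism}\circ g_{\bcdot\prism}$ and $g_{\prism}\circ\theta'_{\prism}$ are canonically isomorphic, both computing the pullback of a sheaf on $(\fX/A_{\inf})^{\sim}_{\prism}$ to $(\fX'_{\bcdot}/A_{\inf})^{\sim}_{\prism}$ along the componentwise compositions $g\circ g'_{[r]}=g_{[r]}\circ g'$; the unit of the composite adjunction is then natural along the resulting square by the coherence of units. Next, the base change isomorphism displayed above is compatible with its counterpart for $\fX'$ and with $g_{\prism}$, $g_{\bcdot\prism}$, $g_{\Zar}$, $g_{\bcdot\Zar}$ because base change morphisms behave well under pasting of commutative squares of ringed topos; here I would also use that $R\theta_*Rg_{\bcdot\Zar*}\cong Rg_{\Zar*}R\theta'_*$ on simplicial Zariski topos is the isomorphism induced by $\theta\circ g_{\bcdot\Zar}\cong g_{\Zar}\circ\theta'$. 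Splicing these two naturalities gives \eqref{eq:CohDescentPrismZarFunct}.

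For \eqref{eq:CohDescentAOmegaFunct} the argument is parallel but carries three extra layers: the functor $R\varprojlim_{\N}$ over the inverse-system topos, the passage through $\nu$ from the proétale to the Zariski simplicial topos, and the functor $L\eta_{\mu}$. Recall that \eqref{eq:AOmegaCohDesc} is built from the descent unit $R\varprojlim_{\N}R\nu_*\ubM\to R\varprojlim_{\N}R\nu_*R\bm{\theta}_*\ubM_{\bcdot}\cong R\theta_*R\varprojlim_{\N}R\nu_{\bcdot*}\ubM_{\bcdot}$ --- using $\theta\circ\nu_{\bcdot}\cong\nu\circ\bm{\theta}$, the fact that $R\varprojlim_{\N}$ and the relevant direct images are computed componentwise by \eqref{eq:InvSysDtoposFBF} and Proposition \ref{prop:DtoposInvLimitFbF}, and cohomological descent --- followed by $L\eta_{\mu}$ and the pushforward comparison morphism $L\eta_{\mu}R\theta_*\to R\theta_*L\eta_{\mu}$ \eqref{eq:DeclageDirectImage}. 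The naturality of the descent unit along $g$ and $\bmg_{\bcdot}$ is again formal; the compatibility of $\ubM\to R\bm{\theta}_*\ubM_{\bcdot}$ with $\uvarepsilon_{g,\CF}$ and $\uvarepsilon_{g_{\bcdot},\CF_{\bcdot}}$ \eqref{eq:BKFIndsysFunctMapHC} reduces componentwise to Proposition \ref{prop:BKFFunctComp} (3); and the crucial coherence is that the two ways of identifying $R\theta_*Rg_{\bcdot\Zar*}L\eta_{\mu}$ with $Rg_{\Zar*}R\theta'_*L\eta_{\mu}$ agree, which is exactly \eqref{eq:DecalageDirectImComp} of Lemma \ref{lem:DecalageMorphRingedTopos} (2) applied to $\theta$ and $g_{\bcdot\Zar}$ (equivalently to $g_{\Zar}$ and $\theta'$), transported through $\theta\circ g_{\bcdot\Zar}\cong g_{\Zar}\circ\theta'$.

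The main obstacle will not be any single deep step but the systematic bookkeeping of these coherences: one must check that the canonical isomorphisms between composites of direct image functors coming from the families-of-topos formalism of \S\ref{sec:DTopos} (Lemmas \ref{lem:DToposDIComp} and \ref{lem:DToposDIMorph}), from the simplicial structure, and from the inverse-system topos are mutually compatible, and in particular that the base change isomorphisms used in cohomological descent \cite{SGA4} are compatible with pasting on both the source and the target side. Once these are pinned down, each of the two squares decomposes into smaller squares already available --- naturality of units, \eqref{eq:CompMorphFunctHP}, Proposition \ref{prop:BKFFunctComp} (3), and Lemma \ref{lem:DecalageMorphRingedTopos} (2) --- and the lemma follows, completing the proof of Proposition \ref{eq:BKFPrismLocCohCompSimpFunct}.
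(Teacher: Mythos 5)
Your proposal matches the paper's approach: both proofs reduce the two squares to formal compatibilities of units, base change morphisms and the decalage pushforward map under pasting of composable morphisms of ringed topos. The paper organizes this bookkeeping by first recording an explicit adjunction formalism (Sublemma~\ref{sublem:bcformalism} on composites of adjoints and their images), then expresses the descent isomorphisms \eqref{eq:PrismZarProjCohDescAinf} and \eqref{eq:AOmegaCohDesc} as images of the unit $k\colon\CF\to R\theta_{\prism*}\CF_{\bcdot}$ (resp.\ $k\colon\ubM\to R\bm{\theta}_*\ubM_{\bcdot}$) under the appropriate functor-plus-base-change pair, and concludes by Sublemma~\ref{sublem:bcformalism}~(5), using exactly the inputs you name — the canonical isomorphism $\theta_{\prism}g_{\bcdot\prism}\cong g_{\prism}\theta'_{\prism}$, the compatibility of $k$, $\ell$, $\ell_{\bcdot}$, $k'$, and the commutativity of the $L\eta_\mu$-pushforward morphism with composition from Lemma~\ref{lem:DecalageMorphRingedTopos}~(2) — so the two arguments coincide in substance.
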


\begin{sublemma}\label{sublem:bcformalism}
We consider the following diagrams of categories such that,
for each pair of vertical functors, the left one is a left adjoint of the right one.
\begin{equation*}
\xymatrix@R=10pt@C=50pt{
C_1\ar@<-0.5ex>[d]_{\alpha^*}\ar[r]^{F_1}&
C_1'\ar@<-.5ex>[d]_{\alpha^{\prime*}}\ar[r]^{F_1'}&
C_1''\ar@<-.5ex>[d]_{\alpha^{\prime\prime*}}\\
C_2\ar@<-.5ex>[d]_{\beta^*}\ar[r]^{F_2}
\ar@<-.5ex>[u]_{\alpha_*}&
C_2'\ar@<-.5ex>[d]_{\beta^{\prime*}}\ar[r]^{F_2'}
\ar@<-.5ex>[u]_{\alpha'_*}&
C_2''
\ar@<-.5ex>[u]_{\alpha''_*}\\
C_3\ar[r]^{F_3}
\ar@<-.5ex>[u]_{\beta_*}&
C_3'
\ar@<-.5ex>[u]_{\beta'_*}&
}
\end{equation*}

(1) Giving a morphism $a^*\colon \alpha'^*F_1\to F_2\alpha^*$ is equivalent
to giving a morphism $a_*\colon F_1\alpha_*\to \alpha_*'F_2$. The correspondence
is given by $a_*\colon F_1\alpha_*\to\alpha'_*\alpha^{\prime*}F_1\alpha_*
\xrightarrow{\alpha'_*(a^*)\alpha_*}\alpha'_*F_2\alpha^*\alpha_*\to \alpha'_*F_2$
and $a^*\colon \alpha^{\prime*}F_1\to\alpha^{\prime*}F_1\alpha_*\alpha^*
\xrightarrow{\alpha^{\prime*}(a_*)\alpha^*}\alpha^{\prime*}\alpha'_*F_2\alpha^*
\to F_2\alpha^*$. 
When $a^*$ and $a_*$ correspond to each other as above, we say 
$a_*$ (resp.~$a^*$) is the right (resp.~left) adjoint of $a^*$ (resp.~$a_*$).
We can apply this correspondence also to the other two squares and
the two outer rectangles in the above diagram.\par
(2) Let $a^*\colon \alpha^{\prime*}F_1\to F_2\alpha^*$
and $b^*\colon \beta^{\prime*}F_2\to F_3\beta^*$ be
morphisms of functors, and let 
$a_*\colon F_1\alpha_*\to \alpha_*'F_2$
and $b_*\colon F_2\beta_*\to \beta'_*F_3$ be the
right adjoints of $a^*$ and $b^*$, respectively, in the sense of (1). 
We define the composition $c^*$ (resp.~$c_*$) of
$a^*$ and $b^*$ (resp.~$a_*$ and $b_*)$ by
$c^*\colon\beta^{\prime*}\alpha^{\prime*}F_1\xrightarrow{\beta^{\prime*}(a^*)}
\beta^{\prime*}F_2\alpha^*\xrightarrow{(b^*)\alpha^*}
F_3\beta^*\alpha^*$ (resp.~$c_*\colon F_1\alpha_*\beta_*\xrightarrow{(a_*)\beta_*}
\alpha'_*F_2\beta_*\xrightarrow{\alpha'_*(b_*)}\alpha'_*\beta'_*F_3$).
Then $c_*$ is the right adjoint of $c^*$ in the sense of (1) with respect to the vertical outer rectangle
in the diagram.\par
(3) Let $a^*\colon \alpha^{\prime*}F_1\to F_2\alpha^*$ and
$a^{\prime*}\colon \alpha^{\prime\prime*}F_1'\to F'_2\alpha^{\prime*}$
be morphisms of functors, and let 
$a_*\colon F_1\alpha_*\to \alpha_*'F_2$
and $a'_*\colon F'_1\alpha'_*\to \alpha''_*F'_2$ be
the right adjoints of $a^*$ and $a^{\prime*}$, respectively, in the sense of (1).
We define the composition $a^{\prime\prime*}$ (resp.~$a''_*$) of
$a^*$ and $a^{\prime*}$ (resp.~$a_*$ and $a'_*$) by 
$a^{\prime\prime*}\colon \alpha^{\prime\prime*}F'_1F_1\xrightarrow{(a^{\prime*})F_1}
F'_2\alpha^{\prime*}F_1\xrightarrow{F'_2(a^*)}
F'_2F_2\alpha^*$ 
(resp.~$a^{\prime\prime}_*\colon F'_1F_1\alpha_*
\xrightarrow{F'_1(a_*)}F'_1\alpha'_*F_2
\xrightarrow{(a'_*)F_2}\alpha''_*F_2'F_2$).
Then $a''_*$ is the right adjoint of $a^{\prime\prime*}$ in the sense of (1)
with respect to the horizontal outer rectangle in the diagram.\par
(4) Let $X_1$ and $X_2$ be objects of $C_1$ and $C_2$,  let
$f\colon X_1\to \alpha_*(X_2)$ be a morphism in $C_1$,
and let $g\colon \alpha^*(X_1)\to X_2$ be the left adjoint of $f$.
Let $a^*\colon \alpha'^*F_1\to F_2\alpha^*$ be a morphism, and
let $a_*\colon F_1\alpha_*\to \alpha_*'F_2$ be its right adjoint
in the sense of (1). We define the image of $f$ by $(F_1,a_*)$
(resp.~$g$ by $(a^*,F_2)$) to be the composition 
$f'\colon F_1(X_1)\xrightarrow{F_1(f)}F_1\alpha_*(X_2)
\xrightarrow{a_*(X_2)}\alpha'_*F_2(X_2)$
(resp.~$g'\colon \alpha^{\prime*}F_1(X_1)\xrightarrow{a^*(X_1)}
F_2\alpha^*(X_1)\xrightarrow{F_2(g)}F_2(X_2)$).
Then $g'$ is the left adjoint of $f'$. \par 
(5) Let $a_*$, $b_*$, and $c_*$ be the same as in (2).
Let $X_1$, $X_2$, and $X_3$ be objects of $C_1$, $C_2$, and $C_3$, respectively.
Let $f_1\colon X_1\to \alpha_*(X_2)$
and $f_2\colon X_2\to \beta_*(X_3)$ be morphisms in $C_1$ and $C_2$.
We define the composition $f_3$
of $f_1$ and $f_2$ to be 
$X_1\xrightarrow{f_1}\alpha_*(X_2)\xrightarrow{\alpha_*(f_2)}\alpha_*\beta_*(X_3)$.
Let $f'_1\colon F_1(X_1)\to \alpha'_*F_2(X_2)$
(resp.~$f'_2\colon F_2(X_2)\to \beta'_*F_3(X_3)$,
resp.~$f'_3\colon F_1(X_1)\to\alpha'_*\beta'_*F_3(X_3)$) be the 
image of $f_1$ (resp.~$f_2$, resp.~$f_3$) 
by $(F_1,a_*)$ (resp.~$(F_2,b_*)$, resp.~$(F_1,c_*)$) in the sense of (4).
Then the morphism $f_3'$ coincides with the composition
$F_1(X_1)\xrightarrow{f_1'}\alpha'_*F_2(X_2)
\xrightarrow{\alpha'_*(f'_2)}\alpha'_*\beta'_*F_3(X_3)$
of $f_1'$ and $f_2'$.
\end{sublemma}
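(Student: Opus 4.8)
This sublemma is a purely formal statement: it records the standard calculus of mates (conjugate natural transformations) for adjunctions in the $2$-category of categories, together with its compatibility with vertical and horizontal pasting and with evaluation on morphisms. The plan is to prove it by systematic use of the triangle identities. First I would fix, for each vertical adjoint pair $(\alpha^*,\alpha_*)$, $(\alpha'^*,\alpha'_*)$, $(\alpha''^*,\alpha''_*)$, $(\beta^*,\beta_*)$, $(\beta'^*,\beta'_*)$ occurring in the diagram, its unit and counit, say $\eta_\alpha\colon\id\to\alpha_*\alpha^*$ and $\varepsilon_\alpha\colon\alpha^*\alpha_*\to\id$, and likewise for the rest; the displayed formulas defining $a^*$ from $a_*$ and $a_*$ from $a^*$ are then just whiskerings of these units and counits with the $F_i$ and with $a^*$ (resp.\ $a_*$).

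For part (1), which is the heart of the sublemma, I would check that the two assignments $a^*\mapsto a_*$ and $a_*\mapsto a^*$ are mutually inverse. Composing $a^*\mapsto a_*\mapsto a^*$ and rewriting the resulting natural transformation by naturality of $\eta_\alpha$, $\varepsilon_\alpha$, $\eta_{\alpha'}$, $\varepsilon_{\alpha'}$, one exhibits inside it a subcomposite of the shape $\alpha^*\eta_\alpha$ followed (after sliding) by $\varepsilon_\alpha\alpha^*$, and one of the shape $\alpha'^*\eta_{\alpha'}$ followed by $\varepsilon_{\alpha'}\alpha'^*$; both collapse to the identity by the triangle identities, leaving $a^*$. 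The reverse composite is handled symmetrically, and exactly the same argument applies to each of the remaining squares and to the two outer rectangles, so the mate correspondence is well defined for all of them.

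With (1) in hand I would treat (2), (3), (4), and (5) each as a short diagram chase that reduces to a single application of a triangle identity or of naturality. For (2) I would compute the right adjoint of $c^*$ directly from the prescription of (1) and match it termwise with the prescribed $c_*$, the interior $\beta'$-- and $\beta$--adjunction data cancelling by the corresponding triangle identities; (3) is the same chase with the roles of the horizontal and vertical directions exchanged, the interior $\alpha'$--data cancelling by a triangle identity for $\alpha'$. For (4) I would unwind the composites $f'=a_*(X_2)\circ F_1(f)$ and $g'=F_2(g)\circ a^*(X_1)$ using the expression for $a_*$ from (1) and the hypothesis $g=\varepsilon_\alpha(X_2)\circ\alpha^*(f)$; the $\alpha$--adjunction data cancels by the triangle identity for $\alpha$, giving that $g'$ is the left adjoint of $f'$. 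Finally (5) follows by applying (4) to $f_1$, $f_2$, and $f_3=\alpha_*(f_2)\circ f_1$, then invoking (2) to identify the $(F_1,c_*)$--image of $f_3$ with the evident composite of the $(F_1,a_*)$-- and $(F_2,b_*)$--images, together with the compatibility of left adjoints with composition of morphisms.

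The only obstacle is bookkeeping: the diagram has three horizontal layers and three vertical columns, so the natural transformations carry many whiskering decorations and the chases in (2), (3), and (5) are long even though every individual step is forced. The plan is therefore to isolate (1) as the single substantive step, phrase (2)--(5) entirely in terms of it, and arrange each remaining verification so that it terminates in one triangle identity or naturality square; no genuine difficulty is anticipated.
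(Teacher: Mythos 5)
Your proposal is correct and matches what the paper leaves implicit: the paper's entire proof is the single word ``Straightforward,'' and what it hides is exactly the standard calculus of mates you describe, with (1) established by the triangle identities and (2)--(5) reducing to naturality squares and one further triangle identity each.
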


\begin{proof} Straightforward. \end{proof}

\begin{proof}[Proof of Lemma \ref{lem:CohDescentFunctoriality}]
We use the terminology introduced in Sublemma \ref{sublem:bcformalism}.\par
Proof of \eqref{eq:CohDescentPrismZarFunct}: 
We abbreviate $u_{\fX/A_{\inf}}$, $u_{\fX_{\bcdot}/A_{\inf}}$,
$u_{\fX'/A_{\inf}}$, $u_{\fX'_{\bcdot}/A_{\inf}}$, $g_{\Zar}$, and $g_{\bcdot\Zar}$ to 
$u$, $u_{\bcdot}$, $u'$, $u'_{\bcdot}$, $g$, and $g_{\bcdot}$. 
We apply Sublemma \ref{sublem:bcformalism} to the 
following diagrams of derived categories, where we
omit the structure rings of ringed topos.
\begin{equation*}
\xymatrix@R=15pt@C=50pt{
D^+((\fX/A_{\inf})_{\prism})\ar[r]^(.55){Ru_*}
\ar@<-0.5ex>[d]_{L\theta_{\prism}^*}
&
D^+(\fX_{\Zar})
\ar@<-0.5ex>[d]_{L\theta^*}
\\
D^+((\fX_{\bcdot}/A_{\inf})_{\prism})\ar[r]^(.55){Ru_{\bcdot*}}
\ar@<-0.5ex>[u]_{R\theta_{\prism*}}
\ar@<-0.5ex>[d]_{Lg_{\bcdot\prism}^*}
&
D^+(\fX_{\bcdot\Zar})
\ar@<-0.5ex>[u]_{R\theta_*}
\ar@<-0.5ex>[d]_{Lg_{\bcdot}^*}
\\
D^+((\fX'_{\bcdot}/A_{\inf})_{\prism})\ar[r]^(.55){Ru'_{\bcdot*}}
\ar@<-0.5ex>[u]_{Rg_{\bcdot\prism*}}
&
D^+(\fX'_{\bcdot\Zar})
\ar@<-0.5ex>[u]_{Rg_{\bcdot*}}
}
\qquad
\xymatrix@R=15pt@C=50pt{
D^+((\fX/A_{\inf})_{\prism})\ar[r]^(.55){Ru_*}
\ar@<-0.5ex>[d]_{Lg_{\prism}^*}
&
D^+(\fX_{\Zar})
\ar@<-0.5ex>[d]_{Lg^*}
\\
D^+((\fX'/A_{\inf})_{\prism})\ar[r]^(.55){Ru'_*}
\ar@<-0.5ex>[u]_{Rg_{\prism*}}
\ar@<-0.5ex>[d]_{L\theta^{\prime*}_{\prism}}
&
D^+(\fX'_{\Zar})
\ar@<-0.5ex>[u]_{Rg_*}
\ar@<-0.5ex>[d]_{L\theta^{\prime*}}
\\
D^+((\fX'_{\bcdot}/A_{\inf})_{\prism})\ar[r]^(.55){Ru'_{\bcdot*}}
\ar@<-0.5ex>[u]_{R\theta'_{\prism*}}
&
D^+(\fX'_{\bcdot\Zar})
\ar@<-0.5ex>[u]_{R\theta'_*}
}
\end{equation*}
The base change morphism $a^*\colon L\theta^*Ru_*
\to Ru_{\bcdot*}L\theta_{\prism}^*$ of the upper left square
is the left adjoint of the canonical isomorphism 
$a_*\colon Ru_*R\theta_{\prism*}
\xrightarrow{\cong}R\theta_*Ru_{\bcdot*}$. 
Therefore,
by applying Sublemma \ref{sublem:bcformalism} (4) to 
the isomorphism $L\theta_{\prism}^*\CF\xrightarrow{\cong}\CF_{\bcdot}$
and its right adjoint $k\colon \CF\to R\theta_{\prism*}\CF_{\bcdot}$, 
we see that the isomorphism \eqref{eq:PrismZarProjCohDescAinf} coincides with the image of
$k$ by $(Ru_*,a_*)$: $Ru_*\CF\xrightarrow{Ru_*(k)} Ru_*
R\theta_{\prism*}\CF_{\bcdot}\xrightarrow[a_*(\CF_{\bcdot})]{\cong} R\theta_*Ru_{\bcdot*}\CF_{\bcdot}$.
Via the isomorphism $R\theta_{\prism*}Rg_{\bcdot\prism*}\cong 
Rg_{\prism*}R\theta'_{\prism*}$ and $R\theta_*Rg_{\bcdot*}\cong Rg_*R\theta'_*$,
the composition  of $a_*\colon Ru_*R\theta_{\prism*}
\xrightarrow{\cong}R\theta_*Ru_{\bcdot*}$
and $b_{\bcdot*}\colon Ru_{\bcdot*}Rg_{\bcdot\prism*}\xrightarrow{\cong}
Rg_{\bcdot*}Ru_{\bcdot*}'$ coincides with
that of $b_*\colon Ru_*Rg_{\prism*}\xrightarrow{\cong}Rg_*Ru'_*$
and $a'_*\colon Ru'_*R\theta'_{\prism*}\xrightarrow{\cong}R\theta'_*Ru'_{\bcdot*}$,
and the composition of 
$k\colon \CF\to R\theta_{\prism*}\CF_{\bcdot}$ and
$\ell_{\bcdot}\colon \CF_{\bcdot}\to Rg_{\bcdot\prism*}\CF'_{\bcdot}$
coincides with that of $\ell\colon \CF\to Rg_{\prism*}\CF'$
and $k'\colon \CF'\to R\theta_{\prism*}'\CF'_{\bcdot}$.
Therefore we see that the diagram \eqref{eq:CohDescentPrismZarFunct} is commutative 
by comparing the composition of the images
of $k$ and $\ell_{\bcdot}$ by $(Ru_*,a_*)$ and $(Ru_{\bcdot*},b_{\bcdot*})$
with the composition of the images of
$\ell$ and $k'$ by $(Ru_*,b_{*})$ and $(Ru'_*, a'_*)$
by using Sublemma \ref{sublem:bcformalism} (5).\par
Proof of \eqref{eq:CohDescentAOmegaFunct}: 
We apply Sublemma \ref{sublem:bcformalism} to the following diagrams of derived categories, 
where we omit the structure rings of ringed topos.
\begin{equation*}
\xymatrix@R=15pt@C=50pt{
D^+(X_{\proet}^{\N^{\circ}})\ar[r]^{R\nu_*}
\ar@<-0.5ex>[d]_{L\bm{\theta}^*}
&
D^+(\fX_{\Zar}^{\N^{\circ}})\ar[r]^{R\varprojlim_{\N}}
\ar@<-0.5ex>[d]_{L\theta^*}
&
D^+(\fX_{\Zar})\ar[r]^{L\eta_{\mu}}
\ar@<-0.5ex>[d]_{L\theta^*}
&
D(\fX_{\Zar})
\ar@<-0.5ex>[d]_{L\theta^*}\\
D^+(X_{\bcdot\proet}^{\N^{\circ}})\ar[r]^{R\nu_{\bcdot*}}
\ar@<-0.5ex>[u]_{R\bm{\theta}_*}
\ar@<-0.5ex>[d]_{L\bmg_{\bcdot}^*}
&
D^+(\fX_{\bcdot\Zar}^{\N^{\circ}})\ar[r]^{R\varprojlim_{\N}}
\ar@<-0.5ex>[u]_{R\theta_*}
\ar@<-0.5ex>[d]_{Lg_{\bcdot}^*}
&
D^+(\fX_{\bcdot\Zar})\ar[r]^{L\eta_{\mu}}
\ar@<-0.5ex>[u]_{R\theta_*}
\ar@<-0.5ex>[d]_{Lg_{\bcdot}^*}
&
D(\fX_{\bcdot\Zar})
\ar@<-0.5ex>[u]_{R\theta_*}
\ar@<-0.5ex>[d]_{Lg_{\bcdot}^*}\\
D^+(X_{\bcdot\proet}^{\prime\N^{\circ}})\ar[r]^{R\nu'_{\bcdot*}}
\ar@<-0.5ex>[u]_{R\bmg_{\bcdot*}}
&
D^+(\fX_{\bcdot\Zar}^{\prime\N^{\circ}})\ar[r]^{R\varprojlim_{\N}}
\ar@<-0.5ex>[u]_{Rg_{\bcdot*}}
&
D^+(\fX'_{\bcdot\Zar})\ar[r]^{L\eta_{\mu}}
\ar@<-0.5ex>[u]_{Rg_{\bcdot*}}
&
D(\fX'_{\bcdot\Zar})
\ar@<-0.5ex>[u]_{Rg_{\bcdot*}}\\
}
\end{equation*}
\begin{equation*}
\xymatrix@R=15pt@C=50pt{
D^+(X_{\proet}^{\N^{\circ}})\ar[r]^{R\nu_*}
\ar@<-0.5ex>[d]_{L\bmg^*}
&
D^+(\fX_{\Zar}^{\N^{\circ}})\ar[r]^{R\varprojlim_{\N}}
\ar@<-0.5ex>[d]_{Lg^*}
&
D^+(\fX_{\Zar})\ar[r]^{L\eta_{\mu}}
\ar@<-0.5ex>[d]_{Lg^*}
&
D(\fX_{\Zar})
\ar@<-0.5ex>[d]_{Lg^*}\\
D^+(X_{\proet}^{\prime\N^{\circ}})\ar[r]^{R\nu'_*}
\ar@<-0.5ex>[u]_{R\bmg_*}
\ar@<-0.5ex>[d]_{L\bm{\theta}^{\prime*}}
&
D^+(\fX_{\Zar}^{\prime\N^{\circ}})\ar[r]^{R\varprojlim_{\N}}
\ar@<-0.5ex>[u]_{Rg_*}
\ar@<-0.5ex>[d]_{L\theta^{\prime*}}
&
D^+(\fX'_{\Zar})\ar[r]^{L\eta_{\mu}}
\ar@<-0.5ex>[u]_{Rg_*}
\ar@<-0.5ex>[d]_{L\theta^{\prime*}}
&
D(\fX'_{\Zar})
\ar@<-0.5ex>[u]_{Rg_*}
\ar@<-0.5ex>[d]_{L\theta^{\prime*}}\\
D^+(X_{\bcdot\proet}^{\prime\N^{\circ}})\ar[r]^{R\nu'_{\bcdot*}}
\ar@<-0.5ex>[u]_{R\bm{\theta}'_*}
&
D^+(\fX_{\bcdot\Zar}^{\prime\N^{\circ}})\ar[r]^{R\varprojlim_{\N}}
\ar@<-0.5ex>[u]_{R\theta'_*}
&
D^+(\fX'_{\bcdot\Zar})\ar[r]^{L\eta_{\mu}}
\ar@<-0.5ex>[u]_{R\theta'_*}
&
D(\fX'_{\bcdot\Zar})
\ar@<-0.5ex>[u]_{R\theta'_*}\\
}
\end{equation*}
We define $A\Omega_{\fX}$, $A\Omega_{\fX_{\bcdot}}$, $A\Omega_{\fX'}$,
and $A\Omega_{\fX'_{\bcdot}}$ to be the compositions of the three
horizontal functors for $\fX$, $\fX_{\bcdot}$, $\fX'$, and $\fX'_{\bcdot}$, respectively.
The base change isomorphisms
$a^*\colon L\theta^*R\nu_*\xrightarrow{\cong}R\nu_{\bcdot*}L\bm{\theta}^*$
and $b^*\colon L\theta^*R\varprojlim_{\N}\xrightarrow{\cong} R\varprojlim_{\N} L\theta^*$
of the top left and middle squares are the left adjoints of 
the canonical isomorphisms
$a_*\colon R\nu_*R\bm{\theta}_*{\cong}R\theta_*R\nu_{\bcdot*}$
and $b_*\colon R\varprojlim_{\N}R\theta_*\xrightarrow{\cong}
R\theta_*R\varprojlim_{\N}$ (Sublemma \ref{sublem:bcformalism} (1)).
Let $c_*\colon L\eta_{\mu}R\theta_*
\to R\theta_*L\eta_{\mu}$ be the right adjoint of 
the isomorphism $c^*\colon L\theta^*L\eta_{\mu}
\xrightarrow{\cong}L\eta_{\mu}L\theta^*$. 
Then, by composing $(a_*,b_*,c_*)$ and $(a^*,b^*,c^*)$,
we obtain a morphism 
$d_*\colon A\Omega_{\fX}R\bm{\theta}_*
\to R\theta_*A\Omega_{\fX_{\bcdot}}$
and $d^*\colon L\theta^*A\Omega_{\fX}
\to A\Omega_{\fX_{\bcdot}}L\bm{\theta}^*$.
The latter is the left adjoint of the former
by Sublemma \ref{sublem:bcformalism} (3).
By applying Sublemma \ref{sublem:bcformalism} (4) to $L\bm{\theta}^*\ubM
\xrightarrow{\cong}\ubM_{\bcdot}$
and its adjoint $k\colon \ubM\to R\bm{\theta}_*\ubM_{\bcdot}$, we see that 
the isomorphism \eqref{eq:AOmegaCohDesc} coincides with the image of $k$ under
$(A\Omega_{\fX},d_*)$: $A\Omega_{\fX}(\ubM)
\xrightarrow{A\Omega_{\fX}(k)} A\Omega_{\fX}(R\bm{\theta}_*\ubM_{\bcdot})
\xrightarrow{d_*(\ubM_{\bcdot})} R\theta_*A\Omega_{\fX_{\bcdot}}(\ubM_{\bcdot})$.
By applying the same argument to the other three horizontal 
sequences of squares in the diagram above,
we obtain morphisms 
$e_{\bcdot*}\colon A\Omega_{\fX_{\bcdot}}R\bm{g}_{\bcdot*}\to Rg_{\bcdot*}A\Omega_{\fX'_{\bcdot}}$, 
$e_*\colon A\Omega_{\fX}R\bm{g}_*\to Rg_*A\Omega_{\fX'}$,
and $d'_*\colon A\Omega_{\fX'}R\bm{\theta}'_*
\to R\theta'_*A\Omega_{\fX'_{\bcdot}}$. 
The composition of $d_*$ and $e_{\bcdot*}$ coincides
with that of $e_*$ and $d'_*$ up to canonical isomorphisms of their
domains and codomains; 
one can verify it by showing the corresponding claims for
($R\nu_*$, $R\nu'_*$, $R\nu_{\bcdot*}$, $R\nu'_{\bcdot*}$), $R\varprojlim_{\N}$, and $L\eta_{\mu}$. 
Since the composition of $k$ and
$\ell_{\bcdot}\colon\ubM_{\bcdot}
\to R\bmg_{\bcdot*}\ubM'_{\bcdot}$
coincides with that of 
$\ell\colon \ubM\to R\bmg_*\ubM'$
and $k'\colon \ubM'\to R\bm{\theta}'_*\ubM'_{\bcdot}$
up to canonical isomorphism of their codomains,
we see that the diagram \eqref{eq:CohDescentAOmegaFunct} 
commutes by comparing 
the composition of the images of $k$ and $\ell_{\bcdot}$
by $(A\Omega_{\fX}, d_*)$ and $(A\Omega_{\fX_{\bcdot}},e_{\bcdot*})$
with that of the images of $\ell$ and $k'$ by 
$(A\Omega_{\fX}, e_*)$ and $(A\Omega_{\fX'},d_*')$
by using Sublemma \ref{sublem:bcformalism} (5).
\end{proof}

\begin{proposition}\label{prop:PrismAinfCohIsomFrobCompGlobal}
Let $\fX$ be a quasi-compact, separated, smooth $p$-adic formal scheme over $\CO$,
let $\CF$ be an object of $\Crystal^{\fproj}_{\prism}(\fX/A_{\inf})$, and
put $\CF_{\varphi}=\varphi^*\CF\in \Ob \Crystal_{\prism}^{\fproj}(\fX/A_{\inf})$
 (Remark \ref{rmk:PrismCrystalFrobTensor} (2)). We write $\bM$ and $\bM_{\varphi}$
 for $\bM_{\BKF,\fX}(\CF)$ and $\bM_{\BKF,\fX}(\CF_{\varphi})$, respectively.
Then the following diagram is commutative, where the
left vertical morphism is induced by $\CF\to \CF_{\varphi};x\mapsto x\otimes 1$.
\begin{equation}
\xymatrix@R=10pt@C=60pt{
Ru_{\fX/A_{\inf}*}\CF\ar[r]^{\sim}_{\eqref{eq:PrismCohAinfCohGlobComp}}\ar[d]&
A\Omega_{\fX}(\bM)\ar[d]^{\eqref{eq:AinfCohBKFFrobPB}}\\
\varphi_*Ru_{\fX/A_{\inf}*}\CF_{\varphi}
\ar[r]^{\sim}_{\eqref{eq:PrismCohAinfCohGlobComp}}&
\varphi_*A\Omega_{\fX}(\bM_{\varphi})
}
\end{equation}
\end{proposition}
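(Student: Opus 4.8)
The plan is to reduce the statement to the simplicial level on which \eqref{eq:PrismCohAinfCohGlobComp} was built, to prove the Frobenius compatibility there by gluing the local statement of \S\ref{sec:AinfcohCompMap}, and then to check that the two cohomological descent isomorphisms entering \eqref{eq:BKFPrismCohLocCompMapSimp} are compatible with Frobenius. Concretely, I would fix a Zariski hypercovering $\fX_{\bcdot}\to\fX$ by affine formal schemes together with a simplicial small framed embedding $\mti_{\bcdot}=(\mfi_{\bcdot}\colon\fX_{\bcdot}\to\fY_{\bcdot},\ut_{\bcdot})$ over $A_{\inf}$, so that \eqref{eq:PrismCohAinfCohGlobComp} is by definition the isomorphism $\kappa_{\mti_{\bcdot},\CF}$ of \eqref{eq:BKFPrismCohLocCompMapSimp}, i.e.\ $R\theta_*(\kappa_{\mti_{\bcdot},\CF_{\bcdot}})$ composed with the inverse of \eqref{eq:AOmegaCohDesc} and with \eqref{eq:PrismZarProjCohDescAinf}. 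Set $\CF_{\varphi}=\varphi^*\CF$ and $\CF_{\bcdot,\varphi}=\theta_{\prism}^{-1}\CF_{\varphi}$; since Frobenius pullback commutes with the inverse image functors $f_{\prism}^{-1}$ (Remark \ref{rmk:PrismCrystalFrobTensor} (2)), we have $\CF_{\bcdot,\varphi}\cong\varphi^*\CF_{\bcdot}$, hence $\bM_{\BKF,\fX_{\bcdot}}(\CF_{\bcdot,\varphi})\cong\bm{\theta}^*\bM_{\varphi}$, and the morphism $\CF_{\bcdot}\to\varphi_*\CF_{\bcdot,\varphi}$ is the inverse image of $\CF\to\varphi_*\CF_{\varphi}$.

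The claim then follows from three facts, combined exactly as in the proof of Lemma \ref{lem:CohDescentFunctoriality} (which carried out the analogous gluing for the functoriality in $\fX$): (a) the simplicial morphism $\kappa_{\mti_{\bcdot},\CF_{\bcdot}}$ is compatible with the Frobenius morphisms $\CF_{\bcdot}\to\varphi_*\CF_{\bcdot,\varphi}$ and \eqref{eq:AinfCohBKFFrobPB}; (b) the prismatic descent isomorphism \eqref{eq:PrismZarProjCohDescAinf} is compatible with $\CF\to\varphi_*\CF_{\varphi}$; (c) the $A_{\inf}$-side descent isomorphism \eqref{eq:AOmegaCohDesc} is compatible with \eqref{eq:AinfCohBKFFrobPB}. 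Granting (a)--(c), one applies $R\theta_*$ to the square in (a) and splices it with the squares of (b) and (c) to obtain the diagram of the proposition. Fact (b) is formal: $\varphi^*$ on prismatic crystals commutes with $\theta_{\prism}^{-1}$ and $R\theta_{\prism*}$, so the descent isomorphism, being the image under $(Ru_{\fX/A_{\inf}*},\mathrm{can})$ of the adjunction $\CF\to R\theta_{\prism*}\CF_{\bcdot}$ in the sense of Sublemma \ref{sublem:bcformalism} (4), is automatically compatible with the Frobenius morphisms.

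For (a) I would repeat verbatim, on the simplicial level, the proof of the local Frobenius-compatibility proposition in \S\ref{sec:AinfcohCompMap}: it combines the four commutative diagrams \eqref{eq:qHiggsKoszFrobComp}, \eqref{eq:HiggsKoszulBKFFrobComp}, \eqref{eq:KoszGammaCohFrobComp}, \eqref{eq:BKFGammaResolFrobComp} with Theorem \ref{thm:PrismCohQHiggsCpx} (2). The simplicial analogues of those diagrams are obtained by assembling their component-wise versions (valid for each $\mti_{[r]}$, $\CF_{[r]}$) using the already-established Frobenius compatibility of the simplicial gluing data $\sigma_{\mtg_{\bcdot},\CF_{\bcdot}}$ (Remark \ref{rmk:qdRInvSysFunctFrobComp}) and $\tau_{\mtg_{\bcdot},\CF_{\bcdot}}$, $K^{\bullet}_{\psi_{\bcdot}}(\otau_{\mtg_{\bcdot},\CF_{\bcdot}})$, $K^{\bullet}_{\psi_{\bcdot}}(\ootau_{\mtg_{\bcdot},\CF_{\bcdot}})$ (Remark \ref{rmk:BKFGammaKoszulFunctFrobPB}); and Theorem \ref{thm:PrismCohQHiggsCpxGlb} (2) replaces Theorem \ref{thm:PrismCohQHiggsCpx} (2). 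For (c), \eqref{eq:AOmegaCohDesc} is, in the notation of the proof of Lemma \ref{lem:CohDescentFunctoriality}, the image under $(A\Omega_{\fX},d_*)$ of the adjunction $\ubM\to R\bm{\theta}_*\ubM_{\bcdot}$, where $d_*$ is built from the three base-change isomorphisms for $R\nu_*$, $R\varprojlim_{\N}$ and $L\eta_{\mu}$; since \eqref{eq:AinfCohBKFFrobPB} is itself built from the semilinear map \eqref{eq:BKFFunctMapFrobPB}, the restriction of scalars $\varphi_*$, and the decalage identities of \cite[Lemmas 6.11 and 6.14]{BMS}, its compatibility with $R\theta_*$ is the same organized diagram chase as in the first Remark following the construction of \eqref{eq:AOmegaFunct} (which treats \eqref{eq:AOmegaFunct} for $g_{\Zar}$ in place of $\theta$), relying on the compatibility squares \eqref{eq:DecalageDirectImComp} and \eqref{eq:DecalageCompDerictIm} of Lemma \ref{lem:DecalageMorphRingedTopos}. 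I expect (c) to be the main obstacle: threading \eqref{eq:AinfCohBKFFrobPB} through $L\bm{\theta}^*$, $R\varprojlim_{\N}$ and $L\eta_{\mu}$ while keeping track of $\varphi_*$ and of which $L\eta$ (namely $L\eta_{\mu}$ versus $L\eta_{[p]_q}L\eta_{\mu}$) appears at each stage is purely book-keeping, but it is the one place where several of the decalage compatibility lemmas must be invoked simultaneously; no new input beyond what is already in the excerpt is needed.
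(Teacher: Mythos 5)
Your proposal matches the paper's proof. The paper establishes the simplicial compatibility of $\kappa_{\mti_{\bcdot},\CF_{\bcdot}}$ with Frobenius exactly as in your step~(a), via Remarks \ref{rmk:qdRInvSysFunctFrobComp} and \ref{rmk:BKFGammaKoszulFunctFrobPB} applied to the face and degeneracy maps $\mtg_{\alpha}$, the four local compatibility squares \eqref{eq:qHiggsKoszFrobComp}--\eqref{eq:BKFGammaResolFrobComp}, and Theorem \ref{thm:PrismCohQHiggsCpxGlb}~(2); the descent-compatibility steps you isolate as (b) and (c) are left implicit in the paper but are indeed what is needed to pass from the simplicial statement to the global one, and your outline of how to deduce them via Sublemma \ref{sublem:bcformalism} and the decalage lemmas mirrors the organization of Lemma \ref{lem:CohDescentFunctoriality}.
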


\begin{proof}
Let $\fX_{\bcdot}$ be a Zariski hypercovering of $\fX$ by affine formal schemes, 
and let $\mti_{\bcdot}=(\mfi_{\bcdot}\colon \fX_{\bcdot}\to \fY_{\bcdot},\ut_{\bcdot},\Lambda_{\bcdot})$
be a simplicial small framed embedding of $\fX_{\bcdot}$ over $A_{\inf}$.
We follow the notation introduced in the construction of 
$\kappa_{\mti_{\bcdot},\CF}$ \eqref{eq:BKFPrismCohLocCompMapSimp}. 
We define $\ubM_{\varphi}$, $\ubM_{\varphi\bcdot}$,
$\uCM_{\varphi\bcdot}$, and $q\Omega^{\bullet}(\uCM_{\varphi\bcdot},\utheta_{\uCM_{\varphi\bcdot}})$
by using $\CF_{\varphi}$ instead of $\CF$.
By applying Remarks \ref{rmk:qdRInvSysFunctFrobComp} and 
\ref{rmk:BKFGammaKoszulFunctFrobPB} to $\mtg_{\alpha}$
$(\alpha\in \Mor \Delta)$, we see that the Frobenius pullback morphisms
\eqref{eq:BKFGammaShfFrob}, \eqref{eq:BKFGammaShfKosFrob2}, 
\eqref{eq:qHiggsInvSysFrobPB} for $\CF_{[r]}$ and $\mti_{[r]}$ for each $[r]\in \Ob \Delta$
define Frobenius pullback morphisms
\begin{gather*}
\nu_{\infty\bcdot*}\ubM_{\bcdot}\longrightarrow\varphi_*\nu_{\infty\bcdot*}\ubM_{\varphi\bcdot},
\qquad
K_{\Lambda_{\bcdot}}^{\bullet}(\iota_{\bcdot*}\iota_{\bcdot}^*\nu_{\infty\bcdot*}\ubM_{\bcdot})
\longrightarrow
\varphi_*K_{\Lambda_{\bcdot}}^{\bullet}(\iota_{\bcdot*}\iota_{\bcdot}^*\nu_{\infty\bcdot*}\ubM_{\varphi\bcdot})\\
v_{\bcdot*}(q\Omega^{\bullet}(\uCM_{\bcdot},\utheta_{\uCM_{\bcdot}}))
\longrightarrow 
\varphi_*v_{\bcdot*}(q\Omega^{\bullet}(\uCM_{\varphi\bcdot},\utheta_{\uCM_{\varphi\bcdot}})).
\end{gather*}
By \eqref{eq:qHiggsKoszFrobComp}, \eqref{eq:HiggsKoszulBKFFrobComp}, 
\eqref{eq:KoszGammaCohFrobComp}, and \eqref{eq:BKFGammaResolFrobComp}, 
we see that the morphisms 
\begin{align*}
&(\eta_{\mu}\varprojlim_{\N}\alpha_{\mti_{\bcdot},\CF_{\bcdot}})\circ c_{\mti_{\bcdot},\CF_{\bcdot}}
\colon \varprojlim_{\N} q\Omega^{\bullet}(\uCM_{\bcdot},\utheta_{\uCM_{\bcdot}})\longrightarrow 
\eta_{\mu}\varprojlim_{\N} \pi_{\bcdot*}
K^{\bullet}_{\Lambda_{\bcdot}}(\iota_{\bcdot*}\iota_{\bcdot}^*\nu_{\infty\bcdot*}\ubM_{\bcdot}),\\
&L\eta_{\mu}R\varprojlim_{\N}R\pi_{\bcdot*}(\beta_{\mti_{\bcdot},\CF_{\bcdot}})\colon 
L\eta_{\mu}R\varprojlim_{\N} R\pi_{\bcdot*}\nu_{\infty\bcdot*}\ubM_{\bcdot}
\xrightarrow{\;\;\sim\;\;}
L\eta_{\mu}R\varprojlim_{\N} R\pi_{\bcdot*}K^{\bullet}_{\Lambda_{\bcdot}}
(\iota_{\bcdot*}\iota_{\bcdot}^*\nu_{\infty\bcdot*}\ubM_{\bcdot})
\end{align*}
induced by $c_{\mti_{\bcdot},\CF_{\bcdot}}$ \eqref{eq:qdRKoszulMapHC}, 
$\alpha_{\mti_{\bcdot},\CF_{\bcdot}}$\eqref{eq:BKFKoszulReolGInvHP}, 
and $\beta_{\mti_{\bcdot},\CF_{\bcdot}}$ \eqref{eq:BKFKoszulResolHC}
and the corresponding ones for $\CF_{\varphi}$
are compatible with the Frobenius pullbacks above.
We obtain the claim by combining the two 
compatibility and Theorem \ref{thm:PrismCohQHiggsCpxGlb} (2), and by noting that the morphism
$L\eta_{\mu}R\varprojlim_{\N}R\pi_{\bcdot*}\nu_{\infty\bcdot*}
\ubM_{\bcdot}\to L\eta_{\mu}R\varprojlim_{\N}R\nu_{\bcdot*}
\ubM_{\bcdot}$ and the corresponding one for $\CF_{\nu}$ are compatible 
with the Frobenius pullbacks.
\end{proof}

\begin{proposition}\label{prop:PrismAinfCohIsomProdCompGlobal}
Let $\fX$ be a quasi-compact, separated, smooth, $p$-adic formal scheme over $\CO$,
 let $\CF_{\nu}$ $(\nu\in \{1,2\})$ be objects of
 $\Crystal_{\prism}^{\fproj}(\fX/A_{\inf})$, and put
 $\CF_3=\CF_1\otimes_{\CO_{\fX/A_{\inf}}}\CF_2
 \in\Ob(\Crystal_{\prism}^{\fproj}(\fX/A_{\inf}))$
 (Remark \ref{rmk:PrismCrystalFrobTensor} (3)).
Put $\bM_{\nu}=\bM_{\BKF,\fX}(\CF_{\nu})$  (Definition \ref{BKFFunctor})
for $\nu\in \{1,2,3\}$. Then the following
diagram in $D(\fX_{\Zar},A_{\inf})$ is commutative.
\begin{equation}
\xymatrix@R=10pt@C=60pt{
Ru_{\fX/A_{\inf}*}\CF_1\otimes^L_{A_{\inf}}Ru_{\fX/A_{\inf}*}\CF_2
\ar[r]^{\sim}_{\eqref{eq:PrismCohAinfCohGlobComp}}
\ar[d]
&
A\Omega_{\fX}(\bM_1)\otimes^L_{A_{\inf}}A\Omega_{\fX}(\bM_2)
\ar[d]^{\eqref{eq:BKFAinfCohProd}}\\
Ru_{\fX/A_{\inf}*}\CF_3
\ar[r]^{\sim}_{\eqref{eq:PrismCohAinfCohGlobComp}}
&
A\Omega_{\fX}(\bM_3)
}
\end{equation}
\end{proposition}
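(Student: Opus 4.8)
The plan is to reduce the statement to the simplicial level by cohomological descent, exactly as in the construction of \eqref{eq:PrismCohAinfCohGlobComp}, and then to prove the simplicial product compatibility by mimicking the proof of Proposition \ref{prop:PrismAinfCohLocMapProdComp}, but replacing the framed embedding used for $\CF_3$ by the one coming from the fiber product of two copies, as in Theorem \ref{thm:PrismCohQHiggsCpxGlb} (4). Concretely, first I would fix a Zariski hypercovering $\fX_{\bcdot}$ of $\fX$ by affine formal schemes together with a simplicial small framed embedding $\mti_{\bcdot}=(\mfi_{\bcdot}\colon\fX_{\bcdot}\to\fY_{\bcdot},\ut_{\bcdot},\Lambda_{\bcdot})$ over $A_{\inf}$, and also form the simplicial small framed embedding $\mti^{\pone}_{\bcdot}$ obtained from the fiber product of two copies of $(\fY_{\bcdot},\ut_{\bcdot})$ over $A_{\inf}$, equipped with the two projections $\mtp_{\nu\bcdot}=(p_{\nu\bcdot},\id_{\fX},\psi_{\nu\bcdot})$ to $\mti_{\bcdot}$ ($\nu=1,2$), where $\psi_{\nu\bcdot}\colon\Lambda_{\bcdot}\to\Lambda^{\pone}_{\bcdot}=\Lambda_{\bcdot}\sqcup\Lambda_{\bcdot}$ are the two inclusions (in particular injective and order preserving). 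By the independence of $\kappa_{\mti_{\bcdot},\CF}$ on the simplicial embedding (the argument after Proposition \ref{eq:BKFPrismLocCohCompSimpFunct}, applied to $\mtp_{\nu\bcdot}$), I may use $\kappa_{\mti_{\bcdot},\CF_{\nu\bcdot}}$ for $\CF_{\nu}$ ($\nu=1,2$) and $\kappa_{\mti^{\pone}_{\bcdot},\CF_{3\bcdot}}$ for $\CF_3$.

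I would next record the analogue of Lemma \ref{lem:CohDescentFunctoriality} for products: the descent isomorphisms \eqref{eq:AOmegaCohDesc} and \eqref{eq:PrismZarProjCohDescAinf} are compatible with the cup products $\otimes^L_{A_{\inf}}$, because the base change morphisms occurring there ($L\theta^{*}$ against $Ru_{\bcdot*}$, $R\varprojlim_{\N}$, $L\eta_{\mu}$, and the cup product) are oplax symmetric monoidal; this is proved, as in the excerpt, by a formalism parallel to Sublemma \ref{sublem:bcformalism} together with the separate monoidality statements (the lax monoidality of $L\eta_{\mu}Rg_{\Zar*}\to Rg_{\Zar*}L\eta_{\mu}$ used after \eqref{eq:BKFZarProjProdFunct}, and the compatibility of cup products with composition of morphisms of ringed topos). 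This step reduces the proposition to the commutativity, in $D(\fX_{\bcdot,\Zar},A_{\inf})$, of the square expressing compatibility of $\kappa_{\mti_{\bcdot},\CF_{1\bcdot}}\otimes^L\kappa_{\mti_{\bcdot},\CF_{2\bcdot}}$, the product on $Ru_{\fX_{\bcdot}/A_{\inf}*}$, the product \eqref{eq:BKFAinfCohProd} on the $A\Omega$-side, and $\kappa_{\mti^{\pone}_{\bcdot},\CF_{3\bcdot}}$.

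I would then decompose this simplicial square along the factorization \eqref{eq:SimplicialCompMap} of $\kappa$ into (a) the $q$-Higgs description \eqref{eq:PrismCohQHiggsCpxSimp}, (b) the Koszul morphism $c_{\mti_{\bcdot},\CF_{\bcdot}}$ \eqref{eq:qdRKoszulMapHC}, and (c) $L\eta_{\mu}$ of \eqref{eq:SimplicialKoszulBKFMap}. For (a), the compatibility of the cup product on $Ru_{\fX_{\bcdot}/A_{\inf}*}$ with the product map $v_{D_{\bcdot},\Zar*}(q\Omega^{\bullet}(\CF_{1\bcdot\fD_{\bcdot}}))\otimes v_{D_{\bcdot},\Zar*}(q\Omega^{\bullet}(\CF_{2\bcdot\fD_{\bcdot}}))\to v_{D^{\pone}_{\bcdot},\Zar*}(q\Omega^{\bullet}(\CF_{3\bcdot\fD^{\pone}_{\bcdot}}))$ is exactly Theorem \ref{thm:PrismCohQHiggsCpxGlb} (4). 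For (b) and (c), I would upgrade the local product-compatibility diagrams of Remarks \ref{rmk:KoszAinfCohFrobProdComp} (2) and \ref{rmk:qdRcpxKoszulFrobProdComp} (2) to the setting where the $q$-Higgs and Koszul data for $\CF_3$ live on $\fD^{\pone}_{\bcdot}$ and the product passes through the projections $\mtp_{\nu\bcdot}$: combining \eqref{eq:qDRKoszProdComp}, \eqref{eq:qHigCpxKoszProdComp}, \eqref{eq:KoszulGammaCohProjFunct}, \eqref{eq:BKFGammaModResolFunct}, \eqref{eq:GSheafCohKCpxProdComp} for each component $[r]$ with the functoriality of $K^{\bullet}_{\psi}(-)$ under products for injective $\psi$ (Remark \ref{rmk:KoszulCpxProdFunct} (1), Remark \ref{rmk:KoszulResolProd}, Lemma \ref{lem:GammaKoszCompos}), and assembling over $\Delta$ via the family-of-topos machinery of \S\ref{sec:DTopos}, shows that $c_{\mti^{\pone}_{\bcdot},\CF_{3\bcdot}}$, $\alpha_{\mti^{\pone}_{\bcdot},\CF_{3\bcdot}}$, $\beta_{\mti^{\pone}_{\bcdot},\CF_{3\bcdot}}$ are compatible, through the morphisms $K^{\bullet}_{\psi_{\nu\bcdot}}(\otau_{\mtp_{\nu\bcdot},\CF_{\nu\bcdot}})$ and $\sigma_{\mtp_{\nu\bcdot},\CF_{\nu\bcdot}}$, with the data for $\CF_1,\CF_2$ and with the products \eqref{eq:BKFInvSySKoszProd2}, \eqref{eq:BKFGammaInvSysTensor}; together with the compatibility of $L\eta_{\mu}R\varprojlim_{\N}R\pi_{\bcdot*}\nu_{\infty\bcdot*}\ubM_{\bcdot}\to L\eta_{\mu}R\varprojlim_{\N}R\nu_{\bcdot*}\ubM_{\bcdot}$ with products (again the lax monoidal property of base change) and part (a), this gives the simplicial square and hence the proposition.

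The step I expect to be the main obstacle is (b)/(c): carefully matching, at the level of complexes on $\fD^{\pone}_{\bcdot,\Zar}$ and on $\Gamma_{\Lambda_{\bcdot}}\hy\fX_{\bcdot,\Zar}^{\sim}$, the two ways of forming product-and-pullback — multiplying the $q$-Higgs/Koszul complexes of $\CF_1,\CF_2$ and then pulling back to the envelope $\fD^{\pone}_{\bcdot}$ of the product embedding, versus pulling back along $\mtp_{\nu\bcdot}$ first and then multiplying — and checking that the $c$-, $\alpha$- and $\beta$-morphisms intertwine these coherently over $\Delta$, including the compatibility of cup products with cohomological descent. The essential ideas are already isolated in the excerpt (the injectivity of $\psi_{\nu\bcdot}$ makes the \emph{strong} product-functoriality of \eqref{eq:TwDrCpxScExt}, \eqref{eq:qHiggsCpxShvFunct} and \eqref{eq:KoszulFunctMorph} available, so the weaker compatibility for non-injective $\psi$ never intervenes), so the difficulty is bookkeeping and diagram-chasing in the simplicial setting rather than a genuinely new input.
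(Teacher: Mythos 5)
Your proposal follows the paper's proof in all essential respects: the key obstruction (non-injectivity of the index maps $\psi_{\alpha}$ in the simplicial transition morphisms, so that \eqref{eq:qHiggsInvSysProd}, \eqref{eq:BKFInvSySKoszProd1}, \eqref{eq:BKFInvSySKoszProd2} fail to commute with pullback) is correctly identified, the fix of taking products only after pulling back along the injective inclusions $\chi_{\nu\bcdot}\colon\Lambda_{\bcdot}\hookrightarrow\Lambda^{\pone}_{\bcdot}=\Lambda_{\bcdot}\sqcup\Lambda_{\bcdot}$ matches the paper's argument, and the main technical ingredients — Theorem~\ref{thm:PrismCohQHiggsCpxGlb}~(4), Remarks~\ref{rmk:KoszulCpxProdFunct}~(2) and~\ref{rmk:KoszulResolProd}, Lemma~\ref{lem:GammaKoszCompos}, the $\Delta$-topos machinery of \S\ref{sec:DTopos}, the lax monoidality of $L\eta_{\mu}Rg_{\Zar*}\to Rg_{\Zar*}L\eta_{\mu}$, and the $R\theta_{*}$ descent at the end — are all the ones the paper actually uses.

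Two smaller remarks for a clean write-up. First, the paper's text defines $\fX^{\pone}_{\bcdot}$ as the product of two copies of $\fX_{\bcdot}$ over $\fX$ (i.e.\ a finer hypercovering), whereas you keep the hypercovering $\fX_{\bcdot}$ fixed and only take the product of the simplicial framed targets $\fY_{\bcdot}$; the latter is in fact what Theorem~\ref{thm:PrismCohQHiggsCpxGlb}~(4) sets up (the embedding system there is $\fX\leftarrow\fX_{\bcdot}\hookrightarrow\fY^{\pone}_{\bcdot}$ with the \emph{same} $\fX_{\bcdot}$), so your formulation is the one that matches the cited input; in either case the projections $\mtp_{\nu\bcdot}$ are, in the paper's $(g,h,\psi)$-convention, $(\id_{\fX_{\bcdot}},p_{\fY,\nu\bcdot},\chi_{\nu\bcdot})$, not $(p_{\nu\bcdot},\id_{\fX},\psi_{\nu\bcdot})$ as you wrote. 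Second, the one intermediate diagram that the paper singles out as requiring a separate verification — \eqref{eq:BKFZarProjSimpProdComp}, relating the products on $R\pi_{\bcdot*}\nu_{\infty\bcdot*}$ and $R\pi^{\pone}_{\bcdot*}\nu^{\pone}_{\infty\bcdot*}$ to the products on $R\nu_{\bcdot*}$, proved at the end via \eqref{eq:ProetGammaSheafProjFunct} — is implicitly subsumed in your step (c) ("compatibility of $L\eta_{\mu}R\varprojlim_{\N}R\pi_{\bcdot*}\nu_{\infty\bcdot*}\ubM_{\bcdot}\to L\eta_{\mu}R\varprojlim_{\N}R\nu_{\bcdot*}\ubM_{\bcdot}$ with products") but deserves to be recorded as a distinct lemma, since it is where the two different $\pi_{\bcdot*}$ and $\pi^{\pone}_{\bcdot*}$ are reconciled.
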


\begin{proof}
When $\fX$ admits a small framed embedding $(\mti\colon \fX\to \fY,\ut,\Lambda)$ over $A_{\inf}$
(Definition \ref{def:AdmFramedSmEmbed} (1)), the compatibility of the comparison morphism $\kappa_{\mti,\CF}$ 
\eqref{eq:PrismCohAinfCohCompMap2} with
the products \eqref{eq:PrismAinfCohLocCompFrProd} 
is  verified by using the product morphisms 
\eqref{eq:qHiggsInvSysProd}, 
\eqref{eq:qHiggsInvSysKoszProd}, 
\eqref{eq:BKFInvSySKoszProd1}, and 
\eqref{eq:BKFInvSySKoszProd2} for 
$q\Omega^{\bullet}(\uCM_{\nu},\utheta_{\uCM_{\nu}})$, 
$K_{\Lambda}^{\bullet}(v_*\uCM_{\nu})$,
$K_{\Lambda}^{\bullet}(\iota^*\nu_{\infty*}\ubM_{\nu})$,
and $K_{\Lambda}^{\bullet}(\iota_*\iota^*\nu_{\infty*}\ubM_{\nu})$.
These product morphisms are not compatible with the pullback morphisms
$\sigma_{\mtg,\CF_{\nu}}$ \eqref{eq:qdRInvSysFunct}, 
$K_{\psi}^{\bullet}(\sigma_{\mtg,\CF_{\nu}}^0)$ \eqref{eq:KoszulqdRFunctComp}, 
 $K_{\psi}^{\bullet}(\otau_{\mtg,\CF_{\nu}})$ \eqref{eq:BKFKoszulFunctMap}, 
and $K_{\psi}^{\bullet}(\ootau_{\mtg,\CF_{\nu}})$ \eqref{eq:BKFKoszulFunctMap2} 
with respect to  a morphism $\mtg$ of small
framed embeddings over $A_{\inf}$ (Definition \ref{def:AdmFramedSmEmbed} (2))
unless the map between the index sets of coordinates is injective. Therefore
we cannot extend the argument in the proof of Proposition 
\ref{prop:PrismAinfCohLocMapProdComp} 
to the simplicial setting. Similarly to Theorem \ref{thm:PrismCohQHiggsCpxGlb} (4), 
we can solve this problem, thanks to \cite[9.24]{TsujiPrismQHiggs} and 
Remark \ref{rmk:KoszulCpxProdFunct} (2), 
by taking products after pulling back to the product of two copies of the chosen pair of a
Zariski hypercovering of $\fX$ and its simplicial small framed embedding over 
$A_{\inf}$ as follows.\par
Let $\fX_{\bcdot}$ be a Zariski hypercovering of $\fX$ by affine formal schemes, 
and let $\mti_{\bcdot}=(\mfi_{\bcdot}\colon \fX_{\bcdot}\to \fY_{\bcdot},\ut_{\bcdot},\Lambda_{\bcdot})$
be a simplicial small framed embedding of $\fX_{\bcdot}$ over $A_{\inf}$.
Let $\fX_{\bcdot}^{\pone}$ be the product of two copies of the simplicial formal scheme
$\fX_{\bcdot}$ over $\fX$, and let $\mti^{\pone}_{\bcdot}=(\mfi^{\pone}_{\bcdot}\colon 
\fX^{\pone}_{\bcdot}\to \fY^{\pone}_{\bcdot},\ut^{\pone}_{\bcdot},\Lambda^{\pone}_{\bcdot})$
be the simplicial small framed embedding of $\fX_{\bcdot}^{\pone}$ over $A_{\inf}$
obtained by taking the product of two copies of the simplicial formal scheme
$\fY_{\bcdot}^{\pone}$ over $A_{\inf}$ as before Proposition 
\ref{eq:BKFPrismLocCohCompSimpFunct}. 
For a morphism $\alpha\colon [r]\to [s]$ in $\Delta$,
we write $\mtg_{\alpha}=(g_{\alpha},h_{\alpha},\psi_{\alpha})$ 
(resp.~$\mtg_{\alpha}^{\pone}=(g_{\alpha}^{\pone},h_{\alpha}^{\pone},\psi_{\alpha}^{\pone})$) for
the corresponding morphism 
$\mti_{[s]}\to \mti_{[r]}$ (resp.~$\mti_{[s]}^{\pone}\to \mti_{[r]}^{\pone}$)
of small framed embeddings over $A_{\inf}$.
For $\nu\in \{1,2\}$, let
$\mtp_{\nu\bcdot}=(p_{\nu\bcdot},p_{\fY,\nu\bcdot},\chi_{\nu\bcdot})$ denote the $\nu$th projection $\mti^{\pone}_{\bcdot}\to \mti_{\bcdot}$.
We have morphisms of ringed topos \eqref{eq:ProetZarProjHP0}
and \eqref{eq:ProetZarProjHP}, and a functor \eqref{eq:ProetGammaModProjHP};
we write the letters with superscript (1) for the corresponding
ones for $\mti^{\pone}_{\bcdot}$. We define $\ubM_{\nu}$, $\ubM_{\nu\bcdot}$,
$\uCM_{\nu\bcdot}$, and $q\Omega^{\bullet}(\uCM_{\nu\bcdot},\utheta_{\uCM_{\nu\bcdot}})$
as in the construction of $\kappa_{\mti_{\bcdot},\CF}$ 
\eqref{eq:BKFPrismCohLocCompMapSimp}
for $\CF=\CF_{\nu}$ $(\nu\in \{1,2,3\})$, abbreviate the last one to
$q\Omega^{\bullet}(\uCM_{\nu\bcdot})$, and write them with superscript (1) when we work with 
$\mti^{\pone}_{\bcdot}$ instead of $\mti_{\bcdot}$.

For each $r\in \N$, we obtain the following morphism by composing
the tensor product of $\sigma_{\mtp_{\nu[r]},\CF_{\nu[r]}}$ $(\nu\in \{1,2\})$
\eqref{eq:qdRInvSysFunct} and the product \eqref{eq:qHiggsInvSysProd} 
for $\CF_{\nu[r]}$ $(\nu\in \{1,2\})$ and $\mti_{[r]}^{\pone}$.
\begin{equation*}
v_{[r]*}q\Omega^{\bullet}(\uCM_{1[r]})\otimes_{\uA_{\inf}} v_{[r]*}q\Omega^{\bullet}(\uCM_{2[r]})\longrightarrow
v_{[r]*}^{\pone}q\Omega^{\bullet}(\uCM^{\pone}_{3[r]})
\end{equation*}
For a morphism $\alpha\colon [r]\to [s]$ in $\Delta$,
the products above 
for $r$ and $s$ are compatible
with the pullbacks $\sigma_{\mtg_{\alpha},\CF_{\nu[r]}}$ $(\nu\in \{1,2\})$
and $\sigma_{\mtg^{\pone}_{\alpha},\CF_{3[r]}}$ \eqref{eq:qdRInvSysFunct}
by \cite[14.21]{TsujiPrismQHiggs}. Thus we obtain a morphism
\begin{equation}\label{eq:qdRCompPBProdSimp}
v_{\bcdot*}q\Omega^{\bullet}(\uCM_{1\bcdot})\otimes_{\uA_{\inf}}
v_{\bcdot*}q\Omega^{\bullet}(\uCM_{2\bcdot})\longrightarrow
v_{\bcdot*}^{\pone}q\Omega^{\bullet}(\uCM^{\pone}_{3\bcdot})
\quad\text{in}\;\;
C^+((\fX_{\bcdot\Zar}^{\sim})^{\N^{\circ}},\uA_{\inf}).
\end{equation}

For each $r\in \N$, we obtain a morphism
\begin{equation*}
p_{\Gamma,1[r]}^*\nu_{\infty[r]*}\ubM_{1[r]}\otimes_{\uA_{\inf}}
p_{\Gamma,2[r]}^*\nu_{\infty[r]*}\ubM_{2[r]}
\longrightarrow \nu^{\pone}_{\infty[r]*}\ubM_{3[r]}
\end{equation*}
by composing the tensor product of the left adjoints of 
$\tau_{\mtp_{\nu[r]},\CF_{\nu[r]}}$ $(\nu\in \{1,2\})$
\eqref{eq:BKFGammaRepFunct} with the product 
\eqref{eq:BKFGammaInvSysTensor} for $\CF_{\nu[r]}$ and $\mti_{[r]}^{\pone}$.
Here $p_{\Gamma,\nu[r]}$ denotes the morphism of ringed
topos $((\Gamma_{\Lambda^{\pone}_{[r]}}$-$\fX_{[r]\Zar}^{\sim})^{\N^{\circ}}, \uA_{\inf})
\to ((\Gamma_{\Lambda_{[r]}}$-$\fX_{[r]\Zar}^{\sim})^{\N^{\circ}},\uA_{\inf})$ induced by 
$\mtp_{\nu[r]}$. For a morphism $\alpha\colon [r]\to [s]$ in $\Delta$,
the products above 
for $r$ and $s$ are
compatible with $\tau_{\mtg_{\alpha},\CF_{\nu[r]}}$ $(\nu\in \{1,2\})$
and $\tau_{\mtg^{\pone}_{\alpha},\CF_{3[r]}}$ \eqref{eq:BKFGammaRepFunct}; 
by Remark
\ref{rmk:BKFGammaKoszulFuncCocyc} for $\tau_{\mtg,\CF}$,
the claim is reduced to the compatibility of the product morphisms for 
$\nu^{\pone}_{\infty[\ell]*}\ubM_{\nu[\ell]}$ $(\nu\in \{1,2,3\}, \ell\in\{r,s\})$
with $\tau_{\mtg_{\alpha}^{\pone},\CF_{\nu[r]}}$ \eqref{eq:BKFGammaRepFunct}, which
follows from \eqref{eq:BKFFunctorTensorFunct} 
and the fact that 
the morphism of functors $\Xi_{\mtg^{\pone}_{\alpha}}$
used in the construction of $\tau_{\mtg^{\pone}_{\alpha},\CF_{\nu[r]}}$
is compatible with the lax monoidal structures.
Hence we obtain a morphism 
\begin{equation}
\label{eq:BKFGammaModPBProdSimp}
p_{\Gamma,1\bcdot}^*\nu_{\infty\bcdot*}\ubM_{1\bcdot}\otimes_{\uA_{\inf}}
p_{\Gamma,2\bcdot}^*\nu_{\infty\bcdot*}\ubM_{2\bcdot}
\longrightarrow \nu^{\pone}_{\infty\bcdot*}\ubM_{3\bcdot}
\quad\text{in}\quad 
C^+((\Gamma_{\Lambda_{\bcdot}^{\pone}}\hy\fX_{\bcdot\Zar}^{\sim})^{\N^{\circ}},\uA_{\inf})
\end{equation}

For each $r\in \N$, we obtain a morphism
\begin{equation*}
p_{\Gamma,1[r]}^*K_{\Lambda_{[r]}}^{\bullet}(\iota_{[r]*}\iota_{[r]}^*\nu_{\infty[r]*}\ubM_{1[r]})
\otimes_{\uA_{\inf}}
p_{\Gamma,2[r]}^*K_{\Lambda_{[r]}}^{\bullet}(\iota_{[r]*}\iota_{[r]}^*\nu_{\infty[r]*}\ubM_{2[r]})
\longrightarrow K_{\Lambda^{\pone}_{[r]}}^{\bullet}(\iota^{\pone}_{[r]*}\iota^{\pone*}_{[r]}\nu_{\infty[r]*}^{\pone}\ubM_{3[r]})
\end{equation*}
by composing the left adjoints of $K^{\bullet}_{\chi_{\nu[r]}}(\ootau_{\mtp_{\nu[r]},\CF_{\nu[r]}})$
$(\nu\in \{1,2\})$ \eqref{eq:BKFKoszulFunctMap2}
 and the product \eqref{eq:BKFInvSySKoszProd2} 
for $\CF_{\nu[r]}$ and $\mti_{[r]}^{\pone}$.
For a morphism $\alpha\colon [r]\to [s]$ in $\Delta$, we see that
the products above 
 for $r$ and $s$ are compatible  with
the pullbacks by $\mtg_{\alpha}$ and $\mtg^{\pone}_{\alpha}$ as follows. 
By Remark \ref{rmk:BKFGammaKoszulFuncCocyc} 
and $\mtg_{\alpha}\circ \mtp_{\nu[s]}=\mtp_{\nu[r]}\circ \mtg_{\alpha}^{\pone}$
$(\nu\in \{1,2\})$, we can apply Remark \ref{rmk:KoszulCpxProdFunct} (2) to
the left adjoints of $\ootau_{\mtp_{\nu[\ell]},\CF_{\nu[\ell]}}$, 
$\ootau_{\mtg_{\alpha},\CF_{\nu[r]}}$, and 
$\ootau_{\mtg^{\pone}_{\alpha},\CF_{\nu[r]}}$ $(\nu\in \{1,2\}, \ell\in \{r,s\})$.
By the compatibility of the product morphisms for
$\nu_{\infty[\ell]*}^{\pone}\ubM_{\nu[\ell]}$ $(\nu\in \{1,2,3\}, \ell\in \{r,s\})$ with 
$\tau_{\mtg_{\alpha}^{\pone}, \CF_{\nu[r]}}$ discussed in
the construction of \eqref{eq:BKFGammaModPBProdSimp}, the commutative diagram
\eqref{eq:GammaSheafProdFunct3} in Remark \ref{rmk:KoszulResolProd} 
implies the compatibility of $\ootau_{\mtg^{\pone}_{\alpha},\CF_{\nu[r]}}$ 
with the product morphisms 
for $\iota_{[\ell]*}^{\pone}\iota_{[\ell]}^{\pone*}\nu_{\infty[\ell]*}^{\pone}\ubM_{\nu[\ell]}$.
We obtain the desired claim by combining the two.  Thus we obtain a morphism
\begin{equation}\label{eq:KoszulResolProdSimp}
p_{\Gamma,1\bcdot}^*K_{\Lambda_{\bcdot}}^{\bullet}(\iota_{\bcdot*}\iota_{\bcdot}^*\nu_{\infty\bcdot*}\ubM_{1\bcdot})
\otimes_{\uA_{\inf}}
p_{\Gamma,2\bcdot}^*K_{\Lambda_{\bcdot}}^{\bullet}(\iota_{\bcdot*}\iota_{\bcdot}^*\nu_{\infty\bcdot*}\ubM_{2\bcdot})
\longrightarrow K_{\Lambda^{\pone}_{\bcdot}}^{\bullet}(\iota^{\pone}_{\bcdot*}\iota^{\pone*}_{\bcdot}\nu_{\infty\bcdot*}^{\pone}\ubM_{3\bcdot})
\end{equation}
in $C^+((\Gamma_{\Lambda_{\bcdot}^{\pone}}\hy\fX_{\bcdot\Zar}^{\sim})^{\N^{\circ}},\uA_{\inf})$.

By applying \eqref{eq:KoszulqdRFunctComp}, \eqref{eq:qdRBKFMorphKoszulFunct}, 
\eqref{eq:BKFGammaCohFunct}, and \eqref{eq:BKFGammaCohResolFunct} to $\mtg_{\alpha}$
and $\mtg_{\alpha}^{\pone}$ for $\alpha\in \Mor \Delta$, we obtain 
morphisms of complexes of $\uA_{\inf}$-modules
\begin{gather*}
v_{\bcdot*}q\Omega^{\bullet}(\uCM_{\nu\bcdot})
\xrightarrow{\gamma_{\mti_{\bcdot},\CF_{\nu\bcdot}}}
K_{\Lambda_{\bcdot}}^{\bullet}(v_{\bcdot*}\uCM_{\nu\bcdot})
\xrightarrow{K_{\Lambda_{\bcdot}}^{\bullet}(\delta_{\mti_{\bcdot},\CF_{\nu\bcdot}})}
K_{\Lambda_{\bcdot}}^{\bullet}(\iota_{\bcdot}^*\nu_{\infty\bcdot*}\ubM_{\nu\bcdot})
\xrightarrow{\alpha_{\mti_{\bcdot},\CF_{\nu\bcdot}}}
\pi_{\bcdot*}K_{\Lambda_{\bcdot}}^{\bullet}(\iota_{\bcdot*}\iota^*_{\bcdot}\nu_{\infty\bcdot*}\ubM_{\nu\bcdot}),\\
\nu_{\infty\bcdot*}\ubM_{\nu\bcdot}
\xrightarrow{\beta_{\mti_{\bcdot},\CF_{\nu\bcdot}}}
K_{\Lambda_{\bcdot}}^{\bullet}(\iota_{\bcdot*}\iota_{\bcdot}^*\nu_{\infty\bcdot*}\ubM_{\nu\bcdot})
\end{gather*}
for $\nu\in \{1,2\}$ on $(\fX_{\bcdot\Zar}^{\sim})^{\N^{\circ}}$
and $(\Gamma_{\Lambda_{\bcdot}}\hy\fX_{\bcdot\Zar}^{\sim})^{\N^{\circ}}$, 
and the corresponding ones for $\mti_{\bcdot}^{\pone}$ and $\CF_3$.
These are compatible with the products \eqref{eq:qdRCompPBProdSimp}, 
\eqref{eq:BKFGammaModPBProdSimp}, and \eqref{eq:KoszulResolProdSimp}
by \eqref{eq:KoszulqdRFunctComp} etc.~above applied to $\mtp_{\nu[r]}$ $(\nu\in \{1,2\}, r\in \N)$, and
\eqref{eq:qDRKoszProdComp}, \eqref{eq:qHigCpxKoszProdComp}, 
\eqref{eq:KoszulGammaCohProjFunct}, and \eqref{eq:BKFGammaModResolFunct};
for the codomain of the first composition, we compose the image of 
\eqref{eq:KoszulResolProdSimp}
under $\pi_{\bcdot*}^{\pone}$ with $\pi_{\bcdot*}\to \pi_{\bcdot*}p_{\Gamma,\nu\bcdot*}
p_{\Gamma,\nu\bcdot}^*\cong\pi_{\bcdot*}^{\pone}p_{\Gamma,\nu\bcdot}^*$.
By taking $\varprojlim_{\N}$ of the first compatibility, we see that 
the morphism $\eta_{\mu}\varprojlim_{\N}(\alpha_{\mti_{\bcdot},\CF_{\nu\bcdot}})\circ
c_{\mti_{\bcdot},\CF_{\nu\bcdot}}$ $(\nu\in\{1,2\})$ \eqref{eq:qdRKoszulMapHC} and the
corresponding one for $\mti^{\pone}_{\bcdot}$ and $\CF_{3\bcdot}$ are compatible
with \eqref{eq:qdRCompPBProdSimp} and 
\eqref{eq:KoszulResolProdSimp}. 
Combining this with the second compatibility,
Theorem \ref{thm:PrismCohQHiggsCpxGlb}  (4), and a commutative diagram
\begin{equation}\label{eq:BKFZarProjSimpProdComp}
\xymatrix@R=15pt{
R\pi_{\bcdot*}\nu_{\infty\bcdot*}\ubM_{1\bcdot}\otimes_{\uA_{\inf}}
R\pi_{\bcdot*}\nu_{\infty\bcdot*}\ubM_{2\bcdot}\ar[rr]\ar[d]
&&
R\nu_{\bcdot*}\ubM_{1\bcdot}\otimes_{\uA_{\inf}}
R\nu_{\bcdot*}\ubM_{2\bcdot}\ar[d]
\\
R\pi^{\pone}_{\bcdot*}p_{\Lambda_{\bcdot},1}^*\nu_{\infty\bcdot*}\ubM_{1\bcdot}\otimes_{\uA_{\inf}}
R\pi^{\pone}_{\bcdot*}p_{\Lambda_{\bcdot},2}^*\nu_{\infty\bcdot*}\ubM_{2\bcdot}
\ar[r]^(.7){\eqref{eq:BKFGammaModPBProdSimp}}
& R\pi_{\bcdot*}^{\pone}\nu_{\infty\bcdot*}^{\pone}\ubM_{3\bcdot}\ar[r]&
R\nu_{\bcdot*}\ubM_{3\bcdot}
}
\end{equation}
which we prove below, we see that 
$\kappa_{\mti_{\bcdot},\CF_{\nu\bcdot}}$ $(\nu\in \{1,2\})$ and
$\kappa_{\mti_{\bcdot}^{\pone},\CF_3}$ \eqref{eq:SimplicialCompMap} are
compatible with the products of 
$Ru_{\fX_{\bcdot}/A_{\inf}}\CF_{\nu\bcdot}$ and
$L\eta_{\mu}R\varprojlim_{\N}R\nu_{\bcdot*}\ubM_{\nu}$
(defined in the same way as Remark \ref{rmk:FrobProdAOmega} (2)). Since the pullback isomorphisms
\eqref{eq:AOmegaCohSimpPB} and \eqref{eq:PrismZarProjCohSimpPBAinf} by 
$\theta$ are compatible with the products,
we obtain the desired compatibility by taking $R\theta_*$.\par
It remains to prove \eqref{eq:BKFZarProjSimpProdComp}. 
We see that the morphisms $R\pi_{\bcdot*}^{\pone}\nu_{\infty\bcdot*}^{\pone}\ubM_{\nu}
\to R\nu_{\bcdot*}\ubM_{\nu}$ $(\nu\in \{1,2,3\})$ are compatible
with the products similarly to the argument before
\eqref{eq:BKFZarProjProdFunct}. Therefore, by the construction of 
\eqref{eq:BKFGammaModPBProdSimp}, the claim is reduced to showing that the morphism 
$R\pi_{\bcdot*}\nu_{\infty\bcdot*}\ubM_{\nu\bcdot}
\to R\pi^{\pone}_{\bcdot*}\nu_{\infty\bcdot*}^{\pone}\ubM_{\nu\bcdot}$
induced by $\tau_{\mtp_{\nu\bcdot},\CF_{\nu\bcdot}}$
\eqref{eq:BKFIndGammaFunctMapHC} is compatible with the morphisms from its
domain and codomain to
$R\pi_{\bcdot*}R\nu_{\infty\bcdot*}\ubM_{\nu\bcdot}
\cong R\nu_{\bcdot*}\ubM_{\nu\bcdot}
\cong R\pi_{\bcdot*}^{\pone}R\nu_{\infty\bcdot*}^{\pone}\ubM_{\nu\bcdot}$
\eqref{eq:SimplcialToposMorphComp}, \eqref{eq:InvSysDtoposFBF}.
This follows from the fact that 
$\pi_{\bcdot*}(\Xi_{\mtp_{\nu\bcdot}})\colon
\pi_{\bcdot*}\nu_{\infty\bcdot*}
\to \pi_{\bcdot*}p_{\Gamma,\nu\bcdot}\nu^{\pone}_{\infty\bcdot*}
\cong \pi_{\bcdot*}^{\pone}\nu_{\infty\bcdot*}^{\pone}$
\eqref{eq:ProetZarMorphFunctHP3} coincides with 
$\pi_{\bcdot*}\nu_{\infty\bcdot*}\cong\nu_{\bcdot*}\cong
\pi^{\pone}_{\bcdot*}\nu^{\pone}_{\infty\bcdot*}$
\eqref{eq:SimplcialToposMorphComp}
by \eqref{eq:ProetGammaSheafProjFunct}.
\end{proof}

\section{$D$-topos and topos of inverse systems}\label{sec:DTopos}
In this section, we study direct image functors not necessarily cartesian
for families of topos over a category $D$.
The cartesian case is studied in \cite[V$^{\text{bis}}$ \S1]{SGA4}, and we verify 
that the cartesian condition is not necessary for some claims, in particular, 
for the fiber by fiber computation of derived direct images
\eqref{eq:DtoposFBF}. We apply the last result to topos of inverse systems.
See \eqref{eq:InvSySFBF}, \eqref{eq:InvSysDtoposFBF}, 
and Proposition \ref{prop:DtoposInvLimitFbF}.

Let $D$ be a $\bU$-small category. A {\it $D$-topos} is a
fibered and cofibered category (\cite[VI]{SGA1}) $\pi\colon E\to D$ over $D$
satisfying the following two conditions (\cite[V\bis D\'efinition (1.2.1)]{SGA4}).\par
(a) For every $i\in \Ob D$, the fiber $E_i$ of $\pi$ over $i$ is a $\bU$-topos.\par
(b) For every morphism $m\colon i\to j$ in $D$, there exists a morphism
of topos $f_m=(f_{m}^*,f_{m*})\colon E_j\to E_i$ such that 
$f_{m*}=m^*$ and $f_m^*=m_*$. \par
We define $f_m^*$ and $f_{m*}$ to be the identity functor of $E_i$ if $m=\id_i$ for $i\in \Ob D$. 

\begin{remark}\label{rmk:DtoposConstruction}
Let $D$ be a $\bU$-small category. 
Suppose that we are given a topos $E_i$ for each $i\in \Ob D$,
a morphism of topos $f_m\colon E_j\to E_i$ for each 
$m\colon i\to j\in \Mor D$, and an isomorphism 
$c_{n,m}\colon f_{m*}\circ f_{n*}\cong f_{nm*}$ for
each $i\xrightarrow{m}j\xrightarrow{n} k$ in $D$
satisfying $c_{n,\id}=\id_{f_{n*}}$, $c_{\id,m}=\id_{f_{m*}}$,
and $c_{l,nm}\circ c_{n,m}f_{l*}
=c_{ln,m}\circ f_{m*}c_{l,n}$
for every $i\xrightarrow{m}j\xrightarrow{n}k
\xrightarrow{l}h$ in $D$.
Then we can define a $D$-topos $\pi\colon E\to D$
whose fiber over $i\in \Ob D$ is $E_i$
by setting $\Hom_{E,m}(\CF,\CG)=\Hom_{E_i}(\CF,f_{m*}\CG)$
for $m\colon i\to j\in \Mor D$,
$\CF\in \Ob E_i$ and $\CG\in \Ob E_j$, where $\Hom_{E,m}$
means the set of morphisms whose images under $\pi$ are $m$,
 and defining the
composition of $\alpha\colon \CF\to f_{m*}\CG$ and
$\beta\colon \CG\to f_{n*}\CH$ for 
$i\xrightarrow{m} j\xrightarrow{n} k$ in $D$ by 
$\CF\xrightarrow{\alpha}f_{m*}\CG
\xrightarrow{f_{m*}\beta}
f_{m*}f_{n*}\CH\xrightarrow[c_{n,m}(\CH)]
{\cong}f_{nm*}\CH$  (\cite[VI \S7]{SGA1}).
\end{remark}

Let $\uGamma(E)$ denote the category of functors from $D$
to $E$ over $D$, i.e., sections of $\pi$
(\cite[V\bis (1.2.8)]{SGA4}), which is known
to be a $\bU$-topos (\cite[V\bis Proposition (1.2.12)]{SGA4}).
For $i\in \Ob D$, the evaluation at $i$ defines a functor $e_i^*\colon 
\uGamma(E)\to E_i;\CF\mapsto \CF(i)$, which admits a right
adjoint $e_{i*}$ and a left adjoint $e_{i!}$
(\cite[V\bis Corollaire (1.2.11)]{SGA4});
the pair $e_i=(e_i^*,e_{i*})$ defines a morphism of topos 
$e_i\colon E_i\to \uGamma(E)$. 
Let $\CF$ be an object of $\uGamma(E)$.
We write $\CF_i$ for $e_i^*\CF$ for an object $i$ of $D$.
For a morphism $m\colon i\to j$ in $D$, we write
$\tau_{\CF,m}\colon\CF_i\to f_{m*}\CF_j=m^*\CF_j$ 
for the unique morphism in $E_i$ whose composition with
the cartesian morphism $m^*\CF_j\to \CF_j$ over $m$ is $\CF(m)$.
This gives an isomorphism between $\uGamma(E)$
and the category of data consisting of an object
$\CG_i$ of $E_i$ for each $i\in \Ob D$ and 
a morphism $\tau_{\CG,m}\colon \CG_i\to f_{m*}\CG_j$ in $E_i$
for each morphism $m\colon i\to j$ in $D$
satisfying $\tau_{\CG,\id_i}=\id_{\CG_i}$ and the obvious
cocycle condition for every pair of composable
morphisms in $D$. Under this interpretation of 
$\uGamma(E)$, the inverse limit of a $\bU$-small
inverse system $(\CF_{\lambda,i}, \tau_{\CF_{\lambda},m})_{\lambda\in\Lambda}$
is given by $\varprojlim_{\lambda}\CF_{\lambda,i}$
and $\varprojlim_{\lambda}\tau_{\CF_{\lambda},m}
\colon \varprojlim_{\lambda}\CF_{\lambda, i}
\to \varprojlim_{\lambda}(f_{m*}\CF_{\lambda,j})
\cong f_{m*}(\varprojlim_{\lambda}\CF_{\lambda,j})$. 

Let $\CA$ be a ring object of $\uGamma(E)$.
We call the pair $(E, \CA)$ a {\it ringed $D$-topos}. 
For each $i\in \Ob D$, $\CA_i=e_i^*\CA$ is a ring object of $E_i$.
For a morphism $m\colon i\to j$ in $D$, 
the morphism $\tau_{\CA,m}\colon \CA_i\to f_{m*}\CA_j$
is a ring homomorphism. By this construction, 
giving a ring $\CA$ in $\uGamma(E)$ is 
equivalent to giving rings $\CA_i$ in $E_i$
and  ring homomorphisms $\tau_{\CA,m}$ in $E_i$ 
satisfying $\tau_{\CA,\id_i}=\id_{\CA_i}$ and
the cocycle condition for composition of $m$'s.
(See the description of inverse limits in $\uGamma(E)$ in the previous paragraph.)
For another ring object $\CA'$ of $\uGamma(E)$, a morphism 
$\alpha\colon \CA\to \CA'$ in $\uGamma(E)$ is a ring homomorphism
if and only if $\alpha_i:=e_i^*(\alpha)\colon \CA_i\to \CA'_i$ is a ring homomorphism
for every $i\in \Ob D$. 
For $i\in \Ob D$, we have a flat morphism of ringed topos $e_i\colon (E_i,\CA_i)\to (E,\CA)$,
which induces an adjoint pair of functors
$(e_i^*,e_{i*})\colon \Mod(E_i,\CA_i)\to \Mod(\uGamma(E),\CA)$. 
For $m\colon i\to j\in \Mor D$, the ring homomorphism 
$\tau_{\CA,m}\colon \CA_i\to f_{m*}\CA_j$
defines a morphism of ringed topos $f_m\colon (E_j,\CA_j)\to (E_i,\CA_i)$,
which induces an adjoint pair of functors
$(f_m^*,f_{m*})\colon \Mod(E_j,\CA_j)\to \Mod(E_i,\CA_i)$.
These are compatible with compositions. 
The category $\Mod(\uGamma(E), \CA)$ is isomorphic to
the category of data consisting of an $\CA_i$-module
$\CM_i$ for each $i\in \Ob D$ and a morphism 
$\tau_{\CM,m}\colon \CM_i\to f_{m*}\CM_j$ of $\CA_i$-modules
for each
morphism $m\colon i\to j$ in $D$ satisfying 
$\tau_{\CM,\id_i}=\id_{\CM_i}$ and the cocycle
condition for composition of $m$'s.
This description allows us to show that a
sequence $\CM\to \CM'\to \CM''$ in 
$\Mod(\uGamma(E),\CA)$ is exact if and only if 
the sequence $\CM_i\to \CM'_i\to \CM''_i$
is exact for every $i\in \Ob D$.
We define $J_{(E,\CA)}$ to be the full subcategory
of $\Mod(\uGamma(E),\CA)$
consisting of $\CM$ with $\CM_i=e_i^*(\CM)
\in \Ob \Mod(E_i,\CA_i)$ flasque 
(\cite[V D\'efinition 4.1]{SGA4})
(i.e., $H^r(X,-)$ vanishes for every $r>0$ and $X\in \Ob E_i$)
for every $i\in \Ob D$. 
Then the subcategory $J_{(E,\CA)}$ is stable under
extensions and every object of $\Mod(\uGamma(E),\CA)$
admits a monomorphism to an object of $J_{(E,\CA)}$
(\cite[V\bis Proposition (1.3.10) (i), (ii)]{SGA4}). 

Let $\pi'\colon E'\to D$ be another $D$-topos, and let 
$\Phi_*\colon E\to E'$ be a functor over $D$ satisfying
the following condition.
\begin{equation}\label{cond:DtoposDIFunctor}
\text{\parbox[t]{.85\linewidth}{
For every $i\in \Ob D$, there exists a morphism of topos
$\Phi_i=(\Phi_i^*,\Phi_{i*})\colon E_i\to E_i'$ such that
the fiber of $\Phi_*$ over $i$ is $\Phi_{i*}$. 
}}
\end{equation}
For a morphism $m\colon i\to j$ in $D$ and $\CF\in \Ob E_j$,
taking $\Phi_*$ of the cartesian morphism $f_{m*}\CF=m^*\CF\to \CF$
lying over $m$ gives a morphism 
$\Phi_{i*}f_{m*}\CF\to \Phi_{j*}\CF$ over $m$, which is uniquely decomposed
into the composition of a morphism $\Phi_{i*}f_{m*}\CF\to f'_{m*}\Phi_{j*}\CF$
in $E'_i$ with the unique cartesian morphism $f'_{m*}\Phi_{j*}\CF
\to \Phi_{j*}\CF$ over $m$, where $f_m'$ denotes the morphism of topos 
$(m_*,m^*)\colon E'_j\to E_i'$ defined by $\pi'\colon E'\to D$. 
This defines  a morphism of functors
\begin{equation}\label{eq:DtoposDirectImageFib}
b_{m*}\colon \Phi_{i*}f_{m*}\longrightarrow f'_{m*}\Phi_{j*}
\end{equation}
satisfying $b_{\id_i*}=\id_{\Phi_{i*}}$ 
for all $i\in \Ob D$ and the obvious cocycle condition
for every pair of composable morphisms in $D$. 
Giving the functor $\Phi_*$ satisfying \eqref{cond:DtoposDIFunctor}
is equivalent to giving  a functor $\Phi_{i*}\colon E_i\to E_i'$
admitting an exact left adjoint for each $i$ and 
$b_{m*}$ for each $m\colon i\to j$  satisfying the
conditions above. The functor $\Phi_*$ is reconstructed
from $\Phi_{i*}$ and $b_{m*}$ as follows: We have
$\Phi_*\CF=\Phi_{i*}\CF$ for $i\in \Ob D$ and $\CF\in \Ob E_i$.
For $m\colon i\to j\in \Mor D$ and $\alpha\colon \CF\to \CG\in \Mor E$
lying over $m$, $\Phi_*(\alpha)$ is the composition 
$\Phi_{i*}\CF\xrightarrow{\Phi_*(\beta)}\Phi_{i*}f_{m*}\CG
\xrightarrow{b_{m*}(\CG)}
f'_{m*}\Phi_{j*}\CG\to \Phi_{j*}\CG$,
where the last morphism is the cartesian morphism in $E'$
lying over $m$, and $\beta$ is the unique morphism 
$\CF\to f_{m*}\CG$ in $E_i$ whose composition with
the cartesian morphism $f_{m*}\CG\to \CG$ in $E$ is $\alpha$.
The composition with $\Phi_*$ defines
a functor $\uGamma(\Phi_*)\colon 
\uGamma(E)\to \uGamma(E')$. In terms of $\Phi_{i*}$ and $b_{m*}$,
the functor $\uGamma(\Phi_*)$ is given by 
$(\uGamma(\Phi_*)\CF)_i=\Phi_{i*}\CF_i$ and 
\begin{equation}\label{eq:DtoposPFConst}
\tau_{\Phi_*\CF,m}\colon 
\Phi_{i*}\CF_i\xrightarrow{\Phi_{i*}(\tau_{\CF,m})}
\Phi_{i*}f_{m*}\CF_j\xrightarrow{b_{m*}(\CF_j)}
f'_{m*}\Phi_{j*}\CF_j.\end{equation}
We have
$e_i^*\uGamma(\Phi_*)=\Phi_{i*}e_i^*\colon 
\uGamma(E)\to E_i'$. Since $\Phi_{i*}$
preserves $\bU$-small inverse limits for every $i\in \Ob D$,
we see that $\uGamma(\Phi_*)$ preserves
$\bU$-small inverse limits. 

By taking left adjoints of $b_{m*}$, we obtain a morphism
of functors $b_m^*\colon \Phi_j^*f_m^{\prime*}\to f_m^*\Phi_i^*$
satisfying $b_{\id_i}^*=\id_{\Phi_i^*}$ and the cocycle
condition for every pair of composable morphisms in $D$.
If $b_{m*}$ are isomorphisms for all $m$, which
is equivalent to $\Phi_*$ being cartesian, then 
$b_m^*$ are isomorphisms for all $m$ and define
a functor $\Phi^*\colon E'\to E$ with fiber $\Phi_i^*$ over $i\in \Ob D$
as follows
(\cite[V\bis Lemme (1.2.16)]{SGA4}):
For a morphism $\alpha\colon \CF\to \CG$ in $E'$ lying over a morphism 
$m\colon i\to j$ in $D$, $\Phi^*(\alpha)$ is the
composition of the cocartesian morphism 
$\Phi_i^*\CF\to m_*\Phi_i^*\CF=f_m^*\Phi_i^*\CF$
over $m$ with $f_m^*\Phi_i^*\CF\xrightarrow{(b_m^*)^{-1}}
\Phi_j^*f_m^{\prime*}\CF\xrightarrow{\Phi_j^*(\beta)}\Phi_j^*\CG$,
where $\beta$ denotes the unique morphism $f_{m}^{\prime*}\CF=m_*\CF\to \CG$
in $E'_j$ whose composition with the cocartesian morphism 
$\CF\to m_*\CF$ over $m$ is $\alpha$. The composition 
with $\Phi^*$ defines an exact left adjoint 
$\uGamma(\Phi^*)\colon \uGamma(E')\to \uGamma(E)$
of $\uGamma(\Phi_*)$, and therefore the adjoint
pair $(\uGamma(\Phi^*),\uGamma(\Phi_*))$
defines a morphism of topos $\uGamma(\Phi)\colon
\uGamma(E)\to\uGamma(E')$ (\cite[V\bis Proposition (1.2.15)]{SGA4}.

\begin{lemma}\label{lem:DToposDIComp}
Let $E$, $E'$, and $E''$ be $D$-topos, and let $E\xrightarrow{\Phi_*}E'
\xrightarrow{\Phi'_*}E''$ be functors over $D$ satisfying \eqref{cond:DtoposDIFunctor}.
Let $E_i\xrightarrow{\Phi_{i*}} E_i'\xrightarrow{\Phi_{i*}'} E_i''$
$(i\in \Ob D)$, and $b_{m*}\colon \Phi_{i*}f_{m*}
\to f'_{m*}\Phi_{j*}$, $b'_{m*}\colon \Phi'_{i*}f'_{m*}
\to f''_{m*}\Phi'_{j*}$ $(m\colon i\to j\in \Mor D)$
denote the functors and the morphisms of functors 
corresponding to $\Phi_*$ and $\Phi'_*$. Then 
the composition $\Phi'_*\circ\Phi_*$ 
corresponds to $\Phi'_{i*}\circ\Phi_{i*}$ $(i\in \Ob D)$
and $b_{m*}''\colon
\Phi_{i*}'\Phi_{i*}f_{m*}
\xrightarrow{\Phi'_{i*}b_{m*}}
\Phi'_{i*}f'_{m*}\Phi_{j*}
\xrightarrow{b'_{m*}\Phi_{j*}}
f''_{m*}\Phi'_{j*}\Phi_{j*}$ $(m\colon i\to j\in \Mor D)$.
\end{lemma}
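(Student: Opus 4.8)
The plan is to reduce everything to the reconstruction of a direct image functor over $D$ from its fibers and its base-change morphisms that is recalled in the paragraph preceding the lemma (the passage beginning ``Giving the functor $\Phi_*$ satisfying \eqref{cond:DtoposDIFunctor} is equivalent to\ldots''), and then to chase the relevant factorizations through cartesian morphisms.

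First I would check that the composite functor $\Phi'_*\circ\Phi_*\colon E\to E''$ over $D$ satisfies condition \eqref{cond:DtoposDIFunctor}. Its fiber over $i\in\Ob D$ is $\Phi'_{i*}\circ\Phi_{i*}$, and this is the direct image part of the composed morphism of topos $\Phi'_i\circ\Phi_i\colon E_i\to E_i''$; equivalently, $\Phi'_{i*}\circ\Phi_{i*}$ admits the left adjoint $\Phi_i^*\circ\Phi_i^{\prime*}$, which is exact as a composition of exact functors. Hence the equivalence recalled before the lemma applies: $\Phi'_*\circ\Phi_*$ is determined by its fiber functors $\Phi'_{i*}\circ\Phi_{i*}$ together with its base-change morphisms in the sense of \eqref{eq:DtoposDirectImageFib}, so it remains only to identify the latter with the morphisms $b''_{m*}$ of the statement.

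Next I would compute the base-change morphism attached to $\Phi'_*\circ\Phi_*$ and a morphism $m\colon i\to j$ in $\Mor D$. By the definition of \eqref{eq:DtoposDirectImageFib} it is obtained by applying $\Phi'_*\circ\Phi_*$ to the cartesian morphism $f_{m*}\CF\to\CF$ over $m$ (for $\CF\in\Ob E_j$) and decomposing the resulting morphism $\Phi'_{i*}\Phi_{i*}f_{m*}\CF\to\Phi'_{j*}\Phi_{j*}\CF$ over $m$ through the cartesian morphism $f''_{m*}\Phi'_{j*}\Phi_{j*}\CF\to\Phi'_{j*}\Phi_{j*}\CF$ over $m$. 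Applying $\Phi_*$ first gives, by the definition of $b_{m*}$, the factorization $\Phi_{i*}f_{m*}\CF\xrightarrow{b_{m*}(\CF)}f'_{m*}\Phi_{j*}\CF\to\Phi_{j*}\CF$ with the second arrow cartesian over $m$; applying $\Phi'_*$ to this, using that $\Phi'_*$ respects composition and that $\Phi'_*$ of the cartesian arrow $f'_{m*}\Phi_{j*}\CF\to\Phi_{j*}\CF$ factors by the definition of $b'_{m*}$ as $\Phi'_{i*}f'_{m*}\Phi_{j*}\CF\xrightarrow{b'_{m*}(\Phi_{j*}\CF)}f''_{m*}\Phi'_{j*}\Phi_{j*}\CF\to\Phi'_{j*}\Phi_{j*}\CF$, one obtains that $\Phi'_{i*}\Phi_{i*}f_{m*}\CF\to\Phi'_{j*}\Phi_{j*}\CF$ is the composite of $(b'_{m*}(\Phi_{j*}\CF))\circ(\Phi'_{i*}(b_{m*}(\CF)))$ with the cartesian arrow over $m$. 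By uniqueness of such a factorization this identifies the base-change morphism of $\Phi'_*\circ\Phi_*$ with $(b'_{m*}\Phi_{j*})\circ(\Phi'_{i*}b_{m*})=b''_{m*}$, which is the asserted formula.

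Combining the two steps, the fiber functors and base-change morphisms of $\Phi'_*\circ\Phi_*$ are exactly $\Phi'_{i*}\circ\Phi_{i*}$ and $b''_{m*}$, which is the content of the lemma; note that the normalization $b''_{\id_i*}=\id$ and the cocycle condition for the $b''_{m*}$ need not be checked by hand, since $b''_{m*}$ is by construction the base-change morphism of a genuine functor over $D$ and hence automatically inherits them. The only step requiring real care is the bookkeeping of which arrows are cartesian over which morphisms of $D$ and the repeated appeal to the uniqueness of the factorization of an arrow over $m$ through the cartesian arrow over $m$; once this is set up cleanly the identification is purely formal, so I do not expect any genuine obstacle here.
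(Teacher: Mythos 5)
Your argument is correct and is exactly the "straightforward" verification the paper alludes to (the paper's proof consists of that one word): you reconstruct the composed functor from its fibers and base-change morphisms, and identify $b''_{m*}$ by factoring $\Phi'_*\Phi_*$ applied to the cartesian arrow over $m$ through the cartesian arrow, using $\Phi_*$'s and then $\Phi'_*$'s factorizations in turn and the uniqueness of the decomposition. The two remarks you make at the end — that exactness of $\Phi_i^*\Phi_i^{\prime*}$ gives condition \eqref{cond:DtoposDIFunctor} for the composite, and that the normalization and cocycle conditions for $b''_{m*}$ are automatic since $b''_{m*}$ arises as the base-change morphism of an actual functor over $D$ — are precisely the points worth spelling out.
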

\begin{proof} Straightforward. \end{proof}

\begin{lemma}\label{lem:DToposDIMorph}
Let $E$ and $E'$ be $D$-topos, and let $\Phi_*$, $\Phi'_*\colon E\to E'$
be two functors over $D$ satisfying \eqref{cond:DtoposDIFunctor}.
Let $\Phi_{i*}$, $\Phi'_{i*}\colon E_i\to E_i'$ $(i\in \Ob D)$
and $b_{m*}\colon \Phi_{i*}f_{m*}\to f'_{m*}\Phi_{j*}$,
$b'_{m*}\colon \Phi'_{i*}f_{m*}\to f'_{m*}\Phi'_{j*}$ $(m\colon i\to j\in \Mor D)$
be the functors and the morphisms of functors corresponding
to $\Phi_*$ and $\Phi'_*$. Suppose that we are given
a morphism of functors $c_i\colon \Phi_{i*}\to \Phi_{i*}'$
for each $i\in \Ob D$. Then there exists
a morphism of functors $c\colon \Phi_*\to \Phi'_*$
over $D$ satisfying $c\vert_{E_i}=c_i$ for each $i\in \Ob D$
if and only if the following diagram is commutative for
every $m\colon i\to j\in \Mor D$.
\begin{equation*}
\xymatrix@R=15pt{
\Phi_{i*}f_{m*}\ar[d]_{c_if_{m*}}\ar[r]^{b_{m*}}&f'_{m*}\Phi_{j*}
\ar[d]^{f'_{m*}c_j}\\
\Phi_{i*}'f_{m*}\ar[r]^{b'_{m*}}&f'_{m*}\Phi'_{j*}
}
\end{equation*}
\end{lemma}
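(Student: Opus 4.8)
The statement is an entirely formal one about $D$-topos and the data $(\Phi_{i*}, b_{m*})$, $(\Phi'_{i*}, b'_{m*})$ reconstructing $\Phi_*$ and $\Phi'_*$. The plan is to unwind the explicit reconstruction of $\Phi_*$ from $(\Phi_{i*}, b_{m*})$ recalled before Lemma \ref{lem:DToposDIComp} and check that the family $(c_i)_{i\in\Ob D}$ assembles into a natural transformation of functors over $D$ exactly under the stated compatibility.

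First I would address necessity. Suppose $c\colon \Phi_*\to\Phi'_*$ is a morphism of functors over $D$ with $c|_{E_i}=c_i$. Fix $m\colon i\to j$ in $D$ and $\CF\in\Ob E_j$. Apply naturality of $c$ to the cartesian morphism $f_{m*}\CF\to\CF$ in $E$ lying over $m$: this gives a commutative square in $E'$ relating $\Phi_{i*}f_{m*}\CF\to\Phi_{j*}\CF$ and $\Phi'_{i*}f_{m*}\CF\to\Phi'_{j*}\CF$ via $c_i(f_{m*}\CF)$ and $c_j(\CF)$. Now factor each horizontal arrow through the cartesian morphism $f'_{m*}\Phi_{j*}\CF\to\Phi_{j*}\CF$ (resp.\ with a prime) lying over $m$ in $E'$: by the defining property \eqref{eq:DtoposDirectImageFib} of $b_{m*}$ and $b'_{m*}$, the ``lifted'' parts are $b_{m*}(\CF)$ and $b'_{m*}(\CF)$. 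Since $c_j(\CF)$ lies over $\id_j$, it is compatible with the cartesian morphisms over $m$, and uniqueness of the factorization through a cartesian arrow forces the square $b'_{m*}(\CF)\circ c_i(f_{m*}\CF)=f'_{m*}c_j(\CF)\circ b_{m*}(\CF)$ to commute. This is precisely the asserted diagram evaluated at $\CF$.

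For sufficiency, assume the diagram commutes for every $m$. I would \emph{define} $c\colon \Phi_*\to\Phi'_*$ on objects by $c(\CF)=c_i(\CF)$ for $\CF\in\Ob E_i$, and check naturality against an arbitrary morphism $\alpha\colon\CF\to\CG$ in $E$ lying over $m\colon i\to j$. Using the reconstruction formula, $\Phi_*(\alpha)$ is the composite $\Phi_{i*}\CF\xrightarrow{\Phi_{i*}(\beta)}\Phi_{i*}f_{m*}\CG\xrightarrow{b_{m*}(\CG)}f'_{m*}\Phi_{j*}\CG\to\Phi_{j*}\CG$ where $\beta\colon\CF\to f_{m*}\CG$ is the adjoint-transpose of $\alpha$ across the cartesian morphism, and similarly for $\Phi'_*(\alpha)$ with the same $\beta$. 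The square for $\alpha$ then decomposes into three smaller squares: naturality of $c_i$ against $\beta$ (i.e.\ $c_i(f_{m*}\CG)\circ\Phi_{i*}(\beta)=\Phi'_{i*}(\beta)\circ c_i(\CF)$), the hypothesized square $f'_{m*}c_j(\CG)\circ b_{m*}(\CG)=b'_{m*}(\CG)\circ c_i(f_{m*}\CG)$, and compatibility of $c_j(\CG)$ with the cartesian morphisms over $m$ in $E'$ (automatic since these are morphisms over $\id_j$ wrapped by $f'_{m*}$, and $c_j$ is natural in $E_j$). Pasting these three gives naturality of $c$; functoriality over $D$ and $c|_{E_i}=c_i$ hold by construction.

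I do not expect a genuine obstacle here; this is a routine diagram chase once the reconstruction formula and the factorization-through-cartesian-arrows bookkeeping are set up carefully. The one point requiring slight care is the uniqueness-of-factorization argument in the necessity direction: one must be precise that a morphism lying over $\id_j$ (namely $c_j(\CF)$) interacts with a cartesian morphism over $m$ in the unique way dictated by the fibered structure, so that the ``vertical'' and ``cartesian'' components of the naturality square match up correctly. Everything else is an application of Lemma \ref{lem:DToposDIComp}'s style of reasoning together with the material preceding it.
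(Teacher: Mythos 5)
Your proof is correct and takes essentially the same route as the paper's: both directions are reduced to the three-square ladder obtained by factoring $\Phi_*(\alpha)$ and $\Phi'_*(\alpha)$ through cartesian lifts, with the left square given by naturality of $c_i$ against $\beta$, the middle square being the stated compatibility, and the right square commuting automatically; necessity is obtained (as in the paper's $\beta=\id$ specialization) from the unique-lifting property of the cartesian arrow.
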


\begin{proof}
There exists $c$ if and only if, for any $\alpha\colon \CF\to \CG\in \Mor E$
lying over $m\colon i\to j\in\Mor D$, we have
$c_j(\CG)\circ\Phi_*(\alpha)=\Phi'_*(\alpha)\circ c_i(\CF)
\colon \Phi_*(\CF)=\Phi_{i*}(\CF)\to \Phi'_*(\CG)=\Phi'_{j*}(\CG)$.
Let $\beta\colon \CF\to f_{m*}\CG$ be the unique morphism in $E_i$ whose
composition with the unique cartesian morphism $f_{m*}\CG\to \CG$
over $m$ is $\alpha$. Then we have the following diagram, where
the right two horizontal morphisms are cartesian over $m$. 
\begin{equation*}
\xymatrix@C=40pt{
\Phi_{i*}(\CF)\ar[r]^(.47){\Phi_{i*}(\beta)}\ar[d]_{c_i(\CF)}&
\Phi_{i*}f_{m*}(\CG)\ar[r]^{b_{m*}(\CG)}\ar[d]_{c_i(f_{m*}\CG)}&
f'_{m*}\Phi_{j*}(\CG)\ar[r]\ar[d]^{f'_{m*}c_j(\CG)}&
\Phi_{j*}(\CG)\ar[d]^{c_j(\CG)}\\
\Phi'_{i*}(\CF)\ar[r]^(.47){\Phi'_{i*}(\beta)}&
\Phi'_{i*}f_{m*}(\CG)\ar[r]^{b'_{m*}(\CG)}&
f'_{m*}\Phi'_{j*}(\CG)\ar[r]&
\Phi'_{j*}(\CG)
}
\end{equation*}
The composition of the upper (resp.~lower) horizontal morphisms
are $\Phi_*(\alpha)$ (resp.~$\Phi'_*(\alpha)$), and the left and
the right squares are commutative. This implies the desired
equivalence; we see the necessity by considering the case
$\beta=\id$ and using the cartesian property of the bottom
right horizontal morphism.
\end{proof}

Let $\CA'$ be a ring object of $\uGamma(E')$ and
suppose that we are given a ring homomorphism 
$\theta\colon \CA'\to \uGamma(\Phi_*)\CA$, 
which induces a ring homomorphism $\theta_i\colon \CA_i'=
e_i^*\CA_i'\to e_i^*\uGamma(\Phi_*)\CA
=\Phi_{i*}e_i^*\CA=\Phi_{i*}\CA_i$. 
Giving a ring homomorphism $\theta\colon \CA'\to \uGamma(\Phi_*)\CA$
is equivalent to giving a ring homomorphism
$\theta_i\colon \CA'_i\to \Phi_{i*}\CA_i$ for each $i\in \Ob D$
compatible with $\tau_{\CA',m}\colon \CA'_i\to f'_{m*}\CA'_j$
and $\tau_{\Phi_*\CA,m}\colon \Phi_{i*}\CA_i\to f'_{m*}\Phi_{j*}\CA_j$
\eqref{eq:DtoposPFConst}. 
The pair of $\Phi_i$ and $\theta_i$
defines a morphism of ringed topos
$\varphi_i\colon (E_i,\CA_i)\to (E'_i,\CA'_i)$ for each $i\in \Ob D$. 
The pair $\varphi_*=(\Phi_*,\theta)$ induces a left exact functor
$\uGamma(\varphi_*)\colon \Mod(\uGamma(E),\CA)
\to \Mod(\uGamma(E'),\CA')$ since
$\uGamma(\Phi_*)$ preserves $\bU$-small inverse limits.
We define $J_{(E',\CA')}$ in the same way as $J_{(E,\CA)}$,
and $L_{(E,\CA)}$ (resp.~$L_{(E',\CA')}$) to be the
full subcategory of $K^+(\uGamma(E),\CA)$ 
(resp.~$K^+(\uGamma(E'),\CA')$) consisting of 
complexes of objects of $J_{(E,\CA)}$ (resp.~$J_{(E',\CA')}$),
which is a triangulated subcategory. 
Since $(\uGamma(\varphi)_*\CM)_i=\varphi_{i*}\CM_i$
for $\CM\in \Ob \Mod(\uGamma(E),\CA)$,
we have $\uGamma(\Phi_*)(L_{(E,\CA)})
\subset L_{(E',\CA')}$ 
(\cite[V Proposition 4.9 1)]{SGA4}),
and,  if $\CI^{\bullet}\in \Ob L_{(E,\CA)}$  is acyclic,
$\uGamma(\Phi_*)(\CI^{\bullet})$ is acyclic
(\cite[V Proposition 5.2]{SGA4}).
Therefore the right derived functor of $\uGamma(\varphi_*)
\colon K^+(\uGamma(E),\CA)\to K^+(\uGamma(E'),\CA')$
is given by the composition (\cite[I Lemma 4.6 1), Theorem 5.1 and its proof]{RD})
\begin{equation}
R^+\uGamma(\varphi_*)\colon D^+(\uGamma(E),\CA)
\xleftarrow{\sim} (L_{(E,\CA)})_{\Qis}
\xrightarrow{\uGamma(\varphi_*)_{\Qis}}
(L_{(E',\CA')})_{\Qis}
\xrightarrow{\sim}
D^+(\uGamma(E'),\CA'),
\end{equation}
where $(-)_{\Qis}$ denotes the localization by quasi-isomorphisms;
every $\CM^{\bullet}\in \Ob C^+(\uGamma(E),\CA)$
admits a quasi-isomorphism $\CM^{\bullet}\to \CI^{\bullet}$
with $\CI^{\bullet}\in \Ob L_{(E,\CA)}$ and it induces
$R^+\uGamma(\varphi_*)(\CM^{\bullet})
\cong R^+\uGamma(\varphi_*)(\CI^{\bullet})
\cong \uGamma(\varphi_*)(\CI^{\bullet})$
in $D^+(\uGamma(E'),\CA')$. 
Since $\CM_i^{\bullet}\to \CI^{\bullet}_i$ is a 
quasi-isomorphism to a complex of flasque $\CA_i$-modules
on $E_i$ for every $i\in \Ob D$,
we see that the isomorphism $e_i^*\circ\uGamma(\varphi_*)
\xrightarrow{\sim}\varphi_{i*}\circ e_i^*$ induces an isomorphism
\begin{equation}\label{eq:DtoposFBF}
e_i^*\circ R^+\uGamma(\varphi_*)\xrightarrow{\;\cong\;}
R^+\varphi_{i*}\circ e_i^*
\colon D^+(E,\CA)\to D^+(E'_i,\CA'_i).
\end{equation}

Next let us discuss topos of inverse systems and inverse
limit functors in connection with $D$-topos discussed above.
Let $I$ be a $\bU$-small category, and let $I^{\circ}$
be its opposite category. Let $T$ be a topos. 
We define $T^{I^{\circ}}$ to be the category of functors
$I^{\circ}\to T$, i.e., inverse systems in $T$ indexed by $I$.
For an object $\CF$ (resp.~a morphism $\alpha$) in $T^{I^{\circ}}$
and $r\in \Ob I$, we write $\CF_r$ (resp.~$\alpha_r$) for its value
at $r$. The functor taking the inverse limit $\varprojlim_I\colon 
T^{I^{\circ}}\to T$ is a right adjoint of the exact functor
$c_I\colon T\to T^{I^{\circ}}$ sending $\CF$ to the
constant inverse system consisting of $\CF$. We write
$\projl_I$ for the morphism of topos
$(c_I,\varprojlim_I)\colon T^{I^{\circ}}\to T$. 
The projection functor $T\times I^{\circ}\to I^{\circ}$ is
an $I^{\circ}$-topos, and we have $\uGamma(T\times I^{\circ})=
T^{I^{\circ}}$. Hence $T^{I^{\circ}}$ is a topos and we have a morphism 
of topos $e_r=(e_r^*,e_{r*})\colon T\to T^{I^{\circ}}$ with $e_r^*\CF=\CF_r$
for $r\in \Ob I$. 
Let $\CA$ be a ring object of $T^{I^{\circ}}$, which is an inverse
system of ring objects of $T$ indexed by $I$. Then an $\CA$-module
on $T^{I^{\circ}}$ is an inverse system of $\CA_r$-modules on $T$.
A ring homomorphism $\sigma\colon \CB\to \varprojlim_I\CA$
on $T$ induces a morphism of ringed topos $(\projl_I,\sigma)\colon
(T^{I^{\circ}},\CA)\to (T,\CB)$. 
Let $\Phi=(\Phi^*,\Phi_*)\colon T\to T'$ be a morphism of topos.
Then the composition with $\Phi$ defines a morphism of topos
$\Phi^{I^{\circ}}\colon T^{I^{\circ}}\to T^{\prime I^{\circ}}$, which is associated to the cartesian functor
$\Phi_*\times \id_{I^{\circ}}\colon T\times I^{\circ}\to T'\times I^{\circ}$
over $I^{\circ}$.
Let $\CA$ and $\CA'$ be ring objects of $T^{I^{\circ}}$ and
$T^{\prime I^{\circ}}$, respectively, and suppose that
we are given a ring homomorphism 
$\theta\colon \CA'\to \Phi^{I^{\circ}}_*\CA$.
Then, for morphisms of ringed topos
$\varphi=(\Phi^{I^{\circ}},\theta)\colon 
(T^{I^{\circ}},\CA)\to (T^{\prime I^{\circ}},\CA')$
and $\varphi_r=(\Phi,\theta_r)\colon 
(T,\CA_r)\to (T',\CA'_r)$ $(r\in \Ob I)$, we obtain the
following isomorphism from \eqref{eq:DtoposFBF}.
\begin{equation}\label{eq:InvSySFBF}
e_r^*\circ R^+\varphi_*\xrightarrow{\cong}
R^+\varphi_{r*}\circ e_r^*\colon D^+(T^{I^{\circ}},\CA)
\to D^+(T',\CA'_r)
\end{equation}

Let $D$ and $I$ be $\bU$-small categories, and let $\pi\colon E\to D$ be
a $D$-topos. We study the case $T=\uGamma(E)$. 
The functor $\pi\times \id_{I^{\circ}}\colon 
E\times I^{\circ}\to D\times I^{\circ}$ defines a
$D\times I^{\circ}$-topos: For a morphism 
$(m,u)\colon (i,r)\to (j,s)$ in $D\times I^{\circ}$ and
objects $\CF$ and $\CG$ above $(i,r)$ and $(j,s)$,
we have $\Hom_{E\times I^{\circ},(m,u)}(\CF,\CG)
=\Hom_{E_i}(\CF,m^*\CG)=\Hom_{E_j}(m_*\CF,\CG)$.
We have $\uGamma(E\times I^{\circ})=\uGamma(E)^{I^{\circ}}$
and the composition 
$\uGamma(E)^{I^{\circ}}\xrightarrow{e_r^*}
\uGamma(E)\xrightarrow{e_i^*}E_i$
coincides with $e_{(i,r)}^*\colon \uGamma(E\times I^{\circ})
\to E_i$ for $(i,r)\in \Ob (D\times I^{\circ})$. 
Let $\pi'\colon E'\to D$ be another $D$-topos, and let 
$\Phi_*\colon E\to E'$ be a functor over $D$
satisfying \eqref{cond:DtoposDIFunctor}. Then the functor
$\Phi_*\times\id_{I^{\circ}}\colon 
E\times I^{\circ}\to E'\times I^{\circ}$
over $D\times I^{\circ}$ also satisfies \eqref{cond:DtoposDIFunctor}
and $\uGamma(\Phi_*)^{I^{\circ}}$
coincides with $\uGamma(\Phi_*\times \id_{I^{\circ}})$. 
Let $\CA$ and $\CA'$ be ring objects of $\uGamma(E)^{I^{\circ}}$
and $\uGamma(E')^{I^{\circ}}$, respectively, 
and suppose that we are given a ring homomorphism 
$\theta\colon \CA'\to \uGamma(\Phi_*)^{I^{\circ}}\CA$.
Then the pair $(\uGamma(\Phi_*)^{I^{\circ}},\theta)
=(\uGamma(\Phi_*\times\id_{I^{\circ}}),\theta)$ induces
a left exact functor 
$\varphi_*\colon \Mod(\uGamma(E)^{I^{\circ}},\CA)
\to \Mod(\uGamma(E')^{I^{\circ}},\CA')$.
Let $\theta_{i,r}\colon \CA'_{i,r}\to \Phi_{i*}\CA_{i,r}$,
$(i,r)\in \Ob(D\times I^{\circ})$ be the pullback 
of $\theta$ under the morphism of topos
$e_{(i,r)}\colon E'_i\to \uGamma(E'\times I^{\circ})$.
Then the morphism of ringed topos
$\varphi_{i,r}=(\Phi_i,\theta_{i,r})\colon 
(E_i,\CA_{i,r})\to (E_i',\CA_{i,r}')$ defines a left exact
functor $\varphi_{i,r*}\colon \Mod(E_i,\CA_{i,r})
\to \Mod(E_i',\CA'_{i,r})$. By \eqref{eq:DtoposFBF}, we have an 
isomorphism 
\begin{equation}\label{eq:InvSysDtoposFBF}
e_{(i,r)}^*\circ R^+\varphi_*\xrightarrow{\;\sim\;}
R^+\varphi_{i,r*}\circ e_{(i,r)}^*\colon 
D^+(\uGamma(E)^{I^{\circ}},\CA)
\longrightarrow D^+(E'_i,\CA'_{i,r}).
\end{equation}

We show that the right derived functor of the inverse limit functor for $\uGamma(E)^{I^{\circ}}$
can be also computed fiber by fiber. By the construction of 
$\bU$-small inverse limits in $\uGamma(E)$, 
the composition 
$\uGamma(E)^{I^{\circ}}\xrightarrow{e_i^{*I^{\circ}}}
E_i^{I^{\circ}}\xrightarrow{\varprojlim_I} E_i$
is canonically isomorphic to the composition 
$\uGamma(E)^{I^\circ}
\xrightarrow{\varprojlim_I}\uGamma(E)
\xrightarrow{e_i^*}E_i$ for $i\in \Ob D$. 
For $\CF\in \Ob \uGamma(E)^{I^{\circ}}$,
we write $\CF_i$ for $(e_i^*)^{I^{\circ}}\CF
\in \Ob (E^{I^{\circ}}_i)$.
Let $\CA$ and $\CB$ be ring objects of
$\uGamma(E)^{I^{\circ}}$ and $\uGamma(E)$,
respectively,  suppose that we are given
a ring homomorphism $\theta\colon \CB\to 
\varprojlim_I\CA$, and let $\theta_i$
be $e_i^*(\theta)\colon 
\CB_i\to e_i^*\varprojlim_I\CA=
\varprojlim_I\CA_i$ for $i\in \Ob D$. We have a
diagram commutative up to a canonical isomorphism
for each $i\in \Ob D$
\begin{equation}\label{eq:DToposInvLimShFBF}
\xymatrix@C=50pt{
\Mod(\uGamma(E)^{I^{\circ}},\CA)
\ar[r]^{e_i^{*I^{\circ}}}\ar[d]_{\varprojlim_I}&
\Mod(E_i^{I^{\circ}},\CA_i)\ar[d]^{\varprojlim_I}\\
\Mod(\uGamma(E),\CB)\ar[r]^{e_i^*}&
\Mod(E_i,\CB_i).
}
\end{equation}
The two horizontal functors are exact.

\begin{proposition}\label{prop:DtoposInvLimitFbF}
The following morphism of functors induced by \eqref{eq:DToposInvLimShFBF}
is an isomorphism for $i\in \Ob D$.
$$
e_i^*\circ R\varprojlim_I
\to R\varprojlim_I\circ e_i^{*I^{\circ}}\colon 
D^+(\uGamma(E)^{I^{\circ}},\CA)
\longrightarrow D^+(E_i,\CB_i)$$
\end{proposition}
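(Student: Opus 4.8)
The plan is to realize the inverse limit functor $\varprojlim_I\colon\uGamma(E)^{I^{\circ}}\to\uGamma(E)$ as the functor $\uGamma(\varphi_*)$ attached to a morphism of ringed $D$-topos of the kind considered for \eqref{eq:DtoposFBF}, so that the proposition becomes a direct instance of that isomorphism. First I would promote the given $D$-topos $E\to D$ to a $D$-topos whose fibre over $i\in\Ob D$ is $(E_i)^{I^{\circ}}$: I apply Remark \ref{rmk:DtoposConstruction} to the topoi $(E_i)^{I^{\circ}}$, to the morphisms of topos $f_m^{I^{\circ}}\colon(E_j)^{I^{\circ}}\to(E_i)^{I^{\circ}}$ obtained by applying $f_m$ componentwise for $m\colon i\to j\in\Mor D$, and to the composition isomorphisms of $E$ applied componentwise; I denote the resulting $D$-topos by $E^{[I^{\circ}]}$. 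Unwinding the description of the sections of a $D$-topos recalled after Remark \ref{rmk:DtoposConstruction} yields a canonical equivalence $\uGamma(E^{[I^{\circ}]})\simeq\uGamma(E)^{I^{\circ}}$ under which, for each $i$, the evaluation functor $e_i^*$ of $E^{[I^{\circ}]}$ corresponds to $e_i^{*I^{\circ}}$, and rings on $\uGamma(E^{[I^{\circ}]})$ correspond to $I$-inverse systems of rings on $\uGamma(E)$; in particular $\CA$ becomes a ring on $\uGamma(E^{[I^{\circ}]})$.

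Next I would exhibit $\varprojlim_I$ as $\uGamma(\Psi_*)$ for a functor $\Psi_*\colon E^{[I^{\circ}]}\to E$ over $D$ satisfying \eqref{cond:DtoposDIFunctor}. By the reconstruction of such functors from fibrewise data recalled before \eqref{eq:DtoposPFConst}, it suffices to give, for each $i\in\Ob D$, a functor $\Psi_{i*}\colon(E_i)^{I^{\circ}}\to E_i$ admitting an exact left adjoint, together with morphisms $b_{m*}\colon\Psi_{i*}f_{m*}^{I^{\circ}}\to f_{m*}\Psi_{j*}$ for $m\colon i\to j$ satisfying $b_{\id}=\id$ and the cocycle condition. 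I take $\Psi_{i*}=\projl_{I*}=\varprojlim_I$, whose left adjoint $c_I$ is exact, and I take $b_{m*}$ to be the canonical isomorphism $\varprojlim_I\circ f_{m*}^{I^{\circ}}\cong f_{m*}\circ\varprojlim_I$, which exists because $f_{m*}$, being the direct image functor of the morphism of topos $f_m\colon E_j\to E_i$, commutes with all inverse limits; the cocycle condition is immediate from functoriality of inverse limits. Using $e_i^*\uGamma(\Psi_*)=\Psi_{i*}e_i^*$ one identifies $\uGamma(\Psi_*)$ with $\varprojlim_I\colon\uGamma(E)^{I^{\circ}}\to\uGamma(E)$, so that $\uGamma(\Psi_*)\CA=\varprojlim_I\CA$, the given ring homomorphism $\theta\colon\CB\to\varprojlim_I\CA$ is a ring homomorphism $\theta\colon\CB\to\uGamma(\Psi_*)\CA$, and the pair $\varphi_*=(\Psi_*,\theta)$ induces $\uGamma(\varphi_*)=\varprojlim_I\colon\Mod(\uGamma(E)^{I^{\circ}},\CA)\to\Mod(\uGamma(E),\CB)$ with fibre functors $\varphi_{i*}=\varprojlim_I\colon\Mod((E_i)^{I^{\circ}},\CA_i)\to\Mod(E_i,\CB_i)$ attached to $\theta_i\colon\CB_i\to\varprojlim_I\CA_i$.

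With this in place, \eqref{eq:DtoposFBF} applied to $\varphi_*$ gives an isomorphism $e_i^*\circ R^+\uGamma(\varphi_*)\xrightarrow{\cong}R^+\varphi_{i*}\circ e_i^*$, which under the identifications above is exactly an isomorphism $e_i^*\circ R\varprojlim_I\xrightarrow{\cong}R\varprojlim_I\circ e_i^{*I^{\circ}}$. To conclude, I would check that this is the morphism of the proposition: the underived isomorphism $e_i^*\circ\uGamma(\varphi_*)\xrightarrow{\sim}\varphi_{i*}\circ e_i^*$ that induces \eqref{eq:DtoposFBF} is precisely the canonical isomorphism making the square \eqref{eq:DToposInvLimShFBF} commute, and the transformation between the derived functors attached to such a square of functors, with exact horizontal and left exact vertical arrows, depends only on that isomorphism. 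I expect the only genuine work to be in the first two paragraphs — constructing $E^{[I^{\circ}]}$, checking that $\uGamma(E^{[I^{\circ}]})\simeq\uGamma(E)^{I^{\circ}}$ compatibly with evaluation functors, rings and transition morphisms, and verifying the cocycle condition for the $b_{m*}$ — which is routine bookkeeping with coherence data but must be carried out with some care; once it is done, the statement follows formally from \eqref{eq:DtoposFBF}.
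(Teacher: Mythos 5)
Your proposal is correct and follows essentially the same route as the paper: the paper introduces the $D$-topos $E^{I^{\circ}/D}$ with fibres $E_i^{I^{\circ}}$, identifies $\uGamma(E^{I^{\circ}/D})$ with $\uGamma(E)^{I^{\circ}}$ compatibly with the evaluation functors, realizes $\varprojlim_I$ as $\uGamma(\varprojlim_{I/D})$ for a functor over $D$ whose fibre over $i$ is the direct image of $\projl_I\colon E_i^{I^{\circ}}\to E_i$, and then invokes \eqref{eq:DtoposFBF}, exactly as you do. The only difference is cosmetic: you assemble the auxiliary $D$-topos and the functor $\Psi_*$ from fibrewise data via Remark \ref{rmk:DtoposConstruction} and the reconstruction preceding \eqref{eq:DtoposPFConst}, whereas the paper writes down the total category $E^{I^{\circ}/D}$ and the functor $\varprojlim_{I/D}$ explicitly.
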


\begin{proof}
We define a category $E^{I^{\circ}/D}$ as follows. 
An object is a functor $\CF\colon I^{\circ}\to E$ whose
composition with $\pi\colon E\to D$ is constant, i.e.,
a functor to $E_i$ for an object $i\in \Ob D$. A morphism
$\alpha\colon \CF\to \CG$ is a morphism of functors
whose composition with $\pi$ is constant. 
We can define a functor $\pi^{I^{\circ}/D}\colon
E^{I^{\circ}/D}\to D$ by associating the compositions with
$\pi$ to $\CF$ and $\alpha$ above. The 
fiber over $i\in \Ob D$ is $E_i^{I^{\circ}}$. 
For a morphism $m\colon i\to j$ in $D$,
$\CF\in \Ob E_i^{I^{\circ}}$,
and $\CG\in \Ob E_j^{I^{\circ}}$,
we have $\Hom_{E^{I^{\circ}/D},m}(\CF,\CG)
=\Hom_{E_i^{I^{\circ}}}(\CF,f_{m*}^{I^{\circ}}\CG)
\cong \Hom_{E_j^{I^{\circ}}}(f_m^{*I^{\circ}}\CF,\CG)$.
Hence $E^{I^{\circ}/D}$ is a $D$-topos whose
pullback and pushforward functors with respect
to a morphism $m\colon i\to j$ in $D$ are given by 
the morphism of topos $f_m^{I^{\circ}}\colon 
E_j^{I^{\circ}}\to E_i^{I^{\circ}}$. We can define
a functor $\varprojlim_{I/D}\colon E^{I^{\circ}/D}\to E$ over $D$ by 
sending $\CF\in \Ob E_i^{I^{\circ}}$
$(i\in \Ob D)$ to $\varprojlim_I\CF$ in $E_i$ and $\alpha\colon \CF\to\CG$
over $m\colon i\to j\in \Mor D$ to 
the composition of 
$\varprojlim_I\CF\to\varprojlim_If_{m*}^{I^{\circ}}\CG
\cong f_{m*}\varprojlim_I\CG$ with 
the cartesian morphism 
$f_{m*}\varprojlim_I\CG\to \varprojlim_I\CG$
over $m$. 

An object of $\uGamma(E^{I^{\circ}/D})$
consists of an inverse system 
$((\CF_{r,i})_{r\in \Ob I^{\circ}}, (\sigma_{\CF,u,i}
\colon \CF_{r,i}\to \CF_{s,i})_{u\colon r\to s\in \Mor I^{\circ}})
$$\in E^{I^{\circ}}_i$
for each $i\in \Ob D$ and a morphism 
$\tau_{\CF,r,m}\colon \CF_{r,i}\to f_{m*}\CF_{r,j}$
in $E_i$ for each $m\colon i\to j\in \Mor D$ and
$r\in \Ob I^{\circ}$ compatible with $\sigma_{\CF,u,i}$
and $\sigma_{\CF,u,j}$, and satisfying
$\tau_{\CF,r,\id}=\id$ and the cocycle condition for
composable morphisms in $D$. 
An object of $\uGamma(E)^{I^{\circ}}$ consists of
an object $((\CG_{r,i})_{i\in \Ob D},
(\tau_{\CG,r,m}\colon \CG_{r,i}\to f_{m*}\CG_{r,j})_{m\colon i\to j\in \Mor D})$
of $\uGamma(E)$ for each $r\in \Ob I^{\circ}$
and a morphism 
$\sigma_{\CG,u,i}\colon \CG_{r,i}\to \CG_{s,i}$ in $E_i$
for each $u\colon r\to s\in \Mor I^{\circ}$ and
$i\in \Ob D$ compatible with $\tau_{\CG,r,m}$ and 
$\tau_{\CG,s,m}$, and satisfying 
$\sigma_{\CG,\id,i}=\id$ and the cocycle condition for 
composable morphisms in $I^{\circ}$. Morphisms
in $\uGamma(E^{I^{\circ}/D})$ and
in $\uGamma(E)^{I^{\circ}}$ are given by morphisms in 
$E_i$'s compatible with $\sigma$'s and $\tau$'s. This observation
leads us a natural identification $\uGamma(E^{I^{\circ}/D})
=\uGamma(E)^{I^{\circ}}$, under which we have 
$e_i^*=(e_i^*)^{I^{\circ}}$ for
each $i\in \Ob D$. 
By the construction of $\bU$-small inverse limits in $\uGamma(E)$
fiber by fiber,
$\varprojlim_I\colon \uGamma(E)^{I^{\circ}}\to \uGamma(E)$
coincides with $\uGamma(\varprojlim_{I/D})
\colon \uGamma(E^{I^{\circ}/D})\to \uGamma(E)$. 
Since the fiber $\varprojlim_{I}\colon 
E_i^{I^{\circ}}\to E_i$ of $\varprojlim_{I/D}$
over $i\in \Ob D$ is the
direct image functor of the morphism of topos
$\projl_I\colon E_i^{I^{\circ}}\to E_i$,
we can apply \eqref{eq:DtoposFBF} to $\varprojlim_{I/D}$
and $\theta\colon \CB\to \uGamma(\varprojlim_{I/D})_*\CA$,
obtaining the desired claim.
\end{proof}

\bibliographystyle{amsplain}
\bibliography{qHiggsRefData}
\end{document}